\renewcommand{\p@enumii}{}
\renewcommand{\p@enumiii}{}
\def\@enum@{\list{\csname label\@enumctr\endcsname}%
          {\usecounter{\@enumctr}\def\makelabel##1{
\normalfont\ignorespaces\emph{{##1}~}}
\setlength{\labelsep}{3pt}
\setlength{\parsep}{0pt}
\setlength{\itemsep}{0pt}
\setlength{\leftmargin}{0pt}
\setlength{\labelwidth}{0pt}
\setlength{\listparindent}{\parindent}
\setlength{\itemsep}{0pt}
\setlength{\itemindent}{0pt}
\topsep=3pt plus 1pt minus 1 pt}}
\renewcommand{\epsilon}{\ensuremath{\varepsilon}}
\renewcommand{\phi}{\ensuremath{\varphi}}
\renewcommand{\to}{\ensuremath{\longrightarrow}}
\renewcommand{\mapsto}{\ensuremath{\longmapsto}}
\newcommand{\N}{\ensuremath{\mathbb N}}
\newcommand{\Z}{\ensuremath{\mathbb Z}}
\newcommand{\dt}{\ensuremath{\mathbb D}^{2}}
\newcommand{\St}[1][2]{\ensuremath{\mathbb S}^{#1}}
\newcommand{\FF}{\ensuremath{\mathbb F}}
\newcommand{\F}[1][n]{\ensuremath{\FF_{{#1}}}}
\newcommand{\rp}{\ensuremath{\mathbb{R}P^2}}
\newcommand{\sn}[1][n]{\ensuremath{S_{{#1}}}}
\newcommand{\an}[1][n]{\ensuremath{A_{{#1}}}}
\DeclareRobustCommand*{\up}[1]{\textsuperscript{#1}}
\renewcommand{\th}{\up{th}}
\newcommand{\ft}[1][n]{\ensuremath{{\Delta_{#1}^2}}}
\newcommand{\garside}[1][n]{\ensuremath{\Delta_{#1}}}
\newcommand{\fti}[1][n]{\ensuremath{\Delta_{#1}^{-2}}}
\newcommand{\ftalt}[2][n]{\ensuremath{\Delta_{#1}^{2#2}}}
\renewcommand{\ker}[1]{\ensuremath{\operatorname{\text{Ker}}\left({#1}\right)}}
\newcommand{\aut}[2][]{\ensuremath{\operatorname{\text{Aut}}_{#1}\left({#2}\right)}}
\newcommand{\out}[1]{\ensuremath{\operatorname{\text{Out}}\left({#1}\right)}}
\newcommand{\id}{\ensuremath{\operatorname{\text{Id}}}}
\newcommand{\dih}[1]{\ensuremath{\operatorname{\text{Dih}}_{#1}}}
\newcommand{\dic}[1]{\ensuremath{\operatorname{\text{Dic}}_{#1}}}
\newcommand{\quat}[1][8]{\ensuremath{\mathcal{Q}_{#1}}}
\newcommand{\homeo}[1][X]{\ensuremath{\operatorname{\text{Homeo}^+}(\St,#1)}}
\newcommand{\mcg}[1][n]{\ensuremath{\operatorname{\mathcal{MCG}}(\St,#1)}}
\newcommand{\pmcg}[1][n]{\ensuremath{\operatorname{\mathcal{P\!MCG}}(\St,#1)}}
\newcommand{\tonestar}{\ensuremath{T^{\ast}}}
\newcommand{\oonestar}{\ensuremath{O^{\ast}}}
\newcommand{\istar}{\ensuremath{I^{\ast}}}
\def\@map#1#2[#3]{\mbox{$#1 \colon\thinspace #2 \to #3$}}
\def\map#1#2{\@ifnextchar [{\@map{#1}{#2}}{\@map{#1}{#2}[#2]}}
\newcommand{\brak}[1]{\ensuremath{\left\{ #1 \right\}}}
\newcommand{\ang}[1]{\ensuremath{\left\langle #1\right\rangle}}
\newcommand{\set}[2]{\ensuremath{\left\{#1 \,\mid\, #2\right\}}}
\newcommand{\setangr}[2]{\ensuremath{\ang{#1 \,\left\lvert \, #2 \right.}}}
\newcommand{\ord}[1]{\ensuremath{\left\lvert #1\right\rvert}}
\newcommand{\setr}[2]{\ensuremath{\brak{#1 \,\left\lvert \, #2 \right.}}}
\newcommand{\setl}[2]{\ensuremath{\brak{\left. #1 \,\right\rvert \, #2}}}
\newtheoremstyle{theoremm}{}{}{\itshape}{}{\scshape}{.}{ }{}
\theoremstyle{theoremm}
\newtheorem{thm}{Theorem}
\newtheorem{lem}[thm]{Lemma}
\newtheorem{prop}[thm]{Proposition}
\newtheorem{cor}[thm]{Corollary}
\newtheoremstyle{remarkk}{}{}{}{}{\scshape}{.}{ }{}
\theoremstyle{remarkk}
\newtheorem{defn}[thm]{Definition}
\newtheorem{rem}[thm]{Remark}
\newtheorem{rems}[thm]{Remarks}
\newcommand{\inn}[1]{\ensuremath{\operatorname{\text{Inn}}\left({#1}\right)}}
\newcommand{\reth}[1]{Theorem~\protect\ref{th:#1}}
\newcommand{\relem}[1]{Lemma~\protect\ref{lem:#1}}
\newcommand{\repr}[1]{Proposition~\protect\ref{prop:#1}}
\newcommand{\repart}[1]{Part~\protect\ref{part:#1}}
\newcommand{\reco}[1]{Corollary~\protect\ref{cor:#1}}
\newcommand{\resec}[1]{Section~\protect\ref{sec:#1}}
\newcommand{\rerem}[1]{Remark~\protect\ref{rem:#1}}
\newcommand{\rerems}[1]{Remarks~\protect\ref{rem:#1}}
\newcommand{\redef}[1]{Definition~\protect\ref{def:#1}}
\newcommand{\req}[1]{equation~(\protect\ref{eq:#1})}
\newcommand{\reqref}[1]{(\protect\ref{eq:#1})}
\let\newbigast=\bigast
\renewcommand{\bigast}{\mathbin{\newbigast}}
\newcommand{\resecglobal}[2]{Section~\protect\ref{part:#1}.\ref{sec:#2}}
\begin{document}

\frontmatter

\title{The classification of the virtually cyclic subgroups\\ of the sphere
braid groups}

\author{DACIBERG~LIMA~GON\c{C}ALVES\\
Departamento de Matem\'atica - IME-USP,\\
Caixa Postal~66281~-~Ag.~Cidade de S\~ao Paulo,\\ 
CEP:~05314-970 - S\~ao Paulo - SP - Brazil.\\
e-mail:~\url{dlgoncal@ime.usp.br}\vspace*{4mm}\\
JOHN~GUASCHI\\
CNRS, Laboratoire de Math\'ematiques Nicolas Oresme
UMR~6139,\\ Université de Caen Basse-Normandie BP 5186,\\
14032 Caen Cedex, France,\vspace*{1.5mm}\\
and\vspace*{1.5mm}\\
Université de Caen Basse-Normandie,\\ Laboratoire de Mathématiques Nicolas Oresme UMR~CNRS~6139,\\ BP 5186, 14032 Caen Cedex, France.\\
%\email{john.guaschi@unicaen.fr}
%
%
%Laboratoire de Math\'ematiques Nicolas Oresme UMR CNRS~6139,\\
%Universit\'e de Caen Basse-Normandie BP 5186,\\
%14032 Caen Cedex, France.\\
e-mail:~\url{john.guaschi@unicaen.fr}}

\date{27th October 2011}

\maketitle

\newenvironment{abstract}{\chapter*{Abstract}}{} 

\begin{abstract}%\noindent%\comment{This is new.}
\emph{Let $n\geq 4$, and let $B_{n}(\St)$ denote the $n$-string braid group of the sphere. In~\cite{GG7}, we showed that the isomorphism classes of the maximal finite subgroups of $B_{n}(\St)$ are comprised of cyclic, dicyclic (or generalised quaternion) and binary polyhedral groups. In this paper, we study the infinite virtually cyclic groups of $B_{n}(\St)$, which are in some sense, its `simplest' infinite subgroups. As well as helping to understand the structure of the group $B_{n}(\St)$, the knowledge of its virtually cyclic subgroups is a vital step in the calculation of the lower algebraic $K$-theory of the group ring of $B_{n}(\St)$ over $\Z$, via the Farrell-Jones fibred isomorphism conjecture~\cite{GJM}.}

\emph{The main result of this manuscript is to classify, with a finite number of exceptions and up to isomorphism, the virtually cyclic subgroups of $B_{n}(\St)$. As corollaries, we obtain the complete classification of the virtually cyclic subgroups of $B_{n}(\St)$ when $n$ is either odd, or is even and sufficiently large. 
Using the close relationship between $B_{n}(\St)$ and the mapping class group $\mcg$ of the $n$-punctured sphere, another consequence is the classification (with a finite number of exceptions) of the isomorphism classes of the virtually cyclic subgroups of $\mcg$.}

\emph{The proof of the main theorem is divided into two parts: the reduction of a list of possible candidates for the virtually cyclic subgroups of $B_{n}(\St)$ obtained using a general result due to Epstein and Wall to an almost optimal family $\mathbb{V}(n)$ of virtually cyclic groups; and the realisation of all but a finite number of  elements of $\mathbb{V}(n)$. The first part makes use of a number of techniques, notably the study of the periodicity and the outer automorphism groups of the finite subgroups of $B_{n}(\St)$, and the analysis of the conjugacy classes of the finite order elements of $B_{n}(\St)$. In the second part, we construct subgroups of $B_{n}(\St)$ isomorphic to the elements of $\mathbb{V}(n)$ using mainly an algebraic point of view that is strongly inspired by geometric observations, as well as explicit geometric constructions in $\mcg$ which we translate to $B_{n}(\St)$.}

\emph{In order to classify the isomorphism classes of the virtually cyclic subgroups of $B_{n}(\St)$, we obtain a number of results that we believe are interesting in their own right, notably the characterisation of the centralisers and normalisers of the maximal cyclic and dicyclic subgroups of $B_{n}(\St)$, a generalisation to $B_{n}(\St)$ of a result due to Hodgkin for the mapping class group of the punctured sphere concerning conjugate powers of torsion elements, the study of the isomorphism classes of those virtually cyclic groups of $B_{n}(\St)$ that appear as amalgamated products, as well as an alternative proof of a result due to~\cite{BCP,FZ} that the universal covering of the $n\up{th}$ configuration space of $\St$, $n\geq 3$, has the homotopy type of $\St[3]$.}
\end{abstract}

\begingroup%Localizing the change to `the footnote'.
\renewcommand{\thefootnote}{}%Removing the footnote symbol.
\footnotetext{\noindent 2010 AMS Subject Classification: 20F36 (primary), 20E07, 20F50, 55R80, 55Q52 (secondary)\\
Keywords: Sphere braid groups, configuration space, virtually cyclic group, mapping class group}
\endgroup

%20F36 Braid groups; Artin groups
% 20E07 Subgroup theorems; subgroup growth
% 20E06 Free products, free products with amalgamation, Higman-Neumann-Neumann extensions, and generalizations
% 20E28 Maximal subgroups
% 20F50 Periodic groups; locally finite groups
% 55R80 Discriminantal varieties, configuration spaces
%55Q52 Homotopy groups of special spaces
%\keywords{braid groups, configuration space,  exact sequence, lower central
%series, derived series}

\tableofcontents

\mainmatter

%\part{Introduction and statement of the main results}\label{part:intro}
\chapter*{Introduction and statement of the main results}\label{part:intro}
%\addcontentsline{toc}{chapter}{Introduction and statement of the main results}
%\section{Introduction}\label{sec:intro}

The braid groups $B_n$ of the plane were introduced by E.~Artin
in~1925 and further studied in 1947~\cite{A1,A2}. They were later generalised by Fox to braid groups of arbitrary topological spaces via the following definition~\cite{FoN}. Let $M$ be a compact, connected surface, and let $n\in\N$. We denote the set of all ordered $n$-tuples of distinct
points of $M$, known as the \emph{$n\th$ configuration space of $M$},
by:
\begin{equation*}
F_n(M)=\setr{(p_1,\ldots,p_n)}{\text{$p_i\in M$ and $p_i\neq p_j$ if $i\neq j$}}.
\end{equation*}
Configuration spaces play an important r\^ole in several branches of
mathematics and have been extensively studied, see~\cite{Bi,CG,FH,H} for example.

The symmetric group $\sn$ on $n$ letters acts freely on $F_n(M)$ by
permuting coordinates. The corresponding quotient space $F_n(M)/\sn$ will be denoted by
$D_n(M)$. The \emph{$n\th$ pure braid group $P_n(M)$} (respectively
the \emph{$n\th$ braid group $B_n(M)$}) is defined to be the
fundamental group of $F_n(M)$ (respectively of $D_n(M)$). 

Together with the real projective plane $\rp$, the braid groups of the
$2$-sphere $\St$ are of particular interest, notably because they have
non-trivial centre~\cite{GVB,GG2}, and torsion elements~\cite{vB,M}.
Indeed, Fadell and Van Buskirk showed that among the braid groups of compact, connected surfaces, $B_n(\St)$ and $B_n(\rp)$ are the only ones to have torsion~\cite{FvB,vB}. Let us recall briefly some of the properties
of $B_n(\St)$~\cite{FvB,GVB,vB}.

If $\dt\subseteq \St$ is a topological disc, there is a 
homomorphism $\map {\iota}{B_n}[B_n(\St)]$ induced by the inclusion.
If $\beta\in B_n$ then we shall denote its image $\iota(\beta)$ simply
by $\beta$. Then $B_n(\St)$ is generated by
$\sigma_1,\ldots,\sigma_{n-1}$ which are subject to the following
relations:
%\begin{equation*}\label{eq:presnbns}
\begin{gather}
%\begin{gathered}
\text{$\sigma_{i}\sigma_{j}=\sigma_{j}\sigma_{i}$ if $\lvert i-j\rvert\geq 2$
and $1\leq i,j\leq n-1$}\label{eq:Artin1}\\
\text{$\sigma_{i}\sigma_{i+1}\sigma_{i}=\sigma_{i+1}\sigma_{i}\sigma_{i+1}$ for
all $1\leq i\leq n-2$, and}\label{eq:Artin2}\\
\text{$\sigma_1\cdots \sigma_{n-2}\sigma_{n-1}^2 \sigma_{n-2}\cdots \sigma_1=1
$.}\label{eq:surface}
\end{gather}
%\end{gathered}
%\end{equation*}
Consequently, $B_n(\St)$ is a quotient of $B_n$. The first three sphere braid
groups are finite: $B_1(\St)$ is trivial, $B_2(\St)$ is cyclic of order~$2$,
and $B_3(\St)$ is a $\text{ZS}$-metacyclic group (a group whose Sylow
subgroups, commutator subgroup and commutator quotient group are all cyclic) of order~$12$, isomorphic to the semi-direct product $\Z_3 \rtimes \Z_4$ of cyclic groups, the action being the non-trivial one. For $n\geq 4$, $B_n(\St)$ is infinite. The Abelianisation of $B_n(\St)$ is isomorphic to the cyclic group $\Z_{2(n-1)}$. The kernel of the associated projection 
\begin{equation}\label{eq:abdefine}
\left\{ \begin{aligned}
\xi\colon\thinspace B_n(\St) &\to \Z_{2(n-1)}\\
\sigma_i &\mapsto \overline{1} \quad\text{for all $1\leq i\leq n-1$}
\end{aligned}\right.
\end{equation}
is the commutator subgroup
$\Gamma_2\left(B_n(\St) \right)$. If $w\in B_n(\St)$ then $\xi(w)$ is the
exponent sum (relative to the $\sigma_i$) of $w$ modulo $2(n-1)$. Further, we have a natural short exact sequence 
\begin{equation}\label{eq:defperm}
1 \to P_{n}(\St) \to B_{n}(\St) \stackrel{\pi}{\to} \sn \to 1,
\end{equation}
$\pi$ being the homomorphism that sends $\sigma_{i}$ to the transposition $(i,i+1)$.

Gillette and Van Buskirk showed that if $n\geq 3$ and $k\in \N$ then
$B_n(\St)$ has an element of order $k$ if and only if $k$ divides one
of $2n$, $2(n-1)$ or $2(n-2)$~\cite{GVB}. The torsion elements of
$B_n(\St)$ and $B_n(\rp)$ were later characterised by
Murasugi:
\begin{thm}[Murasugi~\cite{M}]\label{th:murasugi}
Let $n\geq 3$. Then up to conjugacy, the torsion elements of $B_n(\St)$ are precisely the powers of
the following three elements:
\begin{enumerate}[(a)]
\item $\alpha_0= \sigma_1\cdots \sigma_{n-2}
\sigma_{n-1}$ (of order $2n$).
\item $\alpha_1=\sigma_1\cdots
\sigma_{n-2} \sigma_{n-1}^2$ (of order $2(n-1)$).
\item $\alpha_2=\sigma_1\cdots \sigma_{n-3} \sigma_{n-2}^2$ (of order
$2(n-2)$).
\end{enumerate}
\end{thm}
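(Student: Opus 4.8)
The plan is to exploit the central extension linking $B_n(\St)$ to the mapping class group $\mcg$ of the $n$-punctured sphere. Recall that for $n\geq 3$ the centre of $B_n(\St)$ is $\ang{\ft}\cong\Z_2$, where $\ft=(\sigma_1\cdots\sigma_{n-1})^n$ is the full twist, and that quotienting by the centre yields a central extension $1\to\Z_2\to B_n(\St)\stackrel{q}{\to}\mcg\to 1$ with $\ker{q}=\ang{\ft}$. A direct computation with the relations~\reqref{Artin1}--\reqref{surface}, using $(\sigma_1\cdots\sigma_{n-1})^n=\ft$, gives $\alpha_0^n=\alpha_1^{n-1}=\alpha_2^{n-2}=\ft$. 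Since the images $\pi(\alpha_0)$, $\pi(\alpha_1)$, $\pi(\alpha_2)$ are respectively an $n$-, an $(n-1)$- and an $(n-2)$-cycle of $\sn$, while $\ft$ is the nontrivial central involution, it follows that $\alpha_0$, $\alpha_1$ and $\alpha_2$ have orders exactly $2n$, $2(n-1)$ and $2(n-2)$; crucially, each cyclic group $\ang{\alpha_j}$ then contains $\ft$, which is what will make the final lifting step painless.

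Now let $g\in B_n(\St)$ be a torsion element and put $\overline{g}=q(g)$. If $\overline{g}=1$ then $g\in\ang{\ft}\subseteq\ang{\alpha_0}$ and we are done, so assume $\overline{g}\neq 1$. By Nielsen realisation for finite cyclic groups, $\overline{g}$ is represented by a finite-order orientation-preserving homeomorphism of the sphere permuting the $n$ marked points, and by the theorem of Ker\'ekj\'art\'o this homeomorphism is conjugate to a rotation $r$ of order $m=\ord{\overline{g}}$. A nontrivial rotation of $\St$ fixes exactly its two poles and moves every other point in a free orbit of length $m$; preserving the marked set therefore forces some number $j\in\brak{0,1,2}$ of marked points to sit at the poles, the remaining $n-j$ splitting into $(n-j)/m$ free orbits. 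In particular $m\mid n-j$, and $\pi(g)\in\sn$ has exactly $j$ fixed points together with $(n-j)/m$ cycles of length $m$.

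The geometric heart of the argument, and the step I expect to be the main obstacle, is the classification of these rotations up to conjugacy. The group $\ang{r}\cong\Z_m$ acts on $\St$ with two fixed points and with the $n-j$ non-polar marked points in $(n-j)/m$ free orbits. I would show that, for each fixed $j$, any two such actions are equivariantly conjugate by an orientation-preserving diffeomorphism of the marked sphere, the quotient being a sphere carrying a fixed number of cone points and marked points. Since $\overline{\alpha}_0$, $\overline{\alpha}_1$, $\overline{\alpha}_2$ are the rotations of orders $n$, $n-1$, $n-2$ realising the cases $j=0,1,2$ with a single free orbit, the order-$m$ subgroup of $\ang{\overline{\alpha}_j}$ has exactly the same polar-and-orbit data as $\ang{r}$. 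Hence $\ang{r}$ is conjugate into $\ang{\overline{\alpha}_j}$, so that its generator $r$, and therefore $\overline{g}$, is conjugate in $\mcg$ to a power of $\overline{\alpha}_j$.

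It remains to lift this conjugacy through the central extension, which is now immediate. Writing $\overline{g}=\overline{h}\,\overline{\alpha}_j^{\,k}\,\overline{h}^{-1}$ in $\mcg$ and choosing any lift $h\in B_n(\St)$ of $\overline{h}$, the elements $h\alpha_j^{\,k}h^{-1}$ and $g$ share the image $\overline{g}$ and so differ by an element of $\ker{q}=\ang{\ft}$; thus $g=h\alpha_j^{\,k}h^{-1}$ or $g=h\alpha_j^{\,k}h^{-1}\ft$. As $\ft=\alpha_j^{\,m_j}$ with $m_j\in\brak{n,n-1,n-2}$ lies in $\ang{\alpha_j}$ and is central, in either case $g$ is conjugate to $\alpha_j^{\,k}$ or to $\alpha_j^{\,k+m_j}$, that is, to a power of $\alpha_j$. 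This shows that every torsion element of $B_n(\St)$ is conjugate to a power of $\alpha_0$, $\alpha_1$ or $\alpha_2$, as claimed.
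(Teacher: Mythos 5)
The first thing to note is that the paper contains no proof of \reth{murasugi} at all: it is quoted from Murasugi~\cite{M}, whose original argument is algebraic, via the structure of Seifert fibre space groups. Your proposal is therefore being compared against Murasugi's proof rather than an in-paper one, and it is a correct but genuinely different route: the geometric one through the mapping class group. In fact it is precisely the circle of ideas this paper uses for its \emph{own} related results — \repr{hodgkin2} is stated in terms of rotations of $\St$, and the proofs of Propositions~\ref{prop:luis} and~\ref{prop:genhodgkin2} push conjugacy information through the central extension~\reqref{mcg} exactly as in your final paragraph, using $\ft=\alpha_{j}^{n-j}$ (the paper's \req{uniqueorder2}) to absorb the $\Z_{2}$-ambiguity. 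What your route buys is transparency: torsion in $\mcg$ becomes rotations, rotations are classified by their pole-and-orbit data, and the lift to $B_{n}(\St)$ is painless because $\ft\in\ang{\alpha_{j}}$. What it costs is two nontrivial classical inputs that Murasugi's argument avoids, namely Nielsen realisation for finite cyclic subgroups of $\mcg$ and Ker\'ekj\'art\'o's theorem. Two steps in your sketch need completing before this is a proof. First, the equivariant classification of marked rotations with the same orbit data: to lift an orientation-preserving homeomorphism of the quotient sphere to an equivariant homeomorphism upstairs, you must match the monodromies of the two branched $\Z_{m}$-covers; since a cyclic cover of $\St$ branched over two points has branch data $(\zeta,\zeta^{-1})$ for a generator $\zeta$ of $\Z_{m}$, this can always be arranged after composing with an automorphism of $\Z_{m}$ and, if necessary, swapping the two cone points — but this lifting criterion is the real content of the step and should be written out. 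Second, a bookkeeping point: Ker\'ekj\'art\'o conjugates your realising homeomorphism to a rotation by a homeomorphism that moves the marked set, so you must fix an orientation-preserving identification of the resulting marked sphere with the standard one (and of $\overline{\alpha}_{j}$ with the class of the standard rotation by $2\pi/(n-j)$) before the phrase ``conjugate in $\mcg$'' in your third paragraph is meaningful; this is routine but not free. With those points filled in, the argument is complete, and your derivation of the orders (the permutation $\pi(\alpha_{j})$ has order $n-j$ while $\alpha_{j}^{n-j}=\ft\neq 1$, forcing $\ord{\alpha_{j}}=2(n-j)$) is sound.
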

So the maximal finite cyclic subgroups of $B_n(\St)$ are isomorphic to $\Z_{2n}$, $\Z_{2(n-1)}$ or $\Z_{2(n-2)}$. In~\cite{GG4}, we showed that $B_n(\St)$ is generated by $\alpha_0$ and $\alpha_1$. Let $\ft= (\sigma_1\cdots\sigma_{n-1})^n$ denote the so-called `full twist' braid of $B_n(\St)$. If $n\geq 3$, $\ft[n]$ is the unique element of $B_n(\St)$ of order $2$, and it generates the centre of $B_n(\St)$. It is also the square of the `half twist' element defined by:
\begin{equation}\label{eq:defgarside}
\garside = (\sigma_1 \cdots \sigma_{n-1}) (\sigma_1 \cdots \sigma_{n-2}) \cdots
(\sigma_1 \sigma_2)\sigma_1.
\end{equation}
It is well known that:
\begin{equation}\label{eq:garsideconj}
\garside \sigma_{i} \garside^{-1}= \sigma_{n-i} \quad \text{for all $i=1,\ldots,n-1$.}
\end{equation}
The uniqueness of the element of order $2$ in $B_{n}(\St)$ implies that the three elements $\alpha_0$, $\alpha_1$ and $\alpha_2$ are respectively $n\th$, $(n-1)\th$ and $(n-2)\th$ roots of $\ft$, and this yields the useful relation:
\begin{equation}\label{eq:uniqueorder2}
\ft=\alpha_{i}^{n-i} \quad \text{for all $i\in\brak{0,1,2}$.}
\end{equation}
In what follows, if $m\geq 2$, $\dic{4m}$ will denote the \emph{dicyclic group} of order $4m$. It admits a presentation of the form 
\begin{equation}\label{eq:presdic}
\setangr{x,y}{x^m=y^2,\; yxy^{-1}=x^{-1}}.
\end{equation}
If in addition $m$ is a power of $2$ then we will also refer to the dicyclic group of order $4m$ as the \emph{generalised quaternion group} of order $4m$, and denote it by $\quat[4m]$. For example, if $m=2$ then we obtain the usual quaternion group $\quat$ of order $8$. Further, $\tonestar$ (resp.\ $\oonestar$, $\istar$) will denote the \emph{binary tetrahedral group} of order $24$ (resp.\ the \emph{binary octahedral group} of order $48$, the \emph{binary icosahedral group} of order $120$). We will refer collectively to $\tonestar,\oonestar$ and $\istar$ as the \emph{binary polyhedral groups}. More details on these groups may be found in~\cite{AM,Co,CM,Wo}, as well as in \resecglobal{generalities}{autout} and the Appendix. 

In order to understand better the structure of $B_{n}(\St)$, one may study (up to isomorphism) the finite subgroups of $B_{n}(\St)$. From \reth{murasugi}, it is clear that the finite cyclic subgroups of $B_{n}(\St)$ are isomorphic to the subgroups of $\Z_{2(n-i)}$, where $i\in \brak{0,1,2}$. Motivated by a question of the realisation of $\quat$ as a subgroup of $B_n(\St)$ of R.~Brown~\cite{DD} in connection with the Dirac string trick~\cite{Fa,Ne}, as well as the study of the case $n=4$ by J.~G.~Thompson~\cite{Thp}, we obtained partial results on the classification of the isomorphism classes of the finite subgroups of $B_{n}(\St)$ in~\cite{GG6,GG5}. The complete classification was given in~\cite{GG7}:
\begin{thm}[\cite{GG7}]\label{th:finitebn}
Let $n\geq 3$. Up to isomorphism, the maximal finite subgroups of $B_n(\St)$ are:
\begin{enumerate}[(a)]
\item\label{it:fina} $\Z_{2(n-1)}$ if $n\geq 5$.
\item\label{it:finb} $\dic{4n}$.
\item\label{it:finc} $\dic{4(n-2)}$ if $n=5$ or $n\geq 7$.
\item\label{it:find} $\tonestar$ if $n\equiv 4 \bmod 6$.
\item\label{it:fine} $\oonestar$ if $n\equiv 0,2\bmod 6$.
\item\label{it:finf} $\istar$ if $n\equiv 0,2,12,20\bmod 30$.
\end{enumerate}
\end{thm}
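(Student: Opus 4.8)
The plan is to exploit the fact, recalled above, that for $n\geq 3$ the full twist $\ft$ is the \emph{unique} element of order~$2$ in $B_n(\St)$ and generates its centre. Let $G\leq B_n(\St)$ be a finite subgroup. If $\ord{G}$ is even then by Cauchy's theorem $G$ contains an involution, which must be $\ft$; hence $\ft\in Z(G)$ and $\overline G:=G/\ang{\ft}$ is a finite subgroup of the quotient $B_n(\St)/\ang{\ft}$. Using the close relationship between $B_n(\St)$ and the mapping class group, namely the central extension $1\to\ang{\ft}\to B_n(\St)\to\mcg\to1$, this exhibits $\overline G$ as a finite subgroup of $\mcg$. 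I would reinforce this with the cohomological constraint coming from the fact (reproved in this paper) that the universal covering of the configuration space has the homotopy type of $\St[3]$: since $B_n(\St)$ then acts freely on a homotopy $3$-sphere, every finite subgroup $G$ has periodic cohomology of period dividing~$4$, which already restricts $G$ to the groups admitting free, linear-type actions on $\St[3]$.

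Next I would identify $\overline G$ explicitly. Finite subgroups of the mapping class group of the punctured sphere are realised by finite groups of orientation-preserving diffeomorphisms of $\St$ preserving the $n$ marked points, and every such finite group is conjugate to a subgroup of $SO(3)$ (the classical linearisation of finite actions on $\St$). Hence $\overline G$ is one of $\Z_m$, $\dih{m}$, $\an[4]$, $\sn[4]$ or $\an[5]$. Lifting back through the central extension, the requirement that $G$ possess a \emph{unique} involution (forced by the uniqueness of $\ft$) singles out the corresponding binary group with a single element of order~$2$: respectively $\Z_{2m}$, $\dic{4m}$, $\tonestar$, $\oonestar$ and $\istar$. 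Together with the odd-order case (necessarily cyclic, hence contained in a larger even-order subgroup), this produces exactly the list of isomorphism types appearing in (a)--(f).

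It then remains to determine, for each candidate, the values of $n$ for which it is realised and is \emph{maximal}. The key device is orbit counting: since the $n$ marked points form a disjoint union of $\overline G$-orbits on $\St$, and the orbit sizes of the subgroups of $SO(3)$ are known ($2,m,2m$ for $\dih{m}$; $4,6,12$ for $\an[4]$; $6,8,12,24$ for $\sn[4]$; $12,20,30,60$ for $\an[5]$), writing $n$ as an admissible sum of orbit sizes yields the congruences. One finds that $\sn[4]$ occurs iff $n\equiv 0,2\bmod 6$, that $\an[5]$ occurs iff $n\equiv 0,2,12,20\bmod 30$, and that $\an[4]$ occurs iff $n\equiv 0,2,4\bmod 6$. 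Feeding in the containments $\an[4]\subset\sn[4]$ and $\an[4]\subset\an[5]$, together with the maximality of $\sn[4]$ and $\an[5]$ in $SO(3)$, gives (e) and (f) at once, and shows that $\tonestar$ is maximal precisely when $\an[4]$ acts but neither $\sn[4]$ nor $\an[5]$ does, i.e.\ for $n\equiv 4\bmod 6$, which is (d). For the dicyclic groups, a regular $n$-gon on the equator realises $\dic{4n}$ for every $n$, giving (b), while two polar points together with an equatorial $(n-2)$-gon realise $\dic{4(n-2)}$; the exceptions $n=4,6$ in (c) arise because there the Sylow $2$-subgroup $\quat$, resp.\ $\quat[16]$, of $\dic{4(n-2)}$ is already contained in the polyhedral group ($\tonestar$, resp.\ $\oonestar$) occurring at that value of $n$, so $\dic{4(n-2)}$ fails to be maximal. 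Finally $\Z_{2(n-1)}=\ang{\alpha_1}$ corresponds to the asymmetric configuration of one marked pole, one \emph{unmarked} pole and an $(n-1)$-gon; because the two poles are inequivalent, no order-$2$ rotation preserves the marked set, so this cyclic group admits no dicyclic overgroup, and for $n\geq 5$ (whence $n-1\geq 4$) it is too large to embed in a binary polyhedral group, yielding (a). By contrast $\Z_{2n}=\ang{\alpha_0}\subseteq\dic{4n}$ and $\Z_{2(n-2)}=\ang{\alpha_2}$ always lie inside a larger subgroup, which is why neither gives a maximal one.

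The main obstacle I expect to be the realisation and obstruction arguments, that is, proving that the orbit-counting congruences are not merely necessary but sufficient, and conversely that no isomorphism type outside the list can occur. Concretely, the delicate points are: lifting the linear $SO(3)$-actions to \emph{explicit} elements of $B_n(\St)$ and checking that they have the orders predicted by \reth{murasugi} and the Gillette--Van Buskirk divisibility condition, so that the correct central extension (the one with a unique involution) is the one realised; excluding the metacyclic period-$4$ groups that satisfy the free-action constraint but whose involution structure is wrong; and treating by hand the borderline small values of $n$ (notably the exceptions $n=4,6$ in (c) and $n=4$ in (a)), where the various containments among the finite subgroups collapse and maximality must be verified directly.
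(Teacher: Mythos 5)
The paper does not prove this theorem: it is imported from~\cite{GG7}, and the only indication given here of its proof is that the key tool is the central extension~\reqref{mcg} relating $B_{n}(\St)$ to $\mcg$. Your proposal follows exactly that route, and indeed the route taken in~\cite{GG7}: quotient by the unique central involution $\ft$, identify the image with a finite group of rotations of the marked sphere (hence cyclic, dihedral, $\an[4]$, $\sn[4]$ or $\an[5]$), lift back through the extension using the unique-involution constraint to single out $\Z_{2m}$, $\dic{4m}$, $\tonestar$, $\oonestar$, $\istar$, and derive the congruences by decomposing the $n$ marked points into orbits, whose possible sizes you record correctly. So the strategy is the right one.

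Two steps need repair, however. First, your justification of~(\ref{it:fina}) is false as stated at the borderline values: for $n=5$ the group $\Z_{2(n-1)}=\Z_{8}$ \emph{does} embed in a binary polyhedral group ($\Z_{8}\leq \oonestar$), and for $n=6$ one has $\Z_{10}\leq \istar$. What makes these cyclic groups maximal is not their size but the congruences: $\oonestar$ requires $n\equiv 0,2\bmod 6$ and $\istar$ requires $n\equiv 0,2,12,20 \bmod 30$, so the relevant overgroups simply do not occur in $B_{5}(\St)$ or $B_{6}(\St)$. The same caveat applies to~(\ref{it:finb}), which you assert without any maximality argument: $\dic{12}$, $\quat[16]$ and $\dic{20}$ are abstractly subgroups of $\oonestar$ or $\istar$, so the maximality of $\dic{4n}$ for $n=3,4,5$ again rests on the congruence conditions rather than on size. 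Second, to exclude a group from the list of maximal subgroups --- $\tonestar$ for $n\equiv 0,2\bmod 6$, or $\dic{4(n-2)}$ for $n\in\brak{4,6}$ --- you must show that \emph{every} copy of it in $B_{n}(\St)$ lies in a strictly larger finite subgroup; abstract containment $\an[4]\subset \sn[4]$ together with the existence of \emph{some} copy of $\oonestar$ does not suffice, since an $\an[4]$-invariant configuration need not a priori be (conjugate to) one extending to an $\sn[4]$-action. This is a conjugacy-class statement (every copy of $\tonestar$ is conjugate to one contained in a copy of $\oonestar$, and similarly for the small dicyclic groups), and it is precisely what the conjugacy classification of~\cite[Proposition~1.5]{GG7} supplies; you flag this in your final paragraph as an expected obstacle, but it is a genuine missing ingredient rather than a routine verification.
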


\begin{rems}\label{rem:finitesub}\mbox{}
\begin{enumerate}[(a)]
\item\label{it:finitesuba} By studying the subgroups of dicyclic and binary polyhedral groups, it is not difficult to show that any finite subgroup of $B_{n}(\St)$ is cyclic, dicyclic or binary polyhedral (see \repr{maxsubgp}).
\item\label{it:finitesubb} As we showed in~\cite{GG6,GG7}, for $i\in\brak{0,2}$, 
\begin{equation}\label{eq:basicconj}
\garside \alpha_{i}'\garside^{-1}=\alpha_{i}'^{-1}, \quad\text{where $\alpha_{i}'=\alpha_{0}\alpha_{i}\alpha_{0}^{-1}= \alpha_{0}^{i/2}\alpha_{i}\alpha_{0}^{-i/2}$,}
\end{equation}
and the dicyclic group of order $4(n-i)$ is realised in terms of the generators of $B_{n}(\St)$ by:
\begin{equation*}
\ang{\alpha_{i}', \garside},
\end{equation*}
which we shall refer to hereafter as the \emph{standard copy} of $\dic{4(n-i)}$ in $B_{n}(\St)$.
\end{enumerate}
\end{rems}

A key tool in the proof of \reth{finitebn} is the strong relationship due to Magnus of $B_{n}(\St)$ with the mapping class group $\mcg$ of the $n$-punctured sphere, $n\geq 3$, given by the short exact sequence~\cite{FM,MKS}:
\begin{equation}\label{eq:mcg}
1\to \ang{\ft} \to B_{n}(\St)\stackrel{\phi}{\to} \mcg \to 1. 
\end{equation}
As we shall see, it will also play an important rôle in various parts of this paper, notably in the study of the centralisers and conjugacy classes of the finite order elements in \repart{generalities}, as well as in some of the constructions in \repart{realisation}. There is a short exact sequence for the mapping class group analogous to \req{defperm}; the kernel of the homomorphism $\mcg\to \sn$ is the pure mapping class group $\pmcg$, which may also be seen as the image of $P_{n}(\St)$ under $\phi$. In particular, since for $n\geq 4$, $P_{n}(\St)\cong P_{n-3}(\St\setminus \brak{x_{1},x_{2},x_{3}})\times \Z_{2}$~\cite{GG2}, where the second factor is identified with $\ang{\ft}$, it follows from the restriction of \req{mcg} to $P_{n}(\St)$ that $\pmcg\cong P_{n-3}(\St\setminus \brak{x_{1},x_{2},x_{3}})$, in particular $\pmcg$ is torsion free for all $n\geq 4$.

In this paper, we go a stage further by classifying (up to isomorphism) the virtually cyclic subgroups of $B_{n}(\St)$. Recall that a group is said to be \emph{virtually cyclic} if it contains a cyclic subgroup of finite index (see also \resecglobal{generalities}{generalities}). It is clear from the definition that any finite subgroup is virtually cyclic, so in view of \reth{finitebn}, it suffices to concentrate on the \emph{infinite} virtually cyclic subgroups of $B_{n}(\St)$, which are in some sense its `simplest' infinite subgroups.
The classification of the virtually cyclic subgroups of $B_{n}(\St)$ is an interesting problem in its own right. As well as helping us to understand better the structure of these braid groups, the results of this paper give rise to some $K$-theoretical applications. We remark that our work was partially motivated by a question of S.~Mill\'an-L\'opez and S.~Prassidis concerning the calculation of the algebraic $K$-theory of the braid groups of $\St$ and $\rp$. It was shown recently that the full and pure braid groups of these two surfaces satisfy the Fibred Isomorphism Conjecture of F.~T.~Farrell and L.~E.~Jones~\cite{BJL,JPM1,JPM2}. This implies that the algebraic $K$-theory groups of their group rings (over $\Z$) may be computed by means of the algebraic $K$-theory groups of their virtually cyclic subgroups via the so-called `assembly maps'. More information on these topics may be found in~\cite{BLR, FJ,JP}. The main theorem of this paper, \reth{main}, is currently being applied to the calculation of the lower algebraic $K$-theory of $\Z[B_{n}(\St)]$~\cite{GJM}, which generalises results of the thesis of Mill\'an-L\'opez~\cite{JPM3,ML} where she calculated the lower algebraic $K$-theory of the group rings of $P_n(\St)$ and $P_n(\rp)$ using our classification of the virtually cyclic subgroups of $P_{n}(\rp)$~\cite{GG8}. This application to $K$-theory thus provides us with additional reasons to find the virtually cyclic subgroups of $B_{n}(\St)$.

%It is natural to ask which finite groups are realised as
%subgroups of $B_n(\rp)$. As in the case of $B_{n}(\St)$, one common property of such subgroups is
%that they are finite periodic groups of cohomological period $2$ or $4$. Indeed, by Proposition~6 
%of~\cite{GG2}, for all $n\geq 2$, the universal covering $X$ of $F_n(\rp)$ is a finite-dimensional complex which has the
%homotopy type of $\St[3]$. Thus any finite subgroup of $B_n(\rp)$ acts freely on $X$, and so has
%period $2$ or $4$ by Proposition~10.2, Section~10, Chapter~VII of~\cite{Br}. Since $\ft$ is the
%unique element of order~$2$ of $B_n(\rp)$, and it generates the centre $Z(B_n(\rp))$, the Milnor
%property must be satisfied for any finite subgroup of $B_n(\rp)$. 

%The main aim of this paper is to classify, up to isomorphism, the infinite virtually cyclic
%subgroups of $B_n(\St))$. Apart from helping us to understand better the structure of $B_{n}(\St)$, supplementary motivation for studying this problem is given by the fact that our results are currently being applied to calculate the algebraic $K$-theory of the group ring $\Z[B_{n}(\St)]$. It has been shown recently that the pure and full braid groups of $\St$ and $\rp$ satisfy the Fibred Isomorphism Conjecture of T.~Farrell and L.~Jones. This implies that the algebraic $K$-theory of their group rings over $\Z$ may be computed using the knowledge of the algebraic $K$-theory groups of their virtually cyclic subgroups via the so-called `assembly maps'.   More on this may be found in \cite{FJ} and Luck ....???. 

As we observed previously, if $n\leq 3$ then $B_{n}(\St)$ is a known finite group, and so we shall suppose in this paper that $n\geq 4$. Our main result is \reth{main}, which yields the complete classification of the infinite virtually cyclic subgroups of $B_{n}(\St)$, with a small number of exceptions, that we indicate below in \rerem{exceptions}. Recall that by results of Epstein and Wall~\cite{Ep,W} (see also \reth{wall} in \resecglobal{generalities}{generalities}), any infinite virtually cyclic group $G$ is isomorphic to $F\rtimes \Z$ or $G_{1}\bigast_{F} G_{2}$, where $F$ is finite and $[G_{i}:F]=2$ for $i\in\brak{1,2}$ (we shall say that $G$ is of \textit{Type~I} or \textit{Type~II} respectively). Before stating \reth{main}, we define two families of virtually cyclic groups. If $G$ is a group, let $\aut{G}$ (resp.\ $\out{G}$) denote the group of its automorphisms (resp.\ outer automorphisms).

\begin{defn}\label{def:v1v2}
Let $n\geq 4$.
\begin{enumerate}[(1)]
\item\label{it:mainIdef} Let $\mathbb{V}_{1}(n)$ be the union of the following Type~I virtually cyclic groups:
\begin{enumerate}[(a)]
\item\label{it:mainzq} $\Z_{q}\times \Z$, where $q$ is a strict divisor of $2(n-i)$, $i\in \brak{0,1,2}$, and $q\neq n-i$ if $n-i$ is odd.
\item\label{it:mainzqt} $\Z_{q}\rtimes_{\rho} \Z$, where $q\geq 3$ is a strict divisor of $2(n-i)$, $i\in \brak{0,2}$, $q\neq n-i$ if $n$ is odd, and $\rho(1)\in \aut{\Z_{q}}$ is multiplication by $-1$.
\item\label{it:maindic} $\dic{4m}\times \Z$, where $m\geq 3$ is a strict divisor of $n-i$ and $i\in \brak{0,2}$.
\item\label{it:maindict} $\dic{4m}\rtimes_{\nu} \Z$, where $m\geq 3$ divides $n-i$, $i\in \brak{0,2}$, $(n-i)/m$ is even, and where $\nu(1)\in \aut{\dic{4m}}$ is defined by:
\begin{equation}\label{eq:actdic4m}
\left\{
\begin{aligned}
\nu(1)(x)&=x\\
\nu(1)(y)&=xy
\end{aligned}\right.
\end{equation}
for the presentation~\reqref{presdic} of $\dic{4m}$. 

\item\label{it:mainq8} $\quat\rtimes_{\theta} \Z$, for $n$ even and $\theta\in \operatorname{Hom}(\Z,\aut{\quat})$, for the following actions:
\begin{enumerate}[(i)]
\item $\theta(1)=\id$.
\item\label{it:mainIcii} $\theta=\alpha$, where $\alpha(1)\in \aut{\quat}$ is given by $\alpha(1)(i)=j$ and $\alpha(1)(j)=k$, where $\quat=\brak{\pm 1, \pm i, \pm j, \pm k}$.
\item\label{it:mainIciii} $\theta=\beta$, where $\beta(1)\in \aut{\quat}$ is given by $\beta(1)(i)=k$ and $\beta(1)(j)=j^{-1}$.
\end{enumerate}

\item\label{it:maint} $\tonestar \times \Z$ for $n$ even.
\item\label{it:maing} $\tonestar \rtimes_{\omega} \Z$ for $n\equiv 0,2 \bmod 6$, where $\omega(1)\in \aut{\tonestar}$ is the automorphism defined as follows. Let $\tonestar$ be given by the presentation~\cite[p.~198]{Wo}:
\begin{equation}\label{eq:preststar}
\setangr{P,Q,X}{X^3=1,\, P^2=Q^2,\, PQP^{-1}=Q^{- 1},\, XPX^{-1}=Q, \, XQX^{-1}=PQ},
\end{equation}
and let $\omega(1)\in \aut{\tonestar}$ be defined by
\begin{equation}\label{eq:nontrivacttstar}
\left\{
\begin{aligned}
P &\mapsto QP\\
Q &\mapsto Q^{-1}\\
X &\mapsto X^{-1}.
\end{aligned}\right.
\end{equation}
More details concerning this automorphism will be given in \resecglobal{generalities}{autout}. 

\item\label{it:maino} $\oonestar \times \Z$ for $n\equiv 0,2 \bmod 6$.
\item\label{it:maini} $\istar \times \Z$ for $n\equiv 0,2,12,20 \bmod{30}$.
\end{enumerate}
\item\label{it:mainIIdef} Let $\mathbb{V}_{2}(n)$ be the union of the following Type~II virtually cyclic groups:
\begin{enumerate}[(a)]
\item\label{it:mainIIa} $\Z_{4q}\bigast_{\Z_{2q}} \Z_{4q}$, where $q$ divides $(n-i)/2$ for some $i\in\brak{0,1,2}$.

\item\label{it:mainIIb} $\Z_{4q}\bigast_{\Z_{2q}} \dic{4q}$, where $q\geq 2$ divides $(n-i)/2$ for some $i\in\brak{0,2}$.

\item\label{it:mainIIc} $\dic{4q}\bigast_{\Z_{2q}} \dic{4q}$, where $q\geq 2$ divides $n-i$ strictly for some $i\in\brak{0,2}$.

\item\label{it:mainIId} $\dic{4q}\bigast_{\dic{2q}} \dic{4q}$, where $q\geq 4$ is even and divides $n-i$ for some $i\in\brak{0,2}$. 

\item\label{it:mainIIe} $\oonestar \bigast_{\tonestar} \oonestar$, where $n\equiv 0,2 \bmod{6}$. 
\end{enumerate}
\end{enumerate}
Finally, let $\mathbb{V}(n)=\mathbb{V}_{1}(n) \bigcup \mathbb{V}_{2}(n)$. Unless indicated to the contrary, in what follows, $\rho, \nu,\alpha,\beta$ and $\omega$ will denote the actions defined in parts~(\ref{it:mainIdef})(\ref{it:mainzqt}), (\ref{it:maindict}), (\ref{it:mainq8})(\ref{it:mainIcii}), (\ref{it:mainq8})(\ref{it:mainIciii}) and (\ref{it:maing}) respectively.
\end{defn}

\pagebreak

The main result of this paper is the following, which classifies (up to a finite number of exceptions), the infinite virtually cyclic subgroups of $B_{n}(\St)$.

\begin{thm}\label{th:main}
Suppose that $n\geq 4$. 
\begin{enumerate}[(1)]
\item\label{it:mainI} 
If $G$ is an infinite virtually cyclic subgroup of $B_{n}(\St)$ then $G$ is isomorphic to an element of $\mathbb{V}(n)$.

\item\label{it:mainII} Conversely, let $G$ be an element of $\mathbb{V}(n)$. Assume that the following conditions hold:
\begin{enumerate}[(a)]
\item\label{it:excepa} if $G\cong \quat \rtimes_{\alpha} \Z$ then $n\notin \brak{6,10,14}$.

\item if $G\cong \tonestar \times \Z$ then $n\notin \brak{4,6,8,10,14}$.

\item if $G\cong \oonestar \times \Z$ or $G\cong \tonestar \rtimes_{\omega} \Z$ then $n\notin \brak{6,8,12,14,18,20,26}$.

\item\label{it:excepd} if $G\cong \istar \times \Z$ then $n\notin \brak{12,20,30,32,42,50,62}$.

\item\label{it:excepe} if $G\cong \oonestar \bigast_{\tonestar} \oonestar$ then $n\notin \brak{6,8,12,14,18,20,24,26,30,32,38}$.
\end{enumerate}
Then there exists a subgroup of $B_{n}(\St)$ isomorphic to $G$.

\item\label{it:mainIII} Let $G$ be isomorphic to $\tonestar\times \Z$ (resp.\ to $\oonestar\times \Z$) if $n=4$ (resp.\ $n=6$). Then $B_{n}(\St)$ has no subgroup isomorphic to $G$.
\end{enumerate}
\end{thm}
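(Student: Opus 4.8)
The plan is to establish the three parts separately, the first two resting on the Epstein--Wall dichotomy (\reth{wall}) recalled before \redef{v1v2}: every infinite virtually cyclic group is of Type~I, $F\rtimes\Z$, or of Type~II, $G_{1}\bigast_{F}G_{2}$ with $[G_{i}:F]=2$ and $F$ finite. In each case the finite pieces are finite subgroups of $B_{n}(\St)$, so by \reth{finitebn} and \rerem{finitesub} they are cyclic, dicyclic or binary polyhedral.

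First I would carry out the reduction of part~(1). Because $\ft$ is the unique element of order~$2$ and generates the centre of $B_{n}(\St)$, every finite subgroup of even order contains $\ft$; combining this with the projection $\phi$ of \req{mcg} and Murasugi's \reth{murasugi} pins down the admissible orders of $F$ (and of $G_{1},G_{2}$) in terms of $2(n-i)$, $i\in\brak{0,1,2}$, which is the origin of the divisibility conditions appearing in \redef{v1v2}. In the Type~I case it then remains to decide which actions occur: as $\aut{F}$ is finite, $\theta$ factors through a finite cyclic quotient of $\Z$, and the relevant datum is the realisability by conjugation in $B_{n}(\St)$ of the induced class in $\out{F}$. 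Here I would use the explicit computation of $\out{F}$ and the periodicity of each finite $F$, together with the characterisation of the centralisers and normalisers of the maximal cyclic and dicyclic subgroups and the generalisation of Hodgkin's result on conjugate powers of torsion elements, to discard the non-occurring actions and leave precisely the automorphisms $\rho,\nu,\alpha,\beta,\omega$ listed in $\mathbb{V}_{1}(n)$. The analogous analysis of the admissible triples $(F,G_{1},G_{2})$ produces $\mathbb{V}_{2}(n)$.

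Next I would prove the realisation in part~(2) by exhibiting an explicit copy of each $G\in\mathbb{V}(n)$. The finite factor is embedded by the standard recipes: cyclic groups as powers of the elements $\alpha_{i}$ of \reth{murasugi}, dicyclic groups as the standard copies $\ang{\alpha_{i}',\garside}$ of \rerem{finitesub}, and the binary polyhedral groups as in \reth{finitebn}. For a Type~I group one then produces an infinite-order element $w$ (typically a suitable pure braid) whose conjugation action on $F$ realises the prescribed $\theta$, and verifies that $\ang{F,w}\cong F\rtimes_{\theta}\Z$; the centraliser/normaliser data guarantee both the existence of $w$ and that the extension does not degenerate. For a Type~II group one glues two finite subgroups along a common index-$2$ subgroup, using $\garside$ together with \req{garsideconj} and \reqref{basicconj}, and, for the amalgams, explicit geometric constructions in $\mcg$ that are pulled back through $\phi$. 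The finitely many excluded values of $n$ are exactly those small cases where the required root, or the required room inside the punctured sphere, is unavailable and the construction collapses.

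Finally, part~(3) is a non-realisation statement for two small cases. A subgroup of $B_{4}(\St)$ isomorphic to $\tonestar\times\Z$ (resp.\ of $B_{6}(\St)$ isomorphic to $\oonestar\times\Z$) would force an infinite-order element to commute with a copy of $\tonestar$ (resp.\ $\oonestar$), so its centraliser would be infinite; I would obtain a contradiction by computing this centraliser, projecting via \req{mcg} to $\mcg$ where the image is a finite group of mapping classes amenable to the same centraliser techniques, and showing that it is finite (the central $\ang{\ft}$ accounting for essentially all that commutes). I expect the main obstacle to be the realisation in part~(2) for the binary polyhedral families --- the semidirect products $\quat\rtimes_{\alpha}\Z$ and $\tonestar\rtimes_{\omega}\Z$ and the amalgam $\oonestar\bigast_{\tonestar}\oonestar$ --- where one must simultaneously embed the finite group and supply an element inducing the prescribed automorphism, and where pinning down the precise threshold in $n$ below which the construction fails (hence the exact exception lists) is the delicate quantitative heart of the argument; the reduction in part~(1), though lengthy, is essentially a finite case analysis once the $\out{F}$ and centraliser data are in place.
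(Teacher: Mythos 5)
Your proposal follows essentially the same route as the paper: part~(\ref{it:mainI}) via the Epstein--Wall dichotomy refined by the $\out{F}$ analysis, the centraliser/normaliser computations, the generalisation of Hodgkin's conjugate-powers result and the periodicity argument (the paper's Propositions~\ref{prop:necV1} and~\ref{prop:necV2}), and part~(\ref{it:mainII}) via algebraic constructions for the cyclic and dicyclic families together with geometric multitwist constructions in $\mcg$ pulled back through $\phi$ (Propositions~\ref{prop:realV1} and~\ref{prop:realV2bis}). For part~(\ref{it:mainIII}), the paper implements your centraliser idea in its sharpest form: since $\tonestar$ (resp.\ $\oonestar$) contains a cyclic subgroup of maximal order $2(n-1)=6$ (resp.\ $2(n-2)=8$) when $n=4$ (resp.\ $n=6$), an infinite-order commuting element would violate \repr{luis}, which says the centraliser of $\ang{\alpha_{i}}$ is $\ang{\alpha_{i}}$ itself --- precisely the finiteness you set out to establish.
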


\begin{rem}\label{rem:exceptions}
Together with \reth{finitebn}, \reth{main} yields a complete classification of the virtually cyclic subgroups of $B_{n}(\St)$ with the exception of a small (finite) number of cases for which the problem of their existence is open. These cases are as follows:
\begin{enumerate}
\item\label{it:exceptions} Type~I subgroups of $B_{n}(\St)$ (see Propositions~\ref{prop:ttimesz} and~\ref{prop:trtimesz}, as well as Remarks~\ref{rem:opentypeI} and~\ref{rem:opentypeIa}):
\begin{enumerate}
\item the realisation of $\quat \rtimes_{\alpha} \Z$ as a subgroup of $B_{n}(\St)$, where $n$ belongs to $\brak{6,10,14}$ and $\alpha(1)\in \aut{\quat}$ is as in \redef{v1v2}(\ref{it:mainIdef})(\ref{it:mainq8})(\ref{it:mainIcii}).

\item the realisation of $\tonestar\times \Z$ as a subgroup of $B_{n}(\St)$, where $n$ belongs to $\brak{6,8,10,14}$.
\item the realisation of $\tonestar\rtimes_{\omega} \Z$ as a subgroup of $B_{n}(\St)$, where the action $\omega$ is given by \redef{v1v2}(\ref{it:mainIdef})(\ref{it:maing}), and $n\in\brak{6,8,12,14,18,20,26}$.
\item the realisation of $\oonestar\times \Z$ as a subgroup of $B_{n}(\St)$, where $n\in\brak{8,12,14,18,20,26}$. 
\item the realisation of $\istar\times \Z$ as a subgroup of $B_{n}(\St)$, where $n\in\brak{12,20,30,32,42,50,62}$.
\end{enumerate}
\item Type~II subgroups of $B_{n}(\St)$ (see \rerem{notostar} and \repr{realV2bis}):
\begin{enumerate}
\item\label{it:exceptionsostar} for $n\in \brak{6,8,12,14,18,20,24,26,30,32,38}$, the realisation of the group $\oonestar \bigast_{\tonestar} \oonestar $ as a subgroup of $B_{n}(\St)$.
\end{enumerate}
\end{enumerate}
%Furthermore, there are a small number of cases for which we know that a given virtually cyclic group is not realised as a subgroup of $B_{n}(\St)$ ($\tonestar\times \Z$ for $n=4$, $\oonestar\times \Z$ for $n=6$, see \repr{ttimesz}(\ref{it:ttimeszd})). With the exception of these cases and the above open cases, our results imply that all other virtually cyclic groups compatible with the hypotheses of \reth{main} are indeed realised as subgroups of $B_{n}(\St)$.
\end{rem}

Since the above open cases occur for even values of $n$, the complete classification of the infinite virtually cyclic subgroups of $B_{n}(\St)$ for all $n\geq 5$ odd is an immediate consequence of \reth{main}.
\begin{thm}\label{th:mainodd}
Let $n\geq 5$ be odd. Then up to isomorphism, the following groups are the infinite virtually cyclic subgroups of $B_{n}(\St)$.

\begin{enumerate}[(I)]
\item \begin{enumerate}[(a)]
\item $\Z_{m} \rtimes_{\theta} \Z$, where $\theta(1)\in\brak{\id, -\id}$, $m$ is a strict divisor of $2(n-i)$, for $i\in \brak{0,2}$, and $m\neq n-i$.
\item $\Z_{m} \times \Z$, where $m$ is a strict divisor of $2(n-1)$.

\item $\dic{4m} \times \Z$, where $m\geq 3$ is a strict divisor of $n-i$ for $i\in \brak{0,2}$.

\end{enumerate}

\item 
\begin{enumerate}[(a)]
\item $\Z_{4q}\bigast_{\Z_{2q}}\Z_{4q}$, where $q$ divides $(n-1)/2$.

\item $\dic{4q}\bigast_{\Z_{2q}}\dic{4q}$, where $q\geq 2$ is a strict divisor of $n-i$, and $i\in \brak{0,2}$

\end{enumerate}
\end{enumerate}

\end{thm}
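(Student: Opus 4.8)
The plan is to obtain \reth{mainodd} as a direct specialisation of \reth{main} to odd $n$, the point being that for such $n$ none of the exceptional cases of \reth{main} can occur and the family $\mathbb{V}(n)$ of \redef{v1v2} collapses to a much shorter list. First I would check that each group singled out in \reth{main}(\ref{it:mainII})(\ref{it:excepa})--(\ref{it:excepe}) and in \reth{main}(\ref{it:mainIII}) --- namely $\quat\rtimes_{\alpha}\Z$, $\tonestar\times\Z$, $\oonestar\times\Z$, $\tonestar\rtimes_{\omega}\Z$, $\istar\times\Z$ and $\oonestar\bigast_{\tonestar}\oonestar$ --- is admitted into $\mathbb{V}(n)$ only under a hypothesis that forces $n$ to be even (``$n$ even'', or a congruence condition modulo $6$ or $30$ whose admissible residues are all even). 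Hence for $n$ odd these finitely many doubtful cases are vacuous, and \reth{main}(\ref{it:mainI}) and~(\ref{it:mainII}) combine to give the clean equivalence: an infinite group $G$ occurs (up to isomorphism) as a virtually cyclic subgroup of $B_{n}(\St)$ if and only if $G$ is isomorphic to an element of $\mathbb{V}(n)$.

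It then remains to describe $\mathbb{V}(n)=\mathbb{V}_{1}(n)\cup\mathbb{V}_{2}(n)$ explicitly for $n$ odd, which I would do by running through the families of \redef{v1v2} one at a time and keeping only those compatible with the parity of $n$. In $\mathbb{V}_{1}(n)$, the families $\quat\rtimes_{\theta}\Z$, $\tonestar\times\Z$, $\tonestar\rtimes_{\omega}\Z$, $\oonestar\times\Z$ and $\istar\times\Z$ are all declared under hypotheses that fail for odd $n$, so they drop out; the family $\dic{4m}\rtimes_{\nu}\Z$ requires $(n-i)/m$ to be even with $i\in\brak{0,2}$, which is impossible since $n-i$, and hence $(n-i)/m$, is odd, so it too is empty. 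The three surviving families then reproduce exactly \reth{mainodd}(I): taking $i=1$ in the family $\Z_{q}\times\Z$ (where $n-1$ is even and the exclusion $q\neq n-i$ is void) yields part~(I)(b); taking $i\in\brak{0,2}$ there (where $n-i$ is odd, so $q\neq n-i$ is imposed) together with the family $\Z_{q}\rtimes_{\rho}\Z$ yields the groups $\Z_{m}\rtimes_{\theta}\Z$ with $\theta(1)\in\brak{\id,-\id}$ of part~(I)(a); and $\dic{4m}\times\Z$ gives part~(I)(c).

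The analogous reduction of $\mathbb{V}_{2}(n)$ is where the only genuine care is needed. The amalgams $\Z_{4q}\bigast_{\Z_{2q}}\dic{4q}$ and $\oonestar\bigast_{\tonestar}\oonestar$ require, respectively, $(n-i)/2\in\Z$ for $i\in\brak{0,2}$ and $n\equiv 0,2\bmod 6$, both of which fail for odd $n$, while $\dic{4q}\bigast_{\dic{2q}}\dic{4q}$ needs an even $q$ dividing the odd integer $n-i$ ($i\in\brak{0,2}$), which is impossible. Of the family $\Z_{4q}\bigast_{\Z_{2q}}\Z_{4q}$ only the choice $i=1$ survives (the others demanding $(n-i)/2\in\Z$ with $n-i$ odd), giving $q\mid(n-1)/2$ as in part~(II)(a), while $\dic{4q}\bigast_{\Z_{2q}}\dic{4q}$ survives unchanged as part~(II)(b). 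I expect no conceptual obstacle: the whole argument is a bookkeeping exercise in parity and divisibility, and the only subtle point is to confirm that the $\theta=\id$ and $\theta=-\id$ cases of part~(I)(a) together absorb \emph{precisely} the surviving members of the families $\Z_{q}\times\Z$ and $\Z_{q}\rtimes_{\rho}\Z$ --- noting that the $-\id$ action is nontrivial only for $m\geq 3$, so that the coincidence of the two cases for $m\leq 2$ produces neither redundancy nor omission.
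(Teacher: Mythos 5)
Your proposal is correct and follows exactly the paper's route: the paper deduces \reth{mainodd} as an immediate consequence of \reth{main}, observing that all the exceptional and open cases of \reth{main}(\ref{it:mainII})--(\ref{it:mainIII}) occur only for even $n$, and then specialising the family $\mathbb{V}(n)$ of \redef{v1v2} to odd $n$ by the same parity and divisibility bookkeeping you carry out (including the point that $\dic{4m}\rtimes_{\nu}\Z$ and $\dic{4q}\bigast_{\dic{2q}}\dic{4q}$ drop out because $n-i$ is odd for $i\in\brak{0,2}$, and that the $-\id$ action on $\Z_{m}$ is trivial for $m\leq 2$).
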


%In \resec{virtcyc}, we recall the criterion due to Wall of an infinite virtually cyclic group $G$ as one which has a finite normal subgroup $F$ such that $G/F$ is isomorphic to $\Z$ or to the free product $\Z_{2} \ast \Z_{2}$ (this being the case, we shall say that $G$ is of \textit{Type~I} or
%\textit{Type~II} respectively). Wall's result enables us to establish a list of the possible
%infinite virtually cyclic subgroups of a given group $G$, if one knows its finite subgroups
%(which is the case for $P_{n}(\rp)$). The real difficulty lies in deciding whether the groups
%belonging to this list are effectively realised as subgroups of $G$. 
%In \relem{type2crit} we give a
%useful criterion to decide whether a group is of Type~II. 

Most of this manuscript is devoted to proving \reth{main}, and is broadly divided into two parts, \ref{part:generalities} and \ref{part:realisation}, together with a short Appendix. The aim of \repart{generalities} is to prove \reth{main}(\ref{it:mainI}). In conjunction with \reth{finitebn}, \reth{wall} gives rise to a family $\mathcal{VC}$ of virtually cyclic groups, defined in \resecglobal{generalities}{generalities}, with the property that any infinite virtually cyclic subgroup of $B_{n}(\St)$ belongs to $\mathcal{VC}$. In that section, we shall discuss a number of properties pertaining to virtually cyclic groups. \repr{vcmcg} describes the correspondence in general between the virtually cyclic subgroups of a group $G$ possessing a unique element $x$ of order $2$ and its quotient $G/\ang{x}$. By the short exact sequence~\reqref{mcg}, this proposition applies immediately to $B_{n}(\St)$ and $\mcg$, and will be used at various points, notably to obtain the classification of the virtually cyclic subgroups of $\mcg$ from that of $B_{n}(\St)$. Two other results of \resecglobal{generalities}{generalities} that will prove to be useful in \resecglobal{realisation}{isoclasses} are \repr{semiamalg} which shows that almost all elements of $\mathbb{V}_{2}(n)$ of the form $G\bigast_{H} G$ may be written as a semi-direct product $\Z\rtimes G$, and \repr{amalgiso} which will be used to determine the number of isomorphism classes of the elements of $\mathbb{V}_{2}(n)$.

The principal difficulty in proving \reth{main} is to decide which of the elements of $\mathcal{VC}$ are indeed realised as subgroups of $B_{n}(\St)$. This is achieved in two stages, \emph{reduction} and \emph{realisation}. In the first stage, we reduce the subfamily of $\mathcal{VC}$ of Type~I groups in several ways. To this end, in \resecglobal{generalities}{gencent}, we obtain a number of results of independent interest concerning structural aspects of $B_{n}(\St)$. The first of these is the calculation of the centraliser and normaliser of its maximal finite cyclic and dicyclic subgroups. Note that if $i\in\brak{0,1}$, the centraliser of $\alpha_{i}$, considered as an element of $B_{n}$, is equal to $\ang{\alpha_{i}}$~\cite{BDM,GW}. A similar equality holds in $B_{n}(\St)$ and is obtained using \req{mcg} and the corresponding result for $\mcg$ due to L.~Hodgkin~\cite{Ho}:

%\pagebreak

\begin{prop}\label{prop:genhodgkin1}
Let $i\in\brak{0,1,2}$, and let $n\geq 3$. 
\begin{enumerate}[(a)]
\item\label{it:centalphai} The centraliser of $\ang{\alpha_{i}}$ in $B_{n}(\St)$ is equal to $\ang{\alpha_{i}}$, unless $i=2$ and $n=3$, in which case it is equal to $B_{3}(\St)$.
\item\label{it:normcyclic} The normaliser of $\ang{\alpha_{i}}$ in $B_{n}(\St)$ is equal to:
\begin{equation*}
\begin{cases}
\ang{\alpha_{0},\garside} \cong \dic{4n} & \text{if $i=0$}\\
\ang{\alpha_{2},\alpha_{0}^{-1}\garside \alpha_{0}} \cong \dic{4(n-2)} & \text{if $i=2$}\\
\ang{\alpha_{1}}\cong \Z_{2(n-1)} & \text{if $i=1$,}
\end{cases}
\end{equation*}
unless $i=2$ and $n=3$, in which case it is equal to $B_{3}(\St)$.
\item If $i\in\brak{0,2}$, the normaliser of the standard copy of $\dic{4(n-i)}$ in $B_{n}(\St)$ is itself, except when $i=2$ and $n=4$, in which case the normaliser is equal to $\alpha_{0}^{-1} \sigma_{1}^{-1} \ang{\alpha_{0},\garside[4]} \sigma_{1}\alpha_{0}$, and is isomorphic to $\quat[16]$. 
\end{enumerate}
\end{prop}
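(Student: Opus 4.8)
The plan is to transfer every computation to the mapping class group via the central extension~\reqref{mcg}, where the behaviour of the periodic elements $\phi(\alpha_{i})$ is controlled by Hodgkin's work~\cite{Ho}, and then to pull the answers back along $\phi$. Throughout one uses that $\phi(\alpha_{i})$ has order $n-i$ in $\mcg$, since by~\reqref{uniqueorder2} the element $\alpha_{i}^{n-i}=\ft$ generates $\ker{\phi}=\ang{\ft}$, and that $\ft$ is central.

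For part~(a), let $C$ denote the centraliser of $\alpha_{i}$ in $B_{n}(\St)$. Then $\ang{\ft}\subseteq C$, so $C$ fits into a short exact sequence $1\to\ang{\ft}\to C\to\phi(C)\to 1$, while $\phi(C)$ is contained in the centraliser of $\phi(\alpha_{i})$ in $\mcg$. Hodgkin's theorem identifies this latter centraliser with the cyclic group $\ang{\phi(\alpha_{i})}$, and as $\phi(\alpha_{i})\in\phi(C)$ we get $\phi(C)=\ang{\phi(\alpha_{i})}$ of order $n-i$; comparing orders forces $\lvert C\rvert=2(n-i)$, whence $C=\ang{\alpha_{i}}$. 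The case $i=2$, $n=3$ is genuinely exceptional because $\alpha_{2}=\sigma_{1}^{2}$ then has order $2$ and so equals the central element $\ft[3]$, whose centraliser is all of $B_{3}(\St)$.

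For part~(b), I would first check that the normaliser $N$ of $\ang{\alpha_{i}}$ in $B_{n}(\St)$ equals $\phi^{-1}$ of the normaliser of $\ang{\phi(\alpha_{i})}$ in $\mcg$: one inclusion is immediate, and for the other, $\ker{\phi}=\ang{\ft}\subseteq\ang{\alpha_{i}}$ shows that any $g$ with $\phi(g)$ normalising $\ang{\phi(\alpha_{i})}$ already satisfies $g\alpha_{i}g^{-1}\in\ang{\alpha_{i}}$. The quotient of the normaliser by the centraliser in $\mcg$ records exactly which powers of $\phi(\alpha_{i})$ are conjugate to it. By Hodgkin~\cite{Ho} — equivalently, by realising $\phi(\alpha_{i})$ as a rotation of $\St$ and observing that its two fixed points can be interchanged by an orientation-preserving homeomorphism preserving the punctures precisely when $i\in\brak{0,2}$ (neither pole, resp.\ both poles, being a puncture) but not when $i=1$ (one pole a puncture, the other not) — these powers are $\phi(\alpha_{i})^{\pm 1}$ for $i\in\brak{0,2}$ and $\phi(\alpha_{i})$ alone for $i=1$. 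Pulling back through the extension, $\lvert N\rvert=4(n-i)$ when $i\in\brak{0,2}$ and $\lvert N\rvert=2(n-1)$ when $i=1$. In the first case $N$ is generated by $\alpha_{i}$ together with the inverting element furnished by~\reqref{basicconj} (namely $\garside$ for $i=0$, and $\alpha_{0}^{-1}\garside\alpha_{0}$ for $i=2$, obtained by conjugating $\garside\alpha_{i}'\garside^{-1}=\alpha_{i}'^{-1}$ back to $\alpha_{i}$); since the inverting element squares to $\ft=\alpha_{i}^{n-i}$, the resulting group is $\dic{4(n-i)}$. For $i=1$ the normaliser collapses to the centraliser $\ang{\alpha_{1}}\cong\Z_{2(n-1)}$.

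For part~(c), write $\mathcal{D}=\ang{\alpha_{i}',\garside}$ for the standard copy of $\dic{4(n-i)}$ and set $m=n-i$. When $m\geq 3$, the presentation~\reqref{presdic} shows every element of $\mathcal{D}\setminus\ang{\alpha_{i}'}$ has order $4$, whereas $\alpha_{i}'$ has order $2m>4$; hence $\ang{\alpha_{i}'}$ is the unique cyclic subgroup of order $2m$ and is therefore characteristic in $\mathcal{D}$. Consequently any element normalising $\mathcal{D}$ normalises $\ang{\alpha_{i}'}$, so by part~(b) (applied to the $\alpha_{0}$-conjugate $\ang{\alpha_{i}'}$ of $\ang{\alpha_{i}}$) the normaliser of $\mathcal{D}$ is $\mathcal{D}$ itself. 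I expect the sole remaining case $m=2$, that is $i=2$ and $n=4$, to be the main obstacle: there $\mathcal{D}\cong\quat$ and $\ang{\alpha_{2}'}$ is no longer characteristic. Here I would first compute that the centraliser of $\mathcal{D}$ reduces to the centre $\ang{\ft[4]}$, so that the normaliser $N$ satisfies $N/\ang{\ft[4]}\hookrightarrow\aut{\quat}\cong S_{4}$ and is in particular finite; combined with \reth{finitebn}, which for $n=4$ provides no finite subgroup of order $48$, the order of $N$ is forced to be $16$. It then remains to exhibit a copy of $\quat[16]$ containing $\mathcal{D}$ as an index-two (hence normal) subgroup, and a direct calculation shows that the conjugate $\alpha_{0}^{-1}\sigma_{1}^{-1}\ang{\alpha_{0},\garside[4]}\sigma_{1}\alpha_{0}$ of the standard $\dic{16}$ does exactly this; the order count then yields $N=\alpha_{0}^{-1}\sigma_{1}^{-1}\ang{\alpha_{0},\garside[4]}\sigma_{1}\alpha_{0}\cong\quat[16]$.
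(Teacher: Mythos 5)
Your strategy for part~(\ref{it:centalphai}) founders on precisely the case that makes the paper's proof delicate. Hodgkin's theorem (\repr{hodgkin1}) does \emph{not} say that the centraliser of $\phi(\alpha_{i})$ in $\mcg$ is $\ang{\phi(\alpha_{i})}$; it gives an exact sequence $1\to \Z_{r}\to Z_{\mcg}(\ang{\gamma})\to \Lambda\to 1$, where $\Lambda$ is a mapping class group of the quotient sphere with three marked points. One obtains $\Lambda=\brak{1}$ (and hence your assertion) only when the two poles are required to be fixed, namely when $r\geq 3$, or when $r=2$ and $r\divides n-1$. In the remaining case $r=2$ and $r\ndivides n-1$ --- which for $\gamma=\phi(\alpha_{i})$, $r=n-i$ occurs exactly when $n=4$ and $i=2$ --- the elements of $\Lambda$ may exchange the two poles, so $\Lambda\cong \Z_{2}$ and $Z_{\mcg[4]}(\ang{a_{2}})$ has order $4$, not $2$. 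This is not a phantom possibility: $\phi\bigl(\ang{\alpha_{2},\alpha_{0}^{-1}\garside[4]\alpha_{0}}\bigr)\cong \Z_{2}\oplus\Z_{2}$ centralises $a_{2}$, because $\alpha_{0}^{-1}\garside[4]\alpha_{0}$ conjugates $\alpha_{2}$ to $\alpha_{2}^{-1}=\alpha_{2}\ft[4]$, which has the same image under $\phi$. Consequently your order comparison ``$\phi(C)=\ang{\phi(\alpha_{i})}$ of order $n-i$ forces $\ord{C}=2(n-i)$'' breaks down at $n=4$, $i=2$; one must argue separately, as the paper does, that although $\phi^{-1}\bigl(Z_{\mcg[4]}(\ang{a_{2}})\bigr)\cong \quat$, this group is non-abelian and so cannot centralise $\alpha_{2}$, whence $C=\ang{\alpha_{2}}$ all the same.

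The gap propagates. In part~(\ref{it:normcyclic}) your count $\ord{N}=4(n-i)$ for $i\in\brak{0,2}$ rests on $\ord{Z_{\mcg}(\ang{a_{i}})}=n-i$ together with the normaliser inducing the two automorphisms $\pm 1$ of $\ang{a_{i}}$; at $n=4$, $i=2$ both inputs fail ($Z_{\mcg[4]}(\ang{a_{2}})$ has order $4$, and $a_{2}=a_{2}^{-1}$, so the normaliser modulo the centraliser is trivial), and followed literally your computation returns $N=\ang{\alpha_{2}}\cong \Z_{4}$ rather than the correct $\ang{\alpha_{2},\alpha_{0}^{-1}\garside[4]\alpha_{0}}\cong \quat$ --- a group that visibly normalises (indeed inverts) $\ang{\alpha_{2}}$. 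Part~(c) for $n=4$, $i=2$ also quotes part~(\ref{it:centalphai}) to identify the centraliser of the standard copy of $\quat$, so it inherits the problem. Away from this exceptional case your argument is sound and in places genuinely different from (and cleaner than) the paper's: for part~(\ref{it:normcyclic}) you use conjugacy of powers in $\mcg$ where the paper uses finiteness (\repr{finord}) together with the classification of maximal finite subgroups (\reth{finitebn}), and for part~(c) with $n-i\geq 3$ your observation that $\ang{\alpha_{i}'}$ is characteristic in $\dic{4(n-i)}$ eliminates the paper's case-by-case analysis at $n=6$. But as written, the case $n=4$, $i=2$ --- the very case the statement singles out --- is unproved.
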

If $F$ is a maximal dicyclic or finite cyclic subgroup of $B_{n}(\St)$, parts~(\ref{it:centalphai}) and~(\ref{it:normcyclic}) imply immediately that $B_{n}(\St)$ has no Type~I subgroup of the form $F\rtimes \Z$. 

The second reduction, given in \repr{isoout} in \resecglobal{generalities}{autout}, will make use of the fact that if $\map{\theta}{\Z}[\aut{F}]$ is an action of $\Z$ on the finite group $F$, the isomorphism class of the semi-direct product $F\rtimes_{\theta} \Z$ depends only on the class of $\theta(1)$ in $\out{F}$. Since we are interested in the realisation of isomorphism classes of virtually finite subgroups in $B_{n}(\St)$, it will thus be sufficient to study the Type~I groups of the form $F\rtimes_{\theta} \Z$, where $\theta(1)$ runs over a transversal of $\out{F}$ in $\aut{F}$. To this end, in \resecglobal{generalities}{autout}, we recall the structure of $\out{F}$ for the binary polyhedral groups. One could also carry out this analysis for the other finite subgroups of $B_{n}(\St)$ given by \reth{finitebn}, but the resulting conditions on $\theta$ are weaker than those obtained from a generalisation of a second result of L.~Hodgkin concerning the powers of $\alpha_{i}$ that are conjugate in $B_{n}(\St)$. More precisely, in \resecglobal{generalities}{conjfinite}, we prove the following proposition.
\begin{prop}\label{prop:genhodgkin2}
Let $n\geq 3$ and $i\in \brak{0,1,2}$, and suppose that there exist $r,m\in \Z$ such that $\alpha_{i}^m$ and $\alpha_{i}^r$ are conjugate in $B_{n}(\St)$. 
\begin{enumerate}
\item\label{it:conjpowera} If $i=1$ then $\alpha_1^m=\alpha_1^r$.
\item\label{it:conjpowerb} If $i\in \brak{0,2}$ then $\alpha_{i}^m=\alpha_{i}^{\pm r}$.
\end{enumerate}
\end{prop}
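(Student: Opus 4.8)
The plan is to push the conjugacy relation down to $\mcg$ via the projection $\phi$ of \reqref{mcg}, apply Hodgkin's theorem there, and then lift the conclusion back to $B_{n}(\St)$, the only subtlety in the lift being the central kernel $\ang{\ft}$ of $\phi$. Set $N=n-i$, so that $\alpha_{i}$ has order $2N$ and, by \reqref{uniqueorder2}, $\phi(\alpha_{i})$ has order $N$ with $\ang{\ft}=\ang{\alpha_{i}^{N}}$. First I would dispose of the degenerate cases where $\alpha_{i}^{m}\in\ang{\ft}$ is central: a central element is conjugate only to itself, so $\alpha_{i}^{r}=\alpha_{i}^{m}$ and there is nothing to prove. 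Hence assume from now on that $\alpha_{i}^{m}$ is non-central, and fix $g$ with $g\alpha_{i}^{m}g^{-1}=\alpha_{i}^{r}$.

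Applying $\phi$ shows that $\phi(\alpha_{i})^{m}$ and $\phi(\alpha_{i})^{r}$ are conjugate in $\mcg$, and here I would invoke Hodgkin's result~\cite{Ho}: the class $\phi(\alpha_{i})$ is represented by a rotation of the $n$-punctured sphere, whose fixed-point data determine the conjugacy class of each of its powers. For $i=1$ the two fixed points of the rotation are of different types (one is a puncture, the other is not), so no mapping class reverses the rotation and conjugate powers satisfy $m\equiv r\pmod{N}$; for $i\in\brak{0,2}$ the two fixed points are interchangeable (both punctures, or both free), so the half-turn interchanging them inverts the rotation and conjugate powers satisfy $m\equiv\pm r\pmod{N}$. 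Pulling this back through $\phi$, whose kernel is generated by $\ft=\alpha_{i}^{N}$, gives $\alpha_{i}^{r}\in\brak{\alpha_{i}^{m},\,\alpha_{i}^{m}\ft}$ when $i=1$, and $\alpha_{i}^{r}\in\brak{\alpha_{i}^{m},\,\alpha_{i}^{-m},\,\alpha_{i}^{m}\ft,\,\alpha_{i}^{-m}\ft}$ when $i\in\brak{0,2}$.

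It remains to remove the ambiguity by the central factor $\ft$, which is the heart of the argument. The key claim is that a non-central $\alpha_{i}^{m}$ is never conjugate to $\alpha_{i}^{m}\ft$. Indeed, if $h\alpha_{i}^{m}h^{-1}=\alpha_{i}^{m}\ft$ then, since $\phi(\ft)=1$, the element $\phi(h)$ centralises $\phi(\alpha_{i})^{m}$; provided $\phi(\alpha_{i})^{m}$ has order greater than $2$, its centraliser in $\mcg$ collapses to $\ang{\phi(\alpha_{i})}$ (this is the centraliser computation underlying \repr{genhodgkin1}, any class commuting with this nontrivial rotation preserving its two fixed points and orientation being itself a rotation about the same axis), so that $\phi(h)\in\ang{\phi(\alpha_{i})}$ and hence $h\in\phi^{-1}(\ang{\phi(\alpha_{i})})=\ang{\alpha_{i}}$; but then $h$ commutes with $\alpha_{i}^{m}$, contradicting $\alpha_{i}^{m}\ft\neq\alpha_{i}^{m}$. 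For $i=1$ this immediately yields $\alpha_{1}^{r}=\alpha_{1}^{m}$. For $i\in\brak{0,2}$ the remaining bad possibility $\alpha_{i}^{r}=\alpha_{i}^{-m}\ft$ is reduced to the forbidden one using the element $\delta$ of the standard dicyclic copy of \repr{genhodgkin1} ($\delta=\garside$ if $i=0$, and $\delta=\alpha_{0}^{-1}\garside\alpha_{0}$ if $i=2$), which by \reqref{basicconj} satisfies $\delta\alpha_{i}\delta^{-1}=\alpha_{i}^{-1}$ and $\delta^{2}=\ft$: replacing $g$ by $\delta^{-1}g$ turns a conjugacy $\alpha_{i}^{m}\mapsto\alpha_{i}^{-m}\ft$ into $\alpha_{i}^{m}\mapsto\alpha_{i}^{m}\ft$, again impossible. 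Thus $\alpha_{i}^{r}\in\brak{\alpha_{i}^{m},\alpha_{i}^{-m}}$, that is, $\alpha_{i}^{m}=\alpha_{i}^{\pm r}$.

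Finally I would treat the excluded low-order case in which $\phi(\alpha_{i})^{m}$ has order $2$, so that $\alpha_{i}^{2m}=\ft$ and $N$ is even. For $i\in\brak{0,2}$ this case is self-resolving, since then $\alpha_{i}^{m}\ft=\alpha_{i}^{-m}$ and $\alpha_{i}^{-m}\ft=\alpha_{i}^{m}$, so the four a priori possibilities already collapse to $\brak{\alpha_{i}^{m},\alpha_{i}^{-m}}$. For $i=1$ the two fixed points remain of different types, so the order-$2$ rotation still admits no orientation-preserving reversal and the centraliser argument applies verbatim; alternatively, as $N=n-1$ is even forces $n$ odd, the exponent-sum homomorphism $\xi$ of \reqref{abdefine} detects $\ft$ (one computes $\xi(\ft)=n-1\neq 0$) and so separates $\alpha_{1}^{m}$ from $\alpha_{1}^{m}\ft$. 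The main obstacle is precisely the centralisation step of the third paragraph: everything hinges on knowing that the centraliser in $\mcg$ of a nontrivial power of the periodic class $\phi(\alpha_{i})$ shrinks to the cyclic group it generates, so that the kernel $\ang{\ft}$ cannot manufacture a genuinely new conjugacy, together with the careful isolation of the order-$2$ exception where the extra symmetry for $i\in\brak{0,2}$ intervenes.
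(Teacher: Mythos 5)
Your overall route for $i\in\brak{0,2}$ --- project to $\mcg$ via $\phi$, invoke Hodgkin to get $m\equiv\pm r\bmod(n-i)$, lift back, and then eliminate the central ambiguity by $\ft$ --- is exactly the skeleton of the paper's proof. The genuine gap is in the step you yourself call ``the heart of the argument'': your proof that a non-central $\alpha_{i}^{m}$ is never conjugate to $\alpha_{i}^{m}\ft$ rests on the assertion that the centraliser in $\mcg$ of $\phi(\alpha_{i})^{m}$ is $\ang{\phi(\alpha_{i})}$ whenever $\phi(\alpha_{i})^{m}$ has order greater than $2$. That assertion is false for proper powers. \repr{genhodgkin1}(\ref{it:centalphai}) (equivalently \repr{luis}) computes the centraliser of the \emph{full} cyclic group $\ang{\alpha_{i}}$ only; for a proper power, Hodgkin's exact sequence~\reqref{mcghodgkin} has quotient $\Lambda$ equal to a mapping class group of a sphere with \emph{more} than three marked points (one point per orbit of the $n$ punctures under the smaller rotation, plus the poles), and that group is infinite. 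Concretely, \relem{commalphaigen}(\ref{it:rgeq2}) exhibits the infinite-order element $\delta_{r,i}$ commuting with $\alpha_{i}^{m}$: in $B_{6}(\St)$ the element $\sigma_{1}\sigma_{3}\sigma_{5}$ commutes with $\alpha_{0}^{2}$, so $\phi(\sigma_{1}\sigma_{3}\sigma_{5})$ is an infinite-order element of $Z_{\mcg[6]}\left(\ang{\phi(\alpha_{0})^{2}}\right)$ even though $\phi(\alpha_{0})^{2}$ has order $3>2$; indeed the existence of such centralising elements is precisely what makes the subgroups $\Z_{q}\times\Z$ of \repr{necsuftimes} exist. (Your parenthetical justification also fails: a class commuting with a rotation need not be a rotation --- it can be a twist supported on an invariant family of discs, which is what $\delta_{r,i}$ is.) Consequently you cannot conclude $\phi(h)\in\ang{\phi(\alpha_{i})}$, and your contradiction evaporates.

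The claim you need is true in the regime you state it, but establishing it is where essentially all the work of the paper lies, by a quite different mechanism: assuming $z\alpha_{i}^{\mu}z^{-1}=\alpha_{i}^{\mu}\ft$, abelianisation forces $n$ even and $\mu$ to divide $n-i$; the subcase $(n-i)/\mu=2$ is absorbed into the conclusion (your ``self-resolving'' remark); and when $(n-i)/\mu\geq 3$ the paper exploits the block structure of $\alpha_{0}^{\mu}$ to show that a suitably corrected $z$ lies in $B_{n/\mu,n-n/\mu}(\St)$, projects the relation onto $B_{n/\mu}(\St)$, applies \reth{murasugi}, and only then uses \repr{genhodgkin1}(\ref{it:normcyclic}) in the smaller braid group --- where the element concerned is now the full generator $\alpha_{0}$ of $B_{n/\mu}(\St)$, so that the normaliser result genuinely applies --- to reach $\ft[n/\mu]\in\brak{\id,\alpha_{0}^{2}}$, a contradiction; the case $i=2$ reduces to $i=0$ by forgetting two strings. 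Your centraliser appeal replaces none of this. Note finally that for $i=1$ the paper bypasses Hodgkin altogether with a short abelianisation argument valid for all $n$, whereas your $i=1$ case leans on the same broken centraliser step (your exponent-sum fallback only covers the order-$2$ subcase, where $n$ is odd).
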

In particular, conjugate powers of the $\alpha_{i}$ are either equal or inverse. So if $F$ is a finite cyclic subgroup of $B_{n}(\St)$ then by \reth{murasugi} the only possible actions of $\Z$ on $F$ are the trivial action and multiplication by $-1$. This also has consequences for the possible actions of $\Z$ on dicyclic subgroups of $B_{n}(\St)$. As in \repr{genhodgkin1}, the proof of \repr{genhodgkin2} will make use of a similar result for the mapping class group and the relation \reqref{mcg}.

The final reduction, described in \resecglobal{generalities}{percohI}, again affects the possible Type~I subgroups that may occur, and is a manifestation of the periodicity (with least period $2$ or $4$) of the subgroups of $B_{n}(\St)$ that was observed in \cite{GG7} for the finite subgroups. The following proposition will be applied to rule out Type~I subgroups of $B_{n}(\St)$ isomorphic to $F\rtimes_{\theta} \Z$ with non-trivial action $\theta$, where $F$ is either $\oonestar$ or $\istar$ (one could also apply the result to the other possible finite groups $F$, but this is not necessary in our context in light of the consequences of \repr{genhodgkin2} mentioned above). The following proposition may be found in~\cite{BCP,FZ}, and may be compared with the analogous result for $\rp$~\cite[Proposition 6]{GG3}. We shall give an alternative proof in \resecglobal{generalities}{homotopytype}.
%\comment{Maybe give the statement of \repr{homot} on the homotopy type of the configuration spaces, either here or a bit later on?}
\begin{prop}[\cite{BCP,FZ}]\label{prop:homot}\mbox{}
\begin{enumerate}[(a)]
\item\label{it:homottype2} The space $F_2(\St)$ (resp.\ $D_2(\St)$) has the homotopy type of $\St$ (resp.\ of $\rp$). Hence the universal covering space of $D_2(\St)$ is $F_2(\St)$.
\item If $n\geq 3$, the universal covering space of $F_n(\St)$ or $D_n(\St)$ has the homotopy type of the $3$--sphere $\St[3]$.
\end{enumerate}
\end{prop}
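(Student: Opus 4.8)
The plan is to prove part~(\ref{it:homottype2}) ($n=2$) by a direct equivariant deformation retraction, and part~(b) ($n\geq 3$) by combining the identification of $F_3(\St)$ with a Lie group, an iterated Fadell--Neuwirth fibration, and a covering-space argument. Throughout I use that $F_n(\St)\to D_n(\St)$ is the regular covering given by the free $\sn$-action, so the two spaces have the same universal cover; this reduces everything to the ordered configuration spaces.

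\emph{Part~(\ref{it:homottype2}).} I would exhibit a $\Z_2$-equivariant strong deformation retraction of $F_2(\St)$ onto the subspace of antipodal pairs $A=\set{(x,-x)}{x\in\St}$. Given distinct $x,y\in\St$ that are not already antipodal, they lie on a unique great circle with a well-defined midpoint $m$ of their minimal geodesic arc; one slides $x$ and $y$ apart along that circle, symmetrically about $m$, until their angular distance reaches $\pi$. The angular distance is positive throughout, so no collision occurs and the path stays in $F_2(\St)$; on $A$ the retraction is the identity, and near $A$ the displacement tends to $0$, giving continuity. The construction is manifestly invariant under the swap $\tau(x,y)=(y,x)$, so it descends to $D_2(\St)$. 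Identifying $A\cong\St$ by the first coordinate, $\tau$ acts as the antipodal map, whence $F_2(\St)\simeq\St$ and $D_2(\St)=F_2(\St)/\tau\simeq A/\tau=\rp$. Since $F_2(\St)\simeq\St$ is simply connected and $F_2(\St)\to D_2(\St)$ is a double cover, $F_2(\St)$ is the universal cover of $D_2(\St)$.

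\emph{Part~(b), base case.} For $n=3$, the Möbius group $PSL_2(\mathbb{C})$ acts sharply $3$-transitively on $\St\cong\mathbb{C}P^1$, so evaluation at a fixed base triple gives a homeomorphism $F_3(\St)\cong PSL_2(\mathbb{C})$. As $PSL_2(\mathbb{C})$ deformation retracts onto its maximal compact subgroup $PSU(2)\cong SO(3)\cong\mathbb{R}P^3$, whose universal cover is $SU(2)\cong\St[3]$, the universal cover of $F_3(\St)$ has the homotopy type of $\St[3]$. For $n\geq 4$ I would use the Fadell--Neuwirth fibration obtained by forgetting the last $n-3$ points,
\[
\Phi \to F_n(\St)\stackrel{p}{\to} F_3(\St)\simeq\mathbb{R}P^3, \qquad \Phi=F_{n-3}(\St\setminus\brak{x_1,x_2,x_3}).
\]
The fibre $\Phi$ is aspherical: iterating the Fadell--Neuwirth fibrations for the open surface $\St\setminus\brak{x_1,x_2,x_3}$ expresses $\Phi$ as an iterated fibration whose successive fibres are punctured planes, each a $K(\pi,1)$, so $\Phi$ is a $K(\pi,1)$ (this is also what underlies the torsion-freeness of $\pmcg\cong P_{n-3}(\St\setminus\brak{x_1,x_2,x_3})$ noted above). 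Since $\pi_2(\mathbb{R}P^3)=0$, the homotopy exact sequence of $p$ yields $1\to\pi_1(\Phi)\to\pi_1(F_n(\St))\to\Z_2\to 1$ with $\pi_1(\Phi)$ injecting as an index-$2$ subgroup.

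\emph{Part~(b), conclusion.} To finish I would pull $p$ back along the universal double cover $q\colon\St[3]\to\mathbb{R}P^3$, obtaining $E'=F_n(\St)\times_{\mathbb{R}P^3}\St[3]$, which is at once a connected double cover of $F_n(\St)$ and the total space of a fibration $E'\to\St[3]$ with fibre $\Phi$. As the base $\St[3]$ is simply connected, $\pi_1(\Phi)\to\pi_1(E')$ is an isomorphism, and the universal cover $\widetilde{F}$ of $F_n(\St)$ equals that of $E'$. Passing to universal covers in $\Phi\to E'\to\St[3]$, the induced map $\widetilde{F}\to\St[3]$ is again a fibration whose fibre is the cover of $\Phi$ determined by $\ker(\pi_1(\Phi)\to\pi_1(E'))$; this kernel is trivial, so the fibre is the universal cover of $\Phi$, which is contractible because $\Phi$ is aspherical. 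Hence $\widetilde{F}\to\St[3]$ is a weak homotopy equivalence, and Whitehead's theorem gives $\widetilde{F}\simeq\St[3]$; the same then holds for $D_n(\St)$. The main obstacle is exactly this last step: the homotopy exact sequence only delivers isomorphisms $\pi_k(F_n(\St))\cong\pi_k(\St[3])$ for $k\geq 2$, which does not by itself force a homotopy equivalence, so the crux is to manufacture an honest fibration $\widetilde{F}\to\St[3]$ with contractible fibre rather than merely to match homotopy groups; verifying the asphericity of $\Phi$ and the triviality of the relevant fibre cover is where the real work lies.
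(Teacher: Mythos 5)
Your proposal is correct, but it takes a genuinely different route from the paper's. For part~(a) you give a direct $\Z_{2}$-equivariant retraction onto the antipodal pairs, whereas the paper simply cites \cite[Lemma~8]{GG10}. For the base case $n=3$, the paper does not use the identification $F_3(\St)\cong \operatorname{PSL}_2(\mathbb{C})\simeq SO(3)$; instead it chooses a map $\map{\phi}{\St[3]}[F_3(\St)]$ whose composite with $p_{2}\circ p_{3}$ is homotopic to the Hopf map (possible since $\pi_3(F_3(\St))\cong\pi_3(F_2(\St))\cong\pi_3(\St)$), compares homotopy groups through the resulting homotopy-commutative square, lifts to universal covers and applies Whitehead's theorem; this imports the facts $\pi_1(F_3(\St))\cong\Z_2$ and $\pi_2(F_3(\St))=\brak{1}$ from \cite{FvB}. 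For $n\geq 4$ the paper inducts one string at a time along the Fadell--Neuwirth fibrations $F_{n+1}(\St)\to F_n(\St)$, using the asphericity of the one-point fibres, $\pi_{2}(F_{n}(\St))=\brak{1}$ from \cite{FvB}, and Whitehead's theorem at each step, while you use a single fibration over $F_{3}(\St)$ with aspherical fibre $\Phi$ together with a pullback/covering argument producing a fibration $\widetilde{F}\to \widetilde{F_{3}(\St)}$ with contractible fibre. Your route is more self-contained: the vanishing of $\pi_{2}$ and the computation of $\pi_{1}(F_{3}(\St))$ fall out of the Lie-group identification rather than being quoted, and the homotopy equivalence is obtained in one shot; the paper's route stays uniformly inside the Fadell--Neuwirth framework it uses throughout and avoids any appeal to $\operatorname{PSL}_2(\mathbb{C})$. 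One small point of hygiene in your write-up: the fibration $p$ has base $F_{3}(\St)$, which is only homotopy \emph{equivalent} to $SO(3)$, so the pullback must be taken along the genuine universal covering $\widetilde{F_{3}(\St)}\to F_{3}(\St)$ rather than along $\St[3]\to SO(3)$, and the identification $\widetilde{F_{3}(\St)}\simeq \St[3]$ should be invoked only at the end; with that substitution your argument goes through verbatim.
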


%Once more, one could carry out a similar analysis for the other finite subgroups of $B_{n}(\St)$, but this is not necessary in our context in light of the consequences of \repr{genhodgkin2} mentioned above.
Putting together these reductions will allow us to prove \reth{main}(\ref{it:mainI}), first for the groups of Type~I in \resecglobal{generalities}{defV1}, and then for those of Type~II in \resecglobal{generalities}{typeII}. The structure of the finite subgroups of $B_{n}(\St)$ imposes strong constraints on the possible Type~II subgroups, and the proof in this case is more straightforward than that for Type~I subgroups. 

The second part of the paper, \repart{realisation}, is devoted to the analysis of the realisation of the elements of $\mathbb{V}_{1}(n)\coprod\mathbb{V}_{2}(n)$ as subgroups of $B_{n}(\St)$ and to proving parts~(\ref{it:mainII}) and (\ref{it:mainIII}) of \reth{main}. With the exception of the values of $n$ excluded by the statement of part~(\ref{it:mainII}), we prove the existence of the elements of $\mathbb{V}(n)$ as subgroups of $B_{n}(\St)$, first those of Type~I in 
Sections~\ref{part:realisation}.\ref{sec:centcycdic}--~\ref{part:realisation}.\ref{sec:typeIbinpoly}, and then those of Type~II in \resecglobal{realisation}{realtypeII}. The results of these sections are gathered together in \repr{realV1} (resp.\ \repr{realV2bis}) which proves \reth{main}(\ref{it:mainII}) for the subgroups of Type~I (resp.\ Type~II). The construction of the elements of $\mathbb{V}(n)$ involving finite cyclic and dicyclic groups are largely algebraic, and will rely heavily on \relem{commalphaigen}, as well as on \relem{funda} which describes the action by conjugation of the $\alpha_{i}$ on the generators of $B_{n}(\St)$. In contrast, the realisation of the elements of $\mathbb{V}(n)$ involving the binary polyhedral groups is geometric in nature, and occurs on the level of mapping class groups via the relation~\reqref{mcg}. The constraints involved in the constructions indicate why the realisation of such elements is an open problem for the values of $n$ given in \rerem{exceptions}. For $n\in \brak{4,6}$, in \repr{ttimesz}(\ref{it:ttimeszd}) we are also able to rule out the existence of the virtually cyclic groups given in \reth{main}(\ref{it:mainIII}).

%\comment{This next part has been rewritten to take account of the recent result on the realisation of $K_{2}$.}

In \resecglobal{realisation}{isoclasses}, we discuss the isomorphism problem for the amalgamated products that occur as elements of $\mathbb{V}_{2}(n)$. It turns out that with one exception, abstractly there is only one way (up to isomorphism) to embed the amalgamating subgroup in each of the two factors. With the help of \repr{amalgiso}, we are able to prove the following result.

\begin{prop}\label{prop:isoamalg}
For each of the amalgamated products given in \redef{v1v2}(\ref{it:mainIIdef}), abstractly there is exactly one isomorphism class, with the exception of $\quat[16]\bigast_{\quat} \quat[16]$, for which there are exactly two isomorphism classes.
\end{prop}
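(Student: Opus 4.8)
The plan is to derive everything from \repr{amalgiso}, which reduces the isomorphism classification of an amalgam $G_{1}\bigast_{H}G_{2}$, in which $H$ has index~$2$ (hence is normal) in each factor, to a double-coset count. Once the abstract isomorphism types of $G_{1}$, $G_{2}$ and $H$ are fixed, such an amalgam is determined up to isomorphism by the automorphism $\chi\in\aut{H}$ used to identify the copy of $H$ inside $G_{1}$ with the copy inside $G_{2}$, and two identifications $\chi,\chi'$ give isomorphic groups precisely when $\chi'\in R_{2}\,\chi\,R_{1}$, where $R_{i}\leq\aut{H}$ is the image of $\operatorname{Stab}_{\aut{G_{i}}}(H)$ under restriction to $H$; when $G_{1}\cong G_{2}$ one must additionally allow the swap $\chi\mapsto\chi^{-1}$ coming from interchanging the two factors. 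Thus the number of isomorphism classes is $\lvert R_{2}\backslash\aut{H}/R_{1}\rvert$, folded if necessary by this involution, and in particular there is a single class as soon as $R_{1}=\aut{H}$ or $R_{2}=\aut{H}$.

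First I would dispose of every family except~(\ref{it:mainIId}) by exhibiting a factor whose restriction map is onto. For families~(\ref{it:mainIIa})--(\ref{it:mainIIc}) the amalgamating subgroup is $H=\Z_{2q}$, with $\aut{H}\cong(\Z/2q)^{*}$. In a cyclic factor $\Z_{4q}$ this $H$ is the unique index-$2$ subgroup and restriction is the reduction $(\Z/4q)^{*}\to(\Z/2q)^{*}$, which is surjective; in a dicyclic factor $\dic{4q}$ the subgroup $\Z_{2q}=\ang{x}$ is a cyclic subgroup of index~$2$, an automorphism $x\mapsto x^{a},\,y\mapsto x^{b}y$ restricts to multiplication by $a$ on $\ang{x}$, and every $a\in(\Z/2q)^{*}$ occurs. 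Hence in all three cases some $R_{i}=\aut{H}$ and there is one class. For family~(\ref{it:mainIIe}), $H=\tonestar$ is the unique index-$2$ subgroup of $G=\oonestar$, so it is characteristic, and since $C_{\oonestar}(\tonestar)=Z(\oonestar)$ the inner automorphisms of $\oonestar$ already restrict to the whole of $\oonestar/Z(\oonestar)\cong S_{4}\cong\aut{\tonestar}$; thus $R=\aut{\tonestar}$ and there is one class.

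Family~(\ref{it:mainIId}), with $H=\dic{2q}$ and $G=\dic{4q}$ for $q$ even, is where the two behaviours must be separated, and it is the main obstacle. Writing $m=q/2$, so that $H\cong\dic{4m}$, I would note that $\dic{4q}$ contains exactly two subgroups isomorphic to $\dic{2q}$, namely $\ang{x^{2},y}$ and $\ang{x^{2},xy}$, and that the automorphism $x\mapsto x,\,y\mapsto xy$ of $\dic{4q}$ (an instance of the action $\nu$ of~\reqref{actdic4m}) interchanges them; hence we may assume both factors are glued along the same copy $H_{1}=\ang{x^{2},y}$, and the count reduces to $R\backslash\aut{H_{1}}/R$. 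An explicit computation of the restriction of $\operatorname{Stab}_{\aut{\dic{4q}}}(H_{1})$ then shows that for $q\geq 6$, i.e.\ $m\geq 3$, the subgroup $\ang{x}$ is characteristic in $\dic{4m}$, the group $\aut{\dic{4m}}$ has order $2m\,\phi(2m)$, and every admissible pair $(a,b)$ is realised, so $R=\aut{H_{1}}$ and the amalgam is again unique.

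The delicate point is the single value $q=4$, giving $\quat[16]\bigast_{\quat}\quat[16]$. Here $H_{1}=\dic{8}=\quat$ has the exceptionally large automorphism group $\aut{\quat}\cong S_{4}$ of order~$24$, the three cyclic subgroups $\ang{i},\ang{j},\ang{k}$ being permuted; this is precisely the symmetry absent when $m\geq 3$. Restricting $\operatorname{Stab}_{\aut{\quat[16]}}(H_{1})$ to $H_{1}$, I expect to obtain a subgroup $R\leq S_{4}$ of order~$8$, a Sylow $2$-subgroup isomorphic to the dihedral group $\dih{4}$ of order~$8$, for which $R\backslash S_{4}/R$ consists of exactly two double cosets, $R$ itself and its complement. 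The final step is to verify that the swap $\chi\mapsto\chi^{-1}$ does not merge these: both double cosets are closed under inversion (the complement of the inverse-closed set $R$ is again inverse-closed), so they remain distinct, yielding exactly two isomorphism classes. Carrying out this last automorphism computation carefully—pinning down $R$ inside $S_{4}$ and confirming that the two cosets are swap-invariant—is the crux of the argument.
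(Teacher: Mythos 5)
Your framework for the ``at most'' direction is essentially the paper's: reduce the gluing data to an automorphism of the amalgamating subgroup modulo automorphisms that extend to the factors, and kill all cases where some restriction map $\aut[H]{G_{i}}\to \aut{H}$ is surjective. Your surjectivity claims are correct and match \relem{index2} (your centraliser argument $C_{\oonestar}(\tonestar)=Z(\oonestar)$ is in fact a cleaner route to surjectivity for $\oonestar$ than the paper's citation), and your computation in the exceptional case is right: the stabiliser of $\quat=\ang{x^{2},y}$ in $\aut{\quat[16]}$ restricts onto the order-$8$ stabiliser of $\ang{x^{2}}$ in $\aut{\quat}\cong\sn[4]$, i.e.\ a Sylow $2$-subgroup $R$, and $\lvert R\backslash \sn[4]/R\rvert=2$. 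So you correctly obtain \emph{at most} one class in every case and \emph{at most} two for $\quat[16]\bigast_{\quat}\quat[16]$.

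The genuine gap is the converse of your double-coset criterion, which you state as a biconditional (``give isomorphic groups \emph{precisely when} $\chi'\in R_{2}\chi R_{1}$'') but never prove. Only the forward implication follows from \repr{amalgiso}-type extension arguments; the backward implication --- that gluings lying in \emph{distinct} double cosets (even after the factor-swap involution) yield \emph{non-isomorphic} abstract groups --- is exactly what is needed to conclude that there are \emph{exactly} two classes rather than at most two, and it is not automatic. An abstract isomorphism between two such amalgams need not a priori respect the decompositions; to make your criterion a biconditional one must prove a rigidity statement, e.g.\ that the amalgamating subgroup is the unique maximal finite normal subgroup (via the action on the Bass--Serre tree), so that any isomorphism descends to an automorphism of $\Z_{2}\bigast\Z_{2}$ and hence carries factors to conjugates of factors. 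You identify the location of the difficulty incorrectly: the crux is not pinning down $R$ inside $\sn[4]$ and checking swap-invariance of the cosets (which you do correctly), but establishing this rigidity. The paper avoids it entirely by a direct structural argument: it presents the two candidates $K_{1}$ and $K_{2}$ by~\reqref{presK1} and~\reqref{presK2}, shows $K_{1}\cong \Z\rtimes\quat[16]$ with the explicit action~\reqref{zrtimesq16}, observes that $K_{2}$ contains a copy of $\quat$ together with an element whose conjugation action cyclically permutes its three subgroups of order $4$, and verifies by computation inside $\Z\rtimes\quat[16]$ that no such element exists in $K_{1}$. Your proposal needs either this direct distinction or a proof of the rigidity statement; as written, it establishes only an upper bound of two isomorphism classes in the exceptional case.
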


Note that \repr{isoamalg} refers to abstract isomorphism classes, and does not depend on the fact that the amalgamated products occurring as elements of $\mathbb{V}_{2}(n)$ are realised as subgroups of $B_{n}(\St)$. In the exceptional case, that of $\quat[16]\bigast_{\quat} \quat[16]$, abstractly there are two isomorphism classes defined by equations~\reqref{presK1} and~\reqref{presK2}. In \reco{semiamalg}, we show that abstractly, all but one of the isomorphism classes of the elements of $\mathbb{V}_{2}(n)$ of the form $G\bigast_{H} G$ may be written as a semi-direct product of $\Z$ by $G$. In Propositions~\ref{prop:o2k2} and~\ref{prop:existk1k2}, if $n\geq 4$ is even we show that one of these isomorphism classes is always realised as a subgroup of $B_{n}(\St)$, while the other isomorphism class is realised as a subgroup of $B_{n}(\St)$ for all $n\notin\brak{6,14,18,26,30,38}$. It is an open question as to whether this second isomorphism class is realised  as a subgroup of $B_{n}(\St)$ for $n\in\brak{6,14,18,26,30,38}$

In \resecglobal{realisation}{genmcg}, we deduce the classification of the virtually cyclic subgroups of $\mcg$ (with a finite number of exceptions). As we shall see, it will follow from \repr{vcmcg} that the homomorphism $\phi$ of the short exact sequence~\reqref{mcg} induces a correspondence that is one-to-one, with the exception of subgroups of $B_{n}(\St)$ that are isomorphic to $\Z_{m}\rtimes_{\theta} \Z$ or $\Z_{2m}\rtimes_{\theta} \Z$ for $m$ odd, which are sent to the same subgroup $\Z_{m}\rtimes_{\theta'} \Z$ of $\mcg$, the action $\theta'$ being given as in \repr{corrbnmcg}(\ref{it:corrbnmcgii}) below.
\begin{prop}\label{prop:corrbnmcg}
Let $n\geq 4$, and let $\map{\phi}{B_{n}(\St)}[\mcg]$ be the epimorphism given by \req{mcg}.
\begin{enumerate}
\item Let $H'$ be an infinite virtually cyclic subgroup of $\mcg$ of Type~I (resp.\ Type~II). Then $\phi^{-1}(H')$ is a virtually cyclic subgroup of $B_{n}(\St)$ of Type~I (resp.\ Type~II). 
\item\label{it:corrbnmcgii} Let $H$ be a Type~I virtually cyclic subgroup of $B_{n}(\St)$, isomorphic to $F\rtimes_{\theta} \Z$, where $F$ is a finite subgroup of $B_{n}(\St)$ and $\theta\in \operatorname{Hom}(\Z,\aut{F})$. Then $\phi(H)\cong \phi(F)\rtimes_{\theta'} \Z$, where $\theta'\in \operatorname{Hom}(\Z,\aut{F'})$ satisfies $\theta'(1)(f')=\phi(\theta(1)(f))$ for all $f'\in F'$ and $f\in F$ for which $\phi(f)=f'$.
\item Let $H$ be a Type~II virtually cyclic subgroup of $B_{n}(\St)$ isomorphic to $G_{1}\bigast_{F} G_{2}$, where $G_{1},G_{2}$ and $F$ are finite subgroups of $B_{n}(\St)$, and $F$ is an index $2$ subgroup of $G_{1}$ and $G_{2}$. Then $\phi(H)\cong \phi(G_{1}) \bigast_{\phi(F)} \phi(G_{2})$.
\end{enumerate}
\end{prop}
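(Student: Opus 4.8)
The plan is to derive all three parts from two facts: that $\ker(\phi)=\ang{\ft}$ is \emph{central} and of order~$2$, and the general correspondence of \repr{vcmcg}, applied to $B_{n}(\St)$ with its unique order-$2$ element $\ft$ and quotient $B_{n}(\St)/\ang{\ft}\cong\mcg$ from \req{mcg}. Throughout I would lean on the elementary dichotomy: for any subgroup $K$ of $B_{n}(\St)$, either $\ft\in K$, so that $\ker(\phi|_{K})=\ang{\ft}$ and $\phi(K)\cong K/\ang{\ft}$, or $\ft\notin K$, so that $\phi|_{K}$ is injective and $\phi(K)\cong K$. I would also use repeatedly that, $\ft$ being the unique element of order~$2$ in $B_{n}(\St)$, it is fixed by every automorphism of every subgroup containing it.

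For part~(1) I would first note that any preimage $\phi^{-1}(H')$ automatically contains $\ker(\phi)=\ang{\ft}$, so \repr{vcmcg} applies directly and shows both that $\phi^{-1}(H')$ is infinite virtually cyclic and that $H'\mapsto\phi^{-1}(H')$ preserves the Type. If one wished to argue from first principles, let $N'$ be the maximal finite normal subgroup of $H'$; then $\phi^{-1}(N')$ is a finite (central $\Z_{2}$-extension of $N'$) normal subgroup of $\phi^{-1}(H')$, and a short check shows it is in fact the maximal one, so that $\phi^{-1}(H')/\phi^{-1}(N')\cong H'/N'$, which is $\Z$ in the Type~I case and $\Z_{2}\ast\Z_{2}$ in the Type~II case. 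Since by \reth{wall} the Type of an infinite virtually cyclic group is exactly recorded by the isomorphism class of this quotient, $\phi^{-1}(H')$ and $H'$ have the same Type.

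For part~(2) I would write $H=\ang{F,t}$ with $tft^{-1}=\theta(1)(f)$ for $f\in F$. Since the torsion elements of $F\rtimes_{\theta}\Z$ are precisely those of $F$, the subgroup $F$ is the maximal finite normal subgroup, so $\ft\in H$ forces $\ft\in F$. When $\ft\notin H$, $\phi|_{H}$ is injective and the conclusion holds with $\theta'=\theta$. When $\ft\in F$, the kernel of $\phi|_{H}$ is $\ang{\ft}\subseteq F$, whence $\phi(H)=\phi(F)\cdot\ang{\phi(t)}$ with $\phi(F)$ normal; as $t^{k}\in F\ang{\ft}=F$ forces $k=0$ one gets $\ang{\phi(t)}\cap\phi(F)=1$ with $\phi(t)$ of infinite order, realising $\phi(H)=\phi(F)\rtimes_{\theta'}\Z$ with $\theta'(1)(f')=\phi(t)f'\phi(t)^{-1}=\phi(tft^{-1})=\phi(\theta(1)(f))$ whenever $\phi(f)=f'$. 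I would then check well-definedness: if $\phi(f_{1})=\phi(f_{2})$ then $f_{2}\in\brak{f_{1},f_{1}\ft}$, and since $\theta(1)(\ft)=\ft$ the two values $\phi(\theta(1)(f_{i}))$ agree.

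For part~(3) the argument is parallel, and I would locate the main obstacle here: one must guarantee that the central element $\ft$ lies in the \emph{amalgamating} subgroup $F$ (not merely in $H$), so that quotienting preserves the amalgam structure. For this I would invoke the standard fact that in a nontrivial amalgam $G_{1}\bigast_{F}G_{2}$ (here $[G_{i}:F]=2$) the centre is contained in $F$; hence $\ft\in F$ as soon as $\ft\in H$. The case $\ft\notin H$ is again immediate, and when $\ft\in F$ the subgroup $\ang{\ft}$ is central, hence normal, and contained in $F$, so quotienting the amalgam by it yields the amalgam of the quotients, $(G_{1}\bigast_{F}G_{2})/\ang{\ft}\cong(G_{1}/\ang{\ft})\bigast_{F/\ang{\ft}}(G_{2}/\ang{\ft})=\phi(G_{1})\bigast_{\phi(F)}\phi(G_{2})$, the indices $[\phi(G_{i}):\phi(F)]=2$ being preserved. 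The genuinely non-formal ingredient in the whole proposition is thus the intrinsic characterisation of Type used in part~(1); everything else reduces to the two quotient computations, which is why I would quote \repr{vcmcg} rather than re-establish the structure theory of virtually cyclic groups.
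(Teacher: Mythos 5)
Your proposal is correct and follows the paper's own route: the paper's entire proof of this proposition is the one-line application of \repr{vcmcg} with $G=B_{n}(\St)$, $G'=\mcg$, $p=\phi$ and $x=\ft$ the unique element of order~$2$, which is exactly your primary argument. Your supplementary first-principles sketches are also sound and essentially reproduce the proof of \repr{vcmcg} itself, the only variation being that in the Type~II case you place $\ft$ in the amalgamating subgroup via the fact that the centre of a non-trivial amalgam lies in the edge group, whereas the paper argues that a central torsion element lies in a factor $G_{i}$, and then in both factors by evenness of their orders and uniqueness of the element of order~$2$.
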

Equation~\reqref{mcg} and \redef{v1v2} together imply that the following virtually cyclic groups are those that will appear in the classification of the virtually cyclic subgroups of $\mcg$. If $m\geq 2$, let $\dih{2m}$ denote the \emph{dihedral group} of order $2m$.

%\comment{Following your suggestion, the following list of infinite virtually cyclic groups of $\mcg$ has been added, and is to be checked. In the case where the finite subgroup is cyclic, there is a calculation which gives the condition below on $q$ in the proof of \reth{classvcmcg} on page~\pageref{sec:genmcg}.}
\begin{defn}\label{def:v1v2mcg}
Let $n\geq 4$.
\begin{enumerate}[(1)]
\item Let $\widetilde{\mathbb{V}}_{1}(n)$ be the union of the following Type~I virtually cyclic groups:
\begin{enumerate}[(a)]
\item\label{it:mainzqmcg} $\Z_{q}\times \Z$, where $q$ is a strict divisor of $n-i$, $i\in \brak{0,1,2}$.
\item\label{it:mainzqtmcg} $\Z_{q}\rtimes_{\widetilde{\rho}} \Z$, where $q\geq 3$ is a strict divisor of $n-i$, $i\in \brak{0,2}$, and $\widetilde{\rho}(1)\in \aut{\Z_{q}}$ is multiplication by $-1$.
\item $\dih{2m}\times \Z$, where $m\geq 3$ is a strict divisor of $n-i$ and $i\in \brak{0,2}$.
\item $\dih{2m}\rtimes_{\widetilde{\nu}} \Z$, where $m\geq 3$ divides $n-i$, $i\in \brak{0,2}$, $(n-i)/m$ is even, and where $\widetilde{\nu}(1)\in \aut{\dih{2m}}$ is defined by:
\begin{equation*}%\label{eq:actdic4m}
\left\{
\begin{aligned}
\widetilde{\nu}(1)(x)&=x\\
\widetilde{\nu}(1)(y)&=xy
\end{aligned}\right.
\end{equation*}
for the presentation of $\dih{2m}$ given by $\setangr{x,y}{x^m=y^2=1,\; yxy^{-1}=x^{-1}}$.
\item $(\Z_{2}\oplus \Z_{2})\rtimes_{\widetilde{\theta}} \Z$, for $n$ even and $\widetilde{\theta}\in \operatorname{Hom}(\Z,\Z_{2}\oplus \Z_{2})$, for the following actions:
\begin{enumerate}[(i)]
\item $\widetilde{\theta}(1)=\id$.
\item $\widetilde{\theta}=\widetilde{\alpha}$, where $\widetilde{\alpha}(1)\in \aut{\Z_{2}\oplus \Z_{2}}$ is given by $\widetilde{\alpha}(1)((\overline{1},\overline{0}))=(\overline{0},\overline{1})$ and $\widetilde{\alpha}(1)((\overline{0},\overline{1}))=(\overline{1},\overline{1})$.
\item $\widetilde{\theta}=\widetilde{\beta}$, where $\widetilde{\beta}(1)\in \aut{\Z_{2}\oplus \Z_{2}}$ is given by $\widetilde{\beta}(1)((\overline{1},\overline{0}))=(\overline{1},\overline{1})$ and $\widetilde{\beta}(1)((\overline{0},\overline{1}))=(\overline{0},\overline{1})$.
\end{enumerate}

\item $\an[4] \times \Z$ for $n$ even.
\item $\an[4] \rtimes_{\widetilde{\omega}} \Z$ for $n\equiv 0,2 \bmod 6$, where $\widetilde{\omega}(1)\in \aut{\an[4]}$ is the automorphism defined as follows. Let $\an[4]=(\Z_{2}\oplus\Z_{2})\rtimes \Z_{3}$ where the action of $\Z_{3}$ on $\Z_{2}\oplus\Z_{2}$ permutes cyclically the three elements $(\overline{1},\overline{0})$, $(\overline{0},\overline{1})$ and $(\overline{1},\overline{1})$, and let $\widetilde{X}$ be a generator of the $\Z_{3}$-factor. Then we define $\widetilde{\omega}(1)\in \aut{\an[4]}$ by:
\begin{equation*}%\label{eq:nontrivacttstar}
\left\{
\begin{aligned}
(\overline{1},\overline{0}) &\mapsto (\overline{1},\overline{1})\\
(\overline{0},\overline{1}) &\mapsto (\overline{0},\overline{1})\\
\widetilde{X} &\mapsto \widetilde{X}^{-1}.
\end{aligned}\right.
\end{equation*}
%More details concerning this automorphism will be given in \resecglobal{generalities}{autout}. 

\item $\sn[4] \times \Z$ for $n\equiv 0,2 \bmod 6$.
\item $\an[5] \times \Z$ for $n\equiv 0,2,12,20 \bmod{30}$.
\end{enumerate}
\item\label{it:mainIIdefmcg} Let $\widetilde{\mathbb{V}}_{2}(n)$ be the union of the following Type~II virtually cyclic groups:
\begin{enumerate}[(a)]
\item $\Z_{2q}\bigast_{\Z_{q}} \Z_{2q}$, where $q$ divides $(n-i)/2$ for some $i\in\brak{0,1,2}$.

\item $\Z_{2q}\bigast_{\Z_{q}} \dih{2q}$, where $q\geq 2$ divides $(n-i)/2$ for some $i\in\brak{0,2}$.

\item $\dih{2q}\bigast_{\Z_{q}} \dih{2q}$, where $q\geq 2$ divides $n-i$ strictly for some $i\in\brak{0,2}$.

\item $\dih{2q}\bigast_{\dih{q}} \dih{2q}$, where $q\geq 4$ is even and divides $n-i$ for some $i\in\brak{0,2}$. 

\item $\sn[4] \bigast_{\an[4]} \sn[4]$, where $n\equiv 0,2 \bmod{6}$. 
\end{enumerate}
\end{enumerate}
Finally, let $\widetilde{\mathbb{V}}(n)=\widetilde{\mathbb{V}}_{1}(n) \bigcup \widetilde{\mathbb{V}}_{2}(n)$. 
\end{defn}

\pagebreak

We thus obtain the classification of the virtually cyclic subgroups of $\mcg$ (with a finite number of exceptions)
\begin{thm}\label{th:classvcmcg}
Let $n\geq 4$. Every infinite virtually cyclic subgroup of $\mcg$ is the image under $\phi$ of an element of\, $\mathbb{V}(n)$, and so is an element of\, $\widetilde{\mathbb{V}}(n)$. Conversely, if $G$ is an element of $\mathbb{V}(n)$ that satisfies the conditions of \reth{main}(\ref{it:mainII}) then $\phi(G)$ is an infinite virtually cyclic subgroup of $\mcg$. 
\end{thm}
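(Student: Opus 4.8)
The plan is to deduce the theorem directly from \reth{main} together with the correspondence recorded in \repr{corrbnmcg}, using only that the kernel $\ang{\ft}$ of $\phi$ is finite (of order~$2$). The guiding elementary observation is that, because $\ker\phi=\ang{\ft}$ is finite, both $\phi$ and $\phi^{-1}$ preserve the properties of being infinite and of being virtually cyclic: a subgroup of $\mcg$ is infinite virtually cyclic precisely when its full preimage in $B_{n}(\St)$ is, and conversely the image of an infinite virtually cyclic subgroup of $B_{n}(\St)$ is again infinite virtually cyclic.

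For the forward inclusion I would begin with an arbitrary infinite virtually cyclic subgroup $H'$ of $\mcg$. By the Epstein--Wall dichotomy (\reth{wall}) $H'$ is of Type~I or Type~II, so by the first part of \repr{corrbnmcg} its preimage $\phi^{-1}(H')$ is a virtually cyclic subgroup of $B_{n}(\St)$ of the same type; since $\ang{\ft}$ is finite and $H'$ is infinite, $\phi^{-1}(H')$ is infinite. Applying \reth{main}(\ref{it:mainI}) shows that $\phi^{-1}(H')$ is isomorphic to an element of $\mathbb{V}(n)$, and since $\phi$ is surjective we have $H'=\phi(\phi^{-1}(H'))$, which gives the first assertion. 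Because $\phi^{-1}(H')$ contains the whole kernel $\ang{\ft}$, it follows that $H'\cong \phi^{-1}(H')/\ang{\ft}$, and the second and third parts of \repr{corrbnmcg} identify this quotient as $\phi(F)\rtimes_{\theta'}\Z$ or $\phi(G_{1})\bigast_{\phi(F)}\phi(G_{2})$. It then remains to read off the quotients of the finite factors, using that $\ft$ is their unique central involution: $\dic{4m}/\ang{\ft}\cong\dih{2m}$, $\quat/\ang{\ft}\cong\Z_{2}\oplus\Z_{2}$, $\tonestar/\ang{\ft}\cong\an[4]$, $\oonestar/\ang{\ft}\cong\sn[4]$, $\istar/\ang{\ft}\cong\an[5]$, and $\Z_{2k}/\ang{\ft}\cong\Z_{k}$, together with the fact (\repr{corrbnmcg}(\ref{it:corrbnmcgii})) that $\theta'$ is the action induced by $\theta$ on the quotient. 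Matching these against the list in \redef{v1v2mcg} shows $H'\in\widetilde{\mathbb{V}}(n)$.

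For the converse, let $G\in\mathbb{V}(n)$ satisfy the hypotheses of \reth{main}(\ref{it:mainII}). That theorem provides a subgroup $\widehat{G}\leq B_{n}(\St)$ with $\widehat{G}\cong G$, and then $\phi(\widehat{G})\cong \widehat{G}/(\widehat{G}\cap\ang{\ft})$ is the image of an infinite virtually cyclic group under a homomorphism with finite kernel, hence is itself an infinite virtually cyclic subgroup of $\mcg$. This is exactly the statement that $\phi(G)$ is an infinite virtually cyclic subgroup of $\mcg$.

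The substance of the argument has already been absorbed into \reth{main} and \repr{corrbnmcg}, so I expect the only genuine care to lie in the bookkeeping of the forward direction: checking that every element of $\mathbb{V}(n)$, once quotiented by its central $\ang{\ft}$, lands in the prescribed family $\widetilde{\mathbb{V}}(n)$, and that the induced actions $\widetilde{\rho},\widetilde{\nu},\widetilde{\alpha},\widetilde{\beta},\widetilde{\omega}$ are indeed the images of $\rho,\nu,\alpha,\beta,\omega$. The one subtlety worth flagging is the non-injectivity of the induced correspondence noted before \repr{corrbnmcg}, whereby $\Z_{m}\rtimes_{\theta}\Z$ and $\Z_{2m}\rtimes_{\theta}\Z$ (for $m$ odd) share the image $\Z_{m}\rtimes_{\theta'}\Z$ in $\mcg$; this does not affect the set-level statement of the theorem, but must be kept in mind when listing $\widetilde{\mathbb{V}}(n)$ so as not to miscount isomorphism classes.
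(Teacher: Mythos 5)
Your proposal is correct and takes essentially the same route as the paper: \reth{main} combined with \repr{corrbnmcg} (itself \repr{vcmcg} applied with $x=\ft$), together with the identification of the quotients of the finite subgroups of $B_{n}(\St)$ by their central involution $\ft$. The only point worth noting is that the ``bookkeeping'' you defer is precisely the one verification the paper's proof writes out in detail, namely translating the conditions on the order $q$ of a cyclic factor in \redef{v1v2} (that $q$ divides $2(n-i)$ strictly, and $q\neq n-i$ if $n-i$ is odd) into the condition of \redef{v1v2mcg} that $q'$ (equal to $q/2$ if $q$ is even, and to $q$ if $q$ is odd) is a \emph{strict} divisor of $n-i$; the delicate case is $q$ odd, where $q'=q$ divides $n-i$, and strictness follows from $q\neq n-i$ when $n-i$ is odd and from parity when $n-i$ is even.
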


In \repr{isoamalgmcg}, we prove a result similar to that of \repr{isoamalg} for the Type~II subgroups of $\mcg$ that appear in \redef{v1v2mcg}(\ref{it:mainIIdefmcg}), namely that there is a single isomorphism class for such groups, with the exception of the amalgamated product $\dih{8} \bigast_{\dih{4}} \dih{8}$, for which there are exactly two isomorphism classes. In an analogous manner to that of $B_{n}(\St)$, if $n$ is even then \repr{s4a4} 
shows that each of these two classes is realised as a subgroup of $\mcg$, with the possible exception of the second isomorphism class when $n$ belongs to $\brak{6,14,18,26,30,38}$.

%To improve upon them slightly, using the interesting structure of the commutator subgroup of $B_4(\St)$ that we found in~\cite{GG5}, in \resecglobal{realisation}{b4calcs}, we are also able to obtain the complete classification of the virtually cyclic subgroups of $B_{4}(\St)$. 

As we mentioned previously, the real projective plane $\rp$ is the only other surface whose braid groups have torsion. In light of the results of this paper, it is thus natural to consider the problem of the classification of the virtually cyclic subgroups of $B_{n}(\rp)$ up to isomorphism. This is the subject of work in progress~\cite{GG11}. The first step, the classification of the finite subgroups of $B_{n}(\rp)$, was carried out in~\cite[Theorem~5]{GG10}. As in this paper, the classification of the infinite virtually cyclic subgroups of $B_{n}(\rp)$ is rather more difficult than in the finite case, but the combination of~\cite[Corollary~2]{GG10}, which shows that $B_{n}(\rp)$ embeds in $B_{2n}(\St)$, with~\reth{main} should be helpful in this respect.

\subsection*{Acknowledgements}

\enlargethispage{5mm}

This work took place during the visit of the second author to the Departmento de Mate\-m\'atica do IME~--~Universidade de S\~ao Paulo during the periods~14\up{th}~--~29\up{th}~April~2008, 18\up{th}~July~--~8\up{th} August~2008, 31\up{st}~October~--~10\up{th}~November~2008, 20\up{th}~May~--~3\up{rd}~June 2009, 11\up{th}~--~26\up{th}~April~2010, 4\up{th}~--~26\up{th}~October~2010, 24\up{th}~February~--~6\up{th}~March~2011 and~17\up{th}~--~25\up{th}~October~2011, and of the visit of the first author to the Laboratoire de Math\'ematiques Nicolas Oresme, Universit\'e de Caen Basse-Normandie during the periods 21\up{st}~November~--~21\up{st}~December~2008 and 5\up{th}~November--~5\up{th}~December~2010. This work was supported by the international Cooperation USP/Cofecub project n\up{o} 105/06, by the CNRS/FAPESP project n\up{o}~24460 (CNRS) et n\up{o} 2009/54745-1 (FAPESP), by the Fapesp `projeto tem\'atico Topologia alg\'ebrica, geom\'etria e diferencial' n\up{o} 08/57607-6, and by the~ANR project TheoGar n\up{o} ANR-08-BLAN-0269-02. The authors wish to thank Silvia~Mill\'an-L\'opez and Stratos~Prassidis for having posed the question of the virtually cyclic subgroups of surface braid groups, \'Etienne Ghys, Gilbert Levitt and Luis Paris for helpful conversations concerning the centraliser of finite order elements in the mapping class groups of the sphere, and Jes\'us Gonz\'alez for pointing out useful references related to the configuration space of the sphere. The second author would like to thank the CNRS for having granted him a `délégation' during the writing of part of this paper, and CONACYT (Mexico) for partial financial support through its programme `Estancias postdoctorales y sab\'aticas vinculadas al fortalecimiento de la calidad del posgrado nacional'.

\chapter{Virtually cyclic groups: generalities, reduction and the mapping class group}\label{part:generalities}

In \repart{generalities}, we start by recalling the definition of virtually cyclic groups and their characterisation due to Epstein and Wall. In \resec{generalities}, applying \reth{finitebn}, in \repr{possvcbnS2} we obtain a family $\mathcal{VC}$ of virtually cyclic groups that are potential candidates to be subgroups of $B_n(\St)$. The initial aim is to whittle down $\mathcal{VC}$ to the subfamily $\mathbb{V}(n)$ of infinite virtually cyclic groups described in \redef{v1v2} with the property that any infinite virtually cyclic subgroup of $B_{n}(\St)$ is isomorphic to an element of $\mathbb{V}(n)$.  
%\comment{Following your comments I added this to highlight the results we have for infinite virtually cyclic groups, and to emphasise the fact that the amalgamated product $G_{1}\bigast_{F} G_{2}$ is not completely defined just by the subgroups.} 
In \resec{generalities}, we also prove a number of results concerning infinite virtually cyclic groups, in particular \repr{vcmcg}, which will be used in \repart{realisation} to construct certain Type~II subgroups, and to prove \reth{classvcmcg}. Also of interest is \repr{amalgiso}, which will play an important rôle in \resecglobal{realisation}{isoclasses} in the study of the isomorphism classes of the Type~II subgroups of $B_{n}(\St)$, notably in the proof of \repr{isoamalg}, which shows that there is just one isomorphism class of each such subgroup, with the exception of $\quat[16]\bigast_{\quat} \quat[16]$, for which there are two isomorphism classes. Another result that shall be applied in \resecglobal{realisation}{isoclasses} is \repr{semiamalg} which implies that almost all elements of $\mathbb{V}_{2}(n)$ of the form $G\bigast_{H} G$ may be written as semi-direct products $\Z\rtimes G$. In \resecglobal{realisation}{genmcg} we will see that a similar result holds for the isomorphism classes of the Type~II subgroups of $\mcg$, the exceptional case being $\dih{8}\bigast_{\dih{4}} \dih{8}$. 
%In \repart{realisation}, we shall show that with a small number of exceptions, the elements of $\mathbb{V}(n)$ are indeed realised as subgroups of $B_{n}(\St)$.
%In order to reduce the list of potential candidates given by \repr{possvcbnS2}, we consider the two different types of infinite virtually cyclic group given by Wall's theorem. 

In \repart{generalities}, we then study the elements of $\mathcal{VC}$ of the form $F\rtimes_{\theta} \Z$, where $F$ is one of the finite groups occurring as a finite subgroup of $B_{n}(\St)$. One of the main difficulties that we face initially is that in general there are many possible actions of $\Z$ on $F$. However, as we shall see in Sections~\ref{sec:gencent}--\ref{sec:reducperiod}, a large number of these actions are incompatible with the structure of $B_{n}(\St)$. In \resec{gencent}, we prove \repr{genhodgkin1}, which will enable us to rule out the case where $F$ is a maximal finite cyclic or dihedral group. In \resec{autout}, we obtain a second reduction using the fact that the isomorphism class of $F\rtimes_{\theta} \Z$ depends only on the outer automorphism induced by $\theta(1)$ in $\out{F}$. Since we are primarily interested in the isomorphism classes of the virtually cyclic subgroups of $B_{n}(\St)$, it follows that it suffices to consider automorphisms of $F$ belonging to a transversal of $\out{F}$ in $\aut{F}$. The subsequent study of the structure of $\out{F}$, where $F$ is either $\quat$ or one of the binary polyhedral groups, then narrows down the possible Type~I subgroups of $B_{n}(\St)$. If $F=\tonestar,\oonestar$ or $\istar$ then $\out{F}\cong \Z_{2}$, so we have just two possible actions to consider, the trivial one, and a non-trivial one, which we shall describe. In \resec{conjfinite}, we obtain in \repr{genhodgkin2} an extension to $B_{n}(\St)$ of a result of Hodgkin concerning the centralisers of finite order elements of $\mcg$. This allows us to reduce greatly the number of possible actions in the case where $F$ is cyclic or dicyclic. In \resec{homotopytype}, in \repr{homot} we give an alternative proof of a result of~\cite{BCP,FZ} that says that if $n\geq 3$, the universal covering space of the $n\up{th}$ permuted configuration space $D_{n}(\St)$ of $\St$ has the homotopy type of $\St[3]$. This fact will then be used in \resec{percohI} to show in \relem{per24} that the non-trivial subgroups of $B_{n}(\St)$ have cohomological period $2$ or $4$. The ensuing study of the cohomology of the groups of the form $F\rtimes_{\theta}\Z$, where $F=\oonestar$ or $\istar$, will allow us to exclude the possibility of the non-trivial action in these cases. Putting together the analysis of Sections~\ref{sec:gencent}--\ref{sec:reducperiod} will lead us to the proof of \reth{main}(\ref{it:mainI}) for the Type~I subgroups. In \resec{typeII}, we study the infinite virtually cyclic groups of the form $G_{1}\bigast_{F} G_{2}$, where $F,G_{1},G_{2}$ are finite and $[G_{i}:F]=2$ for $i=1,2$. Using the cohomological properties obtained in \resec{percohI} and the relation with the groups of the form $F\rtimes_{\theta} \Z$, we show that any group of this form that is realised as a subgroup of $B_{n}(\St)$ is isomorphic to an element of $\mathbb{V}_{2}(n)$. This will enable us to prove \reth{main}(\ref{it:mainI}) in \resec{typeII}.

\section{Virtually cyclic groups: generalities}\label{sec:generalities}
%\label{sec:familyv}

We start by recalling the definition and Epstein and Wall's characterisation of virtually cyclic groups. We then proceed to prove some general results concerning these groups, notably Propositions~\ref{prop:isoamalg} and~\ref{prop:vcmcg}, that will be used in \repart{realisation} of the manuscript.

\begin{defn}
A group is said to be \emph{virtually cyclic} if it contains a cyclic subgroup of finite index.
\end{defn}

\begin{rems}\mbox{}\label{rem:vcgens}
\begin{enumerate}[(a)]
\item Every finite group is virtually cyclic.
\item Every infinite virtually cyclic group contains a normal subgroup of finite index.
\end{enumerate}
\end{rems}

The following criterion is well known; most of the first part is due to Epstein and Wall~\cite{Ep,W}.
\begin{thm}\label{th:wall}
Let $G$ be a group. Then the following statements are equivalent.
\begin{enumerate}[(a)]
\item\label{it:walla} $G$ is a group with two ends.
\item\label{it:wallb} $G$ is an infinite virtually cyclic group.
\item\label{it:wallc} $G$ has a finite normal subgroup $F$ such that $G/F$ is isomorphic to $\Z$ or to the infinite dihedral group $\Z_{2}\bigast \Z_{2}$.
\end{enumerate}
Equivalently, $G$ is of the form:
\begin{enumerate}[(i)]
\item\label{it:semi} $F \rtimes_{\theta} \Z$ for some action $\theta \in \operatorname{\text{Hom}}(\Z,\aut{F})$, or
\item\label{it:amalg} $G_{1} \bigast_{F} G_{2}$, where $[G_{i} : F]=2$ for $i=1,2$,
\end{enumerate}
where $G_{1},G_{2}$ and $F$ are finite groups. 
\end{thm}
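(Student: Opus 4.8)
The goal is to prove Theorem~\ref{th:wall}, the Epstein--Wall characterisation of infinite virtually cyclic groups. I shall treat the equivalence of the three conditions~(\ref{it:walla})--(\ref{it:wallc}) first, and then derive the structural dichotomy between the semi-direct product form~(\ref{it:semi}) and the amalgamated product form~(\ref{it:amalg}). The plan is to exhibit the cycle of implications $(\ref{it:walla})\Rightarrow(\ref{it:wallb})\Rightarrow(\ref{it:wallc})\Rightarrow(\ref{it:walla})$, so that each of the three conditions is sandwiched between the other two.

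First I would prove $(\ref{it:walla})\Rightarrow(\ref{it:wallb})$. A group with two ends is in particular infinite and finitely generated, and by Stallings' structure theorem (or directly, via the classification of groups with more than one end) it splits over a finite subgroup; the two-ended hypothesis forces the quotient to be either $\Z$ or $\Z_{2}\bigast\Z_{2}$, each of which contains $\Z$ with finite index, whence $G$ is virtually cyclic and infinite. For $(\ref{it:wallb})\Rightarrow(\ref{it:wallc})$, suppose $G$ is infinite and contains a finite-index cyclic subgroup $C$. By \rerem{vcgens}(b), $G$ contains a finite-index \emph{normal} subgroup, and intersecting the conjugates of $C$ one obtains a normal infinite cyclic subgroup $N\cong\Z$ of finite index. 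Set $F$ to be the kernel of the conjugation action of $G$ on $N$; since $\aut{\Z}\cong\Z_{2}$, the subgroup $F$ has index at most $2$ in the centraliser of $N$, is finite (being a normal torsion subgroup), and the quotient $G/F$ acts faithfully on $N$, forcing $G/F$ to be isomorphic either to $\Z$ or to the infinite dihedral group $\Z_{2}\bigast\Z_{2}$. The remaining implication $(\ref{it:wallc})\Rightarrow(\ref{it:walla})$ is the easiest: a finite normal extension of $\Z$ or of $\Z_{2}\bigast\Z_{2}$ is quasi-isometric to that quotient, and both $\Z$ and $\Z_{2}\bigast\Z_{2}$ have exactly two ends, so $G$ has two ends as well.

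Finally I would deduce the two normal forms. Given the finite normal subgroup $F$ with $G/F\cong\Z$ or $G/F\cong\Z_{2}\bigast\Z_{2}$ from condition~(\ref{it:wallc}), the first case yields directly a short exact sequence $1\to F\to G\to\Z\to 1$; since $\Z$ is free this sequence splits, giving $G\cong F\rtimes_{\theta}\Z$ as in~(\ref{it:semi}), the action $\theta$ being conjugation by a chosen lift of a generator. In the second case, $G/F\cong\Z_{2}\bigast\Z_{2}$ is itself an amalgamated product $\Z_{2}\bigast_{\{1\}}\Z_{2}$; pulling this decomposition back along the projection $G\to G/F$ presents $G$ as $G_{1}\bigast_{F}G_{2}$, where $G_{i}$ is the preimage of the $i\th$ factor $\Z_{2}$, so that $[G_{i}:F]=2$ for $i=1,2$, which is precisely form~(\ref{it:amalg}).

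The main obstacle, and the part genuinely requiring care, is the implication $(\ref{it:walla})\Rightarrow(\ref{it:wallb})$ together with the final pull-back in the second case. The former rests on the nontrivial structure theory of ends of groups (Stallings' theorem), which I would invoke rather than reprove; the latter requires checking that the Bass--Serre decomposition of the quotient lifts compatibly to $G$, in particular that the amalgamating subgroup of the pulled-back splitting is exactly $F$ rather than a proper overgroup or a conjugate. Verifying that $[G_{i}:F]=2$ and that the intersection $G_{1}\cap G_{2}$ collapses to $F$ is the one place where one must argue carefully rather than appeal to generalities, and it is where I would spend most of the detailed work.
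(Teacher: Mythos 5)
Your step from (b) to (c) is broken. You set $F$ equal to the kernel of the conjugation action of $G$ on $N$, that is, the kernel of the homomorphism $G\to\aut{N}\cong\Z_{2}$. But this kernel is exactly the centraliser $Z_{G}(N)$, which contains $N$ itself (as $N$ is abelian) and is therefore infinite; it is not a ``finite normal torsion subgroup'', and the phrase ``has index at most $2$ in the centraliser'' cannot be right, since the kernel \emph{is} the centraliser (it has index at most $2$ in $G$). The ensuing claim fails as well: because your $F$ acts trivially on $N$, the quotient $G/F$ embeds in $\aut{\Z}\cong\Z_{2}$, so a faithful action of $G/F$ on $N$ would force $\ord{G/F}\leq 2$, not $G/F\cong\Z$ or $\Z_{2}\bigast\Z_{2}$. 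What you actually need is to take $F$ to be the torsion subgroup $T$ of $C=Z_{G}(N)$, and then to prove two things that are absent from your proposal. First, that $T$ is a finite characteristic subgroup of $C$ with $C/T\cong\Z$: for instance, $N$ is central of finite index in $C$, so $[C,C]$ is finite by Schur's theorem, $C/[C,C]\cong\Z\oplus A$ with $A$ finite, and $T$ is the preimage of $A$. Second, that in the case $[G:C]=2$, any $g\in G\setminus C$ inverts $N$, hence inverts $C/T\cong\Z$, hence the image of $g^{2}$ in $C/T$ is fixed by inversion and so trivial, i.e.\ $g^{2}\in T$; thus $G/T$ is generated by a copy of $\Z$ and an involution acting by inversion, which is $\Z_{2}\bigast\Z_{2}$. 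Without this, the implication (b)$\Rightarrow$(c) --- the load-bearing edge of your cycle of implications --- is unproved.

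For comparison, the paper never proves that direction itself: it quotes Epstein for the equivalence of (a) and (b) and Wall for (a)$\Rightarrow$(c), and the only implication proved in the text is (c)$\Rightarrow$(b), done by constructing an explicit infinite cyclic subgroup of finite index (central in the $\Z$ case, normal in the $\Z_{2}\bigast\Z_{2}$ case). Your remaining steps are sound and essentially parallel to the paper's: invoking Stallings for (a)$\Rightarrow$(b) and quasi-isometry invariance of ends for (c)$\Rightarrow$(a) are legitimate citations, the splitting of $1\to F\to G\to\Z\to 1$ because $\Z$ is free gives form (i), and your pull-back of the two $\Z_{2}$ factors to index-$2$ overgroups $G_{1},G_{2}$ of $F$ is exactly the paper's construction of form (ii) (which the paper completes via the universal property of amalgamated products, Hopficity of $\Z_{2}\bigast\Z_{2}$, and the $5$-Lemma). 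So the architecture is salvageable, but you must either repair (b)$\Rightarrow$(c) as indicated above or, as the paper does, cite Wall for it.
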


\begin{defn}
%\mbox{}
%\begin{enumerate}
%\item 
An infinite virtually cyclic group will be said to be of \emph{Type~I} (resp.\ \emph{Type~II}) if it is of the form given by~(\ref{it:semi}) (resp.\ by~(\ref{it:amalg})).
%\item Given $n\geq 4$, let $\mathcal{VC}$ denote the family of virtually cyclic groups consisting of all groups of Type~I and Type~II whose factors $F,G_{1}$ and $G_{2}$ are subgroups of $\Z_{2(n-1)}$, $\dic{4n}$, $\dic{4(n-2)}$, $\tonestar$, $\oonestar$ or $\istar$.
%\end{enumerate}
\end{defn}

\begin{proof}[Proof of \reth{wall}.]
%The equivalence of parts~(\ref{it:walla})--(\ref{it:wallc}) may be found in~\cite{Ep,W}. \comment{check this.} We now prove the second statement of the theorem. First note that in part~(\ref{it:wallc}), the fact that $G/F$ is isomorphic to $\Z$ is clearly equivalent to condition~(\ref{it:semi}). Suppose then that condition~(\ref{it:amalg}) holds. Since $[G_{i}:F]=2$ for $i=1,2$, $F$ is normal in $G_{i}$, so is normal in $G=G_{1} \bigast_{F} G_{2}$, and $G/F\cong \Z_{2}\bigast \Z_{2}$. Finally, suppose that $G$ possesses a finite normal subgroup $F$ such that $G/F\cong\Z_{2}\bigast \Z_{2}$. Let $H_{1},H_{2}$ be subgroups of $G/F$ isomorphic to $\Z_{2}$ such that \comment{To be completed.}
%The equivalence of parts~(\ref{it:walla})--(\ref{it:wallc}) may be found in~\cite{Ep,W}. \comment{This is not quite true. Wall proves that (a) implies (c), and Epstein shows that (a) is equivalent to (b). So I suggest that we add the proof of the remaining implication, (c) implies (b), most of which was already in the proof of \repr{centralext}, as follows.} 
The equivalence of parts~(\ref{it:walla}) and~(\ref{it:wallb}) may be found in~\cite{Ep}, and the implication~(\ref{it:walla}) implies~(\ref{it:wallc}) is proved in~\cite{W}. So to prove the first part, it suffices to show that~(\ref{it:wallc}) implies~(\ref{it:wallb}). Suppose then that $G$ has a finite normal subgroup $F$ such that $G/F$ is isomorphic to $\Z$ or to $\Z_{2}\bigast \Z_{2}$. Clearly $G$ is infinite. Assume first that $G \cong F\rtimes_{\theta} \Z$, where $\theta\in \operatorname{Hom}(\Z,\aut{F})$, let $k$ be the order of the automorphism $\theta(1)\in \aut{F}$, let $\map{s}{G/F}[G]$ be a section for the canonical projection $\map{p}{G}[G/F]$, and let $x$ be a generator of the infinite cyclic group $G/F$. Since $\theta(x)(f)=s(x)f s(x^{-1})$ for all $f\in F$, it follows that the infinite cyclic subgroup $\ang{s(x^{k})}$ is central in $G$, and that there exists a commutative diagram of short exact sequences of the form:
\begin{equation}\label{eq:walltypeI}
\begin{xy}*!C\xybox{%
\xymatrix{%
& & 1 \ar[d] & 1 \ar[d]\\
& & \ang{s(x^{k})} \ar[r]^{p\left\lvert_{\ang{s(x^{k})}}\right.}_{\cong} \ar[d]  & \ar[d] \ang{x^{k}}\cong k\Z\\
1 \ar[r]  & F \ar[r] \ar[d]_{\phi\left\lvert_{F}\right.}^{\cong} & G \ar[d]^{\phi} \ar[r]^(.42){p} & G/F\cong \Z
\ar[d]^{\widehat{\phi}} \ar[r] & 1\\
1 \ar[r] & \ker{\widehat{p}} \ar[r] & G\left/\ang{s(x^{k})}\right. \ar[d]\ar@{.>}[r]_{\widehat{p}} & \Z/k\Z \ar[d]\ar[r] & 1,\\
& & 1 & 1}}
\end{xy}
\end{equation}
the left-hand vertical extension being central, where
\begin{equation*}
\text{$\map{\phi}{G}[G/\ang{s(x^{k})}]$ and $\map{\widehat{\phi}}{\Z}[\Z/k\Z]$}
\end{equation*}
are the canonical projections, and $\map{\widehat{p}}{G/\ang{s(x^{k})}}[\Z/k\Z]$ is the epimorphism induced on the quotients. Since the restriction of $p$ to $\ang{s(x^{k})}$ is an isomorphism, it follows that $\map{\phi\left\lvert_{F}\right.}{F}[\ker{\widehat{p}}]$ is too. Thus $G/\ang{s(x^{k})}$ is of order $k\ord{F}$. Since $\ang{s(x^{k})}$ is infinite cyclic, the left-hand vertical extension then implies that $G$ is virtually cyclic.

Now suppose that $G/F\cong\Z_{2}\bigast \Z_{2}$. Then $G/F\cong \Z \rtimes \Z_2$, where the action of $\Z_{2}$ on $\Z$ is non trivial. So there exist a short exact sequence
\begin{equation*}
1 \to F \to G \stackrel{p}{\to} \Z \rtimes \Z_2 \to 1
\end{equation*}
and a split extension
\begin{equation*}
1 \to F \to \widehat{G} \stackrel{p\left\lvert_{\widehat{G}}\right.}{\to} \Z \to 1,
\end{equation*}
where $\widehat{G}$ is the inverse image of the $\Z$-factor of $\Z\rtimes \Z_{2}$ under $p$. Let $x$ be a generator of $\Z$, and let $\map{s}{\Z}[\widehat{G}]$ be a section for $p\left\lvert_{\widehat{G}}\right.$. Applying the argument of the previous paragraph to $\widehat{G}\cong F\rtimes \Z$, there exists a central extension
\begin{equation*}
1 \to \ang{s(x^{k})} \to \widehat{G} \to \widehat{G}\left/\ang{s(x^{k})}\right. \to 1,
\end{equation*}
where $k$ is the order of $\aut{F}$. %Consider the following commutative diagram of short exact sequences:
%\begin{equation*}
%\xymatrix{%
%& & 1 \ar[d] & 1 \ar[d]\\
%1 \ar[r]  & F \ar[r] \ar@{=}[d] & \widehat{G} \ar[d] \ar[r]^{p\left\lvert_{\widehat{G}}\right.} & \Z \ar[d] \ar[r] & 1\\
%1\ar[r] & F \ar[r] & G \ar[d] \ar[r]^{p} & \Z\rtimes \Z_{2} \ar[d] \ar[r] & 1. \\
%&  & \Z_{2} \ar[d]\ar@{=}[r]& \Z_{2} \ar[d] &\\
%& & 1 & 1}
%\end{equation*}
Let $m=\ord{F}$. We claim that $\ang{s(x^{mk})}$ is normal in $G_1\bigast_{F} G_{2}$. To see this, first note that $\ang{s(x^{mk})}\subset Z(\widehat{G})$. Now let $g\in G\setminus \widehat{G}$. Then $p(gs(x^{k})g^{-1})=x^{-k}$ since $p(g)$ is sent to an element of the form $(x^{q},\overline{1})$ in $\Z\rtimes \Z_{2}$, where $q\in \Z$. Hence $gs(x^{k})g^{-1}=s(x^{-k})f$, where $f\in F$. Since $s(x^{k})\in Z(\widehat{G})$, $s(x^{k})$ commutes with $f$, and so
\begin{equation}\label{eq:noncentral}
gs(x^{mk})g^{-1}=(s(x^{-k})f)^{m}=s(x^{-mk}).
\end{equation}
We thus have the following commutative diagram of short exact sequences:
\begin{equation}\label{eq:walltypeII}
\begin{xy}*!C\xybox{%
\xymatrix{%
& & 1 \ar[d] & 1 \ar[d]\\
1 \ar[r]  & \ang{s(x^{mk})} \ar[r] \ar@{=}[d] & \widehat{G} \ar[d] \ar[r] & \widehat{G}\left/\ang{s(x^{mk})}\right.\ar[d] \ar[r] & 1\\
1\ar[r] & \ang{s(x^{mk})} \ar[r] & G \ar[d] \ar[r] & G \left/\ang{s(x^{mk})}\right. \ar[d] \ar[r] & 1. \\
&  & \Z_{2} \ar[d]\ar@{=}[r]& \Z_{2} \ar[d] &\\
& & 1 & 1}}
\end{xy}
\end{equation}
An argument similar to that of the previous paragraph shows that $\ord{\widehat{G}\left/\ang{s(x^{mk})}\right.}=m^{2}k$, and so $\ord{G\left/\ang{s(x^{mk})}\right.}=2m^{2}k$. Since $\ang{s(x^{mk})}\cong \Z$, it follows from the second row of~\reqref{walltypeII} that $G$ is virtually cyclic. This shows that~(\ref{it:wallc}) implies~(\ref{it:wallb}), and thus completes the proof of the first part of the statement.

We now prove the second statement of the theorem. First note that in part~(\ref{it:wallc}), the fact that $G/F$ is isomorphic to $\Z$ is clearly equivalent to condition~(\ref{it:semi}). Suppose then that condition~(\ref{it:amalg}) holds. Since $[G_{i}:F]=2$ for $i=1,2$, $F$ is normal in $G_{i}$, so is normal in $G=G_{1} \bigast_{F} G_{2}$, and $G/F\cong \Z_{2}\bigast \Z_{2}$. Finally, suppose that $G$ has a finite normal subgroup $F$ such that $G/F\cong\Z_{2}\bigast \Z_{2}$. Let $\map{\Pi}{G}[G/F]$ denote the canonical projection. For $i=1,2$, let $y_{i}\in G/F$ be such that $G/F=\setangr{y_{1},y_{2}}{y_{1}^{2}=y_{2}^{2}=1}$, and let $G_{i}=\Pi^{-1}(\ang{y_{i}})$. Then the groups $G_{i}$ are finite and each contain $F$ as a subgroup of index $2$. We can thus form the amalgamated product $G_{1}\bigast_{F} G_{2}$. So $F$ is normal in $G_{1}\bigast_{F} G_{2}$, and the quotient $(G_{1}\bigast_{F} G_{2})/F$ is isomorphic to $\Z_{2}\bigast \Z_{2}$. By standard properties of amalgamated products, there exists a unique (surjective) homomorphism $\map{\phi}{G_{1}\bigast_{F} G_{2}}[G]$ that makes the following diagram of short exact sequences commutative:
\begin{equation*}
\xymatrix{%
1 \ar[r]  & F \ar[r] \ar@{=}[d] & G_1\bigast_{F}G_2 \ar[d]^{\phi} \ar[r]^(.42)q & (G_{1}\bigast_{F} G_{2})/F
\ar@{.>}[d]^{\widehat{\phi}} \ar[r] & 1\\
1 \ar[r] & F \ar[r] & G \ar[r]^(.45){\Pi} & G/F \ar[r] & 1,}
\end{equation*}
$q$ being the canonical projection, and where $\widehat{\phi}$ is the induced homomorphism on the quotients. Now for $i=1,2$, $\phi(g)=g$ for all $g\in G_{i}$, and so $\widehat{\phi}(q(x_{i}))=y_{i}$. In particular, $\widehat{\phi}$ sends the $\Z_{2}$-factors of $(G_{1}\bigast_{F} G_{2})/F$ isomorphically onto those of $G/F$, and thus $\widehat{\phi}$ is an isomorphism. It follows from the $5$-Lemma that $\phi$ is also an isomorphism.
%
%%[\Z_{2}\bigast \Z_{2}]
%, and denote the canonical projection by . For  $i=1,2$, let $G_i=\Pi^{-1}\left(\Z_{2}^{(i)}\right)$, where the $\left(\Z_{2}^{(i)}\right)$ are the factors of the amalgamated product $\Z_{2}\bigast \Z_{2}$. Then the groups $G_1, G_2$ are finite and contain $F$ as a normal subgroup of index $2$, and we can thus form the amalgamated product $G_1\bigast_{F}G_2$. The inclusions $G_i \subset G$ induce a homomorphism $\map{\phi}{G_1 \bigast_{F}G_2}[G]$ and a commutative diagram of short exact sequences:
%\begin{equation*}
%\xymatrix{%
%1 \ar[r]  & F \ar[r] \ar@{=}[d] & G_1\bigast_{F}G_2 \ar[d]^{\phi} \ar[r]^q & \Z_{2}\bigast \Z_{2}
%\ar[d] \ar[r] & 1\\
%1 \ar[r] & F \ar[r] & G \ar[r]_{\Pi} & \Z_{2}\bigast \Z_{2} \ar[r] & 1,}
%\end{equation*}
%Since the left and right-hand vertical arrows are isomorphisms, it follows that $\phi$ is also an isomorphism.
\end{proof}

The following result shows that the type of an infinite virtually cyclic group is determined by the (non) centrality of the extension given by \reth{wall}(\ref{it:wallb}).

\begin{prop}\label{prop:centralext}
Let $G$ be an infinite virtually cyclic group. Then $G$ is of Type~I (resp.\ of Type~II) if and only if the extension
\begin{equation}\label{eq:seszgf}
1\to \Z\to G\to F\to 1
\end{equation}
arising in the definition of virtually cyclic group is central (resp.\ is not central).
\end{prop}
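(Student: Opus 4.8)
The plan is to read the result off from the explicit extensions already constructed in the proof of \reth{wall}, the only additional ingredient being that the centrality of the extension~\reqref{seszgf} does not depend on the particular choice of normal infinite cyclic subgroup of finite index. I shall therefore first reformulate centrality, then isolate this independence, and finally treat the two types in turn. Given an infinite virtually cyclic group $G$ and an extension~\reqref{seszgf} in which the kernel $C\cong\Z$ is a normal infinite cyclic subgroup of finite index and $F=G/C$ is finite, conjugation yields a homomorphism $\map{\psi_{C}}{G}[\aut{\Z}]$; since $\aut{\Z}\cong\Z_{2}$, the extension is central if and only if $\psi_{C}$ is trivial, that is, if and only if $C\subseteq Z(G)$.

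The key step is to show that the triviality of $\psi_{C}$ is independent of $C$, so that the centrality of~\reqref{seszgf} is a well-defined property of $G$. Given two such subgroups $C_{1},C_{2}$, their intersection $C_{1}\cap C_{2}$ is again normal, infinite cyclic, and of finite index, being a finite-index subgroup of each $C_{i}$. Since the only non-trivial automorphism of $\Z$ is inversion, and inversion restricts to inversion on any non-trivial subgroup, an element $g\in G$ acts trivially on $C_{i}$ if and only if it acts trivially on $C_{1}\cap C_{2}$; hence $\psi_{C_{1}}$ is trivial if and only if $\psi_{C_{2}}$ is. I expect this well-definedness to be the main (though modest) obstacle, since it is precisely what allows the two forward implications below to be assembled into the desired biconditionals.

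It then remains to match each type with (non)centrality, which is essentially contained in the proof of \reth{wall}. If $G$ is of Type~I, that proof produces the central subgroup $\ang{s(x^{k})}\cong\Z$ of finite index displayed in diagram~\reqref{walltypeI}; this is an instance of~\reqref{seszgf} with central kernel, so by the independence lemma every such extension is central. If $G$ is of Type~II, the same proof produces the normal subgroup $\ang{s(x^{mk})}\cong\Z$ of finite index of diagram~\reqref{walltypeII}, and~\reqref{noncentral} shows that conjugation by any $g\in G\setminus\widehat{G}$ inverts its generator; hence this extension is not central, and by the independence lemma neither is~\reqref{seszgf}.

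Finally, since by \reth{wall} every infinite virtually cyclic group is of Type~I or of Type~II, these two implications combine with the well-definedness to give both biconditionals: if~\reqref{seszgf} is central then $G$ cannot be of Type~II, whence it is of Type~I, and dually, if~\reqref{seszgf} is not central then $G$ cannot be of Type~I, whence it is of Type~II.
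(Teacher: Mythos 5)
Your proposal is correct and follows essentially the same route as the paper's own proof: you first establish that centrality is independent of the choice of normal infinite cyclic finite-index subgroup (via the intersection $C_{1}\cap C_{2}$ and the fact that an automorphism of $\Z$ is determined by its restriction to any non-trivial subgroup), and then read off centrality for Type~I and non-centrality for Type~II from the explicit extensions~\reqref{walltypeI} and~\reqref{walltypeII} together with~\reqref{noncentral} in the proof of \reth{wall}. The only difference is cosmetic — phrasing centrality through the conjugation homomorphism $G\to\aut{\Z}\cong\Z_{2}$ — so nothing further is needed.
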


%\comment{The proof has been shortened because most of it is now in that of \reth{wall}.}
\begin{proof}
In order to prove the proposition, we start by showing that if
\begin{equation*}
1\to \Z \stackrel{\iota_{j}}{\to} G \to F_{j} \to 1 \quad\text{for $j=1,2$,}
\end{equation*}
are extensions of $G$, with $F_{j}$ finite, then they are either both central or both non central. Note that the intersection $\iota_1(\Z) \cap \iota_2(\Z)$ is a normal subgroup of $G$ of finite index, and so is infinite cyclic. Since an automorphism of $\Z$ is completely determined by its restriction to the subgroup $k\Z\subset \Z$ for any $k\neq 0$, (as the automorphism and its restriction are either both equal to $\id$ or to $-\id$), the two extensions are thus either both central or both non central. 

To prove the necessity of the condition, consider the extension~\reqref{seszgf} given by the definition of virtually cyclic group. Assume first that $G$ is of Type~I. By the first part of \reth{wall}, there exists a finite subgroup $F'$ of $G$ and $\theta\in \operatorname{Hom}(\Z,\aut{F'})$ such that $F'\rtimes_{\theta} \Z$. Using the notation of the first part of the proof of \reth{wall}, as in the commutative diagram~\reqref{walltypeI}, we obtain a central extension
\begin{equation*}
1\to \ang{s(x^{k})}\to G\to G\left/\ang{s(x^{k})}\right.\to 1.
\end{equation*}
Since $\ang{s(x^{k})}\cong \Z$, it follows from the first paragraph that the extension~\reqref{seszgf} is central.

Now suppose that $G$ is of Type~II. From the proof of the first part of \reth{wall}, from the commutative diagram~\reqref{walltypeII} we obtain an extension
\begin{equation*}
1\to \ang{s(x^{mk})}\to G\to G\left/\ang{s(x^{mk})}\right.\to 1,
\end{equation*}
where $\ang{s(x^{mk})}\cong \Z$, $G\left/\ang{s(x^{mk})}\right.$ is finite. Equation~\reqref{noncentral} implies that this extension is non central. Using the first paragraph once more, it follows that the extension~\reqref{seszgf} is non central.
This proves the necessity of the conditions.

%
%It is not hard to see that the subgroup $k\Z$ of $G_1\bigast_{F} G_{2}$ is normal  and that the extension is non central because the generator of $\Z_2$ acts non trivially on $\Z$, and therefore on $k\Z$.  

Conversely, if the extension~\reqref{seszgf} is central (resp.\ non central)  then from \reth{wall}, it must be of Type~I (resp.\ Type~II) because as we saw in the two previous paragraphs, any group of Type~I (resp.\ Type~II) is the middle group of a central (resp.\ non central) extension. But by the first paragraph of this proof, this property is independent of the short exact sequence.
\end{proof}

The following proposition will be used in \resecglobal{realisation}{isoclasses} to give an alternative description of the elements of $\mathbb{V}_{2}(n)$ as semi-direct products.

\begin{prop}\label{prop:semiamalg}
Let $G_{1}$ and $G_{2}$ be isomorphic groups, and consider the amalgamated product $G=G_{1}\bigast_{H} G_{2}$ defined by 
\begin{equation*}
\xymatrix{%
& H_{1} \ar[r] & G_{1} \ar[rd] & \\
H \ar[ru]^{i_{1}} \ar[rd]_{i_{2}} & & & G_1\bigast_{H} G_2,\\
& H_{2} \ar[r] & G_{2} \ar[ru]  & }
\end{equation*}
where for $j=1,2$, $H_{j}$ is a subgroup of $G_{j}$ of index $2$ and $\map{i_{j}}{H}[H_{j}]$ is an embedding of the abstract group $H$ in $G_{j}$, the remaining arrows being inclusions. Suppose that the isomorphism $\map{i_{2}\circ i_{1}^{-1}}{H_{1}}[H_{2}]$ extends to an isomorphism $\map{\iota}{G_{1}}[G_{2}]$. Then $G \cong \Z \rtimes G_{i}$, where the action is given by
\begin{equation}\label{eq:actamalg}
g_{i} t g_{i}^{-1}=
\begin{cases}
t & \text{if $g_{i}\in H_{i}$}\\
t^{-1} & \text{if $g_{i}\in G_{i}\setminus H_{i}$,}
\end{cases}
\end{equation}
$t$ being a generator of the $\Z$-factor.
\end{prop}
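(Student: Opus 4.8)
The plan is to exhibit an explicit infinite cyclic normal subgroup of $G=G_{1}\bigast_{H} G_{2}$ on which $G_{1}$ acts in the prescribed fashion, and then to show that $G$ splits as the corresponding semi-direct product. Throughout I would identify $H$ with its images, so that for $h\in H$ the elements $i_{1}(h)\in H_{1}\subseteq G_{1}$ and $i_{2}(h)\in H_{2}\subseteq G_{2}$ are equal in $G$. The hypothesis that $\iota$ extends $i_{2}\circ i_{1}^{-1}$ then says precisely that $\iota(h_{1})=h_{1}$ in $G$ for every $h_{1}\in H_{1}$, while $\iota$ maps $G_{1}\setminus H_{1}$ bijectively onto $G_{2}\setminus H_{2}$.

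First I would fix $a\in G_{1}\setminus H_{1}$, set $b=\iota(a)\in G_{2}\setminus H_{2}$, and define $t=ba^{-1}\in G$, claiming that $\langle t\rangle$ is the desired $\Z$-factor. Two consequences of the hypothesis on $\iota$ are what make this work: (i) since $a^{2}\in H_{1}$ we have $b^{2}=\iota(a)^{2}=\iota(a^{2})=a^{2}$ in $G$; and (ii) for $h\in H_{1}$ and $x\in G_{1}$ one has $h\iota(x)h^{-1}=\iota(h)\iota(x)\iota(h)^{-1}=\iota(hxh^{-1})$, because $h=\iota(h)$ in $G$.

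Next I would verify the conjugation relations. For $h\in H_{1}$, using (ii) one gets $hth^{-1}=\iota(hah^{-1})\,(hah^{-1})^{-1}$, and writing $hah^{-1}=ak$ with $k\in H_{1}$ (legitimate since $H_{1}\triangleleft G_{1}$) and noting $\iota(k)=k$, this collapses to $bk\,k^{-1}a^{-1}=ba^{-1}=t$; hence $H_{1}$ centralises $t$. For the generator $a$ itself, $ata^{-1}=ab a^{-2}=ab b^{-2}=ab^{-1}=t^{-1}$ by (i), and the identical computation gives $btb^{-1}=b^{2}a^{-1}b^{-1}=a^{2}a^{-1}b^{-1}=ab^{-1}=t^{-1}$. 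Since $G_{1}=H_{1}\sqcup aH_{1}$ and $G_{2}=H_{2}\sqcup bH_{2}$, it follows that every element of $H_{i}$ centralises $t$ and every element of $G_{i}\setminus H_{i}$ inverts $t$. In particular $G=\langle G_{1},G_{2}\rangle$ normalises $\langle t\rangle$, so $\langle t\rangle\triangleleft G$, and the induced action is exactly the one in~\reqref{actamalg}.

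Finally I would identify $G$ with $\langle t\rangle\rtimes G_{1}$. To see that $\langle t\rangle$ is infinite cyclic and meets $G_{1}$ trivially, I would pass to the quotient by $H$: as $H$ is normal in both factors it is normal in $G$, and $G/H\cong (G_{1}/H)\bigast (G_{2}/H)\cong \Z_{2}\bigast \Z_{2}$. The image of $t$ is the product of the two generating involutions, which generates the infinite cyclic part of $\Z_{2}\bigast\Z_{2}$ and lies outside the image of $G_{1}$; hence $t$ has infinite order and $\langle t\rangle\cap G_{1}=\brak{1}$. Since $b=ta\in\langle t\rangle G_{1}$ we obtain $G_{2}\subseteq\langle t\rangle G_{1}$, and as $\langle t\rangle$ is normal, $\langle t\rangle G_{1}$ is a subgroup containing both $G_{1}$ and $G_{2}$, so equals $G$. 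Combined with $\langle t\rangle\cap G_{1}=\brak{1}$, this yields the internal decomposition $G=\langle t\rangle\rtimes G_{1}$ with the action~\reqref{actamalg}, whence $G\cong\Z\rtimes G_{i}$. I expect the main obstacle to be the bookkeeping in the conjugation computations of the third paragraph: the assertions that $H_{1}$ centralises $t$ and that $a$ inverts $t$ both rest on the consequences (i) and (ii) of the extension hypothesis, and it is exactly here that the assumption on $\iota$ — rather than merely $[G_{i}:H]=2$ — is indispensable, since otherwise the amalgam need not split as a semi-direct product with this particular action.
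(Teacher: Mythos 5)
Your proof is correct, and it reaches the same generator $t$ (up to inversion: the paper takes $t=g_{1}(\iota(g_{1}))^{-1}$, you take $t=\iota(a)a^{-1}$), but the mechanism is genuinely different. The paper never computes conjugation relations by hand: it uses the universal property of the amalgamated product to build a retraction $\map{\alpha}{G_{1}\bigast_{H}G_{2}}[G_{2}]$ (equal to $\iota$ on $G_{1}$ and to the identity on $G_{2}$ — this is exactly where the extension hypothesis enters, making $\alpha$ well defined), which immediately yields a split short exact sequence $1\to\ker{\alpha}\to G\to G_{2}\to 1$; it then identifies $\ker{\alpha}\cong\Z$ and reads off the action by a diagram chase against the projection $\map{p}{G}[\Z_{2}\bigast\Z_{2}]$. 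You instead work entirely inside $G$: your observations $b^{2}=a^{2}$ and $h\iota(x)h^{-1}=\iota(hxh^{-1})$ give the conjugation formulas directly, normality of $\ang{t}$ follows, and you obtain the internal decomposition $G=\ang{t}\rtimes G_{1}$ from $\ang{t}\cap G_{1}=\brak{1}$ and $b=ta$. Both arguments use the quotient $G/H\cong\Z_{2}\bigast\Z_{2}$ to see that $t$ has infinite order. The trade-off: the paper's retraction argument is more structural — the splitting and the complement ($G_{2}$ rather than your $G_{1}$, immaterial since $G_{1}\cong G_{2}$) come for free from the universal property — whereas your computation is more elementary and self-contained, and has the pedagogical merit of isolating precisely which consequences of the hypothesis on $\iota$ (the two relations (i) and (ii)) make the semi-direct product structure appear.
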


\begin{proof}
We start by constructing a homomorphism $\map{\alpha}{G_{1}\bigast_{H} G_{2}}[G_{2}]$. It suffices to define $\alpha$ on the elements of $G_{1}$ and $G_{2}$. Let
\begin{equation*}
\alpha(x)=
\begin{cases}
\iota(x) & \text{if $x\in G_{1}$}\\
x & \text{if $x\in G_{2}$.}
\end{cases}
\end{equation*}
Then $\alpha$ is well defined, since if $h\in H$ then $\alpha(i_{1}(h))=\iota(i_{1}(h))=i_{2}(h)=\alpha(i_{2}(h))$ since $i_{j}(h)\in H_{j}$ for $j\in \brak{1,2}$.
Hence we obtain a split short exact sequence:
\begin{equation}\label{eq:sesalpha}
1 \to \ker{\alpha} \to G_{1}\bigast_{H} G_{2} \to G_{2} \to 1,
\end{equation}
where a section $\map{s}{G_{2}}[G_{1}\bigast_{H} G_{2}]$ is just given by inclusion. It remains to show that $\ker{\alpha}\cong \Z$, and to determine the action. 

Let $\map{p}{G_{1}\bigast_{H} G_{2}}[\Z_{2}\bigast \Z_{2}]$ be the canonical projection of $G_{1}\bigast_{H} G_{2}$ onto the quotient $(G_{1}\bigast_{H} G_{2})/H$. If $h\in H$ then $\alpha(i_{2}(h))=i_{2}(h)$, so the lower left-hand square of the following diagram of short exact sequences is commutative:
\begin{equation*}
\xymatrix{%
& & 1\ar[d] & 1\ar[d] &\\
& & \ker{\alpha}\ar[d] & \Z\ar[d] & \\
1 \ar[r] & H \ar[r]^{i_{2}} \ar@{=}[d] & G_{1}\bigast_{H} G_{2} \ar[r]^{p} \ar[d]^{\alpha} & \Z_{2} \bigast \Z_{2} \ar[r] \ar@{.>}[d]^{\widehat{\alpha}} & 1\\
1 \ar[r] & H \ar[r]^{i_{2}} \ar[d] &  G_{2} \ar[r]^{p\left\lvert_{G_{2}}\right.}  \ar[d] & \Z_{2} \ar[r]\ar[d] & 1.\\
& 1 & 1 & 1 &}
\end{equation*}
Thus $\alpha$ induces a homomorphism $\map{\widehat{\alpha}}{\Z_{2} \bigast \Z_{2}}[\Z_{2}]$ that makes the lower right-hand square commute. Let $i\in \brak{1,2}$, and suppose that $g_{i}\in G_{i}\setminus H_{i}$. If $i=1$ then $\alpha(g_{1})=\iota(g_{1})\in G_{2}\setminus H_{2}$ because $\iota$ is an isomorphism that sends $H_{1}$ to $H_{2}$, while if $i=2$ then $\alpha(g_{2})=g_{2}\in G_{2}\setminus H_{2}$. We conclude that $p(\alpha(g_{i}))=\overline{1}$. Setting $x_{i}=p(g_{i})$, the commutativity of the above diagram implies firstly that $\widehat{\alpha}(x_{i})=\overline{1}$, and hence $\ker{\widehat{\alpha}}= \ang{x_{1}x_{2}}\cong \Z$, and secondly that the restriction
\begin{equation*}
\map{p\left\lvert_{\ker{\alpha}}\right.}{\ker{\alpha}}[\ker{\widehat{\alpha}}]
\end{equation*}
is an isomorphism and that $\ker{\alpha}=\ang{g_{1}(\iota(g(1)))^{-1}}\cong \Z$ for any $g_{1}\in G_{1}\setminus H_{1}$. Thus $G_{1}\bigast_{H} G_{2}\cong \Z\rtimes G_{2}$ by \req{sesalpha}. Further, if $g_{2}\in G_{2}$ then
\begin{align*}
p\biggl(g_{2} \bigl(g_{1}(\iota(g(1)))^{-1}\bigr) g_{2}^{-1}\biggr)&= p(g_{2}) x_{1}x_{2} p(g_{1}^{-1})\\
&=
\begin{cases}
x_{1}x_{2} & \text{if $g_{2}\in i_{2}(H)=H_{2}$}\\
x_{2}x_{1}x_{2}x_{2}^{-1}=x_{2}x_{1}= (x_{1}x_{2})^{-1} & \text{if $g_{2}\in G_{2}\setminus H_{2}$.}
\end{cases}
\end{align*}
The action given by \req{actamalg} then follows from the commutativity of the above diagram, where $t$ is taken to be the element $g_{1}(\iota(g(1)))^{-1}$.
\end{proof}

We now turn our attention to the virtually cyclic subgroups of $B_{n}(\St)$. 

\begin{defn}%\mbox{}
%\begin{enumerate}
%\item An infinite virtually cyclic group will be said to be of \emph{Type~I} (resp.\ \emph{Type~II}) if it is of the form given by~(\ref{it:semi}) (resp.\ by~(\ref{it:amalg})).
%\item 
Given $n\geq 4$, let $\mathcal{VC}$ denote the family of virtually cyclic groups consisting of all groups of Type~I and Type~II whose factors $F,G_{1}$ and $G_{2}$, as described by \reth{wall}, are subgroups of $\Z_{2(n-1)}$, $\dic{4n}$, $\dic{4(n-2)}$, $\tonestar$, $\oonestar$ or $\istar$.
%\end{enumerate}
\end{defn}

The family $\mathcal{VC}$ thus consists of the infinite virtually cyclic groups that are formed using the finite subgroups of $B_{n}(\St)$. The following proposition is an immediate consequence of Theorems~\ref{th:finitebn} and~\ref{th:wall}.
\begin{prop}\label{prop:possvcbnS2}
Let $G$ be a virtually cyclic subgroup of $B_n(\St)$. 
\begin{enumerate}[(a)]
\item\label{it:parta} If $G$ is finite then it is isomorphic to a subgroup of one of $\Z_{2(n-1)}$, $\dic{4n}$, $\dic{4(n-2)}$, $\tonestar$, $\oonestar$ or $\istar$.
\item\label{it:partb} If $G$ is infinite then it is isomorphic to an element of $\mathcal{VC}$.
\end{enumerate}
\end{prop}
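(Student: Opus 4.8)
The goal is to prove \repr{possvcbnS2}, which characterises the virtually cyclic subgroups of $B_n(\St)$ in terms of the finite subgroup classification from \reth{finitebn} and the Epstein--Wall dichotomy from \reth{wall}.

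For part~(\ref{it:parta}), the plan is straightforward: if $G$ is a finite subgroup of $B_n(\St)$, then by \reth{finitebn} it is contained in one of the maximal finite subgroups listed there, namely (up to isomorphism) $\Z_{2(n-1)}$, $\dic{4n}$, $\dic{4(n-2)}$, $\tonestar$, $\oonestar$ or $\istar$. Hence $G$ is isomorphic to a subgroup of one of these six groups. I would simply invoke \reth{finitebn} directly, noting that every finite subgroup is contained in some maximal finite subgroup (a finite group has maximal finite subgroups containing any given subgroup, by finiteness of the ambient order considerations; more precisely every finite subgroup of $B_n(\St)$ embeds in one of the listed maximal ones by the classification).

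For part~(\ref{it:partb}), suppose $G$ is an infinite virtually cyclic subgroup of $B_n(\St)$. By \reth{wall}, $G$ is either of Type~I, i.e.\ of the form $F\rtimes_{\theta}\Z$ with $F$ finite, or of Type~II, i.e.\ of the form $G_{1}\bigast_{F} G_{2}$ with $F,G_{1},G_{2}$ finite and $[G_{i}:F]=2$. In either case, the finite groups $F$ (and $G_1,G_2$ in the Type~II case) appearing in this decomposition are themselves finite subgroups of $B_n(\St)$: indeed $F$ is a finite normal subgroup of $G$, hence a finite subgroup of $B_n(\St)$, and in the Type~II case $G_1=\Pi^{-1}(\ang{y_1})$ and $G_2=\Pi^{-1}(\ang{y_2})$ are finite subgroups as well. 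Applying part~(\ref{it:parta}) to each of these finite factors, they are isomorphic to subgroups of $\Z_{2(n-1)}$, $\dic{4n}$, $\dic{4(n-2)}$, $\tonestar$, $\oonestar$ or $\istar$. But this is precisely the condition defining the family $\mathcal{VC}$, so $G$ is isomorphic to an element of $\mathcal{VC}$.

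This proof is essentially a direct assembly of the two cited theorems, so there is no genuine obstacle to overcome; the only point requiring a little care is to check that the finite factors $F$, $G_1$, $G_2$ produced by \reth{wall} really are realised as finite \emph{subgroups} of $B_n(\St)$ (rather than merely abstract finite groups), so that \reth{finitebn} applies to them. This follows because the Epstein--Wall decomposition of a subgroup $G\leq B_n(\St)$ exhibits $F$ as the finite normal subgroup $F\trianglelefteq G$ and the $G_i$ as intermediate subgroups, all of which are genuine subgroups of $B_n(\St)$; one then simply transfers the conclusion of \reth{finitebn} through the abstract isomorphisms. Thus \repr{possvcbnS2} follows immediately from Theorems~\ref{th:finitebn} and~\ref{th:wall}, as claimed.
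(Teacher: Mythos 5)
Your proof is correct and follows exactly the route the paper takes: the paper offers no separate argument, stating only that the proposition "is an immediate consequence of Theorems~\ref{th:finitebn} and~\ref{th:wall}", which is precisely the assembly you carry out. Your added care in noting that the Epstein--Wall factors $F$, $G_{1}$, $G_{2}$ are genuine subgroups of $B_{n}(\St)$ (so that \reth{finitebn} applies to them) is the right point to check, and it is handled correctly.
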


We recall the following general result from~\cite{GG8}, which will prove to be very useful when it comes to constructing subgroups of $B_{n}(\St)$ of Type~II.

\begin{prop}[{\cite[Lemma~15]{GG8}}]\label{prop:infincard}
Let $G=G_{1} \bigast_{F} G_{2}$ be a virtually cyclic group of Type~II, and let
$\map{\phi}{G_{1} \bigast_{F} G_{2}}[H]$ be a homomorphism such that for $i=1,2$, the restriction of $\phi$
to $G_i$ is injective. Then $\phi$ is injective if and only if $\phi(G)$ is infinite.
\end{prop}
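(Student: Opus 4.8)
The plan is to prove the two implications separately, the forward one being immediate and the reverse one requiring the structural description of $G$ furnished by \reth{wall}. The forward direction is trivial: if $\phi$ is injective then $\phi(G)$ is isomorphic to $G$, which by hypothesis is an infinite virtually cyclic group of Type~II, so $\phi(G)$ is infinite.

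For the converse, suppose that $\phi(G)$ is infinite, and set $N=\ker{\phi}$, a normal subgroup of $G$. I would first establish that $N$ is finite. As in the proof of \reth{wall} for the Type~II case, $G$ contains an infinite cyclic normal subgroup $C$ of finite index (the subgroup denoted $\ang{s(x^{mk})}$ there). Since $C$ has finite index in $G$, its image $\phi(C)$ has finite index in $\phi(G)$; as $\phi(G)$ is infinite, $\phi(C)\cong C/(C\cap N)$ must also be infinite. But $C\cong \Z$, whose only subgroup with infinite quotient is the trivial one, so $C\cap N=\brak{1}$. The restriction to $N$ of the canonical projection $\map{p}{G}[G/C]$ is therefore injective, and since $G/C$ is finite, it follows that $N$ is finite.

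The final step is to show that a finite normal subgroup of $G$ must be trivial, using the injectivity of $\phi$ on the factors. Recall from \reth{wall} that $F$ is normal in $G$ and that $G/F\cong \Z_{2}\bigast \Z_{2}$ is the infinite dihedral group. As $N$ and $F$ are both normal in $G$, the product $NF$ is normal in $G$, and $NF/F\cong N/(N\cap F)$ is finite, so $NF/F$ is a finite normal subgroup of $\Z_{2}\bigast \Z_{2}$. The infinite dihedral group has no non-trivial finite normal subgroup: the intersection of such a subgroup with the translation subgroup is a finite, hence trivial, subgroup of $\Z$, so the subgroup injects into the quotient $\Z_{2}$ and therefore has order at most $2$, but an order~$2$ subgroup would be generated by a reflection, which is never normal. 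Hence $NF=F$, that is, $N\subseteq F\subseteq G_{1}$; since $\phi\left\lvert_{G_{1}}\right.$ is injective, we obtain $N=N\cap G_{1}=\brak{1}$, so $\phi$ is injective.

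The main obstacle is the reverse implication, and within it the crucial point is the passage from ``$\phi(G)$ infinite'' to ``$N$ finite'', which rests on the existence of the infinite cyclic normal subgroup of finite index coming out of the proof of \reth{wall}. Once the finiteness of $N$ is in hand, the analysis of finite normal subgroups of $\Z_{2}\bigast \Z_{2}$ concludes the argument quickly, the injectivity hypothesis on the factors being needed only at this last stage.
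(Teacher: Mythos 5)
Your proof is correct. One point of context: this paper never proves \repr{infincard} itself --- it is imported from \cite{GG8} (Lemma~15) --- so there is no in-paper argument to compare yours against; what matters is that your argument is a legitimate, self-contained substitute using only material established here. You extract from the Type~II part of the proof of \reth{wall} the subgroup $C=\ang{s(x^{mk})}$, shown there to be infinite cyclic, normal and of finite index in $G$, and deduce that $\ker{\phi}\cap C=\brak{1}$ (otherwise $\phi(C)$, of finite index in the infinite group $\phi(G)$, would be finite), so that $\ker{\phi}$ embeds in the finite quotient $G/C$ and is finite. Your claim that $\Z_{2}\bigast\Z_{2}$ has no non-trivial finite normal subgroup is also right: such a subgroup meets the translation subgroup trivially, hence has order at most $2$, and a normal subgroup of order $2$ would be central, whereas the centre is trivial. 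This forces $\ker{\phi}\subseteq F$, and injectivity on $G_{1}$ finishes. Two features worth noting: your route avoids the usual external ingredient in proofs of this statement, namely the torsion theorem for amalgamated products (every finite-order element is conjugate into a factor), replacing it by the explicit $\Z$-by-finite structure of $G$; and since $\ker{\phi}\subseteq F$ is obtained before injectivity is ever invoked, you have in fact proved the formally stronger statement in which only the restriction of $\phi$ to $F$ is assumed injective.
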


\begin{rem}
\repr{infincard} will be applied in the following manner: we will be given finite subgroups $\widetilde{G}_{1}, \widetilde{G}_{2}$ of $B_{n}(\St)$ such that $\widetilde{F}=\widetilde{G}_{1}\bigcap \widetilde{G}_{2}$ is of index two in both $\widetilde{G}_{1}$ and $\widetilde{G}_{2}$. The aim will be to prove that the subgroup $\ang{\widetilde{G}_{1}\bigcup \widetilde{G}_{2}}$ is the amalgamated product of $\widetilde{G}_{1}$ and $\widetilde{G}_{2}$ along $\widetilde{F}$. It will suffice to show that $\ang{\widetilde{G}_{1}\bigcup \widetilde{G}_{2}}$ is infinite. Suppose that this is indeed the case. Let $G_{1}$ and $G_{2}$ be abstract groups isomorphic respectively to $\widetilde{G}_{1}$ and $\widetilde{G}_{2}$ whose intersection is an index two subgroup $F$. We define a map $\map{\phi}{G_{1} \bigast_{F} G_{2}}[\ang{\widetilde{G}_{1}\bigcup \widetilde{G}_{2}}]$ that sends $F$ onto $\widetilde{F}$ and $G_{i}$ onto $\widetilde{G}_{i}$ isomorphically for $i=1,2$. Then $\phi$ is a surjective homomorphism, and by \repr{infincard}, is an isomorphism.
\end{rem}

As an easy exercise, we may deduce the classification of the virtually cyclic subgroups of $P_{n}(\St)$. If $n\leq 3$ then $P_{n}(\St)$ is trivial if $n\leq 2$, and $P_{3}(\St)\cong \Z_{2}$. So suppose that $n\geq 4$. The only finite subgroups of $P_{n}(\St)$ are $\brak{e}$ and $\ang{\ft}$, both of which are central. 
\begin{prop}
Let $n\geq 4$. The virtually cyclic subgroups of $P_{n}(\St)$ are $\brak{e}$, $\ang{\ft}$, $\ang{x}\cong \Z$ and
$\ang{\ft,x} \cong \Z_{2} \times \Z$, where $x$ is any element of $P_{n}(\St) \setminus \brak{\ft}$.
\end{prop}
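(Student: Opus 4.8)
The plan is to combine the known description of the finite subgroups of $P_{n}(\St)$ with the Epstein--Wall dichotomy of \reth{wall}. Recall from the discussion preceding the statement that, since $P_{n}(\St)\cong P_{n-3}(\St\setminus\brak{x_{1},x_{2},x_{3}})\times \Z_{2}$ with the second factor equal to $\ang{\ft}$ and the first factor torsion free, the only finite subgroups of $P_{n}(\St)$ are $\brak{e}$ and $\ang{\ft}$, both of which are central. In particular $\ft$ is the unique element of order $2$, and an element of $P_{n}(\St)$ has finite order if and only if it lies in $\ang{\ft}$. This at once accounts for the two finite virtually cyclic subgroups $\brak{e}$ and $\ang{\ft}\cong\Z_{2}$, so it remains to treat the infinite case.

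Let $G$ be an infinite virtually cyclic subgroup of $P_{n}(\St)$. By \reth{wall}, $G$ is of Type~I, i.e.\ $G\cong F\rtimes_{\theta}\Z$, or of Type~II, i.e.\ $G\cong G_{1}\bigast_{F} G_{2}$ with $[G_{i}:F]=2$, where $F,G_{1},G_{2}$ are finite subgroups of $P_{n}(\St)$ and hence each isomorphic to $\brak{e}$ or $\ang{\ft}$. I would first rule out Type~II: since $\ord{G_{i}}=2\ord{F}$ and the only available orders are $1$ and $2$, the sole possibility is $F=\brak{e}$ and $G_{1}\cong G_{2}\cong\Z_{2}$, giving $G\cong\Z_{2}\bigast\Z_{2}$. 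But the two free factors then supply two generating elements of order $2$, each of which, being a torsion element of $P_{n}(\St)$, must coincide with the unique involution $\ft$; this forces $G=\ang{\ft}$, contradicting the infiniteness of $G$. Hence no Type~II subgroup occurs.

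For Type~I, $F\in\brak{\brak{e},\ang{\ft}}$. If $F=\brak{e}$ then $G\cong\Z$. If $F=\ang{\ft}$ then, since $\ft$ is central in $P_{n}(\St)$, the conjugation action $\theta$ of $\Z$ on $F$ is trivial, whence $G\cong\ang{\ft}\times\Z\cong\Z_{2}\times\Z$. It remains to identify these subgroups concretely. An infinite cyclic subgroup is $\ang{x}$ for a generator $x$ of infinite order, and since every torsion element lies in $\ang{\ft}$, such an $x$ satisfies $x\notin\ang{\ft}$; conversely any $x$ of infinite order gives $\ang{x}\cong\Z$. Because $\ft$ is central of order $2$ and $\ang{x}$ is torsion free (so $\ft\notin\ang{x}$), we obtain $\ang{\ft,x}=\ang{\ft}\times\ang{x}\cong\Z_{2}\times\Z$. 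Finally, such elements $x$ exist since for $n\geq 4$ the group $P_{n}(\St)$ is infinite with finite torsion subgroup $\ang{\ft}$, so the list $\brak{e}$, $\ang{\ft}$, $\ang{x}$, $\ang{\ft,x}$ is complete.

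The only point requiring any care---and the nearest thing to an obstacle---is the exclusion of Type~II groups, which rests squarely on the uniqueness and centrality of the involution $\ft$. Everything else is a direct translation of \reth{wall} using the two-element list of finite subgroups, which is why the result is genuinely an easy exercise.
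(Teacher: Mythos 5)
Your proposal is correct and follows essentially the same route as the paper: both reduce to the two finite subgroups $\brak{e}$ and $\ang{\ft}$, invoke \reth{wall}, exclude Type~II via the uniqueness of the involution $\ft$, and obtain $\Z$ and $\Z_{2}\times\Z$ in the Type~I case using the centrality of $\ft$. Your write-up merely makes explicit a few details the paper leaves implicit (the trivial action forced by centrality, and the existence of infinite-order elements $x$), so there is nothing to correct.
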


\begin{proof}
Let $G$ be an infinite virtually cyclic subgroup of $P_{n}(\St)$. The Type~I subgroups are $\Z$ and $\Z_{2} \times \Z$ (both are realised, by taking $\ang{x}$ and $\ang{\ft, x}$ respectively, where $x$ is any element of $P_{n}(\St)\setminus \ang{\ft}$). As for the Type~II subgroups, the only possibility is $F=\brak{e}$ and $G_{1}= G_{2}= \ang{\ft}$, but then $G \ncong \Z_{2} \bigast \Z_{2}$ since $\ang{\ft}$ is the unique subgroup of $P_{n}(\St)$ of order two. 
\end{proof}

%\subsection{Correspondence between the virtually cyclic subgroups of $B_{n}(\St)$ and $\mcg$}\label{sec:corrmcg}

%\comment{3/11/09: this section is new.}

The following result will be used later on to show that there is an almost one-to-one correspondence between the virtually cyclic subgroups of $B_{n}(\St)$ and those of $\mcg$. This will also enable us to construct copies of $\tonestar\times \Z$ (\repr{ttimesz}) and $\oonestar \bigast_{\tonestar} \oonestar$ (\repr{oto}) in $B_{n}(\St)$ for certain values of $n$, as well as to prove \reth{classvcmcg}.

%\comment{Much of this proposition is new, we will need it later. The last part is very new, and will help in \resecglobal{realisation}{genmcg} to determine the number of isomorphism classes of the Type~II subgroups of $\mcg$.}
\begin{prop}\label{prop:vcmcg}
Let $G$ be a group that possesses a unique element $x$ of order $2$, let $G'=G/\ang{x}$, and let $\map{p}{G}[G']$ denote the canonical projection. 
\begin{enumerate}[(a)]
\item\label{it:vcmcga} Let $H$ be a virtually cyclic subgroup of $G$. 
\begin{enumerate}[(i)]
\item\label{it:vcmcgai} $H'=p(H)$ is a virtually cyclic subgroup of $G'$ of the same type (finite, of Type~I or of Type~II) as $H$.
\item\label{it:vcmcgaii} Let $H\cong F\rtimes_{\theta} \Z$, where $F$ is a finite subgroup of $G$ and $\theta\in \operatorname{Hom}(\Z,\aut{F})$. Then $p(H)\cong p(F)\rtimes_{\theta'} \Z$, where $\theta'\in \operatorname{Hom}(\Z,\aut{F'})$ is the action induced by $\theta$, and defined by $\theta'(1)(f')=p\bigl(\theta(1)(f)\bigr)$ for all $f'\in F'$, where $f\in F$ satisfies $p(f)=f'$.

\item\label{it:vcmcgaiii} Let $H\cong G_{1} \bigast_{F} G_{2}$, where $G_{1},G_{2}$ are subgroups of $H$, and $F=G_{1} \cap G_{2}$ is of index $2$ in $G_{1}$ and $G_{2}$. Then $p(H)\cong p(G_{1})\bigast_{p(F)} p(G_{2})$.
\end{enumerate}

\item\label{it:vcmcgb} Let $H'$ be a virtually cyclic subgroup of $G'$.
\begin{enumerate}
\item\label{it:vcmcgbi} $H=p^{-1}(H')$ is a virtually cyclic subgroup of $G$ of the same type (finite, of Type~I or of Type~II) as $H'$.
\item\label{it:vcmcgbii} If $H' \cong G_{1}' \bigast_{F'} G_{2}'$, where $G_{1}',G_{2}'$ are subgroups of $H'$, and $F'=G_{1}' \cap G_{2}'$ is of index $2$ in $G_{1}'$ and $G_{2}'$, then $H\cong p^{-1}(G_{1}') \bigast_{p^{-1}(F')} p^{-1}(G_{2}')$.
\end{enumerate}
\item\label{it:vcmcgc} Let $H_{1}$ and $H_{2}$ be isomorphic subgroups of $G$. Then $p(H_{1})$ and $p(H_{2})$ are isomorphic subgroups of $G'$.
\end{enumerate}
%Let $n\geq 3$, and consider the short exact sequence \reqref{mcg}.
%\begin{enumerate}[(a)]
%\item If $G$ is a virtually cyclic subgroup of $B_{n}(\St)$ then $G'=\phi(G)$ is a virtually cyclic subgroup of $\mcg$. Further, if $G\cong F\rtimes \Z$ (resp.\ $G_{1}\bigast_{F} G_{2}$), where $F$ and $G_{i}$ are finite for $i=1,2$ and $[G_{i}:F]=2$, then $G$ is isomorphic to $\phi(F)\rtimes \Z$ (resp.\ to $\phi(G_{1})\bigast_{\phi(F)} \phi(G_{2})$).
%\item\label{it:vcmcgb} If $G'$ is a virtually cyclic subgroup of $\mcg$ then $G=\phi^{-1}(G')$ is a virtually cyclic subgroup of $B_{n}(\St)$. Further, if $G'\cong F'\rtimes \Z$ (resp.\ $G_{1}'\bigast_{F'} G_{2}'$), where $F'$ and $G_{i}'$ are finite for $i=1,2$ and $[G_{i}':F']=2$, then $G$ is isomorphic to $\phi^{-1}(F')\rtimes \Z$ (resp.\ to $\phi^{-1}(G_{1}')\bigast_{\phi^{-1}(F')} \phi^{-1}(G_{2}')$).
%\end{enumerate}
\end{prop}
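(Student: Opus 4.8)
The plan is to organise everything around the single observation that $\ang{x}$ is \emph{central} in $G$: since $x$ is the unique element of order~$2$, any conjugate $gxg^{-1}$ again has order~$2$ and hence equals $x$. Thus $\map{p}{G}[G']$ is a central extension with kernel $\ang{x}\cong\Z_{2}$. For any subgroup $K\leq G$ there is then a clean dichotomy: either $x\notin K$, in which case $K\cap\ang{x}=\brak{e}$ and $p|_{K}$ is an isomorphism onto $p(K)$, so every assertion is transparent; or $x\in K$, in which case $p(K)=K/\ang{x}$ and the real work takes place. Throughout I would lean on two results already established, namely \repr{centralext} (the type of an infinite virtually cyclic group is detected by the (non-)centrality of its defining $\Z$-extension) and \repr{infincard} (a homomorphism out of a Type~II amalgam that is injective on each factor is injective as soon as its image is infinite).

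For part~(\ref{it:vcmcga}) I would prove~(\ref{it:vcmcgaii}) and~(\ref{it:vcmcgaiii}) first and then read off~(\ref{it:vcmcgai}). In the semidirect case $H\cong F\rtimes_{\theta}\Z$, the only elements of order~$2$ lie in $F$, so if $x\in H$ then $x\in F$; as $\theta(1)$ is an automorphism it fixes $x$ and hence descends to $\theta'(1)\in\aut{F/\ang{x}}$, and a short check that $p(F)\cap p(\ang{t})=\brak{e}$ (for $t$ projecting to a generator of the $\Z$-factor) yields $p(H)\cong p(F)\rtimes_{\theta'}\Z$. In the amalgamated case the key point is that $x\in F$: because $[G_{i}:F]=2>1$ the amalgam $G_{1}\bigast_{F}G_{2}$ is non-degenerate, and the centre of a non-degenerate amalgamated product lies in the amalgamating subgroup; since $x$ is central, $x\in F$ (alternatively, a normal-form argument shows a central element cannot carry a nontrivial syllable outside $F$). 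Granting this, $p(F)=p(G_{1})\cap p(G_{2})$ has index~$2$ in each $p(G_{i})$, and the natural surjection $p(G_{1})\bigast_{p(F)}p(G_{2})\to p(H)$ is injective on each factor with infinite image, whence it is an isomorphism by \repr{infincard}. Since $\ang{x}$ is finite, $p(H)$ is infinite, so part~(\ref{it:vcmcgai}) follows: a finite $H$ gives a finite $p(H)$, while the explicit descriptions exhibit $p(H)$ as a semidirect product (Type~I) or an index-$2$ amalgam (Type~II), and by \repr{centralext} these types are mutually exclusive.

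For part~(\ref{it:vcmcgb}) write $H=p^{-1}(H')$, so that $1\to\ang{x}\to H\to H'\to 1$ is a central extension by a group of order~$2$; as an extension of a virtually cyclic group by a finite group is again virtually cyclic, $H$ is virtually cyclic, and it is finite exactly when $H'$ is. The Type~II statement~(\ref{it:vcmcgbii}) is the cleanest: setting $G_{i}=p^{-1}(G_{i}')$ and $F=p^{-1}(F')$ one has $F=G_{1}\cap G_{2}$ of index~$2$ in each $G_{i}$, and the natural map $G_{1}\bigast_{F}G_{2}\to H$ is injective on factors with image $\ang{G_{1}\cup G_{2}}=H$ infinite, so \repr{infincard} again forces an isomorphism; this simultaneously handles the Type~II case of~(\ref{it:vcmcgbi}). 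I expect the \emph{Type~I direction of~(\ref{it:vcmcgbi}) to be the main obstacle}: a generator $c'$ of a central normal infinite cyclic subgroup $C'=\ang{c'}$ of $H'$ lifts to some $c\in H$, but conjugation need only satisfy $hch^{-1}\in\brak{c,cx}$, so the obvious lift need not be central. The remedy is to pass to squares: since $x$ is central of order~$2$, $(cx)^{2}=c^{2}$, hence $hc^{2}h^{-1}=c^{2}$ for all $h\in H$, so $\ang{c^{2}}\cong\Z$ is central and of finite index in $H$. By \repr{centralext}, $H$ is then of Type~I.

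Finally, part~(\ref{it:vcmcgc}) is immediate from the uniqueness of $x$. Given an isomorphism \map{\psi}{H_{1}}[H_{2}] between subgroups of $G$, the element $\psi(x)\in G$ (when $x\in H_{1}$) has order~$2$ and so equals $x$; thus $x\in H_{1}$ forces $x\in H_{2}$, ruling out the mixed case $x\in H_{1}$, $x\notin H_{2}$, and $\psi$ descends to an isomorphism $p(H_{1})=H_{1}/\ang{x}\to H_{2}/\ang{x}=p(H_{2})$. If instead $x\notin H_{1}$, then $p|_{H_{1}}$ and $p|_{H_{2}}$ are injective and $p(H_{1})\cong H_{1}\cong H_{2}\cong p(H_{2})$ directly.
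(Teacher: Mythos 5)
Your proposal is correct, and although it rests on the same two pillars as the paper's proof --- the centrality of $\ang{x}$ in $G$, and the verification that $x$ lies in the relevant finite normal subgroup whenever $x\in H$ --- it reaches the conclusions with genuinely different tools at the key steps. For the amalgam identifications in parts~(a)(iii) and~(b)(ii), the paper builds the natural surjection from the abstract amalgamated product, passes to the induced map on the quotients by $F'$ (resp.\ $F$), invokes residual finiteness of $\Z_{2}\bigast\Z_{2}$ to conclude that this induced map is an isomorphism of Hopfian groups, and finishes with the $5$-Lemma; you instead feed the same surjection directly into \repr{infincard} (injective on each factor, infinite image), which is shorter and avoids Hopficity entirely, the only price being that you must know the image is infinite --- immediate here since the kernel of $p$ is finite. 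For type preservation, the paper runs a single kernel-matching argument through the commutative diagram~\reqref{mcgvc1}, showing $\ker{p\left\lvert_{F}\right.}=\ker{p\left\lvert_{H}\right.}$ so that $H/F\cong H'/F'$, which settles~(a)(i) and~(b)(i) simultaneously without needing \repr{centralext} or the explicit product descriptions; you instead deduce~(a)(i) from your proofs of~(a)(ii)--(iii), and handle the Type~I case of~(b)(i) by the squaring trick $hch^{-1}\in\brak{c,cx}$ forcing $hc^{2}h^{-1}=c^{2}$, combined with \repr{centralext} --- a nice self-contained alternative. Two points you should make explicit: deducing~(a)(i) from the ``explicit descriptions'' uses the fact that a Type~I (resp.\ Type~II) subgroup of $G$ is the \emph{internal} semi-direct product (resp.\ internal amalgam) of subgroups of $G$, which is exactly what the proof of \reth{wall} supplies; and your appeal to the fact that the centre of a non-degenerate amalgam lies in the amalgamating subgroup, while standard, can be replaced by the paper's more elementary route --- a finite-order element lies in a conjugate of a factor, a central one therefore lies in a factor, and uniqueness of the order-$2$ element then places it in $F$.
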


%\comment{For the Type~II case, we know that if $G_{i}\cong \quat[16]$ and $F\cong \quat$ then there are two possible amalgamated products. It would be nice to make the statement more precise in this case. In particular, what can we say about the isomorphism classes of the Type~II subgroups $G_{1}'\bigast_{F'} G_{2}'$ of $\mcg$?}

\begin{proof}
First note that since $x$ is the unique element of $G$ of order $2$, the subgroup $\ang{x}$ is characteristic in $G$, in particular, $x\in Z(G)$.
We start by proving parts~(\ref{it:vcmcga})(\ref{it:vcmcgai}) and~(\ref{it:vcmcgb})(\ref{it:vcmcgbi}). The result is clear if either $H$ or $H'$ is finite, so it suffices to consider the cases where they are infinite. Before proving the statement in these cases, let us introduce some notation. Suppose that $H$ (resp.\ $H'$) is an infinite virtually cyclic subgroup of $G$ (resp.\ $G'$). Then by \reth{wall}, $H$ (resp.\ $H'$) has a finite normal subgroup $F$ (resp.\ $F'$) such that $H/F$ (resp.\ $H'/F'$) is isomorphic to $\Z$ if $H$ (resp.\ $H'$) is of Type~I, and to $\Z_{2}\bigast \Z_{2}$ if $H$ (resp.\ $H'$) is of Type~II.
%is isomorphic to either $F\rtimes \Z$ (resp.\ $F'\rtimes \Z$) or $G_{1}\bigast_{F} G_{2}$ (resp.\ $G'_{1}\bigast_{F'} G'_{2}$), where $F$ (resp.\ $F'$) is a finite normal subgroup  of $H$ (resp.\ $H'$), and for $i=1,2$, $G_{i}$ (resp.\ $G'_{i}$) is a finite subgroup of $H$ (resp.\ $H'$) containing $F$ and satisfying $[G_{i}:F]=2$ (resp.\ $[G'_{i}:F']=2$). 
Let $H'=p(H)$ and $F'=p(F)$ (resp.\ $H=p^{-1}(H')$ and $F=p^{-1}(F')$). So $H'$ (resp.\ $H$) is infinite, and $F'$ (resp.\ $F$) is finite. Further, $\map{{p\left\lvert_{F}\right.}}{F}[F']$ and $\map{{p\left\lvert_{H}\right.}}{H}[H']$ are surjective, $F'$ (resp.\ $F$) is normal in $H'$ (resp.\ $H$), and
\begin{equation}\label{eq:inclex}
\brak{e}\subset \ker{p\left\lvert_{F}\right.}\subset \ker{p\left\lvert_{H}\right.} \subset \ker{p}=\ang{x}. 
\end{equation}
%The fact that $F$ (resp.\ $F'$) is normal in $H$ (resp.\ in $H'$) implies that $F'$ (resp.\ $F$) is normal in $H'$ (resp.\ $H$). 
%Let $K'=H'/F'$ (resp.\ $K=H/F$).
%, and let $\map{\phi}{H}[F]$ and $\map{\phi'}{H'}[F']$ denote the canonical projections. 
Then we have the following commutative diagram of short exact sequences:
\begin{equation}\label{eq:mcgvc1}
\begin{xy}*!C\xybox{%
\xymatrix{%
& 1 \ar[d] & 1 \ar[d] & &\\
& \ker{p\left\lvert_{F}\right.} \ar[d] \ar[r] & \ker{p\left\lvert_{H}\right.} \ar[d] & &\\
1 \ar[r]  & F \ar[r] \ar[d]_{p\left\lvert_{F}\right.} & H \ar[d]^{p\left\lvert_{H}\right.} \ar[r]^{q} & H/F \ar@{.>}[d]^{\widehat{p}} \ar[r] & 1\\
1 \ar[r] & F' \ar[r] \ar[d] & H' \ar[r]^{q'} \ar[d] & H'/F' \ar[r] & 1,\\
& 1 & 1 &&
}}
\end{xy}
\end{equation}
where $\map{q}{H}[H/F]$ and $\map{q'}{H'}[H'/F']$ are the canonical projections, the map
\begin{equation*}
\map{\widehat{p}}{H/F}[H'/F']
\end{equation*}
is the induced surjective homomorphism on the quotients and $\ker{p\left\lvert_{F}\right.} \to \ker{p\left\lvert_{H}\right.}$ is inclusion. We claim that $\ker{p\left\lvert_{F}\right.}= \ker{p\left\lvert_{H}\right.}$. This being the case, $\widehat{p}$ is an isomorphism, and thus $H$ and $H'$ are virtually cyclic groups of the same type, which proves the proposition. If $x\notin H$ then $\ker{p\left\lvert_{F}\right.}= \ker{p\left\lvert_{H}\right.}=\brak{e}$ trivially by \req{inclex}. So assume that $x\in H$. To prove the claim, by \req{inclex}, it suffices to show that $x\in F$. We separate the two cases corresponding to parts~(\ref{it:vcmcga})(\ref{it:vcmcgai}) and~(\ref{it:vcmcgb})(\ref{it:vcmcgbi}) of the statement.
\begin{enumerate}
\item[(\ref{it:vcmcga})(\ref{it:vcmcgai})] If $H$ is of Type~I then $H\cong F\rtimes \Z$, and so $x\in F$ since $x$ is of finite order. So suppose that $H$ is of Type~II. Then $H\cong G_{1}\bigast_{F} G_{2}$, where $G_{1},G_{2}$ are subgroups of $H$ that contain $F$ as a subgroup of index $2$. 
%\comment{This argument has been modified:} 
By standard properties of amalgamated products, $x$ belongs to a conjugate in $H$ of one of the $G_{i}$ because it is of finite order, and since $x\in Z(G)$, it belongs to one of the $G_{i}$, which shows that $G_{1}$ and $G_{2}$ are of (the same) even order. The fact that $x$ is the unique element of $G$ of order $2$ implies that $x\in G_{1}\cap G_{2}=F$ as required.
\item[(\ref{it:vcmcgb})(\ref{it:vcmcgbi})] In this case, $\ker{p\left\lvert_{F}\right.}= \ker{p\left\lvert_{H}\right.}=\ker{p}=\ang{x}$ by construction.
\end{enumerate}
This proves the claim, and thus we obtain parts~(\ref{it:vcmcga})(\ref{it:vcmcgai}) and~(\ref{it:vcmcgb})(\ref{it:vcmcgbi}).

%\comment{The rest of the proof is new.}
We now prove part~(\ref{it:vcmcga})(\ref{it:vcmcgaii}). Let $H$ be an infinite Type~I subgroup of $G$ and let $F$ be a finite normal subgroup of $H$ such that there exists a short exact sequence of the form
\begin{equation*}
1\to F \to H\stackrel{q}{\to} H/F \to 1,
\end{equation*}
where $H/F\cong \Z$, and where $\map{q}{H}[H/F]$ is the canonical projection. By the previous paragraph, we thus have the commutative diagram~\reqref{mcgvc1}, $\widehat{p}$ being an isomorphism. Let $z$ be a generator of $H/F$, let $\map{s}{H/F}[H]$ be a section for $q$ such that $\theta(z)(f)=s(z)\ldotp f \ldotp s(z^{-1})$ for all $f\in F$, where $\theta\in \operatorname{Hom}(H/F,\aut{F})$ is given. The commutativity of the diagram~\reqref{mcgvc1} implies that $\map{s'=p\circ s\circ \widehat{p}^{-1}}{H'/F'}[H']$ is a section for $q'$. Since $x\in Z(G)$, if $x\in F$ then $\theta(z)(x)=x$, and so $p$ induces a homomorphism $\map{\Phi}{\aut{F}}[\aut{F'}]$ satisfying $\Phi(\alpha)(p(f))=p(\alpha(f))$ for all $f\in F$ and $\alpha\in \aut{F}$. We thus obtain a homomorphism $\map{\theta'}{H'/F'}[\aut{F'}]$ defined by $\theta'=\Phi\circ \theta \circ \widehat{p}^{-1}$ that makes the following diagram commute:
\begin{equation*}%\label{eq:mcgvc1}
\begin{xy}*!C\xybox{%
\xymatrix{%
H/F \ar[r]^>>>>>>{\theta}\ar[d]^{\widehat{p}} & \aut{F} \ar[d]^{\Phi}\\
H'/F' \ar@{.>}[r]^>>>>>{\theta'} & \aut{F'}.}}
\end{xy}
\end{equation*}
In particular, if $f'\in F'$ and if $f\in F$ is such that $p(f)=f'$ then:
\begin{align*}
s'(z') \ldotp f'\ldotp s'(z'^{-1})&=p\circ s(z)\ldotp p(f) \ldotp p\circ s(z^{-1})= p (s(z)\ldotp f \ldotp s(z^{-1}))=p (\theta(z))(f)\\
&= \Phi\circ \theta(z)(f')=\theta'(z')(f'),
\end{align*}
and thus $H'\cong F'\rtimes_{\theta'} \Z$, where $\theta'\in \operatorname{Hom}(\Z,\aut{F'})$ is the homomorphism induced by 
$\theta\in \operatorname{Hom}(\Z,\aut{F})$ given by $\theta'(1)(f')=p(\theta(1)(f))$ for all $f'\in F'$, where $f\in F$ satisfies $p(f)=f'$, and where we write the generators of $H/F$ and $H'/F'$ as $1$. This proves part~(\ref{it:vcmcga})(\ref{it:vcmcgaii}).

We now prove part~(\ref{it:vcmcga})(\ref{it:vcmcgaiii}). Let $H,G_{1},G_{2}$ and $F$ be as in the statement, and let $H',G_{1}',G_{2}'$ and $F'$ be their respective images under $p$. Then $H/F\cong \Z_{2}\bigast \Z_{2}$, and once more we have the commutative diagram~\reqref{mcgvc1}, $\widehat{p}$ being an isomorphism. By part~(\ref{it:vcmcga})(\ref{it:vcmcgai}), $H'$ is a Type~II subgroup of $G$. Now $F'$ is of index $2$ in both $G_{1}'$ and $G_{2}'$, and the inclusions $F'\subset G_{i}'$ give rise to an amalgamated product $G_{1}' \bigast_{F'} G_{2}'$ whose quotient by $F'$ is isomorphic to $\Z_{2}\bigast \Z_{2}$. Since $H=\ang{G_{1}\cup G_{2}}$ and $\map{p\left\lvert_{H}\right.}{H}[H']$ is surjective, we have that $H'=\ang{G_{1}'\cup G_{2}'}$. By the universality property of amalgamated products, there exists a surjective homomorphism $\map{\alpha}{G_1'\bigast_{F'} G_2'}[H']$ satisfying $\alpha(g_{i}')=g_{i}'$ for all $g_{i}'\in G_{i}'$. We thus obtain the following commutative diagram of short exact sequences:
\begin{equation*}%\label{eq:mcgvc1}
\begin{xy}*!C\xybox{%
\xymatrix{%
1 \ar[r]  & F' \ar[r] \ar@{=}[d] & G_1'\bigast_{F'} G_2' \ar[d]^{\alpha} \ar[r] & \Z_{2}\bigast \Z_{2} \ar@{.>}[d]^{\widehat{\alpha}} \ar[r] & 1\\
1 \ar[r] & F' \ar[r] & H' \ar[r]^{q'} & H'/F' \ar[r] & 1,}}
\end{xy}
\end{equation*}
where $\widehat{\alpha}$ is the homomorphism induced on the quotients. The surjectivity of $\alpha$ implies that of $\widehat{\alpha}$. The finiteness of $\Z_{2}$ implies that the free product $\Z_{2}\bigast \Z_{2}$, which is finitely generated, is residually finite~\cite[Proposition~22]{Coh}. It thus follows that $H'/F'\cong \Z_{2}\bigast \Z_{2}$ is Hopfian~\cite[see the proof of the Corollary, page~12]{Coh}, so $\widehat{\alpha}$ is an isomorphism. Using the $5$-Lemma, we see that $\alpha$ is an isomorphism as required. 

We now prove part~(\ref{it:vcmcgb})(\ref{it:vcmcgbii}). For $i=1,2$, let $G_{i}=p^{-1}(G_{i}')$. Since $H' \cong G_{1}' \bigast_{F'} G_{2}'$ and $\widehat{p}$ is an isomorphism, we have that $H'/F'\cong H/F\cong \Z_{2}\bigast \Z_{2}$. Now $F$ is a subgroup of both $G_{1}$ and $G_{2}$, and the corresponding inclusions give rise to an amalgamated product $G_{1}\bigast_{F} G_{2}$ whose quotient by $F$ is isomorphic to $\Z_{2}\bigast \Z_{2}$. The equality $H'=\ang{G_{1}' \cup G_{2}'}$ implies that $H=\ang{G_{1} \cup G_{2}}$, and it follows from the universality property of amalgamated products that there exists a (unique) surjective homomorphism $\map{\alpha}{G_{1}\bigast_{F} G_{2}}[H]$ satisfying $\alpha(g_{i})=g_{i}$ for all $g_{i}\in G_{i}$. We thus have a commutative diagram of the form
\begin{equation*}%\label{eq:mcgvc1}
\begin{xy}*!C\xybox{%
\xymatrix{%
1 \ar[r]  & F \ar[r] \ar@{=}[d] & G_{1}\bigast_{F} G_{2} \ar[d]^{\alpha} \ar[r] & \Z_{2}\bigast \Z_{2} \ar@{.>}[d]^{\widehat{\alpha}} \ar[r] & 1\\
1 \ar[r] & F \ar[r] & H \ar[r]^{q} & H/F \ar[r] & 1,}}
\end{xy}
\end{equation*}
where $\widehat{\alpha}$ is the homomorphism induced by $\alpha$ that makes the diagram commute, which is surjective because $\alpha$ is, and is thus an isomorphism since $\Z_{2}\bigast \Z_{2}$ is Hopfian. The $5$-Lemma implies the result.

Finally, we prove part~(\ref{it:vcmcgc}). Let $\map{\psi}{H_{1}}[H_{2}]$ be an isomorphism between $H_{1}$ and $H_{2}$. Since $x$ is the  unique element of $G$ of order $2$, then $x\in H_{1}$ if and only if $x\in H_{2}$, and since $\ker{p}=\ang{x}$, we have $\ker{p\left\lvert_{H_{1}}\right.}=\ker{p\left\lvert_{H_{2}}\right.}$. We thus have the following commutative diagram of short exact sequences:
\begin{equation*}%\label{eq:mcgvc1}
\begin{xy}*!C\xybox{%
\xymatrix{%
1 \ar[r]  & \ker{p\left\lvert_{H_{1}}\right.} \ar[r] \ar@{=}[d] & H_{1} \ar[d]^{\cong}_{\psi} \ar[r]^{p} & p(H_{1}) \ar@{.>}[d]^{\widehat{\psi}} \ar[r] & 1\\
1 \ar[r] & \ker{p\left\lvert_{H_{2}}\right.} \ar[r] & H_{2} \ar[r]^{p} & p(H_{2}) \ar[r] & 1,}}
\end{xy}
\end{equation*}
where $\map{\widehat{\psi}}{p(H_{1})}[p(H_{2})]$ is the surjective homomorphism induced by $\psi$. The $5$-Lemma then implies that $\widehat{\psi}$ is an isomorphism.
\end{proof}

We thus obtain directly \repr{corrbnmcg}:%\comment{check this.}
\begin{proof}[Proof of \repr{corrbnmcg}.]
Taking $G=B_{n}(\St)$, $G'=\mcg$ and $\phi$ as given in \req{mcg}, and applying \repr{vcmcg} yields the result.
\end{proof}

We finish this section with the following result that will be applied in \resecglobal{realisation}{isoclasses} to study the isomorphism classes of the elements of $\mathbb{V}_{2}(n)$. For $k=1,2$, let $G_k, F$ be finite groups such that $F$ is abstractly isomorphic to a subgroup of $G_{k}$ of index $2$, and let $\map{i_{k},j_{k}}{F}[G_k]$ be pairs of embeddings. We can then form two amalgamated products, $G_1\bigast_{F}G_2$ (with respect to the embeddings $i_1,i_2$) and $G_1\bigast_{F}'G_2$ (with respect to the embeddings $j_1,j_2$).  Suppose that for $k=1,2$, there exist automorphisms $\map{\theta_k}{G_k}$ satisfying $\theta_k \circ i_k=j_k$.

\begin{prop}\label{prop:amalgiso}
Under the above hypotheses, the two amalgamated products $G_1\bigast_{F}G_2$ and $G_1\bigast_{F}'G_2$ are isomorphic.
\end{prop}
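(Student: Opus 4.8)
The plan is to prove the result purely formally, using only the universal property of the amalgamated product together with the automorphisms $\theta_{1},\theta_{2}$; no structural information about $F$, $G_{1}$ or $G_{2}$ (not even finiteness or the index-$2$ condition) is needed beyond the stated hypotheses. First I would fix notation for the canonical homomorphisms into the two amalgamated products: write $\map{\iota_{k}}{G_{k}}[G_1\bigast_{F} G_2]$ for the maps defining $G_1\bigast_{F} G_2$ (via $i_{1},i_{2}$) and $\map{\iota_{k}'}{G_{k}}[G_1\bigast_{F}' G_2]$ for those defining $G_1\bigast_{F}' G_2$ (via $j_{1},j_{2}$), where $k\in\brak{1,2}$. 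By construction of the amalgamated products these satisfy $\iota_{1}\circ i_{1}=\iota_{2}\circ i_{2}$ and $\iota_{1}'\circ j_{1}=\iota_{2}'\circ j_{2}$.

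Next I would build a homomorphism $\map{\Psi}{G_1\bigast_{F} G_2}[G_1\bigast_{F}' G_2]$. For $k\in\brak{1,2}$, consider the homomorphism $\iota_{k}'\circ\theta_{k}$ from $G_{k}$ to $G_1\bigast_{F}' G_2$. Using the hypothesis $\theta_{k}\circ i_{k}=j_{k}$, I compute $\iota_{1}'\circ\theta_{1}\circ i_{1}=\iota_{1}'\circ j_{1}=\iota_{2}'\circ j_{2}=\iota_{2}'\circ\theta_{2}\circ i_{2}$, so the two maps agree on the image of $F$. The universal property of $G_1\bigast_{F} G_2$ then yields a unique homomorphism $\Psi$ with $\Psi\circ\iota_{k}=\iota_{k}'\circ\theta_{k}$ for $k\in\brak{1,2}$.

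By the same argument applied to the automorphisms $\theta_{k}^{-1}$, which satisfy $\theta_{k}^{-1}\circ j_{k}=i_{k}$, I would obtain a homomorphism $\map{\Phi}{G_1\bigast_{F}' G_2}[G_1\bigast_{F} G_2]$ with $\Phi\circ\iota_{k}'=\iota_{k}\circ\theta_{k}^{-1}$. To finish, I would show that $\Psi$ and $\Phi$ are mutually inverse: for $k\in\brak{1,2}$ one has $\Phi\circ\Psi\circ\iota_{k}=\Phi\circ\iota_{k}'\circ\theta_{k}=\iota_{k}\circ\theta_{k}^{-1}\circ\theta_{k}=\iota_{k}$, and symmetrically $\Psi\circ\Phi\circ\iota_{k}'=\iota_{k}'$.

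The only point that needs a little care is this last step: from the fact that $\Phi\circ\Psi$ agrees with the identity on each $\iota_{k}(G_{k})$ I must deduce $\Phi\circ\Psi$ is the identity. This is not a genuine obstacle but rather an invocation of the uniqueness clause in the universal property: both $\Phi\circ\Psi$ and the identity of $G_1\bigast_{F} G_2$ are endomorphisms of $G_1\bigast_{F} G_2$ whose composites with $\iota_{1}$ and $\iota_{2}$ coincide, so they are equal (equivalently, the images of $\iota_{1}$ and $\iota_{2}$ generate $G_1\bigast_{F} G_2$). The same reasoning gives $\Psi\circ\Phi=\id$, whence $\Psi$ is the desired isomorphism between the two amalgamated products.
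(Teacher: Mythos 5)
Your proof is correct and follows essentially the same route as the paper: both use the universal property of $G_1\bigast_{F} G_2$ applied to the maps $\iota_{k}'\circ\theta_{k}$ (which agree on $F$ precisely because $\theta_{k}\circ i_{k}=j_{k}$) to get the forward homomorphism, and then repeat the construction with $\theta_{k}^{-1}$ to get the inverse. The only difference is one of exposition — you spell out the verification that the two maps are mutually inverse via the uniqueness clause of the universal property, a step the paper leaves implicit in the phrase ``we obtain the inverse of this homomorphism in a similar manner.''
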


\begin{proof}
The hypotheses imply the existence of the following commutative diagram:
\begin{equation*}
\xymatrix{%
& G_{1} \ar[r]^{\theta_{1}} \ar[rd] & G_{1} \ar[rd] & \\
F \ar[ru]^{i_{1}} \ar[rd]_{i_{2}} &  & G_1\bigast_{F} G_2 \ar@{.>}[r] &  G_1\bigast_{F}'G_2,\\
& G_{2} \ar[r]_{\theta_{2}} \ar[ru] & G_{2} \ar[ru] & }
\end{equation*}
where for $l=1,2$, the homomorphisms from $G_{l}$ to $G_1\bigast_{F} G_2$ and $G_1\bigast_{F}'G_2$ are inclusions. By the universal property of amalgamated products, there exists a unique (surjective) homomorphism $G_1\bigast_{F} G_2\to G_1\bigast_{F}'G_2$. We obtain the inverse of this homomorphism in a similar manner, by replacing $i_{1}$, $i_{2}$, $\theta_{1}$ and $\theta_{2}$ by $j_{1}$, $j_{2}$, $\theta_{1}^{-1}$ and $\theta_{2}^{-1}$ respectively and by exchanging the rôles of $G_1\bigast_{F} G_2$ and $G_1\bigast_{F}'G_2$.
\end{proof}

%\begin{rem}
%If $G$ is a virtually cyclic Type~I subgroup of $B_{n}(\St)$ then 
%\end{rem}

\section{Centralisers and normalisers of some maximal finite subgroups of $B_{n}(\St)$}\label{sec:gencent}

\reth{murasugi} asserts that up to conjugacy, the maximal finite order cyclic subgroups of $B_{n}(\St)$ are of the form $\ang{\alpha_{i}}$ for $i\in\brak{0,1,2}$. On the other hand,~\cite[Theorem~1.3 and Proposition~1.5(1)]{GG7} implies that up to conjugacy, the maximal dicyclic subgroups of $B_{n}(\St)$ are the standard dicyclic subgroups of \rerem{finitesub}(\ref{it:finitesubb}). In this section, we determine the centralisers and normalisers of these subgroups.  In the cyclic case, our results mirror those for finite order elements of $\mcg$, and shall be used to construct the possible actions of $\Z$ on cyclic and dicyclic subgroups of $B_{n}(\St)$.

We first prove the following proposition which states that an infinite subgroup of $B_{n}(\St)$ cannot be formed solely of elements of finite order.
\begin{prop}\label{prop:finord}
Any infinite subgroup of $B_{n}(\St)$ contains an element of infinite order. In particular, any subgroup of $B_{n}(\St)$ consisting entirely of elements  of finite order is itself finite.
\end{prop}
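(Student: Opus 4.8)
The plan is to exploit the short exact sequence~\reqref{defperm} together with the fact, recalled in the introduction, that the torsion of $P_{n}(\St)$ is as small as possible. Since $B_{n}(\St)$ is finite for $n\leq 3$, the statement is vacuous in those cases, so I would assume throughout that $n\geq 4$.

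First I would pin down the finite-order elements of $P_{n}(\St)$. Using the isomorphism $P_{n}(\St)\cong P_{n-3}(\St\setminus\brak{x_{1},x_{2},x_{3}})\times \Z_{2}$, where the second factor is identified with $\ang{\ft}$, and the fact that $P_{n-3}(\St\setminus\brak{x_{1},x_{2},x_{3}})$ is torsion free for all $n\geq 4$, an element $(a,b)$ of this direct product has finite order if and only if $a$ is trivial. Hence the set of finite-order elements of $P_{n}(\St)$ is exactly $\ang{\ft}=\brak{e,\ft}$, a subgroup of order $2$.

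Next, let $G$ be an infinite subgroup of $B_{n}(\St)$, and consider $H=G\cap P_{n}(\St)$. From~\reqref{defperm}, the second isomorphism theorem gives an embedding of $G/H$ into $\sn$, so $H$ is of finite index in $G$; in particular $H$ is infinite. Since $\ang{\ft}$ is finite, $H$ must contain some element $g\notin \ang{\ft}$. By the previous paragraph every finite-order element of $P_{n}(\St)$ lies in $\ang{\ft}$, so $g$ necessarily has infinite order, which proves the first assertion. The second assertion is the contrapositive: a subgroup consisting entirely of finite-order elements cannot be infinite, hence is finite.

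The argument is essentially complete once the torsion of $P_{n}(\St)$ has been determined, so the only real content — the point I would regard as the crux — is the identification of the finite-order elements of $P_{n}(\St)$ with $\ang{\ft}$. This rests on the torsion-freeness of the pure braid group of the thrice-punctured sphere, quoted from~\cite{GG2} in the introduction; everything else is the elementary finite-index computation afforded by~\reqref{defperm}.
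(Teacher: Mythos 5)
Your proof is correct and follows essentially the same route as the paper: restrict the short exact sequence~\reqref{defperm} to the subgroup, conclude that its intersection with $P_{n}(\St)$ is infinite (the quotient embedding in $\sn$ forces finite index), and invoke the fact that the torsion of $P_{n}(\St)$ is exactly $\brak{e,\ft}$. The only cosmetic difference is that you spell out the derivation of this torsion fact from the splitting $P_{n}(\St)\cong P_{n-3}(\St\setminus\brak{x_{1},x_{2},x_{3}})\times \Z_{2}$, which the paper simply quotes.
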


\begin{proof}
Let $H$ be an infinite subgroup of $B_{n}(\St)$. Consider the following restriction of the short exact sequence~\reqref{defperm}:
\begin{equation*}
1 \to P_{n}(\St)\cap H \to H \stackrel{\pi\left\lvert_{H}\right.}\to \pi(H) \to 1,
\end{equation*}
where $\pi(H)$ is a subgroup of $\sn$. If $P_{n}(\St)\cap H$ is finite then it follows that $H$ is finite, which contradicts the hypothesis. So $P_{n}(\St)\cap H$ is infinite, but since the torsion of $P_{n}(\St)$ is precisely $\brak{e, \ft}$, $H$ must contain an element of infinite order.
\end{proof}

The following lemma will play a fundamental rôle in the rest of the paper.
\begin{lem}\label{lem:funda}
Let $i\in \brak{0,1,2}$. Then:
\begin{gather}
\alpha_{i}^l \sigma_{j} \alpha_{i}^{-l}=\sigma_{j+l} \quad \text{for all $j,l\in \N$ satisfying $j+l\leq n-i-1$,}\label{eq:fundaa}\\
\sigma_{1}=\alpha_{i}^2 \sigma_{n-i-1} \alpha_{i}^{-2}.\label{eq:fundab}
\end{gather}
Further, if $0\leq q\leq n$, we have:
\begin{equation}\label{eq:alpha0q}
\alpha_{0}^q=(\sigma_{1}\cdots \sigma_{q-1})^q \cdot \prod_{k=1}^{q} (\sigma_{q-k+1}\cdots \sigma_{n-k}).
\end{equation}
\end{lem}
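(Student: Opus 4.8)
The plan is to establish~\reqref{fundaa} first, since both of the remaining identities follow from it. The key input is the classical \emph{shift relation} for the Coxeter-type element $\delta=\sigma_{1}\cdots\sigma_{n-1}$, namely $\delta\sigma_{j}\delta^{-1}=\sigma_{j+1}$ for $1\leq j\leq n-2$, which is a routine consequence of the braid relations~\reqref{Artin1} and~\reqref{Artin2} (push $\sigma_{j}$ leftwards through $\delta$, using the commutations and a single application of $\sigma_{j}\sigma_{j+1}\sigma_{j}=\sigma_{j+1}\sigma_{j}\sigma_{j+1}$). For $i=0$ we have $\alpha_{0}=\delta$, so the base case $l=1$ of~\reqref{fundaa} is immediate for $j\leq n-2$. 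For $i\in\brak{1,2}$ I would write $\alpha_{1}=\delta\sigma_{n-1}$ and $\alpha_{2}=(\sigma_{1}\cdots\sigma_{n-2})\sigma_{n-2}$; in each case the trailing generator $\sigma_{n-i}$ commutes with $\sigma_{j}$ whenever $j\leq n-i-2$, so conjugation by $\alpha_{i}$ reduces to conjugation by the relevant Coxeter prefix and again yields $\alpha_{i}\sigma_{j}\alpha_{i}^{-1}=\sigma_{j+1}$ for $j\leq n-i-2$. The full statement~\reqref{fundaa} then follows by induction on $l$: applying the base case successively to $\sigma_{j},\sigma_{j+1},\ldots,\sigma_{j+l-1}$ requires each of these indices to be at most $n-i-2$, which is exactly guaranteed by the constraint $j+l\leq n-i-1$.

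Given~\reqref{fundaa}, identity~\reqref{fundab} drops out at once. Taking $j=1$ and $l=n-i-2$ in~\reqref{fundaa} (so that $j+l=n-i-1$ is admissible) gives $\sigma_{n-i-1}=\alpha_{i}^{\,n-i-2}\sigma_{1}\alpha_{i}^{-(n-i-2)}$, whence
\begin{equation*}
\alpha_{i}^{2}\sigma_{n-i-1}\alpha_{i}^{-2}=\alpha_{i}^{\,n-i}\sigma_{1}\alpha_{i}^{-(n-i)}.
\end{equation*}
By~\req{uniqueorder2} we have $\alpha_{i}^{\,n-i}=\ft$, which generates the centre of $B_{n}(\St)$, so the right-hand side equals $\sigma_{1}$, as required. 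This is the one place where the sphere relation enters, through the centrality of $\ft$: for $i\in\brak{1,2}$ the equality $\alpha_{i}^{\,n-i}=\ft$ genuinely relies on~\req{uniqueorder2} and fails in $B_{n}$.

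Finally, for~\reqref{alpha0q} I would argue by induction on $q$, the cases $q=0,1$ being trivial. For the inductive step one writes $\alpha_{0}^{q}=\alpha_{0}^{\,q-1}\cdot\alpha_{0}$ and uses the shift relations~\reqref{fundaa} repeatedly to transport the initial block $\sigma_{1}\cdots\sigma_{n-q}$ of the final factor $\alpha_{0}$ leftwards across $\alpha_{0}^{\,q-1}$ (each such transport raises an index by $q-1$ and is admissible precisely because the resulting index never exceeds $n-1$), leaving the trailing generators to accumulate on the right. Reassembling the words so produced into the precise shape on the right of~\reqref{alpha0q} — a leading factor $(\sigma_{1}\cdots\sigma_{q-1})^{q}$, which one recognises as the full twist $\Delta_{q}^{2}$ on the first $q$ strands, followed by the descending ``staircase'' product $\prod_{k=1}^{q}(\sigma_{q-k+1}\cdots\sigma_{n-k})$ — is the delicate point, and is where I expect the main difficulty to lie: the shift relations readily yield \emph{a} positive normal form for $\alpha_{0}^{q}$, but matching it term-by-term to the stated staircase demands careful control of the commutations at each stage of the induction. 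Identifying the leading factor with $\Delta_{q}^{2}$ provides a useful consistency check; for instance it forces the staircase product to be empty when $q=n$, in agreement with the formula reducing to the definition $\alpha_{0}^{n}=(\sigma_{1}\cdots\sigma_{n-1})^{n}$.
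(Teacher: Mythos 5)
Your treatment of~\reqref{fundaa} and~\reqref{fundab} is correct and essentially the paper's own argument: the paper likewise writes $\alpha_{1}=\alpha_{0}\sigma_{n-1}$ and $\alpha_{2}=\alpha_{0}\sigma_{n-1}^{-1}\sigma_{n-2}$ (the same element as your $(\sigma_{1}\cdots\sigma_{n-2})\sigma_{n-2}$), notes that the trailing factors commute with $\sigma_{j}$ for $j\leq n-i-2$ so that conjugation by $\alpha_{i}$ reduces to the shift by $\alpha_{0}$, iterates to get~\reqref{fundaa}, and then combines~\req{uniqueorder2} (centrality of $\ft=\alpha_{i}^{n-i}$) with~\reqref{fundaa} at $j=1$, $l=n-i-2$ to get~\reqref{fundab}; your computation is the same one read in the opposite direction.

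The genuine gap is in~\reqref{alpha0q}: you propose an induction on $q$ and then explicitly defer the step that carries the whole content of the identity, namely matching the word you produce to the staircase $\prod_{k=1}^{q}(\sigma_{q-k+1}\cdots \sigma_{n-k})$ with prefix $(\sigma_{1}\cdots\sigma_{q-1})^{q}$. This is not routine bookkeeping that can be left to the reader. Under induction on $q$, both the prefix (passing from $(\sigma_{1}\cdots\sigma_{q-2})^{q-1}$ to $(\sigma_{1}\cdots\sigma_{q-1})^{q}$) and every staircase factor (from $\sigma_{q-k}\cdots\sigma_{n-k}$ to $\sigma_{q-k+1}\cdots\sigma_{n-k}$) change shape, so the inductive hypothesis does not slot into the identity you want; moreover your leftward transports produce blocks such as $\sigma_{q}\cdots\sigma_{n-1}$ on the \emph{left}, i.e.\ a Garside-type descending form, which is not visibly of the claimed shape. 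The paper sidesteps all of this by fixing $q$ and proving the stronger, interpolating identity
\begin{equation*}
\alpha_{0}^q=(\sigma_{1}\cdots \sigma_{q-1})^m \,\alpha_{0}^{q-m} \cdot \prod_{k=1}^{m} (\sigma_{m-k+1}\cdots \sigma_{n-q+m-k})
\end{equation*}
by induction on $m\in\brak{0,\ldots,q}$. The step is then mechanical: split off one factor $\alpha_{0}=(\sigma_{1}\cdots\sigma_{q-1})(\sigma_{q}\cdots\sigma_{n-1})$ from $\alpha_{0}^{q-m}$, absorb the first block into the prefix power, and push the intact block $\sigma_{q}\cdots\sigma_{n-1}$ rightwards through $\alpha_{0}^{q-(m+1)}$ using~\reqref{fundaa} (every index drops by $q-m-1$), which yields exactly the new $k=1$ staircase factor $\sigma_{m+1}\cdots\sigma_{n-q+m}$ while the old factors re-index as $k\mapsto k+1$; taking $m=q$ gives~\reqref{alpha0q}. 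If you strengthen your induction in this way, your sketch becomes a proof; as written, the third identity is asserted rather than proved. (Your side remarks are fine: $(\sigma_{1}\cdots\sigma_{q-1})^{q}$ is indeed the full twist on the first $q$ strands, and the staircase is empty when $q=n$.)
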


\begin{rems}\mbox{}\label{rem:funda}
\begin{enumerate}
\item\label{it:fundaa} An alternative formulation of equations~\reqref{fundaa} and~\reqref{fundab} is that conjugation by $\alpha_{i}$ permutes the $n-i$ elements
\begin{equation*}
\sigma_{1},\ldots, \sigma_{n-i-1}, \alpha_{i} \sigma_{n-i-1} \alpha_{i}^{-1}
\end{equation*}
cyclically. 
\item \label{it:fundac} If $0\leq q\leq n$ then using \req{alpha0q}, $\alpha_{0}^q$ may be interpreted geometrically as a full twist on the first $q$ strings, followed by the passage of these $q$ strings over the remaining $n-q$ strings (see Figure~\ref{fig:alpha0q} for an example). If further $q$ divides $n$ then $\alpha_{0}^q$ admits a block structure (see also \rerem{nt}(\ref{it:nta})).
\end{enumerate}
\end{rems}

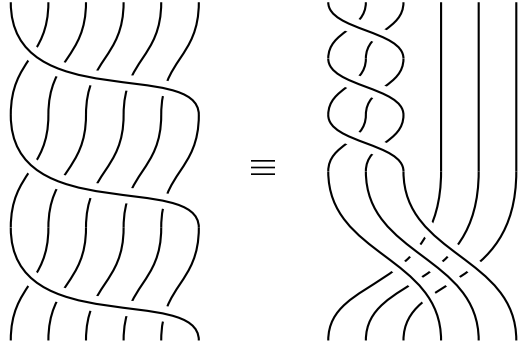
\begin{figure}[h]%[!h]
\hfill
\begin{tikzpicture}[scale=0.5]
\foreach \k in {6,3,0}
{\foreach \j in {2,3,4,5,6}
{\draw[thick] (\j,\k).. controls (\j,\k-1.5) and (\j-1,\k-1.5) .. (\j-1,\k-3);};
\draw[white,line width=6pt] (1,\k) .. controls (1,\k-3) and (6,\k-1.5) .. (6,\k-3);
\draw[thick] (1,\k) .. controls (1,\k-3) and (6,\k-1.5) .. (6,\k-3);};
\end{tikzpicture}\quad
\begin{tikzpicture}[scale=0.5]
\draw (0,1.5) node {\large$\equiv$};
\draw (0,-3) node {};
\end{tikzpicture}\quad
%\hfill
\begin{tikzpicture}[scale=0.5]
\foreach \j in {1,2,3}
{\draw[thick] (\j+3,1.5).. controls (\j+3,-1.75) and (\j,-1.25) .. (\j,-3);};
\foreach \k in {6,4.5,3}
\foreach \j in {2,3}
{\draw[thick] (\j,\k).. controls (\j,\k-0.75) and (\j-1,\k-0.75) .. (\j-1,\k-1.5);};;
\foreach \k in {6,4.5,3}
{\draw[white,line width=6pt] (1,\k).. controls (1,\k-0.75) and (3,\k-0.75) .. (3,\k-1.5);
\draw[thick] (1,\k).. controls (1,\k-0.75) and (3,\k-0.75) .. (3,\k-1.5);};
\foreach \j in {1,2,3}
{\draw[thick] (\j+3,6) -- (\j+3,1.5);
\draw[white,line width=6pt] (\j,1.5).. controls (\j,-0.75) and (\j+3,-0.75) .. (\j+3,-3);
\draw[thick] (\j,1.5).. controls (\j,-0.75) and (\j+3,-0.75) .. (\j+3,-3);
};
\end{tikzpicture}
\hspace*{\fill}
\caption{The braid $\alpha_{0}^{3}$ in $B_{6}(\St)$, first in its usual form, and then in the form $(\sigma_{1}\sigma_{2})^{3} (\sigma_{3}\sigma_{4}\sigma_{5}) (\sigma_{2}\sigma_{3}\sigma_{4}) (\sigma_{1}\sigma_{2}\sigma_{3})$ of \req{alpha0q}.}\label{fig:alpha0q}
\end{figure}

%\usepackage{braids}
%\begin{center}
%\begin{tikzpicture}
%\braid s_{1} s_{2} s_{3} s_{4} s_{5} s_{1} s_{2} s_{3} s_{4} s_{5} s_{1} s_{2} s_{3} s_{4} s_{5};
%%\braid[rotate=90, style strands={1}{red}, style strands={2}{blue}, style strands={3}{green}] s_1
%%s_2^{−1} s_1 s_2^{−1} s_1 s_2^{−1};
%\end{tikzpicture}
%\end{center}

\begin{proof}[Proof of \relem{funda}.]
Let $i\in \brak{0,1,2}$. We start by establishing equations~\reqref{fundaa} and~\reqref{fundab}. First note that $\alpha_{1}=\alpha_{0}\sigma_{n-1}$ and $\alpha_{2}=\alpha_{0}\sigma_{n-1}^{-1}\sigma_{n-2}$, so if $1\leq j\leq n-i-2$, $\alpha_{i} \sigma_{j} \alpha_{i}^{-1}=\alpha_{0} \sigma_{j} \alpha_{0}^{-1}= \sigma_{j+1}$ using standard properties of $\alpha_{0}$. If further $l\in \N$ and $j+l\leq n-i-1$, $\alpha_{i}^l \sigma_{j} \alpha_{i}^{-l}=\sigma_{j+l}$, which proves \req{fundaa}. Since $n-i-2\geq 0$, we obtain
\begin{equation*}
\sigma_{1}=\alpha_{i}^{n-i} \sigma_{1} \alpha_{i}^{-(n-i)}= \alpha_{i}^2 \alpha_{i}^{n-i-2} \sigma_{1} \alpha_{i}^{-(n-i-2)} \alpha_{i}^{-2}=\alpha_{i}^2 \sigma_{n-i-1} \alpha_{i}^{-2},
\end{equation*}
using equations~\reqref{uniqueorder2} and~\reqref{fundaa}, which proves \req{fundab}. We now prove \req{alpha0q}. Let us prove by induction that for all $m\in \brak{0,\ldots,q}$,
\begin{equation}\label{eq:induction}
\alpha_{0}^q=(\sigma_{1}\cdots \sigma_{q-1})^m \alpha_{0}^{q-m} \cdot \prod_{k=1}^{m} (\sigma_{m-k+1}\cdots \sigma_{n-q+m-k}).
\end{equation}
Clearly the equality holds if $m=0$. So suppose that it is true for some $m\in \brak{0,\ldots,q-1}$. Then $q-(m+1)\geq 0$, and:
\begin{align*}
\alpha_{0}^q &=(\sigma_{1}\cdots \sigma_{q-1})^m \alpha_{0}^{q-m} \cdot \prod_{k=1}^{m} (\sigma_{m-k+1}\cdots \sigma_{n-q+m-k})\\
&=(\sigma_{1}\cdots \sigma_{q-1})^{m+1} \sigma_{q}\cdots \sigma_{n-1} \alpha_{0}^{q-(m+1)} \cdot \prod_{k=1}^{m} (\sigma_{m-k+1}\cdots \sigma_{n-q+m-k})\\
&=(\sigma_{1}\cdots \sigma_{q-1})^{m+1} \alpha_{0}^{q-(m+1)} \sigma_{m+1}\cdots \sigma_{n-q+m} \cdot \prod_{k=2}^{m+1} (\sigma_{m-k+2}\cdots \sigma_{n-q+m-k+1})\\
&=(\sigma_{1}\cdots \sigma_{q-1})^{m+1} \alpha_{0}^{q-(m+1)} \cdot \prod_{k=1}^{m+1} (\sigma_{(m+1)-k+1}\cdots \sigma_{n-q+(m+1)-k})
\end{align*}
using \req{fundaa}, which gives \req{induction}. Taking $m=q$ in that equation yields \req{alpha0q}.
\end{proof}

As well as being of interest in its own right, the following result will prove to be useful when we come to discussing the possible Type~I subgroups whose finite factor is cyclic. If $H$ is a subgroup of a group $G$ then we denote the centraliser (resp.\ normaliser) of $H$ in $G$ by $Z_{G}(H)$ (resp.\ $N_{G}(H)$).
\begin{prop}\label{prop:luis}
Let $n\geq 4$, and let $i\in \brak{0,1,2}$. Then $Z_{B_{n}(\St)}(\ang{\alpha_{i}})=\ang{\alpha_{i}}$.
\end{prop}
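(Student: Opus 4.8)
The plan is to reduce the statement to the corresponding computation in the mapping class group $\mcg$ via the central extension~\reqref{mcg}, and then to quote Hodgkin's description~\cite{Ho} of the centralisers of finite order elements of $\mcg$. First observe that the centraliser of the cyclic subgroup $\ang{\alpha_{i}}$ coincides with the centraliser $Z_{B_{n}(\St)}(\alpha_{i})$ of the single element $\alpha_{i}$, since an element commutes with every power of $\alpha_{i}$ precisely when it commutes with $\alpha_{i}$. The inclusion $\ang{\alpha_{i}}\subseteq Z_{B_{n}(\St)}(\alpha_{i})$ is immediate because $\alpha_{i}$ is central in $\ang{\alpha_{i}}$, so it remains to establish the reverse inclusion.

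Next I would record the behaviour of $\ang{\alpha_{i}}$ under the projection $\map{\phi}{B_{n}(\St)}[\mcg]$ of~\reqref{mcg}, whose kernel is $\ang{\ft}$. By~\reqref{uniqueorder2} we have $\ft=\alpha_{i}^{n-i}$, so $\ker{\phi}=\ang{\ft}\subseteq \ang{\alpha_{i}}$ and $\ang{\alpha_{i}}\cap \ker{\phi}=\ang{\alpha_{i}^{n-i}}$ has order $2$. Since $\alpha_{i}$ has order $2(n-i)$ by \reth{murasugi}, it follows that $\phi(\alpha_{i})$ has order $n-i$, and that $\phi^{-1}\bigl(\ang{\phi(\alpha_{i})}\bigr)$ is a subgroup of order $2(n-i)$ that contains $\ang{\alpha_{i}}$; comparing orders gives the key identity $\phi^{-1}\bigl(\ang{\phi(\alpha_{i})}\bigr)=\ang{\alpha_{i}}$.

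The heart of the argument is then the computation in $\mcg$: by Hodgkin's theorem on the centralisers of finite order (periodic) mapping classes of the punctured sphere, the centraliser of $\phi(\alpha_{i})$ in $\mcg$ is exactly the cyclic group $\ang{\phi(\alpha_{i})}$ that it generates, the hypothesis $n\geq 4$ ruling out the degenerate case $i=2$, $n=3$. Granting this, let $g\in Z_{B_{n}(\St)}(\alpha_{i})$. Applying $\phi$ shows that $\phi(g)$ commutes with $\phi(\alpha_{i})$, whence $\phi(g)\in \ang{\phi(\alpha_{i})}$ and therefore $g\in \phi^{-1}\bigl(\ang{\phi(\alpha_{i})}\bigr)=\ang{\alpha_{i}}$ by the previous paragraph. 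This gives $Z_{B_{n}(\St)}(\alpha_{i})\subseteq \ang{\alpha_{i}}$ and hence the desired equality.

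The main obstacle is the geometric input in $\mcg$, namely the precise statement and applicability of Hodgkin's centraliser result: one must check that $\phi(\alpha_{i})$ is realised by a rotation of the sphere generating a \emph{maximal} finite cyclic group of the appropriate type, so that its centraliser (equivalently, the mapping class group of the quotient orbifold) is no larger than $\ang{\phi(\alpha_{i})}$. The delicate cases are those of small order, in particular $i=2$ with $n$ small, where $\phi(\alpha_{2})$ may have order as low as $2$; for these one should verify directly that no further mapping classes commute with $\phi(\alpha_{i})$. Everything else, namely the order count and the pull-back along $\phi$, is routine once the $\mcg$ centraliser is known.
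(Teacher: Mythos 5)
Your strategy is the same as the paper's: project to $\mcg$ via the central extension~\reqref{mcg}, quote Hodgkin's centraliser theorem there, and pull back. The pull-back step is sound, and your identity $\phi^{-1}\bigl(\ang{\phi(\alpha_{i})}\bigr)=\ang{\alpha_{i}}$ (valid because $\ker{\phi}=\ang{\ft}=\ang{\alpha_{i}^{n-i}}\subseteq \ang{\alpha_{i}}$) is exactly the paper's computation $z=\alpha_{i}^{t}\ftalt{\epsilon}\in\ang{\alpha_{i}}$. The gap lies in the step you call the heart of the argument: it is \emph{not} true that Hodgkin's theorem gives $Z_{\mcg}\bigl(\ang{\phi(\alpha_{i})}\bigr)=\ang{\phi(\alpha_{i})}$ for all $n\geq 4$; the hypothesis $n\geq 4$ only excludes $i=2$, $n=3$, whereas the genuinely exceptional case is $i=2$, $n=4$. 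There $a_{2}=\phi(\alpha_{2})$ has order $r=2$, and since $r$ does not divide $n-1=3$, the group $\Lambda$ of \repr{hodgkin1} is not the (trivial) pure mapping class group of the quotient $3$-marked sphere but the order-$2$ group of classes allowed to exchange the two poles, so by the exact sequence~\reqref{mcghodgkin} the centraliser $Z_{\mcg[4]}(\ang{a_{2}})$ has order $4$. This enlargement is real, not an artefact of the statement: the rotation by $\pi$ about an equatorial axis through the two non-polar marked points commutes with $a_{2}$ (rotations by $\pi$ about perpendicular axes commute in $\operatorname{SO}(3)$) and exchanges the poles, so it is not a power of $a_{2}$. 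Consequently the ``direct verification that no further mapping classes commute with $\phi(\alpha_{i})$'' to which you defer would fail, and your pull-back then only places $Z_{B_{4}(\St)}(\ang{\alpha_{2}})$ inside a group of order $8$.

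The missing argument must therefore be made in $B_{4}(\St)$, not in $\mcg[4]$, and this is how the paper finishes. The preimage $\phi^{-1}\bigl(Z_{\mcg[4]}(\ang{a_{2}})\bigr)$ is a subgroup of $B_{4}(\St)$ of order $8$ containing $\alpha_{2}$; it has a unique element of order $2$, so is isomorphic to $\Z_{8}$ or $\quat$, and it cannot be $\Z_{8}$ because $\ang{\alpha_{2}}$ is maximal cyclic in $B_{4}(\St)$: by \reth{murasugi} an overgroup isomorphic to $\Z_{8}$ would force $\alpha_{2}$ to be conjugate to $\alpha_{0}^{\pm 2}$, which is impossible since their permutations have different cycle types. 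Hence the preimage is isomorphic to $\quat$. If $Z_{B_{4}(\St)}(\ang{\alpha_{2}})$ were all of $\quat$, then $\alpha_{2}$, which is central in its own centraliser, would be an element of $Z(\quat)\cong \Z_{2}$ of order $4$, a contradiction; so the centraliser has order at most $4$ and equals $\ang{\alpha_{2}}$. Once this case is added, your proof is complete: for $i\in\brak{0,1}$, and for $i=2$ with $n\geq 5$, one has $r\geq 3$ or $r\divides n-1$, the group $\Lambda$ is trivial, and the argument runs exactly as you wrote it.
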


%\comment{Added 2/9/09.}

In order to prove \repr{luis}, we first state a result due to L.~Hodgkin concerning the centralisers of finite order elements in $\mcg$. 
%\comment{Check~\cite{Ho}.}

\begin{prop}[\cite{Ho}]\label{prop:hodgkin1}
Let $n\geq 3$, let $\gamma\in \mcg$ be an element of finite order $r\geq 2$, and let $f$ be a rotation of $\St$ by angle $2\pi m/r$ about the axis passing through the poles which represents $\gamma$, where $\gcd{(m,r)}=1$. Let $\Lambda$ be the subgroup of the mapping class group of the quotient space $\St/\ang{f}$ whose set of marked points is the union of the image of the $n$ marked points under the quotient map $\St\to \St/\ang{f}$ with the two poles of $\St/\ang{f}$, and whose elements fix these two poles if $r\neq 2$ or $r$ divides $n-1$, and leaves the set of poles invariant if $r=2$ and $r$ does not divide $n-1$. Then there is an exact sequence 
\begin{equation}\label{eq:mcghodgkin}
1 \to \Z_{r}\to Z_{\mcg}(\ang{\gamma}) \to \Lambda \to 1.
\end{equation}
\end{prop}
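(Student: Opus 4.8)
The statement is Hodgkin's structural description of the centraliser, so the plan is to realise $\gamma$ by an honest rotation, pass to the quotient orbifold, and recover~\reqref{mcghodgkin} through Birman--Hilden theory for the associated cyclic branched cover. First I would fix a geometric model: by Nielsen realisation for finite cyclic groups, the class $\gamma$ of order $r$ is represented by a homeomorphism of order $r$, and by the classical fact that every finite-order orientation-preserving homeomorphism of $\St$ is conjugate to a rotation, this homeomorphism may be taken to be the rotation $f$ of angle $2\pi m/r$ about a polar axis, with $\gcd(m,r)=1$. The rotation $f$ permutes the $n$ marked points in free orbits of size $r$, apart from those marked points lying at the two fixed poles; writing $a\in\brak{0,1,2}$ for the number of poles that carry a marked point, one has $n\equiv a \pmod r$, so that exactly one pole is marked precisely when $r\mid n-1$.

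Next I would construct the descent homomorphism. The quotient $\St/\ang{f}$ is again a $2$-sphere, now carrying two cone points of order $r$ at the images of the poles under the branched cover $\map{\pi}{\St}[\St/\ang{f}]$, together with the marked points induced by the orbits of the $n$ points. The central input is the Birman--Hilden property for this cover: a mapping class commuting with $\gamma=[f]$ is represented by a homeomorphism $h$ with $hfh^{-1}$ isotopic to $f$, and one may isotope $h$ so that it genuinely commutes with $f$, uniquely up to $f$-equivariant isotopy. Such an $h$ then descends to a homeomorphism of $\St/\ang{f}$ preserving the cone points and the image marked points, whence a homomorphism $\map{\Psi}{Z_{\mcg}(\ang{\gamma})}[\Lambda]$. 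The dichotomy in the definition of $\Lambda$ between fixing the poles and merely preserving their pair records which base homeomorphisms admit equivariant lifts: a homeomorphism interchanging the two poles conjugates $f$ to $f^{-1}$, so it is compatible with $f$ only when $f=f^{-1}$, that is when $r=2$, and even then only when the two poles are indistinguishable as marked or unmarked points, which by the previous paragraph fails exactly when $r\mid n-1$. This accounts precisely for the stated conditions.

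Finally I would identify the two ends of the sequence. The kernel of $\Psi$ consists of classes descending to the identity on $\St/\ang{f}$; by covering-space theory such a class is, up to equivariant isotopy, a deck transformation, so $\ker{\Psi}=\ang{f}=\ang{\gamma}\cong \Z_{r}$, which is the left-hand term of~\reqref{mcghodgkin} and is central in $Z_{\mcg}(\ang{\gamma})$ by definition of the centraliser. Surjectivity onto $\Lambda$ is the lifting half of Birman--Hilden: any mapping class of $\St/\ang{f}$ respecting the branch and marked-point data, and treating the poles according to the rules defining $\Lambda$, lifts to an $f$-equivariant homeomorphism of $\St$, which by construction centralises $\gamma$. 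Assembling these three steps yields the exact sequence.

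The main obstacle is the Birman--Hilden input in both directions: upgrading the hypothesis that $h$ commutes with $\gamma$ merely up to isotopy to genuine $f$-equivariance, so that $\Psi$ is well defined, and the dual equivariant lifting of base mapping classes giving surjectivity. For branched covers of the sphere these require care with the isotopy-extension arguments near the cone points, together with the $r=2$ pole-swapping subtlety; when $n$ is large the quotient orbifold is hyperbolic and the standard Birman--Hilden machinery applies directly, whereas the finitely many small cases have to be checked individually.
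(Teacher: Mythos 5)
The paper itself contains no proof of this proposition: it is quoted from Hodgkin~\cite{Ho}, and the remark that follows it in the text merely observes that Hodgkin's argument, written for elements of prime power order, ``may be checked'' to hold for arbitrary finite order, with Maclachlan--Harvey~\cite{McH} cited for related results. Measured against that literature, your proposal is correct and follows the same geometric route: realise $\gamma$ by a rotation (the statement in fact already posits $f$), pass to the quotient, and obtain~\reqref{mcghodgkin} from the correspondence between $f$-equivariant mapping classes and the centraliser. Your bookkeeping for the pole dichotomy is exactly right: with $a\in\brak{0,1,2}$ marked poles one has $n\equiv a\bmod{r}$, so $a=1$ if and only if $r\divides n-1$; an orientation-preserving lift of a pole swap conjugates $f$ to $f^{-1}$, hence can centralise $\gamma$ only if $r=2$, and if moreover $r\divides n-1$ the two poles are distinguishable (one is the image of a marked point, the other is not), so no such lift preserves the marked set. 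Likewise the kernel computation ($\ker{\Psi}=\ang{\gamma}\cong\Z_{r}$, a centralising class descending to the identity being equivariantly isotopic to a deck transformation) and the surjectivity (the cover is classified by an epimorphism $\Z\to\Z_{r}$ whose kernel is preserved by any homeomorphism of the base respecting the branch data) are sound.

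Two further remarks. First, the crux is indeed the Birman--Hilden input you flag, in both directions: upgrading ``commutes with $f$ up to isotopy'' to genuine $f$-equivariance so that the descent homomorphism $\Psi$ is defined, and ``isotopic'' to ``equivariantly isotopic'' so that its kernel is exactly $\ang{\gamma}$; for this fully ramified cyclic cover both are supplied by the Teichm\"uller-theoretic results of Maclachlan--Harvey~\cite{McH}. Second, your caution about ``finitely many small cases'' is unnecessary: if a pole carries a marked point, its image in the quotient is a puncture rather than a cone point, so by multiplicativity of the orbifold Euler characteristic the quotient orbifold satisfies $\chi^{\mathrm{orb}}=(2-n)/r<0$ for every $n\geq 3$ and $r\geq 2$; it is always hyperbolic, and the machinery applies uniformly. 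A genuine virtue of your argument is that nothing in it uses primality of $r$, so it furnishes precisely the extension from prime power order to arbitrary finite order that the paper asserts without proof.
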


\begin{rem}
Hodgkin's proof of the result is for elements of prime power order~\cite[Proposition~2.5]{Ho}, but one may check that it holds for any finite order element.
\end{rem}

%\comment{(Maybe this result can be used to reprove some of the results of `Kebab' on the centraliser of finite order elements in $B_{n}$?)}

We now come to the proof of \repr{luis}.

\begin{proof}[Proof of \repr{luis}.]
Let $z$ belong to $Z_{B_{n}(\St)}(\ang{\alpha_{i}})$. We start by showing that either $Z_{\mcg}(\ang{a_{i}})=\ang{a_{i}}$, or in the case $n=4$, $i=2$, the possibility that $Z_{\mcg[4]}(\ang{a_{2}})\cong \Z_{2}\oplus \Z_{2}$ is also allowed. Consider the short exact sequence~\reqref{mcg}.
%\begin{equation}\label{eq:mcg}
%1 \to \Z_{2} \to B_{n}(\St) \stackrel{\phi}{\to} \mcg \to 1.
%\end{equation}
Take $m=1$ and $r=n-i\geq 2$ in the statement of \repr{hodgkin1}. Up to conjugacy, we may suppose that $\phi(\alpha_{i})=a_{i}$, where we denote the mapping class of the rotation $f$ of that proposition by $a_{i}$. Then $\St/\ang{f}$ may be regarded as a sphere with three marked points, two of which are the poles, and the other marked point corresponds to the single orbit of $r$ marked points in $\St$. Suppose first that $r\neq 2$ or $i=1$ (in the latter case, $r$ clearly divides $n-1$). Then $\Lambda$ is the subgroup of the mapping class group of $\St/\ang{f}$ whose elements fix each of the poles, as well as the remaining marked point. Hence $\Lambda$ is the pure mapping class group of $\St/\ang{f}$, which is trivial. It follows from \req{mcghodgkin} that $Z_{\mcg}(a_{i})$ is cyclic of order $r$, and so is equal to $\ang{a_{i}}$. Now suppose that $r=2$ and $i\neq 1$. Since $n\geq 3$, we have that $n=4$ and $i=2$. In this case, $\Lambda$ is the subgroup of the mapping class group of $\St/\ang{f}$ whose elements leave the set of poles invariant (and fix the remaining marked point), and so is isomorphic to $\Z_{2}$. By \req{mcghodgkin}, $Z_{\mcg}(\ang{a_{2}})$ is an extension of $\Z_{2}$ by $\Z_{2}$, thus $Z_{\mcg[4]}(\ang{a_{2}})$ is isomorphic to either $\Z_{4}$ or $\Z_{2}\oplus \Z_{2}$. In the former case, we obtain $Z_{\mcg[4]}(\ang{a_{2}})=\ang{a_{2}}$. 

We first consider the case where $Z_{\mcg}(\ang{a_{i}})=\ang{a_{i}}$ (so either $r\neq 2$ or $i=1$, or $n=4$, $i=2$ and $Z_{\mcg[4]}(\ang{a_{2}})=\ang{a_{2}}$). Now $z'=\phi(z)$ belongs to the centraliser of $a_{i}$, and so may be written in the form $z'=a_{i}^t$, where $t\in \brak{0,\ldots,n-i-1}$. By equations~\reqref{uniqueorder2} and~\reqref{mcg}, $z=\alpha_{i}^t \ftalt{\epsilon}=\alpha_{i}^{t+ \epsilon(n-i)}$, where $\epsilon\in \brak{0,1}$, and hence $z\in \ang{\alpha_{i}}$. Since $Z_{B_{n}(\St)}(\ang{\alpha_{i}})$ clearly contains $\ang{\alpha_{i}}$, it follows that  $Z_{B_{n}(\St)}(\ang{\alpha_{i}})=\ang{\alpha_{i}}$ as required.

Finally, suppose that $n=4$, $i=2$ and $Z_{\mcg[4]}(\ang{a_{2}})\cong \Z_{2}\oplus \Z_{2}$. Since
\begin{equation*}
\phi(Z_{B_{4}(\St)}(\ang{\alpha_{2}}))\subset Z_{\mcg[4]}(\ang{a_{2}}),
\end{equation*}
we have
\begin{equation*}
\ang{\alpha_{2}} \subset Z_{B_{4}(\St)}(\ang{\alpha_{2}}) \subset \phi^{-1}(Z_{\mcg[4]}(\ang{a_{2}}))\cong \quat
\end{equation*}
using \req{mcg}. If $Z_{B_{4}(\St)}(\ang{\alpha_{2}})$ is isomorphic to $\quat$ then there exists at least one element of $Z_{B_{4}(\St)}(\alpha_{2})$ that does not commute with $\alpha_{2}$, which is a contradiction. So $\ang{\alpha_{2}} = Z_{B_{4}(\St)}(\ang{\alpha_{2}})$, and this completes the proof of the proposition.
\end{proof}

%\comment{\repr{genhodgkin1} has been rewritten, as there were some inaccuracies, and also there is a special case for $n=4$ in part~(c). We should check to see if we need to use this special case anywhere.}

%\begin{cor}\label{cor:normalise}
%Let $n\geq 4$, and $i=0,1,2$.
%\begin{enumerate}[(a)]
%\item The centraliser of $\alpha_{i}$ in $B_{n}(\St)$ is $\ang{\alpha_{i}}$.
%\item\label{it:normcyclic} The normaliser of $\ang{\alpha_{i}}$ in 
%$B_{n}(\St)$ is isomorphic to $\dic{4(n-i)}$ if $i\in\brak{0,2}$, and to $\Z_{2(n-1)}$ if $i=1$. \comment{Make more precise.}
%\item If $i\in\brak{0,2}$, the normaliser of the standard copy $\ang{\alpha_{0}\alpha_{i}\alpha_{0}^{-1}, \garside}$ of $\dic{4(n-i)}$ in $B_{n}(\St)$ is itself, except when $i=2$ and $n=4$, in which case the normaliser is isomorphic to $\quat[16]$. 
%\item If $n=4$ (resp.\ $n=6$), let $H$ be a subgroup of $B_{n}(\St)$ isomorphic to $\tonestar$ (resp.\ $\oonestar$). Then the normaliser of $H$ in $B_{n}(\St)$ is $H$ itself.
%\end{enumerate}
%\end{cor}

\begin{rem}
As we shall see in \resecglobal{realisation}{dirtoi}, in general the binary polyhedral groups $\tonestar,\oonestar$ and $\istar$ have infinite centraliser in $B_{n}(\St)$.
\end{rem}

We now proceed with the proof of \repr{genhodgkin1}.

\begin{proof}[Proof of \repr{genhodgkin1}.] 
We first deal with the case $n=3$ for both parts~(\ref{it:centalphai}) and~(\ref{it:normcyclic}). We have $\alpha_{0}=\sigma_{1}\sigma_{2}$, $\alpha_{1}=\sigma_{1}\sigma_{2}^{2}$ and $\alpha_{2}=\sigma_{1}^{2}$, which are of order $6,4$ and $2$ respectively. In particular, $\alpha_{2}=\ft[3]$, and so the centraliser of $\alpha_{2}$ and the normaliser of $\ang{\alpha_{2}}$ are both equal to $B_{3}(\St)$. As for $\alpha_{0}$ and $\alpha_{1}$, they may be taken to be the generators $x$ and $y$ of $B_{3}(\St)\cong \dic{12}$ appearing in \req{presdic}. Indeed, $\alpha_{0}^{3}=\alpha_{1}^{2}=\ft[3]$ by \req{uniqueorder2}, $\ang{\alpha_{0},\alpha_{1}}=B_{3}(\St)$ since $\ang{\alpha_{0},\alpha_{1}}$ cannot be of order less than $12$, and 
\begin{equation*}
\alpha_{1}\alpha_{0}\alpha_{1}^{-1}= \sigma_{1}\sigma_{2}^{2} \sigma_{1}\sigma_{2} \sigma_{2}^{-2}\sigma_{1}^{-1}= \sigma_{1}\sigma_{2}^{2} \sigma_{1} \sigma_{2}^{-1}\sigma_{1}^{-1} =\sigma_{2}^{-1}\sigma_{1}^{-1} \quad\text{by \req{surface}.}
\end{equation*}
It follows from the presentation of \req{presdic} that $\ang{\alpha_{0},\alpha_{1}}\cong \dic{12}$, and the rest of the statement follows using this presentation in the case $m=3$.

We suppose henceforth that $n\geq 4$.
\begin{enumerate}[(a)]
\item This is the statement of \repr{luis}.

\item Let $i\in {0,1,2}$, let $N=N_{B_{n}(\St)}(\ang{\alpha_{i}})$, and let $x\in N$. Then some power of $x$ belongs to the centraliser of $\ang{\alpha_{i}}$ in $B_{n}(\St)$, which is equal to $\ang{\alpha_{i}}$ by \repr{luis}. So $x$ is of finite order, and thus $N$ is finite by \repr{finord}. Let 
\begin{equation*}
G=
\begin{cases}
\ang{\alpha_{0},\garside} \cong \dic{4n} & \text{if $i=0$}\\
\ang{\alpha_{2},\alpha_{0}^{-1}\garside \alpha_{0}} \cong \dic{4(n-2)} & \text{if $i=2$}\\
\ang{\alpha_{1}}\cong \Z_{2(n-1)} & \text{if $i=1$.}
\end{cases}
\end{equation*}
If $i\in \brak{0,2}$ then $G$ is conjugate to the standard copy of $\dic{4(n-i)}$. Since $[G:\ang{\alpha_{i}}]\in \brak{1,2}$, $\ang{\alpha_{i}}$ is normal in $G$, and so $N \supset G$. If $G=N$ then we are done. So suppose that $G\neq N$, and let $M$ be a maximal finite subgroup containing $N$. Hence $G$ is not maximal, and by \reth{finitebn}, we are in one of the following cases:
\begin{enumerate}[(i)]
\item\label{it:n4i12} $n=4$ and $i\in\brak{1,2}$. If $i=1$ (resp.\ $i=2$) then $G\cong \Z_6$ (resp.\ $G\cong \quat$). Since $G \subsetneqq N \subset M$, it follows from \reth{finitebn} and the subgroup structure of the finite maximal subgroups of $B_{4}(\St)$ (see \repr{maxsubgp}) that $N=M$ and $N\cong \tonestar$ (resp.\ $N\cong \tonestar$ or $N\cong \quat[16]$). Now $\ang{\alpha_{i}}$ is isomorphic to $\Z_{6}$ (resp.\ to $\Z_{4}$), but these subgroups are not normal in $\tonestar$, so $N\ncong \tonestar$. Hence $i=2$ and $N\cong \quat[16]$. Take $N$ to have the presentation~\reqref{presdic} with $m=4$. Since $\ang{\alpha_{2}}$ is isomorphic to $\Z_{4}$ and is normal in $N$, we have that $\ang{\alpha_{2}}=\ang{x^2}$. Hence $\ang{\alpha_2} \subsetneqq \ang{x}$, but this contradicts the fact that $\ang{\alpha_2}$ is maximal cyclic in $B_{4}(\St)$ by \reth{murasugi}.

\item\label{it:n6i2} $n=6$ and $i=2$. Then $\ang{\alpha_2}\cong \Z_8$ and $G\cong \quat[16]$. Since the maximal finite subgroups of $B_{6}(\St)$ are isomorphic to $\dic{24}$, $\Z_{10}$ or $\oonestar$ by \reth{finitebn}, and $G \subsetneqq N \subset M$, it follows that $N=M\cong \oonestar$. However, this contradicts the fact that the copies of $\Z_8$ in $\oonestar$ are not normal by \repr{maxsubgp}. This completes the proof of part~(\ref{it:normcyclic}).
\end{enumerate}

\item Let $i\in \brak{0,2}$, let $G$ denote the standard copy of $\dic{4(n-i)}$, and let $N$ be the normaliser $N_{B_{n}(\St)}(G)$ of $G$ in $B_{n}(\St)$. If $x\in N$ then some power of $x$ centralises $G$, and so centralises its cyclic subgroup $\ang{\alpha_{0}\alpha_{i} \alpha_{0}^{-1}}$ of order $2(n-i)$. It follows from part~(\ref{it:normcyclic}) that $N$ is finite. Since $N\supset G$, if $G$ is maximal finite then $G=N$, and we are done. So suppose that $G$ is not maximal, and let $M$ be a finite maximal subgroup of $B_{n}(\St)$ satisfying $G \subsetneqq N \subset M$. \reth{finitebn} implies that $n\in\brak{4,6}$ and $i=2$.

Suppose first that $n=4$, so $G\cong \quat$. Then $M$ is isomorphic to $\tonestar$ or $\quat[16]$ by \reth{finitebn}, and $N=M$ by \repr{maxsubgp}. Suppose first that $N\cong \tonestar \cong \quat \rtimes \Z_{3}$. Then $G$ is the unique subgroup of $M$ isomorphic to $\quat$. The form of the action of $\Z_{3}$ on $\quat$ implies that the elements of $G$ of order $4$ are pairwise conjugate. However, this is impossible since the  permutations of the order $4$ elements $\alpha_{2}$ and $\alpha_{0}^{-1}\garside[4]\alpha_{0}$ of $G$ have distinct cycle types. Thus $N\cong \quat[16]$. By~\cite[Proposition~1.5 and Theorem~1.6]{GG7}, the standard copy $\ang{\alpha_{0},\garside[4]}$ of $\quat[16]$ in $B_{4}(\St)$, which is a representative of the unique conjugacy class of subgroups isomorphic to $\quat[16]$, contains representatives of the two conjugacy classes of $\quat$, from which it follows that there exists a subgroup $K$ of $B_{4}(\St)$ conjugate to $\ang{\alpha_{0},\garside[4]}$ and containing $G$. Since $[K:G]=2$, $G$ is normal in $K$, so $K\subset N$. The maximality of $\quat[16]$ as a finite subgroup of $B_{4}(\St)$ implies that $K=N\cong \quat[16]$. Furthermore, we claim that $K=\alpha_{0}^{-1} \sigma_{1}^{-1} \ang{\alpha_{0},\garside[4]} \sigma_{1}\alpha_{0}$. Indeed, $K$ has a subgroup isomorphic to $\quat$ that is generated by the following two elements:
\begin{align*}
\alpha_{0}^{-1}\sigma_{1}^{-1} \alpha_{0}^2 \sigma_{1}\alpha_{0}&=\alpha_{0}^{-1}\alpha_{0}^2 \sigma_{3}^{-1} \sigma_{1}= \alpha_{0}^{-1}\garside[4]\alpha_{0}\\
\alpha_{0}^{-1}\sigma_{1}^{-1} \alpha_{0}\garside[4] \sigma_{1}\alpha_{0}&=\alpha_{0}^{-1}\sigma_{1}^{-1} \alpha_{0} \sigma_{3} \garside[4]\alpha_{0}=
\alpha_{0}^{-1} \sigma_{2}\sigma_{3}^2 \alpha_{0}\ldotp \alpha_{0}^{-1} \garside[4]\alpha_{0}=\alpha_{2} \ldotp \alpha_{0}^{-1} \garside[4]\alpha_{0},
\end{align*}
which are also generators of $G$. We have used relations~\reqref{defgarside} and~\reqref{garsideconj}, as well as \relem{funda} to obtain these equalities. This proves the claim, and completes the proof in the case $n=4$.

%Then $G\cong \quat$, $N=M$, and $N\cong \tonestar$ or $N\cong \quat[16]$. If $N\cong \tonestar\cong \quat\rtimes \Z_{3}$ then the copies of $\Z_{4}$ lying in $G$ are conjugate since they are permuted cyclically by the action of $\Z_{3}$. But $G=\ang{\alpha_{2}',\garside[4]}$, $\pi(\alpha_{2}')=(2,3)$ and $\pi(\garside[4])=(1,4)(2,3)$, so 
%$\alpha_{2}'$ and $\garside[4]$ cannot be conjugate in $B_{n}(\St)$. Thus $N\cong \quat[16]$. 

Finally, suppose that $n=6$, so $G\cong \quat[16]$. If $G \subsetneqq N$ then as in part~(\ref{it:normcyclic})(\ref{it:n6i2}) it follows that $N\cong \oonestar$. But by \repr{maxsubgp}, the copies of $\quat[16]$ in $\oonestar$ are not normal, which yields a contradiction. We thus conclude that $G=N$ as required.\qedhere
%\item \comment{To be completed (maybe elsewhere?)}
\end{enumerate}
\end{proof}

\section{Reduction of isomorphism classes of $F\rtimes_{\theta} \Z$ via $\out{F}$}\label{sec:autout}
%\section{Reduction of the class of virtually cyclic subgroups of $B_{n}(\St)$}%\label{sec:typeI}

%, and will enable us to define the subfamily $\mathbb{V}_{1}$ in \resec{defV1}, and the subfamily $\mathbb{V}_{2}$ in \resec{typeII}. 

%\comment{For the cyclic (and probably dicyclic) groups, we will have the mapping class group results concerning conjugates of the $\alpha_{i}$. Maybe these are sufficient to deduce the possible actions, and perhaps we don't need all of the following discussion. In fact, it seems likely that we only need the periodicity results to rule out the non-trivial actions on $\oonestar$ and $\istar$.}

%\comment{Since we only need the periodicity results to rule out the non-trivial actions on $\oonestar$ and $\istar$, much of the previous version of this section has been commented out.}

%\subsection{Reduction of isomorphism classes of $F\rtimes \Z$ via $\out{F}$}%\label{sec:autout}%{sec:isoclasstypeI}

%\subsubsection{Isomorphism classes of semi-direct products $F\rtimes_{\theta}\Z$}

If $F$ is a group, let $\inn{F}$ denote the normal subgroup of inner automorphisms of the group $\aut{F}$ of automorphisms of $F$, and recall that $\inn{F}\cong F/Z(F)$, where $Z(F)$ denotes the centre of $F$. By \reth{wall}, any Type~I group is of the form $F\rtimes_{\theta} \Z$ for some action $\theta\in \operatorname{\text{Hom}}(\Z,\aut{F})$, where $F$ is finite. The following proposition asserts that the isomorphism class of such a group depends only on the homomorphism $\map{\overline{\theta}}{\Z}[\out{F}]$ which is the composition of $\theta$ with the canonical projection $\aut{F} \to \out{F}$.

\begin{prop}[{\cite[Chapter~1.2, Proposition~12]{AB}}]\label{prop:isoout}
Let $F$ be a finite group, and let
\begin{equation*}
\map{\theta,\theta'}{\Z}[\aut{F}]
\end{equation*}
be homomorphisms such that $\overline{\theta}=\overline{\theta'}$. Then the groups $F\rtimes_{\theta}\Z$ and $F\rtimes_{\theta'}\Z$ are isomorphic. 
\end{prop}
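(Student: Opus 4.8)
The plan is to observe that a homomorphism $\map{\theta}{\Z}[\aut{F}]$ is determined by the single automorphism $\varphi=\theta(1)$, and likewise $\theta'$ by $\varphi'=\theta'(1)$, so that the hypothesis $\overline{\theta}=\overline{\theta'}$ asserts exactly that $\varphi$ and $\varphi'$ have the same image in $\out{F}=\aut{F}/\inn{F}$. Hence there exists $g\in F$ with $\varphi'=\iota_{g}\circ\varphi$, where $\iota_{g}\in\inn{F}$ is conjugation $x\mapsto gxg^{-1}$. The whole statement will then follow by writing down an explicit isomorphism that absorbs this $g$ into a twist of the stable letter.

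First I would fix the standard presentation of the two semi-direct products: $F\rtimes_{\theta}\Z$ is generated by $F$ together with a letter $t$ (a generator of the $\Z$-factor) subject to the relations of $F$ and $tft^{-1}=\varphi(f)$ for every $f\in F$; I write $t'$ for the analogous letter in $F\rtimes_{\theta'}\Z$, so that $t'ft'^{-1}=\varphi'(f)$. I would then define $\map{\Psi}{F\rtimes_{\theta}\Z}[F\rtimes_{\theta'}\Z]$ on generators by $\Psi|_{F}=\id$ and $\Psi(t)=g^{-1}t'$.

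The only relation requiring verification is the conjugation relation, and here the key computation is
\[
(g^{-1}t')\,f\,(g^{-1}t')^{-1}=g^{-1}\varphi'(f)\,g=g^{-1}\bigl(g\varphi(f)g^{-1}\bigr)g=\varphi(f),
\]
using $\varphi'=\iota_{g}\varphi$; since $\Psi$ restricts to the identity on $F$, the relations internal to $F$ are preserved automatically, so $\Psi$ is a well-defined homomorphism. I would then produce its inverse symmetrically, defining $\map{\Psi'}{F\rtimes_{\theta'}\Z}[F\rtimes_{\theta}\Z]$ by $\Psi'|_{F}=\id$ and $\Psi'(t')=gt$; the same computation (now reading $\varphi'(f)=g\varphi(f)g^{-1}$ directly) shows $\Psi'$ respects $t'ft'^{-1}=\varphi'(f)$, and evaluating the two composites on generators gives $\Psi'(\Psi(t))=\Psi'(g^{-1}t')=g^{-1}\cdot gt=t$ and $\Psi(\Psi'(t'))=\Psi(gt)=g\cdot g^{-1}t'=t'$, with both composites the identity on $F$. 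Hence $\Psi$ and $\Psi'$ are mutually inverse, and $F\rtimes_{\theta}\Z\cong F\rtimes_{\theta'}\Z$.

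There is no genuine obstacle here, as the argument is purely formal; the only point demanding care is the bookkeeping of the inner automorphism — tracking whether $\varphi'=\iota_{g}\varphi$ or $\iota_{g^{-1}}\varphi$, and correspondingly whether the stable letter is to be twisted by $g$ or by $g^{-1}$ — so that the conjugation identity closes up and the two twists cancel in the composites. As an alternative to exhibiting $\Psi'$, one could instead write each element of $F\rtimes_{\theta}\Z$ uniquely as $ft^{n}$ and check that $\Psi(ft^{n})=fh_{n}t'^{n}$ for an element $h_{n}\in F$ depending only on $n$, so that $\Psi$ is a bijection coset-by-coset over the $\Z$-factor; but constructing $\Psi'$ explicitly is cleaner and avoids expanding $(g^{-1}t')^{n}$.
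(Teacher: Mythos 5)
Your proof is correct, and the computations close up exactly as you claim: with $\varphi'=\iota_{g}\circ\varphi$, the assignment $t\mapsto g^{-1}t'$ respects the conjugation relation $tft^{-1}=\varphi(f)$, the symmetric assignment $t'\mapsto gt$ respects $t'ft'^{-1}=\varphi'(f)$, and the two maps are mutually inverse. Note that the paper itself gives no proof of this proposition -- it is quoted from Alperin--Bell \cite[Chapter~1.2, Proposition~12]{AB} -- so there is no internal argument to compare against; your explicit construction is the standard one (essentially that of the cited reference), and it has the merit of making the statement self-contained, the only background fact used being that $F\rtimes_{\theta}\Z$ admits the presentation $\setangr{F,t}{\text{relations of }F,\; tft^{-1}=\theta(1)(f)}$, which holds because the $\Z$-factor is free.
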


In order to help us determine (up to isomorphism) the possible Type~I groups arising as subgroups of $B_{n}(\St)$, it will be appropriate at this juncture to describe $\out{F}$, where $F$ is one of the finite subgroups $\quat,\tonestar,\oonestar$ or $\istar$ of $B_{n}(\St)$. By choosing a transversal in $\aut{F}$ of $\out{F}$, from \repr{isoout} we may obtain all possible isomorphism classes of the groups $F\rtimes_{\theta} \Z$ (we shall always choose the identity as the representative of the trivial element of $\out{F}$). It then follows directly from \repr{possvcbnS2}(\ref{it:partb}) that any Type~I subgroup of $B_{n}(\St)$ involving $F$ is isomorphic to one of the groups belonging to this family. Cohomological considerations will then be applied in \resec{reducperiod} to rule out those subgroups involving $\oonestar$ and $\istar$ for all but the trivial action. Note that we could carry out the study of $\out{F}$ for the other finite subgroups of $B_{n}(\St)$, but in \resec{conjfinite} we will obtain stronger conditions on the possible actions of $\Z$ on $F$ using \repr{genhodgkin2}.

%We study the different possibilities for $F$ in turn. Recall from \rerem{finitesub}(\ref{it:finitesuba}) that $F$ is cyclic, dicyclic or binary polyhedral.

\begin{enumerate}[(1)]
%\item  $F=\Z_m$: $\inn{\Z_{m}}$ is trivial and $\out{\Z_{m}} \cong \aut{\Z_{m}} \cong \Z_{m}^{\ast}$, the multiplicative group of invertible elements of $\Z_{m}$, which is an Abelian group whose order is Euler's $\phi$-function evaluated at $m$. If $m=p_1^{m_1} \cdots p_r^{m_r}$, where
%$p_{1}, \ldots,p_{r}$ are the distinct prime divisors of $m$, then $\aut{\Z_{m}}$
%is isomorphic to $\bigoplus_{i=1}^r \aut{\Z_{p_i^{m_i}}}$, and $\aut{\Z_{p_i^{m_i}}}$ is isomorphic to the cyclic group of order $(p_i-1)p_i^{m_i-1}$ if $p_i$ is odd, and to $\Z_2\oplus\Z_{2^{m_i-2}}$ if $p_i=2$.
%\comment{reference?}

%\item\label{it:casedic}
%\begin{enumerate}[(a)]
\item\label{it:caseq8} $F=\quat$: we have $\aut{\quat}\cong \sn[4]$~\cite[p.~149]{AM}, $Z(\quat)\cong\Z_2$ and $\inn{\quat}\cong \Z_2 \oplus\Z_2$. Therefore $\out{\quat}\cong \sn[4]/(\Z_2 \oplus\Z_2)\cong \sn[3]$.

\item\label{it:casetstar} $F=\tonestar$. Writing $\quat=\brak{\pm 1,\pm i,\pm j,\pm k}$, it is well known that $\tonestar$ is isomorphic to $\quat\rtimes \Z_{3}$~\cite{AM,CM}, where the action of $\Z_{3}$ permutes cyclically the elements $i,j,k$ of $\quat$. From \cite[Theorem~3.3]{gg5}, we have $\aut{\tonestar} \cong \sn[4]$.  Now $Z(\tonestar)\cong \Z_2$, so $\inn{\tonestar}\cong (\Z_2 \oplus \Z_2)\rtimes \Z_3\cong \an[4]$, where the action permutes cyclically the three non-trivial elements of $\Z_2 \oplus \Z_2$. Therefore $\out{\tonestar}\cong \Z_2$. Let $\tonestar$ be given by the presentation \reqref{preststar}. 
%
%~\cite[p.~198]{Wo}:
%\begin{equation}\label{eq:preststar}
%\setangr{P,Q,X}{X^3=1,\, P^2=Q^2,\, PQP^{-1}=Q^{- 1},\, XPX^{-1}=Q, \, XQX^{-1}=PQ}.
%\end{equation}
The non-trivial element of $\out{\tonestar}$ is represented by the automorphism $\omega(1)$ of $\tonestar$ defined by \req{nontrivacttstar}.
%\begin{equation}\label{eq:nontrivacttstar}
%\left\{
%\begin{aligned}
%P &\mapsto QP\\
%Q &\mapsto Q^{-1}\\
%X &\mapsto X^{-1}.
%\end{aligned}\right.
%\end{equation}
Indeed, if $S\in\ang{P,Q}$ then any automorphism of $\tonestar$ which sends $X$ to $SX^{-1}$ is not an inner automorphism. This follows since $PXP^{-1}=PQ^{-1}X$, and $QXQ^{-1}=P^{-1}X$, so any conjugate of $X$ in $\tonestar$ belongs to the coset $\ang{P,Q}\ldotp X$, but on the other hand, $SX^{-1}$ belongs to the coset $\ang{P,Q}\ldotp X^{-1}$ which is distinct from $\ang{P,Q}\ldotp X$. As we shall see presently in case~(\ref{it:caseostar}), the automorphism given by~\reqref{nontrivacttstar} is the restriction to $\tonestar$ of conjugation by an element $R\in \oonestar \setminus \tonestar$.

\item\label{it:caseostar} $F=\oonestar$: from~\cite[p.~198]{Wo}, $\oonestar$ is generated by $X,P,Q,R$ which are subject to the following relations:
\begin{equation}\label{eq:presostar}
\left\{ 
\begin{aligned}
& X^{3}=1,\,P^2=Q^2=R^2,\,PQP^{-1}=Q^{-1},\\
& XPX^{-1}=Q,\,XQX^{-1}=PQ,\\
& RXR^{-1}=X^{-1},\, RPR^{-1}=QP,\,RQR^{-1}=Q^{-1}.
\end{aligned}
\right.
\end{equation}
Comparing the presentations given by equations~(\ref{eq:preststar}) and~(\ref{eq:presostar}), we see that $\oonestar$ admits $\ang{P,Q,X}\cong \tonestar$ as its index~$2$ subgroup. So we have the extensions~\cite[page~150]{AM}:
\begin{equation*}
1\to \tonestar \to \oonestar \to \Z_2 \to 1,
\end{equation*}
and~\cite[Proposition~4.1]{gg5}:
\begin{equation}\label{eq:autot}
1\to \Z_2 \to \aut{\oonestar} \to \aut{\tonestar} \to 1.
\end{equation}
Now $Z(\oonestar)\cong \Z_2$, and so $\inn{\oonestar}\cong \oonestar/Z(\oonestar)\cong \sn[4]$. From \req{autot} and part~(\ref{it:casetstar}) above, $\ord{\aut{\oonestar}}=48$, and thus $\out{\oonestar}\cong \Z_2$.
%
% and $\inn{\oonestar}\cong ((\Z_2 \oplus\Z_2)\rtimes \Z_3)\rtimes \Z_2\cong \sn[4]$~\cite[cf.\ Theorem~4.5]{gg5}, where the action of $\Z_{3}$ is as for $\inn{\tonestar}$, and the action of $\Z_{2}$ is given by conjugation by $R$, hence $\out{\oonestar}\cong \Z_2$ from \req{autot}.  
The non-trivial element of $\out{\oonestar}$ is represented by the following element of $\aut{\oonestar}$: 
\begin{equation*}
\left\{
\begin{aligned}
&P \mapsto P\\
&Q \mapsto Q\\
&X \mapsto X\\
&R \mapsto R^{-1}.
\end{aligned}
\right.
\end{equation*}
To see this, suppose on the contrary that this automorphism arises as conjugation by some element $S\in \oonestar$. Since $[\oonestar:\tonestar]=2$ and $R\notin \tonestar$, there exists $t\in \tonestar$ such that $S=t$ or $S=tR$. If $S=t$ then $S$ commutes with all of the generators of $\tonestar$, hence belongs to the centre $\ang{P^2}$ of $\tonestar$. But $Z(\oonestar)= \ang{P^2}$, so conjugation by $S$ cannot send $R$ to $R^{-1}$ since $R$ is of order $4$. Thus $S=tR$, and so $X=SXS^{-1}= tX^{-1}t^{-1}$, but this implies that $X$ and $X^{-1}$ belong to the same conjugacy class in $\tonestar$, and as we saw in case~(\ref{it:casetstar}) above, this is impossible. We conclude that the given automorphism is not an inner automorphism, so must represent the non-trivial element of $\out{\oonestar}$.

\item $F=\istar$: we know that $\istar \cong \operatorname{SL}_2(\FF_{5})$~\cite[page~151]{AM}, $Z(\istar)\cong \Z_{2}$, $\inn{\istar}\cong \istar/Z(\istar)\cong \an[5]$, $\aut{\istar}\cong \sn[5]$~\cite[see Theorem~2.1]{gg5}, and $\out{\istar}\cong \Z_2$~(see \cite[page~151]{AM} or \cite[page~207]{gg4}). The non-trivial element of $\out{\istar}$ is represented by the automorphism of $\istar$ which in terms of $\operatorname{SL}_2(\FF_{5})$ is conjugation by the matrix $\left( \begin{smallmatrix}
w & 0\\
0 & 1
\end{smallmatrix}
\right)$, where $w$ is a non square of $\FF_{5}$~\cite[page~152]{AM}. 
%\comment{This non-square condition is mentioned in~\cite{AM} in line~6 of \cite[page~152]{AM}, where they refer to $\out{\operatorname{SL}_{2}(\FF_{2})}$. But the rest of the sentence leads me to believe that $\FF_{2}$ should in fact read $\FF_{p}$. Is this correct?}
\end{enumerate}

%\comment{Could the following paragraph be put earlier, as we don't use the cohomological aspect?}
Let us come back to case~(\ref{it:caseq8}) %(\ref{it:casedic}\ref{it:caseq8})
where $F=\quat$. Since $\out{\quat}\cong\sn[3]$, \emph{a priori}, we need to decide which of the six groups of the form $\quat\rtimes \Z$ are realised. We may however make a minor simplification as follows. Recall from \redef{v1v2}(\ref{it:mainq8}) that $\alpha,\beta \in \operatorname{Hom}(\Z, \aut{\quat})$ are such that $\alpha(1)$ is the automorphism of $\quat$ of order $3$ that permutes $i,j$ and $k$ cyclically, and $\beta(1)$ is the automorphism that sends $i$ to $k$ and $j$ to $j^{-1}$. The following lemma shows that we may reduce further the number of isomorphism classes of $\quat\rtimes \Z$ from the six representatives of the elements of $\out{\quat}$ to just three.

%\comment{Doesn't this just follow from the fact that $\sn[3]$ is generated by any transposition and any $3$-cycle?}

\begin{lem}\label{lem:quatact}
Let $H$ be of the form $\quat\rtimes_{\theta} \Z$, where $\theta\in \operatorname{Hom}(\Z,\aut{\quat})$. Then $H$ is isomorphic to one of $\quat\times \Z$, $\quat\rtimes_{\alpha} \Z$ and $\quat\rtimes_{\beta} \Z$.
\end{lem}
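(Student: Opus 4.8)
The plan is to reduce the classification of the groups $\quat\rtimes_{\theta}\Z$ to the \emph{conjugacy classes} of $\out{\quat}\cong\sn[3]$. By \repr{isoout}, the isomorphism class of $\quat\rtimes_{\theta}\Z$ depends only on the composite homomorphism $\map{\overline{\theta}}{\Z}[\out{\quat}]$, and since a homomorphism out of $\Z$ is determined by the image of a generator, it depends only on the single element $\overline{\theta}(1)\in\out{\quat}\cong\sn[3]$. As $\sn[3]$ has only six elements, this already gives at most six isomorphism classes; the content of the lemma is to collapse these to three.

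The key step I would carry out first is to upgrade \repr{isoout} from \emph{equality} in $\out{\quat}$ to \emph{conjugacy}: I claim that if $\overline{\theta}(1)$ and $\overline{\theta'}(1)$ are conjugate in $\out{\quat}$ then $\quat\rtimes_{\theta}\Z\cong\quat\rtimes_{\theta'}\Z$. To prove this, suppose $\overline{\theta'}(1)=\overline{c}\,\overline{\theta}(1)\,\overline{c}^{-1}$, lift $\overline{c}$ to some $c\in\aut{\quat}$, and set $\psi=c\,\theta(1)\,c^{-1}$. The map $(f,m)\mapsto(c(f),m)$ (with $m\in\Z$) is readily checked to define an isomorphism $\quat\rtimes_{\theta}\Z\to\quat\rtimes_{\psi}\Z$, because $\psi^{m}=c\,\theta(1)^{m}c^{-1}$ for all $m$. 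On the other hand $\overline{\psi}=\overline{c}\,\overline{\theta}(1)\,\overline{c}^{-1}=\overline{\theta'}(1)$, so \repr{isoout} yields $\quat\rtimes_{\psi}\Z\cong\quat\rtimes_{\theta'}\Z$, and composing the two isomorphisms proves the claim. Consequently the isomorphism class of $\quat\rtimes_{\theta}\Z$ depends only on the conjugacy class of $\overline{\theta}(1)$ in $\sn[3]$.

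It then remains to invoke the conjugacy-class structure of $\sn[3]\cong\out{\quat}$ (recalled above), namely that there are exactly three classes: the identity, the three transpositions, and the two $3$-cycles; and to exhibit a representative action lying in each class. The trivial class corresponds to the trivial action, giving $\quat\times\Z$. For the $3$-cycles, I would observe that $\alpha(1)$ cyclically permutes $i,j,k$ and so has order $3$; since $\ord{\inn{\quat}}=4$ is coprime to $3$, its image $\overline{\alpha}(1)$ is nontrivial of order $3$, hence a $3$-cycle. For the transpositions, a short computation with the quaternion relations shows that $\beta(1)$ (which sends $i\mapsto k$, $j\mapsto j^{-1}$) satisfies $\beta(1)^2=\id$, and that $\beta(1)$ is not inner, because every inner automorphism of $\quat$ merely changes the signs of $i,j,k$ and therefore cannot interchange $i$ and $k$; thus $\overline{\beta}(1)$ has order $2$, i.e.\ is a transposition. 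Since $\overline{\theta}(1)$ must lie in one of these three conjugacy classes, the preceding paragraph shows that $\quat\rtimes_{\theta}\Z$ is isomorphic to one of $\quat\times\Z$, $\quat\rtimes_{\alpha}\Z$ or $\quat\rtimes_{\beta}\Z$.

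The main obstacle is precisely the passage from ``equal in $\out{\quat}$'' (all that \repr{isoout} provides) to ``conjugate in $\out{\quat}$'', handled by the lift-and-conjugate argument above; everything else — enumerating the conjugacy classes of $\sn[3]$ and checking that $\alpha$ and $\beta$ realise the $3$-cycle and transposition classes respectively — is routine verification in $\quat$.
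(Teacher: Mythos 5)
Your proof is correct, and it reaches the result by a genuinely different route than the paper. Both arguments begin the same way: by \repr{isoout} the isomorphism class of $\quat\rtimes_{\theta}\Z$ depends only on $\overline{\theta}(1)\in\out{\quat}\cong\sn[3]$, leaving at most six classes to distinguish. Where the paper then writes down explicit isomorphisms between specific pairs of semi-direct products --- for instance $i\mapsto i$, $j\mapsto k$, $t\mapsto kt'$ to identify $\quat\rtimes_{\alpha}\Z$ with $\quat\rtimes_{\alpha^{2}}\Z$, and two analogous maps (each twisting the generator of the $\Z$-factor by an element of $\quat$) identifying $\quat\rtimes_{\beta}\Z$ with $\quat\rtimes_{\alpha\circ\beta}\Z$ and $\quat\rtimes_{\alpha^{2}\circ\beta}\Z$ --- you instead prove the general statement that conjugate classes in $\out{F}$ yield isomorphic groups $F\rtimes\Z$, via the standard isomorphism $(f,m)\mapsto(c(f),m)$ between products with conjugate actions, followed by an application of \repr{isoout}. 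With that in hand, the collapse from six classes to three is immediate from the conjugacy-class structure of $\sn[3]$, together with your (correct) verifications that $\overline{\alpha}(1)$ is a $3$-cycle and that $\overline{\beta}(1)$ is a transposition (the observation that inner automorphisms of $\quat$ only change signs of $i,j,k$, hence cannot send $i$ to $k$, is exactly right). Your route buys generality and transparency: the conjugacy-invariance lemma applies verbatim to any finite group $F$, and it is the conceptual reason the paper's three ad hoc maps exist at all. The paper's route, by contrast, is a self-contained computation inside $\quat$ that avoids any discussion of conjugacy in $\out{\quat}$, but it gives no indication of why precisely those pairs of actions should be identified.
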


\begin{proof}
Since $\out{\quat}\cong \sn[3]$, there exists $\gamma\in \brak{\id,\alpha,\alpha^2,\beta, \alpha\circ \beta,\alpha^2\circ \beta}$ such that $H$ is isomorphic to $\quat \rtimes_{\gamma} \Z$ by \repr{isoout}.
%Consider semi-direct products of the form $\quat\rtimes_{\theta} \Z$, where $\theta\in \operatorname{Hom}(\Z,\aut{\quat})$. Since $\out{\quat}\cong \sn[3]$, up to isomorphism, the automorphism $\theta(1)$ may be represented by the following elements: $\id,\alpha,\alpha^2,\beta, \alpha\circ \beta$ and $\alpha^2\circ \beta$.
%Let $\alpha$ be the automorphism of $\quat$ of order $3$ that permutes $i,j$ and $k$, and let $\beta$ be the automorphism which for which $i \mapsto k$, and $j \mapsto j^3$ (if $\quat$ is contained in a copy of $\oonestar$, this automorphism is that given by conjugation by $R$, see the presentation~\reqref{presostar}). So it suffices to consider the following six automorphisms which represent the elements of $\out{\quat}$.
%\begin{enumerate}[(I)]
%\item $\theta(1)=\id$.
%\item\label{it:caseII} $\theta(1)=\alpha$.
%\item\label{it:caseIII} $\theta(1)=\alpha^2$.
%\item\label{it:caseIV} $\theta(1)=\beta$.
%\item\label{it:caseV} $\theta(1)=\alpha\beta$.
%\item\label{it:caseVI} $\theta(1)=\alpha^2\beta$.
%\end{enumerate}
We claim that:
\begin{enumerate}[(a)]
\item\label{it:caseq8i} $\quat \rtimes_{\alpha} \Z$ and $\quat \rtimes_{\alpha^{2}} \Z$ are isomorphic.
\item\label{it:caseq8ii} $\quat \rtimes_{\beta} \Z$, $\quat \rtimes_{\alpha \circ\beta} \Z$ and $\quat \rtimes_{\alpha^{2} \circ\beta} \Z$ are isomorphic.
\end{enumerate}
To prove the claim, we define isomorphisms $\map{\phi}{\quat \rtimes_{\theta} \Z}[\quat \rtimes_{\theta'} \Z]$, where the actions $\theta,\theta'\in \operatorname{Hom}(\Z,\aut{\quat})$ run through the possible pairs given by~(\ref{it:caseq8i}) and~(\ref{it:caseq8ii}). Let $t$ (resp.\ $t'$) denote the generator of the $\Z$-factor of $\quat \rtimes_{\theta} \Z$ (resp.\ of $\quat \rtimes_{\theta'} \Z$). Defining $\phi$ by:
\begin{enumerate}[(i)]
\item $i \mapsto i$, $j \mapsto k$ and $t \mapsto kt'$ if $\theta=\alpha$ and $\theta'=\alpha^2$,
\item $i\mapsto k$, $k\mapsto j$, $j\mapsto i$ and $t\mapsto jt'$ if $\theta= \beta$ and $\theta'=\alpha \circ \beta$,
\item $i\mapsto j$, $k\mapsto i$, $j\mapsto k$ and $t\mapsto jt'$ if $\theta= \beta $ and $\theta'=\alpha^2 \circ \beta$,
\end{enumerate}
we may check that $\phi$ gives rise to an isomorphism between each pair of groups. In particular, there are only three isomorphism classes of semi-direct products $\quat \rtimes_{\theta} \Z$, namely those for which $\theta(1)\in \brak{\id, \alpha(1),\beta(1)}$.
%given by~(\ref{it:caseII}) and~(\ref{it:caseIII}) are isomorphic. To see this, consider the isomorphism defined by $i \mapsto i$,   $j \mapsto k$ and $t \mapsto kt$, where $t$ is the generator of $\Z$ \comment{would it be better to write $\theta(t)$ above?}.
%%  supposeuppose that the group  $\quat\rtimes \Z$ is realised, where the action is %that given by $\alpha$. Then $\quat\rtimes 2\Z$ is 
%%also realised, and has action $\alpha^2$. Similarly, $\quat\rtimes 4\Z$ is %isomorphic to the first group. In fact if we define   It is not
%%difficult to see that we can reduce our problem to the following cases: 
%Finally the cases~(\ref{it:caseIV}),~(\ref{it:caseV}) and~(\ref{it:caseVI}) also yield pairwise isomorphic groups. The two isomorphisms, the first between the groups given by~(\ref{it:caseIV}) and~(\ref{it:caseV}), and the second between those given by~(\ref{it:caseIV}) and~(\ref{it:caseVI}) are as follows: in both cases, the generator $t$ of the $\Z$-factor of $\quat \rtimes_{\beta}$ is sent to $(j,t')$, where $t'$ is the generator of the $\Z$-factor of $\quat \rtimes_{\alpha\beta}$; for~(\ref{it:caseIV}) and~(\ref{it:caseV}), we choose $i\mapsto k$, $k\mapsto j$, $j\mapsto i$ (notice that this is $\alpha^{-1}$). For~(\ref{it:caseIV}) and~(\ref{it:caseVI}), we choose $i\mapsto j$, $k\mapsto i$, $j\mapsto k$ (notice that this is $\alpha$). Since we are interested in isomorphism classes, the above discussion implies that we just need to discuss the realisation of three groups $\quat\rtimes \Z$, rather than six.
\end{proof}

\begin{rem}
Since $(\alpha(1))^{3}=(\beta(1))^{2}=\id_{F}$, it will suffice to study the existence of semi-direct products of the form $\quat\rtimes_{\alpha}\Z$ and $\quat\rtimes_{\beta}\Z$.
\end{rem}

\section{Reduction of isomorphism classes of $F\rtimes_{\theta} \Z$ via conjugacy classes}\label{sec:conjfinite}

%\comment{Added 15/8/09.}

In this section, we use the relation between $\mcg$ and $B_{n}(\St)$ given by \req{mcg} to prove \repr{genhodgkin2}. As a consequence, the only possible actions on cyclic groups that are realised as subgroups of $B_{n}(\St)$ are the trivial action, and multiplication by $-1$. This will subsequently be used to rule out many Type~I groups involving %cyclic and 
dicyclic factors.
% that cannot be eliminated using the two other reductions of Sections~\ref{sec:autout} and~\ref{sec:percohI}.

%\comment{This is in fact \repr{genhodgkin2}.}
%\begin{prop}\label{prop:genhodgkin2}
%Let $n\geq 3$.
%\begin{enumerate}
%\item\label{it:conjpowera} Any two conjugate powers of $\alpha_{1}$ in $B_{n}(\St)$ are equal.
%\item\label{it:conjpowerb} Let $i\in \brak{0,2}$. Then any two distinct conjugate powers of $\alpha_{i}$ in $B_{n}(\St)$ are inverse.
%\end{enumerate}
%\end{prop}

In order to prove \repr{genhodgkin2}, we first state \repr{hodgkin2} whose statement, seemingly well known to the experts in the field, is related to a classical problem of Nielsen concerning the conjugacy problem in the mapping class group. The first proof we found in the literature is due to L.~Hodgkin~\cite{Ho} (see also~\cite{McH} for related results). 

\pagebreak

\begin{prop}\label{prop:hodgkin2}
Let $n,r\geq 2$ be such that $\mcg$ has elements of order $r$.
\begin{enumerate}[(a)]
\item Suppose that either $r\geq 3$, or $r=2$ and $n$ is odd. Then there is a unique value of $i\in \brak{0,1,2}$ for which $r$ divides $n-i$. Let $f$ be a rotation of $\St$ of angle $2\pi m/r$, where $m\in \N$ and $\gcd{(m,r)}=1$, and let $\gamma\in \mcg$ denote the mapping class of $f$. Then any element $\gamma'\in \mcg$ of order $r$ is conjugate to $\gamma$. Further, any two distinct powers of $\gamma$ are conjugate in $\mcg$ if and only if the following conditions hold:
\begin{enumerate}[(i)]
\item they are inverse, and
\item $i\in \brak{0,2}$.
\end{enumerate}
\item If $r=2$ and $n$ is even then $r$ divides both $n$ and $n-2$, and so both the choices $i=0$ and $i=2$ are possible. In the first (resp.\ second) case, we obtain an element $\gamma_{0}$ (resp.\ $\gamma_{2}$) of order $2$ that fixes none (resp.\ exactly two) of the $n$ marked points of $\St$. Further, every element of $\mcg$ of order $2$ is conjugate to exactly one of $\gamma_{0}$ or $\gamma_{2}$.
\end{enumerate}
\end{prop}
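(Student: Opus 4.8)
The plan is to reduce the entire statement to the classification of finite-order rotations of $\St$ preserving the $n$ marked points, and to read off the conjugacy invariants from the induced local data at the poles. First I would invoke the realisation of periodic mapping classes by isometries: any element of $\mcg$ of finite order $r$ is represented by a finite-order orientation-preserving homeomorphism of $\St$ (Nielsen realisation for cyclic groups), which by the classical theorem of Kerékjártó is topologically conjugate to a genuine rotation $f$ in $SO(3)$. Such an $f$ has exactly two fixed points, the poles, and every other orbit has length $r$; so if $i$ denotes the number of marked points lying at the poles, the remaining $n-i$ marked points split into $(n-i)/r$ free orbits, forcing $r \mid (n-i)$ with $i \in \brak{0,1,2}$. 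The uniqueness of $i$ under the hypotheses of part~(a) is then purely arithmetic: among the three consecutive integers $n-2,\, n-1,\, n$, no two can both be divisible by the same $r\geq 3$ (their difference is at most $2$), while for $r=2$ with $n$ odd only $n-1$ is even, so $i=1$ is forced. Since $\mcg$ is assumed to contain an element of order $r$, at least one admissible $i$ exists, hence exactly one.

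Next I would isolate the invariants distinguishing rotations up to conjugacy in $\mcg$. Viewing the quotient map $\St\to\St/\ang{f}$ as a branched double-data cover branched over the two pole-images with branching index $r$, the conjugacy class of $f$ is pinned down by $r$, by the marked-point type of the two poles (recorded by $i$), and by the local rotation number at a pole, namely the primitive root $e^{2\pi i m/r}$ describing the deck action there (the opposite pole carries its inverse). The $(n-i)/r$ free orbits contribute no further invariant beyond their number, which is already fixed by $n$, $i$ and $r$. The crucial point is how this local rotation number behaves under conjugation: a homeomorphism preserving each pole leaves $m \bmod r$ unchanged, whereas one interchanging the two poles replaces $m$ by $-m$. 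Whether such a pole-swapping conjugacy exists in the \emph{orientation-preserving} mapping class group is governed entirely by $i$: if $i\in\brak{0,2}$ the two poles carry identical marked-point data (both unmarked, or both marked), so they may be exchanged by an orientation-preserving half-turn about an equatorial axis, which may be chosen to preserve the marked set and which conjugates $f$ to $f^{-1}$; if $i=1$ a marked pole cannot be exchanged with an unmarked one, so the poles are rigidly distinguished and no sign ambiguity arises.

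From this the two parts follow. For part~(a), any order-$r$ element $\gamma'$ has the same $i$ as $\gamma$ (by uniqueness of $i$), and matching its quotient data to that of $\gamma$ shows $\gamma'$ is conjugate to a power $\gamma^a$ with $\gcd(a,r)=1$, so in particular $\ang{\gamma'}$ is conjugate to $\ang{\gamma}$. Since the local rotation number of $\gamma^a$ is $am \bmod r$, read up to sign exactly when $i\in\brak{0,2}$, two powers $\gamma^a$ and $\gamma^b$ are conjugate if and only if $a\equiv b$, or $i\in\brak{0,2}$ and $a\equiv -b$, modulo $r$; this is precisely conditions~(i) and~(ii) for two distinct powers to be conjugate. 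For part~(b), an order-$2$ element is a half-turn fixing exactly its two poles; as $n$ is even the case $i=1$ is arithmetically impossible, leaving $i\in\brak{0,2}$, which produces the two classes $\gamma_0$ (fixing no marked point) and $\gamma_2$ (fixing exactly two). These are distinguished by the number of fixed marked points, a conjugacy invariant taking the distinct values $0$ and $2$, so every order-$2$ element is conjugate to exactly one of them.

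The main obstacle, and the step I would treat most carefully, is the realisation-and-classification core: justifying that a finite-order mapping class of the marked sphere is conjugate to an actual rotation, and then that the local rotation number at the poles, read modulo the pole-swap sign, is a \emph{complete} conjugacy invariant for such rotations. Establishing the sign change of the rotation number under a pole-interchanging homeomorphism, and verifying that such a homeomorphism exists within the orientation-preserving mapping class group exactly when $i\in\brak{0,2}$, is where the dichotomy between part~(a)(ii) and the rigidity of the $i=1$ case is genuinely earned; the remaining number theory and the orbifold bookkeeping are routine.
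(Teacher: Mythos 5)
There is no line-by-line comparison to make here, because the paper does not actually prove this proposition: its entire ``proof'' is the remark that the statement ``may be deduced in a straightforward manner'' from Hodgkin's Proposition~2.1 in \cite{Ho}. What you have written is a self-contained reconstruction of the classical argument that underlies that citation: realise a finite-order mapping class by a finite-order homeomorphism (Nielsen realisation for cyclic groups), straighten it to a genuine rotation (Ker\'ekj\'art\'o), count orbits to obtain $r \mid n-i$ with $i\in\brak{0,1,2}$ equal to the number of marked poles, and then use the rotation numbers at the two poles, read modulo the pole-swapping involution, as a complete conjugacy invariant. This is sound, and the steps you flag as needing care are exactly the classical inputs that must be quoted: besides realisation and Ker\'ekj\'art\'o, the ``only if'' directions need the rigidity statement that isotopic finite-order homeomorphisms of the $n$-marked sphere ($n\geq 3$) are conjugate by a homeomorphism isotopic to the identity --- without it, rotation numbers are invariants of topological conjugacy but not obviously of conjugacy in $\mcg$. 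Your half-turn construction when $i\in\brak{0,2}$, and the observation that a marked pole can never be exchanged with an unmarked one when $i=1$, is precisely where the dichotomy in (a)(i)--(ii) is earned. What your route buys over the paper's is a proof one can actually check inside the document; what the paper's deferral buys is brevity and the (implicit) extension of Hodgkin's prime-power-order argument to general $r$, which you handle directly.

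One discrepancy deserves explicit comment: you prove that an element of order $r$ is conjugate to some power $\gamma^{a}$ with $\gcd(a,r)=1$, whereas the statement literally asserts conjugacy to $\gamma$ itself, with $m$ fixed in advance. Your weaker conclusion is in fact the only sustainable one: if $r\geq 5$ and $a\not\equiv\pm 1 \bmod r$, then $\gamma^{a}$ has order $r$ but, by the ``Further'' clause, is not conjugate to $\gamma$, so the literal first claim contradicts the second. The correct reading --- equivalently, that all order-$r$ cyclic subgroups are conjugate, with the conjugacy classes of generators then separated by the sign dichotomy --- is what Hodgkin proves and is all the paper ever uses: the proof of \repr{genhodgkin2} invokes only the equal-or-inverse statement. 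So this is not a gap in your argument but an imprecision in the statement that your proof silently corrects.
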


The proof of \repr{hodgkin2} may be deduced in a straightforward manner from that of~\cite[Proposition 2.1]{Ho}. Before coming to the proof of \repr{genhodgkin2}, we first define some notation that shall also be used later in Sections~\ref{part:realisation}.\ref{sec:dirtoi} and~\ref{part:realisation}.\ref{sec:oonestaramalg}. If $X$ is an $n$-point subset of $\St$, let $\homeo$ denote the set of orientation-preserving homeomorphisms that leave $X$ invariant. There is a natural surjective homomorphism $\map{\Psi}{\homeo}[\mcg]$, where $\Psi(f)=[f]$ denotes the mapping class of the homeomorphism $f\in \homeo$.

%Before proving \repr{genhodgkin2}, we state the following lemma that shall be used frequently in the rest of the paper.

%\comment{This lemma has already been proved?}
%\begin{lem}\label{lem:alpha0conj}
%Let $n\geq 3$, $j\in \N$ and $1\leq k\leq n-1$. Then 
%\begin{equation}%\label{eq:}
%\alpha_{0}^j \sigma_{k} \alpha_{0}^{-j}= \sigma_{j+k \!\!\!\!\mod n},
%\end{equation}
%where $\sigma_{0}=\alpha_{0}^{-1}\sigma_{1} \alpha_{0}= \alpha_{0}\sigma_{n-1} \alpha_{0}^{-1}$.
%\end{lem}
%
%\begin{proof}[Proof of \relem{alpha0conj}.]
% 
%\end{proof}

\begin{proof}[Proof of \repr{genhodgkin2}.]
Let $i\in\brak{0,1,2}$. Let $1\leq m,r\leq 2(n-i)$, and suppose that $\alpha_{i}^m$ and $\alpha_{i}^r$ are conjugate powers of $\alpha_{i}$ in $B_{n}(\St)$. Then there exists $z\in B_{n}(\St)$ such that 
\begin{equation}\label{eq:conjpoweralphai}
z \alpha_{i}^m z^{-1} = \alpha_{i}^r.
\end{equation}
Let $\mu=\gcd{(m,2(n-i))}$, and set $q=2(n-i)/\mu$. Then $\alpha_{i}^m$ and $\alpha_{i}^r$ are both of order $q$, and generate the same subgroup $\ang{\alpha_{i}^{\mu}}$ of $\ang{\alpha_i}$. In particular, there exists $1\leq \tau < q$ with $\gcd{(\tau,q)}=1$ such that $\alpha_{i}^{m\tau}=\alpha_{i}^{\mu}$. Setting $\xi=r\tau$ and raising \req{conjpoweralphai} to the $\tau\up{th}$ power yields $z \alpha_i^{\mu} z^{-1} = \alpha_i^{\xi}$. Now $\alpha_i^{\mu}$ and $\alpha_i^{\xi}$ generate the same subgroup of $\ang{\alpha_i}$, so there exists $1\leq t< q$ with $\gcd{(t,q)}=1$ such that $\alpha_i^{\xi}=\alpha_i^{t\mu}$, and hence
\begin{equation}\label{eq:mutmu}
z \alpha_i^{\mu} z^{-1} = \alpha_i^{\xi}=\alpha_i^{t\mu}.
\end{equation}
We claim that it suffices to show that $\alpha_i^{\xi}\in \brak{\alpha_{i}^{\mu},\alpha_{i}^{-\mu}}$. Suppose for a moment that the claim holds. Since $\gcd{(\tau,q)}=1$, there exist $u,v\in \Z$ such that $u\tau -vq=1$, and so
\begin{equation}\label{eq:alpha1calca}
\alpha_i^{m\tau u} = \alpha_i^{m(1+vq)}=\alpha_i^m \ldotp \left(\alpha_i^{mq}\right)^v= \alpha_i^m
\end{equation}
since $\alpha_i^{m}$ is of order $q$. Similarly,
\begin{equation}\label{eq:alpha1calcbb}
\alpha_i^{r\tau u} = \alpha_i^{r(1+vq)}=\alpha_i^r \ldotp \left(\alpha_i^{rq}\right)^v=\alpha_i^r
\end{equation}
since $\alpha_i^{r}$ is also of order $q$. But 
\begin{equation}\label{eq:alpha1calcc}
\alpha_i^{m\tau u}=\alpha_i^{\mu u}=\alpha_i^{\pm\xi u}=\alpha_i^{\pm r\tau u},
\end{equation}
and putting together equations~\reqref{alpha1calca}, \reqref{alpha1calcbb} and~\reqref{alpha1calcc}, we obtain $\alpha_i^m=\alpha_i^{\pm r}$. As we shall see below, if $i=1$ then in fact $\alpha_1^m=\alpha_1^r$, which will prove the proposition in this case. We now proceed to prove the claim, separating the cases $i=1$ and $i\in\brak{0,2}$.

\begin{enumerate}[(i)]
\item Let $i=1$.
%, let $1\leq m,r\leq 2(n-1)$, and suppose that there exists $x\in B_{n}(\St)$ such that $x \alpha_{1}^m x^{-1} = \alpha_{1}^r$. Let $\mu=\gcd{(m,2(n-1))}$, and set $q=2(n-1)/\mu$. Then $\alpha_{1}^m$ and $\alpha_{1}^r$ are both of order $q$, and generate the same subgroup $\ang{\alpha_{1}^{\mu}}$ of $\ang{\alpha_{1}}$. In particular, there exists $1\leq \tau \leq q$ with $\gcd{(\tau,q)}=1$ such that $\alpha_{1}^{m\tau}=\alpha_{1}^{\mu}$. Setting $\xi=r\tau$ and raising $x \alpha_{1}^m x^{-1} = \alpha_{1}^r$ to the $\tau\up{th}$ power yields $x \alpha_{1}^{\mu} x^{-1} = \alpha_{1}^{\xi}$, where $\mu \divides 2(n-1)$.  Now $\alpha_{1}^{\mu}$ and $\alpha_{1}^{\xi}$ generate the same subgroup of $\ang{\alpha_{1}}$, and so there exists $1\leq t\leq q$ with $\gcd{(t,q)}=1$ such that $\alpha_{1}^{\xi}=\alpha_{1}^{t\mu}$. Let $\alpha=\alpha_{1}^{\mu}$. Then $x\alpha x^{-1}=\alpha^t$.
Projecting relation~\reqref{mutmu} onto the Abelianisation $\Z_{2(n-1)}$ of $B_{n}(\St)$, we obtain $n\mu \equiv n\mu t \bmod 2(n-1)$, in other words, there exists $k\geq 0$ such that $n\mu (t-1)=k\ldotp 2(n-1)$. Now $n$ and $n-1$ are coprime, so there exists $l\geq 0$ such that $\mu (t-1) =l(n-1)$ and $2k =nl$. But $1\leq t<q= 2(n-1)/\mu$, thus $\mu\leq \mu t< 2(n-1)$, which implies that
\begin{equation*}
0\leq \mu (t-1) \leq 2(n-1)-\mu< 2(n-1),
\end{equation*}
and thus
\begin{equation*}
0 \leq l(n-1) < 2(n-1).
\end{equation*}
It follows that $l=0$ or $l=1$. If $l=1$ then $n=2k$, so $n$ is even. Further, $t-1=(n-1)/\mu=q/2$, hence $q$ is even. But $\gcd(t,q)=1$, so $t$ is odd, thus $\mu (t-1)=n-1$ is even, and $n$
is odd, a contradiction. We conclude that $l=0$, so $t=1$, and so $\alpha_{1}^{\mu}=\alpha_{1}^{\xi}$. As we saw above, this implies that $\alpha_{1}^{m}= \alpha_{1}^{r}$, which proves part~(\ref{it:conjpowera}) of the proposition.
%Now $\gcd{(\tau,q)}=1$, so there exist $u,v\in \Z$ such that $u\tau -vq=1$. Thus 
%\begin{equation}\label{eq:alpha1calca}
%\alpha_{1}^{m\tau u} = \alpha_{1}^{m(1+vq)}=\alpha_{1}^m \ldotp \left(\alpha_{1}^{mq}\right)^v=\alpha_{1}^m
%\end{equation}
%since $\alpha_{1}^{m}$ is of order $q$. Similarly,
%\begin{equation}\label{eq:alpha1calcbb}
%\alpha_{1}^{r\tau u} = \alpha_{1}^{r(1+vq)}=\alpha_{1}^r \ldotp \left(\alpha_{1}^{rq}\right)^v=\alpha_{1}^r
%\end{equation}
%since $\alpha_{1}^{r}$ is also of order $q$. But 
%\begin{equation}\label{eq:alpha1calcc}
%\alpha_{1}^{m\tau u}=\alpha_{1}^{\mu u}=\alpha_{1}^{\xi u}=\alpha_{1}^{r\tau u},
%\end{equation}
%and putting together equations~\reqref{alpha1calca}, \reqref{alpha1calcbb} and~\reqref{alpha1calcc}, we obtain $\alpha_{1}^m=\alpha_{1}^r$ as required.

% and let $m,r\in \brak{1,\ldots,2(n-i)-1}$. If $\alpha_{i}^m$ and $\alpha_{i}^r$ are conjugate in $B_{n}(\St)$ then $m=\pm r$.

\item Let $i\in \brak{0,2}$.
%, and let $u,v\in B_{n}(\St)$ be conjugate powers of $\alpha_{i}$. Set $u=\alpha_{i}^m$, and $v=\alpha_{i}^r$, where $0\leq m,r<2(n-i)$, and let $z\in B_{n}(\St)$ be such that $z\alpha_{i}^mz^{-1}=\alpha_{i}^r$. 
Consider \req{mutmu} and the short exact sequence~\reqref{mcg}. Let $w=\phi(z)$, let $a_{i}=\phi(\alpha_{i})$, and let $X$ be an $n$-point subset of $\St$ consisting of $n-i$ equally-spaced points on the equator, with the remaining $i$ points distributed at the poles. Then $w a_{i}^{\mu}w^{-1}=a_{i}^{\xi}$, and we may suppose $a_{i}$ to be represented by the homeomorphism $f_{i}\in\homeo$ that is rigid rotation of $\St$ of angle $2\pi /(n-i)$. It follows from \repr{hodgkin2} that $a_{i}^{\mu}$ and $a_{i}^{\xi}$ are either equal or are inverses, and since $a_{i}$ is of order $n-i$, $\xi\equiv \pm \mu \bmod{n-i}$, so $\xi=\pm \mu+\delta (n-i)$, where $\delta\in \Z$. 
%The inequality $-2(n-i)< r\mp m< 4(n-i)$ implies that $r=\pm m+\delta (n-i)$, where $\delta\in \brak{-1,0,1,2,3}$. If $\delta\in \brak{0,2}$ 
If $\delta$ is even then $\alpha_{i}^{\xi}=\alpha_{i}^{\pm \mu}$ by \req{uniqueorder2}, and as we saw above, this implies that $\alpha_{i}^{m}= \alpha_{i}^{\pm r}$, which proves part~(\ref{it:conjpowerb}) of the proposition in this case. So assume that $\delta$ is odd,
%$\delta\in \brak{-1,1,3}$. 
%If $m\in \brak{(n-i)/2, 3(n-i)/2}$ then we may write 
%\begin{equation*}
%\alpha_{i}^r=\alpha_{i}^{\pm (n-i)/2+\delta (n-i)}=\alpha_{i}^{\pm (n-i)/2+ (n-i)}= \alpha_{i}^{\mp (n-i)/2},
%\end{equation*}
%and once more $v$ and $w$ are either equal or are inverses. So let us suppose from now on that $m \notin \brak{(n-i)/2, 3(n-i)/2}$. 
in which case 
\begin{equation}\label{eq:conjalphai}
z\alpha_{i}^{\mu}z^{-1}=\alpha_{i}^{\xi}=\alpha_{i}^{\pm \mu+\delta (n-i)}=\alpha_{i}^{\pm \mu}\ft, 
\end{equation}
%Consider first the case $i=0$. 
also using \req{uniqueorder2}. Conjugating \req{conjalphai} by $\alpha_{0}^{-i/2}\garside \alpha_{0}^{i/2}$, replacing $z$ by the element $\alpha_{0}^{-i/2}\garside \alpha_{0}^{i/2} z$ and using \req{basicconj} if necessary, we may suppose that
\begin{equation}\label{eq:transforma}
z\alpha_{i}^{\mu}z^{-1}=\alpha_{i}^{\mu}\ft.
\end{equation}
Notice however that since $\ft$ is central and of order $2$, the relation 
\begin{equation}\label{eq:transformb}
\alpha_{i}^{\xi}=\alpha_{i}^{\pm \mu}\ft
\end{equation}
of \req{conjalphai} persists under this conjugation. Conjugating \req{transforma} by $z^{-1}$ and multiplying by $\ft$ yields:
\begin{equation}\label{eq:conjalphamft}
\alpha_{i}^{\mu}\ft=z^{-1}\alpha_{i}^{\mu}z.
\end{equation}
%The aim is to prove by induction on $n\geq 3$ that if \req{conjalphamft} admits a solution $z\in B_{n}(\St)$ then $\gcd{(\mu,n)}=n/2$. 
The Abelianisation of \req{conjalphamft} yields $n(n-1)\equiv 0 \bmod{2(n-1)}$, so $n$ must be even for a solution to exist. In particular, if $n$ is odd, there is no $z\in B_{n}(\St)$ satisfying \req{conjalphamft}.
%, and the induction hypothesis is satisfied vacuously. 
So let $n\geq 4$, and suppose that \req{conjalphamft} admits a solution $z\in B_{n}(\St)$. If $\mu\in \brak{n-i,2(n-i)}$ then $\alpha_{i}^{\mu}\in\ang{\ft}$, and this equation implies that $\ft=\id$, hence $n\leq 2$, which gives a contradiction. So $\mu\notin \brak{n-i,2(n-i)}$, and since $\mu\divides 2(n-i)$, we must have $1\leq \mu<n-i$. Moreover, $q=2(n-i)/\mu$ cannot be odd, for if it were then $\alpha_{i}^{\mu}\ft$ would be of order $2q$ because $\alpha_{i}^{\mu}$ is of order $q$ and $\ft$ is central. But this contradicts \req{conjalphamft}, so $q$ is even, and hence $\mu$ divides $n-i$. If $(n-i)/\mu=2$ then $\mu=(n-i)/2$, and
\begin{equation*}
\alpha_{i}^{\xi}=\alpha_{i}^{\pm\frac{(n-i)}{2}+(n-i)}=\alpha_{i}^{\mp (n-i)/2}=\alpha_{i}^{\mp\mu}
\end{equation*}
by \req{transformb}, which proves the result in this case. Since $\mu<n-i$, we suppose henceforth that $(n-i)/\mu\geq 3$.

We first assume that $i=0$, so $\mu$ divides $n$ and $n/\mu\geq 3$. Consider the image of \req{conjalphamft} under the homomorphism $\pi$ of \req{defperm}. Then $\pi(\alpha_{0}^{\mu})=(n-\mu+1,n-2\mu+1,\ldots, \mu+1,1)(n-\mu+2,n-2\mu+2,\ldots, \mu+2,2)\cdots (n,n-\mu,\ldots,2\mu,\mu)$ consists of $\mu$ disjoint $n/\mu$-cycles. For $j=1,\ldots,\mu$, the elements that appear in the $j\up{th}$ such cycle are of the form $\mu\left(\frac{n}{\mu}-k\right)+j$, where $k=1,\ldots,n/\mu$. 
%Since $\gcd(m/q,n/q)=1$, $\pi(\alpha_{0}^m)$ is comprised of the $q$ disjoint $n/q$-cycles $\left(q\left( \frac{n}{q}-1\right)+j,q\left( \frac{n}{q}-2\right)+j, \ldots,q+j,j\right)^{m/q}$, for $j=1,\ldots,q$. It follows that the image of $q\left(\frac{n}{q}-k\right)+j$ under $\pi(\alpha_{0}^m)$ is $q\left(\frac{n}{q}-\frac{m}{q}-k\right)+j$, taken modulo $n$. Further, there exist $u,v\in \Z$ such that $u\frac{n}{q}+v\frac{m}{q}=1$, so $ku\frac{n}{q}+kv\frac{m}{q}=k$, and thus
%\begin{align*}
%q\left(\frac{n}{q}-k\right)+j &=q\left(\frac{n}{q}-ku\frac{n}{q}-kv\frac{m}{q}\right)+j \equiv q\left( -kv \frac{m}{q}\right)+j \bmod n\\
%& =\left( \pi(\alpha_{0}^m)\right)^{kv}(j).
%\end{align*}
Since $\pi(\ft)$ is trivial, $\pi(z)$ commutes with $\pi(\alpha_{0}^{\mu})$, and so $\pi(z)$ permutes the $\mu$ $n/\mu$-cycles of $\pi(\alpha_{0}^{\mu})$, and preserves the cyclic order of the elements within each cycle. In particular, if $\pi(z)$ sends $j$ to $\mu\left(\frac{n}{\mu}-k'\right)+j'$, where $j'\in \brak{1,\ldots,\mu}$ and $k'\in \brak{1,\ldots,n/\mu}$ then 
\begin{align}
\pi(z) \left(\mu\left(\frac{n}{\mu}-k\right)+j\right)&=\pi\bigl(\alpha_{0}^{k\mu}\bigr) \circ\pi(z)(j)= \pi(z) \circ\pi\bigl(\alpha_{0}^{k\mu}\bigr)(j)\notag\\
&=\pi\left(\alpha_{0}^{k\mu}\right)\left( \mu\left(\frac{n}{\mu}-k'\right)+j'\right)\notag\\
&= \mu\left(\left(\frac{n}{\mu}-k\right)-k'\right)+j' \bmod n.\label{eq:permpiz}
%\pi(z)\left( q\left(\frac{n}{q}-(k+s)\right)+j \right)&= \pi(z)\left( q\left(\frac{n}{q}-\frac{m}{q}-\left(k+s-\frac{m}{q}\right)\right)+j \right)\\
%&=\pi(z)\circ \pi(\alpha_{0}^m)\left( q\left(\frac{n}{q}-\left(k+s-\frac{m}{q}\right)\right)+j \right)\\
%&=\pi(\alpha_{0}^m) \circ\pi(z) \left( q\left(\frac{n}{q}-\left(k+s-\frac{m}{q}\right)\right)+j \right)\\
%&=\pi(\alpha_{0}^m) \circ \left( q\left(\frac{n}{q}-\left(k+s-\frac{m}{q}\right)\right)+j' \right)
\end{align}
To coincide with the convention that we use for braids, note that we compose permutations from left to right. Now let $j=1$, and let $j'\in \brak{1,\ldots, \mu}$ and $k'\in \brak{1,\ldots,n/\mu}$ be such that $\pi(z)(1)= \mu\left(\frac{n}{\mu}-k'\right)+j'$. Set
\begin{equation*}
\zeta =(\sigma_{1}\cdots \sigma_{j'-1}) (\sigma_{\mu +1}\cdots \sigma_{\mu +j'-1}) \cdots (\sigma_{n-\mu +1}\cdots \sigma_{n-\mu +j'-1}).
\end{equation*}
Since $1\leq j'\leq \mu$, for $k=1,\ldots,n/\mu$, the blocks $\sigma_{\mu(\frac{n}{\mu}-k)+1}\cdots \sigma_{\mu(\frac{n}{\mu}-k)+j'-1}$ commute pairwise. By equations~\reqref{fundaa} and~\reqref{fundab}, $\zeta$ and $\alpha_{0}^{\mu}$ commute, 
%so $\xi$ commutes with $\alpha_{0}^m$, and 
hence 
\begin{equation}\label{eq:conjxi}
\zeta z^{-1}\alpha_{0}^{\mu}z \zeta ^{-1}=\alpha_{0}^{\mu}\ft.
\end{equation}
Now $\pi(\zeta)(1)=j'$, so for all $k=1,\ldots,n/\mu$,
\begin{align*}
\pi(\zeta z^{-1})\left(\mu\left(\frac{n}{\mu}-k\right)+1\right)&= \pi(z^{-1})\left(\mu\left(\frac{n}{\mu}-k\right)+j'\right)\\
&= \mu\left(\frac{n}{\mu}-k+k'\right)+1,
\end{align*}
by \req{permpiz}. Thus $\zeta z^{-1}$ and $\alpha_{0}^{\mu}$ belong to the subgroup $B_{n/\mu,n-n/\mu}(\St)$ of $B_{n}(\St)$ which here denotes the subgroup of those braids whose permutation leaves the set $\brak{1, \mu +1,\ldots, n-\mu +1}$ invariant. Let $z'$ denote the image of $z \zeta^{-1}$ under the projection onto $B_{n/\mu}(\St)$. 
%We claim that under this projection, $\alpha_{0}^{\mu}$ is sent to a conjugate of $\alpha_{0}$ (drawing the braid $\alpha_{0}^{\mu}$, the reader may convince him or herself that in fact $\alpha_{0}^{\mu}$ is sent exactly to $\alpha_{0}$). To prove the claim, suppose 
Since the kernel
\begin{equation*}
B_{n-n/\mu}(\St\setminus \brak{x_{1}, x_{\mu +1},\ldots, x_{n-\mu +1}})
\end{equation*}
of the surjective homomorphism $B_{n/\mu,n-n/\mu}(\St)\to B_{n/\mu}(\St)$ is torsion free (this follows for example from~\cite[Proposition~2.5]{GG9}), the element $\alpha_{0}^{\mu}$, which is of order $q$, is sent to an element $\beta$ of $B_{n/\mu}(\St)$ of order $q$, and $\ft$ is sent to $\ft[n/\mu]$, the unique element of $B_{n/\mu}(\St)$ of order $2$ (using \req{alpha0q}, it is in fact possible to show that $\beta$ is equal to the element $\alpha_{0}$ of $B_{n/\mu}(\St)$, see Figure~\ref{fig:projnovermu} for an example in the case $n=6$ and $\mu=2$).
\begin{figure}[h]
\hfill
\begin{tikzpicture}[scale=0.6]
\foreach \k in {2,4}
{\draw[very thick] (\k,5) .. controls (\k,-1) and  (\k-1,1) .. (\k-2,-1);
\draw (\k+1,5) .. controls (\k+1,-1) and  (\k,1) .. (\k-1,-1);}:
%\draw (1,3) edge[out=-70,in=90] (6,0);
\draw[very thick] (0,5) edge[out=-90,in=90] (1,4);
\draw (1,5) edge[out=-90,in=45] (0.57,4.57);
\draw (0.43,4.43) edge[out=-135,in=90] (0,4);
\draw (0,4) edge[out=-90,in=90] (1,3);
\draw[very thick] (1,4) edge[out=-90,in=45] (0.57,3.57);
\draw[very thick] (0.43,3.43) edge[out=-135,in=90] (0,3);
\filldraw[white] (1.8,2) rectangle (2.2,2.4);
\filldraw[white] (1.6,1.2) rectangle (2.2,1.5);
\filldraw[white] (2.8,1.8) rectangle (3.2,2.2);
\filldraw[white] (2.6,1) rectangle (3.2,1.3);
\filldraw[white] (3.7,1.6) rectangle (4.2,2);
\filldraw[white] (3.4,0.4) rectangle (3.6,0.8);
\filldraw[white] (4.5,0.9) rectangle (5.1,1.2);
\filldraw[white] (3.4,-0.04) rectangle (4.7,0.1);
\draw[very thick] (0,3)  .. controls (1,0) and (4,2.5) .. (4,-1);
\draw (1,3)  .. controls (2,1) and (5,3.5) .. (5,-1);
\draw[->,very thick] (7,2.2) -- (9,2.2);
\end{tikzpicture}\hspace{1cm}
\begin{tikzpicture}[scale=0.6]
\foreach \k in {2,4}
{\draw[very thick] (\k,5) .. controls (\k,-1) and  (\k-1,1) .. (\k-2,-1);};
\draw[very thick] (0,5) edge[out=-90,in=90] (1,4);
\draw[very thick] (1,4) edge[out=-90,in=45]  (0,3);
\filldraw[white] (1.6,1.2) rectangle (2.2,1.5);
\filldraw[white] (3.4,0.4) rectangle (3.6,0.8);
\draw[very thick] (0,3)  .. controls (1,0) and (4,2.5) .. (4,-1);
\end{tikzpicture}\hspace*{\fill}
\caption{The element $\alpha_{0}^{2}$ of $B_{3,3}(\St)$ is sent to the element $\alpha_{0}$ of $ B_{3}(\St)$ under the projection $B_{3,3}(\St)\to B_{3}(\St)$.}\label{fig:projnovermu}
\end{figure}
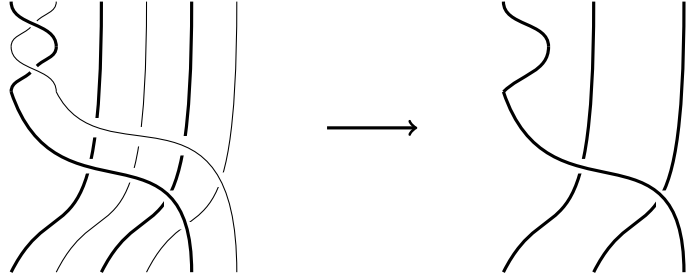
Now $q=2n/\mu\geq 6$, so by \reth{murasugi}, 
%Further, the restriction of the induced permutation of $\alpha_{0}^{\mu}$ to $\brak{1, \mu +1,\ldots, n-\mu +1}$ is an $n/\mu$-cycle, so the induced permutation is also an $n/\mu$-cycle. Since $\alpha_{0}$ (considered as the standard finite order element of $B_{n/\mu}(\St)$) is of order $2n/\mu=q$, it follows by \reth{murasugi} that 
there exists $z''\in B_{n/\mu}(\St)$ and $1\leq k< q$, $\gcd{(k,q)}=1$, such that $\beta=z'' \alpha_{0}^k z''^{-1}$ ($\alpha_{0}$ here being considered as the standard finite order element of $B_{n/\mu}(\St)$). The image of \req{conjxi} under this projection yields: 
\begin{equation*}%\label{eq:znovermu}
z'^{-1}z'' \alpha_{0}^k z''^{-1} z'  =z'' \alpha_{0}^k z''^{-1} \ft[n/\mu] \quad\text{in $B_{n/\mu}(\St)$,}
\end{equation*}
so 
\begin{equation}\label{eq:zonenovermu}
z_{1} \alpha_{0}^k z_{1}^{-1} = \alpha_{0}^k \ft[n/\mu] \quad\text{in $B_{n/\mu}(\St)$.}
\end{equation}
where $z_{1}=z''^{-1}z'^{-1}z''$. There exist $\lambda_{1},\lambda_{2}\in \Z$ such that $\lambda_{1}k+\lambda_{2}q=1$, so $\alpha_{0}^{\lambda_{1}k}=\alpha_{0}$ in $B_{n/\mu}(\St)$. Since $q$ is even, $\lambda_{1}$ is odd, and raising \req{zonenovermu} to the $\lambda_{1}\up{th}$ power yields
\begin{equation}\label{eq:znovermu}
z_{1}\alpha_{0}z_{1}^{-1} =\alpha_{0}\ft[n/\mu] =\alpha_{0}^{1+\frac{n}{\mu}}\in B_{n/\mu}(\St).
\end{equation}
%Hence $z'\in N_{B_{n/\mu}(\St)}(\alpha_{0})$, which by \reco{normalise} is $\ang{\alpha_0,\garside[n/\mu]}\cong \dic{4n/\mu}$. So $z'=\alpha_0^{\lambda}\garside[n/\mu]^{\epsilon}$, where $0\leq \lambda < 2n/\mu$ and $\epsilon\in \brak{0,1}$. Thus 
%\begin{equation}\label{eq:conjz}
%z'^{-1}\alpha_0 z'=\garside[n/\mu]^{-\epsilon} \alpha_0 \garside[n/\mu]^{\epsilon}=
%\begin{cases}
%\alpha_0 & \text{if $\epsilon=0$}\\
%\alpha_0^{-1} & \text{if $\epsilon=1$.}
%\end{cases}
%\end{equation}
%From \req{znovermu}, if $\epsilon=0$, this implies that $\ft=\id$, so $n\leq 2$, which yields a contradiction, while if $\epsilon=1$, this implies that $\ft=\alpha_{0}^2$, and since $\alpha_{0}^2$ is of order $n$ if $n\geq 3$, we obtain once more a contradiction. 
%Suppose that $\mu\neq n/2$. Since $1\leq \mu <n$, we have $n/\mu\geq 3$. 
%By \req{znovermu} we have 
%\begin{equation}\label{eq:znormaliser}
%z^{-1}\alpha_{0}z=\alpha_{0}\ft=\alpha_{0}^{n+1}. 
%\end{equation}
Hence $z_{1}\in N_{B_{n/\mu}(\St)}(\ang{\alpha_{0}})$, and so by \repr{genhodgkin1}(\ref{it:normcyclic}), $z_{1}$ is an element of $\ang{\alpha_0,\garside[n/\mu]}\cong \dic{4n/\mu}$, and $z_{1}=\alpha_0^{\lambda}\garside[n/\mu]^{\epsilon}$, where $0\leq \lambda < 2n/\mu$ and $\epsilon\in \brak{0,1}$. Thus 
\begin{equation}\label{eq:conjz}
z_{1}\alpha_0 z_{1}^{-1}=\garside[n/\mu]^{-\epsilon} \alpha_0 \garside[n/\mu]^{\epsilon}=
\begin{cases}
\alpha_0 & \text{if $\epsilon=0$}\\
\alpha_0^{-1} & \text{if $\epsilon=1$.}
\end{cases}
\end{equation}
Combining equations~\reqref{znovermu} and~\reqref{conjz}, we obtain $\ft[n/\mu]\in\brak{\id,\alpha_{0}^2}$. Now $n/\mu\geq 3$, so $\alpha_{0}^2$ (resp.\ $\ft[n/\mu]$) is of order $n/\mu$ (resp.\ $2$), which yields a contradiction.

Suppose finally that $i=2$, so $\mu\divides n-2$ and $(n-2)/\mu \geq 3$. 
Since $n$ must be even in order that \req{conjalphamft} possess a solution, these conditions imply that $n\geq 6$. Let $t\in \brak{n-1,n}$. Projecting \req{conjalphamft} into $\sn$ leads to the equality $(\pi(\alpha_{2}^{\mu})\circ \pi(z))(t)=(\pi(z) \circ \pi(\alpha_{2}^{\mu}))(t)$, and this implies that 
\begin{equation*}
\pi(\alpha_{2}^{\mu})(\pi(z)(t))=\pi(z)(t),
\end{equation*}
so $\pi(z)(t)\in \operatorname{Fix}(\pi(\alpha_{2}^{\mu}))$. Since $1\leq \mu <n-2$, we have
$\operatorname{Fix}(\pi(\alpha_{2}^{\mu}))=\brak{n-1,n}$,
and thus $\pi(z)(t)\in \brak{n-1,n}$. We conclude that $z\in B_{n-2,2}(\St)$, $B_{n-2,2}(\St)$ being the subgroup of $B_{n}(\St)$ whose elements induce permutations that leave $\brak{n-1,n}$ invariant. This permits us to project \req{conjalphamft} onto $B_{n-2}(\St)$ by forgetting the last two strings. It is clear that $\alpha_{2}$ (as an element of $B_{n-2,2}(\St)$) projects to $\alpha_{0}$ (as an element of $B_{n-2}(\St)$), and so $\ft=\alpha_{2}^{n-2}$ (which is an element of $B_{n-2,2}(\St)$) projects to $\alpha_{0}^{n-2}=\ft[n-2]$ (as an element of $B_{n-2}(\St)$) by \req{uniqueorder2}. We thus obtain:
\begin{equation}\label{eq:conjnminus2}
z'^{-1} \alpha_{0}^{\mu} z'=\alpha_{0}^{\mu} \ft[n-2],
\end{equation}
where $z'$ is the image of $z$ under this projection. But $n-2\geq 4$, and applying the analysis of the case $i=0$ to \req{conjnminus2} yields a contradiction. This proves the result in the case $i=2$, and thus completes the proof of the proposition.\qedhere
\end{enumerate}
\end{proof}

\begin{rems}\mbox{}\label{rem:nt}
\begin{enumerate}
\item If $i\in \brak{0,2}$ then the converse of \repr{genhodgkin2}(\ref{it:conjpowerb}) holds using the construction of the corresponding dicyclic groups of \rerem{finitesub}(\ref{it:finitesubb}).
\item\label{it:nta}  If $\mu$ divides $n-i$ where $i\in\brak{0,1,2}$, the braid $\alpha_{i}^{\mu}$ admits a block structure using arguments similar to those of the second part of \relem{funda}. If $q=(n-i)/\mu$ then $\alpha_{i}^{\mu}$ may be thought of as a collection of $q$ blocks, each comprised of $\mu$ strings (see Figures~\ref{fig:alphai} and~\ref{fig:alphaii} for examples where $n-i=12$ and $\mu=4$, as well as Figure~\ref{fig:alpha0q} for the case $i=0$, $n=6$ and $\mu=3$). The first block contains a full twist on its $\mu$ strings, and passes over each of the remaining $q-1$ blocks. If $i=1$ (resp.\ $i=2$) then the last (resp.\ penultimate) string then wraps around this first block. If $i=2$ then there is an additional final vertical string. In terms of the Nielsen-Thurston classification of surface homeomorphisms applied to braid groups, these braids are reducible, and a set of reducing curves may be read off from these braid diagrams (see~\cite{BNG,GW} for more information). 
\end{enumerate}
\end{rems}

%\comment{I believe that the following result implies that strictly speaking, we no longer need to discuss the structure of $\aut{F}$ and $\out{F}$, nor the periodicity, in the case where $F$ is cyclic or dicyclic. But we could also leave these discussions in the manuscript for completeness. The result may also help to shorten some of the arguments in the constructions in \repart{realisation}.}

%\begin{rem}
%\comment{Put in a picture here to show geometricially what we claimed in the previous proof: the structure of $\alpha_{i}^m$ is $q$ blocks each of $m$ strings. The first block is composed of a full twist on $m$ strings, then is sent to the last block, passing over the other $q-1$ blocks; the remaining blocks pass under the first block, but their relative order is preserved. In the case of $\alpha_{2}$ the penultimate string wraps round the last block, and the last string is vertical.}
%\end{rem}

\begin{figure}[h]%[!h]
%\centering
\hfill
\begin{tikzpicture}[scale=0.5]
\foreach \k in {0,1,2,3}
{\draw[thick] (\k,4) .. controls (\k+1,1.5) and (\k+3,2.5) .. (\k+4,1);
\draw[thick] (\k,-2) -- (\k,1);
\draw[thick] (\k,6.3) -- (\k,7);};
\draw[thick] (4,4) edge[out=-90,in=45] (3.7,3.2);
\draw[thick] (0,1) edge[out=90,in=-135] (1.3,2.1);
\draw[thick] (5,4) edge[out=-90,in=45] (4.2,2.8);
\draw[thick] (1,1) edge[out=90,in=-135] (2.1,1.9);
\draw[thick] (6,4) edge[out=-90,in=45] (4.7,2.5);
\draw[thick] (2,1) edge[out=90,in=-135] (2.6,1.6);
\draw[thick] (7,4) edge[out=-90,in=45] (5.4,2.3);
\draw[thick] (3,1) edge[out=90,in=-135] (3.4,1.4);
%\draw[help lines] (0,-7) grid (14,7);
\foreach \k in {0,1,2,3}
{\draw[thick, rotate around={180:(5.5,1)}] (\k,4) .. controls (\k+1,1.5) and (\k+3,2.5) .. (\k+4,1);
\draw[thick, rotate around={180:(5.5,1)}] (\k,-2) -- (\k,1);};
\draw[thick, rotate around={180:(5.5,1)}] (4,4) edge[out=-90,in=45] (3.7,3.2);
\draw[thick, rotate around={180:(5.5,1)}] (0,1) edge[out=90,in=-135] (1.3,2.1);
\draw[thick, rotate around={180:(5.5,1)}] (5,4) edge[out=-90,in=45] (4.2,2.8);
\draw[thick, rotate around={180:(5.5,1)}] (1,1) edge[out=90,in=-135] (2.1,1.9);
\draw[thick, rotate around={180:(5.5,1)}] (6,4) edge[out=-90,in=45] (4.7,2.5);
\draw[thick, rotate around={180:(5.5,1)}] (2,1) edge[out=90,in=-135] (2.6,1.6);
\draw[thick, rotate around={180:(5.5,1)}] (7,4) edge[out=-90,in=45] (5.4,2.3);
\draw[thick, rotate around={180:(5.5,1)}] (3,1) edge[out=90,in=-135] (3.4,1.4);
\foreach \k in {4,5,...,11}
{\draw[thick] (\k,4) -- (\k,7);
\draw[thick, rotate around={180:(5.5,1)}] (\k,4) -- (\k,7);
};
\foreach \k in {8,9,...,11}
{\draw[thick] (\k,-5) -- (\k,-2);};
\draw[very thick] (-0.3,4) rectangle  (3.3,6.3);
\draw (1.5,5) node {\large $\ft[4]$};
%};};
%edge[out=-90,in=90]
\end{tikzpicture}\hspace*{\fill}
\caption{The braid $\alpha_{0}^{4}$ in $B_{12}(\St)$.}\label{fig:alphai}
\end{figure}
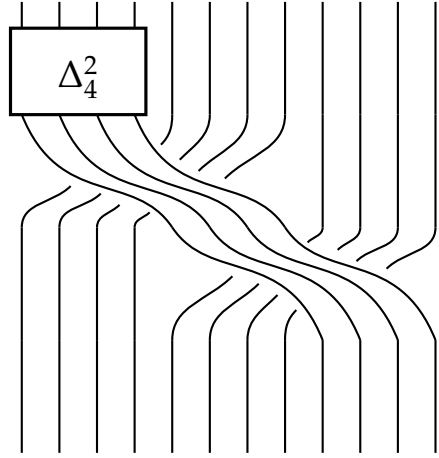
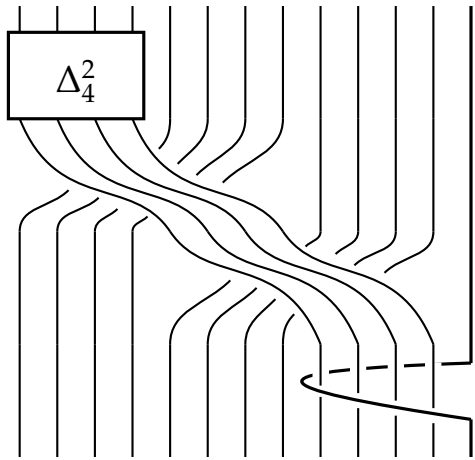
\begin{figure}[h]\hfill
\begin{tikzpicture}[scale=0.5]
\foreach \k in {0,1,2,3}
{\draw[thick] (\k,4) .. controls (\k+1,1.5) and (\k+3,2.5) .. (\k+4,1);
\draw[thick] (\k,-2) -- (\k,1);
\draw[thick] (\k,6.3) -- (\k,7);};
\draw[thick] (4,4) edge[out=-90,in=45] (3.7,3.2);
\draw[thick] (0,1) edge[out=90,in=-135] (1.3,2.1);
\draw[thick] (5,4) edge[out=-90,in=45] (4.2,2.8);
\draw[thick] (1,1) edge[out=90,in=-135] (2.1,1.9);
\draw[thick] (6,4) edge[out=-90,in=45] (4.7,2.5);
\draw[thick] (2,1) edge[out=90,in=-135] (2.6,1.6);
\draw[thick] (7,4) edge[out=-90,in=45] (5.4,2.3);
\draw[thick] (3,1) edge[out=90,in=-135] (3.4,1.4);
%\draw[help lines] (0,-7) grid (14,7);
\foreach \k in {0,1,2,3}
{\draw[thick, rotate around={180:(5.5,1)}] (\k,4) .. controls (\k+1,1.5) and (\k+3,2.5) .. (\k+4,1);
\draw[thick, rotate around={180:(5.5,1)}] (\k,-2) -- (\k,1);};
\draw[thick, rotate around={180:(5.5,1)}] (4,4) edge[out=-90,in=45] (3.7,3.2);
\draw[thick, rotate around={180:(5.5,1)}] (0,1) edge[out=90,in=-135] (1.3,2.1);
\draw[thick, rotate around={180:(5.5,1)}] (5,4) edge[out=-90,in=45] (4.2,2.8);
\draw[thick, rotate around={180:(5.5,1)}] (1,1) edge[out=90,in=-135] (2.1,1.9);
\draw[thick, rotate around={180:(5.5,1)}] (6,4) edge[out=-90,in=45] (4.7,2.5);
\draw[thick, rotate around={180:(5.5,1)}] (2,1) edge[out=90,in=-135] (2.6,1.6);
\draw[thick, rotate around={180:(5.5,1)}] (7,4) edge[out=-90,in=45] (5.4,2.3);
\draw[thick, rotate around={180:(5.5,1)}] (3,1) edge[out=90,in=-135] (3.4,1.4);
\foreach \k in {4,5,...,11}
{\draw[thick] (\k,4) -- (\k,7);
\draw[thick, rotate around={180:(5.5,1)}] (\k,4) -- (\k,7);
};
\draw[very thick] (-0.3,4) rectangle  (3.3,6.3);
\draw (1.5,5) node {\large $\ft[4]$};
\draw[very thick] (12,7) -- (12,-2.5);
\draw[very thick] (12,-2.5) .. controls (6,-2.7) and (6,-3.1) .. (12,-4);
\draw[very thick] (12,-4) -- (12,-5);
\filldraw[white] (7.8,-3) rectangle  (8.2,-2.5);
\filldraw[white] (8.8,-3) rectangle  (9.2,-2.5);
\filldraw[white] (9.8,-3) rectangle  (10.2,-2.5);
\filldraw[white] (10.8,-3) rectangle  (11.2,-2.5);
\draw[thick] (8,-2) -- (8,-3.1);
\draw[thick] (9,-2) -- (9,-3.3);
\draw[thick] (10,-2) -- (10,-3.5);
\draw[thick] (11,-2) -- (11,-3.7);
\draw[thick] (8,-5) -- (8,-3.4);
\draw[thick] (9,-5) -- (9,-3.7);
\draw[thick] (10,-5) -- (10,-3.9);
\draw[thick] (11,-5) -- (11,-4.1);
\end{tikzpicture}\hspace*{\fill}
\caption{The braid $\alpha_{1}^{4}$ in $B_{13}(\St)$. The braid $\alpha_{2}^{4}$ in $B_{14}(\St)$ is obtained by adding an extra vertical string on the right of this braid.}\label{fig:alphaii}
\end{figure}

One immediate consequence of \repr{genhodgkin2} is that it allows us to narrow down the possible Type~I subgroups of $B_{n}(\St)$ involving cyclic or dicyclic factors, with the exception of $\quat$.

\begin{cor}\label{cor:cycaction}
Let $G$ be a Type~I subgroup of $B_{n}(\St)$ of the form $F\rtimes_{\theta} \Z$.
\begin{enumerate}[(a)]
\item\label{it:cycactiona} Suppose that $F$ is cyclic.
\begin{enumerate}[(i)]
\item If $\ord{F}$ divides $2(n-1)$ then $G\cong F\times \Z$.
\item\label{it:cycactionaii} If $\ord{F}$ divides $2(n-i)$, where $i\in \brak{0,2}$, then either $G\cong F\times \Z$ or $G\cong F\rtimes_{\rho} \Z$, where $\rho$ is the action defined in \redef{v1v2}(\ref{it:mainzqt}) (multiplication by $-1$).
\end{enumerate}
\item\label{it:cycactionb} Let $m \geq 3$ divide $n-i$, where $i\in \brak{0,2}$, and let $F$ be dicyclic of order $4m$ with the presentation given by \req{presdic}. Then either $G\cong F\times\Z$ or $G\cong F\rtimes_{\nu} \Z$, where $\nu$ is the action defined by \req{actdic4m}.
\end{enumerate}
\end{cor}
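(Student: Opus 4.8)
The plan is to deduce both parts directly from \repr{genhodgkin2}, after putting the finite factor $F$ into a standard position inside $B_{n}(\St)$. Write $G=F\rtimes_{\theta}\Z$ and let $t\in G$ be a generator of the $\Z$-factor, so that $\theta(1)$ is realised as conjugation by $t$ on $F$. Since conjugating $G$ by an element of $B_{n}(\St)$ changes neither the isomorphism class of $G$ nor the $\aut{F}$-conjugacy class (hence the $\out{F}$-class) of $\theta(1)$, I would first invoke \reth{murasugi} to assume, up to conjugacy, that the relevant cyclic generator of $F$ is a genuine power of one of the $\alpha_{i}$. The point is then that conjugation by $t$ sends this power of $\alpha_{i}$ to another power of itself, whereupon \repr{genhodgkin2} forces the two powers to be equal (if $i=1$) or equal up to inversion (if $i\in\brak{0,2}$).

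For part~(\ref{it:cycactiona}), $F$ is cyclic, say $F=\ang{\gamma}$ with $\ord{F}=q$. By \reth{murasugi}, $\gamma$ is conjugate to a power $\alpha_{i}^{s}$ of some $\alpha_{i}$; since $\gcd(2n,2(n-1))=\gcd(2(n-1),2(n-2))=2$, for $q>2$ the index is forced to be $i=1$ under the hypothesis of~(i) and to lie in $\brak{0,2}$ under that of~(ii), while for $q\leq 2$ we have $F\subseteq\ang{\ft}$, which is central, so $\theta$ is trivial in every case. Assuming $\gamma=\alpha_{i}^{s}$, the identity $t\gamma t^{-1}=\theta(1)(\gamma)=\gamma^{k}$ (with $\gcd(k,q)=1$, as $\theta(1)\in\aut{\Z_{q}}$) shows that $\alpha_{i}^{sk}$ and $\alpha_{i}^{s}$ are conjugate in $B_{n}(\St)$. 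Then \repr{genhodgkin2}(\ref{it:conjpowera}) gives $\gamma^{k}=\gamma$ when $i=1$, so $\theta(1)=\id$ and $G\cong F\times\Z$, while part~(\ref{it:conjpowerb}) gives $\gamma^{k}=\gamma^{\pm1}$ when $i\in\brak{0,2}$, so $\theta(1)$ is either the identity or multiplication by $-1$, that is $G\cong F\times\Z$ or $G\cong F\rtimes_{\rho}\Z$. As $F$ is abelian, $\inn{F}$ is trivial and no appeal to $\out{F}$ is needed.

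For part~(\ref{it:cycactionb}), $F\cong\dic{4m}$ with the presentation~\reqref{presdic}, and up to conjugacy I would take $F$ inside the standard copy $\ang{\alpha_{i}',\garside}$ of $\dic{4(n-i)}$ (see \rerem{finitesub}(\ref{it:finitesubb})), so that $x={\alpha_{i}'}^{(n-i)/m}$ is conjugate, via~\reqref{basicconj}, to the power $\alpha_{i}^{(n-i)/m}$, and $y\notin\ang{x}$. The key structural fact is that for $m\geq 3$ every element of $F$ outside $\ang{x}$ has order $4$, so $\ang{x}$ is the unique cyclic subgroup of order $2m$ and is therefore characteristic; hence $\theta(1)(\ang{x})=\ang{x}$ and $\theta(1)(y)=x^{j}y$ for some $j$. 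Applying the argument of part~(\ref{it:cycactiona}) to $x$, conjugation by $t$ together with \repr{genhodgkin2}(\ref{it:conjpowerb}) forces $\theta(1)(x)=x^{\pm1}$. It then remains to reduce the resulting family of automorphisms $(x\mapsto x^{\pm1},\,y\mapsto x^{j}y)$ modulo $\inn{F}$ and to apply \repr{isoout}.

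This last step is where the real work lies, and it is a finite computation in $\out{\dic{4m}}$. Conjugation by $x$ fixes $x$ and sends $y\mapsto x^{2}y$, whereas conjugation by $y$ sends $x\mapsto x^{-1}$ and fixes $y$; thus modulo $\inn{F}$ one may normalise the exponent of $x$ to $+1$ and alter $j$ only by even integers, leaving the parity of $j$ as the sole invariant. Consequently $\theta(1)$ is conjugate in $\aut{F}$ either to $\id$ (when $j$ is even) or to the automorphism $\nu(1)$ of~\reqref{actdic4m} (when $j$ is odd); note that $\nu(1)$ is genuinely outer, since all inner automorphisms have even $j$, so the two alternatives are distinct. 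By \repr{isoout} we conclude $G\cong F\times\Z$ or $G\cong F\rtimes_{\nu}\Z$, as required. I expect this bookkeeping of the inner automorphisms of $\dic{4m}$ to be the main obstacle, the remaining assertions being immediate consequences of \repr{genhodgkin2}.
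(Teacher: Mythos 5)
Your proposal is correct and takes essentially the same route as the paper's proof: reduce via \reth{murasugi} to powers of the $\alpha_{i}$, apply \repr{genhodgkin2} to force the action on the (characteristic) maximal cyclic subgroup to be $\pm\id$, and in the dicyclic case normalise the action on $y$ modulo $\inn{F}$ before invoking \repr{isoout}. The only differences are cosmetic: your parity bookkeeping for the exponent $j$ replaces the paper's explicit composition with conjugation by $x^{-k}$ or $x^{k+\delta}y$, and your conjugation of $F$ into the standard copy of $\dic{4(n-i)}$ is unnecessary (Murasugi applied to $x$ alone suffices, which is fortunate since that positioning really rests on \relem{maxdicyclic2} rather than on \rerem{finitesub} alone).
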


%\begin{rem}
%The two actions of part~(\ref{it:cycactionb}) give rise to automorphisms of $\dic{4m}$ that represent the two elements of $\out{\dic{4m}}$ described in item~(\ref{it:casedic})(\ref{it:mgeq3}) of \resec{autout}.
%\end{rem}

\pagebreak

\begin{proof}\mbox{}%[Proof of \reco{cycaction}.]\mbox{}
\begin{enumerate}[(a)]
\item Let $G$ be a Type~I subgroup of $B_{n}(\St)$ of the form $F\rtimes_{\theta} \Z$, where $F$ is cyclic. Up to conjugacy, we may suppose by \reth{murasugi} that there exist $i\in\brak{0,1,2}$ and $1\leq l\leq 2(n-i)$ such that $l$ divides $2(n-i)$, and $F=\ang{\alpha_{i}^l}$, with $\ord{F}=2(n-i)/l$. There exists $z\in B_{n}(\St)$ of infinite order such that the action $\theta$ on $F$ is realised by conjugation by $z$, so $z\alpha_{i}^l z^{-1}=\alpha_{i}^{lm}$, where $\gcd{(m,2(n-i)/l)}=1$. From \repr{genhodgkin2}, $\alpha_{i}^{lm}=\alpha_{i}^l$ if $i=1$ and $\alpha_{i}^{lm} \in\brak{\alpha_{i}^l,\alpha_{i}^{-l}}$ if $i\in \brak{0,2}$, which implies the result.

\item Let $G$ be a Type~I subgroup of $B_{n}(\St)$ of the form $F\rtimes_{\theta} \Z$, where $F\cong \dic{4m}$ has the given presentation, and let the action $\theta$ of $\Z$ on $F$ be realised by conjugation by $z$, where $z\in B_{n}(\St)$ is of infinite order. Since $m\geq 3$, $\ang{x}$ is the unique cyclic subgroup of $F$ of order $2m$, so is invariant under conjugation by $z$. By part~(\ref{it:cycactiona})(\ref{it:cycactionaii}), $zxz^{-1}=\theta(1)(x)=x^{\epsilon}$, where $\epsilon\in \brak{1,-1}$. Further, the elements of $F\setminus \ang{x}=\ang{x}y$ are permuted by the action, so $zyz^{-1}=\theta(1)(y)=x^{2k+\delta}y$ for some $k\in \brak{0,1,\ldots, m-1}$ and $\delta\in \brak{0,1}$. If $\epsilon=1$ (resp.\ $\epsilon=-1$) then consider the action $\theta'$ defined by $\theta'(1)=\iota \circ \theta(1)$, where $\iota\in \inn{F}$ is conjugation by $x^{-k}$ (resp.\ by $x^{k+\delta}y$). So $\theta'(1)(x)=x$, and $\theta'(1)(y)= x^{\delta}y$, which gives rise to the two possible actions given in the statement. Since the automorphisms $\theta(1)$ and $\theta'(1)$ of $F$ differ by an inner automorphism, it follows from \repr{isoout} that $G$ and $F\rtimes_{\theta'} \Z$ are isomorphic.\qedhere
\end{enumerate}
\end{proof}

\section{Reduction of isomorphism classes of $F\rtimes_{\theta} \Z$ via periodicity}\label{sec:reducperiod}

We now turn our attention to the Type~I subgroups $G$ of $B_{n}(\St)$ of the form $F\rtimes_{\theta} \Z$, where $F$ is equal to $\oonestar$ or $\istar$. The arguments of \resec{autout} showed that there are two possible actions. The aim of this section is to rule out the non-trivial action in each case, which will imply that $G$ is isomorphic to $F \times \Z$. This is achieved in two stages. First, in \resec{homotopytype} we give an alternative proof of the fact that the homotopy type of the universal covering space of the configuration spaces $F_{n}(\St)$ and $D_{n}(\St)$ is that of $\St$ if $n\leq 2$, and that of $\St[3]$ otherwise. This result appears to be an interesting fact in its own right, and mirrors that for the projective plane $\rp$~\cite{GG3}. As a consequence, in \relem{per24} we generalise the fact that any nontrivial finite subgroup of $B_{n}(\St)$ is periodic of least period $2$ or $4$~\cite{GG5} to its infinite subgroups. Secondly, if $F\in\brak{\oonestar,\istar}$, in \repr{perhom} we recall some facts concerning the cohomology of $F$. From this, it will follow in these cases that $\theta(1)$ is an inner automorphism, and so by \repr{isoout}, $F\rtimes_{\theta} \Z$ is isomorphic to $F\times \Z$.

\subsection{The homotopy type of the configuration spaces $F_{n}(\St)$ and $D_{n}(\St)$}\label{sec:homotopytype}

%\comment{This section is the proof you gave in an e-mail for the homotopy type
%of $F_{n}(\St)$. It is used to prove that the cohomology of
%the finite subgroups of $B_{n}(\St)$ is periodic, of period $2$ or $4$.}

The purpose of this section is to describe the homotopy type of the universal covering space of $F_n(\St)$ and $D_n(\St)$. For $n=1$, we have $F_1(\St)=D_1(\St)=\St$, which is simply connected. So assume from now on that $n\geq 2$. We give an alternative proof of \repr{homot} which is due to~\cite{BCP,FZ}. 
% for $n\geq 3$ has the homotopy type of the 3 sphere $S^3$.

%\comment{Does the proof have an intersection with the `embeddings' paper?}

\begin{proof}[Proof of \repr{homot}.]
First observe that $F_n(\St)$ and $D_n(\St)$ have the same universal covering space because $F_n(\St)$ is a finite $n!$-fold regular covering space of $D_n(\St)$.
\begin{enumerate}[(a)]
\item This was proved in \cite[Lemma~8]{GG10}.

%Let $\Delta$ denote the diagonal of $\St \times \St$. Since $F_2(\St)=(\St \times \St) \setminus \Delta$, consider the projection $\map{p_{1}}{(\St\times \St) \setminus\Delta}[\St]$ onto the first coordinate. This map is a fibration, and the fibre over a point $x_{0}\in \St$ is $\St \setminus \brak{x_0}$ which is contractible. So $p_{1}$ induces an isomorphism $\pi_{i}\left((\St\times \St) \setminus\Delta\right)\cong \pi_{i}(\St)$, $i\geq 1$, of the homotopy groups. Moreover, the map $\St\to (\St\times \St) \setminus\Delta$ given by $x \mapsto (x,-x)$ is a homotopy inverse of $p_{1}$, and so $p_{1}$ is a homotopy equivalence. This proves the first part of (\ref{it:homottype2}). The fact that $D_2(\St)$ has the homotopy type of $\rp$ was proved in \cite[Lemma~8]{GG10}.

%. \comment{this is in the proof of {\cite[Lemma~8]{GG10}}.} To see this, first observe that $r$ is a $\Z_2$-equivariant map with respect to the free actions on $\St$ given by the antipodal map and the free action on $\map{\tau}{\St\times \St}[\St]$ given by $\tau(x,y)=(y,x)$ for all $x,y\in \St$. So $r$ induces a map $\map{\overline{r}}{\rp}[D_2(\St)]$ on the quotients. Since the actions are free, $\overline{r}$ induces an isomorphism on the level of fundamental groups, 
%and so induces an isomorphism on all of the homotopy groups by the first part. Since all the spaces under consideration have the homotopy type of $CW$-complexes, the result follows by applying Whitehead's theorem.

\item Let $n\geq 1$. Consider the Fadell-Neuwirth fibration $\map{p_{n+1}}{F_{n+1}(\St)}[F_n(\St)]$ obtained by forgetting the last coordinate. The fibre over a point $(x_1,\ldots,x_n) \in F_{n}(\St)$ may be identified with $F_1(\St\setminus\brak{x_1,\ldots,x_n})$. The related long exact sequence in homotopy is:
\begin{multline*}%\label{eq:longes}
\ldots \to \pi_{m+1}(F_{n+1}(\St)) \to \pi_{m+1}(F_n(\St)) \to
\pi_m(F_1(\St\setminus\brak{x_1,\ldots,x_n})) \to\\
\pi_m(F_{n+1}(\St)) \to \pi_m(F_n(\St))\to \pi_{m-1}(F_1(\St\setminus\brak{x_1,\ldots,x_n})) \to \ldots
\end{multline*}
The fact that $F_1(\St\setminus\brak{x_1,\ldots,x_n})$ is a $K(\pi,1)$-space implies that the homomorphism $\pi_m(F_{n+1}(\St))\to \pi_m(F_n(\St))$ induced by $p_{n+1}$ is an isomorphism for all $m\geq 3$ and all $n\geq 1$. It remains to study the case $m=2$.

First suppose that $n=3$. From part~(\ref{it:homottype2}), $F_2(\St)$ has the homotopy type of $\St$, and 
%using the knowledge of some of the groups for small values of $m$, we obtain the following exact sequence
%\begin{equation*}
%1 \to \pi_2(F_3(\St)) \to \Z \to \Z \to \pi_1(F_3(\St)) \to 1
%\end{equation*}
%from \req{longes}. By~\cite{FvB}, the boundary homomorphism $\Z\to \Z$ is multiplication by $2$, and so 
$\pi_2(F_3(\St))=\brak{1}$ and $\pi_1(F_3(\St))\cong \Z_2$ by~\cite{FvB}. 
%\comment{actually these two results, $\pi_2(F_3(\St))=1$ and $\pi_1(F_3(\St))=\Z_2$, are stated explicitly in \cite{FvB}, so maybe we can just quote them and delete most of this paragraph?}
%Let $\map{p_{2}}{F_{2}(\St)}[\St]$ denote the projection onto the first coordinate, \comment{define also $p_{3}$; maybe they were defined previously in the text.} and 
Let $\map{\phi}{\St[3]}[F_3({\St})]$ be such that $p_{2}\circ p_{3}\circ \phi$ is homotopic to the Hopf map $\eta$ (such a $\phi$ exists because $\pi_3(F_3(\St))$ is isomorphic  to $\pi_3(F_2(\St))$). We thus have the following diagram that commutes up to homotopy:
\begin{equation*}
\xymatrix{%
\St[3] \ar[r]^{\phi} \ar[d]^{\eta}  & F_{3}(\St)
\ar[d]^{p_{2}\circ p_{3}}\\
\St  \ar@{=}[r] & \St.}
\end{equation*}
If $m\geq 3$, $\eta$ induces an isomorphism $\pi_{m}(\St[3])\to \pi_{m}(\St)$ and $p_{2}\circ p_{3}$ induces an isomorphism $\pi_m(F_{3}(\St))\to \pi_m(\St)$. Since $\pi_2(\St[3])$ and $\pi_2(F_3(\St))$ are trivial, the commutativity of the above diagram implies that $\phi$ induces an isomorphism $\pi_{m}(\St[3]) \to \pi_{m}(F_{3}(\St))$ for all $m\geq 2$. Lifting to the corresponding universal covering spaces gives rise to an isomorphism $\pi_{m}(\St[3]) \to \pi_{m}(\widetilde{F_{3}(\St)})$ for all $m\in \N$, $\widetilde{F_{3}(\St)}$ being the universal covering space of $F_{3}(\St)$, and so by Whitehead's theorem, $\widetilde{F_{3}(\St)}$ has the homotopy type of $\St[3]$.
%Now for the commutative diagram \comment{which diagram?} involving the $3$-sphere $\St[3] \to F_3(\St)\to \St$ we apply the group $\pi_m$. We know that the homomorphism induced by the Hopf map is an isomorphism, and that the homomorphism induced by $p_{2}\circ p_{3}$ is also an isomorphism. \comment{why?} So the homomorphism induced by $\phi$ is also an isomorphism. Since $\pi_2(F_3(\St))=1$, it follows that $\phi$ induces an isomorphism of the $m\up{th}$ homotopy groups for all $m>1$. Therefore the lifting to the universal covering of $F_3(\St)$ induces an isomorphism for all homotopy groups, and by Whitehead's theorem, it follows that $F_3(\St)$ has the homotopy type of $\St[3]$. 

Let $n\geq 3$. Then $\pi_{2}(F_{n}(\St))=\brak{1}$~\cite{FvB} and so the homomorphism
\begin{equation*}
\pi_m(F_{n+1}(\St))\to \pi_m(F_n(\St))
\end{equation*}
induced by $p_{n+1}$ is an isomorphism for all $m\geq 2$. Lifting to the universal covering spaces and applying Whitehead's Theorem, part~(\ref{it:homottype2}) and induction gives the result.\qedhere
\end{enumerate}
\end{proof}

\subsection{A cohomological condition for the realisation of Type~I virtually cyclic groups}\label{sec:percohI}

%In this section we apply \repr{homot} to derive a necessary cohomological condition for an abstract group to be realised as a subgroup of $B_n(\St)$. If $F=\oonestar,\istar$, this will allow us to rule out the possibility of $F\rtimes_{\theta} \Z$ for the non-trivial action for each of these groups described in \resec{autout}. We first recall the definition of a periodic group which extends that for finite groups. Let $G$ be a group, let $n\in \N$ let $d\geq 1$, and let $A$ be a system of local coefficients. If the homomorphism $H^n(G, A) \stackrel{\cup u} \to H^{n+d}(G, A)$ is an isomorphism for some cohomology class $u \in H^d(G,\Z)$, we write $H^n(G, A)\approx H^{n+d}(G, A)$. The group $G$ is said to be \emph{periodic of period $d$} if there exists a positive $n_0\geq 0$ such that $H^n(G, A) \approx H^{n+d}(G, A)$ for all $n\geq n_0$ and  for all systems of local coefficients $A$ (see~\cite{AS} and~\cite{Br} \comment{pg. ???}). It is well known that if a discrete group acts freely on a finite-dimensional $CW$-complex of dimension $m$ whose homotopy type is that of the sphere $\St[d]$ then the group $G$ is periodic of period $d$ \comment{$d+1$?}, and that we can take $n_0=m+1$, see~\cite[page~408]{AS} for example. An obvious consequence of the above is the following lemma.

In this section we apply \repr{homot} to derive a necessary cohomological condition for an abstract group to be realised as a subgroup of $B_n(\St)$. If $F=\oonestar,\istar$, this will allow us to rule out the possibility of $F\rtimes_{\theta} \Z$ for the non-trivial action for each of these groups described in \resec{autout}. Following~\cite{AS}, we recall the definition of a periodic group which extends the classical definition for finite groups. 
%Let $G$ be a group.
% let $n\in \N$ let $d\geq 1$,
%and let $A$ be a system of local coefficients. 
By Definition~2.1 and the definition given just before Corollary~2.10 in~\cite{AS}, we say that a group $G$ is \emph{periodic} of period $d\geq 1$ if there exist a non-negative integer $r_0\geq 0$ and a cohomology class $u \in H^d(G,\Z)$ such that the homomorphism $H^r(G, A) \stackrel{\cup u}{\to} H^{r+d}(G, A)$ is an isomorphism for all $r \geq r_0$ and for all local coefficient systems $A$. From~\cite[Corollary~2.14]{AS}, if a discrete group acts freely on a finite-dimensional $CW$-complex of dimension $m$ whose homotopy type is that of the sphere $\St[d-1]$ then the group $G$ is periodic. By a standard argument using the spectral sequence associated to the covering of the orbit space, it is not hard to see that $d$ is a period, and that we can take $r_0=m+1$. An obvious consequence of the above is the following lemma.

\begin{lem}\label{lem:per24}
Let $n\geq 3$, and let $G$ be a group abstractly isomorphic to a subgroup of $B_{n}(\St)$. Then there exists $r_{0}\geq 1$ such that $H^r(G, \Z) \cong H^{r+4}(G, \Z)$ for all $r\geq r_{0}$. 
%there exists $n_{0}\in \N$ such that $H^{n}(G, \Z)\approx H^{n+d}(G, \Z)$ \comment{$\approx$ means abstractly isomorphic?} for all $n\geq n_0$. \comment{should $d$ be $4$ here? Doesn't $n$ have two different meanings here?}
\end{lem}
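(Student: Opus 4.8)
The plan is to apply directly the periodicity machinery recalled just before the statement, taking as finite-dimensional homotopy sphere the universal covering space of $D_n(\St)$. Since both the property of being periodic and the cohomology groups $H^{*}(G,\Z)$ depend only on the isomorphism type of $G$, I would first fix an embedding realising $G$ as an honest subgroup of $B_{n}(\St)=\pi_{1}(D_{n}(\St))$. Writing $\widetilde{D_{n}(\St)}$ for the universal covering space of $D_{n}(\St)$, the group $B_{n}(\St)$ acts freely on $\widetilde{D_{n}(\St)}$ by deck transformations, and hence so does its subgroup $G$.

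Next I would pin down the homotopy type and the dimension of the complex. By \repr{homot}, since $n\geq 3$, the space $\widetilde{D_{n}(\St)}$ has the homotopy type of $\St[3]$. Moreover $D_{n}(\St)=F_{n}(\St)/\sn$ is a manifold of dimension $2n$ (being a quotient of the open submanifold $F_{n}(\St)$ of $(\St)^{n}$ by the free $\sn$-action), so its universal cover $\widetilde{D_{n}(\St)}$ is likewise a $2n$-dimensional manifold, and in particular a finite-dimensional $CW$-complex of dimension $m=2n$. Thus $G$ acts freely on a finite-dimensional $CW$-complex of dimension $m=2n$ whose homotopy type is that of the sphere $\St[d-1]$ with $d=4$.

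Then the criterion \cite[Corollary~2.14]{AS}, in the form recalled above, applies and shows that $G$ is periodic, that $d=4$ is a period, and that one may take $r_{0}=m+1=2n+1\geq 1$. Concretely, there is a class $u\in H^{4}(G,\Z)$ such that cup product $H^{r}(G,A)\stackrel{\cup u}{\to} H^{r+4}(G,A)$ is an isomorphism for all $r\geq r_{0}$ and for every local coefficient system $A$. Specialising to the trivial coefficient system $A=\Z$ then yields $H^{r}(G,\Z)\cong H^{r+4}(G,\Z)$ for all $r\geq r_{0}$, which is exactly the statement of \relem{per24}.

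I do not expect a serious obstacle, the lemma being essentially a formal consequence of \repr{homot} together with the cited periodicity criterion; the only points requiring a little care are verifying that the free deck action of $G$ and the finite-dimensionality of $\widetilde{D_{n}(\St)}$ genuinely meet the hypotheses of \cite[Corollary~2.14]{AS}, and reading off the period $d=4$ correctly from the identification $\St[d-1]=\St[3]$.
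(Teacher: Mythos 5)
Your proposal is correct and follows essentially the same route as the paper's own proof: realise $G$ as a subgroup of $B_{n}(\St)$ acting freely by deck transformations on the finite-dimensional universal cover $\widetilde{D_{n}(\St)}$, invoke \repr{homot} to identify its homotopy type as $\St[3]$, and apply the periodicity criterion of~\cite{AS} with $d=4$ and $A=\Z$. The only difference is that you spell out details the paper leaves implicit (the dimension $m=2n$ and the explicit value $r_{0}=2n+1$), which is harmless.
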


%\comment{Perhaps this can be used to show that $B_{n}$ cannot be embedded in $B_{n}(\St)$ or $B_{n}(\rp)$ if the cohomology of $B_{n}$ is not periodic of period $4$.}

\begin{proof} 
Since the universal covering space $\widetilde{D_{n}(\St)}$ of $D_{n}(\St)$ is a finite-dimensional $CW$-complex, it is a homotopy $3$--sphere by \repr{homot}(\ref{it:homottype2}). Any subgroup of $B_n(\St)$ acts freely on $\widetilde{D_{n}(\St)}$, and thus is periodic of period $4$. Taking $A=\Z$ yields the result.
%
%we see that the group satisfies the cohomological condition that $H^{n}(G, \Z)\approx H^{n+d}(G, \Z)$ \comment{$\approx $ means abstractly isomorphic?} for all $n\geq n_0$ for some $n_0\in \N$. The result then follows.
\end{proof} 

We now apply \relem{per24} to the Type~I groups of the form $F\rtimes_{\theta} \Z$.
%
%come to the main part of this section in which we will determine the groups $G$ of \repr{possvcbnS2}(\ref{it:partb}) that satisfy the cohomological condition of \relem{per24}. 
%The Lemma can be easily stated in a more general setting, but we prefer to state %in the form that we need.  
If a group $G$ acts on a module $A$, let $A^G$ denote the submodule of $A$ fixed by $G$, and let $A_G$ denote the quotient of $A$ by the submodule generated by $\set{a-ga}{a\in A,\, g\in G}$.

\begin{lem}\label{lem:cohomtypeI}
Let $G=F\rtimes_{\theta} \Z$, where $F$ is a finite periodic group % of least period $2$ or $4$ 
and $\theta\in \operatorname{Hom}(\Z,\aut{F})$, and let $\map{\theta(1)^{(i)}}{H^i(F,\Z)}[H^i(F,\Z)]$ be the
induced automorphism on cohomology in dimension $i$.  Then $H^{\ast}(G, \Z)$ is as follows: $H^0(G, \Z)=\Z$,  $H^1(G, \Z)=\Z$,  and for all $i\in \N$, $H^{2i}(G,\Z)=H^{2i}(F,\Z)^{\Z}$ and $H^{2i+1}(G, \Z)=H^{2i}(F,\Z)_{\Z}$ with respect to the $\Z$-module structure on
$H^{2i}(F,\Z)$ induced by $\theta$.
% \comment{for all $i\in\N$? Also, is the notation consistent with $A^G$, $A_{G}$?}.
\end{lem}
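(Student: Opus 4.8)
The plan is to feed the extension
\begin{equation*}
1 \to F \to G \to \Z \to 1
\end{equation*}
arising from the semi-direct product $G=F\rtimes_{\theta}\Z$ into the Lyndon--Hochschild--Serre spectral sequence with trivial coefficients $\Z$, whose second page is
\begin{equation*}
E_2^{p,q}=H^p\!\left(\Z, H^q(F,\Z)\right) \Longrightarrow H^{p+q}(G,\Z),
\end{equation*}
the $\Z$-module structure on $H^q(F,\Z)$ being that induced by the conjugation action, i.e.\ by $\theta(1)$ (inner automorphisms of $F$ act trivially on $H^{\ast}(F,\Z)$, so this is exactly the action described in the statement). Since $\Z$ has cohomological dimension $1$, we have $E_2^{p,q}=0$ for all $p\geq 2$, so the spectral sequence is concentrated in the two columns $p=0,1$. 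Every differential $d_r$ with $r\geq 2$ then has target in a column of index $\geq 2$ and so vanishes; hence $E_2=E_\infty$.

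Next I would identify the two surviving columns. For any $\Z$-module $M$, the length-one free resolution $0\to \Z[\Z]\stackrel{t-1}{\to}\Z[\Z]\to \Z\to 0$ gives $H^0(\Z,M)=M^{\Z}$ and $H^1(\Z,M)=M_{\Z}$, and $H^p(\Z,M)=0$ for $p\geq 2$. Taking $M=H^q(F,\Z)$ yields $E_2^{0,q}=H^q(F,\Z)^{\Z}$ and $E_2^{1,q}=H^q(F,\Z)_{\Z}$. Because only two columns survive, the decreasing filtration of $H^n(G,\Z)$ has at most two non-zero graded quotients, the $p=1$ term being a subobject and the $p=0$ term the quotient; this gives for each $n\geq 0$ (with the convention $H^{-1}(F,\Z)=0$) a short exact sequence
\begin{equation*}
0 \to H^{n-1}(F,\Z)_{\Z} \to H^n(G,\Z) \to H^n(F,\Z)^{\Z} \to 0.
\end{equation*}

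The remaining step, which I expect to be the crux, is to show that the outer terms collapse to the asserted form. The low-degree values are immediate: $H^0(F,\Z)=\Z$ with trivial action gives $H^0(G,\Z)=\Z^{\Z}=\Z$, and since $F$ is finite, $H^1(F,\Z)=\operatorname{Hom}(F,\Z)=0$, so the degree-$1$ sequence reads $0\to H^0(F,\Z)_{\Z}\to H^1(G,\Z)\to 0$, whence $H^1(G,\Z)=\Z_{\Z}=\Z$. For the general even/odd statement one needs $H^{2k+1}(F,\Z)=0$ for every $k\geq 0$: substituting this in degrees $n=2i$ and $n=2i+1$ kills the term $H^{2i-1}(F,\Z)_{\Z}$ (respectively the term $H^{2i+1}(F,\Z)^{\Z}$), leaving exactly $H^{2i}(G,\Z)\cong H^{2i}(F,\Z)^{\Z}$ and $H^{2i+1}(G,\Z)\cong H^{2i}(F,\Z)_{\Z}$. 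The vanishing of the odd integral cohomology of $F$ is the one place where periodicity enters essentially, and is the main obstacle to handle carefully: the minimal period of a finite periodic group is even and its Tate cohomology is genuinely periodic in every degree, while $H^1(F,\Z)=0$ and $H^3(F,\Z)\cong H_2(F,\Z)=0$ (the Schur multiplier of a periodic group vanishes, each Sylow subgroup being cyclic or generalised quaternion and hence of trivial multiplier); periodicity then propagates this vanishing to all odd degrees, completing the proof.
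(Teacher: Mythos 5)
Your argument follows the same route as the paper's proof: the Lyndon--Hochschild--Serre spectral sequence of $1\to F\to G\to \Z\to 1$, collapse at $E_2$ because $\Z$ has cohomological dimension one, the identification $H^0(\Z,M)=M^{\Z}$ and $H^1(\Z,M)=M_{\Z}$, and the vanishing of the odd-dimensional integral cohomology of $F$ to eliminate one of the two surviving terms in each total degree. The only cosmetic difference is that the paper phrases the last step as the absence of an extension problem (at most one non-trivial $E_2^{p,q}$ on each anti-diagonal), whereas you phrase it via the two-column (Wang) exact sequence $0\to H^{n-1}(F,\Z)_{\Z}\to H^{n}(G,\Z)\to H^{n}(F,\Z)^{\Z}\to 0$; these are equivalent, and everything up to that point is correct.

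The gap is in your final paragraph, i.e.\ in your justification of $H^{2k+1}(F,\Z)=0$ --- exactly the fact that the paper quotes without proof as part of ``the knowledge of the cohomology of finite periodic groups''. From $H^1(F,\Z)=0$, $H^3(F,\Z)\cong H_2(F,\Z)=0$ and periodicity of (Tate) cohomology with period $d$, you only obtain vanishing in odd degrees congruent to $1$ or $3$ modulo $d$. This covers all odd degrees when the minimal period $d$ is $2$ or $4$, but finite periodic groups of larger minimal period exist: the nonabelian group $\Z_{7}\rtimes\Z_{3}$ of order $21$ has minimal period $6$, and your propagation says nothing about $H^{5}$, $H^{11}$, etc. Since the lemma is stated for an arbitrary finite periodic $F$ (the period-$4$ case would suffice only for the paper's eventual application to $F=\oonestar,\istar$), the step ``periodicity then propagates this vanishing to all odd degrees'' fails as written. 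The repair is to apply uniformly, in every degree, the Sylow argument you already invoke for the Schur multiplier: for $n\geq 1$ the group $H^{n}(F,\Z)$ is finite, and its $p$-primary component embeds into $H^{n}(F_{p},\Z)$ for a Sylow $p$-subgroup $F_{p}$, because $\operatorname{cor}\circ\operatorname{res}$ is multiplication by the index $[F:F_{p}]$, which is prime to $p$. Since $F$ is periodic, each $F_{p}$ is cyclic or generalised quaternion, and both of these have vanishing odd-dimensional integral cohomology (period $2$, resp.\ $4$, with $H^{1}=H^{3}=0$, so here your propagation is valid). Hence $H^{2k+1}(F,\Z)=0$ for all $k$, with no reference to the period of $F$ itself.
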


\begin{proof}
Consider the Lyndon-Hochschild-Serre spectral sequence associated with the short exact sequence
\begin{equation*}
1 \to F \to G \to \Z \to 1.
\end{equation*}
The $E_2$-term of this spectral sequence, given by $H^{p}(\Z, H^q(F,\Z))$, vanishes if $p \notin \brak{0,1}$ because the cohomological dimension of $\Z$ is equal to one. So outside of the two vertical lines given by $p=0$ and $p=1$, the terms vanish which implies that all differentials are necessarily trivial, and so the spectral sequence collapses. Further, since the cohomology of $F$ in odd dimension vanishes, there is at most one non-trivial group $E_2^{p,q}$ with $p+q=r$ for each given $r$.  Hence there is no extension problem from $E_{\infty}$ to $H^{\ast}(G)$, and it suffices to compute the $E_2$-term. The result follows from the well-known description of  the cohomology of $\Z$ with coefficients in $A$ (see  \cite[Chapter III, Section 1, Example 1]{Br}).
\end{proof}

We now seek necessary conditions for the group $G$ to have least period either $2$ or $4$. Let $d$ be the least period of $F$. Then $d$ is the least integer for which $H^{d}(F,\Z)\cong \Z_{\ord{F}}$, and if $H^{2i}(G,\Z)=H^{2i}(F,\Z)^{\Z}\cong \Z_{\ord{F}}$ then $\theta(1)^{(2i)}=\id$. So there exists $k\in \N$ such that $2i=kd$. Let $k_{0}$ be the least integer for which $\theta(1)^{(k_{0}d)}=\id$. If $G$ is periodic, its period is necessarily a multiple of $k_{0}d$. In particular, if the least period of $G$ is equal to either $2$ or $4$ then $k_{0}\in \brak{1,2}$ if $d=2$, and $k_{0}=1$ if $d=4$.

%$E_2^{p,q}$ be isomorphic to $\Z_{\ord{F}}$. This happens exactly when $p+q$ is either $2k$ or $2k+1$ and $\theta(1)^{(2k)}$ is the identity homomorphism. \comment{reference?} Let $k$ be the first integer for which $\theta(1)^{(2k)}$ is the identity. \comment{should the previous two sentences be permuted, because $k$ is not defined before?} So if $l<k$ then $\theta(1)^{(2l)}$ is not the identity. Therefore, if $k\notin \brak{1,2}$, a possible \comment{why `possible'? Would `the period of $G$ is an\ldots' be OK?} period for the group is an even number different from $2$ and $4$. Hence the only possibility for $G$ to be periodic with period $2$ or $4$ is when $k\in\brak{1,2}$.

\begin{rem}
The additive structure of the cohomology of the virtually cyclic groups of Type~I with integer coefficients was computed in detail in~\cite{Je} for the cases where $F$ is one of the groups of the form $\Z_a\rtimes \Z_b$ or $\Z_a\rtimes( \Z_b \times \quat[2^i])$. This corresponds to the first two families of the classification of the finite periodic groups given by the Suzuki-Zassenhaus Theorem~\cite[Theorem 6.15]{AM}.
\end{rem}

Based on \relem{cohomtypeI} and the knowledge of the cohomology of finite periodic groups, we obtain the following result.
\begin{prop}\label{prop:perhom}
Let $F\in \brak{\oonestar,\istar}$, and let $G\cong F\rtimes_{\theta}\Z$ be a Type~I subgroup of $B_{n}(\St)$. Then $\theta(1)$ is an inner automorphism of $F$.
\end{prop}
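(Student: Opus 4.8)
The plan is to exclude the non-trivial action by a cohomological periodicity argument, exactly as set up by \relem{per24} and \relem{cohomtypeI}. First I would observe that for $F\in\brak{\oonestar,\istar}$ the Sylow subgroups are not all cyclic (both groups contain a copy of $\quat$), so the least cohomological period $d$ of $F$ equals $4$; concretely, $F$ acts freely and orthogonally on $\St[3]$, whence $H^{4}(F,\Z)\cong\Z_{\ord{F}}$ and $4$ is the first positive degree in which the integral cohomology is cyclic of order $\ord{F}$. Now $G=F\rtimes_{\theta}\Z$ is by hypothesis a subgroup of $B_{n}(\St)$, so \relem{per24} gives $H^{r}(G,\Z)\cong H^{r+4}(G,\Z)$ for $r$ large; since $F$ is a subgroup of $G$ and the period of a subgroup divides that of the ambient periodic group, the least period of $G$ is divisible by $4$ and divides $4$, hence equals $4$. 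Feeding $d=4$ into the discussion following \relem{cohomtypeI} forces $k_{0}=1$, i.e.\ the induced automorphism $\map{\theta(1)^{(4)}}{H^{4}(F,\Z)}[H^{4}(F,\Z)]$ on $H^{4}(F,\Z)\cong\Z_{\ord{F}}$ is the identity.

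It then remains to show that this vanishing detects innerness, namely that the homomorphism $\aut{F}\to\aut{\Z_{\ord{F}}}$ describing the action on $H^{4}(F,\Z)$ has kernel exactly $\inn{F}$. Since conjugation induces the identity on cohomology with trivial coefficients, this action factors through $\out{F}\cong\Z_{2}$ (see \resec{autout}), so the claim reduces to proving that the \emph{non-trivial} outer automorphism of $F$ — given explicitly in \resec{autout} (the class of $R\mapsto R^{-1}$ for $\oonestar$, and of conjugation by $\left(\begin{smallmatrix}w&0\\0&1\end{smallmatrix}\right)$ for $\istar\cong\operatorname{SL}_{2}(\FF_{5})$) — acts on $\Z_{\ord{F}}$ as multiplication by a unit $u\not\equiv 1 \bmod \ord{F}$. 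Granting this, $\theta(1)^{(4)}=\id$ places $\theta(1)$ in $\inn{F}$, which is the assertion of the proposition.

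The \emph{main obstacle} is precisely this last computation of the $\out{F}$-action on $H^{4}(F,\Z)$. I would carry it out by realising the generator of $H^{4}(F,\Z)\cong\Z_{\ord{F}}$ as the Euler class $e(\rho)$ of the $2$-dimensional unitary representation $\map{\rho}{F}[\operatorname{SU}(2)]$ coming from the free action on $\St[3]$ (the Gysin sequence of $\St[3]\to \St[3]/F\to BF$ shows $e(\rho)$ generates $H^{4}$), so that by naturality $\theta(1)^{(4)}(e(\rho))=e(\rho\circ\theta(1))$, and then comparing $e(\rho)$ with $e(\rho\circ\theta(1))$ after restriction to a suitable cyclic subgroup. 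For $\istar$ the non-trivial automorphism is the Galois twist exchanging the two faithful $2$-dimensional representations; restricting to a subgroup $\Z_{10}\leq\istar$ it acts there by $g\mapsto g^{3}$, so on $H^{4}(\Z_{10},\Z)\cong\Z_{10}$ the Euler class gets multiplied by $3^{2}\equiv -1$, giving $u\equiv -1 \bmod 10$ and in particular $u\neq 1$. The case $\oonestar$ is more delicate, since restriction to a cyclic subgroup of maximal order $8$ is blind to the automorphism (one has $k^{2}\equiv 1\bmod 8$ for every odd $k$); here I would instead use $\rho\circ\theta(1)\cong\rho\otimes\epsilon$, where $\epsilon$ is the order-two character of $\oonestar^{\mathrm{ab}}\cong\Z_{2}$, together with the identity $e(\rho\otimes\epsilon)=e(\rho)+c_{1}(\epsilon)^{2}$ and the non-vanishing of $c_{1}(\epsilon)^{2}\in H^{4}(\oonestar,\Z)$, the latter read off from the ring structure of $H^{*}(\oonestar,\Z)$ recorded in~\cite{AM}. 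In either case one obtains a non-identity action, which completes the proof.
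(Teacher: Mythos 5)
Your overall reduction is exactly the paper's: \relem{per24} together with \relem{cohomtypeI} (and the discussion following it) force $\theta(1)^{(4)}=\id$ on $H^{4}(F,\Z)\cong\Z_{\ord{F}}$, and the proposition then reduces to showing that only inner automorphisms of $F$ act trivially on $H^{4}(F,\Z)$. The paper settles this last point by citation (\cite{gg3} for $\oonestar$, \cite[Proposition~1.5]{gg4} for $\istar$), whereas you attempt direct Euler-class computations. Your $\istar$ argument is correct and complete: conjugation by $\bigl(\begin{smallmatrix} w & 0\\ 0 & 1\end{smallmatrix}\bigr)$ preserves a cyclic subgroup $\Z_{10}=\ang{-u}$ ($u$ unipotent), acting on it by $g\mapsto g^{k}$ with $k$ odd and $k\equiv w \bmod 5$ (so $k=3$ or $k=7$ according to the choice of non-square $w$, not always $3$; this is harmless since $k^{2}\equiv -1 \bmod 10$ either way), the Euler class of the restricted representation generates $H^{4}(\Z_{10},\Z)$ because the restricted action on $\St[3]$ is still free, and naturality of restriction does the rest. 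This is a genuinely self-contained replacement for the paper's citation.

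The gap is in the $\oonestar$ case, at exactly the point where all of its difficulty is concentrated: the non-vanishing of $c_{1}(\epsilon)^{2}$ in $H^{4}(\oonestar,\Z)\cong\Z_{48}$. (Your other unproved claim, $\rho\circ\theta(1)\cong\rho\otimes\epsilon$, is easily repaired: the paper's representative of the non-trivial outer class fixes $\tonestar$ pointwise and sends $R\mapsto R^{-1}=R^{-2}\ldotp R$, i.e.\ it multiplies every element of $\oonestar\setminus\tonestar$ by the central involution, so $\rho\circ\theta(1)=\rho\otimes\epsilon$ on the nose.) The non-vanishing itself is true --- $c_{1}(\epsilon)^{2}$ is the element $24$ of order $2$ --- but it cannot simply be ``read off'' from~\cite{AM}: the material there on the binary polyhedral groups is structural or mod-$p$, and the class in question is invisible both in mod-$2$ cohomology (it reduces to $0$) and under restriction to \emph{every} proper subgroup. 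Indeed, restriction to the Sylow $2$-subgroup $\quat[16]$ is injective on $2$-primary parts and sends $\epsilon$ to the character $\chi$ with kernel $\quat$, and in $H^{4}(\quat[16],\Z)\cong\Z_{16}$ the joint kernel of the restrictions to all proper subgroups is precisely the $2$-torsion $\brak{0,8}$, which is where $c_{1}(\chi)^{2}$ lives; moreover the analogous squares vanish identically for $\quat$ (there $\rho\otimes\chi\cong\rho$ for every character $\chi$). So no restriction argument of the kind you used for $\istar$ --- and you noticed this yourself for $\Z_{8}$ --- can decide the question; a genuinely global computation is unavoidable, and your proposal does not contain one.

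One way to close the gap: write the two faithful $2$-dimensional irreducibles of $\quat[16]$ as induced representations $\tau_{a}=\operatorname{Ind}_{\Z_{8}}^{\quat[16]}L_{a}$ ($a=1,3$), and use Evens' norm formula $e(\operatorname{Ind}L)=\mathcal{N}(c_{1}(L))$ together with the index-two identities $\mathcal{N}(x+y)=\mathcal{N}(x)+\mathcal{N}(y)+\operatorname{cores}(x\cup\sigma^{*}y)$, $\sigma^{*}\lambda=-\lambda$ and $\operatorname{cores}(\lambda^{2})=-2e(\tau_{1})$; this yields $e(\tau_{3})=9\,e(\tau_{1})$ in $\Z_{16}$, hence $c_{1}(\chi)^{2}=e(\tau_{3})-e(\tau_{1})=8\neq 0$, and the non-vanishing pulls back up to $\oonestar$ since $\tau_{3}\cong\tau_{1}\otimes\chi$. (Incidentally this also explains the paper's quoted ``multiplication by $9$'': $9$ is not a unit modulo $48$, so it can only describe the action on the $2$-primary component $\Z_{16}$; on $\Z_{48}$ the non-trivial outer automorphism acts as multiplication by $25$.) Alternatively, cite a source that actually records the integral cohomology \emph{ring} of $\oonestar$ (Tomoda--Zvengrowski), or do as the paper does and invoke \cite{gg3}.
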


\begin{proof}
Suppose first that $F\cong\oonestar$. By~\cite[p.~39]{gg3}, the group $\oonestar$ has period $4$, and the induced automorphism on $H^4(\oonestar,\Z)\cong \Z_{48}$ is trivial if $\theta(1)$ is an inner automorphism, and multiplication by $9$ (so is non trivial) otherwise, thus the result follows.

Now suppose that $F\cong \istar$, which we interpret as $\operatorname{SL}_2(\FF_{5})$. The non-trivial element of $\out{\istar}$ is represented by the automorphism of $\istar$ which is conjugation by the matrix $\left( \begin{smallmatrix}
w & 0\\
0 & 1
\end{smallmatrix}
\right)$, where $w$ is a non square of $\FF_{5}$~\cite[page~152]{AM}.  From~\cite[Proposition~1.5]{gg4}, the induced automorphism on the $5$-primary component of the group $H^4(\istar,\Z)\cong \Z_{120}$, which is isomorphic to $\Z_5$, is multiplication by $-1$. For the trivial element of $\out{\istar}$, the induced homomorphism is trivial and the result follows.
\end{proof}

%\item 
%\begin{enumerate}[(a)]
%\item If $F=\quat$, $\theta(1)\in \aut{\quat}$ is arbitrary.
%\item Let $F=\dic{4m}$, $m\geq 3$.  Then $\theta(1)$ is the identity or  an  automorphism of order $2$ %whose
%restriction to the $2$-primary component of $\Z_{2m}$ is either the identity or multiplication by $-1$.
%\end{enumerate}
%\item If $F=\tonestar$, $\theta(1)\in \aut{\tonestar}$ is arbitrary.
%\item If $F=\oonestar$, $\theta(1)$ is an inner automorphism.
%\item If $F=\istar$, $\theta(1)$ is an inner automorphism.

%\subsubsection{Classification of the infinite virtually cyclic subgroups of %$B_{n}(\St)$ of type~I}\label{sec:classvctypeI}

\section{Necessity of the conditions on $\mathbb{V}_{1}(\lowercase{n})$ and $\mathbb{V}_{2}(\lowercase{n})$}
\label{sec:defV1V2}

Let $n\geq 4$. In this section, we prove \reth{main}(\ref{it:mainI}), which shows the necessity of the conditions on $\mathbb{V}_{1}(n)$ and $\mathbb{V}_{2}(n)$. We start by considering the subgroups of $B_{n}(\St)$ of Type~I, and then go on to study those of Type~II.

%we define two subfamilies $\mathbb{V}_{1}(n)$ and $\mathbb{V}_{2}(n)$ of Type~I and~II respectively of the family of virtually cyclic groups given by \reth{wall} whose finite factors are the finite subgroups of $B_{n}(\St)$. The aim of \repart{realisation} will be to prove that most of the elements of $\mathbb{V}_{1}(n)$ and $\mathbb{V}_{2}(n)$ are indeed realised as subgroups of $B_{n}(\St)$.

\subsection{Necessity of the conditions on $\mathbb{V}_{1}(n)$}\label{sec:defV1}

%\comment{This has been refined since the last version, but it should be checked.}
%
%\comment{10/02/2010: the following proposition is new. We have basically proved the result already, the proof is (more or less) a summary to help the reader put the pieces together.}

We gather together the results of the previous sections to prove the following proposition, which is the statement of \reth{main}(\ref{it:mainI}) for the Type~I subgroups of $B_{n}(\St)$.

\begin{prop}\label{prop:necV1}
Let $n\geq 4$. Then every virtually cyclic subgroup of $B_{n}(\St)$ of Type~I is isomorphic to an element of $\mathbb{V}_{1}(n)$.
\end{prop}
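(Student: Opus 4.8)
The plan is to run through the three possible shapes of the finite factor. By \reth{wall} an infinite Type~I subgroup $G$ of $B_{n}(\St)$ is of the form $F\rtimes_{\theta}\Z$ with $F$ finite and $\theta\in\operatorname{Hom}(\Z,\aut{F})$, the $\Z$-factor being generated by some $z\in B_{n}(\St)$ of infinite order that acts on $F$ by conjugation. By \repr{possvcbnS2}(\ref{it:partb}) and \rerem{finitesub}, $F$ is, up to isomorphism, a subgroup of one of $\Z_{2(n-1)}$, $\dic{4n}$, $\dic{4(n-2)}$, $\tonestar$, $\oonestar$ or $\istar$, hence cyclic, dicyclic or binary polyhedral. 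In each case I would determine the admissible actions $\theta$, using \repr{isoout} to replace $\theta$ by its class in $\out{F}$, and then read off the group as one of the families of \redef{v1v2}(\ref{it:mainIdef}). The key structural input throughout is that, since $G$ is infinite and $F\trianglelefteq G$, a suitable power $z^{k}$ centralises $F$, so $F$ has \emph{infinite} centraliser and normaliser in $B_{n}(\St)$.

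The first reduction is non-maximality. If $F$ were a maximal finite cyclic subgroup $\ang{\alpha_{i}}$ or a standard maximal dicyclic subgroup, then \repr{genhodgkin1}(\ref{it:normcyclic}) and~(c) would force its normaliser to be finite, contradicting the previous remark; this yields the ``strict divisor'' and ``$m$ strict divisor'' conditions. For cyclic $F$, by \reth{murasugi} we may take $F=\ang{\alpha_{i}^{l}}$, and \reco{cycaction}(\ref{it:cycactiona}) pins the action down to the trivial one or inversion (the latter only for $i\in\brak{0,2}$), giving families (a) and (b). The residual exclusion $q\neq n-i$ for $n-i$ odd I would obtain by applying the Hodgkin centraliser computation of \repr{luis}/\repr{hodgkin1} to $\ang{\alpha_{i}^{2}}$: when $n-i$ is odd its image in $\mcg$ still generates the full rotation group (a single orbit of marked points), so its centraliser is finite and no infinite $z$ exists, whereas for $n-i$ even the two orbits make it infinite. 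For dicyclic $F\cong\dic{4m}$ with $m\geq 3$, which is conjugate into a standard copy and hence forces $i\in\brak{0,2}$, \reco{cycaction}(\ref{it:cycactionb}) restricts the action to the trivial one or $\nu$, giving families (c) and (d).

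The remaining cases are $\quat$ and the binary polyhedral groups. The group $\quat=\dic{8}$ must be treated separately from the dicyclic case because $\out{\quat}\cong\sn[3]$ is non-abelian; \relem{quatact} collapses the six outer classes to three, so $G$ is isomorphic to $\quat\times\Z$, $\quat\rtimes_{\alpha}\Z$ or $\quat\rtimes_{\beta}\Z$ (family (e)), with $n$ even since $\quat$ embeds only for even $n$. For $F$ binary polyhedral we have $\out{F}\cong\Z_{2}$, so there are only two actions; for $F\in\brak{\oonestar,\istar}$ the cohomological argument of \repr{perhom}, resting on the homotopy type \repr{homot} and the periodicity \relem{per24}, forces $\theta(1)$ to be inner, so $G\cong F\times\Z$ (families (h) and (i)), while for $\tonestar$ both the trivial action and the outer $\omega$ survive (families (f) and (g)). The congruence conditions on $n$ come from \reth{finitebn}, noting that $\tonestar\subset\oonestar$ makes $\tonestar$ embed for all even $n$, and that $\omega$, being the restriction of conjugation by $R\in\oonestar\setminus\tonestar$, is induced by an infinite-order conjugation only when $n\equiv 0,2\bmod 6$.

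I expect the main obstacles to be the two places where an \emph{outer} action survives and the numerical side-condition is therefore genuinely forced rather than cosmetic. First, for $F\cong\dic{4m}$ carrying the action $\nu$: since $\nu(1)$ is outer (it sends $y\mapsto xy$, an odd power, while inner automorphisms fixing $x$ send $y\mapsto x^{2k}y$), such a $G$ cannot coincide with $\dic{4m}\times\Z$, so necessity demands that $(n-i)/m$ be even, which I would establish from the conjugacy analysis of the order-$4$ elements $y$ and $xy$ lying outside $\ang{x}$ via \repr{genhodgkin2} together with the block structure of \rerem{nt}. Second, the analogous constraint forcing $n\equiv 0,2\bmod 6$ in the $\tonestar\rtimes_{\omega}\Z$ case is \emph{not} covered by \repr{perhom}, and must instead be deduced from the way the outer automorphism $\omega$ is realised by conjugation, i.e. from the presence of the enveloping group $\oonestar$. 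By contrast, the purely cyclic families and the $\oonestar,\istar$ cases should be routine once \repr{genhodgkin1}, \reco{cycaction} and \repr{perhom} are available.
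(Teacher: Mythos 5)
Your overall architecture is the same as the paper's: \reth{wall} and \repr{possvcbnS2} to restrict the finite factor $F$ to cyclic, dicyclic or binary polyhedral; \repr{isoout} to work modulo $\inn{F}$; \repr{genhodgkin1}/\repr{luis} to exclude maximal cyclic and maximal dicyclic factors; \reco{cycaction} and \relem{quatact} for the cyclic, dicyclic and $\quat$ actions; and \repr{perhom} to kill the outer action for $\oonestar$ and $\istar$. Your structural remark that a power of $z$ centralises $F$ is exactly what drives the paper's contradiction arguments. However, the two steps you yourself single out as the crux are where your plan diverges from the paper, and one of them contains a genuine gap.

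The gap is the congruence condition for $\tonestar\rtimes_{\omega}\Z$. It cannot be deduced ``from the presence of the enveloping group $\oonestar$''. If $G\cong\tonestar\rtimes_{\omega}\Z$ is a subgroup of $B_{n}(\St)$, then every torsion element of $G$ lies in the $\tonestar$-factor (all torsion of $F\rtimes\Z$ is contained in $F$), so $G$ contains no copy of $\oonestar$; and nothing allows you to replace the infinite-order element $z$ realising $\omega$ by conjugation with a finite-order one. Hence you cannot conclude that $\oonestar$ embeds in $B_{n}(\St)$ and then quote \reth{finitebn}. Indeed, the abstract group $\tonestar\rtimes_{\omega}\Z$ exists for every $n$; the obstruction is intrinsic to $B_{n}(\St)$. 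The paper's argument is an Abelianisation computation: if $n=6l+4$, an order-$3$ element $X$ of the $\tonestar$-factor satisfies, up to conjugacy and inversion, $X=\alpha_{1}^{2(n-1)/3}$ by \reth{murasugi}, and applying $\xi$ to the relation $zXz^{-1}=X^{-1}$ forces $\xi(X)=-\xi(X)$ in $\Z_{2(n-1)}$, whereas one computes $\xi(X)=\overline{4l+2}\neq\overline{-(4l+2)}$ in $\Z_{12l+6}$, a contradiction. A similar (smaller) weakness affects your treatment of $\dic{4m}\rtimes_{\nu}\Z$: \repr{genhodgkin2} concerns conjugate powers of a \emph{single} $\alpha_{i}$, while the order-$4$ elements $y$ and $xy=\nu(1)(y)$ need not both be powers of one $\alpha_{i}$, so it does not apply as stated. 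The paper again Abelianises: $zyz^{-1}=xy$ gives $\xi(x)=\overline{0}$, and since $x$ has order $2m$ it is conjugate to $\alpha_{i}^{l(n-i)/m}$ with $l$ odd, so $\xi(x)=l\,\frac{n-i}{m}\,\overline{(n-1)}$, which vanishes in $\Z_{2(n-1)}$ exactly when $(n-i)/m$ is even. Your conjugacy idea can be repaired—the conjugacy class of such an element is detected by $\xi$—but the repair is precisely this computation, so you should carry it out rather than appeal to \repr{genhodgkin2}.
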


Before proving \repr{necV1}, we state and prove the following result which shows that if $F$ is a dicyclic subgroup of $B_{n}(\St)$ then up to conjugacy, it may be taken to be a subgroup of one of the maximal dicyclic subgroups $\dic{4(n-i)}$, $i\in \brak{0,2}$.

\begin{lem}\label{lem:maxdicyclic2}
Let $n\geq 4$, and let $H$ be a subgroup of $B_{n}(\St)$ isomorphic to $\dic{4m}$, where $m\geq 2$. Then there exists $i\in \brak{0,2}$ such that $H$ is conjugate to a subgroup of the standard maximal dicyclic subgroup $\dic{4(n-i)}$ of \rerem{finitesub}(\ref{it:finitesubb}).
\end{lem}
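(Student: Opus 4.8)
The plan is to use the finiteness of $H$ together with the structure forced by the ambient group. First I would record two constraints. Since $m\geq 2$, the group $\dic{4m}$ is non-abelian and has a unique element of order $2$, namely its central element $x^{m}=y^{2}$; as $\ft$ is the unique element of order $2$ in $B_{n}(\St)$ and generates its centre, any embedding must send $x^{m}=y^{2}$ to $\ft$. In particular $\ang{x}$ is cyclic of order $2m$ with $x^{m}=\ft$, so by \reth{murasugi} we may conjugate $H$ so that $\ang{x}\subseteq\ang{\alpha_{i}}$ for some $i\in\brak{0,1,2}$, and writing $d=(n-i)/m$ we may take $x=\alpha_{i}^{d}$.

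Next I would eliminate $i=1$. The relation $yxy^{-1}=x^{-1}$ shows that $\alpha_{i}^{d}$ is conjugate to $\alpha_{i}^{-d}$; by \repr{genhodgkin2}, if $i=1$ this forces $\alpha_{1}^{d}=\alpha_{1}^{-d}$, whence $x$ has order at most $2$, contradicting $2m\geq 4$. Hence $i\in\brak{0,2}$ and $m$ divides $n-i$. It then remains to show that $H$ is conjugate into the standard copy of $\dic{4(n-i)}$, which by \rerem{finitesub}(\ref{it:finitesubb}) and \repr{genhodgkin1}(\ref{it:normcyclic}) is precisely $N_{B_{n}(\St)}(\ang{\alpha_{i}})$. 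Since $\ang{x}=\ang{\alpha_{i}^{d}}\subseteq\ang{\alpha_{i}}$ and $y$ normalises $\ang{x}$, it suffices to prove that $y$ normalises the \emph{larger} group $\ang{\alpha_{i}}$. Let $\delta_{i}$ denote the standard element inverting $\alpha_{i}$ (so $\delta_{0}=\garside$ and $\delta_{2}=\alpha_{0}^{-1}\garside\alpha_{0}$, using \reqref{basicconj}). Both $y$ and $\delta_{i}$ conjugate $x=\alpha_{i}^{d}$ to $x^{-1}$, so $\delta_{i}^{-1}y$ centralises $\alpha_{i}^{d}$; thus $y\in\ang{\alpha_{i},\delta_{i}}=N_{B_{n}(\St)}(\ang{\alpha_{i}})$ as soon as the centraliser of $\alpha_{i}^{d}$ equals $\ang{\alpha_{i}}$, which yields $H=\ang{x,y}\subseteq N_{B_{n}(\St)}(\ang{\alpha_{i}})\cong\dic{4(n-i)}$.

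The hard part is exactly this last centraliser computation: \repr{luis} gives $Z_{B_{n}(\St)}(\ang{\alpha_{i}})=\ang{\alpha_{i}}$ only for the full cyclic group, whereas here I must control the centraliser of the proper power $\alpha_{i}^{d}$, which a priori is strictly larger. I expect to resolve this by the alternative route of containing $H$ in a maximal finite subgroup $M$ and invoking \reth{finitebn}: if $M$ is cyclic it cannot contain the non-abelian $H$; if $M\cong\dic{4n}$ or $M\cong\dic{4(n-2)}$ then, since by \cite{GG7} the maximal dicyclic subgroups of $B_{n}(\St)$ are conjugate to the standard copies, the result is immediate. The remaining, genuinely delicate, case is $M\in\brak{\tonestar,\oonestar,\istar}$: here I would use \repr{maxsubgp} to enumerate the finitely many dicyclic subgroups of each binary polyhedral group, and then appeal to the explicit conjugacy analysis of these dicyclic subgroups in $B_{n}(\St)$ from \cite{GG7} (of the type already exploited in the proof of \repr{genhodgkin1}) to conclude that each is conjugate into a standard maximal dicyclic subgroup. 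Completing this finite case-check, together with the reduction to $i\in\brak{0,2}$ above, finishes the proof.
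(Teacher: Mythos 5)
Your proposal is correct, but its decisive step is identical to the paper's entire proof, and everything you do before it turns out to be redundant. The paper's proof is a two-sentence citation: by \cite[Proposition~1.5(2)]{GG7}, a subgroup isomorphic to $\dic{4m}$ ($m\geq 2$) lies in one of at most two conjugacy classes in $B_{n}(\St)$, and the construction of \cite[Theorem~1.6]{GG7} exhibits representatives of every such class inside the standard maximal dicyclic subgroups, so every such $H$ is conjugate into a standard copy --- with no need to know which maximal finite subgroup contains $H$. Your appeal in the binary polyhedral case to ``the explicit conjugacy analysis of these dicyclic subgroups in $B_{n}(\St)$ from \cite{GG7}'' is exactly this citation; once you grant it, your preliminary reductions (the unique element of order $2$, \reth{murasugi}, the elimination of $i=1$ via \repr{genhodgkin2}, and the case division over the maximal finite subgroups of \reth{finitebn}) are all correct but unnecessary, since the GG7 classification applies to every dicyclic subgroup at once; the divisibility $m\divides n-i$ that you derive is recorded in the paper as \rerem{maxdicyclic2}, a consequence of the lemma rather than an input to its proof. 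You were also right to abandon your first route: by \relem{commalphaigen}(\ref{it:rgeq2}) the centraliser of the proper power $\alpha_{i}^{d}$ contains the infinite-order element $\delta_{r,i}$ whenever $r\geq 2$, so knowing that $\delta_{i}^{-1}y$ centralises $x$ gives no control over $y$; moreover, when $n$ is even and $m$ divides $(n-i)/2$ there are two conjugacy classes of $\dic{4m}$, and the fact that both are realised inside the standard copy is again something only the GG7 construction provides, so that route could not have been completed without the very citation it was meant to avoid.
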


\begin{rem}\label{rem:maxdicyclic2}
Under the hypotheses of \relem{maxdicyclic2}, we have that $m\divides n-i$.
\end{rem}

\begin{proof}[Proof of \relem{maxdicyclic2}.] 
Let $H\cong \dic{4m}$, where $m\geq 2$. By \cite[Proposition~1.5(2)]{GG7}, $H$, as an abstract finite group, is realised as a single conjugacy class in $B_{n}(\St)$ with the exception that when $n$ is even and $m$ divides $(n-i)/2$, $i\in\brak{0,2}$, there are exactly two conjugacy classes. Using the subgroup structure of dicyclic groups and the construction of \cite[Theorem~1.6]{GG7}, it follows that $H$ is conjugate to a subgroup of the one of the standard maximal dicyclic subgroups $\dic{4(n-i)}$ of $B_{n}(\St)$, where $i\in \brak{0,2}$.
\end{proof}

\begin{proof}[Proof of \repr{necV1}.]
Let $G$ be a infinite virtually cyclic subgroup of $B_{n}(\St)$ of Type~I. Then $G$ is of the form $F\rtimes_{\theta} \Z$, where $F$ is a finite subgroup of $B_{n}(\St)$, and $\theta(1)\in \operatorname{Hom}(\Z,\aut{F})$. We separate the discussion into two cases.
\begin{enumerate}[(a)]
\item Suppose that $F$ is isomorphic to one of the binary polyhedral groups $\tonestar,\oonestar, \istar$. Then $n$ must satisfy the conditions given in \reth{finitebn} for the existence of $F$ as a subgroup of $B_{n}(\St)$. Applying \repr{isoout}, up to isomorphism, we may restrict ourselves to representative automorphisms $\theta(1)$ of the elements of $\out{F}\cong \Z_{2}$ given in \resec{autout}. If $\theta(1)=\id_{F}$ then $G\cong F \times \Z$, and these are the elements of $\mathbb{V}_{1}(n)$ given by \redef{v1v2}(\ref{it:mainI})(\ref{it:maint}), (\ref{it:maino}) and (\ref{it:maini}) for the given values of $n$. So suppose that $\theta(1)$ represents the nontrivial element of $\out{F}$. By \repr{perhom}, $F\ncong \oonestar, \istar$, so $F\cong \tonestar$, and $G$ is isomorphic to the element of $\mathbb{V}_{1}(n)$ given by \redef{v1v2}(\ref{it:mainI})(\ref{it:maing}), the action $\omega$ being that of \req{nontrivacttstar}. Since $n$ must be even for the existence of $\tonestar$, it remains to show that $n\equiv 0,2 \bmod{6}$. Suppose on the contrary that $n=6l+4$, where $l\in\N$, and suppose that $\tonestar\rtimes_{\omega} \Z$ is realised as a subgroup $L$ of $B_{n}(\St)$, with the $\tonestar$-factor (resp.\ the $\Z$-factor) realised as a subgroup $H$ (resp.\  $\ang{z}$) of $B_{n}(\St)$. Let~\reqref{preststar} denote a presentation of $H$. By the definition of $\omega $, we have that $\omega(1)(X)=X^{-1}$ by \req{nontrivacttstar}. On the other hand, $X$ is of order $3$, so up to conjugacy and inverses, it follows from \reth{murasugi} that $X=\alpha_{1}^{2(n-1)/3}=\alpha_{1}^{4l+2}$. Since the action $\omega $ of $\Z$ on $H$ is realised by conjugation by $z$, we have $\omega(1)(X)=zXz^{-1}$ in $L$, which implies that $zXz^{-1}=X^{-1}$. Abelianising this relation in $\Z_{2(n-1)}$ yields $\xi(X)=\xi(X^{-1})$. However,
\begin{equation*}
\xi(X)=\xi(\alpha_{1}^{4l+2})=\overline{n(4l+2)}=\overline{(6l+4)(4l+2)}=\overline{4l+2}
\end{equation*}
in $\Z_{12l+6}$, so $\xi(X)\neq \xi(X^{-1})$, and we obtain a contradiction.

%\comment{We should also rule out the non trivial action on $\tonestar$ for $n\equiv 4\bmod{6}$ -- see \repr{trtimesz}.}

\item\label{it:necV1notpoly} Suppose that $F$ is not isomorphic to any of the three binary polyhedral groups $\tonestar,\oonestar, \istar$. \repr{maxsubgp} implies that $F$ is cyclic or dicyclic. If $F$ is dicyclic, isomorphic to $\dic{4m}$ for some $m\geq 2$, then \relem{maxdicyclic2} implies that up to conjugation, $F$ is a subgroup of one of the standard dicyclic groups $\dic{4(n-i)}$, $i\in\brak{0,2}$, and $m\divides n-i$. If $m=2$ then $F\cong \quat$ and $n$ is even. Furthermore, by \relem{quatact}, up to an element of $\inn{F}$, $\theta(1)\in \brak{\id_{\quat},\alpha,\beta}$, so $G$ is isomorphic to an element of $\mathbb{V}_{1}(n)$ given by \redef{v1v2}(\ref{it:mainI})(\ref{it:mainq8}). If $m\geq 3$ then \reco{cycaction}(\ref{it:cycactionb}) applies, and up to an element of $\inn{F}$, there are two cases to consider:
\begin{enumerate}[(i)]
\item\label{it:necV1dictrivial} $\theta(1)=\id_{F}$, in which case $G\cong \dic{4m}\times \Z$. Since $F$ admits a cyclic subgroup of order $2m$, the realisation of $G$ implies that of $\Z_{2m}\times \Z$. If $m=n-i$ then up to conjugacy, we may suppose by \reth{murasugi} that the cyclic factor is generated by $\alpha_{i}$, but this contradicts \repr{luis}, and hence $m<n-i$. Thus $G$ is an element of $\mathbb{V}_{1}(n)$ given by \redef{v1v2}(\ref{it:mainI})(\ref{it:maindic}).

\item\label{it:necV1dicnontrivial} $G\cong \dic{4m}\rtimes_{\nu} \Z$, where $\nu(1)$ is given by \req{actdic4m}. Let $F$ have the presentation given by \req{presdic}. Abelianising the relation $\nu(1)(y)=xy$ in $B_{n}(\St)$ implies that the exponent sum of $x$ is congruent to zero modulo $2(n-1)$. On the other hand, $x$ is of order $2m$, so by \reth{murasugi} is conjugate to $\alpha_{i}^{l(n-i)/m}$, where $\gcd{(l,2m)}=1$. In particular, $l$ is odd. Now the exponent sum of $\alpha_{i}$ is congruent to $n-1$ modulo $2(n-1)$, and since that of $x$ is congruent to zero modulo $2(n-1)$, it follows that $l(n-i)/m$ is even, and consequently $(n-i)/m$ is even. Thus $G$ is an element of $\mathbb{V}_{1}(n)$ given by \redef{v1v2}(\ref{it:mainI})(\ref{it:maindict}).
\end{enumerate}

Finally, suppose that $F$ is cyclic of order $q$, say. By \reth{murasugi} there exists $i\in\brak{0,1,2}$ such that $q$ divides $2(n-i)$, and up to conjugacy, $F=\ang{\alpha_{i}^{2(n-i)/q}}$. Applying \reco{cycaction}(\ref{it:cycactiona}), we have that $\theta(1)\in \brak{\id_{F},-\id_{F}}$ up to an element of $\inn{F}$. If $\theta(1)=\id_{F}$ then $G\cong F \times \Z$. But $F$ cannot be maximal cyclic, for then its centraliser would contain an element of infinite order, which contradicts \repr{luis}, so $q\neq 2(n-i)$. Further, if $n-i$ is odd then $q\neq n-i$, for then $\ang{\alpha_{i}^2\ft}=\ang{\alpha_{i}}$ would be of order $2(n-i)$, and its centraliser would contain an element of infinite order, which contradicts \repr{luis} once more. Hence $G$ is isomorphic to an element of $\mathbb{V}_{1}(n)$ given by \redef{v1v2}(\ref{it:mainI})(\ref{it:mainzq}). So suppose that $\theta(1)=-\id_{F}$. Then $G\cong F \rtimes_{\rho} \Z$, where $\rho$ is the action by conjugation for which $\rho(1)$ is multiplication by $-1$. By \reco{cycaction}(\ref{it:cycactiona}), we have $i\in \brak{0,2}$. Further, the subgroup of $G$ isomorphic to $F \rtimes_{\rho} 2\Z$ is abstractly isomorphic to $F\times \Z$, and so we conclude from the previous case that $q\neq 2(n-i)$, and that $q\neq n-i$ if $n$ is odd. Hence $G$ is isomorphic to an element of $\mathbb{V}_{1}(n)$ given by \redef{v1v2}(\ref{it:mainI})(\ref{it:mainzqt}). This shows that any virtually cyclic subgroup of $B_{n}(\St)$ is isomorphic to an element of the family $\mathbb{V}_{1}(n)$ as required. \qedhere
\end{enumerate}
\end{proof}

\subsection{Necessity of the conditions on $\mathbb{V}_{2}(n)$}\label{sec:typeII}

We now prove \reth{main}(\ref{it:mainI}) for the Type~II subgroups of $B_{n}(\St)$. 

\begin{prop}\label{prop:necV2}
Let $n\geq 4$. Then every virtually cyclic subgroup of $B_{n}(\St)$ of Type~II is isomorphic to an element of $\mathbb{V}_{2}(n)$.
%
%Let $n\geq 4$, and let $\mathbb{V}_{2}(n)$ denote the union of the following families of infinite virtually cyclic groups of Type~II:
%\begin{enumerate}[(a)]
%\item\label{it:groupi} $\Z_{4q}\bigast_{\Z_{2q}} \Z_{4q}$, where $q$ divides $(n-i)/2$ for some $i\in\brak{0,1,2}$.
%\item\label{it:groupii} $\Z_{4q}\bigast_{\Z_{2q}} \dic{4q}$, where $q\geq 2$ divides $(n-i)/2$ for some $i\in\brak{0,2}$.
%\item\label{it:groupiii} $\dic{4q}\bigast_{\Z_{2q}} \dic{4q}$, where $q\geq 2$ divides $n-i$ strictly for some $i\in\brak{0,2}$. 
%\item\label{it:groupiiix} $\dic{4q}\bigast_{\dic{2q}} \dic{4q}$, where $q\geq 4$ is even, and $q$ divides $n-i$ for some $i\in\brak{0,2}$. 
%\item\label{it:groupiv} $\oonestar \bigast_{\tonestar} \oonestar$, and $n\equiv 0,2 \bmod{6}$.
%\end{enumerate}
%If $G$ is an infinite virtually cyclic subgroup of $B_{n}(\St)$ of Type~II then $G$ is isomorphic to an element of $\mathbb{V}_{2}(n)$.
\end{prop}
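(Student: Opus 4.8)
The plan is to read off the isomorphism type of such a group by analysing its finite factors directly, using the classification of the finite subgroups of $B_{n}(\St)$ together with the uniqueness of the element of order~$2$. So let $G$ be an infinite virtually cyclic subgroup of $B_{n}(\St)$ of Type~II. By \reth{wall} and \repr{possvcbnS2}(\ref{it:partb}), $G\cong G_{1}\bigast_{F} G_{2}$, where $G_{1},G_{2}$ and $F$ are finite subgroups of $B_{n}(\St)$, realised so that $F=G_{1}\cap G_{2}$ is of index~$2$ in each $G_{i}$; by \reth{finitebn} and \rerem{finitesub}(\ref{it:finitesuba}), each of $G_{1},G_{2},F$ is cyclic, dicyclic or binary polyhedral. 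Since $\ord{G_{i}}=2\ord{F}$ is even, each $G_{i}$ contains an element of order~$2$; as $\ft$ is the unique such element of $B_{n}(\St)$ and lies in its centre, I would deduce that $\ft\in G_{1}\cap G_{2}=F$, so that $\ord{F}$ is even and $\ft\in F$.

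Next I would pin down the factors. Each $G_{i}$ possesses a subgroup of index~$2$, so its Abelianisation has even order; among the binary polyhedral groups this excludes $\tonestar$ (whose Abelianisation is $\Z_{3}$) and $\istar$ (which is perfect), leaving only $\oonestar$, whose unique index~$2$ subgroup is $\tonestar$. Hence each $G_{i}$ is cyclic, dicyclic or isomorphic to $\oonestar$, and $\ord{G_{1}}=\ord{G_{2}}=2\ord{F}$. A short case analysis then yields the five families of \redef{v1v2}(\ref{it:mainIIdef}): if both factors are cyclic, then writing $\ord{F}=2q$ gives $G_{i}\cong \Z_{4q}$ and $F\cong \Z_{2q}$ (family~(a)); if exactly one factor is dicyclic, then $F$, being of index~$2$ in a cyclic group, is cyclic, whence the factors are $\Z_{4q}$ and $\dic{4q}$ amalgamated over $\Z_{2q}$ (family~(b)); if both factors are dicyclic, say $\dic{4q}$, then $F$ is an index~$2$ subgroup, hence either the cyclic $\Z_{2q}$ (family~(c)) or, when $q\geq 4$ is even, the dicyclic $\dic{2q}$ (family~(d)); and if some factor equals $\oonestar$, then $F\cong \tonestar$ and the other factor is an order~$48$ subgroup containing $\tonestar$ with index~$2$, which by \reth{finitebn} must again be $\oonestar$ (family~(e)).

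It then remains to establish the arithmetic conditions relating $q$, $n$ and $i$. Here I would use \reth{murasugi} and \relem{maxdicyclic2} to conjugate the cyclic (resp.\ dicyclic) factors into a standard maximal cyclic subgroup $\ang{\alpha_{i}}$ (resp.\ a standard $\dic{4(n-i)}$); this forces $i\in\brak{0,1,2}$ when only cyclic factors occur and $i\in\brak{0,2}$ as soon as a dicyclic factor is present, and $n\equiv 0,2\bmod 6$ in family~(e). The precise divisibility statements and parity constraints would then be extracted from the Abelianisation $\xi$ exactly as in the proof of \repr{necV1}, by comparing exponent sums modulo $2(n-1)$; the strictness of the divisibility in family~(c) follows from the fact that the standard maximal dicyclic subgroups are self-normalising (\repr{genhodgkin1}).

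I expect the main obstacle to be precisely this last step. The structural classification of the factors (the first two paragraphs) is elementary group theory, but the delicate part is the bookkeeping: distinguishing the condition $q\mid (n-i)/2$ from $q\mid (n-i)$ in families~(a) and~(b), establishing the evenness of $q$ in family~(d), and, above all, ruling out the a priori possibility that $G_{1}$ and $G_{2}$ are conjugate into standard maximal subgroups corresponding to different values of $i$ — which must be excluded using that the two factors share the amalgamating subgroup $F$.
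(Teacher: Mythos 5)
Your structural reduction (the first two paragraphs) is exactly the paper's: $\ft$ lies in $F$ since each $G_{i}$ has even order, the binary polyhedral case forces $G\cong \oonestar\bigast_{\tonestar}\oonestar$, and otherwise the factors are cyclic or dicyclic of order $4q$ with $F$ of order $2q$, giving the five families. The genuine gap is precisely the step you defer to the end and leave unproved: the compatibility of the parameters $i$ and $i'$ in the mixed family~(b). The paper's resolution is short but it is the real content there: the cyclic factor $\Z_{4q}$ gives $4q \divides 2(n-i)$ for some $i\in\brak{0,1,2}$ by \reth{murasugi}, i.e.\ $q\divides (n-i)/2$, and the dicyclic factor gives $q\divides n-i'$ for some $i'\in\brak{0,2}$ by \relem{maxdicyclic2}; writing $n-i'=2\left(\tfrac{n-i}{2}\right)+(i-i')$ yields $q\divides i-i'$, and since $q\geq 2$, $i\in\brak{0,1,2}$ and $i'\in\brak{0,2}$, this forces $i\in\brak{0,2}$, as required by \redef{v1v2}(\ref{it:mainIIdef})(\ref{it:mainIIb}). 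Note that this has nothing to do with the two factors sharing $F$ beyond the fact that they have equal orders; your proposed mechanism (extracting the conditions ``from the Abelianisation $\xi$ by comparing exponent sums'') is off-target, since all the divisibility conditions for Type~II come directly from these order considerations, not from exponent sums.

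Two smaller corrections to the last paragraph. For the strictness in family~(c), the paper argues via Type~I subgroups: if $F\cong\Z_{2q}$ with $q=n-i$, then $G$ contains an index-$2$ subgroup $F\rtimes_{\theta}\Z$ with $\theta(1)=\pm\id$ by \reco{cycaction}, so $F\rtimes_{\theta}2\Z\cong \Z_{2(n-i)}\times\Z$, contradicting \repr{luis}. Your route through \repr{genhodgkin1} can be made to work, but not via self-normalisation of the maximal dicyclic subgroups (part~(c) of that proposition), since neither factor need normalise the other; the statement you need is part~(\ref{it:normcyclic}): as $F$ is maximal cyclic and of index $2$ in both $G_{1}$ and $G_{2}$, both factors lie in $N_{B_{n}(\St)}(F)\cong\dic{4(n-i)}$, and equal orders then force $G_{1}=G_{2}$, making $G$ finite, a contradiction. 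Finally, the evenness of $q$ in family~(d) is not a delicate point at all: $\dic{2q}$ is a dicyclic group only when $2q\equiv 0 \bmod 4$ with $2q\geq 8$, so $q\geq 4$ even is immediate from the definition.
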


\begin{rem}
Combining Propositions~\ref{prop:necV1} and~\ref{prop:necV2} yields the proof of \reth{main}(\ref{it:mainI}).
\end{rem}

\begin{proof}[Proof of \repr{necV2}.]
Let $G$ be an infinite virtually cyclic subgroup of $B_{n}(\St)$ of Type~II. Then $G=G_{1} \bigast_{F} G_{2}$, where $F,G_{1}$ and $G_{2}$ are finite subgroups of $B_{n}(\St)$, and $F$ is of index $2$ in $G_j$, $j=1,2$. Suppose first that one of the $G_{j}$, $G_{1}$ say, is binary polyhedral. Then $G_{1}\cong \oonestar$ since $\tonestar,\istar$ have no index $2$ subgroup, $F\cong \tonestar$ since $\tonestar$ is the unique index $2$ subgroup of $\oonestar$, and $G_{2}\cong \oonestar$ since $\oonestar$ is the only finite subgroup of $B_{n}(\St)$ to have $\tonestar$ as an index $2$ subgroup. Thus $G\cong \oonestar \bigast_{\tonestar} \oonestar$, which is the element of $\mathbb{V}_{2}(n)$ given by \redef{v1v2}(\ref{it:mainIIdef})(\ref{it:mainIIe}).

Assume now that the $G_{j}$ are not binary polyhedral. By \rerem{finitesub}(\ref{it:finitesuba}),  the $G_{j}$ are cyclic or dicyclic, and since they possess an even index subgroup, they are of even order, so both contain the unique element $\ft$ of order $2$. This implies that $F=G_{1}\cap G_{2}$ is of even order, so the $G_{j}$ are in fact of order $4q$ for some $q\in \N$. 

Suppose that one of the $G_{j}$, $G_{1}$ say, is cyclic. Then $G_{1}\cong \Z_{4q}$ and $F\cong \Z_{2q}$. By \reth{murasugi}, there exists $i\in\brak{0,1,2}$ such that $4q \divides 2(n-i)$, so $q\divides (n-i)/2$. If $G_{2}\cong \Z_{4q}$, $G$ is isomorphic to the element of $\mathbb{V}_{2}(n)$ given by \redef{v1v2}(\ref{it:mainIIdef})(\ref{it:mainIIa}). If $G_{2}\cong \dic{4q}$ then $q\geq 2$, and there exists $i'\in \brak{0,2}$ such that $q \divides n-i'$ by \relem{maxdicyclic2}. But $n-i'=2\left(\frac{n-i}{2}\right)+ (i-i')$, so $q\divides i-i'$, and since $q\geq 2$, we must have $i\in \brak{0,2}$. In this case, $G$ is isomorphic to the element of $\mathbb{V}_{2}(n)$ given by \redef{v1v2}(\ref{it:mainIIdef})(\ref{it:mainIIb}).

Finally, suppose that $G_{1}\cong G_{2}\cong \dic{4q}$, where $q\geq 2$. Then $F\cong \Z_{2q}$ or $F\cong \dic{2q}$, and there exists $i\in\brak{0,2}$ such that $q\divides n-i$ by \relem{maxdicyclic2}. If $F\cong \Z_{2q}$ then by standard properties of the amalgamated product $G=G_{1} \bigast_{F} G_{2}$, $G$ has an index $2$ subgroup $G'$ isomorphic to $F\rtimes_{\theta} \Z$ for some $\theta\in \operatorname{Hom}(\Z,\aut{F})$. Since $F$ is cyclic, $\theta(1)\in \brak{\id_{F}, -\id_{F}}$ by \reco{cycaction}(\ref{it:cycactiona})(\ref{it:cycactionaii}), and hence the subgroup $F\rtimes_{\theta} 2\Z$ is abstractly isomorphic to $F\times \Z$. It follows from \reth{murasugi} and \repr{luis} that $q\neq n-i$, and so $G$ is isomorphic to the element of $\mathbb{V}_{2}(n)$ given by \redef{v1v2}(\ref{it:mainIIdef})(\ref{it:mainIIc}). Now suppose that $F\cong \dic{2q}$. Then $q\geq 4$ is even, and hence $G$ is isomorphic to the element of $\mathbb{V}_{2}(n)$ given by \redef{v1v2}(\ref{it:mainIIdef})(\ref{it:mainIId}).
\end{proof}

\enlargethispage{6mm}
\vspace*{-2mm}

\begin{rem}
The cohomological property of \resec{percohI} used to define the family $\mathbb{V}_{1}(n)$ appears to be important in this case. We do not know of an example of two finite periodic groups $G_{1}, G_{2}$ of the same period $d$ for which the amalgamated product $G_{1} \bigast_{F} G_{2}$ does not have period $d$. The Mayer-Vietoris sequence~\cite[Chapter~II, Section~7, Corollary~7.7]{Br} suggests that such an example may not even exist.
%
%
%This follows from the fact that if two finite groups $G_{1}, G_{2}$ are periodic of the same period $d$ (not necessarily the least period) 
%
%then the group $G=G_{1} \bigast_{F} G_{2}$ satisfies the cohomological property used in the case of Type~I groups \comment{perhaps make this more precise?}. This is obtained using the Mayer-Vietoris sequence~\cite[Chapter~II, Section~7, Corollary~7.7]{Br}.
\end{rem}

\chapter{Realisation of the elements of $\mathbb{V}_{1}(n)$ and $\mathbb{V}_{2}(n)$ in $B_{n}(\St)$}\label{part:realisation}
%\addcontentsline{toc}{part}{Realisation of the elements of $\mathbb{V}_{1}(n)$ and $\mathbb{V}_{2}(n)$ in $B_{n}(\St)$}

In this Part, we prove that with a small number of exceptions (those described in \rerem{exceptions}), the isomorphism classes of $\mathbb{V}(n)$ given in the statement of \reth{main}(\ref{it:mainII}) are indeed realised as subgroups of $B_{n}(\St)$. For the realisation of the Type~I groups, the cases $F=\Z_{q}$, $F=\dic{4m}$ ($m\geq 3$), $F=\quat$ and $F=\tonestar,\oonestar,\istar$ will be treated in Sections~\ref{sec:centcycdic}, \ref{sec:realdic}, \ref{sec:realquat} and~\ref{sec:typeIbinpoly} respectively, and the results will be brought together in \resec{realtypeI}. The realisation of the Type~II groups will be dealt with in \resec{realtypeII}, and this will enable us to prove \reth{main}(\ref{it:mainII}) in \resec{proofthm5}. In the first three cases, the constructions are algebraic, but are heavily inspired by geometric considerations, and it may be helpful for the reader to draw some pictures. If $F$ is binary polyhedral, the corresponding virtually cyclic groups will be obtained geometrically by considering certain multitwists in $\mcg$, and then lifting the corresponding mapping class to an element of $B_{n}(\St)$ via \req{mcg}. \reth{main}(\ref{it:mainIII}) will be proved in \resec{dirtoi}. In \resec{isoclasses}, we discuss the question of the number of isomorphism classes of the Type~II virtually cyclic subgroups of $B_{n}(\St)$, which will enable us to prove \repr{isoamalg}. Finally, in \resec{genmcg}, we apply \reth{main} and \repr{corrbnmcg} to the problem of the classification of the virtually cyclic subgroups of $\mcg$, from which we will obtain directly \reth{classvcmcg}.

\section{Type~I subgroups of $B_{\lowercase{n}}(\St)$ of the form $F\rtimes \Z$ with $F$ cyclic}\label{sec:centcycdic}

Let $n\geq 4$, and let $F$ be a finite cyclic subgroup of $B_{n}(\St)$. In order to construct elements of $\mathbb{V}_{1}(n)$ involving $F$, we require elements of $B_{n}(\St)$ of infinite order whose action on $F$ by conjugation is compatible with \repr{genhodgkin2}. Since these actions are given by multiplication by $\pm 1$, we will be interested in finding elements $z\in B_{n}(\St)$ of infinite order for which $zxz^{-1}=x^{\pm 1}$ for all $x\in F$. This comes down to studying the centraliser and normaliser of $F$ in $B_{n}(\St)$. Note that by \reth{murasugi}, there exist $i\in\brak{0,1,2}$ and $0\leq m< 2(n-i)$, $m\divides 2(n-i)$, such that $F$ is conjugate to $\ang{\alpha_{i}^m}$. Since conjugate subgroups have conjugate centralisers and normalisers, we may suppose for our purposes that $F=\ang{\alpha_{i}^m}$.

\subsection{Type~I subgroups of the form $\Z_{q}\times \Z$}\label{sec:constzqz}

We first study the centralisers of powers of the $\alpha_{i}$, $i\in \brak{0,1,2}$, which will give rise to Type~I subgroups of the form $\Z_{q}\times \Z$.

\begin{lem}\label{lem:commalphaigen}
Let $n\geq 4$, and let $i\in \brak{0,1,2}$. Suppose that $m\in \N$ divides $2(n-i)$, and let
\begin{equation*}
r=
\begin{cases}
m & \text{if $m \divides n-i$}\\
\frac{m}{2} & \text{if $m \ndivides n-i$.}
\end{cases}
\end{equation*}
Then:
\begin{enumerate}[(a)]
\item\label{it:gen012} $r \divides n-i$, and $Z_{B_{n}(\St)}(\ang{\alpha_{i}^r})=Z_{B_{n}(\St)}(\ang{\alpha_{i}^m})$.
\item If $r=1$ then $Z_{B_{n}(\St)}(\ang{\alpha_{i}^m})=\ang{\alpha_{i}}$.
\item\label{it:rgeq2} If $r\geq 2$ then $Z_{B_{n}(\St)}(\ang{\alpha_{i}^m}) \supset \ang{\delta_{r,i}}$, where the element
\begin{equation}\label{eq:delta1}
\delta_{r,i}=\sigma_{1}\sigma_{r+1} \cdots \sigma_{n-i-r+1}= \prod_{k=0}^{(n-i-r)/r} \sigma_{kr+1}
\end{equation}
is of infinite order.
\end{enumerate}
\end{lem}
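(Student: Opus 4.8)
The plan is to dispatch the three parts in order, the first two being number-theoretic once one exploits that $\ft=\alpha_{i}^{n-i}$ is central. For part~(a), to see $r\divides n-i$: if $m\divides n-i$ this is immediate, and otherwise $m$ must be even (an odd divisor of $2(n-i)$ would already divide $n-i$), so $r=m/2$ and the relation $m\divides 2(n-i)$ is exactly $r\divides n-i$. For the equality of centralisers I first recall that $Z_{B_{n}(\St)}(\ang{g})$ is just the centraliser of the single element $g$. If $m\divides n-i$ then $r=m$ and there is nothing to prove. If $m\ndivides n-i$, write $n-i=rt$; then $t$ is odd (as $m=2r\ndivides rt$), whence $\ft=\alpha_{i}^{n-i}=(\alpha_{i}^{r})^{t}$ and $\alpha_{i}^{m}=(\alpha_{i}^{r})^{2}$ together generate the cyclic group $\ang{\alpha_{i}^{r}}$ of order $2t$. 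Since $\ft$ is central, any element commuting with $\alpha_{i}^{m}$ commutes with $\ft$, hence with $\ang{\alpha_{i}^{m},\ft}=\ang{\alpha_{i}^{r}}$; the reverse inclusion is clear because $\alpha_{i}^{m}\in\ang{\alpha_{i}^{r}}$. This proves part~(a), and part~(b) follows at once: by part~(a), $Z_{B_{n}(\St)}(\ang{\alpha_{i}^{m}})=Z_{B_{n}(\St)}(\ang{\alpha_{i}})=\ang{\alpha_{i}}$ by \repr{luis}.

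For part~(c) with $r\geq 2$, by part~(a) it suffices to show that $\delta_{r,i}$ commutes with $\alpha_{i}^{r}$. Here I would use \rerem{funda}(a): setting $\tau_{j}=\alpha_{i}^{j-1}\sigma_{1}\alpha_{i}^{-(j-1)}$, conjugation by $\alpha_{i}$ permutes $\tau_{1},\ldots,\tau_{n-i}$ cyclically, and $\tau_{j}=\sigma_{j}$ for $1\leq j\leq n-i-1$ by \reqref{fundaa}. Since $r\geq 2$, the indices $1,r+1,\ldots,(t-1)r+1=n-i-r+1$ all lie in $[1,n-i-1]$, so $\delta_{r,i}=\tau_{1}\tau_{r+1}\cdots\tau_{(t-1)r+1}$, where $t=(n-i)/r$. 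Conjugation by $\alpha_{i}^{r}$ shifts every index by $r$ modulo $n-i$, so it permutes these $t$ factors amongst themselves (the top index $n-i-r+1$ being sent to $n-i+1\equiv 1$). As the factors pairwise commute by \reqref{Artin1} (their indices differ by multiples of $r\geq 2$), the product is left invariant, giving $\alpha_{i}^{r}\delta_{r,i}\alpha_{i}^{-r}=\delta_{r,i}$, so that $\delta_{r,i}\in Z_{B_{n}(\St)}(\ang{\alpha_{i}^{r}})=Z_{B_{n}(\St)}(\ang{\alpha_{i}^{m}})$.

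The main obstacle is the remaining claim that $\delta_{r,i}$ has infinite order. The permutation $\pi(\delta_{r,i})$ is a product of $t$ disjoint transpositions, hence of order $2$, so $\delta_{r,i}^{2}\in P_{n}(\St)$; were $\delta_{r,i}$ of finite order, $\delta_{r,i}^{2}$ would be a torsion element of $P_{n}(\St)$ and so lie in $\brak{e,\ft}$. It cannot be $e$, for then $\delta_{r,i}$ would be the unique involution $\ft$, contradicting $\pi(\delta_{r,i})\neq \id=\pi(\ft)$. To exclude $\delta_{r,i}^{2}=\ft$ I would compare exponent sums under $\xi$: one has $\xi(\delta_{r,i}^{2})=\overline{2t}$, whereas $\xi(\ft)=\overline{n(n-1)}$ equals $\overline{0}$ for $n$ even and $\overline{n-1}$ for $n$ odd. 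Since $1\leq t\leq (n-i)/2<n-1$, a direct check shows $\overline{2t}\neq\xi(\ft)$ in $\Z_{2(n-1)}$ in every case except $n$ odd, $i=1$, $r=2$ and $t=(n-1)/2$. In that single residual case $\delta_{2,1}=\sigma_{1}\sigma_{3}\cdots\sigma_{n-2}$, and forgetting all strings except $\brak{1,3,5}$ (allowed since $n\geq 5$ here) sends $\delta_{2,1}^{2}$ to the trivial element of $P_{3}(\St)\cong\Z_{2}$ while sending $\ft$ to the generator $\ft[3]$; hence $\delta_{2,1}^{2}\neq\ft$. This establishes that $\delta_{r,i}$ has infinite order and completes part~(c).

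I expect the separation of the finitely many borderline triples $(n,i,r)$ — where the abelianised invariant alone fails to distinguish $\delta_{r,i}^{2}$ from $\ft$ — to be the only genuinely delicate point; the commutation identity and the two number-theoretic parts are routine manipulations with the relations of \relem{funda} and the centrality of $\ft$.
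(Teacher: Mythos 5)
Your proof is correct. Parts~(a), (b) and the commutation half of~(c) are essentially the paper's own arguments: the paper also deduces~(a) from the centrality of $\ft$ (it phrases it as $\ang{\alpha_{i}^{m}\ft}=\ang{\alpha_{i}^{r}}$, which is the same observation as your $\ang{\alpha_{i}^{m},\ft}=\ang{\alpha_{i}^{r}}$), gets~(b) from \repr{luis}, and proves the commutation by showing that conjugation by $\alpha_{i}^{r}$ permutes the pairwise-commuting factors $\sigma_{kr+1}$ cyclically, the last one returning to $\sigma_{1}$ via \reqref{fundab} — exactly your index-shift-mod-$(n-i)$ argument. Where you genuinely diverge is the infinite-order claim. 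The paper works with $\delta_{r,i}$ itself: assuming finite order, \reth{murasugi} makes it conjugate to some $\alpha_{l}^{\mu}$, a fixed-point count on $\pi(\delta_{r,i})$ forces the number of fixed points to equal $l$, and the abelianisation $\xi$ eliminates $l\in\brak{0,2}$, leaving $l=1$, which forces $i=1$, $r=2$; that residual case is settled by forgetting the last string, sending $\delta_{2,1}\in B_{n}(\St)$ to $\delta_{2,0}\in B_{n-1}(\St)$, already known to be of infinite order from the $i=0$ part of the same analysis. You instead square into the pure braid group: $\delta_{r,i}^{2}\in P_{n}(\St)$, whose torsion is $\brak{e,\ft}$, so after using uniqueness of the order-$2$ element you need only exclude $\delta_{r,i}^{2}=\ft$; your $\xi$-computation isolates exactly the same residual case ($n$ odd, $i=1$, $r=2$), which you kill by projecting onto $P_{3}(\St)$. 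Your route is somewhat more economical, since it bypasses Murasugi's conjugacy classification and the cycle-type analysis of the powers $\alpha_{l}^{\mu}$, using only facts the paper records anyway (uniqueness of $\ft$ as the element of order $2$, and the torsion of $P_{n}(\St)$); the paper's route keeps everything inside one computation and resolves the exceptional case by an induction-style reduction to the $i=0$ case. The one step you should make explicit is why the forgetting homomorphism $\map{p}{P_{n}(\St)}[P_{3}(\St)]$ sends $\ft$ to $\ft[3]$: this holds because $\ker{p}\cong P_{n-3}(\St\setminus\brak{x_{1},x_{2},x_{3}})$ is torsion free (a fact stated in the Introduction of the paper), so the order-$2$ element $\ft$ cannot map to the identity, and $\ft[3]$ is the only non-trivial element of $P_{3}(\St)\cong\Z_{2}$; alternatively one checks it directly on the generators $A_{i,j}$.
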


\begin{proof}\mbox{}
\begin{enumerate}[(a)]
\item The statement clearly holds if $m \divides n-i$. So suppose that $m\ndivides n-i$. Since $qm=2(n-i)$ for some $q\in \N$, we have $q/2=(n-i)/m$. Thus $q$ is odd, $m$ is even, $r=m/2$ is an integer and $q=(n-i)/r$, which proves the first part of the statement. For the second part, note first that $Z_{B_{n}(\St)}(\ang{\alpha_{i}^r}) \subset Z_{B_{n}(\St)}(\ang{\alpha_{i}^m})$. Conversely, suppose that $z\in B_{n}(\St)$ commutes with $\alpha_{i}^m$. Then $z$ commutes with $\alpha_{i}^m \ft=\alpha_{i}^{n+m-i}$ by \req{uniqueorder2}. Further, $\alpha_{i}^m$ is of order $q$, which is odd. Hence $\alpha_{i}^m \ft$ is of order $2q$. Since $\alpha_{i}^m \ft\in \ang{\alpha_{i}}$ and $\ord{\ang{\alpha_{i}^r}}=2q$, we have $\ang{\alpha_{i}^m \ft}= \ang{\alpha_{i}^r}$, so $z$ commutes with $\alpha_{i}^r$, and this completes the proof of part~(\ref{it:gen012}).
\item If $r=1$ then $Z_{B_{n}(\St)}(\ang{\alpha_{i}^m}) =Z_{B_{n}(\St)}(\ang{\alpha_{i}}) = \ang{\alpha_{i}}$ by part~(\ref{it:gen012}) and \repr{genhodgkin1}.
\item Suppose that $r\geq 2$. We first show that $\delta_{r,i}$ is of infinite order. Assume on the contrary that $\delta_{r,i}$ is of finite order. By \reth{murasugi}, there exist $l\in \brak{0,1,2}$ and $0\leq \mu <2(n-i)$ such that $\delta_{r,i}$ is conjugate to $\alpha_{l}^{\mu}$. Since $r\geq 2$, the permutation $\pi(\delta_{r,i})$ consists of the product of $s$ disjoint transpositions, plus $n-2s$ fixed points, where $s=(n-i)/r$. In particular, $\delta_{r,i}\notin P_{n}(\St)$, so $\delta_{r,i}\neq \ft$, $\mu\neq n-l$ by \req{uniqueorder2}, and $\pi(\delta_{r,i})$ has exactly $l$ fixed points. Suppose first that $l\in \brak{0,2}$. Since $\xi(\alpha_{l})=\overline{n-1}$ in $\Z_{2(n-1)}$, $\xi(\delta_{r,i})=\xi(\alpha_{l}^{\mu})=\overline{s}$ belongs to the subgroup $\ang{\overline{n-1}}$, so there exists $\lambda\in \N$ such that $\lambda(n-1)=s$. But 
\begin{equation*}
n-1\leq\lambda(n-1)=s=(n-i)/r\leq n/2,
\end{equation*}
so $n\leq 2$, which yields a contradiction. Hence $l=1$, $\pi(\delta_{r,i})$ has a single fixed point, thus $1=n-2s=(rs+i)-2s=s(r-2)+i$, and $i\in \brak{0,1}$. If $i=0$ then $s=1$ and $r=n=3$, which gives rise to a contradiction. So $i=1$, $r=2$, $n=2s+1$ (which implies that $n\geq 5$) and $\delta_{r,i}=\delta_{2,1}=\sigma_{1}\sigma_{3}\cdots \sigma_{n-2}$. But $\delta_{2,1}$ belongs to the subgroup $B_{n-1,1}(\St)$ of $n$-string braids whose permutation fixes the element $n$. Under the projection $B_{n-1,1}(\St)\to B_{n-1}(\St)$ given geometrically by forgetting the last string, $\delta_{2,1}$ is sent to the element $\delta_{2,0}$ of $B_{n-1}(\St)$, which must then also be of finite order. However, using the fact that $n-1\geq 4$, the above discussion implies that the element $\delta_{2,0}$ of $B_{n-1}(\St)$ is of infinite order, hence the element $\delta_{2,1}$ of $B_{n}(\St)$ is also of infinite order.

It remains to prove that $\delta_{r,i}$ commutes with $\alpha_{i}^m$. By part~(\ref{it:gen012}), it suffices to show that it commutes with $\alpha_{i}^r$. First note that the product in \req{delta1} is taken over $k=0,1,\ldots,s-1$. If $0\leq k\leq s-2$, we have
\begin{equation*}
1\leq r+(kr+1)\leq r(s-1)+1=n-i-(r-1)\leq n-i-1 \quad\text{since $r\geq 2$,}
\end{equation*}
and hence $\alpha_{i}^r \sigma_{kr+1}\alpha_{i}^{-r}=\sigma_{(k+1)r+1}$ by \req{fundaa}. If $k=s-1$ then
\begin{align*}
\alpha_{i}^r \sigma_{(s-1)r+1}\alpha_{i}^{-r}&=\alpha_{i}^r \sigma_{n-i-(r-1)}\alpha_{i}^{-r}= \alpha_{i}^2 \sigma_{n-i-1}\alpha_{i}^{-2} \quad\text{by \req{fundaa}}\\
&= \sigma_{1} \quad\text{by \req{fundab}.}
\end{align*}
Since $r\geq 2$, the terms $\sigma_{kr+1}$, $0\leq k\leq s-1$, commute pairwise, and so
\begin{equation*}
\alpha_{i}^r \delta_{r,i}\alpha_{i}^{-r} = \alpha_{i}^r \left( \prod_{k=0}^{s-1} \sigma_{kr+1}\right) \alpha_{i}^{-r}=  \left( \prod_{k=1}^{s-2} \sigma_{kr+1}\right) \sigma_{1}=\delta_{r,i},
\end{equation*}
using the previous calculations. This completes the proof of the proposition.\qedhere
\end{enumerate}
\end{proof}

\begin{prop}\label{prop:necsuftimes}
Let $n\geq 4$, and let $q\in \N$. Then $B_{n}(\St)$ possesses a subgroup isomorphic to $\Z\times \Z_{q}$ if and only if there exists $i\in \brak{0,1,2}$ such that the following three conditions are satisfied:
\begin{enumerate}[(i)]
\item\label{it:ztimeszq1} $q$ divides $2(n-i)$.
\item\label{it:ztimeszq2} $1\leq q\leq n-i$.
\item\label{it:ztimeszq3} $q<n-i$ if $n-i$ is odd.
\end{enumerate}
\end{prop}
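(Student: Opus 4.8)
The plan is to reduce both implications to a single question about the centraliser $Z_{B_{n}(\St)}(\ang{\alpha_{i}^{m}})$ and then to invoke \relem{commalphaigen}, the bridge being a purely arithmetic translation. Concretely, given $i\in\brak{0,1,2}$ with $q\divides 2(n-i)$, set $m=2(n-i)/q$, so that $\ang{\alpha_{i}^{m}}$ is cyclic of order $q$, and let $r$ be the integer associated to $m$ in \relem{commalphaigen} (that is, $r=m$ if $m\divides n-i$ and $r=m/2$ otherwise). The key claim, which I would isolate as a short arithmetic lemma, is that for such an $i$ the three conditions~(\ref{it:ztimeszq1})--(\ref{it:ztimeszq3}) hold \emph{if and only if} $r\geq 2$. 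Granting this, the proposition follows by combining it with the two parts of \relem{commalphaigen} describing the centraliser according as $r=1$ or $r\geq 2$.

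For the sufficiency, suppose~(\ref{it:ztimeszq1})--(\ref{it:ztimeszq3}) hold for some $i$. By the arithmetic claim, $r\geq 2$, so by \relem{commalphaigen}(\ref{it:rgeq2}) the element $\delta_{r,i}$ of \req{delta1} has infinite order and lies in $Z_{B_{n}(\St)}(\ang{\alpha_{i}^{m}})$, hence commutes with $\alpha_{i}^{m}$. Since $\delta_{r,i}$ has infinite order and $\alpha_{i}^{m}$ has finite order $q$, the intersection $\ang{\delta_{r,i}}\cap\ang{\alpha_{i}^{m}}$ is trivial; as the two elements commute, $\ang{\alpha_{i}^{m},\delta_{r,i}}\cong \Z_{q}\times\Z$, giving the required subgroup.

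For the necessity, suppose $H\leq B_{n}(\St)$ with $H\cong \Z\times\Z_{q}$. Its torsion subgroup is the characteristic factor $\brak{e}\times\Z_{q}$, a cyclic subgroup of order $q$; by \reth{murasugi} it is conjugate to some $\ang{\alpha_{i}^{m}}$ with $q\divides 2(n-i)$, which is~(\ref{it:ztimeszq1}). Conjugating $H$, I may assume $\Z_{q}=\ang{\alpha_{i}^{m}}$ (centralisers of conjugate subgroups are conjugate, so nothing is lost). The $\Z$-factor is generated by an infinite-order element commuting with $\alpha_{i}^{m}$, so $Z_{B_{n}(\St)}(\ang{\alpha_{i}^{m}})$ contains an element of infinite order. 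If $r=1$ this centraliser equals $\ang{\alpha_{i}}$ by \relem{commalphaigen} (using \repr{genhodgkin1}), which is finite, a contradiction; hence $r\geq 2$, and the arithmetic claim yields~(\ref{it:ztimeszq2}) and~(\ref{it:ztimeszq3}) for this $i$.

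The main obstacle is the verification of the arithmetic equivalence, which requires a careful split on the parity of $q$. If $q$ is even then $m\divides n-i$ and $r=2(n-i)/q$, so $r\geq 2\iff q\leq n-i$; moreover $q$ even forces $q\neq n-i$ whenever $n-i$ is odd, so~(\ref{it:ztimeszq3}) is automatic once~(\ref{it:ztimeszq2}) holds, and the two conditions coincide with $r\geq 2$. If $q$ is odd then $q\divides n-i$, $m$ is even, $r=(n-i)/q$, and $r\geq 2\iff q\leq (n-i)/2$; here I would use the elementary fact that, for $q\divides n-i$, the inequality $q<n-i$ is equivalent to $n-i\geq 2q$ (the next multiple of $q$), so that~(\ref{it:ztimeszq2})--(\ref{it:ztimeszq3}) again match $r\geq 2$ (noting that $q$ odd and $q=n-i$ can only occur with $n-i$ odd, which~(\ref{it:ztimeszq3}) excludes). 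This bookkeeping, rather than any geometric input, is where the real care is needed; the group theory is entirely supplied by \relem{commalphaigen}, \reth{murasugi}, \repr{genhodgkin1} and \repr{finord}.
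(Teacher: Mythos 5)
Your proof is correct. The sufficiency half coincides with the paper's: both set $m=2(n-i)/q$, check through the same parity analysis that the integer $r$ of \relem{commalphaigen} satisfies $r\geq 2$, and then take $\ang{\alpha_{i}^{m},\delta_{r,i}}$; your explicit verification that this group is a direct product (commuting generators whose cyclic closures intersect trivially) is left implicit in the paper. Where you genuinely diverge is the necessity. The paper disposes of it in one line by citing \repr{necV1}, the global classification of Type~I subgroups, whose cyclic case runs through \reth{murasugi}, \reco{cycaction} and \repr{luis}, and needs a separate ad hoc computation (that $\ang{\alpha_{i}^{2}\ft}=\ang{\alpha_{i}}$ when $n-i$ is odd) to exclude $q=n-i$ in that case. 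You instead argue locally: the torsion subgroup of a copy of $\Z\times\Z_{q}$ is conjugate to $\ang{\alpha_{i}^{m}}$ by \reth{murasugi}, the $\Z$-factor supplies an infinite-order element of $Z_{B_{n}(\St)}(\ang{\alpha_{i}^{m}})$, and \relem{commalphaigen}(a)--(b) then forbids $r=1$; your arithmetic equivalence ``conditions~(ii) and~(iii) hold $\iff r\geq 2$'' (which is right in all four parity cases, including the observation that for $q\divides n-i$ the inequality $q<n-i$ is the same as $n-i\geq 2q$) then closes both directions symmetrically. The two arguments are at bottom the same mathematics --- \relem{commalphaigen}(a)--(b) rests on \repr{genhodgkin1}, which is exactly what \repr{necV1} invokes via \repr{luis}, and the paper's exclusion of $q=n-i$ for $n-i$ odd is your $r=1$ case in disguise --- but your version is self-contained and makes the hypotheses of the proposition appear for a transparent reason (they are precisely the condition $r\geq 2$), whereas the paper's citation avoids duplicating work that \repr{necV1} must do anyway for \reth{main}(\ref{it:mainI}).
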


\begin{proof}
The necessity of conditions~(\ref{it:ztimeszq1})--(\ref{it:ztimeszq3}) was proved in \repr{necV1}.
%Assume first that $B_{n}(\St)$ possesses a subgroup $G$ isomorphic to $\Z\times \Z_{q}$. \reth{murasugi} implies that there exists $i\in \brak{0,1,2}$ such that condition~(\ref{it:ztimeszq1}) is satisfied, and that up to conjugacy, the $\Z_{q}$-factor of $G$, which we denote by $F$, is equal to $\ang{\alpha_{i}^m}$, where $m=2(n-i)/q$. Suppose that one of conditions~(\ref{it:ztimeszq2}) and~(\ref{it:ztimeszq3}) is not satisfied. Then either $q=2(n-i)$, or else $q=n-i$ and $q$ is odd, in which case $m=2$ and $m\ndivides n-i$. By \relem{commalphaigen}, it follows that $Z_{B_{n}(\St)}(F)=\ang{F}$, which rules out the existence in $B_{n}(\St)$ of a subgroup of the form $F\times \Z$. This proves the necessity of the three conditions.
%
Conversely, suppose that there exists $i\in \brak{0,1,2}$ such that the conditions~(\ref{it:ztimeszq1})--(\ref{it:ztimeszq3}) are satisfied. Then $m=2(n-i)/q$ is an integer greater than or equal to two. Consider the subgroup $\ang{\alpha_{i}^m}$ of $B_{n}(\St)$, which is isomorphic to $\Z_{q}$. With the notation of \relem{commalphaigen}:
\begin{enumerate}[\textbullet]
\item if $m\divides n-i$ then $r=m\geq 2$.
\item if $m\ndivides n-i$ then $q$ is odd, $m$ is even and $r=m/2$. If $r=1$ then $m=2$ and so $q=n-i$, but this contradicts condition~(\ref{it:ztimeszq3}). Hence $r\geq 2$.
\end{enumerate}
So by \relem{commalphaigen}(\ref{it:rgeq2}), $\delta_{r,i}\in Z_{B_{n}(\St)}(\ang{\alpha_{i}^m})$, and thus the subgroup $\ang{\alpha_{i}^m,\delta_{r,i}}$ of $B_{n}(\St)$ is isomorphic to $\Z\times \Z_{q}$ as required.
\end{proof}

\subsection{Type~I subgroups of the form $\Z_{q}\rtimes_{\rho} \Z$}

In this section, we consider the realisation in $B_{n}(\St)$ of Type~I groups $\Z_{q}\rtimes_{\rho} \Z$, where $\rho\in \operatorname{Hom}(\Z,\aut{\Z_{q}})$, and $\rho(1)$ is multiplication by $-1$.

\pagebreak

\begin{prop}\label{prop:rtimes}
Let $n\geq 4$, and let $q\in \N$. Then $B_{n}(\St)$ possesses a subgroup isomorphic to $\Z_{q}\rtimes_{\theta} \Z$, for some action $\theta\in \operatorname{Hom}(\Z,\aut{\Z_{q}})$, $\theta(1)\neq \id_{\Z_{q}}$, if and only if the following conditions are satisfied:
\begin{enumerate}[(i)]
\item\label{it:rtimesi} $q$ divides $2(n-i)$, where $i\in \brak{0,2}$.
\item\label{it:rtimesii} $3\leq  q\leq n-i$, and $q<n-i$ if $n$ is odd.
\item\label{it:rtimesiii} $\theta(1)=\rho(1)$.
\end{enumerate}
\end{prop}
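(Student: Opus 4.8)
Necessity is already contained in \repr{necV1}: if $B_{n}(\St)$ contains a copy of $\Z_{q}\rtimes_{\theta}\Z$ with $\theta(1)\neq\id$, then by \reco{cycaction}(\ref{it:cycactiona}) the only nontrivial action on a cyclic factor is multiplication by $-1$, which forces $\theta(1)=\rho(1)$ and $i\in\brak{0,2}$; the divisibility and strictness conditions~(\ref{it:rtimesi}) and~(\ref{it:rtimesii}) then follow exactly as in the final case of the proof of \repr{necV1}, using that a proper divisor $q$ of $2(n-i)$ satisfies $q\leq n-i$. (Condition $q\geq 3$ is automatic, since $\aut{\Z_{q}}$ is trivial for $q\leq 2$ and so admits no nontrivial action.) Thus the real content is the converse.

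For the converse, assume~(\ref{it:rtimesi})--(\ref{it:rtimesiii}), and set $m=2(n-i)/q$, so that $F=\ang{\alpha_{i}^{m}}$ is cyclic of order $q$. By \repr{genhodgkin1}(\ref{it:normcyclic}) the normaliser of $\ang{\alpha_{i}}$ in $B_{n}(\St)$ is the standard copy of $\dic{4(n-i)}$, and from the dicyclic relation~\reqref{basicconj} it contains an element $\beta_{i}$ (namely $\garside$ if $i=0$ and $\alpha_{0}^{-1}\garside\alpha_{0}$ if $i=2$) satisfying $\beta_{i}\alpha_{i}\beta_{i}^{-1}=\alpha_{i}^{-1}$ and $\beta_{i}^{2}=\ft$; in particular $\beta_{i}\alpha_{i}^{m}\beta_{i}^{-1}=\alpha_{i}^{-m}$. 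Next I verify, from the hypotheses, that the integer $r$ of \relem{commalphaigen} satisfies $r\geq 2$: when $q$ is even one has $r=m=2(n-i)/q\geq 2$ by~(\ref{it:rtimesii}); when $q$ is odd it divides $n-i$, and the strictness in~(\ref{it:rtimesii}) gives $q<n-i$, whence $r=(n-i)/q\geq 2$. Hence by \relem{commalphaigen}(\ref{it:rgeq2}) the element $\delta_{r,i}$ has infinite order and commutes with $\alpha_{i}^{m}$.

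I then set $z=\beta_{i}\delta_{r,i}$. Since $\delta_{r,i}$ centralises $\alpha_{i}^{m}$ and $\beta_{i}$ inverts it, $z\alpha_{i}^{m}z^{-1}=\beta_{i}\alpha_{i}^{m}\beta_{i}^{-1}=\alpha_{i}^{-m}$, so $z$ normalises $F$ and acts on it by $\rho$. Granting that $z$ has infinite order (treated below), the subgroup $G=\ang{\alpha_{i}^{m},z}$ is the required copy of $\Z_{q}\rtimes_{\rho}\Z$: the factor $F$ is normal in $G$ (being normalised by $z$ and by itself), and $\ang{z}\cap F=\brak{e}$, because every nontrivial element of $F$ has finite order whereas no nonzero power of the infinite-order element $z$ does. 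Consequently $G=F\rtimes\ang{z}\cong\Z_{q}\rtimes_{\rho}\Z$, the generator $z$ acting by inversion.

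It remains to show that $z=\beta_{i}\delta_{r,i}$ has infinite order, and this is the main obstacle. The plan is to argue by contradiction: were $z$ of finite order, then by \reth{murasugi} it would be conjugate to a power of one of $\alpha_{0},\alpha_{1},\alpha_{2}$, so the cycle type of $\pi(z)=\pi(\beta_{i})\pi(\delta_{r,i})\in\sn$ and the exponent sum $\xi(z)\in\Z_{2(n-1)}$ would have to coincide with those of some $\alpha_{j}^{k}$. Since the permutations $\pi(\alpha_{0}^{k})$, $\pi(\alpha_{1}^{k})$ and $\pi(\alpha_{2}^{k})$ have very constrained cycle types (respectively $\gcd(n,k)$ cycles of equal length, an $(n-1)$-cycle with one fixed point, and an $(n-2)$-cycle with two fixed points), computing the cycle type of $\pi(\beta_{i})\pi(\delta_{r,i})$ and combining it with the exponent-sum constraint yields a contradiction; this is entirely in the spirit of the infinite-order argument already made for $\delta_{r,i}$ in the proof of \relem{commalphaigen}, and where convenient one may first forget the strings fixed by $\pi(\alpha_{i}^{m})$ to pass to a sphere braid group on fewer strings. (In the favourable subcase $r=2$ with $i=0$ one checks directly that $\beta_{i}$ commutes with $\delta_{r,i}$, so $z^{2}=\ft\,\delta_{r,i}^{2}$ and infinite order is immediate; for $r\geq 3$ no such identity holds, and the cycle-type bookkeeping seems unavoidable.) I expect this bookkeeping to be the only delicate point, the rest of the construction being formal.
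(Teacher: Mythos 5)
Your necessity argument, the element you construct, and the group-theoretic frame around it all coincide with the paper's proof: the paper likewise takes $z=\garside'\delta_{r,i}$ with $\garside'=\alpha_{0}^{-1}\garside\alpha_{0}$ (your $\beta_{i}$ is this element for $i=2$ and is $\garside$ for $i=0$; both invert $\alpha_{i}$ by \req{basicconj}), likewise extracts $r\geq 2$ from the hypotheses exactly as in the proof of \repr{necsuftimes}, and likewise concludes that $\ang{\alpha_{i}^{m},z}\cong\Z_{q}\rtimes_{\rho}\Z$ once $z$ is known to have infinite order. Your treatment of the subcase $i=0$, $r=2$ (where $\garside$ commutes with $\delta_{2,0}$, so $z^{2}=\ft\,\delta_{2,0}^{2}$) is also correct.

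The gap is the step you defer: the proof that $z$ has infinite order in all remaining cases, which is the actual content of the proposition and occupies most of the paper's proof. Your plan -- compare the cycle type of $\pi(z)$ and the exponent sum $\xi(z)$ with those of powers of the $\alpha_{j}$ -- is only a plan, and the cycle type alone demonstrably cannot carry it: for instance with $n=6$, $i=0$, $q=4$ (so $r=3$ and $\delta_{3,0}=\sigma_{1}\sigma_{4}$), the permutation $\pi(\garside\delta_{3,0})$ is a $6$-cycle, exactly the cycle type of $\alpha_{0}$, so any contradiction must come from exponent-sum congruences whose verification for general $n$, $i$, $r$ is precisely the bookkeeping you have not done. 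The paper makes this tractable by squaring first: it sets $\beta=z^{2}\fti=(\garside'\delta_{r,i}\garside'^{-1})\delta_{r,i}$, computes $\beta$ explicitly as a product of generators using \req{garsideconj} and \relem{funda}, and then argues by cases. If $i+r\geq 4$, every generator occurring in $\beta$ has index at most $n-3$, so the last two strings of $\beta$ are vertical; a finite-order $\beta$ would then be conjugate to a power of $\alpha_{2}$ (this covers the pure case too, since $\ft=\alpha_{2}^{n-2}$), forcing $\xi(\beta)\equiv 0\bmod{n-1}$, whereas $\xi(\beta)=\xi(\delta_{r,i}^{2})=2(n-i)/r$ and $2(n-i)=2(n-1)+2(1-i)$ with $1-i=\pm1$ would force $n-1\divides 2$, a contradiction. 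The leftover cases $i=0$, $r\in\brak{2,3}$ are settled by explicit computation followed by projection onto $B_{4}(\St)$ and $B_{6}(\St)$ respectively. Some such reduction (squaring, or an equivalent device giving an explicit normal form for the element) is needed before your sketch becomes a proof; as it stands, the crucial step is asserted, not established.
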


\begin{proof}
Suppose first that $B_{n}(\St)$ possesses a subgroup isomorphic to $\Z_{q}\rtimes_{\theta} \Z$, where $\theta(1)\neq \id_{\Z_{q}}$. \repr{necV1} implies that conditions~(\ref{it:rtimesi})--(\ref{it:rtimesiii}) are satisfied (note that if $q\in\brak{1,2}$ then $\theta(1)=\id_{\Z_{q}}$).

Conversely, suppose that conditions~(\ref{it:rtimesi})--(\ref{it:rtimesiii}) are satisfied, and let $m=2(n-i)/q$. From the proof of \repr{necsuftimes}, and making use of the notation of \relem{commalphaigen}, we know that $r\geq 2$ and that $\ang{\alpha_{i}^m,\delta_{r,i}}$ is isomorphic to $\Z_{q}\times \Z$. We will modify slightly the generator $\delta_{r,i}$ of the $\Z$-factor in order to obtain an action on $\alpha_{i}^m$ that is multiplication by $-1$. To achieve this, let $\garside'=\alpha_{0}^{-1}\garside \alpha_{0}$. Equation~\reqref{basicconj} implies that $\alpha_{i}^{-1}=\garside' \alpha_{i}\garside'^{-1}$. Now let $\delta_{r,i}'=\garside'\delta_{r,i}$. Since $\delta_{r,i}$ commutes with $\alpha_{i}^m$, we have that $\delta_{r,i}' \alpha_{i}^m \delta_{r,i}'^{-1}=\alpha_{i}^{-m}$, which will give rise to the required action on $\ang{\alpha_{i}^m}$. We claim that $\delta_{r,i}'$ is of infinite order. This being the case, the subgroup $\ang{\delta_{r,i}',\alpha_{i}^m}$ of $B_{n}(\St)$ is isomorphic to $\Z_{q}\rtimes_{\rho} \Z$, where $\rho(1)=-\id_{\Z_{q}}$, which will prove that the conditions~(\ref{it:rtimesi})--(\ref{it:rtimesiii}) are sufficient. To prove the claim, first note that since $\ft$ is central and of order $2$, it suffices to prove that $\beta=\delta_{r,i}'^2 \fti$ is of infinite order. Further:
\begin{align*}
\beta& \! =\! (\garside'\delta_{r,i})^2 \fti=\alpha_{0}^{-1}\garside \alpha_{0} \delta_{r,i} \alpha_{0}^{-1}\garside^{-1} \alpha_{0} \delta_{r,i}\\
& \! =\!\alpha_{0}^{-2}\garside \sigma_{1}\sigma_{r+1}\cdots \sigma_{n-i-2r+1} \sigma_{n-i-r+1} \garside^{-1} \alpha_{0}^2 \delta_{r,i} \;\text{by equations~\reqref{basicconj} ($i=0$), and~\reqref{delta1}}\\
& \! =\! \alpha_{0}^{-2} \sigma_{i+r-1} \sigma_{i+2r-1} \cdots \sigma_{n-r-1}\sigma_{n-1}  \alpha_{0}^2 \delta_{r,i} \; \text{using \req{garsideconj} and the fact that $r\geq 2$}\\
& \! =\!
\begin{cases}
\alpha_{0}^{-2}\sigma_{i+r-1} \alpha_{0}^2 \ldotp \sigma_{i+2r-3} \cdots \sigma_{n-r-3}\sigma_{n-3} \delta_{r,i} & \text{if $i+r\leq3$}\\
\sigma_{i+r-3} \sigma_{i+2r-3} \cdots \sigma_{n-r-3}\sigma_{n-3} \sigma_{1} \sigma_{r+1} \cdots \sigma_{n-2r-i+1}\sigma_{n-r-i+1} & \text{if $i+r\geq 4$,}
\end{cases}
\end{align*}
using \req{fundaa}. We distinguish these two cases:
\begin{enumerate}[(a)]
\item $i+r\geq 4$. Then $n-(i+r)+1\leq n-3$, and the last two strings of $\beta$ are vertical. 
%Thus $\beta$ may be identified with an element of $B_{n-2}(\St\setminus\brak{p_{n-1},p_{n}})$, the $(n-2)$th-string braid group of the annulus, which is well known to be torsion free. Hence $\beta$ is of infinite order.
If $\beta$ were of finite order, it would have to be conjugate to a power of $\alpha_{2}$ using \reth{murasugi} (observe that this is also the case if $\beta$ is pure, since the only nontrivial torsion element of $P_{n}(\St)$ is $\ft$, which is a power of $\alpha_{2}$ by \req{uniqueorder2}), and so its Abelianisation $\xi(\beta)$ would be congruent to $0$ modulo $n-1$. On the other hand, $\xi(\beta)=\xi(\delta_{r,i}^{2})$ is congruent to $2(n-i)/r \bmod{2(n-1)}$. So there exists $\lambda\in \N$ such that $2(n-i)/r=\lambda(n-1)$. Hence $\lambda r(n-1)=2(n-i)=2(n-1)+2(1-i)$, and since $1-i\in \brak{1,-1}$, this implies that $n-1\divides 2$, which is impossible. So $\beta$ is of infinite order.
%
%
% and under projection by the Abelianisation homomorphism $\xi$ onto $\Z_{2(n-1)}$, its exponent sum, which is congruent to $2(n-i)/r \bmod{2(n-1)}$, would be congruent to $0$ modulo $n-1$. We analyse the following two cases:
%\begin{enumerate}[(i)]
%\item if $m\divides n-i$ then $m=r$, and $2(n-i)/r=q$, so $q \equiv 0 \bmod{n-1}$, and thus $i=0$ and $q=n-1$ by condition~(\ref{it:rtimesii}), but this contradicts condition~(\ref{it:rtimesi}). 
%\item if $m\ndivides n-i$ then $m/2=r$, and $2(n-i)/r=2q$, so $2q \equiv 0 \bmod{n-1}$. Condition~(\ref{it:rtimesii}) implies that either $q=n-1$ or $2q=n-1$. Since the first possibility contradicts condition~(\ref{it:rtimesi}), we must have $2q=n-1=2(n-i)/r$. 
%%If $i=1$ then $r=2$, but this contradicts the assumption that $i+r\geq 4$, and so $i\in \brak{0,2}$. 
%By condition~(\ref{it:rtimesi}), there exists $k\in \N$ such that $k(n-1)/2=2(n-i)$, and hence $k(n-1)=4(n-i)=4(n-1)+4(1-i)$. 
%Now $1-i\in \brak{1,-1}$ and $n\geq 4$, thus $n=5$ and $q=2$, but then $m=2(n-i)/q=n-i$, which contradicts the fact that $m\ndivides n-i$.
%\end{enumerate}
%We conclude that $\beta$ is of infinite order if $i+r\geq 4$.

\item $i+r\leq 3$. Since $r\geq 2$ and $i\in\brak{0,2}$, we must have $i=0$ and $r\in \brak{2,3}$. Suppose first that $r=3$. Using \req{fundaa}, we obtain:
\begin{align*}
\beta &=\alpha_{0}^{-1} (\alpha_{0}^{-1} \sigma_{2} \sigma_{5} \cdots \sigma_{n-4}\sigma_{n-1}  \alpha_{0}) (\alpha_{0}
\sigma_{1} \sigma_{4} \cdots \sigma_{n-5}\sigma_{n-2} \alpha_{0}^{-1}) \alpha_{0}\\
&= \alpha_{0}^{-1} (\sigma_{1} \sigma_{2} \sigma_{4} \sigma_{5}\cdots \sigma_{n-5} \sigma_{n-4}  \sigma_{n-2} \sigma_{n-1}) \alpha_{0}.
\end{align*}
Now $3\divides n$ by \relem{commalphaigen}(\ref{it:gen012}) and $n\geq 4$, so $n\geq 6$. Thus the projection of $\alpha_{0}\beta \alpha_{0}^{-1}$ onto the first six strings is the braid $\beta'=\sigma_{1}\sigma_{2} \sigma_{4} \sigma_{5}\in B_{6}(\St)$. If $\beta'$ were of finite order, by \reth{murasugi}, it would be conjugate in $B_{6}(\St)$ to some power of $\alpha_{0}=\sigma_{1}\cdots\sigma_{5}$ (because its permutation has no fixed point), so its exponent sum would be congruent to $5$ modulo $10$. But this is clearly not the case, and so $\beta'$ and $\beta$ are of infinite order in their respective groups. 
%\item $i+r=2$: then $i=0$, $r=2$, and
Now assume that $r=2$. Then
\begin{equation*}
\beta =\alpha_{0}^{-2} \sigma_{1} \sigma_{3} \cdots \sigma_{n-3}\sigma_{n-1}  \alpha_{0}^2 
\sigma_{1} \sigma_{3} \cdots \sigma_{n-3}\sigma_{n-1}=  \sigma_{1}^2 \sigma_{3}^2 \cdots \sigma_{n-3}^2\sigma_{n-1}^2
\end{equation*}
by equations~\reqref{fundaa} and~\reqref{fundab}.
The projection of $\beta$ onto $B_{4}(\St)$ by forgetting all but the first four strings gives rise to the element $\sigma_{1}^2 \sigma_{3}^2$ of $P_{4}(\St)$, which is equal to $\delta_{2,0}^{2}$ by \req{delta1}, and so is of infinite order by \relem{commalphaigen}(\ref{it:rgeq2}). This implies that
%
%Indeed, under Abelianisation, $\beta'$ projects to $\overline{4}$ in $\Z_{6}$, while the torsion of $P_{4}(\St)$, which is $\ang{\ft[4]}$, projects to $\overline{0}$. So 
$\beta$ is also of infinite order.
\end{enumerate}
So in both cases, $\beta$ is of infinite order, and hence so is $\delta'_{r,i}$. This completes the proof of the claim, and thus that of the proposition.
\end{proof}

\section{Type~I subgroups of $B_{n}(\St)$ of the form $F\rtimes \Z$  with $F$ dicyclic, $F\neq \quat$}\label{sec:realdic}

%\subsection{Infinite virtually cyclic subgroups formed using subgroups of $\dic{4n}$}

Let $n\geq 4$ and $i\in\brak{0,2}$. In this section, we consider the realisation in $B_{n}(\St)$ of Type~I subgroups of the form $F\rtimes_{\theta} \Z$, where $F\cong \dic{4s}$, $s\geq 3$. By \repr{necV1}, there are two possible actions of $\Z$ on $\dic{4s}$ to be considered. The trivial action, given by Definition~\ref{def:v1v2}(\ref{it:maindic}) will be analysed in \repr{dic4sz}, while the nontrivial action, given by Definition~\ref{def:v1v2}(\ref{it:maindict}) will be studied in \repr{nonk}.

\begin{prop}\label{prop:dic4sz}
Let $n\geq 4$ and let $s\geq 3$. Then $\dic{4s}\times \Z$ is realised as a subgroup of $B_n(\St)$ if and only if there exists $i\in \brak{0,2}$ such that $s$ divides $n-i$ strictly.
% the following two conditions are satisfied:
%\begin{enumerate}[(i)]
%\item\label{it:dic4szi} there exists $i\in \brak{0,2}$ such that $s$ divides $n-i$.
%\item \label{it:dic4szii} $s\leq (n-i)/2$.
%\end{enumerate}
\end{prop}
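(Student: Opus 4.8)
The necessity direction I would dispatch immediately: if $\dic{4s}\times\Z$ with $s\geq 3$ is realised as a subgroup of $B_{n}(\St)$, then by \repr{necV1} it is isomorphic to an element of $\mathbb{V}_{1}(n)$, and the only groups of this shape in the list are those of \redef{v1v2}(\ref{it:maindic}). This forces $s$ to be a strict divisor of $n-i$ for some $i\in\brak{0,2}$, which is exactly the stated condition.

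For sufficiency I would argue by an explicit construction. Suppose $s\geq 3$ divides $n-i$ strictly for some $i\in\brak{0,2}$, and set $m=(n-i)/s$; strictness of the divisor gives $m\geq 2$. First I would realise a copy of $\dic{4s}$. By \req{uniqueorder2} we have $\alpha_{i}^{n-i}=\ft$, so $\alpha_{i}^{m}$ has order $2s$ and $(\alpha_{i}^{m})^{s}=\ft$. Writing $\widehat{\garside}=\garside$ if $i=0$ and $\widehat{\garside}=\alpha_{0}^{-1}\garside\alpha_{0}$ if $i=2$, equations~\reqref{basicconj} and \repr{genhodgkin1}(\ref{it:normcyclic}) give $\widehat{\garside}\alpha_{i}\widehat{\garside}^{-1}=\alpha_{i}^{-1}$ and $\widehat{\garside}^{2}=\ft$, so $\ang{\alpha_{i}^{m},\widehat{\garside}}$ is a copy of $\dic{4s}$ sitting inside the standard maximal dicyclic subgroup of \rerem{finitesub}(\ref{it:finitesubb}).

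The heart of the matter is to produce an infinite-order element centralising this $\dic{4s}$. Here I would use the block structure of \rerem{nt}(\ref{it:nta}): $\alpha_{i}^{m}$ cyclically permutes $s$ consecutive blocks of $m$ strings (among the first $n-i$ strings), the action on generators being the cyclic permutation of \rerem{funda}(\ref{it:fundaa}). Letting $P_{k}$ denote the full twist on the $m$ strings of the $k$-th block and putting $z=P_{0}P_{1}\cdots P_{s-1}$, the $P_{k}$ commute pairwise (disjoint supports), so $z$ is a well-defined pure braid. Conjugation by $\alpha_{i}^{m}$ sends block $k$ to block $k+1$ cyclically, so $\alpha_{i}^{m}P_{k}\alpha_{i}^{-m}=P_{k+1\bmod s}$ by equations~\reqref{fundaa} and~\reqref{fundab}, whence $\alpha_{i}^{m}z\alpha_{i}^{-m}=z$. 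For $\widehat{\garside}$, I would argue topologically rather than by a word computation: $\widehat{\garside}$ is a flip reversing the cyclic order of the blocks, hence permutes this family of blocks among themselves, and since $P_{k}$ is the full twist about the essential boundary curve of its block and $\widehat{\garside}$ is orientation-preserving, $\widehat{\garside}P_{k}\widehat{\garside}^{-1}$ is again one of the $P_{k'}$; therefore $\widehat{\garside}z\widehat{\garside}^{-1}=z$. Finally, to see that $z$ has infinite order I would pass to $\mcg$ via \req{mcg}: $\phi(z)$ is the multitwist along the $s$ disjoint curves bounding the blocks, each enclosing $m\geq 2$ punctures and leaving $n-m\geq 2$ outside, hence essential, so this positive multitwist has infinite order in $\mcg$ and $z$ has infinite order in $B_{n}(\St)$. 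Since $z$ is then central in $G=\ang{\alpha_{i}^{m},\widehat{\garside},z}$, $\dic{4s}$ is finite, and every nontrivial power of $z$ has infinite order, we get $\ang{z}\cap\dic{4s}=\brak{e}$ and $G\cong\dic{4s}\times\Z$.

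The step I expect to be the main obstacle is the commutation with the flip, and in particular the $i=2$ case: one must check that the chosen $\widehat{\garside}$ genuinely preserves the partition into blocks (the two polar strings being merely interchanged), rather than carrying the block boundaries to a different, rotated system of curves. I would handle this by choosing the geometric model underlying the standard dicyclic subgroup symmetrically, so that the flip axis is compatible with the block decomposition; if a direct word-level verification is preferred, one reduces $i=2$ to $i=0$ by forgetting the two polar strings, under which $\alpha_{2}^{m}$ and $z$ project to the corresponding data in $B_{n-2}(\St)$.
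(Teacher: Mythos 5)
Your construction coincides, up to conjugation by $\alpha_{0}^{i/2}$, with the paper's: the infinite-order element $\rho^{m}$ used there, where $\rho=\prod_{j=1}^{s}\left(\sigma_{(j-1)m+1}\cdots\sigma_{jm-1}\right)$, is exactly your $z$, since each factor $\left(\sigma_{(j-1)m+1}\cdots\sigma_{jm-1}\right)^{m}$ is the full twist on the $j$\up{th} block, and the paper's subgroup $\ang{\alpha_{i}'^{m},\garside,\rho'^{m}}$ is the $\alpha_{0}^{i/2}$-conjugate of your $\ang{\alpha_{i}^{m},\widehat{\garside},z}$. The differences lie in the two verifications, where the paper stays purely algebraic. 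For commutation with the half twist, it computes $\garside\rho^{m}\garside^{-1}$ word by word using \req{garsideconj} and the reversal identity \reqref{revft}, then conjugates by $\alpha_{0}^{i/2}$ and applies \req{basicconj}; this treats $i=0$ and $i=2$ uniformly and disposes of the difficulty you flag. Your geometric argument (the flip permutes the block discs, hence the $P_{k}$) is sound once a representative of the flip is chosen that carries round block discs to round block discs, but your word-level fallback for $i=2$ --- projecting to $B_{n-2}(\St)$ by forgetting the two polar strings --- is not sufficient as stated: that homomorphism has an infinite, torsion-free kernel, so commutation of the images does not yield commutation in $B_{n}(\St)$ unless you further show that the commutator, which lies in the kernel, is trivial. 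For the infinite order of $z$, the paper argues via exponent sums: $\rho^{m}$ is pure, so finite order would force $\rho^{m}\in\ang{\ft}$ and hence exponent sum congruent to $0$ modulo $n-1$, whereas the exponent sum is $sm(m-1)\equiv (1-i)(m-1)\bmod{n-1}$, and $n-1$ cannot divide $m-1$ since $2\leq m<n$. Your multitwist argument through \req{mcg} is a correct alternative (the block curves are essential, each side containing at least two punctures), and is in fact the method the paper itself uses for the binary polyhedral cases in \repr{ttimesz}; it is less self-contained but more conceptual, while the paper's exponent-sum argument needs nothing beyond the abelianisation homomorphism \reqref{abdefine} and the fact that the torsion of $P_{n}(\St)$ is $\ang{\ft}$.
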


\begin{rem}%\mbox{}
%\begin{enumerate}
%\item 
In other words, if $i\in \brak{0,2}$ and $s\geq 3$ divides $n-i$ then $\dic{4s}\times \Z$ is realised as a subgroup of $B_n(\St)$ if and only if $\dic{4s}$ is non maximal. Further, the value of $i\in \brak{0,2}$ is unique since $s\geq 3$.
%\item In condition~(\ref{it:dic4szi}), $i\in \brak{0,2}$ is unique since $s\geq 3$.
%\end{enumerate}
\end{rem}

\begin{proof}[Proof of \repr{dic4sz}.]
%\comment{This paragraph can be deleted as it appears in the proof of \repr{necV1}.}
%Let $s\geq 3$, and suppose that $\dic{4s}\times \Z$ is realised as a subgroup $H$ of $B_n(\St)$. Condition~(\ref{it:dic4szi}) follows from \relem{maxdicyclic2} and \rerem{maxdicyclic2}. Since the dicyclic factor of $H$ admits a cyclic subgroup of order $2s$, the realisation of $\dic{4s}\times \Z$ implies that of $\Z_{2s}\times \Z$. By \repr{necsuftimes}, we have that $1\leq 2s \leq n-i$ since $2s$ divides $2(n-i)$ by condition~(\ref{it:dic4szi}) (note that condition~(\ref{it:ztimeszq3}) of \repr{necsuftimes}, that $2s<n-i$ if $n$ is odd, holds automatically since $i$ is even). This implies the necessity of the conditions.
The necessity of the condition was shown in the proof of \repr{necV1}(\ref{it:necV1notpoly})(\ref{it:necV1dictrivial}). Conversely, suppose that $i\in \brak{0,2}$, let $s\geq 3$ be such that $s$ divides $n-i$ strictly, so $s\leq (n-i)/2$. Set $m=(n-i)/s\geq 2$. Then $2\leq m\leq (n-i)/3$. Consider the subgroup $\ang{\alpha_i^m, \rho^{m}}$, where
%We saw in the proof of \repr{alpha0infinite} that $\Z_{2s}\times \Z$ is realised as $\ang{\alpha_i^m,\delta_1}$. But it may also be realised by $\ang{\alpha_i^m, \rho}$, where \comment{we can perhaps combine the proofs of the existence of $\Z_{r}\times \Z$ and $\dic{4s}\times \Z$.}
\begin{align}
\rho &=(\sigma_1\cdots \sigma_{m-1})(\sigma_{m+1}\cdots \sigma_{2m-1})
\cdots (\sigma_{(s-1)m+1} \cdots \sigma_{sm-1})\label{eq:rhocomm1}\\
& = \prod_{j=1}^s
\left(\sigma_{(j-1)m+1} \cdots \sigma_{jm-1}\right).\notag
\end{align}
We claim that the bracketed terms of \req{rhocomm1} are permuted cyclically under conjugation by $\alpha_i^m$. To prove the claim, first suppose that $j\in \brak{1,\ldots,s-1}$. Since $jm-1+m=(j+1)m-1\leq sm-1=n-i-1$, it follows from \req{fundaa} that 
\begin{equation}\label{eq:rhocomm2}
\alpha_{i}^m \left(\sigma_{(j-1)m+1} \cdots \sigma_{jm-1}\right)  \alpha_{i}^{-m}= 
\sigma_{jm+1} \cdots \sigma_{(j+1)m-1}.
\end{equation}
Now suppose that $j=s$. Then
\begin{align}
\alpha_{i}^m \left(\sigma_{(s-1)m+1} \cdots \sigma_{sm-1}\right)  \alpha_{i}^{-m}
%&= \alpha_{i}^m 
%\left( \prod_{k=1}^{m-1} \sigma_{n-i-m+k} \right) \alpha_{i}^{-m} 
&= \left( \prod_{k=1}^{m-1} \alpha_{i}^m \sigma_{n-i-m+k} \alpha_{i}^{-m}\right)\notag\\
&=\left( \prod_{k=1}^{m-1} \alpha_{i}^{k+1} \sigma_{n-i-1} \alpha_{i}^{-(k+1)}\right) \notag\\
&= \left( \prod_{k=1}^{m-1} \alpha_{i}^{k-1} \sigma_{1} \alpha_{i}^{-(k-1)}\right)= \sigma_{1} \cdots \sigma_{m-1}, \label{eq:finalconj}
\end{align}
by equations~\reqref{fundaa} and~\req{fundab}.
%\begin{align}
%\alpha_{i}^m \left(\sigma_{(s-1)m+1} \cdots \sigma_{sm-1}\right)  \alpha_{i}^{-m}&
%%= \alpha_{i}^m 
%%\left( \prod_{k=1}^{m-1} \sigma_{n-i-m+k} \right) \alpha_{i}^{-m} 
%= \left( \prod_{k=1}^{m-1} \alpha_{i}^m \sigma_{n-i-m+k} \alpha_{i}^{-m}\right)\notag\\
%&=\left( \prod_{k=1}^{m-1} \alpha_{i}^{k+1} \sigma_{n-i-1} \alpha_{i}^{-(k+1)}\right) \quad \text{by \req{fundaa}} \notag\\
%&= \left( \prod_{k=1}^{m-1} \alpha_{i}^{k-1} \sigma_{1} \alpha_{i}^{-(k-1)}\right) \quad \text{by \req{fundab}} \notag\\ 
%&= \sigma_{1} \cdots \sigma_{m-1} \quad \text{by \req{fundaa}.}\label{eq:finalconj}
%\end{align}
The claim then follows from equations~\reqref{rhocomm2} and~\reqref{finalconj}. The fact that the bracketed terms of \req{rhocomm1} commute pairwise implies that $\alpha_i^m$ and $\rho$ commute, and that $\rho^m \in P_{n}(\St)$. 
%using 
%If $i=0$, this follows from \req{conjalpha0}, while if $i=2$, by \req{commalpha2} we have
%\begin{equation}\label{eq:rhocomm2}
%\alpha_{2}^m\left( \prod_{j=1}^{s-1}
%\left(\sigma_{(j-1)m+1} \cdots \sigma_{jm-1}\right)^m \right) \alpha_{2}^{-m}= \prod_{j=2}^{s}
%\left(\sigma_{(j-1)m+1} \cdots \sigma_{jm-1}\right)^m,
%\end{equation}
%since $(s-1)m-1= n-m-3$. Further, since $sm=n-3$,
%\begin{align*}
%\alpha_{2}^m (\sigma_{(s-1)m+1} \cdots \sigma_{sm-1})^m  \alpha_{2}^{-m}&= \left( \alpha_{2}^m 
%\left( \prod_{j=0}^{m-2} \sigma_{n-m-1+j} \right) \alpha_{2}^{-m} \right)^m= \left( \prod_{j=0}^{m-2} \alpha_{2}^m \sigma_{n-m-1+j} \alpha_{2}^{-m}\right)^m\\
%&=\left( \prod_{j=0}^{m-2} \alpha_{2}^{j+2} \sigma_{n-3} \alpha_{2}^{-(j+2)}\right)^m \quad \text{by \req{commalpha2}}\\
%&= \left( \prod_{j=0}^{m-2} \alpha_{2}^{j} \sigma_{1} \alpha_{2}^{-j}\right)^m \quad \text{by \req{commalpha2a}}\\ &= (\sigma_{1} \cdots \sigma_{m-1})^m \quad \text{by \req{commalpha2}.}
%\end{align*}
%From this and equations~\reqref{rhocomm1} and~\reqref{rhocomm2}, we deduce that $\alpha_i^m$ and $\rho$ themselves commute. 
If $\rho^m$ were of finite order then $\rho^m\in \ang{\ft}$, so $\xi(\rho^m)\equiv n-1$ modulo $2(n-1)$, and the exponent sum of $\rho^m$ would be congruent to $0$ modulo $n-1$. On the other hand, the exponent sum of $\rho^m$ modulo $n-1$ is equal to: 
\begin{align}
\overline{sm(m-1)}&=\overline{(n-i)(m-1)}\notag\\
&=\overline{(n-1)(m-1)}+\overline{(1-i)(m-1)} =\overline{(1-i)(m-1)}.\label{eq:xirhom}
\end{align}
Since $1-i\in\brak{1,-1}$, $n-1$ would thus divide $m-1$, which is not possible because $2\leq m\leq (n-i)/3<n$. Thus $\rho$ is of infinite order, and hence $\ang{\alpha_i^m, \rho^m}\cong \Z_{2s}\times \Z$. 
%\comment{maybe we could use this construction in \resecglobal{realisation}{constzqz}?}

Using the element $\garside$ and the subgroup $\ang{\alpha_i^m,\rho^m}$, we will now construct a subgroup isomorphic to $\dic{4s}\times \Z$, which will complete the proof of the proposition. First note that for all $1\leq j_{1}< j_{2}\leq n-1$, the relation
\begin{equation}\label{eq:revft}
(\sigma_{j_{1}}\cdots \sigma_{j_{2}-1})^{j_{2}-j_{1}+1}=(\sigma_{j_{2}-1}\sigma_{j_{2}-2} \cdots \sigma_{j_{1}})^{j_{2}-j_{1}+1}
\end{equation}
holds in $B_{n}$ (\emph{cf.}~\cite[Chapter~2, Exercise~4.1]{M2}, and using the fact that $B_{j_{2}-j_{1}}$ embeds in $B_{n}$),  and so holds in $B_{n}(\St)$. Now
\begin{align*}
\garside \rho^m \garside^{-1} &= \garside \left( \prod_{j=1}^s \left(\sigma_{(j-1)m+1} \cdots \sigma_{jm-1}\right)^m  \right) \garside^{-1}\\ 
&= \left( \prod_{j=1}^s \left(\sigma_{m(s-j+1)+i-1} \cdots \sigma_{m(s-j)+i+1}\right)^m  \right)\quad \text{by \req{garsideconj}}\\
&= \left( \prod_{j=1}^s \left(\sigma_{m(s-j)+1} \cdots \sigma_{m(s-j+1)-1}\right)^m \right) \quad \text{by \req{revft}}
\\
&= \alpha_{0}^{i}\left( \prod_{j=1}^s \left(\sigma_{m(s-j)+1} \cdots \sigma_{m(s-j+1)-1}\right)^m \right) \alpha_{0}^{-i} \quad\text{by \req{fundaa}}\\
&= \alpha_{0}^{i}\left( \prod_{j'=1}^s \left(\sigma_{m(j'-1)+1} \cdots \sigma_{mj'-1}\right)^m \right) \alpha_{0}^{-i} = \alpha_{0}^{i} \rho^m\alpha_{0}^{-i},
\end{align*}
taking $j'=s-j+1$, and using also the fact that the inner bracketed terms commute pairwise. It follows from \req{basicconj} that $\garside$ commutes with the element $\rho'^m=\alpha_{0}^{i/2} \rho^m\alpha_{0}^{-i/2}$. Since 
$\ang{\alpha_i'^m,\garside} \cong \dic{4s}$ where $\alpha_i'^m=\alpha_{0}^{i/2} \alpha_i^m\alpha_{0}^{-i/2}$, it follows that the group $\ang{\alpha_i'^m,\garside,\rho'^m}$ is isomorphic to $\dic{4s}\times \Z$ as required.
\end{proof}

We now turn our attention to the other possible action in $B_{n}(\St)$ of $\Z$ on the dicyclic subgroups.

\begin{prop}\label{prop:nonk}
Let $n\geq 4$ and $s\geq 3$, and consider the Type~I group $G=\dic{4s}\rtimes_{\nu} \Z$, where $\nu$ is defined by \req{actdic4m}.
%\begin{equation*}
%\dic{4s}=\setangr{x,y}{x^s=y^2,\; yxy^{-1}=x^{-1}},
%\end{equation*}
%and $\map{\theta}{\Z}[\aut{\dic{4s}}]$ is the homomorphism given by:
%\begin{equation*}
%\left\{
%\begin{aligned}
%& \theta(1)(x)=x\\ 
%& \theta(1)(y)=xy.
%\end{aligned}
%\right.
%\end{equation*}
Then $B_{n}(\St)$ possesses a subgroup isomorphic to $G$ if and only if the following two conditions are satisfied:
\begin{enumerate}[(i)]
\item\label{it:nis1} $s$ divides $n-i$ for some $i\in \brak{0,2}$, and 
\item\label{it:nis2} $(n-i)/s$ is even.
\end{enumerate}
\end{prop}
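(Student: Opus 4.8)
The necessity of conditions~(\ref{it:nis1}) and~(\ref{it:nis2}) has already been established in the proof of \repr{necV1}, case~(\ref{it:necV1notpoly})(\ref{it:necV1dicnontrivial}) (the parity of $(n-i)/s$ comes precisely from abelianising the relation $\nu(1)(y)=xy$), so the plan is to prove sufficiency by an explicit construction. Assume $i\in\brak{0,2}$, $s\geq 3$, $s\mid n-i$, and set $m=(n-i)/s$, which by~(\ref{it:nis2}) is even. I would reuse the ingredients of \repr{dic4sz}: put $X=\alpha_{i}'^{m}$ and $Y=\garside$, where $\alpha_{i}'=\alpha_{0}^{i/2}\alpha_{i}\alpha_{0}^{-i/2}$. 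Then $X$ has order $2s$, and from \req{uniqueorder2} and \req{basicconj} one gets $X^{s}=\alpha_{i}'^{n-i}=\ft=\garside^{2}=Y^{2}$ and $YXY^{-1}=X^{-1}$, so $N=\ang{X,Y}$ is the standard sub-dicyclic group $\dic{4s}$ of the standard copy of $\dic{4(n-i)}$. Moreover \repr{dic4sz} supplies an element $\rho'^{m}=\alpha_{0}^{i/2}\rho^{m}\alpha_{0}^{-i/2}$ of infinite order that commutes with both $X$ and $Y$, and with $\rho^{m}\in P_{n}(\St)$.

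The new idea is to twist the generator of the $\Z$-factor by a "half-power" of $\alpha_{i}'$, which is available precisely because $m$ is even. Set $w=\alpha_{i}'^{m/2}$, so that $w^{2}=X$, and note that \req{basicconj} gives $YwY^{-1}=\alpha_{i}'^{-m/2}=w^{-1}$, whence $Yw^{-1}=wY$ and therefore $wYw^{-1}=w\cdot wY=w^{2}Y=XY$. Now define $T=w\,\rho'^{m}$. Since $\rho'^{m}$ commutes with $X$ and with $Y$, I would compute directly $TXT^{-1}=wXw^{-1}=X$ and $TYT^{-1}=wYw^{-1}=XY$, i.e.\ conjugation by $T$ realises exactly the action $\nu$ of \req{actdic4m} on $N$. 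Consequently $T$ normalises $N$, and there is an epimorphism $\dic{4s}\rtimes_{\nu}\Z\to\ang{N,T}$; because its restriction to $\dic{4s}\cong N$ is injective and $N$ is finite and normal, this map is an isomorphism as soon as $T$ has infinite order (no nonzero power of $T$ can lie in the finite group $N$).

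The main obstacle is therefore showing that $T$ has infinite order. Conjugating by $\alpha_{0}^{i/2}$ replaces $T$ by $T_{0}=\alpha_{i}^{m/2}\rho^{m}$, so it suffices to treat $T_{0}$. Here I would combine two conjugacy invariants against \reth{murasugi}. First, since $\rho^{m}\in P_{n}(\St)$, the permutation $\pi(T_{0})=\pi(\alpha_{i}^{m/2})$ fixes $0$ points when $i=0$ and exactly $2$ points when $i=2$ (using $1\leq m/2\leq(n-i)/6<n-i$, so no extra cycles collapse). Second, as $\xi(\alpha_{i})\equiv n-1$ and $\xi(\rho^{m})\equiv(n-i)(m-1)\pmod{n-1}$ for $i\in\brak{0,2}$, one finds $\xi(T_{0})\equiv\pm(m-1)\not\equiv 0\pmod{n-1}$, the non‑vanishing holding because $1\leq m-1<n-1$. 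If $T_{0}$ were of finite order it would be conjugate to some $\alpha_{j}^{k}$; the cases $j\in\brak{0,2}$ are excluded by $\xi(T_{0})\not\equiv 0\pmod{n-1}$, while $j=1$ is excluded because $\pi(\alpha_{1}^{k})$ fixes either exactly one point (if $(n-1)\nmid k$) or all $n$ points (if $(n-1)\mid k$, forcing $\xi\equiv 0$ again), neither matching the fixed‑point count of $\pi(T_{0})$. Hence $T_{0}$, and so $T$, has infinite order, which completes the construction and the proof of sufficiency.
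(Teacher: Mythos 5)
Your construction is exactly the paper's: the paper also takes the sub-dicyclic group $\ang{\alpha_{i}'^{m},\garside}$ inside the standard copy of $\dic{4(n-i)}$, observes that conjugation by $\alpha_{i}'^{m/2}$ realises the action $\nu$ of \req{actdic4m}, and uses the same twisting element $\alpha_{i}'^{m/2}\rho'^{m}$ with $\rho'^{m}$ imported from the proof of \repr{dic4sz}; necessity is handled by the same reference to \repr{necV1}. The only genuine divergence is the argument that this element has infinite order. The paper assumes finite order, uses $\rho'^{m}\in P_{n}(\St)$ and the torsion of $P_{n}(\St)$ to pin the order down to $2s$ or $4s$, then invokes the conjugacy classification of finite cyclic subgroups of $B_{n}(\St)$ from~\cite[Proposition~1.5(2)]{GG7} to conjugate the element into $\ang{\alpha_{i}'}$, and finally gets a contradiction from $\xi(\rho^{m})\nequiv 0 \bmod{n-1}$ via \req{xirhom}. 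You instead apply \reth{murasugi} directly to $T_{0}=\alpha_{i}^{m/2}\rho^{m}$: powers of $\alpha_{0}$ and $\alpha_{2}$ are excluded because $\xi(T_{0})\equiv \pm(m-1)\nequiv 0\bmod{n-1}$ (using $1\leq m-1<n-1$), and powers of $\alpha_{1}$ are excluded because $\pi(T_{0})=\pi(\alpha_{i}^{m/2})$ has exactly $i\in\brak{0,2}$ fixed points whereas $\pi(\alpha_{1}^{k})$ has either $1$ or $n$. Both arguments rest on the same two invariants (exponent sum modulo $n-1$ and the cycle structure of the induced permutation); yours is slightly more self-contained, since it avoids citing the external conjugacy result from~\cite{GG7} and uses only the Murasugi classification already stated in the paper, at no extra cost. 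All the supporting computations you give (the identity $wYw^{-1}=XY$ from \req{basicconj}, the fixed-point count, the reduction of the isomorphism $\ang{N,T}\cong \dic{4s}\rtimes_{\nu}\Z$ to the infinitude of the order of $T$) are correct.
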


\begin{proof}
The necessity of the conditions was obtained in part~(\ref{it:necV1notpoly})(\ref{it:necV1dicnontrivial}) of the proof of \repr{necV1}.
%\comment{This paragraph can be deleted since it appears in the proof of \repr{}.}
%Suppose first that $B_{n}(\St)$ possesses a subgroup $H$ isomorphic to $G$. Condition~(\ref{it:dic4szi}) follows from \relem{maxdicyclic2} and \rerem{maxdicyclic2}.   Since $s\geq 3$, $i\in \brak{0,2}$ is unique. If $z$ is a generator of the cyclic subgroup $F$ of $H$ of order $2s$ then by \reth{murasugi}, $z$ is conjugate to $\alpha_{i}^{l(n-i)/s}$ for some $l\in \N$ coprime with $2s$, so $l$ is odd. Thus the exponent sum of $z$ is congruent to $0$ (resp.\ $n-1$) modulo $2(n-1)$ if $(n-i)/s$ is even (resp.\ odd). On the other hand, the equation $\theta(1)(y)=xy$ in $G$ implies an analogous relation in $H$ involving $z$ in place of $x$, from which it follows by Abelianising that the exponent sum of $z$ is congruent to zero modulo $2(n-1)$, so $(n-i)/s$ is even, and thus condition~(\ref{it:nis2}) holds.
Conversely, suppose that conditions~(\ref{it:nis1}) and~(\ref{it:nis2}) hold. Set $m=(n-i)/s$, and let $\alpha_{i}'=\alpha_{0}\alpha_{i}\alpha_{0}^{-1}= \alpha_{0}^{i/2}\alpha_{i}\alpha_{0}^{-i/2}$.
%\begin{equation*}
%\alpha=
%\begin{cases}
%\alpha_{0} & \text{if $i=0$}\\
%\alpha_{2}' & \text{if $i=2$.}
%\end{cases}
%\end{equation*}
Since $m/2\in \N$ by condition~(\ref{it:nis2}), we may consider the subgroup $\ang{\alpha_{i}'^{m/2},\garside}$ of $B_{n}(\St)$ which is a dicyclic subgroup (of order $8s$) of the standard copy of $\dic{4(n-i)}$, and which contains the dicyclic subgroup $\ang{\alpha_{i}'^{m},\garside}$ of order $4s$. Taking $x=\alpha_{i}'^{m}$ and $y=\garside$, the action by conjugation of $\alpha_{i}'^{m/2}$ on $\ang{x,y}$ coincides with that given by $\theta(1)$ in the statement of the proposition. From the proof of \repr{dic4sz}, the subgroup $\ang{\alpha_{i}'^{m},\garside,\rho'^m}$ is isomorphic to $\dic{4s}\times \Z$, $\rho'^{m}$ being as defined in that proof.
%, where
%\begin{equation*}
%z= 
%\begin{cases}
%\rho^{m} & \text{if $i=0$}\\
%\rho'^{m}=\alpha_{0}\rho^{m} \alpha_{0}^{-1} & \text{if $i=2$},\\
%\end{cases}
%\end{equation*}
%and $\rho$ is defined by \req{rhocomm1}. 
We claim that $\alpha_{i}'^{m/2}\rho'^m$ is of infinite order. This being the case, the subgroup $\ang{\alpha_{i}'^{m},\garside,\alpha_{i}'^{m/2}\rho'^m}$ is isomorphic to $\dic{4s}\rtimes_{\theta} \Z$, which will complete the proof of the proposition. To prove the claim, we suppose that $\alpha_{i}'^{m/2}\rho'^m$ is of finite order, and argue for a contradiction. Since $\rho'^m\in P_{n}(\St)$, $\pi(\alpha_{i}'^{m/2}\rho'^m)=\pi(\alpha_{i}'^{m/2})$. Now $\alpha_{i}'^{m/2}$ is of order $4s$, and the cycle decompositions of $\pi(\alpha_{i}'^{m/2})$ and $\pi(\alpha_{i}'^{m/2}\rho'^m)$ consist of $m/2$ $2s$-cycles (and $i$ fixed elements). The fact that the finite order elements of $P_{n}(\St)$ are the elements of $\ang{\ft}$ then implies that $\alpha_{i}'^{m/2}\rho'^m$ is of order $ks$, where $k\in \brak{2,4}$. Now $\alpha_{i}'^{2m/k}$ also generates a subgroup of order $ks$, and since $ks\geq 6$, by~\cite[Proposition~1.5(2)]{GG7}, there is a single conjugacy class of such subgroups in $B_{n}(\St)$. So there exist $\gamma\in B_{n}(\St)$ and $\lambda\in \N$, with $\gcd(\lambda,2s)=1$, such that $\alpha_{i}'^{2m\lambda/k}=\gamma \alpha_{i}'^{m/2}\rho'^m \gamma^{-1}$. But $\xi(\alpha_{i}')\equiv 0$ modulo $n-1$, and so it follows that $\xi(\rho'^m)=\xi(\rho^m)\equiv 0$ modulo $n-1$. But using \req{xirhom}, we saw in the proof of \repr{dic4sz} that this is not the case. This yields a contradiction, and proves the claim.
%
%
% of order $ks$, and since $\alpha_{i}'^{2m/k}$ is of order $ks$, $\pi(\alpha_{i}'^{2m/k})$ and $\pi(\alpha_{i}'^{m/2}\rho'^m)$ generate conjugate subgroups in $\sn$. \comment{continue here.}
%
%
%Suppose first that $\alpha_{i}'^{m/2}\rho'^m$ is of order $2s$. Now $2s\geq 6$, so by~\cite[Proposition~1.5(2)]{GG7}, there is a single conjugacy class of cyclic subgroups of $B_{n}(\St)$ of order $2s$, and since $\alpha_{i}'^{m}$ is of order $2s$, $\pi(\alpha_{i}'^{m})$ and $\pi(\alpha_{i}'^{m/2}\rho'^m)$ generate conjugate subgroups in $\sn$. But this is impossible because the cycle decomposition of $\pi(\alpha_{i}'^{m})$ consists of $m$ $s$-cycles (and $i$ fixed elements). Hence $\alpha_{i}'^{m/2}\rho'^m$ is of order $4s$, and using~\cite[Proposition~1.5(2)]{GG7} once more, $\alpha_{i}'^{m/2}\rho'^m$ and $\alpha_{i}'^{m/2}$ generate conjugate subgroups in $B_{n}(\St)$. This implies that $\xi(\alpha_{i}'^{m/2})$ and $\xi(\rho'^m)$ are congruent to $0$ modulo $n-1$. But $\xi(\rho'^m)=\xi(\rho^{m})$, and thus $n-1$ divides $m-1$ using \req{xirhom}. Now $s\geq 3$ and $m\geq 2$, so $n-i=sm\geq m+4$, thus $n-1\geq m+3+i > m-1 \geq 1$, and hence $n-1$ cannot divide $m-1$. This yields a contradiction, and proves the claim.
\end{proof}

\section{Type~I subgroups of $B_{\lowercase{n}}(\St)$ of the form $\quat\rtimes \Z$}\label{sec:realquat}

The aim of this section is to prove the existence of Type~I subgroups of $B_{n}(\St)$ of the form $\quat\rtimes \Z$. As we saw in \relem{quatact}, up to isomorphism it suffices to consider the two actions $\alpha$ and $\beta$ defined in \redef{v1v2}(\ref{it:mainIdef})(\ref{it:mainq8}), of order $3$ and $2$ respectively. We start by showing that the existence of the Type~I subgroup $\tonestar\times \Z$ (resp.\ of $\tonestar\rtimes_{\omega} \Z$, for the nontrivial action $\omega$ given by \req{nontrivacttstar}) implies that of $\quat\rtimes_{\alpha} \Z$ (resp.\ of $\quat\rtimes_{\beta} \Z$). Using the results of \resecglobal{realisation}{typeIbinpoly}, this will imply the existence of $\quat\rtimes_{\alpha} \Z$ and $\quat\rtimes_{\beta} \Z$ as subgroups of $B_{n}(\St)$ for most even values of $n$. In the second part of this section, we exhibit explicit algebraic constructions of $\quat\rtimes_{\alpha} \Z$ (resp.\ $\quat\rtimes_{\beta} \Z$) for all $n\equiv 0 \bmod 4$, $n\geq 8$ (resp.\ all $n\geq 4$ even).

\begin{prop}\label{prop:tstarxz}\mbox{}
\begin{enumerate}[(a)]
\item\label{it:tstarxza} The group $\tonestar\times \Z$ possesses a subgroup isomorphic to $\quat\rtimes_{\alpha} \Z$.
\item The group $\tonestar\rtimes_{\omega} \Z$ for the action defined by \req{nontrivacttstar} possesses a subgroup isomorphic to $\quat\rtimes_{\beta} \Z$.
\end{enumerate}
\end{prop}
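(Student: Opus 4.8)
The plan for both parts is to realise the quaternion group as the canonical copy $\ang{P,Q}\cong \quat$ sitting inside the $\tonestar$-factor, and then to produce an element of infinite order whose conjugation action restricts on $\ang{P,Q}$ to an automorphism representing the required element of $\out{\quat}\cong \sn[3]$. The key leverage is \relem{quatact}: the isomorphism type of $\quat\rtimes_{\theta} \Z$ depends only on the order of the class of $\theta(1)$ in $\out{\quat}$, so to obtain $\quat\rtimes_{\alpha} \Z$ (resp.\ $\quat\rtimes_{\beta} \Z$) it is enough to build an internal semi-direct product $\ang{P,Q}\rtimes \Z$ whose action has order $3$ (resp.\ order $2$) modulo inner automorphisms. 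In each case I would verify the standard internal semi-direct-product criterion: $\ang{P,Q}$ is normalised by the chosen infinite-order element, and meets the infinite cyclic group it generates trivially, so that the group they generate is $\ang{P,Q}\rtimes \Z$ with the action given by conjugation.

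For part~(a), writing $t$ for a generator of the central $\Z$-factor of $\tonestar\times \Z$, I would take $w=Xt$. Since $t$ is central of infinite order and $X$ has order $3$, the element $w$ has infinite order and $\ang{w}\cap \ang{P,Q}=\{1\}$, while $\ang{P,Q}$ is normalised by $w$ because it is normalised by $X$ (by~\reqref{preststar}, $XPX^{-1}=Q$ and $XQX^{-1}=PQ$). Conjugation by $w$ restricts on $\ang{P,Q}$ to conjugation by $X$, an automorphism of order $3$; as $\inn{\quat}\cong \Z_{2}\oplus \Z_{2}$ has no element of order $3$, this automorphism is necessarily outer of order $3$. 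Hence $\ang{P,Q,w}\cong \quat\rtimes \Z$ with an order-$3$ action, and \relem{quatact} identifies it with $\quat\rtimes_{\alpha} \Z$.

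For part~(b), letting $t$ generate the $\Z$-factor of $\tonestar\rtimes_{\omega} \Z$, conjugation by $t$ already induces the nontrivial automorphism $\omega(1)$ of~\reqref{nontrivacttstar}, and restricts on $\ang{P,Q}$ to $P\mapsto QP$, $Q\mapsto Q^{-1}$. A short computation shows this has order $2$, since $P\mapsto QP\mapsto Q^{-1}QP=P$ and $Q\mapsto Q^{-1}\mapsto Q$. To see it is outer I would identify $P=i$ and $Q=j$, so that $PQ=k$ and $\omega(1)(P)=QP=ji=-k$; thus the cyclic subgroup $\ang{P}=\ang{i}$ is sent to $\ang{k}=\ang{PQ}\neq \ang{P}$, which is impossible for an inner automorphism, because every subgroup of the Hamiltonian group $\quat$ is preserved setwise under conjugation. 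With $\ang{t}\cap \ang{P,Q}=\{1\}$ in the semi-direct product, $\ang{P,Q,t}\cong \quat\rtimes \Z$ with an order-$2$ action, which by \relem{quatact} is $\quat\rtimes_{\beta} \Z$.

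The computations are all elementary; the only delicate point, and the one I would write out in full, is the $\out{\quat}$ bookkeeping, namely checking that each restricted automorphism has exactly the claimed order ($3$ in part~(a), $2$ in part~(b)) in $\out{\quat}\cong \sn[3]$, and, for part~(b), that it is genuinely outer rather than inner. Once these orders are pinned down, \relem{quatact} does the rest: it frees me from having to match the induced automorphisms with $\alpha(1)$ and $\beta(1)$ on the nose, since all order-$3$ (resp.\ all order-$2$) actions of $\Z$ on $\quat$ yield a single isomorphism class.
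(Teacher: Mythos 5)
Your proposal is correct and takes essentially the same route as the paper: both parts use the canonical copy $\ang{P,Q}$ of $\quat$ inside the $\tonestar$-factor together with the same infinite-order elements (your $Xt$ is the paper's $XZ$ in part~(a), and your $t$ is the paper's $Z$ in part~(b)). The only difference is in the final bookkeeping — you pin down the isomorphism type via the order of the induced class in $\out{\quat}$ and \relem{quatact}, while the paper identifies the restricted action with $\alpha$ and $\beta$ directly — which is a cosmetic, not structural, distinction.
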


\begin{proof} 
Consider $\tonestar=\quat \rtimes \Z_{3}$ given by the presentation~\reqref{preststar}.
\begin{enumerate}[(a)]
\item Let $G=\tonestar\times \Z$, and let $Z$ be the generator of the $\Z$-factor. Since $X$ and $Z$ commute, the group $\ang{XZ}$ is of infinite order and its action on $\quat$ by conjugation permutes cyclically the elements $P$, $Q$ and $PQ$ of $\ang{P,Q}$. Hence $\ang{P,Q, XZ}\cong \quat\rtimes_{\alpha} \Z$, where $\alpha$ is as defined in \redef{v1v2}(\ref{it:mainIdef})(\ref{it:mainq8}).
\item Let $G=\tonestar\rtimes_{\omega} \Z$, let $Z$ be the generator of the $\Z$-factor. The action of $Z$ on $\tonestar$ by conjugation coincides with that of \req{nontrivacttstar}. The restriction of this action to $\ang{P,Q}$ exchanges $P$ and $QP$, and sends $Q$ to $Q^{-1}$.
%
%is given by:
%\begin{equation*}
%\left\{
%\begin{aligned}
%P &\mapsto QP\\
%QP &\mapsto P\\
%Q &\mapsto Q^{-1}.
%\end{aligned}
%\right.
%\end{equation*}
Thus $\ang{P,Q, Z}\cong \quat\rtimes_{\beta} \Z$, where $\beta$ is as defined in \redef{v1v2}(\ref{it:mainIdef})(\ref{it:mainq8}).\qedhere
\end{enumerate}
\end{proof}

\begin{rem}
The realisation of $\tonestar\times \Z$ (resp.\ $\tonestar\rtimes_{\omega} \Z$) as a subgroup of $B_{n}(\St)$ for $n$ even and satisfying $n=12$ or $n\geq 16$ (resp.\ $n\equiv 0,2 \bmod{6}$ and satisfying $n=24$ or $n\geq 30$) will follow from Propositions~\ref{prop:ttimesz} and~\ref{prop:trtimesz}. \repr{tstarxz} then implies the existence of $\quat\rtimes_{\alpha} \Z$ (resp.\ $\quat\rtimes_{\beta} \Z$) as a subgroup of $B_{n}(\St)$ for these values of $n$.
\end{rem}

We now turn our attention to the problem of the algebraic realisation of Type~I subgroups of the form $\quat\rtimes \Z$. In most cases, the existence of these subgroups follows by combining \repr{tstarxz} with Propositions~\ref{prop:ttimesz} and~\ref{prop:trtimesz}. As we shall see later, we will prove these two propositions using geometric constructions in $\mcg$. Before doing so, we exhibit explicit algebraic representations in terms of the standard generators of $B_{n}(\St)$, and in some cases, we obtain their existence for values of $n$ that are not covered by these propositions. We start by defining certain elements that shall be used in the constructions, and in \relem{propsomega}, we give some of their properties.
Let $n\geq 4$ be even, and let
\begin{equation}\label{eq:omegadef}
\Omega_{1}= \prod_{i=1}^{n/2-1} \; \sigma_{1}\cdots \sigma_{n/2-i} \quad \text{and} \quad \Omega_{2}= \prod_{i=1}^{n/2-1} \; \sigma_{n/2+1}\cdots \sigma_{n-i}.
\end{equation}
Clearly $\Omega_{1}$ and $\Omega_{2}$ commute, and using equations~\reqref{uniqueorder2} and~\reqref{fundaa}, we see that
\begin{equation}\label{eq:omegaconj}
\alpha_{0}^{n/2} \Omega_{1}\alpha_{0}^{-n/2}= \Omega_{2} \quad \text{and} \quad \alpha_{0}^{n/2} \Omega_{2}\alpha_{0}^{-n/2}= \Omega_{1}.
\end{equation}
For $i=1,\ldots,n/2$, set 
\begin{equation}\label{eq:rhoi}
\rho_{i}= \sigma_{i} \cdots \sigma_{i+n/2-1},
\end{equation}
and 
\begin{equation}\label{eq:rhodef}
\rho=\rho_{n/2} \cdots \rho_{1}.
\end{equation}
Geometrically, $\Omega_{1}$ (resp.\ $\Omega_{2}$) is the half twist on the first (resp.\ second) $n/2$ strings, and $\rho$ is the braid that passes the first $n/2$ strings over the second $n/2$ strings (see Figures~\ref{fig:omegai},~\ref{fig:omegai1} and~\ref{fig:omegai2}). 
\begin{figure}[h]%[!h]
\hfill
\begin{tikzpicture}[scale=0.5]
\foreach \k in {1,2,3,4}
{\draw[thick] (\k+4,8) .. controls (\k+4,4.5) and (\k,4.5) .. (\k,1);};
\foreach \k in {1,2,3,4}
{\draw[draw=white,line width=5pt] (\k,8) .. controls (\k,4.5) and (\k+4,4.5) .. (\k+4,1);
\draw[thick] (\k,8) .. controls (\k,4.5) and (\k+4,4.5) .. (\k+4,1);};
\end{tikzpicture}\hspace*{\fill}
\caption{The braid $\rho$ in $B_8(\St)$.}\label{fig:omegai}
\end{figure}
\begin{figure}[h]\hfill
\begin{tikzpicture}[scale=0.5]
%\draw[help lines] (1,1) grid (8,8);
%\foreach \k in {2,3,4}
\foreach \j in {2,3,4}
{\draw[thick] (\j,8) .. controls (\j,6) and (\j-1,7) .. (\j-1,5);}
{\draw[white,line width=7pt] (1,8) .. controls (1,5.5) and (4,7.5) .. (4,5);}
{\draw[thick] (1,8) .. controls (1,5.5) and (4,7.5) .. (4,5);};
\foreach \j in {2,3}
{\draw[thick] (\j,5) .. controls (\j,3.5) and (\j-1,4.5) .. (\j-1,3);}
{\draw[white,line width=7pt] (1,5) .. controls (1,3.5) and (3,4.5) .. (3,3);}
{\draw[thick] (1,5) .. controls (1,3.5) and (3,4.5) .. (3,3);};
\draw[thick] (2,3) .. controls (2,1.25) and (1,2.75) .. (1,1);
\draw[white,line width=7pt] (1,3) .. controls (1,1.25) and (2,2.75) .. (2,1);
\draw[thick] (1,3) .. controls (1,1.25) and (2,2.75) .. (2,1);
\draw[thick] (3,1) -- (3,3);
\draw[thick] (4,1) -- (4,5);
\foreach \k in {5,6,7,8}
{\draw[thick] (\k,1)--(\k,8);};
\end{tikzpicture}
\hspace*{\fill}
\caption{The braid $\Omega_{1}$ in $B_8(\St)$.}\label{fig:omegai1}
\end{figure}
\begin{figure}[h]\hfill
\begin{tikzpicture}[scale=0.5]
\foreach \j in {6,7,8}
{\draw[thick] (\j,8) .. controls (\j,6) and (\j-1,7) .. (\j-1,5);}
{\draw[white,line width=7pt] (5,8) .. controls (5,5.5) and (8,7.5) .. (8,5);}
{\draw[thick] (5,8) .. controls (5,5.5) and (8,7.5) .. (8,5);};
\foreach \j in {6,7}
{\draw[thick] (\j,5) .. controls (\j,3.5) and (\j-1,4.5) .. (\j-1,3);}
{\draw[white,line width=7pt] (5,5) .. controls (5,3.5) and (7,4.5) .. (7,3);}
{\draw[thick] (5,5) .. controls (5,3.5) and (7,4.5) .. (7,3);};
\draw[thick] (6,3) .. controls (6,1.25) and (5,2.75) .. (5,1);
\draw[white,line width=7pt] (5,3) .. controls (5,1.25) and (6,2.75) .. (6,1);
\draw[thick] (5,3) .. controls (5,1.25) and (6,2.75) .. (6,1);
\draw[thick] (7,1) -- (7,3);
\draw[thick] (8,1) -- (8,5);
\foreach \k in {1,2,3,4}
{\draw[thick] (\k,1)--(\k,8);};
\end{tikzpicture}
\hspace*{\fill}
\caption{The braid $\Omega_{2}$ in $B_8(\St)$.}\label{fig:omegai2}
\end{figure}

\begin{lem}\label{lem:propsomega}
With the above notation, the following relations hold:
\begin{enumerate}[(a)]
\item\label{it:omegai} $\rho\Omega_{1}=\Omega_{2}\rho$.
\item\label{it:omegaii} $\garside=\Omega_{1}\Omega_{2}\rho$.
\item\label{it:omegaiii} $\rho\Omega_{2}=\Omega_{1}\rho$.
\item\label{it:omegaiiibis} $\Omega_{2}=\sigma_{n-1}(\sigma_{n-2}\sigma_{n-1})\cdots (\sigma_{n/2+2} \cdots \sigma_{n-1})(\sigma_{n/2+1} \cdots \sigma_{n-1})$.
\item\label{it:omegaiv} $\alpha_{0}^{n/2}=\Omega_{1}^2 \rho$.
\item\label{it:omegav} $\garside=\Omega_{2} \alpha_{0}^{n/2}\Omega_{2}^{-1}$ and $\alpha_{0}^{n/2}=\Omega_{1} \garside\Omega_{1}^{-1}$.
\item\label{it:omegavi} $\ft=\Omega_{1}^2 \Omega_{2}^{-2}$.
\end{enumerate}
\end{lem}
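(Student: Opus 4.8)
The plan is to prove the seven identities in the order (d), (a), (c), (b), (e), (f), (g), keeping (g) for last since it is the only one that genuinely uses the sphere relation. Identities (a)--(f) are all consequences of the braid relations \reqref{Artin1}--\reqref{Artin2} together with \relem{funda} and \reqref{garsideconj}, and in fact hold already in the Artin braid group $B_n$; by contrast (g) fails in $B_n$ (there the two sides have different exponent sums) and requires the surface relation \reqref{surface}. So the logical skeleton is: get the ``geometric'' identities (a)--(e) by direct word manipulation, deduce (f) formally, and then attack (g) separately.

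First I would dispose of (d), which is purely a half-twist identity on the block $\{n/2+1,\dots,n\}$. Comparing \reqref{omegadef} with \reqref{defgarside} shows that $\Omega_1$ (resp.\ $\Omega_2$) is the half-twist $\garside[n/2]$ on the first (resp.\ second) block of $n/2$ strings; (d) is then the standard alternative expression of a half-twist obtained by reversing the internal strand order, which I would prove by induction using \reqref{Artin1}--\reqref{Artin2}. For (a) and (c) the key point is that conjugation by $\rho$ is the ``block swap'': I would show by induction on the word length, using the braid relations, that $\rho\sigma_j\rho^{-1}=\sigma_{j+n/2}$ for $1\le j\le n/2-1$ and $\rho\sigma_j\rho^{-1}=\sigma_{j-n/2}$ for $n/2+1\le j\le n-1$. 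Substituting the defining word \reqref{omegadef} of $\Omega_1$ then carries it termwise to that of $\Omega_2$, giving $\rho\Omega_1\rho^{-1}=\Omega_2$, i.e.\ (a), and symmetrically (c). Identity (e) is immediate: taking $q=n/2$ in \reqref{alpha0q} splits $\alpha_0^{n/2}$ as $(\sigma_1\cdots\sigma_{n/2-1})^{n/2}$ times $\prod_{k=1}^{n/2}(\sigma_{n/2-k+1}\cdots\sigma_{n-k})$, and the first factor is $\garside[n/2]^2=\Omega_1^2$ while the second is exactly $\rho$ as written in \reqref{rhodef}. Identity (b) then follows either by comparing \reqref{defgarside} with the block decomposition or directly from (e), (a) and (c). With (a),(b),(c),(e) in hand, (f) is pure algebra: $\Omega_1\garside\Omega_1^{-1}=\Omega_1^2\Omega_2\rho\Omega_1^{-1}=\Omega_1^2\rho=\alpha_0^{n/2}$, using (b), then $\rho\Omega_1^{-1}=\Omega_2^{-1}\rho$ from (a), then (e); and symmetrically $\Omega_2\alpha_0^{n/2}\Omega_2^{-1}=\garside$ from (e), (c) and (b).

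The main obstacle is (g). Squaring (b) and using that $\rho$ commutes with $\Omega_1\Omega_2$ (a consequence of (a) and (c)) gives $\ft=\garside^2=\Omega_1^2\Omega_2^2\rho^2$, so (g) is equivalent to the single relation $\rho^2=\Omega_2^{-4}$, equivalently $\Omega_1^2=\ft\,\Omega_2^2$. Since every identity derivable from (a)--(f) already holds in $B_n$, where (g) is false, this last step must invoke \reqref{surface}. I would establish $\Omega_1^2\Omega_2^{-2}=\ft$ through the mapping class group via \reqref{mcg}: the full twists $\Omega_1^2$ and $\Omega_2^2$ are sent by $\phi$ to the Dehn twist about a simple closed curve separating the first $n/2$ marked points from the last $n/2$, and since on $\St$ these two curves are isotopic and the twists have the same handedness, $\phi(\Omega_1^2)=\phi(\Omega_2^2)$; hence $\Omega_1^2\Omega_2^{-2}\in\ker{\phi}=\ang{\ft}$.

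It then remains only to rule out $\Omega_1^2\Omega_2^{-2}=1$. For $n\ge 6$ I would forget the last $n/2$ strings via the projection of the mixed braid group onto $B_{n/2}(\St)$ (the same type of projection used in the proof of \repr{genhodgkin2}), under which $\Omega_1^2\mapsto\ft[n/2]\neq 1$ while $\Omega_2^2\mapsto 1$, so $\Omega_1^2\neq\Omega_2^2$ and therefore $\Omega_1^2\Omega_2^{-2}=\ft$. The remaining case $n=4$ (where $\Omega_1=\sigma_1$ and $\Omega_2=\sigma_3$) I would check directly: \reqref{surface} reads $\sigma_1\sigma_2\sigma_3^2\sigma_2\sigma_1=1$, whence $\sigma_2\sigma_3^2\sigma_2=\sigma_1^{-2}$, and a short reduction then identifies $\sigma_1^2\sigma_3^{-2}$ with $\ft$, simultaneously giving membership and nontriviality. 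This yields (g). A purely computational alternative would be to verify $\rho^2=\Omega_2^{-4}$ head-on by reducing $\rho^2$ with \reqref{surface} and \reqref{fundaa}; I expect this to be appreciably more laborious, which is why I favour the $\mcg$ argument, reserving the direct computation only for the small case $n=4$.
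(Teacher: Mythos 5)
Your proposal is correct, and for parts (a)--(f) it follows the paper's algebra closely, with two genuine local improvements: you obtain (e) at once by specialising \reqref{alpha0q} of \relem{funda} at $q=n/2$ (the factor $(\sigma_{1}\cdots\sigma_{n/2-1})^{n/2}$ is $\Omega_{1}^{2}$ and the remaining product is exactly $\rho_{n/2}\cdots\rho_{1}=\rho$), where the paper instead redoes a computation parallel to the proof of that lemma; and you prove (c) by the disc-valid block-swap formula $\rho\sigma_{j}\rho^{-1}=\sigma_{j\pm n/2}$, whereas the paper gets (c) by conjugating (a) by $\alpha_{0}^{n/2}$, which invokes the sphere relations through \reqref{uniqueorder2} and \reqref{basicconj}. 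Your structural remark that (a)--(f) hold already in $B_{n}$ while (g) cannot (exponent sum) is accurate and is not made in the paper. The real divergence is (g): the paper stays algebraic, writing $\Omega_{1}^{2}\Omega_{2}^{-2}=\Omega_{1}^{2}\alpha_{0}^{n/2}\Omega_{1}^{-2}\alpha_{0}^{-n/2}$ via \reqref{omegaconj} and collapsing it, using part (f), \reqref{uniqueorder2} and \reqref{basicconj}, to $\alpha_{0}^{-n}=\ft$, uniformly in $n$; you instead map $\Omega_{1}^{2}$ and $\Omega_{2}^{2}$ into $\mcg$ via \reqref{mcg}, observe that both are the positive Dehn twist about the one isotopy class of curve on $\St$ separating the two blocks, so $\Omega_{1}^{2}\Omega_{2}^{-2}\in\ang{\ft}$, and exclude triviality by the strand-forgetting projection $B_{n/2,n/2}(\St)\to B_{n/2}(\St)$, under which $\Omega_{1}^{2}\mapsto\ft[n/2]\neq 1$ and $\Omega_{2}^{2}\mapsto 1$ for $n\geq 6$. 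This is sound (both the projection and the twist dictionary are used in exactly this way elsewhere in the paper, e.g.\ in the proofs of \repr{genhodgkin2} and \repr{ttimesz}), and it has the merit of explaining geometrically why (g) is a sphere phenomenon; the price is a case split at $n=4$, where $\ft[2]=1$ kills the projection argument, while the paper's computation needs no cases.

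Two small caveats. First, your fallback derivation of (b) ``directly from (e), (a) and (c)'' is not available: $\garside$ does not occur in those three identities, so nothing at that stage ties it to $\alpha_{0}^{n/2}$; (b) must come from comparing \reqref{defgarside} with the block decomposition, as in your primary route (and in the paper). Second, the $n=4$ ``short reduction'' is left implicit, but it does close: \reqref{surface} gives $\sigma_{3}^{-2}=\sigma_{2}\sigma_{1}^{2}\sigma_{2}$, whence
\begin{equation*}
\sigma_{1}^{2}\sigma_{3}^{-2}=\sigma_{1}^{2}\sigma_{2}\sigma_{1}^{2}\sigma_{2}=(\sigma_{1}\sigma_{2})^{3}=(\sigma_{1}\sigma_{2}^{2})^{2}=\alpha_{2}^{2}=\ft[4]
\end{equation*}
by the braid relations and \reqref{uniqueorder2}, so membership in $\ang{\ft[4]}$ and non-triviality indeed come out simultaneously.
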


\begin{proof}\mbox{}
\begin{enumerate}
\item First observe that
\begin{align}
\rho_{1} \Omega_{1}\rho_{1}^{-1}&= \sigma_{1} \cdots \sigma_{n/2} \left( \prod_{i=1}^{n/2-1} \; \sigma_{1}\cdots \sigma_{n/2-i} \right) \sigma_{n/2}^{-1} \cdots \sigma_{1}^{-1}\notag\\
&= \sigma_{1} \cdots \sigma_{n-1} \left( \prod_{i=1}^{n/2-1} \; \sigma_{1}\cdots \sigma_{n/2-i} \right) \sigma_{n-1}^{-1} \cdots \sigma_{1}^{-1}\notag\\
&= \alpha_{0} \Omega_{1}  \alpha_{0}^{-1}.\label{eq:rho1omega}
\end{align}
For $i=1,\ldots,n/2$, we have
\begin{align*}
\rho_{i} \alpha_{0}^{i-1} \Omega_{1}\alpha_{0}^{-(i-1)}\rho_{i}^{-i} &= \alpha_{0}^{i-1}  (\alpha_{0}^{-(i-1)}\rho_{i} \alpha_{0}^{i-1})  \Omega_{1} (\alpha_{0}^{-(i-1)}\rho_{i}^{-i} \alpha_{0}^{i-1}) \alpha_{0}^{-(i-1)}\\
&= \alpha_{0}^{i-1} \rho_{1} \Omega_{1}\rho_{1}^{-1} \alpha_{0}^{-(i-1)} \quad \text{by equations~\reqref{fundaa} and~\reqref{rhoi}}\\
&= \alpha_{0}^{i} \Omega_{1} \alpha_{0}^{-i} \quad\text{by \req{rho1omega}.}
\end{align*}
By induction on $i$, equations~\reqref{omegaconj} and~\reqref{rho1omega}, it follows that
\begin{equation*}
\rho \Omega_{1} \rho^{-1}= \rho_{n/2}\cdots \rho_{1} \Omega_{1} \rho_{1}^{-1}\cdots \rho_{n}^{-1}=\alpha_{0}^{n/2}\Omega_{1}\alpha_{0}^{-n/2}=\Omega_{2}
\end{equation*}
as required.

\item We have:
\begin{align*}
\garside &= \prod_{i=1}^{n-1} (\sigma_{1}\cdots \sigma_{n-i})=\prod_{i=1}^{n/2-1} (\sigma_{1}\cdots \sigma_{n-i}) \prod_{i=n/2}^{n-1} (\sigma_{1}\cdots \sigma_{n-i})\quad\text{by \req{defgarside}}\\
&=\prod_{i=1}^{n/2-1} (\sigma_{1}\cdots \sigma_{n/2-i})(\sigma_{n/2-i+1}\cdots \sigma_{n-i}) \prod_{i=0}^{n/2-1} (\sigma_{1}\cdots \sigma_{n/2-i})\\
&=\left(\prod_{i=1}^{n/2-1} (\sigma_{1}\cdots \sigma_{n/2-i}) \prod_{i=1}^{n/2-1}(\sigma_{n/2-i+1}\cdots \sigma_{n-i})\right) \rho_{1} \prod_{i=1}^{n/2-1} (\sigma_{1}\cdots \sigma_{n/2-i})\\
&=\Omega_{1} \left(\prod_{i=1}^{n/2-1} \rho_{n/2-i+1}\right)\rho_{1}\Omega_{1}\quad \text{by equations~\reqref{omegadef} and~\reqref{rhoi}}\\
&= \Omega_{1} \rho\Omega_{1} =\Omega_{1}  \Omega_{2} \rho \quad\text{by equations~\reqref{rhoi} and~\reqref{rhodef}, and part~(\ref{it:omegai}).}
%
%\sigma_{1}\cdots \sigma_{n-1} \ldotp \sigma_{1}\cdots \sigma_{n-2} \cdots \sigma_{1}\sigma_{2}\ldotp \sigma_{1}\\
%=& \sigma_{1}\cdots \sigma_{n/2-1} \ldotp \sigma_{1}\cdots \sigma_{n/2-2} \cdots \sigma_{1}\sigma_{2}\ldotp \sigma_{1}\ldotp
%\sigma_{n/2}\cdots \sigma_{n-1} \ldotp \sigma_{n/2-1}\cdots \sigma_{n-2} \cdots \sigma_{1}\cdots \sigma_{n/2}\ldotp\\
%& \sigma_{1}\cdots \sigma_{n/2-1} \ldotp \sigma_{1}\cdots \sigma_{n/2-2} \cdots \sigma_{1}\sigma_{2}\ldotp \sigma_{1}= \Omega_{1} \rho \Omega_{1}=
%\Omega_{1}  \Omega_{2} \rho \quad\text{by part~(\ref{it:omegai}).}
\end{align*}

\item Since $\alpha_{0}^{n/2} \garside \alpha_{0}^{-n/2}=\garside^{-1}$ by equations~\reqref{uniqueorder2} and~\reqref{basicconj}, we have:
\begin{equation*}
\alpha_{0}^{n/2} \rho \alpha_{0}^{-n/2}=\alpha_{0}^{n/2} \Omega_{2}^{-1}\Omega_{1}^{-1} \garside\alpha_{0}^{-n/2} = \Omega_{1}^{-1}\Omega_{2}^{-1} \garside^{-1}= \rho \ft,\\
%&= \rho \ft \quad\text{by part~(\ref{it:omegaii}).}
\end{equation*}
by part~(\ref{it:omegaii}) and \req{omegaconj}, using the fact that $\Omega_{1}$ and $\Omega_{2}$ commute. Conjugating the relation $\rho\Omega_{1}=\Omega_{2}\rho$ of part~(\ref{it:omegai}) by $\alpha_{0}^{n/2}$ and using \req{omegaconj} gives the result.

\item For $n/2+1\leq i\leq j\leq n-1$, set $\tau_{i,j}=\sigma_{i}\cdots \sigma_{j}$ (so $\tau_{i,i}=\sigma_{i}$). For $k=1,\ldots, n/2-1$, set 
\begin{equation*}%\label{eq:omega2}
\omega_{k}=\left( \prod_{i=n/2-k}^{n/2-1} \tau_{n-i,n-1}  \right) \left( \prod_{i=n/2-k}^{n/2-1} \tau_{n/2+1,n/2+i}^{-1}  \right).
\end{equation*}
Let $\Omega_{2}'=\sigma_{n-1}(\sigma_{n-2}\sigma_{n-1})\cdots (\sigma_{n/2+2} \cdots \sigma_{n-1})(\sigma_{n/2+1} \cdots \sigma_{n-1})$. Since
\begin{align*}
\omega_{n/2-1}&=\left( \prod_{i=1}^{n/2-1} \tau_{n-i,n-1}  \right) \left( \prod_{i=1}^{n/2-1} \tau_{n/2+1,n/2+i}^{-1}  \right)\\
&=\Omega_{2}' \Omega_{2}^{-1} \quad\text{by \req{omegadef},}
\end{align*}
it suffices to show that $\omega_{n/2-1}=1$. To do so, we shall prove by induction that $\omega_{k}=1$ for all $k=1,\ldots, n/2-1$. If $k=1$ then $\omega_{1}=\tau_{n/2+1,n-1}\tau_{n/2+1,n-1}^{-1}=1$. So suppose that $\omega_{k}=1$ for some $k=1,\ldots, n/2-2$. First note that if $i\leq l<j$,
\begin{align}
\tau_{i,j}\tau_{i,l}^{-1}&=(\sigma_{i}\cdots \sigma_{l}\sigma_{l+1}\sigma_{l+2}\cdots\sigma_{j})(\sigma_{l}^{-1}\cdots \sigma_{i}^{-1})\notag\\
&=(\sigma_{i}\cdots \sigma_{l}\sigma_{l+1}\sigma_{l}^{-1}\cdots \sigma_{i}^{-1}) (\sigma_{l+2}\cdots\sigma_{j})\notag\\
&=\tau_{i,l+1} \sigma_{l}^{-1}\cdots \sigma_{i}^{-1} \tau_{i,l+1}^{-1} \tau_{i,j}= \sigma_{l+1}^{-1}\cdots \sigma_{i+1}^{-1}\tau_{i,j}\notag\\
& =\tau_{i+1,l+1}^{-1} \tau_{i,j},\label{eq:tauij}
\end{align}
using the fact that $\tau_{i,l+1}\tau_{m}\tau_{i,l+1}^{-1}=\tau_{m+1}$ for all $i\leq m\leq l$.
%where the first equality of the last line follows in a similar manner to that of \req{fundab}.
So
\begin{align*}
\omega_{k+1}&=\tau_{n/2+k+1,n-1} \left( \prod_{i=n/2-k}^{n/2-1} \tau_{n-i,n-1}  \right) 
\tau_{n/2+1,n-k-1}^{-1} \left( \prod_{i=n/2-k}^{n/2-1} \tau_{n/2+1,n/2+i}^{-1}  \right)\\ 
&=\tau_{n/2+k+1,n-1} \ldotp 
\tau_{n/2+k,n-1}\cdots \tau_{n/2+1,n-1}\tau_{n/2+1,n-k-1}^{-1} \left( \prod_{i=n/2-k}^{n/2-1} \tau_{n/2+1,n/2+i}^{-1}  \right)\\ 
&= \tau_{n/2+k+1,n-1} \ldotp \tau_{n/2+k+1,n-1}^{-1}
\left( \prod_{i=n/2-k}^{n/2-1} \tau_{n-i,n-1}  \right) \left( \prod_{i=n/2-k}^{n/2-1} \tau_{n/2+1,n/2+i}^{-1}  \right)\\
& =\omega_{k}=1,
\end{align*}
where we have used \req{tauij} $k$ times to obtain the first equality of the last line. The result follows by induction.

\item  Using parts~(\ref{it:omegaiii}) and~(\ref{it:omegaiiibis}), and equations~\reqref{omegadef},~\reqref{rhoi} and~\reqref{rhodef}, we have:
\begin{align*}
\alpha_{0}^{n/2}&= (\sigma_{1}\cdots \sigma_{n-1})^{n/2}= \prod_{i=1}^{n/2} (\sigma_{1}\cdots \sigma_{n/2-i}\sigma_{n/2-i+1} \cdots \sigma_{n-1})\\
&= \prod_{i=1}^{n/2-1} (\sigma_{1}\cdots \sigma_{n/2-i}) \prod_{i=1}^{n/2}(\sigma_{n/2-i+1} \cdots \sigma_{n-1})\\
& = \Omega_{1} \prod_{i=1}^{n/2}(\sigma_{n/2-i+1} \cdots \sigma_{n-i}\sigma_{n-i+1}\cdots \sigma_{n-1})\\
& = \Omega_{1} \prod_{i=1}^{n/2}(\sigma_{n/2-i+1} \cdots \sigma_{n-i}) \prod_{i=2}^{n/2}(\sigma_{n-i+1}\cdots \sigma_{n-1})\\
&= \Omega_{1} \rho \prod_{i=1}^{n/2-1}(\sigma_{n-i}\cdots \sigma_{n-1})=\Omega_{1} \rho \Omega_{2}=\Omega_{1}^{2} \rho.
%\quad\text{by parts~(\ref{it:omegaiii}) and~(\ref{it:omegaiiibis}), and equations~\reqref{omegadef},~\reqref{rhoi} and~\reqref{rhodef}.}
%
% (\sigma_{1}\cdots \sigma_{n/2-1})(\sigma_{1}\cdots \sigma_{n/2-2}) \cdots (\sigma_{1} \sigma_{2})(\sigma_{1})\ldotp\\
%& (\sigma_{n/2}\cdots \sigma_{n-1}) (\sigma_{n/2-1}\cdots \sigma_{n-1})\cdots (\sigma_{2}\cdots \sigma_{n-1})(\sigma_{1}\cdots \sigma_{n-1})\\
%=& \Omega_{1} (\sigma_{n/2}\cdots \sigma_{n-1}) (\sigma_{n/2-1}\cdots \sigma_{n-2})\cdots (\sigma_{2}\cdots \sigma_{n/2+1})(\sigma_{1}\cdots \sigma_{n/2})\ldotp\\
%& \sigma_{n-1}(\sigma_{n-2}\sigma_{n-1})\ldots (\sigma_{n/2+2}\cdots \sigma_{n-1})(\sigma_{n/2+1}\cdots \sigma_{n-1})\\
%=& \Omega_{1}\rho \Omega_{2}=\Omega_{1}^2\rho\quad\text{by parts~(\ref{it:omegaiii}) and~(\ref{it:omegaiiibis}).}
\end{align*}
%where we have used parts~(\ref{it:omegaiii}) and~(\ref{it:omegaiiibis}), and equations~\reqref{omegadef},~\reqref{rhoi} and~\reqref{rhodef}.

\item Applying successively parts~(\ref{it:omegaiv}),~(\ref{it:omegaiii}) and~(\ref{it:omegaii}) and using the fact that $\Omega_{1}$ and $\Omega_{2}$ commute yields:
\begin{equation*}
\Omega_{2} \alpha_{0}^{n/2}\Omega_{2}^{-1} = \Omega_{2} \Omega_{1}^2\rho \Omega_{2}^{-1}= \Omega_{2} \Omega_{1}^2 \Omega_{1}^{-1} \rho = \Omega_{1}\Omega_{2}\rho=\garside.
\end{equation*}
Further, using parts~(\ref{it:omegaii}),~(\ref{it:omegai}) and~(\ref{it:omegaiv}), we have:
\begin{equation*}
\Omega_{1} \garside\Omega_{1}^{-1} = \Omega_{1} \Omega_{1}\Omega_{2}\rho\Omega_{1}^{-1}=\Omega_{1}^{2}\rho=\alpha_{0}^{n/2}.
\end{equation*}

\item Using equations~\reqref{omegaconj} and~\reqref{uniqueorder2} and part~(\ref{it:omegav}), we have
\begin{align*}
\Omega_{1}^{2}\Omega_{2}^{-2} &= \Omega_{1}^{2} \alpha_{0}^{n/2}\Omega_{1}^{-2} \alpha_{0}^{-n/2}= \Omega_{1} \left(\Omega_{1} \alpha_{0}^{n/2}\Omega_{1}^{-1}\right) \Omega_{1}^{-1}\alpha_{0}^{-n/2}\\
&= \Omega_{1} \left(\alpha_{0}^{n/2}\Omega_{2}\alpha_{0}^{-n/2}\ldotp \alpha_{0}^{n/2} \ldotp\alpha_{0}^{n/2}\Omega_{2}^{-1}\alpha_{0}^{-n/2}\right) \Omega_{1}^{-1}\alpha_{0}^{-n/2}\\
&= \Omega_{1} \left(\alpha_{0}^{n/2}\garside\alpha_{0}^{-n/2}\right) \Omega_{1}^{-1}\alpha_{0}^{-n/2}= \Omega_{1} \garside^{-1} \Omega_{1}^{-1}\alpha_{0}^{-n/2}\\
&=\alpha_{0}^{-n/2}\ldotp \alpha_{0}^{-n/2}=\ft. \qedhere
\end{align*}
\end{enumerate}
\end{proof}

%\pagebreak

\begin{prop}\label{prop:constq8}
With the notation defined above,
\begin{enumerate}[(a)]
\item\label{it:q8parta} $\ang{\alpha_{0}^{n/2}, \garside, \alpha_{0}^{n/4}\Omega_{2}}\cong \quat\rtimes_{\alpha} \Z$ for all $n\equiv 0 \bmod 4$, $n\geq 8$.
\item\label{it:q8partb} $\ang{\alpha_{0}^{n/2}, \garside, \Omega_{1}\garside}\cong \quat\rtimes_{\beta} \Z$ for all $n\geq 4$ even.
\end{enumerate}
\end{prop}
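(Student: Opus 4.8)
The plan is to realise the quaternion factor as the subgroup $\ang{\alpha_{0}^{n/2},\garside}$ of $B_{n}(\St)$, to verify the prescribed action by conjugating by the proposed third generator, and then to check that this generator has infinite order. First I would show that $\ang{\alpha_{0}^{n/2},\garside}\cong\quat$. Indeed, by \reqref{uniqueorder2} and \reqref{defgarside} we have $(\alpha_{0}^{n/2})^{2}=\alpha_{0}^{n}=\ft=\garside^{2}$, so both generators have order $4$ and square to the unique central involution $\ft$, while \reqref{basicconj} (with $i=0$) gives $\garside\alpha_{0}^{n/2}\garside^{-1}=\alpha_{0}^{-n/2}$. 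Comparing with the presentation \reqref{presdic} for $m=2$, and noting that this subgroup properly contains the cyclic group $\ang{\alpha_{0}^{n/2}}$ of order $4$ (as $\garside\notin\ang{\alpha_{0}}$) and so has order $8$, identifies it with $\quat$. Throughout I shall write $i=\alpha_{0}^{n/2}$, $j=\garside$, $k=ij=\alpha_{0}^{n/2}\garside$ and $-1=\ft$.

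For part~(a), with $z=\alpha_{0}^{n/4}\Omega_{2}$ I would compute $z\,i\,z^{-1}$ and $z\,j\,z^{-1}$ using \relem{propsomega}. Part~(f) of that lemma gives $\Omega_{2}\alpha_{0}^{n/2}\Omega_{2}^{-1}=\garside$, whence, using $\garside\alpha_{0}^{-n/4}=\alpha_{0}^{n/4}\garside$ from \reqref{basicconj}, $z\,i\,z^{-1}=\alpha_{0}^{n/4}\garside\alpha_{0}^{-n/4}=\alpha_{0}^{n/2}\garside=k$. For the second, a short manipulation with parts~(b),~(e),~(g) yields $\Omega_{2}\garside\Omega_{2}^{-1}=\Omega_{2}^{2}\rho=\ft\,\alpha_{0}^{n/2}=\alpha_{0}^{-n/2}$, so $z\,j\,z^{-1}=\alpha_{0}^{n/4}\alpha_{0}^{-n/2}\alpha_{0}^{-n/4}=\alpha_{0}^{-n/2}=i^{-1}$. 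Thus conjugation by $z$ induces the automorphism $\psi$ of $\quat$ with $\psi(i)=k$ and $\psi(j)=i^{-1}$; one then checks $\psi(k)=-j$, so $\psi^{3}=\id$ while $\psi\neq\id$, and hence $\overline{\psi}$ is an element of order $3$ in $\out{\quat}\cong\sn[3]$.

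For part~(b), with $z=\Omega_{1}\garside$ the relation $\alpha_{0}^{n/2}=\Omega_{1}\garside\Omega_{1}^{-1}$ of \relem{propsomega}(f) gives immediately $z\,j\,z^{-1}=\Omega_{1}\garside\Omega_{1}^{-1}=\alpha_{0}^{n/2}=i$, and a direct computation with parts~(e),~(g) (which shows $\Omega_{1}\alpha_{0}^{n/2}\Omega_{1}^{-1}=\ft\garside$) yields $z\,i\,z^{-1}=\Omega_{1}\alpha_{0}^{-n/2}\Omega_{1}^{-1}=\garside=j$. Hence conjugation by $z$ transposes $i$ and $j$ (sending $k$ to $-k$); this automorphism has order $2$ and is not inner, since the inner automorphisms of $\quat$ fix each of $i,j,k$ up to sign. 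Therefore $\overline{\psi}$ is a transposition in $\out{\quat}\cong\sn[3]$.

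It then remains to prove that in each case $z$ has infinite order, which I expect to be the main obstacle. I would argue by contradiction using \reth{murasugi}: were $z$ of finite order it would be conjugate to a power of one of $\alpha_{0},\alpha_{1},\alpha_{2}$, and I would exclude this by comparing the cycle type of the permutation $\pi(z)$ with the cycle types of powers of the $\alpha_{l}$, reinforced by an exponent-sum computation of $\xi(z)$ modulo $n-1$ (and, if needed, modulo $2(n-1)$), exactly as in the proofs of \repr{rtimes} and \repr{nonk}. Once $z$ is known to have infinite order, the conclusion follows cleanly: $\ang{\quat,z}$ is the image of the homomorphism $\quat\rtimes_{\psi}\Z\to B_{n}(\St)$ sending the finite factor isomorphically onto $\ang{\alpha_{0}^{n/2},\garside}$ and a generator of $\Z$ to $z$, and this map is injective because no nonzero power of the infinite-order element $z$ can lie in the finite group $\quat$. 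Finally, by \repr{isoout} the isomorphism type of $\quat\rtimes_{\psi}\Z$ depends only on $\overline{\psi}\in\out{\quat}$, so \relem{quatact} identifies it with $\quat\rtimes_{\alpha}\Z$ when $\overline{\psi}$ has order $3$ (part~(a)) and with $\quat\rtimes_{\beta}\Z$ when $\overline{\psi}$ is a transposition (part~(b)).
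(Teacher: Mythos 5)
Your proposal takes the same route as the paper: the same model $\ang{\alpha_{0}^{n/2},\garside}$ of $\quat$, the same conjugation computations from \relem{propsomega} and \reqref{basicconj}, identification of the induced automorphism, and then an infinite-order argument via \reth{murasugi}. The action computations you carry out are correct and agree with the paper's ($z\alpha_{0}^{n/2}z^{-1}=\alpha_{0}^{n/2}\garside$ and $z\garside z^{-1}=\alpha_{0}^{-n/2}$ in part~(a); the swap of $\alpha_{0}^{n/2}$ and $\garside$ in part~(b)), and passing through $\out{\quat}\cong\sn[3]$ together with \repr{isoout} and \relem{quatact} is a legitimate, arguably cleaner, way of phrasing what the paper states as ``gives rise to the action $\alpha$ (resp.\ $\beta$)''. (A minor point: your derivations of $\Omega_{2}\garside\Omega_{2}^{-1}=\Omega_{2}^{2}\rho$ and of $\Omega_{1}\alpha_{0}^{n/2}\Omega_{1}^{-1}=\ft\garside$ also use parts~(a) and~(c) of \relem{propsomega}, or \reqref{omegaconj}, beyond the parts you cite; the identities themselves are true.)

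The genuine gap is that the infinite-order claim is left as a plan, and that is exactly where the remaining half of the paper's proof, and all of the force of the hypotheses, lies. Observe that the computations you did execute are valid for every even $n\geq 4$, including $n=4$ in part~(a); yet for $n=4$ one has $\alpha_{0}^{n/4}\Omega_{2}=\sigma_{1}\sigma_{2}\sigma_{3}^{2}=\alpha_{1}$, an element of order $6$, and $\ang{\alpha_{0}^{2},\garside[4],\alpha_{1}}\cong\tonestar$ is finite, so the conclusion of part~(a) fails without the hypothesis $n\geq 8$. That hypothesis can therefore only enter through the step you postponed. Carrying it out requires the explicit permutations: for $z=\alpha_{0}^{n/4}\Omega_{2}$ with $n\geq 8$, $\pi(z)$ contains a transposition together with a $6$-cycle, and since every power of $\alpha_{0},\alpha_{1},\alpha_{2}$ (hence every torsion element, by \reth{murasugi}) has all of its nontrivial cycles of equal length, finite order is excluded; the same mixed-cycle-type argument works for $z=\Omega_{1}\garside$ when $n\equiv 2\bmod 4$ (a transposition and a $4$-cycle). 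But when $n\equiv 0\bmod 4$ the cycle-type test is inconclusive for $\Omega_{1}\garside$: its permutation is a product of $n/4$ disjoint $4$-cycles, exactly the cycle type of $\pi(\alpha_{0}^{n/4})$, so one must fall back on the exponent sum, as you anticipated: finite order would force conjugacy to a power of $\alpha_{0}$, hence $\xi(z)\equiv 0\bmod{n-1}$, whereas $\xi(\Omega_{1}\garside)\equiv\frac{n}{4}\left(\frac{n}{2}-1\right)\equiv\frac{n}{8}(n-2)\nequiv 0\bmod{n-1}$. These permutation and exponent-sum computations are the crux rather than a routine appendix; with them written out, your proof coincides with the paper's.
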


\begin{proof}
\rerem{finitesub}(\ref{it:finitesubb}) implies that the subgroup $\ang{\alpha_{0}^{n/2}, \garside}$ of $B_{n}(\St)$ is isomorphic to $\quat$. So to prove the proposition, we must study the action of the third generator in both cases on this subgroup. Let $\Omega_{1}$ and $\Omega_{2}$ be as defined in \req{omegadef}.
\begin{enumerate}[(a)]
\item Let $n \equiv 0\bmod{4}$ where $n\geq 8$, and let $\nu=\alpha_{0}^{n/4}\Omega_{2}$. We have:
\begin{align}
\nu \alpha_{0}^{n/2} \nu^{-1}&= (\alpha_{0}^{n/4}\Omega_{2}) \alpha_{0}^{n/2} (\Omega_{2}^{-1} \alpha_{0}^{-n/4})= \alpha_{0}^{n/4}\garside \alpha_{0}^{-n/4} \quad \text{by \relem{propsomega}(\ref{it:omegav})}\notag\\
&= \alpha_{0}^{n/2}\garside \quad\text{by \req{basicconj}}, \label{eq:nuconj}\\
\nu \alpha_{0}^{n/2} \garside \nu ^{-1}&=\alpha_{0}^{n/2} \garside \ldotp \alpha_{0}^{n/4}\Omega_{2} \garside \Omega_{2}^{-1} \alpha_{0}^{-n/4} \quad\text{by \req{nuconj}}\notag\\
%&=\alpha_{0}^{n/2} \garside \ldotp \alpha_{0}^{n/4}\Omega_{2} \garside \Omega_{2}^{-1} \alpha_{0}^{-n/4} \notag\\
%
& =\alpha_{0}^{n/2} \garside \ldotp \alpha_{0}^{3n/4} \alpha_{0}^{-n/2}\Omega_{2}^2 \alpha_{0}^{n/2} \Omega_{2}^{-2} \alpha_{0}^{-n/4} \quad \text{by \relem{propsomega}(\ref{it:omegav})} \notag\\
%&=\alpha_{0}^{n/2} \garside \ldotp \alpha_{0}^{3n/4} \alpha_{0}^{-n/2}\Omega_{2}^2 \alpha_{0}^{n/2} \Omega_{2}^{-2} \alpha_{0}^{-n/4}\notag\\
&=\alpha_{0}^{n/2} \garside \ldotp \alpha_{0}^{3n/4} \Omega_{1}^2 \Omega_{2}^{-2} \alpha_{0}^{-n/4} \quad \text{by \req{omegaconj}}\notag\\
& =\alpha_{0}^{n/2} \garside \ldotp \alpha_{0}^{n/2} \ft  \quad \text{by \relem{propsomega}(\ref{it:omegavi})} \notag\\
&= \garside^{-1} \quad\text{by equations~\reqref{uniqueorder2} and~\reqref{basicconj}, and}\label{eq:nuconj2}\\
\nu\garside^{-1} \nu^{-1} &= \nu (\alpha_{0}^{n/2} \garside)^{-1} \alpha_{0}^{n/2} \nu^{-1}\\
&= \alpha_{0}^{n/2} \quad \text{by equations~\reqref{uniqueorder2},~\reqref{basicconj},~\reqref{nuconj} and~\reqref{nuconj2}}.\label{eq:nuconjgarside}
\end{align}
Hence conjugation by $\nu$ permutes cyclically the elements $\alpha_{0}^{n/2}$, $\alpha_{0}^{n/2} \garside$ and $\garside^{-1}$, and thus gives rise to the action $\alpha$ on the copy $\ang{\alpha_{0}^{n/2}, \garside}$ of $\quat$ in $B_{n}(\St)$. It remains to show that $\nu$ is of infinite order. Its permutation is:
\begin{align*}
\pi(\nu)=&(1,3n/4+1,n/2+1,n/4+1)(2,3n/4+2,n/2+2,n/4+2) \cdots \\
& (n/4,n,3n/4,n/2)(n/2+1,n)(n/2+2,n-1)\cdots (3n/4,3n/4+1),
\end{align*}
and since $n\geq 8$, the cycle decomposition of $\pi(\nu)$ contains the transposition $(3n/4+1,n)$ and the $6$-cycle $(1,3n/4, n/2, n/4, n/2+1,n/4+1)$. By \reth{murasugi}, $\pi(\nu)$ cannot be the permutation of an element of $B_{n}(\St)$ of finite order. This shows that $\nu$ is of infinite order, and so $\ang{\alpha_{0}^{n/2}, \garside, \alpha_{0}^{n/4}\Omega_{2}}\cong \quat\rtimes_{\alpha} \Z$.

\item Let $n\geq 4$ be even. Set $\zeta=\Omega_{1}\garside$. Then:
\begin{align*}
&\zeta \garside \zeta^{-1}= \Omega_{1}\garside \Omega_{1}^{-1}= 
%\Omega_{1}\Omega_{1}\Omega_{2}\rho \Omega_{1}^{-1}\quad\text{by \relem{propsomega}(\ref{it:omegaii})}\\
%&=\Omega_{1}^2 \rho = 
\alpha_{0}^{n/2} \quad\text{by \relem{propsomega}(\ref{it:omegav}),}\\
&\zeta \alpha_{0}^{n/2} \zeta^{-1}
\begin{aligned}[t]
=&\Omega_{1}\garside \alpha_{0}^{n/2} \garside^{-1} \Omega_{1}^{-1} \alpha_{0}^{-n/2}\alpha_{0}^{n/2}\\
=& \ft \Omega_{1} \Omega_{2}^{-1}\alpha_{0}^{n/2}\,\text{by equations~\reqref{uniqueorder2},~\reqref{basicconj} and~\reqref{omegaconj}}\\
=&\ft\alpha_{0}^{n/2} \garside^{-1}\alpha_{0}^{n/2}\;\text{by \relem{propsomega}(\ref{it:omegaii}) and (\ref{it:omegaiv}), and the commutativity of $\Omega_{1}$ and $\Omega_{2}$}\\
=&\garside \quad\text{by equations~\reqref{uniqueorder2} and~\reqref{basicconj},}
\end{aligned}\\
&\zeta \alpha_{0}^{n/2}\garside \zeta^{-1}= \garside\alpha_{0}^{n/2}=(\alpha_{0}^{n/2}\garside)^{-1} \quad\text{by \req{uniqueorder2} and the above two relations.}
\end{align*}
So conjugation by $\zeta$ exchanges $\garside$ and $\alpha_{0}^{n/2}$, and sends $\alpha_{0}^{n/2} \garside$ to $(\alpha_{0}^{n/2}\garside)^{-1}$, hence gives rise to the action $\beta$ on the copy $\ang{\alpha_{0}^{n/2},\garside}$ of $\quat$ in $B_{n}(\St)$. It remains to show that $\zeta$ is of infinite order. Suppose first that $n\equiv 2 \bmod 4$. Then:
\begin{align*}
\pi(\zeta)=& (1,n/2)(2,n/2-1)\cdots ((n-2)/4,(n+6)/4)\ldotp (1,n)\ldotp\\
&(2,n-1)\cdots (n/2,n/2+1).
\end{align*}
By \reth{murasugi}, $\pi(\zeta)$ cannot be the permutation of a finite-order element of $B_{n}(\St)$ since its cycle decomposition contains the transposition $((n+2)/4,(3n+2)/4)$ and the $4$-cycle $(1,n/2+1,n/2,n)$. So suppose that $n \equiv 0 \bmod 4$. Then:
\begin{equation*}
\pi(\zeta)= (1,n/2)(2,n/2-1)\cdots (n/4,n/4+1)\ldotp (1,n)(2,n-1)\cdots (n/2,n/2+1).
\end{equation*}
Hence the cycle decomposition of $\pi(\zeta)$ consists of the $4$-cycles of the form $(j,n/2+j,n/2+1-j,n+1-j)$, where $1\leq j\leq n/4$, so if $\zeta$ is of finite order then by \reth{murasugi}, it is conjugate to a power of $\alpha_{0}$. Thus the Abelianisation of $\zeta$ is congruent to $0$ modulo $n-1$. On the other hand, the Abelianisation of $\zeta=\Omega_{1}\garside$ is congruent to $\frac{n}{4}\left( \frac{n}{2}-1 \right)+\frac{n}{2}(n-1)$ modulo $2(n-1)$, which is congruent modulo $n-1$ to $\frac{n}{8}(n-2)$. But $\frac{n}{8}(n-2) \nequiv 0$ modulo $n-1$, which gives a contradiction. So $\zeta$ is of infinite order, and $\ang{\alpha_{0}^{n/2}, \garside, \zeta}\cong \quat\rtimes_{\beta} \Z$ as required.\qedhere
\end{enumerate}
\end{proof}

\begin{rems}\mbox{}
\begin{enumerate}[(a)]
\item If we take $n=4$ in the proof of \repr{constq8}(\ref{it:q8parta}) then $\nu=\alpha_{1}$ is of order $6$, and we obtain the subgroup $\ang{\alpha_{0}^{2}, \garside[4], \alpha_{0}\Omega_{2}}$ which is isomorphic to $\tonestar$ (see~\cite[Remark~3.2]{GG7}). However, a copy of $\quat \rtimes_{\alpha} \Z$ in $B_{4}(\St)$ will be exhibited in the proof of \repr{realV1}. Combining this with Propositions~\ref{prop:tstarxz},~\ref{prop:constq8} and~\ref{prop:ttimesz} will prove the existence of Type~I subgroups of $B_{n}(\St)$ of the form $\quat \rtimes_{\alpha} \Z$ for all $n\geq 4$ even, with the exception of $n\in \brak{6,10,14}$. \repr{constq8}(\ref{it:q8partb}) implies the existence of Type~I subgroups of $B_{n}(\St)$ of the form $\quat \rtimes_{\beta} \Z$ for all $n\geq 4$ even.
\item In the case where $n\equiv 2 \bmod 4$, we do not know of an explicit algebraic representation of $\quat \rtimes_{\alpha} \Z$ similar to that of the construction of \repr{constq8}(\ref{it:q8parta}) in the case $n\equiv 0 \bmod{4}$. In order to obtain such a representation, note that by~\cite[Proposition~1.5 and Theorem~1.6]{GG7}, the standard copy $\ang{\alpha_{2}',\garside}$ of $\dic{4(n-2)}$ exhibits both conjugacy classes of subgroups isomorphic to $\quat$ in $B_{n}(\St)$. To construct a copy $H$ of $\quat \rtimes_{\alpha} \Z$, the elements of the copy of $\quat$ of order $4$ must be conjugate, so $H=\ang{\alpha_{2}'^{(n-2)/2},\alpha_{2}' \garside}$ (up to conjugacy). We then need to look for an element $z$ of $B_{n}(\St)$ of infinite order whose action by conjugacy on $H$ permutes cyclically the elements $\alpha_{2}'^{(n-2)/2},\garside$ and $\alpha_{2}'^{(n-2)/2}\garside$ of $H$ (or perhaps their inverses). Propositions~\ref{prop:tstarxz}(\ref{it:tstarxza}) and~\ref{prop:ttimesz}(\ref{it:ttimesza}) imply the existence of $z$, but we have not been able to find explicitly such an element.
\end{enumerate}
\end{rems}

\section{Type~I subgroups of $B_{n}(\St)$ of the form $F\rtimes \Z$  with $F=\tonestar, \oonestar, \istar$}\label{sec:typeIbinpoly}

We now consider the problem of the existence of Type~I subgroups of $B_{n}(\St)$ of the form $F\rtimes \Z$  with $F=\tonestar, \oonestar, \istar$. In the case where the product is direct, the question will be treated in \resec{dirtoi}. \repr{necV1} asserts that the only nontrivial action occurs when $F=\tonestar$, in which case the action is that given by \req{nontrivacttstar}. This possibility will be dealt with in \resec{nontrivtstar}.

\subsection{Type~I subgroups of $B_{n}(\St)$ of the form $F\times \Z$  with $F=\tonestar, \oonestar, \istar$}\label{sec:dirtoi}

%\comment{Added 15/4/09.} 

In this section, we prove the following result.
\begin{prop}\label{prop:ttimesz}\mbox{}
\begin{enumerate}[(a)]
\item\label{it:ttimesza} Suppose that $n=12$ or that $n\geq 16$ is even. Then the group $\tonestar\times \Z$ is realised as a subgroup of $B_{n}(\St)$.
\item\label{it:ttimeszb} Suppose that $n=24$ or that $n\geq 30$ is congruent to $0$ or $2\bmod 6$. Then the group $\oonestar\times \Z$ is realised as a subgroup of $B_{n}(\St)$. 
\item\label{it:ttimeszc} Suppose that $n=60$ or that $n\geq 72$ is congruent to $0,2,12$ or $20 \bmod 30$. Then the group $\istar\times \Z$ is realised as a subgroup of $B_{n}(\St)$.
\item\label{it:ttimeszd} The group $\tonestar\times \Z$ (resp.\ $\oonestar\times \Z$) is not realised as a subgroup of $B_{4}(\St)$ (resp.\ $B_{6}(\St)$).
\end{enumerate}
\end{prop}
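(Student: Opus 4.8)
For the realisation statements (a)--(c), my plan is to proceed geometrically on the level of the mapping class group $\mcg$ and then lift back to $B_{n}(\St)$ via the short exact sequence~\reqref{mcg}. The strategy is to realise each binary polyhedral group $F$ as a finite subgroup of $B_{n}(\St)$ (which is possible exactly for the congruence conditions of \reth{finitebn}), and then to produce an element of infinite order in the centraliser $Z_{B_{n}(\St)}(F)$. Concretely, I would use \repr{vcmcg}, which tells us that virtually cyclic subgroups of $B_{n}(\St)$ correspond to those of $\mcg$ of the same type. In $\mcg$, the image $\phi(F)\cong \an[4], \sn[4]$ or $\an[5]$ is realised as the group of (isotopy classes of) rotations of $\St$ coming from the symmetries of a platonic solid inscribed in the sphere, with marked points placed at the orbit of a suitable configuration. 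The centralising element of infinite order would be constructed as a multitwist along a family of disjoint simple closed curves that is invariant under this finite rotation group; such a multitwist commutes with $\phi(F)$ and has infinite order in $\mcg$. Provided the number of marked points matches $n$ (which is where the slightly stronger hypotheses $n=12$ or $n\geq 16$ even, etc., enter, guaranteeing enough room to insert an invariant curve system), this yields a copy of $\phi(F)\times \Z$ in $\mcg$. Applying \repr{vcmcg}(\ref{it:vcmcgb})(\ref{it:vcmcgbi}) then produces $F\times \Z$ (or possibly $F\rtimes_{\theta}\Z$, which by \repr{perhom} and the triviality of the induced action must in fact be the direct product) as a subgroup of $B_{n}(\St)$. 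The main obstacle in this part is the careful geometric bookkeeping: one must verify that the invariant curve system exists, consists of essential non-peripheral curves, and that the associated multitwist genuinely has infinite order and centralises the whole of $\phi(F)$ rather than merely normalising it; the exact values of $n$ excluded are dictated by when such a configuration can be arranged.

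For the non-realisation statement (d), a different, cohomological argument is needed, and this is where I expect the real difficulty to lie. The point is that $B_{4}(\St)$ and $B_{6}(\St)$ are \emph{too small} to contain $\tonestar\times\Z$ and $\oonestar\times\Z$ respectively. My plan is to argue by contradiction using the periodicity constraint of \relem{per24}: any subgroup $G$ of $B_{n}(\St)$ satisfies $H^{r}(G,\Z)\cong H^{r+4}(G,\Z)$ for all large $r$, i.e.\ is periodic of period dividing $4$. Now suppose $\tonestar\times\Z\hookrightarrow B_{4}(\St)$. By \repr{luis} the centraliser of any maximal finite cyclic subgroup is finite, and more relevantly, I would compute the cohomology of $\tonestar\times\Z$ using \relem{cohomtypeI} with the trivial action: the Künneth-type formula gives $H^{\ast}(\tonestar\times\Z,\Z)$ in terms of $H^{\ast}(\tonestar,\Z)$ tensored with $H^{\ast}(\Z,\Z)$, and since $\tonestar$ has period $4$ with $H^{4}(\tonestar,\Z)\cong\Z_{24}$, the product retains period $4$ and is consistent with \relem{per24}. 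So periodicity alone does not immediately forbid it; instead the obstruction must come from the \emph{size} of $B_{4}(\St)$.

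The cleaner route for (d) is therefore to rule out the direct factor $\Z$ centralising $F$ directly inside $B_{n}(\St)$. The idea is that if $\tonestar\times\Z$ were a subgroup of $B_{4}(\St)$, then $\tonestar$ would be realised (which it is, since $4\equiv 4\bmod 6$) but there would additionally exist an element $z$ of infinite order commuting with a copy of $\tonestar$. I would show this is impossible by analysing the centraliser $Z_{B_{4}(\St)}(\tonestar)$: since $\tonestar$ contains $\ang{\alpha_{0}}\cong\Z_{8}$ (or an appropriate maximal cyclic subgroup), any element centralising $\tonestar$ in particular centralises this cyclic subgroup, and by \repr{luis} (and \repr{genhodgkin1}) the centraliser of such a maximal cyclic subgroup is finite. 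Hence $Z_{B_{4}(\St)}(\tonestar)$ is finite, so no infinite-order central element exists and $\tonestar\times\Z$ cannot embed. The identical argument, using the cyclic subgroup $\Z_{12}$ or $\Z_{8}$ inside $\oonestar\subset B_{6}(\St)$ together with \repr{luis}, disposes of $\oonestar\times\Z\hookrightarrow B_{6}(\St)$. The subtle point I must handle carefully is that centralising $\tonestar$ requires commuting with \emph{every} element of $\tonestar$, so I should pick an element of $\tonestar$ whose cyclic subgroup is maximal finite cyclic in $B_{4}(\St)$ (so that \repr{luis} applies), and confirm that $\tonestar$ does contain such an element; here the classification of the finite cyclic subgroups of $\tonestar$ and $\oonestar$, together with \reth{murasugi}, makes this verification routine. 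I expect this final identification of the right maximal cyclic subgroup and the invocation of \repr{luis} to be the crux of part~(d).
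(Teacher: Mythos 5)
Your proposal follows the paper's own proof in both halves. Parts~(a)--(c) are proved in the paper exactly as you describe: the groups $\an[4]$, $\sn[4]$, $\an[5]$ are realised as rotation groups $\Gamma$ of a platonic solid inscribed in $\St$, with the $n$ marked points placed on a $\Gamma$-invariant configuration ($l\geq 2$ points per edge plus optional vertex/face points, which is exactly where the excluded values of $n$ come from); the centralising element is the product of Dehn twists along the $\Gamma$-orbit of a curve encircling the points of one edge, checked to commute with $\Gamma$ at the level of homeomorphisms and to have infinite order by projecting to $\pmcg[4]$, where it becomes $\phi(\sigma_{1}^{2}\sigma_{3}^{2})$. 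Part~(d) is proved exactly by your centraliser argument via \reth{murasugi} and \repr{luis}.

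Two slips in your write-up would fail as literally stated, though both are repairable inside your own framework. First, $\tonestar$ contains no element of order $8$ (its maximal cyclic subgroups are $\Z_{4}$ and $\Z_{6}$, by \repr{maxsubgp}), so you cannot use $\ang{\alpha_{0}}\cong \Z_{8}$ inside $\tonestar\subset B_{4}(\St)$; the correct choice --- and the paper's --- is an element of order $6$, whose cyclic group is conjugate to $\ang{\alpha_{1}}$, after which \repr{luis} gives the contradiction. Likewise $\oonestar$ contains no $\Z_{12}$; in $B_{6}(\St)$ one must use its $\Z_{8}$, conjugate to $\ang{\alpha_{2}}$. Second, in the lifting step of parts~(a)--(c), your appeal to \repr{perhom} to exclude a twisted product does not cover $F=\tonestar$: that proposition treats only $\oonestar$ and $\istar$, and $\tonestar\rtimes_{\omega}\Z$ does exist abstractly, so triviality of the action is not automatic. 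The paper sidesteps this entirely: if $z\in B_{n}(\St)$ is any lift of the centralising mapping class $\widetilde{z}$, then for $w$ in the lifted copy $\Lambda$ of $\tonestar$ one has $[w,z]\in\ang{\ft}$, hence $wz^{2}w^{-1}=z^{2}$ because $\ft$ is central of order $2$; thus $\ang{\Lambda,z^{2}}\cong \tonestar\times\Z$ with no appeal to outer automorphisms at all, and the same trick works uniformly for all three binary polyhedral groups.
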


\begin{rems}\mbox{}\label{rem:proddirbin}
\begin{enumerate}
\item Since $\tonestar\times \Z$ is not realised as a subgroup of $B_{4}(\St)$, neither is $\tonestar\rtimes_{\omega} \Z$.
\item\label{it:proddirbinb} \reth{main}(\ref{it:mainIII}) follows immediately from \repr{ttimesz}(\ref{it:ttimeszd}).
\end{enumerate}
\end{rems}

%\comment{We have explicit realisations of $\tonestar$ in $B_{6}(\St)$. Can we construct a copy of $\tonestar\times \Z$?}

\begin{rem}\label{rem:opentypeI}
For the following values of $n$ not covered by \repr{ttimesz}, the associated binary polyhedral group occurs as a subgroup of $B_{n}(\St)$, but it is an open question as to whether the given direct product is realised or not:
\begin{enumerate}[(i)]
\item $\tonestar\times \Z$, for $n\in \brak{6,8,10,14}$,
\item $\oonestar\times \Z$, for $n\in \brak{8,12,14,18,20,26}$,
\item $\istar\times \Z$, for $n\in \brak{12,20,30,32,42,50,62}$.
\end{enumerate}
\end{rem}

\begin{proof}[Proof of \repr{ttimesz}.]
We start by proving part~(\ref{it:ttimesza}). Suppose that $n=12$ or that $n\geq 16$ is even. Set $n=6l+4m$, where $l\geq 2$ and $m\in\brak{0,1,2}$. Let $\Delta$ be a regular tetrahedron, and let $X\subset \Delta$ be an $n$-point subset invariant under the action of the group $\Gamma\cong \an[4]$ of rotations of $\Delta$. We may suppose that each edge of $\Delta$ contains $l$ equally-spaced points in its interior. If $m\geq 1$ then we place four points of $X$ at the vertices of $\Delta$, and if $m=2$, we add a further four points at the barycentres of the faces. We inscribe $\Delta$ within the sphere $\St$, and from now on, the two shall be identified by radial projection without further comment. 

Recall that $\homeo$ and
\begin{equation*}
\map{\Psi}{\homeo}[\mcg]
\end{equation*}
were defined in \resecglobal{generalities}{conjfinite}.
%Let $\homeo$  \comment{define this elsewhere, as the notation will be used later, see also p.25} denote the group under composition of orientation-preserving homeomorphisms of $\St$ that leave $X$ invariant, and let $\map{\Psi}{\homeo}[\mcg]$ be the surjective homomorphism defined by $\Psi(f)=[f]$ for all $f\in \homeo$, where $[f]$ denotes the isotopy class of $f$ relative to $X$. We shall make use of the relationships between $\homeo$, $\mcg$ and $B_{n}(\St)$ given by $\Psi$ and \req{mcg}. 
Now $\Gamma$ is a subgroup of $\homeo$ whose image $\widetilde{\Gamma}=\Psi(\Gamma)$ under $\Psi$ is also isomorphic to $\an[4]$. Indeed, $f \in \homeo$ belongs to $\ker{\Psi}$ if and only if it is isotopic to the identity relative to $X$. Such an $f$ would thus fix $X$ pointwise, but the only element of $\Gamma$ which achieves this is the identity. So the restriction of $\Psi$ to $\Gamma$ is injective.

Since $\widetilde{\Gamma} \cong \an[4]$, the preimage $\Lambda=\phi^{-1}\bigl(\widetilde{\Gamma}\bigr)$ under the homomorphism $\phi$ of \req{mcg} is a copy of $\tonestar$. The aim is to prove the existence of an element $v$ of infinite order belonging to the centraliser of $\Lambda$ in $B_{n}(\St)$. We claim that it suffices to exhibit an element $\widetilde{z}$ of infinite order belonging to the centraliser of $\widetilde{\Gamma}$ in $\mcg$. Indeed, suppose such a $\widetilde{z}$ exists, and let $z\in B_{n}(\St)$ be a preimage of $\widetilde{z}$ under $\phi$. Clearly $z$ is also of infinite order. Let $w\in \Lambda$, and let $\widetilde{w}=\phi(w)\in \widetilde{\Gamma}$. Then $\phi([w,z])=[\widetilde{w},\widetilde{z}]=1$, so $[w,z]=\ftalt{\epsilon}$, where $\epsilon\in \brak{0,1}$. Thus  $wzw^{-1}=z\ftalt{\epsilon}$, hence $wz^2w^{-1}=z^2$ for all $w\in \Lambda$, and so we may take $v=z^2$. It follows that $\ang{\Lambda, v}\cong \tonestar \times \Z$.

To prove the existence of $\widetilde{z}$, denote the edges of $\Delta$ by $e_{1},\ldots,e_{6}$, and for $j=1,\ldots,6$, let $f_{j}\in \Gamma$ be such that $f_{j}(e_{1})=e_{j}$ (we choose $f_{1}=\id$). Let $\mathcal{C}_{1}$ be a positively oriented simple closed curve containing the $l$ points of $X$ belonging to $e_{1}$, and let $\mathcal{A}_{1}$ be a small annular neighbourhood of $\mathcal{C}_{1}$, chosen so that the orbit $\mathcal{C}$ of $\mathcal{C}_{1}$ (resp.\ the orbit $\mathcal{A}$ of $\mathcal{A}_{1}$) under the action of $\Gamma$ consists of the six (disjoint) oriented simple closed curves $\mathcal{C}_{j}=f_{j}(\mathcal{C}_{1})$, $j=1,\ldots,6$ (resp.\ six pairwise-disjoint annuli $\mathcal{A}_{j}=f_{j}(\mathcal{A}_{1})$, $j=1,\ldots,6$) (see Figure~\ref{fig:tetra}). The orbits $\mathcal{C}$ and $\mathcal{A}$ are obviously invariant under this action. Each $\mathcal{C}_{i}$ is associated with the edge $e_{i}$ of $\Delta$, and bounds a disc containing the $l$ points of $X \cap e_{i}$. Let $T_{1}\in \homeo$ be the (positive) Dehn twist along $\mathcal{C}_{1}$ in $\mathcal{A}_{1}$, and set $T_{i}=f_{i}\circ T_{1} \circ f_{i}^{-1}$. Then $T_{i}$ is the (positive) Dehn twist along $\mathcal{C}_{i}$ in $\mathcal{A}_{i}$. Since the $\mathcal{A}_{i}$ are pairwise disjoint, the $T_{i}$ commute pairwise. 

\begin{figure}[h]
\centering
\begin{tikzpicture}[scale=4]
\coordinate (A) at (-0.0625,0.696);
\coordinate (B) at (0.86,-0.5);
\coordinate (C) at (-0.86,-0.5);
\coordinate (D) at (-0.1,-1.23);

\node[black] (a) at (A) {};
\node[black] (b) at (B) {};
\node[black] (c) at (C) {};
\node[black] (d) at (D) {};

\draw[white,fill=gray!5] (A)--(B)--(C)--cycle;
\draw[white,fill=gray!10] (B)--(D)--(C)--cycle;

\fill [black] 
($2/3*(a) + 1/3*(b)$) circle (0.5pt)
($1/3*(a) + 2/3*(b)$) circle (0.5pt)
($2/3*(a) + 1/3*(c)$) circle (0.5pt)
($1/3*(a) + 2/3*(c)$) circle (0.5pt)
($2/3*(a) + 1/3*(d)$) circle (0.5pt)
($1/3*(a) + 2/3*(d)$) circle (0.5pt)
($2/3*(c) + 1/3*(b)$) circle (0.5pt)
($1/3*(c) + 2/3*(b)$) circle (0.5pt)
($2/3*(d) + 1/3*(b)$) circle (0.5pt)
($1/3*(d) + 2/3*(b)$) circle (0.5pt)
($2/3*(d) + 1/3*(c)$) circle (0.5pt)
($1/3*(d) + 2/3*(c)$) circle (0.5pt);

%\fill[even odd rule,color=gray!35] ($1/2*(a) + 1/2*(d)$) 
%circle [x radius=0.4, y radius=0.07, rotate=89, dotted] ($1/2*(a) + 1/2*(d)$) circle [x radius=0.55, y radius=0.15, rotate=89];
%
%\draw [ultra thick] ($1/2*(a) + 1/2*(d)$) circle [x radius=0.475, y radius=0.11, rotate=89];

\fill[even odd rule,color=gray!35,rotate=89] ($1/2*(a) + 1/2*(d)$) 
ellipse (0.4cm and 0.07cm) ($1/2*(a) + 1/2*(d)$) circle (0.55cm and 0.15cm);

\draw[ultra thick,rotate=89] ($1/2*(a) + 1/2*(d)$) ellipse (0.475cm and 0.11cm);

\draw (-0.65,0.4) node {\large$\Delta$};
\draw (0.17,-0.3) node {$\mathcal{A}_{1}$};
\draw (-0.125,0.42) node {$e_{1}$};
\draw (0.078,0.1) node {$\mathcal{C}_{1}$};

%\draw[thick,fill=gray!10] (A)--(B)--(C)--cycle;
\draw[thick] (A)--(C)--(D)--(B)--cycle;
\draw[ultra thick] (A)--(D);
\draw[dashed] (B)--(C);

\end{tikzpicture}
\caption{The geometric construction of $\widetilde{z}$ in $\mcg[12]$.}\label{fig:tetra}
\end{figure}

Set $T=T_{1}\circ \cdots \circ T_{6}$. Let us prove that $T$ is an element of $\homeo$ of infinite order belonging to the centraliser of $\Gamma$. To see this, let $f\in \Gamma$, and let $x\in \Delta$. First suppose that $x\notin \bigcup_{i=1}^6 \mathcal{A}_{i}$. Then $f(x)\notin \bigcup_{i=1}^6 \mathcal{A}_{i}$ since $\bigcup_{i=1}^6 \mathcal{A}_{i}$ is invariant under the action of $\Gamma$, and so $f\circ T(x)=f(x)=T\circ f(x)$ as required. Now assume that $x\in \mathcal{A}_{j}$ for some $j=1,\ldots,6$. By relabelling the edges of $\Delta$ if necessary, we may suppose that $x\in \mathcal{A}_{1}$. Let $\ang{g}\cong \Z_{2}$ be the stabiliser of $e_{1}$ in $\Gamma$. If we parametrise $\mathcal{A}_{1}$ as $[0,1]\times \St[1]$ then $T_{1}$ is defined by $T_{1}(t,s)=(t,se^{2\pi it})$, and the restriction of $g$ to $\mathcal{A}_{1}$ is given by $g(t,s)=(t,se^{\pi i})$. A straightforward calculation shows that $g\circ T_{1}=T_{1}\circ g$ on $\mathcal{A}_{1}$. By considering the action on the oriented edges of $\Delta$, it follows that there exist $i\in \brak{1,\ldots,6}$ and $\epsilon\in \brak{0,1}$ such that $f=f_{i}\circ g^{\epsilon}$, so $f(x)\in \mathcal{A}_{i}$, and:
\begin{equation*}
T\circ f(x)= T_{i}\circ f(x)=T_{i}\circ f_{i}\circ g^{\epsilon} (x)=f_{i}\circ T_{1}\circ g^{\epsilon}(x)= f_{i}\circ g^{\epsilon}\circ T_{1}(x)=f\circ T(x),
\end{equation*}
using the facts that the $T_{j}$ commute pairwise, and that for $j=1,\ldots,6$, the support of $T_{j}$ is $\mathcal{A}_{j}$. This shows that $T$ belongs to the centraliser of $\Gamma$ in $\homeo$, and so $\widetilde{T}=\Psi(T)$ belongs to the centraliser of $\widetilde{\Gamma}$ in $\mcg$. It remains to show that $\widetilde{T}$ is of infinite order. This is a consequence of a generalisation of the intersection number formula for Dehn twists, see~\cite[Propositions~3.2 and~3.4]{FM} for example. An alternative proof of this fact is as follows. Since $\widetilde{T}$ belongs to the pure mapping class group $\pmcg$ of $\St$ on $n$ points, we may consider its image $\widehat{T}$ under the homomorphism $\pmcg \to \pmcg[4]$, obtained in an analogous manner to the Fadell-Neuwirth homomorphism by removing all but two pairs of points, one pair contained in the small disc bounded by $\mathcal{C}_{1}$, and another pair contained in that bounded by $\mathcal{C}_{2}$. Since $\mathcal{C}_{1}$ and $\mathcal{C}_{2}$ are both positively oriented, $\widehat{T}$ is the image under $\phi$ of a pure braid, which choosing appropriate generators, may be written as $\sigma_{1}^2 \sigma_{3}^2$. We saw at the end of the proof of \repr{rtimes} that this element is of infinite order, and this
%
%Using the fact that $P_{4}(\St)$ is isomorphic to the direct product of $\ang{\ft}$ with a free group of rank $2$~\cite{GG2}, one may check that $\widehat{T}$ is of infinite order, which 
%
implies that $\widetilde{T}$ is also of infinite order. We have thus shown that there exists an element $\widetilde{z}=\widetilde{T}$ of infinite order belonging to the centraliser of $\widetilde{\Gamma}$ in $\mcg$, and this proves part~(\ref{it:ttimesza}).

Applying a similar construction for $\oonestar$ (taking $\Delta$ to be a cube) and for $\istar$ (taking $\Delta$ to be a dodecahedron) yields parts~(\ref{it:ttimeszb}) and~(\ref{it:ttimeszc}). Note that in the case of $\oonestar$ (resp.\ $\istar$), we have that $n\equiv 0,2 \bmod 6$ (resp.\ $n\equiv 0,2,12,20 \bmod 30$). We set $n=12l+8m+6r$ (resp.\ $n=30l+20m+12r$), where $l\in \N$, and $m,r\in \brak{0,1}$ denote respectively the number of points of $X$ placed at the vertices of $\Delta$ and at the barycentre of the faces of $\Delta$. Since we require $l\geq 2$ in the construction, the excluded values of $n$ are $6,8,12,14,18,20$ and $26$ (resp.\ $12,20,30,32,42,50$ and $62$). 

Finally, we prove part~(\ref{it:ttimeszd}). Suppose that $\tonestar\times \Z$ (resp.\ $\oonestar\times \Z$) is realised as a subgroup $K$ of $B_{4}(\St)$ (resp.\ $B_{6}(\St)$). Since $\tonestar$ (resp.\ $\oonestar$) possesses elements of order $6$ (resp.\ $8$) by \repr{maxsubgp}, $K$ contains a subgroup $H$ isomorphic to $\Z_{6}\times \Z$ (resp.\ $\Z_{8}\times \Z$) whose finite factor is conjugate to $\ang{\alpha_{1}}$  (resp.\ $\ang{\alpha_{2}}$) by \reth{murasugi}. But the existence of $H$ then contradicts \repr{luis}.
%Since $\tonestar$ (resp.\ $\oonestar$) is maximal in $B_{4}(\St)$ (resp.\ $B_{6}(\St)$) by \reth{finitebn}, there exists an element $w$ belonging to $B_{4}(\St)$ (resp.\ $B_{6}(\St)$) of infinite order such that $\ang{K,w}\cong \tonestar\times \Z$ (resp.\ $\ang{K,w}\cong \oonestar\times \Z$). But since $K$ contains elements of order $6$ (resp.\ $8$), $\ang{K,w}$ contains a subgroup isomorphic to $\Z_{6}\times \Z$ (resp.\ $\Z_{8}\times \Z$), which is not possible by \reco{directprod02}.
\end{proof}

As a consequence of \repr{ttimesz}(\ref{it:ttimeszd}), we obtain the following result which complements that of \repr{genhodgkin1}.
\begin{cor}
If $n=4$ (resp.\ $n=6$), let $H$ be a subgroup of $B_{n}(\St)$ isomorphic to $\tonestar$ (resp.\ $\oonestar$). Then the normaliser of $H$ in $B_{n}(\St)$ is $H$ itself.
\end{cor}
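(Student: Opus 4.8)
The plan is to reduce the statement to the finiteness of the centraliser, then invoke the maximality of $H$ given by \reth{finitebn}. Write $N=N_{B_{n}(\St)}(H)$ and $Z=Z_{B_{n}(\St)}(H)$, so that $H\subseteq N$. Conjugation defines a homomorphism $N\to \aut{H}$ whose kernel is exactly $Z$, and since $\aut{H}$ is finite (it is $\sn[4]$ for $\tonestar$ and of order $48$ for $\oonestar$, by the computations in \resec{autout}), the index $[N:Z]$ is finite. Hence $N$ is finite if and only if $Z$ is finite, and the whole problem comes down to showing that $Z$ is finite.

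To establish that $Z$ is finite I would argue by contradiction. Suppose $Z$ is infinite; then by \repr{finord} it contains an element $z$ of infinite order. As $z$ commutes with every element of $H$ and $\ang{z}$ is torsion free, we have $\ang{z}\cap H=\brak{e}$ and $z$ is central in $\ang{H,z}$, so $\ang{H,z}$ is the internal direct product $H\times \ang{z}\cong H\times \Z$. For $n=4$ (resp. $n=6$) this produces a copy of $\tonestar\times \Z$ (resp. $\oonestar\times \Z$) inside $B_{n}(\St)$, which is impossible by \repr{ttimesz}(\ref{it:ttimeszd}). Therefore $Z$ is finite, and consequently $N$ is finite.

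It then remains to combine finiteness with maximality. For $n=4$ (resp. $n=6$) the group $\tonestar$ (resp. $\oonestar$) occurs in the list of maximal finite subgroups of \reth{finitebn}, and a comparison of orders with the other maximal finite subgroups (namely $\dic{16}$ for $n=4$, and $\Z_{10}$ and $\dic{24}$ for $n=6$) shows that any finite subgroup of $B_{n}(\St)$ containing $H$ must already coincide with $H$. Since $N$ is a finite subgroup containing $H$, we conclude that $N=H$, as required.

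I expect the only genuine obstacle to be the finiteness of $Z$; this is exactly the point at which the geometry of the configuration space intervenes, and it is already packaged in \repr{ttimesz}(\ref{it:ttimeszd}) (whose proof reduces a copy of $\Z_{6}\times \Z$, resp. $\Z_{8}\times \Z$, to a contradiction with \repr{luis}). Once that input is granted, the argument is purely formal, resting on the finite index of $Z$ in $N$ and the maximality afforded by \reth{finitebn}.
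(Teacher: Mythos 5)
Your proof is correct and follows essentially the same route as the paper: both arguments rest on the finiteness of $\aut{H}$ to pass between the normaliser and the centraliser, use \repr{ttimesz}(\ref{it:ttimeszd}) to exclude infinite-order elements commuting with $H$, invoke \repr{finord} to conclude finiteness, and finish with the maximality of $H$ from \reth{finitebn}. The only difference is bookkeeping: the paper shows each $x\in N$ has finite order (since some power of $x$ centralises $H$) and applies \repr{finord} to $N$ directly, whereas you apply \repr{finord} to the centraliser first and then use the finite index $[N:Z]$ — the two are logically interchangeable.
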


\begin{proof}
In both cases, $H$ is finite maximal by \reth{finitebn}, so it suffices to prove that $N=N_{B_{n}(\St)}(H)$ is finite. If $x\in N$ then some power of $x$ belongs to $Z_{B_{n}(\St)}(H)$, but by \repr{ttimesz}(\ref{it:ttimeszd}), $x$ must be of finite order. Hence $N$ is finite by \repr{finord}.
\end{proof}

\subsection{Realisation of $\tonestar\rtimes_{\omega} \Z$}\label{sec:nontrivtstar}

We now consider the realisation of $\tonestar\rtimes_{\omega} \Z$ as a subgroup of $B_{n}(\St)$, where $\omega(1)$ is as defined in \req{nontrivacttstar}.

\begin{prop}\label{prop:trtimesz}%\mbox{}
%\begin{enumerate}[(a)]
%\item 
If $\oonestar \times \Z$ is realised as a subgroup of $B_{n}(\St)$ then so is $\tonestar\rtimes_{\omega} \Z$.
%\item\label{it:trtimeszb} If $n\equiv 4 \bmod 6$ then $B_{n}(\St)$ has no subgroup isomorphic to $\tonestar\rtimes_{\omega} \Z$.
%\end{enumerate}
\end{prop}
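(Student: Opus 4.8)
The plan is to observe that the statement is really an abstract embedding problem: it suffices to exhibit a copy of $\tonestar\rtimes_{\omega}\Z$ inside $\oonestar\times\Z$, for then composing with the given embedding $\oonestar\times\Z\hookrightarrow B_{n}(\St)$ produces the desired subgroup of $B_{n}(\St)$. So I would reduce at once to constructing a monomorphism $\tonestar\rtimes_{\omega}\Z\to\oonestar\times\Z$, at which point no properties of $B_{n}(\St)$ beyond the hypothesis are needed.

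The construction uses the geometric input already recorded in \resec{autout}. Recall from there that, for the presentation \reqref{preststar} of $\tonestar$, the group $\tonestar=\ang{P,Q,X}$ is the index-$2$ subgroup of $\oonestar$, and that the automorphism $\omega(1)$ defined by \reqref{nontrivacttstar} is precisely the restriction to $\tonestar$ of conjugation by an element $R\in\oonestar\setminus\tonestar$. Writing $\oonestar\times\Z=\ang{\oonestar,t}$, where $t$ generates the $\Z$-factor and commutes with $\oonestar$, I would set $T=\tonestar$ and $g=Rt$. Since $t$ is central, conjugation by $g$ restricted to $T$ agrees with conjugation by $R$, and hence realises the action $\omega$ on $T$ exactly.

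It then remains to check three routine points. First, $g$ has infinite order: since $g^{k}=R^{k}t^{k}$ in the direct product, $g^{k}=1$ forces $t^{k}=1$ and so $k=0$. Second, $T$ is normal in $\ang{T,g}$, because conjugation by both $T$ and $g$ preserves $T$. Third, the projection $\map{p}{\oonestar\times\Z}[\Z]$ onto the $\Z$-factor sends $T$ to $0$ and $g$ to $t$, so no nonzero power of $g$ lies in $T$ and $\ang{T,g}/T\cong\Z$. Consequently the extension $1\to T\to\ang{T,g}\to\Z\to 1$ splits, with $g$ acting on $T$ by $\omega$, giving $\ang{T,g}\cong\tonestar\rtimes_{\omega}\Z$ as required.

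There is no serious obstacle here; the one point demanding care is that the action realised be the specific automorphism $\omega(1)$ of \redef{v1v2}, rather than merely some representative of the nontrivial class of $\out{\tonestar}$. This is guaranteed by the explicit identification of $\omega(1)$ with conjugation by $R$ in \resec{autout}. Alternatively, one may note that any $R\in\oonestar\setminus\tonestar$ induces a non-inner automorphism of $\tonestar$---its centraliser in $\oonestar$ is $Z(\oonestar)=\ang{P^{2}}\subset\tonestar$---hence represents the nontrivial element of $\out{\tonestar}\cong\Z_{2}$, so that \repr{isoout} identifies $\tonestar\rtimes_{c_{R}}\Z$ with $\tonestar\rtimes_{\omega}\Z$ in any case.
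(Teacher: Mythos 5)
Your proof is correct and follows essentially the same route as the paper's: both take the element $R\in\oonestar\setminus\tonestar$, whose conjugation action on $\tonestar$ is exactly $\omega(1)$ by \reqref{presostar} and \reqref{nontrivacttstar}, multiply it by the infinite-order generator of the central $\Z$-factor, and check that the resulting element has infinite order and acts on $\tonestar$ by $\omega$, yielding $\tonestar\rtimes_{\omega}\Z$. Your repackaging as an abstract embedding $\tonestar\rtimes_{\omega}\Z\to\oonestar\times\Z$ composed with the given inclusion is only a cosmetic difference from the paper's in-situ construction of the subgroup $\ang{H,zR}$ of $B_{n}(\St)$.
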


\begin{rem}\label{rem:opentypeIa}%\mbox{}
%\begin{enumerate}
%\item 
Let $n\notin\brak{6,8,12,14,18,20,26}$. Putting together the results of Propositions~\ref{prop:ttimesz}(\ref{it:ttimeszb}) and~\ref{prop:trtimesz}, we see that $\tonestar\rtimes_{\omega} \Z$ is realised as a subgroup of $B_{n}(\St)$ if and only if $n\equiv 0,2 \bmod 6$.
%\item\label{it:opentypeIab} 
%\reth{main}(\ref{it:mainIII}) follows immediately from Propositions~\ref{prop:ttimesz}(\ref{it:ttimeszd}) and~\ref{prop:trtimesz}(\ref{it:trtimeszb}).
%\end{enumerate}
\end{rem}

%\comment{Can we rule out $\tonestar\rtimes_{\omega} \Z$ in $B_{6}(\St)$ using the copy of $\tonestar$ described in \rerem{tstarb6}?}

\begin{proof}[Proof of \repr{trtimesz}.] Suppose that $\oonestar \times \Z$ is realised as a subgroup $L$ of $B_{n}(\St)$. Then there exist a subgroup $K$ of $B_{n}(\St)$ isomorphic to $\oonestar$ and an element $z\in B_{n}(\St)$ of infinite order such that $z$ belongs to the centraliser of $K$. Let~\req{presostar} denote a presentation of $K$, and let $H=\ang{P,Q,X}$ denote the subgroup of $K$ isomorphic to $\tonestar$, with the presentation of \req{preststar}. Equations~\reqref{nontrivacttstar} and~\reqref{presostar} imply that the restriction  to $H$ of conjugation by the element $R$ of $K$ represents the nontrivial element of $\out{\tonestar}$. But $z$ commutes with $R$, so $zR$ is of infinite order, and since $z$ also belongs to the centraliser of $H$, it follows that $\ang{H,zR}\cong \tonestar\rtimes_{\omega} \Z$.
\end{proof}

\section{Proof of the realisation of the elements of $\mathbb{V}_{1}(\lowercase{n})$ in $B_{\lowercase{n}}(\St)$}\label{sec:realtypeI}

In this section, we bring together the results of Sections~\ref{part:realisation}.\ref{sec:centcycdic}--\ref{sec:typeIbinpoly} to prove \repr{realV1}. This proposition will imply \reth{main}(\ref{it:mainII}) for the Type~I subgroups of $B_{n}(\St)$, namely the realisation of the virtually cyclic groups given by~(\ref{it:mainIdef})(\ref{it:mainzq})--(\ref{it:maini}) of \redef{v1v2}, with the exception of the values of $n$ given in \rerem{exceptions}(\ref{it:exceptions}) and not covered by \reth{main}(\ref{it:mainII})(\ref{it:excepa})--(\ref{it:excepd}). 
%The realisation of 
%Furthermore, in the case of the Type~I subgroups, it will justify the list of exceptions  whose realisation as a subgroup of $B_{n}(\St)$ is an open question.

\begin{prop}\label{prop:realV1}
Let $n\geq 4$. The following Type~I virtually cyclic groups are realised as subgroups of $B_{n}(\St)$:
%\begin{enumerate}[(1)]
%\item\label{it:realV11}
\begin{enumerate}[(a)]
\item\label{it:realV11a} $\Z_{q}\times \Z$, where $q\divides 2(n-i)$ with $i\in\brak{0,1,2}$, $1\leq q\leq n-i$, and $q<n-i$ if $n-i$ is odd.
\item\label{it:realV11b} $\Z_{q}\rtimes_{\rho} \Z$, where $q\divides 2(n-i)$ with $i\in\brak{0,2}$, $3\leq q\leq n-i$, $q<n-i$ if $n-i$ is odd, and $\rho(1)\in \aut{\Z_{q}}$ is multiplication by $-1$.
\item\label{it:realV13a} $\dic{4m}\times \Z$, where $m\divides n-i$ with $i\in \brak{0,2}$, and $3\leq m\leq (n-i)/2$.
\item\label{it:realV13b} $\dic{4m}\rtimes_{\nu} \Z$, where $m\divides n-i$ with $i\in \brak{0,2}$, $m\geq 3$, $(n-i)/m$ is even, and $\nu(1)$ is the automorphism of $\dic{4m}$ given by \req{actdic4m}.
\item\label{it:realV12} \begin{enumerate}[(i)]
\item\label{it:realV12a} $\quat \times \Z$ for all $n$ even.
\item\label{it:realV12b} $\quat \rtimes_{\alpha} \Z$, for all $n$ even, $n\notin\brak{6,10,14}$, where $\alpha(1)\in \aut{\quat}$ is given by $\alpha(1)(i)=j$ and $\alpha(1)(j)=k$, and where $\quat=\brak{\pm 1, \pm i, \pm j, \pm k}$.

\item\label{it:realV12c} $\quat \rtimes_{\beta} \Z$ for all $n$ even, where $\beta(1)\in \aut{\quat}$ is given by $\beta(1)(i)=k$ and $\beta(1)(j)=j^{-1}$.
\end{enumerate}
\item\label{it:realV14a} $\tonestar\times \Z$, where $n=12$ or $n\geq 16$ is even.
\item\label{it:realV14d} $\tonestar\rtimes_{\omega} \Z$, where $n=24$ or $n\geq 30$ and $n\equiv 0,2\bmod{6}$, and $\omega(1)$ is the automorphism of $\tonestar$ given by \req{nontrivacttstar}.

\item\label{it:realV14b} $\oonestar\times \Z$, where $n=24$ or $n\geq 30$ and $n\equiv 0,2\bmod{6}$.
\item\label{it:realV14c} $\istar\times \Z$, where $n=60$ or $n\geq 72$ and $n\equiv 0,2,12,20\bmod{30}$.
\end{enumerate}
%\item\label{it:realV12}
%\begin{enumerate}[(a)]
%\item\label{it:realV12a} $\quat \times \Z$ for all $n$ even.
%\item\label{it:realV12b} $\quat \rtimes_{\alpha} \Z$, for $n$ even, $n\notin\brak{6,10,14}$.
%\item\label{it:realV12c} $\quat \rtimes_{\beta} \Z$ for all $n$ is even.
%\end{enumerate}
%\item\label{it:realV13}
%\begin{enumerate}[(a)]
%\item\label{it:realV13a} $\dic{4m}\times \Z$, where $m\divides n-i$ with $i\in \brak{0,2}$, and $3\leq s\leq (n-i)/2$.
%\item\label{it:realV13b} $\dic{4m}\rtimes_{\theta} \Z$, where $m\divides n-i$ with $i\in \brak{0,2}$, $m\geq 3$, $(n-i)/m$ is even, and $\theta(1)\in \aut{\dic{4s}}$ is the automorphism given by \req{actdic4m}.
%\end{enumerate}
%\item\label{it:realV14}
%\begin{enumerate}[(a)]
%\item\label{it:realV14a}
%\begin{enumerate}[(i)]
%\item $\tonestar\times \Z$, where $n=12$, or $n\geq 16$ is even.
%\item $\oonestar\times \Z$, where $n=24$, or $n\geq 30$ and $n\equiv 0,2\bmod{6}$.
%\item $\istar\times \Z$, where $n=60$, or $n\geq 72$ and $n\equiv 0,2,12,20\bmod{30}$.
%\end{enumerate}
%\item\label{it:realV14b} $\tonestar\rtimes_{\theta} \Z$, where $n=24$, or $n\geq 30$ and $n\equiv 0,2\bmod{6}$, and $\theta(1)$ is the automorphism of $\tonestar$ given by \req{nontrivacttstar}.
%\end{enumerate}
%\end{enumerate}
\end{prop}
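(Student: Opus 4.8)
The plan is to assemble the realisation results of Sections~\ref{sec:centcycdic}--\ref{sec:typeIbinpoly}, handling each family of $\mathbb{V}_{1}(n)$ in turn. Most of the parts are immediate citations, and the only family requiring genuine additional argument is that of $\quat$ in part~(\ref{it:realV12}).

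First I would dispose of the cyclic and dicyclic families. Parts~(\ref{it:realV11a}) and~(\ref{it:realV11b}) are exactly the sufficiency directions of Propositions~\ref{prop:necsuftimes} and~\ref{prop:rtimes}; here one notes that for $i\in\brak{0,2}$ the integers $n$ and $n-i$ have the same parity, so that the conditions ``$q<n-i$ if $n-i$ is odd'' and ``$q<n-i$ if $n$ is odd'' coincide. Parts~(\ref{it:realV13a}) and~(\ref{it:realV13b}) are the sufficiency directions of Propositions~\ref{prop:dic4sz} and~\ref{prop:nonk}, once one observes that a divisor $m\geq3$ of $n-i$ is a \emph{strict} divisor precisely when $m\leq(n-i)/2$. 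The binary polyhedral families are equally direct: parts~(\ref{it:realV14a}), (\ref{it:realV14b}) and~(\ref{it:realV14c}) are Proposition~\ref{prop:ttimesz}(\ref{it:ttimesza})--(\ref{it:ttimeszc}), while part~(\ref{it:realV14d}) is obtained by feeding the realisation of $\oonestar\times\Z$ of Proposition~\ref{prop:ttimesz}(\ref{it:ttimeszb}) into Proposition~\ref{prop:trtimesz}.

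This leaves part~(\ref{it:realV12}). Part~(\ref{it:realV12c}), the group $\quat\rtimes_{\beta}\Z$, is Proposition~\ref{prop:constq8}(\ref{it:q8partb}), which is valid for every even $n$. For part~(\ref{it:realV12a}), rather than building $\quat\times\Z$ afresh I would extract it from the copy just produced: since $\beta(1)^{2}=\id$, the generator $\zeta$ of the $\Z$-factor satisfies $\zeta^{2}x\zeta^{-2}=x$ for all $x$ in the $\quat$-factor, whence $\ang{\quat,\zeta^{2}}\cong\quat\times\Z$, realising $\quat\times\Z$ for all even $n$. For part~(\ref{it:realV12b}), the group $\quat\rtimes_{\alpha}\Z$, I would combine three sources: Proposition~\ref{prop:constq8}(\ref{it:q8parta}) covers $n\equiv0\bmod4$ with $n\geq8$; Propositions~\ref{prop:tstarxz}(\ref{it:tstarxza}) and~\ref{prop:ttimesz}(\ref{it:ttimesza}) realise $\quat\rtimes_{\alpha}\Z$ inside $\tonestar\times\Z$ whenever $n=12$ or $n\geq16$ is even; and the case $n=4$ is treated by an explicit braid construction. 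Their union covers every even $n$ except $n\in\brak{6,10,14}$, exactly the open cases of \rerem{exceptions}.

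The main obstacle is precisely the $n=4$ construction for $\quat\rtimes_{\alpha}\Z$. The element $\nu=\alpha_{0}^{n/4}\Omega_{2}$ of Proposition~\ref{prop:constq8}(\ref{it:q8parta}) specialises at $n=4$ to $\alpha_{1}$, which has finite order $6$, so that construction degenerates and a fresh one is required. Starting from the standard copy $\ang{\alpha_{0}^{2},\garside[4]}\cong\quat$ of \rerem{finitesub}(\ref{it:finitesubb}), I would exhibit a specific braid $z\in B_{4}(\St)$ whose conjugation action cyclically permutes the three order-$4$ elements $\alpha_{0}^{2}$, $\garside[4]$ and $\alpha_{0}^{2}\garside[4]$ (up to sign), verifying the permutation via~\reqref{uniqueorder2}, \reqref{garsideconj} and \relem{funda}. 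The delicate point, as always in $B_{4}(\St)$, is to certify that $z$ has \emph{infinite} order, which I would do by examining its induced permutation in $\sn[4]$ together with its image $\xi(z)$ in the abelianisation $\Z_{6}$ and invoking \reth{murasugi}, exactly as in the infinite-order verifications of Propositions~\ref{prop:rtimes} and~\ref{prop:constq8}.
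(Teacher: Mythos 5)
Apart from one sub-case, your proposal coincides with the paper's own proof: parts~(\ref{it:realV11a})--(\ref{it:realV13b}) are cited to Propositions~\ref{prop:necsuftimes}, \ref{prop:rtimes}, \ref{prop:dic4sz} and~\ref{prop:nonk}; part~(\ref{it:realV12})(\ref{it:realV12c}) to \repr{constq8}(\ref{it:q8partb}), with part~(\ref{it:realV12})(\ref{it:realV12a}) obtained by passing to the square of the $\Z$-factor of that copy; parts~(\ref{it:realV14a}), (\ref{it:realV14b}) and~(\ref{it:realV14c}) to \repr{ttimesz}, and part~(\ref{it:realV14d}) to \repr{ttimesz}(\ref{it:ttimeszb}) combined with \repr{trtimesz}. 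Your reconciliations of hypotheses (that ``$q<n-i$ if $n$ is odd'' and ``$q<n-i$ if $n-i$ is odd'' agree when $i\in\brak{0,2}$, and that a divisor $m\geq 3$ of $n-i$ is strict exactly when $m\leq (n-i)/2$) are correct, as is the three-way split for $\quat\rtimes_{\alpha}\Z$: \repr{constq8}(\ref{it:q8parta}) for $n\equiv 0\bmod{4}$, $n\geq 8$; \repr{tstarxz}(\ref{it:tstarxza}) together with \repr{ttimesz}(\ref{it:ttimesza}) for $n=12$ and all even $n\geq 16$; and $n=4$ separately. The coverage argument (all even $n\notin\brak{6,10,14}$) is right.

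The gap is the case $n=4$ of $\quat\rtimes_{\alpha}\Z$, which you correctly identify as the main obstacle but then only promise to settle: no braid $z$ is exhibited, and ``exhibit a specific braid whose conjugation cyclically permutes the three order-$4$ elements, then certify infinite order via \reth{murasugi}'' describes the goal rather than achieving it. Producing such an element is precisely the nontrivial content of this case, and these searches are not routine --- the paper's own \rerem{tstarb6} records that for $n=6$ the authors could not find the analogous infinite-order element, which is why $n=6$ remains open. The paper discharges $n=4$ not by a fresh construction but by quoting \cite[Theorem~1.3(3)]{GG5}: $B_{4}(\St)$ contains a copy of $\quat$ generated by $x=\sigma_{3}\sigma_{1}^{-1}$ and $y=axa^{-1}$, where $a=\sigma_{1}^{2}\sigma_{2}\sigma_{1}^{-3}$ is of infinite order and acts by conjugation as $x\mapsto y$, $y\mapsto xy$, so that $\ang{x,y,a}\cong \quat\rtimes_{\alpha}\Z$. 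For what it is worth, your choice of starting copy $\ang{\alpha_{0}^{2},\garside[4]}$ is the right one: it lies in the conjugacy class of $\quat$ in $B_{4}(\St)$ all of whose order-$4$ elements have the same cycle type in $\sn[4]$, whereas in the other class (see the proof of \repr{genhodgkin1}) the order-$4$ elements have distinct cycle types and so cannot be cyclically permuted by any conjugation. But until an explicit $z$ is written down and its action and infinite order verified, part~(\ref{it:realV12})(\ref{it:realV12b}) is unproved for $n=4$.
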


%\begin{rem}\label{rem:realV1}
%%\mbox{}
%%\begin{enumerate}[(a)]
%%\item 
%As we saw in \repr{necsuftimes} and Propositions~\ref{prop:rtimes},~\ref{prop:dic4sz} and~\ref{prop:nonk}, the conditions given in parts~(\ref{it:realV11}) and~(\ref{it:realV13}) of \repr{realV1} are also necessary for the existence of the given Type~I group.
%%\item
%%\end{enumerate}
%\end{rem}

\begin{proof}
Parts~(\ref{it:realV11a}),~(\ref{it:realV11b}),~(\ref{it:realV13a}) and~(\ref{it:realV13b}) are proved in Propositions~\ref{prop:necsuftimes},~\ref{prop:rtimes},~\ref{prop:dic4sz} and~\ref{prop:nonk} respectively.
By \repr{constq8}(\ref{it:q8partb}), $\quat\rtimes_{\beta}\Z$ is realised as a subgroup of $B_{n}(\St)$ for all $n\geq 4$ even, and its subgroup generated by $\quat$ and the square of the $\Z$-factor is abstractly isomorphic to $\quat\times\Z$, which proves parts~(\ref{it:realV12})(\ref{it:realV12a}) and~(\ref{it:realV12c}). We now consider the realisation of $\quat \rtimes_{\alpha}\Z$ as a subgroup of $B_{n}(\St)$. Suppose first that $n\equiv 0 \bmod{4}$. If $n\geq 8$ then the result follows from \repr{constq8}(\ref{it:q8parta}). So suppose that $n=4$. By~\cite[Theorem~1.3(3)]{GG5}, $B_{4}(\St)$ contains a copy of $\quat$ generated by $x=\sigma_{3}\sigma_{1}^{-1}$ and $y =(\sigma_{1}^2 \sigma_{2}\sigma_{1}^{-3})\sigma_{3}\sigma_{1}^{-1}(\sigma_{1}^3\sigma_{2}^{-1} \sigma_{1}^{-2})$,
%\begin{align*}
%y &=(\sigma_{1}^2 \sigma_{2}\sigma_{1}^{-3})\sigma_{3}\sigma_{1}^{-1}(\sigma_{1}^3\sigma_{2}^{-1} \sigma_{1}^{-2})=\sigma_{1}^2 \sigma_{2}\sigma_{3}\sigma_{1}^{-1}\sigma_{2}^{-1} \sigma_{1}^{-2}= \sigma_{2}^{-1}\sigma_{3}^{-1} \sigma_{1}^{-1}\sigma_{2}^{-1} \sigma_{1}^{-2}\\
%&= \sigma_{2}^{-1}\sigma_{1}^{-1} \sigma_{3}^{-1} \sigma_{2}^{-1} \sigma_{1}^{-2} = \sigma_{2}^{-1}\sigma_{1}^{-1} \sigma_{3} \sigma_{2}=\sigma_{2}^{-1}x \sigma_{2},
%\end{align*}
and the element $a=\sigma_{1}^2 \sigma_{2}\sigma_{1}^{-3}$, which is of infinite order, acts by conjugation on $\ang{x,y}$ by sending $x$ to $y$ and $y$ to $xy$. Hence the subgroup $\ang{x,y,a}$ of $B_{4}(\St)$ is isomorphic to $\quat \rtimes_{\alpha} \Z$ as required. Now suppose that $n\equiv 2 \bmod{4}$. If $n\notin \brak{6,10,14}$ then $n\geq 18$. So by \repr{ttimesz}(\ref{it:ttimesza}), $\tonestar \times \Z$ is realised as a subgroup of $B_{n}(\St)$, and we deduce from \repr{tstarxz}(\ref{it:tstarxza}) that $B_{n}(\St)$ contains a copy of $\quat \rtimes_{\alpha}\Z$, which proves part~(\ref{it:realV12})(\ref{it:realV12b}).
Parts~(\ref{it:realV14a}),~(\ref{it:realV14b}) and~(\ref{it:realV14c}) follow directly from \repr{ttimesz}(\ref{it:ttimesza})--(\ref{it:ttimeszc}). Finally, to prove part~(\ref{it:realV14d}), if $n=24$ or $n\geq 30$ and $n\equiv 0,2\bmod{6}$ then $\oonestar \times \Z$ is realised as a subgroup of $B_{n}(\St)$ by \repr{ttimesz}(\ref{it:ttimeszb}), and so $B_{n}(\St)$ contains a copy of $\tonestar \rtimes_{\omega} \Z$ by \repr{trtimesz}.
\end{proof}

\begin{rem}\label{rem:tstarb6}
In \repr{realV1}(\ref{it:realV12})(\ref{it:realV12b}), we do not know whether the Type~I group $\quat\rtimes_{\alpha} \Z$ is realised as a subgroup of $B_{n}(\St)$ for $n\in \brak{6,10,14}$. In~\cite[Remark~3.3]{GG7}, we exhibited a copy $\ang{\gamma,\delta}$ of $\tonestar$ in $B_{6}(\St)$, where 
\begin{equation*}
\text{$\gamma=\sigma_{5}\sigma_{4} \sigma_{1}^{-1}\sigma_{2}^{-1}$ and $\delta= \sigma_{3}^{-1}\sigma_{4}^{-1} \sigma_{5}^{-1}\sigma_{2}^{-1}\sigma_{1}^{-1} \sigma_{2}^{-1}\sigma_{5}\sigma_{4}\sigma_{5}\sigma_{5}\sigma_{4}\sigma_{3}$}
\end{equation*}
(note that there is a typing error in the original version, the expression for $\delta$ there is missing the terms $\sigma_{5}\sigma_{4}\sigma_{5}$). The action of conjugation by $\gamma$ permutes cyclically the elements $\gamma^{i}\delta\gamma^{-i}$, $i=0,1,2$, and gives rise to the semi-direct product structure $\quat\rtimes \Z_{3}$ of $\tonestar$. In order to obtain a subgroup of $B_{6}(\St)$ isomorphic to $\quat\rtimes_{\alpha} \Z$, the proof of \repr{tstarxz}(\ref{it:tstarxza}) shows that it suffices to exhibit an element $z\in B_{6}(\St)$ of infinite order that commutes with $\gamma$ and $\delta$, but up until now, we have not been able to find such a $z$.
\end{rem}

\section{Realisation of the elements of $\mathbb{V}_{2}(\lowercase{n})$ in $B_{\lowercase{n}}(\St)$}\label{sec:realtypeII}

We now turn our attention to the problem of the realisation in $B_{n}(\St)$ of the virtually cyclic groups of Type~II described in \redef{v1v2}(\ref{it:mainIIdef}). In \resec{realV2}, we consider those groups that contain a cyclic or dicyclic factor. In \resec{oonestaramalg}, we discuss the realisation of $\oonestar \bigast_{\tonestar} \oonestar$ in $B_{n}(\St)$.

\subsection{Realisation of the elements of $\mathbb{V}_{2}(n)$ with cyclic or dicyclic factors}\label{sec:realV2}

%\comment{check the list of groups with the statement of \reth{main}.}

\begin{thm}\label{th:realV2}
For all $n\geq 4$, the following Type~II virtually cyclic groups are realised as subgroups of $B_{n}(\St)$:
%
%Any infinite virtually cyclic subgroup of $B_n(\St)$ of type~II is isomorphic to one of the above groups (\ref{it:ggroupi})--(\ref{it:ggroupiv}). Conversely, all of the groups of cases~(\ref{it:ggroupi})--(\ref{it:ggroupiiix}) are realised as subgroups of $B_n(\St)$. \comment{Check this. In what we show that the following subgroups are exactly those that are realised.}
\begin{enumerate}[(a)]
\item\label{it:realV2a} $\Z_{4q} \bigast_{\Z_{2q}} \Z_{4q}$, where $i\in\brak{0,1,2}$ and $q$ divides $(n-i)/2$.
\item\label{it:realV2b} $\Z_{4q} \bigast_{\Z_{2q}} \dic{4q}$, where $i\in\brak{0,2}$, $q\geq 2$ and $q$ divides $(n-i)/2$.
\item\label{it:realV2c} $\dic{4q} \bigast_{\Z_{2q}} \dic{4q}$, where $i\in\brak{0,2}$, $q\geq 2$ and $q$ divides $n-i$ strictly.
\item\label{it:realV2d} $\dic{4q} \bigast_{\dic{2q}} \dic{4q}$, where $i\in\brak{0,2}$, and $q\geq 4$ is an even divisor of $n-i$.
\end{enumerate}
\end{thm}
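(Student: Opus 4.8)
The unifying tool is \repr{infincard} together with the recipe in the remark following it: for each of the four families it suffices to exhibit two \emph{finite} subgroups $\widetilde{G}_{1},\widetilde{G}_{2}$ of $B_{n}(\St)$, abstractly isomorphic to the two prescribed factors, whose intersection $\widetilde{F}=\widetilde{G}_{1}\cap\widetilde{G}_{2}$ has index two in each and is isomorphic to the prescribed amalgamating subgroup, and then to prove that $\ang{\widetilde{G}_{1}\cup\widetilde{G}_{2}}$ is \emph{infinite}. Granting infiniteness, the universal property of the amalgamated product gives a surjection $\widetilde{G}_{1}\bigast_{\widetilde{F}}\widetilde{G}_{2}\to \ang{\widetilde{G}_{1}\cup\widetilde{G}_{2}}$ that is injective on each factor, and \repr{infincard} promotes it to an isomorphism. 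Fix $i$ in the relevant set and abbreviate by $\alpha=\alpha_{i}'$ the standard generator of order $2(n-i)$, so that $\ang{\alpha,\garside}$ is the standard copy of $\dic{4(n-i)}$, with $\garside\alpha\garside^{-1}=\alpha^{-1}$ by \req{basicconj} and $\garside^{2}=\ft=\alpha^{n-i}$ by \req{uniqueorder2}.

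The constructions for the three cyclic-amalgam families are as follows. For~(\ref{it:realV2c}), set $s=(n-i)/q\ge 2$; then $\alpha^{s}$ has order $2q$, and I take $\widetilde{F}=\ang{\alpha^{s}}\cong\Z_{2q}$ and $\widetilde{G}_{1}=\ang{\alpha^{s},\garside}\cong\dic{4q}$ (here $(\alpha^{s})^{q}=\alpha^{n-i}=\ft=\garside^{2}$). For the second factor I choose, via \relem{commalphaigen}, an element $c$ of infinite order centralising $\ang{\alpha^{s}}$ (a suitable $\delta_{r,i}$), and set $\widetilde{G}_{2}=c\,\widetilde{G}_{1}\,c^{-1}=\ang{\alpha^{s},c\garside c^{-1}}$; since $c$ commutes with $\alpha^{s}$, the element $c\garside c^{-1}$ again inverts $\alpha^{s}$ and squares to $\ft$, so $\widetilde{G}_{2}\cong\dic{4q}$ and $\widetilde{F}$ has index two in each. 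For~(\ref{it:realV2b}) I take $b=\alpha^{(n-i)/(2q)}$ of order $4q$ (using $2q\mid n-i$), $\widetilde{G}_{1}=\ang{b}\cong\Z_{4q}$ and $\widetilde{F}=\ang{b^{2}}=\ang{\alpha^{(n-i)/q}}\cong\Z_{2q}$; here one must avoid $\ang{b,\garside}$, which merely yields the larger finite group $\dic{8q}$, and instead take $\widetilde{G}_{2}=\ang{b^{2},c\garside c^{-1}}\cong\dic{4q}$ with $c$ centralising $\widetilde{F}=\ang{b^{2}}$ but not $b$ (again from \relem{commalphaigen}, exploiting that the centraliser of $\ang{\alpha^{(n-i)/q}}$ strictly contains that of $\ang{b}$). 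For~(\ref{it:realV2a}), with $i\in\brak{0,1,2}$ now allowed, I use $b=\alpha_{i}^{(n-i)/(2q)}$, $\widetilde{G}_{1}=\ang{b}$ and $\widetilde{G}_{2}=\ang{cbc^{-1}}$, so that $(cbc^{-1})^{2}=cb^{2}c^{-1}=b^{2}$ generates the common $\widetilde{F}=\ang{b^{2}}$, the element $c$ being chosen to centralise $b^{2}$ but move $b$ outside $\ang{b}$.

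For family~(\ref{it:realV2d}), with $q\ge 4$ even and $s=(n-i)/q$, the amalgamating subgroup is the non-cyclic group $\widetilde{F}=\ang{\alpha^{2s},\garside}\cong\dic{2q}$ (of order $2q$, using that $q$ is even so that $(\alpha^{2s})^{q/2}=\alpha^{n-i}=\ft=\garside^{2}$), sitting inside $\widetilde{G}_{1}=\ang{\alpha^{s},\garside}\cong\dic{4q}$ with index two; the second copy $\widetilde{G}_{2}$ is produced by conjugating $\widetilde{G}_{1}$ by an infinite-order element normalising $\widetilde{F}$. In every case the genuinely delicate point, and where the real work lies, is the infiniteness of $\ang{\widetilde{G}_{1}\cup\widetilde{G}_{2}}$. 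The generated group contains a distinguished element — of the form $g=c(\garside c\garside^{-1})^{-1}$ in~(\ref{it:realV2c}) and~(\ref{it:realV2d}), or $b^{-1}cbc^{-1}$ in~(\ref{it:realV2a}) and~(\ref{it:realV2b}) — whose exponent sum vanishes, so abelianisation alone cannot detect its order. The plan is to verify that $g$, or a suitable power, has infinite order by the technique used already in \repr{rtimes} and \repr{dic4sz}: projecting onto a braid group on few strings by forgetting the remaining ones reduces $g$ to an explicit braid such as $\sigma_{1}^{2}\sigma_{3}^{2}$, shown non-torsion in \relem{commalphaigen}, while alternatively one compares the permutation $\pi(g)$ and the image $\xi(g)$ against \reth{murasugi}, since any finite-order element is conjugate to a power of some $\alpha_{j}$ and hence has constrained cycle type and abelianised image. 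Were $\ang{\widetilde{G}_{1}\cup\widetilde{G}_{2}}$ finite, \reth{finitebn} would force it to be cyclic, dicyclic or binary polyhedral while containing two distinct index-two copies of the factor along $\widetilde{F}$, and this is excluded on order grounds once $g$ is seen to be of infinite order.

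The subtlest instance is~(\ref{it:realV2d}) with $q=4$, where $\dic{16}\bigast_{\dic{8}}\dic{16}=\quat[16]\bigast_{\quat}\quat[16]$ carries two abstract isomorphism classes by \repr{isoamalg}; there the clean semidirect-product description $\Z\rtimes\dic{4q}$ furnished by \repr{semiamalg} is the appropriate vehicle, and the separation and realisation of the two classes is treated apart from the generic argument (and is completed, with a residual finite list of open values of $n$ for one of the classes). For all other parameter values the single isomorphism class is obtained uniformly by the recipe above, completing the proof for all $n\ge 4$.
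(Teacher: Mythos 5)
Your architecture is the paper's own: produce two finite subgroups $\widetilde{G}_{1},\widetilde{G}_{2}$ meeting in an index-two subgroup $\widetilde{F}$ (by conjugating a standard cyclic or dicyclic subgroup by an element supplied by \relem{commalphaigen}), prove $\ang{\widetilde{G}_{1}\cup\widetilde{G}_{2}}$ is infinite, and invoke \repr{infincard}. The problem is that the step you defer — infiniteness — is not a routine finish but is essentially the entire content of the paper's proof (the subcases $m=1,2,3,\geq 4$, the cycle-type arguments via \reth{murasugi}, and ad hoc free-group computations in $P_{n}(\St)$ for small $n$), and the witness elements you nominate fail in two of the four cases. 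In case~(\ref{it:realV2d}), any $c$ normalising $\widetilde{F}=\ang{\alpha^{2s},\garside}$ satisfies $c\garside c^{-1}\in\widetilde{F}$, so your element $g=c\garside c^{-1}\garside^{-1}$ lies in the \emph{finite} group $\widetilde{F}$ and is torsion; with the paper's conjugator $\lambda_{i}$, which commutes with $\garside$ by \req{zetagarside}, one even gets $g=1$. The element that actually works is the commutator of the conjugator with $\alpha^{s}\in\widetilde{G}_{1}\setminus\widetilde{F}$, i.e.\ the paper's $\rho_{i}=\alpha_{i}'^{m}\lambda_{i}\alpha_{i}'^{-m}\lambda_{i}^{-1}$. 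In case~(\ref{it:realV2b}), with your $\widetilde{G}_{1}=\ang{b}$ and $\widetilde{G}_{2}=\ang{b^{2},c\garside c^{-1}}$, the element $b^{-1}cbc^{-1}$ involves $cbc^{-1}$, which lies in $c\ang{b}c^{-1}$ — a group that is \emph{neither} of your two factors — so it is not an element of $\ang{\widetilde{G}_{1}\cup\widetilde{G}_{2}}$ at all; the usable witness is a product such as $b\ldotp(c\garside c^{-1})$, which is what the paper analyses via its element $\eta$.

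Your fallback via \reth{finitebn} is also unsound as written: ``excluded on order grounds once $g$ is seen to be of infinite order'' is circular (if $g$ has infinite order there is nothing left to exclude), and without $g$ the order argument fails exactly where it is needed. For $q=1$ in case~(\ref{it:realV2a}), the groups $\quat$, $\quat[16]$ and $\tonestar$ all contain two distinct copies of $\Z_{4}$ meeting in $\Z_{2}$, so no classification shortcut exists; moreover your recipe degenerates there, since centralising $b^{2}=\ft$ is no condition at all, while the elements furnished by \relem{commalphaigen} commute with $b$ itself, forcing $\widetilde{G}_{2}=\widetilde{G}_{1}$. The paper instead conjugates by $\sigma_{n-i}$, shows $\pi(v_{1}v_{2})$ is a $3$-cycle for $n\geq 6$, and treats $n=4,5$ by hand. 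For $q=2$ in case~(\ref{it:realV2a}) one needs the finer argument inside $\sn[4]$ (two subgroups generated by $4$-cycles cannot intersect in a group of order $2$), since $\oonestar$ does contain three copies of $\Z_{8}$ — this is where the paper's ``claim'' replaces any infinite-order element, by \repr{maxsubgp}. Finally, the $m=1$ subcases of (\ref{it:realV2b})--(\ref{it:realV2d}) for $n\in\brak{4,6}$ cannot be reached by the generic projection-to-$B_{4}(\St)$ trick and require the individual computations the paper carries out. As it stands, your text is a correct plan with the same skeleton as the paper's proof, but the decisive verifications are missing, and in cases (\ref{it:realV2b}) and (\ref{it:realV2d}) they are attached to the wrong element.
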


\begin{proof}  
Let $n\geq 4$. First recall that if $1\leq j\leq n+1$, the kernel of the homomorphism $P_{n+1}(\St)\to P_{n}(\St)$ defined geometrically by deleting the $j\up{th}$ string may be identified with the fundamental group 
\begin{equation*}
\pi_{1}\left(\St\setminus\brak{x_{1},\ldots,x_{j-1},x_{j+1},\ldots, x_{n+1}},x_{j}\right),
\end{equation*}
which is a free group of rank $n-1$ for which a presentation is given by
\begin{equation}\label{eq:prespi1}
\setangr{A_{1,i},\ldots, A_{i-1,i},A_{i,i+1},\ldots, A_{i,n}}{A_{1,i} \cdots A_{i-1,i}A_{i,i+1}\cdots A_{i,n}=1},
\end{equation}
and for which a basis is obtained by selecting any $n-1$ distinct elements of the set $\brak{A_{1,j},\ldots, A_{j-1,j}, A_{j,j+1},\ldots,A_{j,n+1}}$, where for $1\leq i<j\leq n+1$, 
%\comment{have the elements $A_{i,j}$ been defined previously? I couldn't find them before.}
\begin{equation}\label{eq:defaij}
A_{i,j}=\sigma_{j-1}\cdots \sigma_{i+1}\sigma_{i}^{2}\sigma_{i+1}^{-1}\cdots \sigma_{j-1}^{-1}=\sigma_{i}^{-1}\cdots \sigma_{j-2}^{-1}\sigma_{j-1}^{2}\sigma_{j-2}\cdots \sigma_{i}.
\end{equation}
We consider the four cases of the statement of the theorem in turn. 
\begin{enumerate}[(a)]
\item We first treat the case $q=1$, and then go on to deal with the general case $q\geq 2$.

%$\Z_{4q}\bigast_{\Z_{2q}} \Z_{4q}$: since the $\Z_{4q}$-factors must be realised by cyclic subgroups of $B_{n}(\St)$, by \reth{murasugi}, there exists $i\in \brak{0,1,2}$ such that $4q\divides 2(n-i)$. Thus $q\divides (n-i)/2$, in particular $n-i$ is even. Notice also that $i$ is unique, except if $q=1$: we shall treat this case first, and then go on to deal with the general case $q\geq 2$. 

\begin{enumerate}
\item[\underline{$1\up{st}$ case: $q=1$}.] We shall construct a subgroup of $B_n(\St)$ isomorphic to $\Z_{4}\bigast_{\Z_2} \Z_{4}$.  Set $i=2$ if $n$ is even, and $i=1$ if $n$ is odd. Then $(n-i)$ is even, and the condition given in the statement is satisfied. Let $v_{1}=\alpha_{i}^{(n-i)/2}$, $v_{2}=\sigma_{n-i}v_{1}\sigma_{n-i}^{-1}$, and for $j=1,2$, let $G_{j}=\ang{v_{j}}$. Then $\ord{G_{j}}=4$ by \req{uniqueorder2}, and $G_{1}\cap G_{2}\supset \ang{\ft}$ since $\ft$ is the unique element of $B_{n}(\St)$ of order $2$. Let $H=\ang{G_{1}\cup G_{2}}$. By \repr{infincard}, to prove that $H\cong \Z_4\bigast_{\Z_2} \Z_4$, it suffices to show that $H$ is of infinite order, or indeed that $H$ contains an element of infinite order. Consider the element $v_{1}v_{2}$ of $H$. A straightforward calculation shows that:
%Let $\map{\pi}{B_n(\St)}[\sn]$ denote the usual projection onto the symmetric group, where $\pi(\sigma_i)=(i,i+1)$ for all $1\leq i\leq n-1$. \comment{This should have been defined previously.} Then we have: \comment {15/02/10: rewritten and simplified somewhat. Maybe we can combine some of the notation and explanation of the general construction at the beginning.}
\begin{align*}
\pi(v_{1})&= \textstyle\left(1,\frac{n-i}{2}+1\right) \left(2, \frac{n-i}{2}+2 \right) \cdots \left(\frac{n-i}{2}-1,n-i-1 \right)\left(\frac{n-i}{2},n-i \right)\\
\pi(v_{2})&= \textstyle\left(1,\frac{n-i}{2}+1\right) \left(2, \frac{n-i}{2}+2 \right) \cdots \left(\frac{n-i}{2}-1,n-i-1 \right)\left(\frac{n-i}{2},n-i+1 \right)\\
\pi(v_{1}v_{2})&= \textstyle\left(\frac{n-i}{2},n-i,n-i+1\right),
\end{align*}
and thus $\pi(v_{1}v_{2})$ consists of one $3$-cycle and $n-3$ fixed points. So if $n\geq 6$, by \reth{murasugi}, $v_{1}v_{2}$ is of infinite order, and this implies that $H$ is infinite as required. It remains to treat the cases $n=4,5$. Suppose first that $n=4$, and assume that $H$ is finite. Then $H$ is contained in a maximal finite subgroup $K$ of $B_{4}(\St)$, where $K$ is isomorphic to $\quat[16]$ or $\tonestar$ by \reth{finitebn}. Since $\pi(v_{1}v_{2})$ is a $3$--cycle and the set of torsion elements of $P_{n}(\St)$ is $\ang{\ft}$, $v_{1}v_{2}$ is of order $3$ or $6$, and so $K\cong \tonestar$. On the other hand, the elements of order $4$ of $\tonestar\cong \quat\rtimes \Z_{3}$ all belong to its subgroup isomorphic to $\quat$, and so the product $v_{1}v_{2}$ of elements of order $4$ is of order $1,2$ or $4$. This yields a contradiction, so $H$ is infinite in this case. Now suppose that $n=5$. Using equations~\reqref{uniqueorder2} and \reqref{fundaa}, as well as the fact that $\alpha_{1}=\alpha_{0}\sigma_{4}$, we obtain:
\begin{align*}
v_{1}v_{2}&= \alpha_1^{2} \sigma_{4} \alpha_1^2 \sigma_{4}^{-1}= \alpha_1^4 \alpha_1^{-2} \sigma_{4} \alpha_1^2 \sigma_{4}^{-1}= \ft[5] \alpha_1^{-1} \sigma_{4}^{-1} \alpha_0^{-1} \sigma_{4} \alpha_0 \sigma_4 \alpha_1 \sigma_{4}^{-1}\\
&=\ft[5] \sigma_{4}^{-1} \alpha_0^{-1} \sigma_{4}^{-1} \sigma_{3} \sigma_4 \alpha_0 = \ft[5] \sigma_{4}^{-1}  \sigma_{3}^{-1} \sigma_{2} \sigma_3= \ft[5] \sigma_{2}(\sigma_{4}^{-1}  \sigma_3) \sigma_{2}^{-1}.
\end{align*}
So to show that $v_{1}v_{2}$ is of infinite order, it suffices to prove that $\sigma_{4}^{-1}  \sigma_3$ is of infinite order. We have 
\begin{equation}\label{eq:sig4sig3}
(\sigma_{4}^{-1}  \sigma_3)^3 = \sigma_{4}^{-2} \sigma_{4} \sigma_3 \sigma_{4}^{-1}  \sigma_3 \sigma_{4}^{-1}  \sigma_3 %\notag\\ &
=\sigma_{4}^{-2} \sigma_{3}^{-1} \sigma_{4} \sigma_3^2\sigma_{4}^{-1}  \sigma_3=\sigma_{4}^{-2} \sigma_{3}^{-2}\sigma_{4}^2 \sigma_3^2.
\end{equation}
So $(\sigma_{4}^{-1}  \sigma_3)^3$ belongs to the free group $\pi_{1}(\St\setminus \brak{x_{1},x_{2},x_{3},x_{5}},x_{4})$, and in terms of the basis $\brak{A_{2,4},A_{3,4},A_{4,5}}$ of the latter, may be written as the commutator $[A_{4,5}^{-1},A_{3,4}^{-1}]$. It follows that $(\sigma_{4}^{-1}  \sigma_3)^3$ and $H$ are of infinite order, and thus $H\cong \Z_{4}\bigast_{\Z_{2}} \Z_{4}$ by \repr{infincard}.

\item[\underline{$2\up{nd}$ case: $q\geq 2$}.] 
%We already know that there exists $i\in \brak{0,1,2}$ (which is unique if $q\geq 2$) that $2q\divides n-i$, so $n-i$ is even. Set $m=(n-i)/2q$. Recall that $4 \divides \ord{G_{j}}$, where $j=1,2$, so if $n$ is even then $i\in\brak{0,2}$, while if $n$ is odd then $i=1$. 
We claim that it suffices to find distinct cyclic subgroups $G_1$, $G_2$ of $B_{n}(\St)$ of order $4q$ for which $G_1\cap G_2$ contains a (cyclic) subgroup of order $2q$. To prove the claim, let $G_{1}$ and $G_{2}$ be subgroups of $B_{n}(\St)$ satisfying these conditions, and suppose that $H=\ang{G_1\cup G_2}$ is finite. Let $K$ be a maximal finite subgroup of $B_{n}(\St)$ containing $H$. Since $G_{1}\neq G_{2}$, $K$ contains two distinct copies of $\Z_{4q}$, and so cannot be cyclic or dicyclic, nor by \repr{maxsubgp} can it be isomorphic to $\tonestar$ or $\istar$, since $q\geq 2$. So suppose that $K \cong \oonestar$. Then $q=2$ by \repr{maxsubgp}, $G_{j}\cong \Z_{8}$, where $j=1,2$, and $G_{1}\cap G_{2}\cong \Z_{4}$ by hypothesis. Under the restriction of the homomorphism $\phi$ of \req{mcg}, $\oonestar$ is sent to $\sn[4]$, the $\phi(G_{j})$ are sent to subgroups of $\sn[4]$ generated by $4$-cycles, and $\ord{\phi(G_{1}\cap G_{2})}=2$. But in $\sn[4]$, the intersection of two subgroups generated by $4$-cycles cannot be of order $2$, so $K\ncong \oonestar$. We conclude that $H$ is infinite, hence $H\cong \Z_{4q} \bigast_{\Z_{2q}} \Z_{4q}$ by \repr{infincard}, which proves the claim. 

We now exhibit subgroups $G_1$ and $G_2$ of $B_n(\St)$ satisfying the  properties of the claim. By hypothesis, $m=(n-i)/2q\in \N$. Let $G_1=\ang{\alpha_i^m}$ and $G_2=\xi G_1 \xi^{-1}$, where
\begin{align*}
\xi&=\delta_{2m,i}=\sigma_1 \sigma_{2m+1} \cdots \sigma_{m(2q-4)+1}\sigma_{m(2q-2)+1}\\
&= \sigma_1 \sigma_{2m+1} \cdots \sigma_{n-i-4m+1}\sigma_{n-i-2m+1},
\end{align*}
using the notation of \req{delta1}. Then $G_{j}\cong \Z_{4q}$ for $j=1,2$ by \req{uniqueorder2}. Now $\pi(\alpha_i^m)$ contains the $2q$-cycle $(1,m(2q-1)+1, m(2q-2)+1, \ldots, m+1)$, and so $\pi(\alpha_i^{mk})(1)\in \brak{1,m+1,\ldots, m(2q-1)+1}$ for all $k\in \N$.
%
%
%For $j=1,\ldots,n-i$, we have $\pi(\alpha_i^m)(j)=j-m \bmod{n-i}$, so for all $k\in\N$, $\pi(\alpha_i^{mk})(1)\in \brak{1,m+1,\ldots, m(2q-1)+1}$. 
On the other hand,
\begin{align*}
\pi(\xi \alpha_i^m \xi^{-1})(1) &=\pi(\alpha_i^m \xi^{-1})(2)\\
&=\pi(\xi^{-1})(m(2q-1)+2)=m(2q-1)+2
\end{align*}
(recall that as for braids, we compose permutations from left to right). Thus $\xi \alpha_i^m \xi^{-1}\notin G_1$, so $G_2\neq G_1$. Taking the integer $m$ of \relem{commalphaigen} to be $2m$, we have that $2m$ divides $n-i$ and so $r=2m\geq 2$. By part~(\ref{it:rgeq2}) of that proposition, $\xi$ commutes with $\alpha_{i}^{2m}$.
%\begin{enumerate}[--]
%\item if $i\in \brak{0,2}$, $\xi$ is the element $\delta_1$ of \req{delta1} (replacing the index $m$ of that equation by $2m$), so commutes with $\alpha_0^{2m}$ by \req{conjdelta1} if $i=0$, and with $\alpha_2^{2m}$ by \req{commalpha2a} if $i=2$.
%\item if $i=1$, $\xi$ is the element $\delta$ of \req{deltaalpha1} (replacing the index $m$ of that equation by $2m$), so commutes with $\alpha_1^{2m}$ using \req{commalpha1}. 
%\end{enumerate}
Thus $G_1\cap G_2=\ang{\alpha_i^{2m}}\cong \Z_{2q}$, and so $G_{1}$ and $G_{2}$ satisfy the hypotheses of the claim.
\end{enumerate}

\item Suppose that $q\geq 2$ divides $(n-i)/2$ for some $i\in \brak{0,2}$, so $n$ is even. Set $m=(n-i)/2q$, and let 
\begin{equation}\label{eq:defxi}
\xi_{i}=\sigma_{1+\frac{i}{2}} \sigma_{1+2m+\frac{i}{2}}\cdots \sigma_{1+n-2m-\frac{i}{2}},
\end{equation}
Equation~\reqref{fundaa} implies that $\xi_{i}=\alpha_{0}^{i/2}\delta_{2m,i}\alpha_{0}^{-i/2}$, where $\delta_{2m,i}$ is as in \req{delta1}.
%For $s\geq 2$ a divisor of $n-i$, let
%\begin{equation}\label{eq:defxi}
%\theta_{s,i} =\sigma_{1+\frac{i}{2}} \sigma_{1+s+\frac{i}{2}}\cdots \sigma_{1+n-s-\frac{i}{2}}\in B_n(\St),
%\end{equation}
%and set $\xi_{i}=\theta_{2m,i}$. 
%Equations~\reqref{fundaa} and~\reqref{delta1} imply that $\xi_{i}=\alpha_{0}^{i/2}\delta_{2m,0}\alpha_{0}^{-i/2}$. 
Taking the integer $m$ of \relem{commalphaigen} to be $2m$, it follows from part~(\ref{it:rgeq2}) of that lemma that $\delta_{2m,i}$ commutes with $\alpha_{i}^{2m}$, and thus $\xi_{i}$ commutes with $\alpha_{i}'^{2m}$, where $\alpha_{i}'$ is given by \req{basicconj}. We analyse separately the two cases $m=1$ and $m\geq 2$.
%$\Z_{4q}\bigast_{\Z_{2q}} \dic{4q}$, where  $q\geqslant 2$. Then there exist $j\in\brak{0,2}$ and $i\in\brak{0,1,2}$, both unique, such that $4q$ divides $4(n-j)$ and $2(n-i)$, so $2q$ divides $\gcd(2(n-j),n-i)$. If $n$ is odd then $i=1$, and since $n-j$ is also odd, we would have that $q$ divides $\gcd(n-j,n-1)=1$ because $j\in \brak{0,2}$, which yields a contradiction. So $n$ is even and $i\in\brak{0,2}$. Let $\alpha_i'=\alpha_0 \alpha_i \alpha_0^{-1}$, and set $m=(n-i)/2q$.

%\comment{The case $i=0$, $m=1$ has been greatly simplified by noting that $\garside$ commutes with $[\xi, \alpha_i']$ -- maybe we can use this elsewhere?}

\begin{enumerate}
\item [\underline{$1\up{st}$ case: $m=1$}.] Then $2q=n-i$. Take $G_1=\ang{\xi_{i}\alpha_i'\xi_{i}^{-1}}$ and $G_2=\ang{\alpha_i'^{2},\alpha_i'\garside}$, where $\xi_{i}$ is as defined above.
%\begin{equation}\label{eq:cycdic}
%\xi_{i} =\sigma_{\frac{i}{2}+1} \sigma_{\frac{i}{2}+3}\cdots \sigma_{n-\left(\frac{i}{2}+1\right)}= 
%\begin{cases}
%\sigma_{1}\sigma_{3}\cdots \sigma_{n-1}=\delta_{2,0} & \text{if $i=0$}\\
%\sigma_{2}\sigma_{4}\cdots \sigma_{n-2} = \alpha_{0}\delta_{2,2} \alpha_{0}^{-1} & \text{if $i=2$,}
%\end{cases}
%\end{equation}
%and $\delta_{2,i}$ is as defined in \req{delta1}. 
Then $G_{1}\cong \Z_{2(n-i)}= \Z_{4q}$, and $G_{2}$ is one of the two dicyclic subgroups of order $2(n-i)$ of the standard copy of $\dic{4(n-i)}$, so $G_{2}\cong \dic{4q}$ and $G_{1}\neq G_{2}$. Since $\xi_{i}$ commutes with $\alpha_{i}'^2$, it follows that $G_{1}\cap G_{2}=\ang{\alpha_{i}'^2}\cong \Z_{2q}$. Set $H=\ang{G_{1}\cup G_{2}}$. By \repr{infincard}, to see that $H\cong \Z_{4q}\bigast_{\Z_{2q}} \dic{4q}$, it suffices to show that $H$ contains an element of infinite order. Consider $\eta=\xi_{i}\alpha_i'\xi_{i}^{-1} \ldotp \alpha_i'\garside\in H$. Since $\xi_{i}$ commutes with $\garside$ by \req{garsideconj} as well as with $\alpha_{i}'^2$, and $n-i$ is even, we have
\begin{align*}
\eta^{2(n-i)}&= (\xi_{i}\alpha_i'\xi_{i}^{-1} \alpha_i'\garside \ldotp \xi_{i}\alpha_i'\xi_{i}^{-1} \alpha_i'\garside^{-1}\ldotp \ft)^{n-i}= (\xi_{i}\alpha_i'\xi_{i}^{-1} \alpha_i' \xi_{i}\alpha_i'^{-1}\xi_{i}^{-1} \alpha_i'^{-1}\ft)^{n-i}\\
&=(\xi_{i}\alpha_i'\xi_{i}^{-1} \alpha_i'\ldotp \alpha_i'^{-2})^{2(n-i)}=(\xi_{i}\alpha_i'\xi_{i}^{-1} \alpha_i')^{2(n-i)} \alpha_i'^{-4(n-i)}=\widetilde{\eta}^{2(n-i)},
\end{align*}
using also \req{uniqueorder2}, where
\begin{equation}\label{eq:defetatilde}
\widetilde{\eta}=\xi_{i}\alpha_i'\xi_{i}^{-1} \alpha_i'.
\end{equation}
So to prove that $\eta$ is of infinite order, it suffices to show that $\widetilde{\eta}$ is of infinite order. Since $\pi(\xi_{i})$ is of order $2$ and
\begin{align*}
\pi(\xi_{i})&= \textstyle\left( 1+\frac{i}{2},2+\frac{i}{2} \right) \left( 3+\frac{i}{2},4+\frac{i}{2} \right) \cdots \left( n-1-\frac{i}{2},n-\frac{i}{2} \right)\\
\pi(\alpha_{i}')&= \textstyle \left( n-\frac{i}{2}, n-1-\frac{i}{2}, \ldots, 2+\frac{i}{2}, 1+\frac{i}{2}\right),
\end{align*}
we have
\begin{equation*}
\textstyle\pi(\widetilde{\eta})=(\pi(\xi_{i}\alpha_i'))^2=\left(n-\frac{i}{2},n-\frac{i}{2}-2, \ldots, 4+\frac{i}{2}, 2+\frac{i}{2}\right)^2,
\end{equation*}
and the cycle decomposition of $\pi(\widetilde{\eta})$ consists of two $(n-i)/4$-cycles (resp.\ one $(n-i)/2$-cycle) if $n-i$ is divisible (resp.\ is not divisible) by $4$, plus $(n+i)/2$ fixed points. If either $i=0$ and $n\geq 6$ or if $i=2$ and $n\geq 8$ then the cycle decomposition of $\pi(\widetilde{\eta})$ contains a cycle of length at least two, plus at least three fixed points, and so $\widetilde{\eta}$ is of infinite order by \reth{murasugi}. Let us deal with the three remaining cases, which are given by $n=4$ and $i\in\brak{0,2}$, and $n=6$ and $i=2$.

\begin{enumerate}[(i)]
\item $i=0$ and $n=4$. Using the presentation of \req{prespi1}, we have
\begin{equation*}
\widetilde{\eta}= \sigma_{1}\sigma_{3}\sigma_{1}\sigma_{2}\sigma_{3}\sigma_{3}^{-1} \sigma_{1}^{-1}\sigma_{1}\sigma_{2}\sigma_{3}=\sigma_{1}^2\sigma_{3} \sigma_{2}^2\sigma_{3}=A_{1,2} A_{2,4} A_{3,4}= A_{1,2} A_{1,4}^{-1},
\end{equation*}
which may be interpreted as an element of $\pi_{1}(\St\setminus \brak{x_{2},x_{3},x_{4}},x_{1})$ for which a basis is $\brak{A_{1,2}, A_{1,4}}$. 
\item $i=2$ and $n=4$. In this case, $\widetilde{\eta}=\sigma_{2}^2 \sigma_{3}^{4}=A_{2,3}A_{3,4}^2$ belongs to the free group $\pi_{1}(\St \setminus \brak{x_{1},x_{2},x_{4}}, x_{3})$ for which a basis is $\brak{A_{2,3},A_{3,4}}$. 
\item $i=2$ and $n=6$. Then by \req{surface},
\begin{align}
\widetilde{\eta}&= \sigma_{2}\sigma_{4}\ldotp \sigma_{2}\sigma_{3}\sigma_{4}\sigma_{5}^2 \ldotp \sigma_{4}^{-1}\sigma_{2}^{-1}\ldotp \sigma_{2}\sigma_{3}\sigma_{4}\sigma_{5}^2%\notag\\
= \sigma_{2}\sigma_{4}\ldotp \sigma_{1}^{-2}\sigma_{2}^{-1}\sigma_{3}^{-1}\sigma_{4}^{-1}
\ldotp \sigma_{4}^{-1}\sigma_{3}\sigma_{4}\sigma_{5}^2\notag\\
&= \sigma_{2}\sigma_{1}^{-2}\sigma_{2}^{-1} \ldotp\sigma_{4} \sigma_{3}^{-1}\sigma_{4}^{-2}
\sigma_{3}\sigma_{4}\sigma_{5}^2%\notag\\
= \sigma_{2}\sigma_{1}^{-2}\sigma_{2}^{-1} \ldotp\sigma_{4}^2
\sigma_{3}^{-2}\sigma_{5}^2= A_{1,3}^{-1}A_{4,5}A_{3,4}^{-1} A_{5,6}.\label{eq:etainfinite}
\end{align}
%Using the relations $\sigma_{i}A_{i,j}\sigma_{i}^{-1}= A_{i+1,j}$, $\sigma_{i}A_{i+1,j}\sigma_{i}^{-1}= A_{i,i+1}A_{i,j} A_{i,i+1}^{-1}$ and $\sigma_{i+1} A_{i,i+2} \sigma_{i+1}^{-1}=  A_{i+1,i+2}A_{i,i+1}A_{i+1,i+2}^{-1}$ for all $1\leq i,j \leq 6$ for which $i+1<j$, we obtain:
%\begin{align}
%\widetilde{\eta}&= \sigma_{2}\sigma_{4}\ldotp \sigma_{2}\sigma_{3}\sigma_{4}\sigma_{5}^2 \ldotp \sigma_{4}^{-1}\sigma_{2}^{-1}\ldotp \sigma_{2}\sigma_{3}\sigma_{4}\sigma_{5}^2 = A_{2,3} \sigma_{4}\sigma_{3}\sigma_{4}\sigma_{5}^2 \sigma_{4}^{-1}\sigma_{3}\sigma_{4} A_{5,6}\notag\\
%&= A_{2,3} \sigma_{4}\sigma_{3}\sigma_{4}^2 A_{4,6} \sigma_{4}^{-2}\sigma_{3}\sigma_{4} A_{5,6} = A_{2,3} \sigma_{4}\sigma_{3}A_{4,5} A_{4,6} A_{4,5}^{-1}\sigma_{3}^{-1}\sigma_{3}^2\sigma_{4} A_{5,6} \notag\\
%&= A_{2,3} \sigma_{4} A_{3,4}A_{3,5} A_{3,6} A_{3,5}^{-1}A_{3,4}^{-1}\sigma_{4} A_{5,6}= A_{2,3} A_{3,4}A_{3,5} A_{3,6} A_{3,5}^{-1}A_{3,4}^{-1} A_{4,5} A_{5,6}.\label{eq:etainfinite}
%\end{align}
Projecting $\widetilde{\eta}$ onto $P_{4}(\St)$ by forgetting the $4\up{th}$ and $5\up{th}$ strings yields $A_{1,3}^{-1}$, which is of infinite order. 
\end{enumerate}
In all three cases, we conclude that $\widetilde{\eta}$ is of infinite order, and this completes the proof of the case $m=1$.

\item [\underline{$2\up{nd}$ case: $m\geq 2$}.] Let $G_1=\ang{\xi_{i}\alpha_i'^m\xi_{i}^{-1}}$ and $G_2=\ang{\alpha_i'^{2m},\garside}$, where $\xi_{i}$ is as defined in \req{defxi}.
%\begin{equation}\label{eq:defxi}
%\xi_{i} =\sigma_{1+\frac{i}{2}} \sigma_{1+2m+\frac{i}{2}}\cdots \sigma_{1+m(2q-2)+\frac{i}{2}}=\sigma_{1+\frac{i}{2}} \sigma_{1+2m+\frac{i}{2}}\cdots \sigma_{1+n-2m-\frac{i}{2}}\in B_n(\St).
%\end{equation}
%Using \req{delta1}, notice that $\xi_{0}=\delta_{2m,0}$ and $\xi_{2}=\alpha_{0}\delta_{2m,2} \alpha_{0}^{-1}$. 
Then $G_{1}\cong \Z_{4q}$ and $G_{2}\cong \dic{4q}$, so $G_{1}\neq G_{2}$. Since $\xi_{i}$ commutes with $\alpha_{i}'^{2m}$, we have $G_{1}\cap G_{2}=\ang{\alpha_{i}'^{2m}}\cong \Z_{2q}$. By \repr{infincard}, to prove that the group $H=\ang{G_{1}\cup G_{2}}$ is isomorphic to $\Z_{4q}\bigast_{\Z_{2q}} \dic{4q}$, it suffices to show that it contains an element of infinite order. Consider the element $\eta=\xi_{i}\alpha_i'^m\xi_{i}^{-1} \ldotp \garside$ of $H$. Using \req{basicconj}, we have that
\begin{equation*}
\eta^2=\xi_{i}\alpha_i'^m\xi_{i}^{-1} \garside \ldotp \xi_{i}\alpha_i'^m\xi_{i}^{-1} \garside^{-1}\ft= \xi_{i}\alpha_i'^m\xi_{i}^{-1} \xi_{i}'\alpha_i'^{-m}\xi_{i}'^{-1} \ft,
\end{equation*}
where 
\begin{equation}
\xi_{i}' =\garside \xi_{i} \garside^{-1}= \sigma_{2m-1+\frac{i}{2}} \sigma_{4m-1+\frac{i}{2}}\cdots \sigma_{n-2m-1-\frac{i}{2}} \sigma_{n-1-\frac{i}{2}}. \label{eq:defxiprime}
\end{equation}
All of the generators appearing in equations~\reqref{defxi} and~\reqref{defxiprime} commute pairwise, so $\xi_{i}$ commutes with $\xi_{i}'$. Since $\ft$ is central and of order $2$, $\eta$ is of infinite order if and only if $\eta^2 \fti$ is. We now distinguish three subcases.

\begin{enumerate}
\item [\underline{$1\up{st}$ subcase: $m=2$}.] In this case, $4q=n-i$, 
\begin{equation*}
\xi_{i} =\sigma_{1+\frac{i}{2}} \sigma_{5+\frac{i}{2}}\cdots \sigma_{n-3-\frac{i}{2}} \quad \text{and} \quad \xi_{i}' = \sigma_{3+\frac{i}{2}} \sigma_{7+\frac{i}{2}}\cdots \sigma_{n-5-\frac{i}{2}} \sigma_{n-1-\frac{i}{2}},
\end{equation*}
and hence $\alpha_{i}'^2 \xi_{i} \alpha_{i}'^{-2}=\xi_{i}'$ by \req{fundaa}. Since $\xi_{i}$ commutes with $\alpha_{i}'^4$, this implies that $\alpha_{i}'^2 \xi'_{i} \alpha_{i}'^{-2}=\xi_{i}$, and thus 
\begin{equation*}
\eta^2 \fti=\xi_{i}\alpha_i'^2\xi_{i}^{-1} \xi_{i}'\alpha_i'^{-2}\xi_{i}'^{-1}=\xi_{i} \xi_{i}'^{-1} \xi_{i} \xi_{i}'^{-1}=\xi_{i}^2 \xi_{i}'^{-2}.
\end{equation*}
Now $n\geq 8$ since $q\geq 2$, and projecting $\eta^2 \fti$ onto $B_{4}(\St)$ by forgetting all but the $\left( 1+\frac{i}{2}\right)^{\text{th}}$, $\left( 2+\frac{i}{2}\right)^{\text{th}}$, $\left( 5+\frac{i}{2}\right)^{\text{th}}$  and $\left( 6+\frac{i}{2}\right)^{\text{th}}$ strings yields the braid $\sigma_{1}^2\sigma_{3}^2$ of $P_{4}(\St)$, which by \req{delta1} is the element $\delta_{2,0}^{2}$. But this element is of infinite order by \relem{commalphaigen}(\ref{it:rgeq2}), and we conclude that $\eta$ is also of infinite order.

\item [\underline{$2\up{nd}$ subcase: $m=3$}.] Since $q\geq 2$, we have $n\geq 12+i$ and $\xi_{i} =\sigma_{1+\frac{i}{2}} \sigma_{7+\frac{i}{2}}\cdots \sigma_{n-5-\frac{i}{2}}$. So
\begin{align*}
\pi(\eta)=& \textstyle\left( 1+\frac{i}{2}, 2+\frac{i}{2}\right)\left( 7+\frac{i}{2}, 8+\frac{i}{2}\right)\cdots \left( n-5-\frac{i}{2}, n-4-\frac{i}{2}\right) \ldotp\\
& \textstyle\left( 1+\frac{i}{2}, n-2-\frac{i}{2},n-5-\frac{i}{2},\ldots, 4+\frac{i}{2}\right)\ldotp\\
& \textstyle\left( 2+\frac{i}{2}, n-1-\frac{i}{2},n-4-\frac{i}{2},\ldots, 5+\frac{i}{2}\right)\ldotp\\
& \textstyle\left( 3+\frac{i}{2}, n-\frac{i}{2},n-3-\frac{i}{2},\ldots, 6+\frac{i}{2}\right) \ldotp\left( 1+\frac{i}{2}, 2+\frac{i}{2}\right)\left( 7+\frac{i}{2}, 8+\frac{i}{2}\right)\cdots\\
& \textstyle \left( n-5-\frac{i}{2}, n-4-\frac{i}{2}\right) \ldotp \left(\vphantom{\left( \frac{i}{2}\right)}1,n\right)\left(\vphantom{\left( \frac{i}{2}\right)} 2,n-1\right) \cdots \left(\vphantom{\left( \frac{i}{2}\right)}\frac{n}{2}, \frac{n}{2}+1\right).
\end{align*}
If $i=0$ (resp.\ $i=2$),
%\begin{align*}
%\pi(\eta)=\pi(\xi_{0} \alpha_0^3 \xi_{0}^{-1}\garside)=& (1,2)(7,8) \cdots (n-5,n-4) \ldotp\\
%& (1, n-2, n-5, \ldots, 4) (2,n-1, n-4,\ldots, 5) (3,n,n-3,\ldots, 6) \ldotp\\
%& (1,2)(7,8) \cdots (n-5,n-4) \ldotp (1,n)(2,n-1) \cdots \left(\frac{n}{2}, \frac{n}{2}+1\right).
%\end{align*}
%Then 
the cycle decomposition of $\pi(\eta)$ contains the two cycles $(1,2,3)$ and $(4,n-1,6,n-2,5,n)$  
%so by \reth{murasugi}, $\eta$ is of infinite order. If $i=2$,
%\begin{align*}
%\pi(\eta)=\pi(\xi_{2} \alpha_2'^3 \xi_{2}^{-1}\garside)=& (2,3)(8,9) \cdots (n-6,n-5) \ldotp\\
%& (2, n-3, n-6, \ldots, 5) (3,n-2, n-5\ldots, 6) (4,n-1,n-4,\ldots, 7) \ldotp\\
%& (2,3)(8,9) \cdots (n-6,n-5) \ldotp (1,n)(2,n-1) \cdots \left(\frac{n}{2}, \frac{n}{2}+1\right).
%\end{align*}
%A calculation shows that the cycle decomposition of $\pi(\eta)$ contains the cycles
(resp.\ $(1,n)$ and $(2,3,4)$), and we deduce from \reth{murasugi} that $\eta$ is of infinite order.

\item [\underline{$3\up{rd}$ subcase: $m\geq 4$}.] Since $q\geq 2$, we have $n\geq 16$. Using equations~\reqref{fundaa} and~\reqref{fundab}, we have that
\begin{align*}
\alpha_i'^m \sigma_{n-1-\frac{i}{2}}\alpha_i'^{-m}&= \alpha_{0}^{i/2}\alpha_i^m\alpha_{0}^{-i/2} \sigma_{n-1-\frac{i}{2}}\alpha_{0}^{i/2}\alpha_i^{-m} \alpha_{0}^{-i/2}=\alpha_{0}^{i/2}\alpha_i^m \sigma_{n-1-i}\alpha_i^{-m} \alpha_{0}^{-i/2}\\
&=\alpha_{0}^{i/2}\alpha_i^{m-2} \sigma_{1}\alpha_i^{-(m-2)} \alpha_{0}^{-i/2}=\sigma_{m-1+\frac{i}{2}}, 
\end{align*}
from which one may see that
\begin{align}
\alpha_i'^m \xi_{i} \alpha_i'^{-m}&= \sigma_{m+1+\frac{i}{2}} \sigma_{3m+1+\frac{i}{2}}\cdots \sigma_{n-3m+1-\frac{i}{2}} \sigma_{n-m+1-\frac{i}{2}}, \quad\text{and}\label{eq:xiconj}\\
\alpha_i'^m \xi_{i}' \alpha_i'^{-m}&= \sigma_{m-1+\frac{i}{2}} \sigma_{3m-1+\frac{i}{2}}\cdots \sigma_{n-3m-1-\frac{i}{2}} \sigma_{n-m-1-\frac{i}{2}}. \label{eq:xiprimeconj}
\end{align}
The terms in each of the expressions~\reqref{defxi},~\reqref{defxiprime},~\reqref{xiconj} and~\reqref{xiprimeconj} commute pairwise, and since $m\geq 4$, $\xi_{i}$, $\xi_{i}'$, $\alpha_i'^m \xi_{i} \alpha_i'^{-m}$ and $\alpha_i'^m \xi_{i}' \alpha_i'^{-m}$ also commute pairwise. So:
\begin{align*}
\eta^2 \fti&=\xi_{i}\ldotp \alpha_i'^m\xi_{i}^{-1}\alpha_i'^{-m}\ldotp \alpha_i'^{m} \xi_{i}'\alpha_i'^{-m}\ldotp \xi_{i}'^{-1}\\
&= \sigma_{1+\frac{i}{2}} \sigma_{m-1+\frac{i}{2}} \sigma_{m+1+\frac{i}{2}}^{-1} \sigma_{2m-1+\frac{i}{2}}^{-1} \cdots 
\sigma_{n-2m+1-\frac{i}{2}} \sigma_{n-m-1-\frac{i}{2}} \sigma_{n-m+1-\frac{i}{2}}^{-1} \sigma_{n-1-\frac{i}{2}}^{-1},
\end{align*}
and all of the terms in this expression commute pairwise. Projecting $\eta^2 \fti$ onto $B_{4}(\St)$ by forgetting all but the strings numbered $1+\frac{i}{2}$, $2+\frac{i}{2}$, $m-1+\frac{i}{2}$ and $m+\frac{i}{2}$ yields the braid $\sigma_{1}\sigma_{3}$, which we know to be of infinite order from the case $m=2$. So $\eta$ and $H$ are also of infinite order. This completes the proof of the realisation of $\Z_{4q}\bigast_{\Z_{2q}} \dic{4q}$ as a subgroup of $B_{n}(\St)$ for all $q\geqslant 2$ dividing $(n-i)/2$.
\end{enumerate}
\end{enumerate}

\item Let $q\geq 2$ be a strict divisor of $n-i$, where $i\in \brak{0,2}$, and let $m=(n-i)/q$. Then $m\geq 2$. We distinguish the cases $m=2$ and $m\geq 3$. 
%\comment{Maybe we can combine the case $m=2$ with the case $m=1$ of part~(b).} 
%By \repr{maxdicyclic}, we may suppose that abstractly the factors are subgroups of the maximal dicyclic groups $\dic{4(n-i)}$, where $i\in \brak{0,2}$. Set $m=(n-i)/q$, and let $\alpha_{i}'=\alpha_{0} \alpha_{i} \alpha_{0}^{-1}$. We distinguish three cases according to the value of $m$. 

%So if $K$ is a dicyclic subgroup of a maximal dicyclic subgroup of $B_{n}(\St)$, there exists $i\in\brak{0,2}$ and $q$ a divisor of $n-i$ such that $K\cong \dic{4q}$, and conjugating $K$ if necessary, it may be chosen to be a subgroup of the standard copy $\ang{\alpha,\garside}$ of $\dic{4(n-i)}$, where $\alpha=\alpha_{0}$ if $i=0$, and $\alpha=\alpha_{2}'$ if $i=2$. 
\begin{enumerate}
%\item $m=1$: this (maximal) case cannot occur. Indeed, by standard properties of type~II virtually cyclic groups, if $B_n(\St)$ had a subgroup of the form $G_1 \bigast_F G_2$, where $G_1$ and $G_2$ are isomorphic to $\dic{4(n-i)}$ where $i\in\brak{0,2}$, and $F$ is isomorphic to $\Z_{2(n-i)}$, it would also have a subgroup isomorphic to $\Z_{2(n-i)} \rtimes \Z$, which is not possible by \reco{normalise} (recall by \reth{murasugi} that the factor $\Z_{2(n-i)}$ is conjugate to $\ang{\alpha_{i}}$). 

\item[\underline{1\up{st} case: $m=2$.}] Then $2q=n-i$, and $n$ is even. Let $G_{1}=\ang{\alpha_{i}'^2,\alpha_{i}' \garside}$ and $G_{2}=\xi_{i} G_{1} \xi_{i}^{-1}$, where $\xi_{i}=\alpha_{0}^{i/2}\delta_{2,i} \alpha_{0}^{-i/2}$. Then $G_{1},G_{2}\cong\dic{2(n-i)}=\dic{4q}$. As we saw in the case $m=1$ of part~(\ref{it:realV2b}) above, $\xi_{i}$
%\begin{equation*}
%\xi_{i}=
%\begin{cases}
%\sigma_{1}^2 \sigma_{3}^2 \cdots \sigma_{n-1}^2=\delta_{2,0} & \text{if $i=0$}\\
%\sigma_{2}^2 \sigma_{4}^2 \cdots \sigma_{n-2}^2= \alpha_{0} \delta_{2,2} \alpha_{0}^{-1} & \text{if $i=2$},
%\end{cases}
%\end{equation*}
%and set $G_{2}=\xi_{i} G_{1} \xi_{i}^{-1}$ and $F=\ang{\alpha_{i}'^2}$. 
%By \relem{delta1comm}, $\xi_{i}$ 
commutes with $\alpha_{i}'^2$, and so $G_{1}\cap G_{2} \supset F$, where $F=\ang{\alpha_{i}'^2}\cong \Z_{2q}$. Let $H=\ang{G_{1}\cup G_{2}}$. To complete the construction, it suffices once more by \repr{infincard} to show that $H$ contains an element of infinite order. Consider the following element of $H$:
\begin{equation*}
\xi_{i} \alpha_{i}' \garside \xi_{i}^{-1}\ldotp \alpha_{i}' \garside\ldotp \alpha_{i}'^2=\xi_{i} \alpha_{i}'  \xi_{i}^{-1} \alpha_{i}'\ft=\widetilde{\eta}\ft,
\end{equation*}
using \req{basicconj} and the fact that $\garside$ commutes with $\xi_{i}$, and where $\widetilde{\eta}$ is as defined in \req{defetatilde}. But we saw there that $\widetilde{\eta}$ is of infinite order, so $\widetilde{\eta}\ft$ is too, and thus $H$ is infinite.

\item [\underline{2\up{nd} case: $m\geqslant 3$.}] Then $n\geq 6+i$. Set $G_1=\ang{\alpha_{i}'^m,\garside}$ and $G_2=\xi_{i} G_1\xi_{i}^{-1}$, where $\xi_{i}=\alpha_{0}^{i/2}\delta_{m,i} \alpha_{0}^{-i/2}$, and since $\delta_{m,i}$ commutes with $\alpha_{i}^{m}$ by \relem{commalphaigen}(\ref{it:rgeq2}), $\xi_{i}$ commutes with $\alpha_{i}'^m$. Thus $G_2=\ang{\alpha_{i}'^m, \xi_{i} \garside \xi^{-1}}$, and $G_1\cap G_2 \supset \ang{\alpha_{i}'^m}\cong \Z_{2q}$. To complete the construction, it suffices to show that $H=\ang{G_{1} \cup G_{2}}$ contains an element of infinite order. Consider the element $[\xi_{i},\garside]=\xi_{i} \garside \xi_{i}^{-1}\ldotp \garside^{-1} \in H$. Then:
\begin{align*}
[\xi_{i},\garside] =&\xi_{i} \ldotp\garside \xi_{i}^{-1} \garside^{-1}= \sigma_{1+\frac{i}{2}}\sigma_{m+1+\frac{i}{2}} \cdots \sigma_{n-m+1-\frac{i}{2}}\ldotp\\
& \sigma_{m-1+\frac{i}{2}}^{-1}\sigma_{2m-1+\frac{i}{2}}^{-1}\cdots \sigma_{n-m-1-\frac{i}{2}}^{-1} \sigma_{n-1-\frac{i}{2}}^{-1}\\
%\sigma_{m-1}^{-1} \sigma_{2m-1}^{-1} \cdots \sigma_{n-1}^{-1}\\
=&\textstyle\left(\sigma_{1+\frac{i}{2}}\sigma_{m-1+\frac{i}{2}}^{-1}\right)
\left(\sigma_{m+1+\frac{i}{2}} \sigma_{2m-1+\frac{i}{2}}^{-1} \right)
\cdots \left(\sigma_{n-m+1-\frac{i}{2}}\sigma_{n-1-\frac{i}{2}}^{-1} \right), %&= (\sigma_1\sigma_{m-1}^{-1})(\sigma_{m+1} \sigma_{2m-1}^{-1}) \cdots (\sigma_{n-m+1} \sigma_{n-1}^{-1}),
\end{align*}
where the bracketed terms commute pairwise. If $m=3$ then after having projected $[\xi_{i},\garside]^3$ into $P_4(\St)$ by forgetting all but the first four strings, we carry out a calculation similar to that of \req{sig4sig3}. If $m\geq 4$, we project $[\xi_{i},\garside]$ into $B_4(\St)$ by forgetting all but the strings numbered $1+\frac{i}{2}$, $2+\frac{i}{2}$, $m+1+\frac{i}{2}$ and $m+2+\frac{i}{2}$, which yields the braid $\sigma_{1}\sigma_{3}$ of infinite order. In both cases, we conclude that $[\xi_{i},\garside]$ is of infinite order.
%To see that this element is of infinite order, if $m=3$, one may take its cube and then project into $P_4(\St)$ by forgetting all but the first four strings, or if $m\geq 4$, we may take its square and then project into $P_4(\St)$ by forgetting all but the first, second, $(m+1)\up{th}$ and $(m+2)\up{th}$ strings. 
Thus $H$ is of infinite order, and it follows from \repr{infincard} that $H\cong \dic{4q} \bigast_{\Z_{2q}} \dic{4q}$.
%\comment{maybe we can combine some of the arguments for $m=2$ and $m\geq 3$?}
\end{enumerate}

\item\label{it:ggroupiiix} Let $q\geq 4$ be an even divisor of $n-i$, and set $m=(n-i)/q$, $G_1=\ang{\alpha_i'^m, \garside}$ and $G_2= \lambda_{i} G_1 \lambda_{i}^{-1}$, where 
%\comment{slightly different construction from the usual one}
\begin{equation}\label{eq:lambdai}
\lambda_{i}= \prod_{j=0}^{(q-2)/2} \sigma_{m(1+2j)+\frac{i}{2}}=\sigma_{m+\frac{i}{2}} \sigma_{3m+\frac{i}{2}} \cdots \sigma_{n-3m-\frac{i}{2}} \sigma_{n-m-\frac{i}{2}}.
\end{equation}
Then both $G_1$ and $G_2$ are isomorphic to $\dic{4q}$, and:
\begin{align}
\garside \lambda_i \garside^{-1}&=  \garside \sigma_{m+\frac{i}{2}} \sigma_{3m+\frac{i}{2}} \cdots \sigma_{n-3m-\frac{i}{2}} \sigma_{n-m-\frac{i}{2}} \garside^{-1}\notag\\
&= \sigma_{n-m-\frac{i}{2}} \sigma_{n-3m-\frac{i}{2}} \cdots \sigma_{3m+\frac{i}{2}} \sigma_{m+\frac{i}{2}}= \lambda_{i}\label{eq:zetagarside}
\end{align}
by \req{garsideconj}. Further, by equations~\reqref{fundaa} and~\reqref{fundab}, we have
\begin{align*}
\alpha_{i}'^{2m} \sigma_{n-m-\frac{i}{2}} \alpha_{2}'^{-2m}&=
\alpha_{0}^{i/2}\alpha_{i}^{2m}\alpha_{0}^{-i/2} \sigma_{n-m-\frac{i}{2}} \alpha_{0}^{i/2}\alpha_{i}^{-2m}\alpha_{0}^{-i/2}=\alpha_{0}^{i/2}\alpha_{i}^{2m}\sigma_{n-m-i} \alpha_{i}^{-2m}\alpha_{0}^{-i/2}\\
&=\alpha_{0}^{i/2}\alpha_{i}^{m+1}\sigma_{n-i-1} \alpha_{i}^{-(m+1)}\alpha_{0}^{-i/2}=\alpha_{0}^{i/2}\alpha_{i}^{m-1}\sigma_{1} \alpha_{i}^{-(m-1)}\alpha_{0}^{-i/2}= \sigma_{m+\frac{i}{2}},
\end{align*}
and from this and \req{lambdai} it follows that 
%$\lambda_{i}=\alpha_{i}'^{m-1} \xi_{i} \alpha_{i}'^{-(m-1)}$, where $\xi_{i}$ is given by \req{defxi}. \comment{in that case, we had $m=(n-i)/2q$, check that this is OK.} \comment{See also \req{cycdic}. We can probably define $\xi_{i}$ before separating the two cases.} If $m=1$, this is clear, while if $m\geq 2$, we apply equations~\reqref{conjalpha0} and~\reqref{commalpha2}. Using the results obtained in the cases $m=1$ and $m\geq 2$ for $\dic{4q}\bigast_{\Z_{2q}} \dic{4q}$, we see that 
$\lambda_{i}$ also commutes with $\alpha_{i}'^{2m}$.
%, and so $\zeta_{i}$ commutes with $\alpha_{i}'^{2m}$ too. 
This fact and \req{zetagarside} imply that $G_1 \cap G_2 \supset \ang{\alpha_i'^{2m}, \garside}\cong \dic{2q}$. To complete the construction, it suffices to show that the subgroup $H=\ang{G_1\cup G_2}$ is infinite, or equivalently, that it contains an element of infinite order. We consider the two cases $m=1$ and $m\geq 2$ separately.

%\begin{align*}
%\alpha_0^m \xi_{i} \alpha_0^{-m}&=\alpha_0^m \sigma_m \sigma_{3m} \cdots \sigma_{m(q-1)} \alpha_0^{-m}=
%\sigma_{2m} \sigma_{4m} \cdots \sigma_{m(q-2)} \alpha_0^m\sigma_{n-m}\alpha_0^{-m}\\
%&=\sigma_{2m} \sigma_{4m} \cdots \sigma_{m(q-2)} \alpha_0 \sigma_{n-1}\alpha_0^{-1} =\sigma_{2m} \sigma_{4m} \cdots \sigma_{m(q-2)} \alpha_0^{-1} \sigma_{1}\alpha_0 \neq \xi,\\
%\alpha_0^{2m} \xi \alpha_0^{-2m}&= \alpha_0^m \sigma_{2m} \sigma_{4m} \cdots \sigma_{m(q-2)} \alpha_0^{-1} \sigma_{1}\alpha_0 
%\alpha_0^{-m}= \sigma_{3m} \cdots \sigma_{m(q-1)} \sigma_m=\xi, \quad \text{and}\\
%\garside \xi \garside^{-1}&=  \garside \sigma_m \sigma_{3m} \cdots \sigma_{m(q-1)} \garside^{-1}= \sigma_{n-m} \sigma_{n-3m} \cdots \sigma_{3m}\sigma_m\\
%&= \sigma_m\sigma_{3m} \cdots \sigma_{m(q-3)} \sigma_{m(q-1)}=\xi, \quad \text{using the relation $n=qm$.}
%\end{align*}
%Hence $G_1 \cap G_2 \supset \ang{\alpha_0^{2m}, \garside}\cong \dic{2q}$. To complete the construction, it suffices to show that the subgroup $H$ generated by $G_1$ and $G_2$ is infinite, or equivalently, that it contains an element of infinite order. 

\begin{enumerate}
\item[\underline{1\up{st} case: $m=1$.}] Then $q=n-i\geq 4$, $n$ is even and $G_1\cong G_2\cong \dic{4(n-i)}$. If the element $\lambda_{i} \alpha_i' \lambda_{i}^{-1}$ of $G_{2}$ belonged also to $G_{1}$, since it is of order $2(n-i)\geqslant 8$, it would be an element of the subgroup $\ang{\alpha_i'}$ of $G_{1}$, and so $\lambda_{i}$ would belong to the normaliser of $\ang{\alpha_i'}$ in $B_n(\St)$. \repr{genhodgkin1} then implies that $\lambda_{i}$ is of finite order. However, $\lambda_{i}=\alpha_{0}^{i/2} \delta_{2,i} \alpha_{0}^{-i/2}$ is of infinite order by \relem{commalphaigen}(\ref{it:rgeq2}), which yields a contradiction, and so we conclude that $G_1\neq G_2$. If $\dic{4(n-i)}$ is maximal finite in $B_{n}(\St)$ then $H$ must then be infinite, which gives the result. So suppose that $\dic{4(n-i)}$ is not maximal. By \reth{finitebn}, we have $n=6$ and $i=2$, in which case $\lambda_{2}=\sigma_{2}\sigma_{4}$ and $\alpha_{2}'=\sigma_{2}\sigma_{3}\sigma_{4}\sigma_{5}^{2}$. Equation~\reqref{etainfinite} implies that the element $\lambda_{2} \alpha_{2}' \lambda_{2}^{-1}\ldotp \alpha_{2}'$ of $H$ is of infinite order as required.

\item[\underline{2\up{nd} case: $m\geqslant 2$.}] Consider the element $\rho_{i}=\alpha_i'^m \ldotp\lambda_{i} \alpha_i'^{-m} \lambda_{i}^{-1}$ of $H$. Then
\begin{align*}
\rho_{i} =& \alpha_i'^{(m-1)} \ldotp\alpha_i' 
\sigma_{m+\frac{i}{2}} \sigma_{3m+\frac{i}{2}} \cdots \sigma_{n-3m-\frac{i}{2}} \sigma_{n-m-\frac{i}{2}} \alpha_i'^{-1}\ldotp\\
&\alpha_i'^{-(m-1)}
\sigma_{n-m-\frac{i}{2}}^{-1} \sigma_{n-3m-\frac{i}{2}}^{-1} \cdots \sigma_{3m+\frac{i}{2}}^{-1} \sigma_{m+\frac{i}{2}}^{-1} \alpha_i'^{(m-1)} \ldotp \alpha_i'^{-(m-1)}\\
=& \alpha_i'^{(m-1)} 
\sigma_{m+1+\frac{i}{2}} \sigma_{3m+1+\frac{i}{2}} \cdots \sigma_{n-3m+1-\frac{i}{2}} \sigma_{n-m+1-\frac{i}{2}} \ldotp\\
& \sigma_{n-2m+1-\frac{i}{2}}^{-1} \sigma_{n-4m+1-\frac{i}{2}}^{-1} \cdots \sigma_{2m+1+\frac{i}{2}}^{-1} \sigma_{1+\frac{i}{2}}^{-1}    \alpha_i'^{-(m-1)}\\
=& \alpha_i'^{(m-1)} 
\sigma_{1+\frac{i}{2}}^{-1} \sigma_{m+1+\frac{i}{2}}  \sigma_{2m+1+\frac{i}{2}}^{-1} \sigma_{3m+1+\frac{i}{2}} \cdots \sigma_{n-2m+1-\frac{i}{2}}^{-1}\sigma_{n-m+1-\frac{i}{2}} \alpha_i'^{-(m-1)},
\end{align*}
using equations~\reqref{fundaa} and~\reqref{fundab}, the fact that $m\geq 2$, as well as the relations:
\begin{align*}
\alpha_i' \sigma_{n-m-\frac{i}{2}}\alpha_i'^{-1} &= \alpha_{0}^{i/2}\alpha_{i} \sigma_{n-m-i} \alpha_{i}^{-1} \alpha_{0}^{-i/2}\\
&= \alpha_{0}^{i/2}\sigma_{n-m-i+1} \alpha_{0}^{-i/2} \;\text{by \req{fundaa} as $n-m-i+1\leq n-i-1$}\\
&= \sigma_{n-m+1-\frac{i}{2}} \;\text{by \req{fundaa} since $n-m+1-\frac{i}{2}\leq n-1$,}
\end{align*}
and
\begin{align*}
\alpha_i'^{(m-1)} \sigma_{n-2m+1-\frac{i}{2}}\alpha_i'^{-(m-1)} & \!=\! \alpha_{0}^{i/2}\alpha_{i}^{m-1} \sigma_{n-2m+1-i} \alpha_{i}^{-(m-1)} \alpha_{0}^{-i/2}\\
&\!=\! \alpha_{0}^{i/2}\sigma_{n-m-i} \alpha_{0}^{-i/2}\;\text{by \req{fundaa} as $n-m-i\leq n-i-1$}\\
&\!=\! \sigma_{n-m-\frac{i}{2}}.
\end{align*}
So $\rho_{i}$ is conjugate to
\begin{equation*}
\sigma_{1+\frac{i}{2}}^{-1} \sigma_{m+1+\frac{i}{2}}  \sigma_{2m+1+\frac{i}{2}}^{-1} \sigma_{3m+1+\frac{i}{2}} \cdots \sigma_{n-2m+1-\frac{i}{2}}^{-1}\sigma_{n-m+1-\frac{i}{2}},
\end{equation*}
which under the projection onto $B_{4}(\St)$ that is obtained by forgetting all but the strings numbered $1+\frac{i}{2}$, $2+\frac{i}{2}$, $2m+1+\frac{i}{2}$ and $2m+2+\frac{i}{2}$ yields the element $\sigma_{1}^{-1} \sigma_{3}^{-1}$, which we know to be of infinite order in $B_{4}(\St)$,
so $H$ is also of infinite order. This completes the proof of the realisation of $\dic{4q}\bigast_{\dic{2q}}\dic{4q}$ as a subgroup of $B_{n}(\St)$, as well as that of \reth{realV2}.\qedhere
\end{enumerate}
\end{enumerate}
\end{proof}

\subsection{Realisation of $\oonestar\bigast_{\tonestar}\oonestar$ in $B_{n}(\St)$}\label{sec:oonestaramalg}

For the realisation of the Type~II subgroups of $B_{n}(\St)$ described in \redef{v1v2}(\ref{it:mainIIdef}), there is just one outstanding case not covered by \reth{realV2} to be dealt with, that of $\oonestar\bigast_{\tonestar}\oonestar$. We start by making some general comments. From \reth{finitebn}, there is no finite subgroup of $B_{n}(\St)$ that contains two copies of $\oonestar$. In particular, any subgroup of $B_{n}(\St)$ generated by two distinct copies $G_{1},G_{2}$ of $\oonestar$ is necessarily infinite. If further $G_{1}\cap G_{2} \cong \tonestar$ then it follows from \repr{infincard} that $\ang{G_{1}\cup G_{2}}\cong \oonestar\bigast_{\tonestar}\oonestar$, from which we also obtain a subgroup isomorphic to $\tonestar \rtimes \Z$ for one of the two actions of $\Z$ on $\tonestar$ of \redef{v1v2}(\ref{it:mainIdef})(\ref{it:maint}) and~(\ref{it:maing}). Notice also that in this case, \cite[Proposition~1.5]{GG7} implies that $G_{1}$ and $G_{2}$ are conjugate by an element that belongs to the normaliser of $G_{1}\cap G_{2}$ since $\oonestar$ contains a unique subgroup isomorphic to $\tonestar$. Conversely, if $\xi$ is an element of $ B_{n}(\St)$ that belongs to the normaliser of a subgroup $K$ of $B_{n}(\St)$ isomorphic to $\tonestar$, and if $n\nequiv 4\bmod{6}$ then $K$ is contained in a subgroup $G_{1}$ of $B_{n}(\St)$ isomorphic to $\oonestar$ by \reth{finitebn}. Either $\xi G_{1} \xi^{-1}=G_{1}$, in which case $\xi$ belongs to the normaliser of $G_{1}$, or else $G_{1}\neq \xi G_{1} \xi^{-1}$, in which case $\ang{G_{1}\cup \xi G_{1} \xi^{-1}}\cong \oonestar\bigast_{\tonestar}\oonestar$ in light of the above remarks.

We now prove the realisation of $\oonestar \bigast_{\tonestar} \oonestar$ in $B_{n}(\St)$ in the following cases.

\begin{prop}\label{prop:oto}
Let $n\equiv 0,2 \bmod 6$, and suppose that $n= 36$ or $n\geq 42$. Then $B_{n}(\St)$ possesses a subgroup that is isomorphic to $\oonestar \bigast_{\tonestar} \oonestar$.
\end{prop}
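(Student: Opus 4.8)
The plan is to build the amalgamated product one level down, inside the mapping class group $\mcg$, and then to pull it back to $B_n(\St)$ by means of the correspondence of \repr{vcmcg}(b)(ii) (equivalently \repr{corrbnmcg}). Concretely, I would produce two copies $\widetilde{\Gamma}_1,\widetilde{\Gamma}_2$ of $\sn[4]$ inside $\mcg$ whose intersection is a common copy $\widetilde{\Lambda}$ of $\an[4]$ and whose union generates an infinite subgroup; applying \repr{infincard} in $\mcg$ then gives $\ang{\widetilde{\Gamma}_1\cup\widetilde{\Gamma}_2}\cong \sn[4]\bigast_{\an[4]}\sn[4]$. Since a subgroup of order $48$ (resp.\ $24$) of $B_n(\St)$ projecting onto $\sn[4]$ (resp.\ $\an[4]$) must, by \reth{finitebn} and \rerem{finitesub}(a), be $\oonestar$ (resp.\ $\tonestar$), we have $\phi^{-1}(\sn[4])\cong\oonestar$ and $\phi^{-1}(\an[4])\cong\tonestar$, so \repr{vcmcg}(b)(ii) yields $\phi^{-1}\bigl(\ang{\widetilde{\Gamma}_1\cup\widetilde{\Gamma}_2}\bigr)\cong \oonestar\bigast_{\tonestar}\oonestar$ as a subgroup of $B_n(\St)$. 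This is the same mechanism as producing an infinite-order $\xi$ in the normaliser of a $\tonestar$ with $\xi G_1\xi^{-1}\neq G_1$, as described before the statement.

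For the geometric input I would imitate the proof of \repr{ttimesz}, but starting from a cube $\Delta$ inscribed in $\St$ rather than a tetrahedron. I take $X\subset\St$ to be an $n$-point set invariant under the octahedral rotation group $\Gamma\cong\sn[4]$, assembled from $\Gamma$-orbits (generic orbits of size $24$, together with the vertices, edge-midpoints and face-centres, of sizes $8$, $12$ and $6$). As in \repr{ttimesz} the restriction to $\Gamma$ of $\Psi\colon\homeo\to\mcg$ is injective, so $\widetilde{\Gamma}_1=\Psi(\Gamma)\cong\sn[4]$. Let $\Lambda\subset\Gamma$ be the rotation group $\cong\an[4]$ of a regular tetrahedron $T$ inscribed in $\Delta$ (on four of the eight vertices), put $\widetilde{\Lambda}=\Psi(\Lambda)$, and fix $\widetilde{R}\in\widetilde{\Gamma}_1\setminus\widetilde{\Lambda}$ coming from an odd cube-rotation, which necessarily interchanges $T$ with the complementary tetrahedron $T'$. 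Bookkeeping of the admissible orbit sizes, under the requirement (as in \repr{ttimesz}) that each edge of $T$ carry at least two interior marked points, is what forces $n=36$ or $n\geq 42$; and $n\equiv 0,2\bmod 6$ arises exactly because the vertex orbit has size $8\equiv 2$ while the other admissible sizes are $\equiv 0\bmod 6$.

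Next I would form the tetrahedral multitwist $\widetilde{T}$ as in the proof of \repr{ttimesz}(a): along a $\Lambda$-orbit of six disjoint simple closed curves, one encircling the marked points on each edge of $T$, take the product of the corresponding positive Dehn twists. The local computation of \repr{ttimesz}(a) (the twist and the stabiliser of an edge commute on the relevant annulus) shows $\widetilde{T}$ centralises $\widetilde{\Lambda}$, while projecting to $\pmcg[4]$ and recognising the image as $\sigma_1^2\sigma_3^2$ shows, via the end of the proof of \repr{rtimes}, that $\widetilde{T}$ has infinite order. Set $\widetilde{\Gamma}_2=\widetilde{T}\,\widetilde{\Gamma}_1\,\widetilde{T}^{-1}$. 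Since $\widetilde{T}$ centralises $\widetilde{\Lambda}$ we get $\widetilde{\Lambda}=\widetilde{T}\widetilde{\Lambda}\widetilde{T}^{-1}\subseteq\widetilde{\Gamma}_1\cap\widetilde{\Gamma}_2$; and because the odd rotation $\widetilde{R}$ carries the curve system of $T$ to that of $T'$, the conjugate $\widetilde{T}'=\widetilde{R}\widetilde{T}\widetilde{R}^{-1}$ is the distinct multitwist of $T'$, so $\widetilde{T}\widetilde{R}\widetilde{T}^{-1}\widetilde{R}^{-1}=\widetilde{T}\widetilde{T}'^{-1}$ is a nontrivial multitwist and hence of infinite order. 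In particular it lies outside the finite group $\widetilde{\Lambda}$, which forces $\widetilde{\Gamma}_2\neq\widetilde{\Gamma}_1$ and, by the index-$2$ argument, $\widetilde{\Gamma}_1\cap\widetilde{\Gamma}_2=\widetilde{\Lambda}$; moreover $\widetilde{T}\widetilde{T}'^{-1}\in\ang{\widetilde{\Gamma}_1\cup\widetilde{\Gamma}_2}$ shows the latter is infinite, so \repr{infincard} applies and finishes the construction.

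The main obstacle is the geometric and combinatorial verification in the previous paragraph: establishing that $\widetilde{T}$ genuinely centralises $\widetilde{\Lambda}$ while being moved off itself by $\widetilde{R}$ (i.e.\ $\widetilde{T}'\neq\widetilde{T}$ with $\widetilde{T}\widetilde{T}'^{-1}$ of infinite order), together with the orbit-counting needed to pin down exactly the stated range of $n$. As throughout \repr{ttimesz} and \reth{realV2}, the infinitude of the relevant multitwists is most safely obtained by projecting to a four-stringed pure braid group and exhibiting an element of the form $\sigma_1^2\sigma_3^2$, whose infinite order was shown at the end of the proof of \repr{rtimes}; the delicate point is to choose the forgetful projection so that the image of $\widetilde{T}\widetilde{T}'^{-1}$ is visibly of this shape.
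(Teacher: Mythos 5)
Your global strategy coincides with the paper's --- construct $\sn[4]\bigast_{\an[4]}\sn[4]$ in $\mcg$ using the rotation group $\Gamma\cong\sn[4]$ of a cube and a twist centralising its $\an[4]$-subgroup, then pull it back through $\phi$ via \repr{vcmcg} and \repr{infincard} --- but your geometric core differs from the paper's and, as written, has genuine gaps. Your marked points are assembled from cube orbits as in \repr{ttimesz}, yet your curves are required to encircle marked points on the edges of the inscribed tetrahedron $T$; those edges are face \emph{diagonals} of the cube, so in a \repr{ttimesz}-type configuration they carry no marked points at all, every one of your six curves bounds a disc with at most one puncture, and $\widetilde{T}$ is trivial. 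To repair this you must place the generic $24$-point orbits on the face diagonals (you never say so), and then your bookkeeping fails in the opposite direction: a single diagonal orbit already puts two interior points on every edge of $T$, so your stated requirement would admit $n=24,30,32,38$ --- precisely values that \rerem{exceptions} lists as open --- and certainly does not ``force $n=36$ or $n\geq 42$''. In the paper that range has an entirely different source: the twisting curves run around \emph{half} of each cube edge, namely the segment $[p_{1},p_{\lceil (m+1)/2\rceil}]$, the edge-reversing element $h\in\Gamma\setminus\Omega$ carries each half-curve to the complementary one, and one needs $m\geq 3$ points per cube edge so that $p_{1}$, $p_{\lceil (m+1)/2\rceil}$, $p_{m}$ are three distinct points, whence $n=12m+6\epsilon_{1}+8\epsilon_{2}$ with $m\geq 3$, which is exactly the stated range.

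The second gap is your assertion that $\widetilde{T}\widetilde{T}'^{-1}$ ``is a nontrivial multitwist and hence of infinite order''. It is not a multitwist: each edge of $T$ crosses the corresponding edge of $T'$ at a face centre, and the two associated curves have positive geometric intersection number (none of the bigon-shaped complementary regions is free of marked points, whether or not the face centre is marked), so the commutator is a product of positive twists along one curve system and negative twists along a transverse one. Its infinite order is therefore exactly what must be proved, and this is where the paper does the real work: it applies the forgetful homomorphism onto $\pmcg[4]$, retaining three marked points on a single edge together with one distant point, so that the image is $A_{1,2}A_{2,3}^{-1}$ --- twists along two curves sharing one marked point --- a nontrivial, hence infinite-order, element of the free group $\pi_{1}(\St\setminus\brak{x_{1},x_{3},x_{4}},x_{2})$. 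Your proposed normal form $\sigma_{1}^{2}\sigma_{3}^{2}$ is not what arises, and the choice of retained points is delicate: if you keep two points inside a curve of the first system and two inside a disjoint curve of the second, the two image curves are isotopic on the $4$-punctured sphere and the image is the identity. So the step on which your whole construction rests is deferred, and the reasoning offered in its place is incorrect; until the crossing pattern of the two curve systems and the forgetful projection are pinned down as in the paper, the proof is not complete.
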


\begin{rem}\label{rem:notostar}
It follows from \reth{main}(\ref{it:mainII})(\ref{it:excepe}) and \repr{oto} that the condition given in \redef{v1v2}(\ref{it:mainII})(\ref{it:mainIIe}) for the existence of $\oonestar \bigast_{\tonestar} \oonestar$ as a subgroup of $B_{n}(\St)$ is necessary and sufficient, unless $n$ belongs to $\brak{6,8,12,14,18,20,24,26,30,32,38}$. For these values of $n$, which are those of \rerem{exceptions}(\ref{it:exceptionsostar}), it is an open question as to whether $\oonestar \bigast_{\tonestar} \oonestar$ is realised as a subgroup of $B_{n}(\St)$. 
\end{rem}

\begin{proof}[Proof of \repr{oto}.]
In order to obtain a subgroup of $B_{n}(\St)$ that is isomorphic to $\oonestar \bigast_{\tonestar} \oonestar$, we shall construct a copy of $\sn[4] \bigast_{\an[4]} \sn[4]$ in $\mcg$, and then take its inverse image by the homomorphism $\phi$ of \req{mcg}. Let $n\equiv 0,2 \bmod 6$, and set $n=12m+6\epsilon_{1}+8\epsilon_{2}$, where $m\in \N$ and $\epsilon_{1},\epsilon_{2}\in \brak{0,1}$. Since $n= 36$ or $n\geq 42$, we have that $m\geq 3$. We use the notation of the proof of \repr{ttimesz}, taking $\Delta$ to be a cube with $m$ (resp.\ $\epsilon_{1},\epsilon_{2}$) marked points lying on each edge (resp.\ at the centre of each face, at each vertex). As in that proof, we consider the group of rotations $\Gamma\cong \sn[4]$ of $\Delta$ to be a subgroup of $\homeo$, and we set $\widetilde{\Gamma}=\Psi(\Gamma)$, which is a subgroup of $\mcg$ isomorphic to $\sn[4]$. Choose an edge $e$ of $\Delta$, fix an orientation of $e$, and denote the marked points lying on $e$ by $p_{1},\ldots,p_{m}$; these points are numbered coherently with the orientation of $e$ (see Figure~\ref{fig:cube}). Let $h$ be the unique element of $\Gamma$ different from the identity and fixing $e$ setwise (so $h$ reverses the orientation of $e$). 
\begin{figure}[h]
\centering
\begin{tikzpicture}[scale=6.7]

\tikzset{inline/.style={outer sep=0mm,inner sep=0mm}}%,baseline=(#1.base)}}

\coordinate (A) at (0,0,0);
\coordinate (B) at (1,0,0);
\coordinate (C) at (1,1,0);
\coordinate (D) at (0,1,0);

\node[inline] (a) at (A){};% {$A$};
\node[inline] (b) at (B){};% {$B$};
\node[inline] (c) at (C){};% {$C$};
\node[inline] (d) at (D){};% {$D$};

\coordinate (E) at (0,0,1);
\coordinate (F) at (1,0,1);
\coordinate (G) at (1,1,1);
\coordinate (H) at (0,1,1);

\node[inline] (e) at (E){};% {$E$};
\node[inline] (f) at (F){};% {$F$};
\node[inline] (g) at (G){};% {$G$};
\node[inline] (h) at (H){};% {$H$};

\draw [white,fill=gray!10] (0,0,0)--(1,0,0)--(1,1,0)--(0,1,0)--cycle;
\draw [white,fill=gray!3] (0,0,0)--(0,0,1)--(0,1,1)--(0,1,0)--cycle;
\draw [white,fill=gray!20] (0,0,0)--(0,0,1)--(1,0,1)--(1,0,0)--cycle;

%\draw [black,opacity=0.7] (0,0,0)--(1,0,0)--(1,1,0)--(0,1,0)--cycle;
%\draw [black,opacity=0.4] (0,0,0)--(0,0,1)--(0,1,1)--(0,1,0)--cycle;

\draw[dashed,thick] (e)--(a)--(d) (a)--(b);
\draw (c)--(d)--(h)--(e)--(f)--(g)--(c)--(b)--(f) (h)--(g);
\draw[very thick] (g)--(f);

\foreach \k in {1/4,1/2,3/4}
{\fill [black] ($\k*(a) + (b)-\k*(b)$) circle (0.3pt);
\fill [black] ($\k*(a) + (e)-\k*(e)$) circle (0.3pt);
\fill [black] ($\k*(a) + (d)-\k*(d)$) circle (0.3pt);
\fill [black] ($\k*(c) + (g)-\k*(g)$) circle (0.3pt);
\fill [black] ($\k*(c) + (d)-\k*(d)$) circle (0.3pt);
\fill [black] ($\k*(c) + (b)-\k*(b)$) circle (0.3pt);
\fill [black] ($\k*(f) + (g)-\k*(g)$) circle (0.3pt);
\fill [black] ($\k*(f) + (e)-\k*(e)$) circle (0.3pt);
\fill [black] ($\k*(f) + (b)-\k*(b)$) circle (0.3pt);
\fill [black] ($\k*(h) + (g)-\k*(g)$) circle (0.3pt);
\fill [black] ($\k*(h) + (e)-\k*(e)$) circle (0.3pt);
\fill [black] ($\k*(h) + (d)-\k*(d)$) circle (0.3pt);
};

%\fill[even odd rule,color=lightgray] 

\draw[ultra thick] ($5/8*(g) + 3/8*(f)$) ellipse (0.09cm and 0.26cm);
\draw[very thick,color=gray!75] ($3/8*(g) + 5/8*(f)$) ellipse (0.09cm and 0.26cm);

%%\draw[ultra thick] ($5/8*(g) + 3/8*(f)$) circle [x radius=0.26, y radius=0.09, rotate=90];
%\draw[very thick,color=gray!75] ($3/8*(g) + 5/8*(f)$) circle [x radius=0.26, y radius=0.09, rotate=90];

\draw (0.46,0.25) node {$\mathcal{C}_{1}$};
\draw (0.43,-0.1) node {$h(\mathcal{C}_{1})$};
\draw (0.64,0.56) node {$e$};
\draw (-0.5,0.12) node {\Large$\Delta$};

\draw (0.575,0.343) node {$p_{1}$};
\draw (0.657,-0.09) node {$p_{m}$};
\draw (0.82,0.11) node {$p_{\left\lceil \frac{m+1}{2}\right\rceil}$};
\end{tikzpicture}
\caption{The construction of $\sn[4]\bigast_{\an[4]} \sn[4]$ in $B_{m}(\St)$, $m=36$.}\label{fig:cube}
\end{figure}

The group $\Gamma$ possesses a unique subgroup $\Omega=\brak{f_{1},\ldots, f_{12}}$ isomorphic to $\an[4]$, where we take $f_{1}=\id$. For $i=1,\ldots,12$, let $e_{i}=f_{i}(e)$, whose orientation is that induced by $e=e_{1}$. For any two edges $e'$ and $e''$ of $\Delta$, there are precisely two elements of $\Gamma$ that send $e'$ to $e''$ (as non-oriented edges). One of these elements respects the orientation, and belongs to $\Omega$, and the other reverses the orientation, and belongs to $\Gamma\setminus \Omega$. Thus $h\in \Gamma \setminus \Omega$ and $\Gamma=\Omega \coprod h\Omega$ since $[\Gamma:\Omega]=2$. Let $\mathcal{C}_{1}$ be a simple closed curve bounding a disc that is a small neighbourhood of the subsegment $[p_{1},p_{\left\lceil \frac{m+1}{2}\right\rceil}]$ of the edge $e$. Let $g_{1}$ be the positive Dehn twist along $\mathcal{C}_{1}$. For $i=1,\ldots, 12$, let $g_{i}=f_{i}\circ g_{1}\circ f_{i}^{-1}$ (resp.\ $g_{i}'=f_{i}\circ h \circ g_{1}\circ h^{-1}\circ f_{i}^{-1}$) be the positive Dehn twist along the simple closed curve $f_{i}(\mathcal{C}_{1})$ (resp.\ $f_{i}\circ h(\mathcal{C}_{1})$). Since the stabiliser of the edge $e$ in $\Gamma$ is $\ang{h}$, which is isomorphic to $\Z_{2}$, the condition on $\mathcal{C}_{1}$ implies that the $f_{i}(\mathcal{C}_{1})$ (resp.\ the $f_{i}\circ h(\mathcal{C}_{1})$) are pairwise disjoint, and that $f_{i}(\mathcal{C}_{1})$ and $f_{j}\circ h(\mathcal{C}_{1})$ are disjoint if $i\neq j$. We conclude that the $g_{i}$ (resp.\ the $g_{i}'$) commute pairwise, and that $g_{i}$ and $g_{j}'$ commute if $i\neq j$.

Let $g=g_{1}\circ \cdots \circ g_{12}$. If $j\in \brak{1,\ldots,12}$, conjugation of $g$ by $f_{j}$ permutes the $g_{i}$, which commute pairwise, so $g$ and $f_{j}$ commute, and thus $g$ belongs to the centraliser of $\Omega$. Let $\Gamma'=g \Gamma g^{-1}$. By construction, $\Gamma\cong\Gamma'\cong \sn[4]$ and $\Gamma \cap \Gamma' \supset \Omega$. Let $\widetilde{\Gamma}=\Psi(\Gamma)$, $\widetilde{\Gamma'}=\Psi(\Gamma')$ and $\widetilde{\Omega}=\Psi(\Omega)$. Then $\widetilde{\Gamma}\cong \widetilde{\Gamma'}\cong \sn[4]$, $\widetilde{\Omega}\subset \widetilde{\Gamma}\cap \widetilde{\Gamma'}$ and $\widetilde{\Omega}\cong \an[4]$. Let us show that $H=\ang{\widetilde{\Gamma} \cup \widetilde{\Gamma'}}\cong \sn[4] \bigast_{\an[4]} \sn[4]$. To do so, by \repr{infincard} it suffices to prove that $H$ is infinite, and in light of the maximality of $\sn[4]$ as a finite subgroup of $\mcg$~\cite{S}, this comes down to showing that $\widetilde{\Gamma}\neq \widetilde{\Gamma'}$. It is enough to prove that $\widetilde{h'}\notin \widetilde{\Gamma}$, where $\widetilde{h'}=\Psi(h')\in \widetilde{\Gamma'}$, and $h'=ghg^{-1}$. To achieve this, suppose on the contrary that $\widetilde{h'}\in \widetilde{\Gamma}$. Let $\widetilde{g}=\Psi(g)$ and $\widetilde{h}=\Psi(h)$. Since $\widetilde{h}\in \widetilde{\Gamma}$, we have $\widetilde{h'}\widetilde{h}^{-1}=\widetilde{\gamma} \in \widetilde{\Gamma}$, where $\widetilde{\gamma}=\Psi(\gamma)$ and $\gamma=[g,h]$. On the other hand, $g$ is the product of Dehn twists, so $\widetilde{g} \in \pmcg$, thus $\widetilde{\gamma} \in \pmcg$ by normality of $\pmcg$ in $\mcg$, which implies that 
%we have
%\begin{equation}%\label{eq:commmcg}
%\widetilde{\gamma}=\widetilde{g} \ldotp\widetilde{h}\, \widetilde{g}^{-1}\widetilde{h}^{-1}\in \pmcg
%\end{equation}
$\widetilde{\gamma} \in \widetilde{\Gamma}\cap \pmcg$. As we mentioned in the Introduction, $\pmcg$ is torsion free, and the finiteness of $\widetilde{\Gamma}$ forces $\widetilde{\gamma}=1$. In particular, if $\map{\alpha}{\pmcg}[{\pmcg[4]}]$ is the projection induced by forgetting all of the marked points with the exception of $p_{1}, p_{\left\lceil \frac{m+1}{2}\right\rceil}, p_{m}$ and $f_{2}(p_{1})$ (for example) then $\alpha(\widetilde{\gamma})=1$.

In order to reach a contradiction, we now analyse $\gamma$ more closely. Since $\Omega\vartriangleleft \Gamma$, there exists a permutation $\lambda\in \sn[12]$ satisfying $\lambda(1)=1$ such that for all $i\in\brak{1,\ldots, 12}$, $f_{i}\circ h=h\circ f_{\lambda(i)}$. Then 
\begin{equation}\label{eq:conjlambdai}
h \circ g_{\lambda(i)} \circ h^{-1}=h\circ f_{\lambda(i)}\circ g_{1}\circ f_{\lambda(i)}^{-1} \circ h^{-1}=f_{i}\circ h\circ g_{1}\circ h^{-1}\circ f_{i}^{-1}=g_{i}'.
\end{equation}
Hence
\begin{align*}
\gamma&=g\ldotp hg^{-1}h^{-1}= g_{1}\circ \cdots \circ g_{12}\circ \left( h \left( g_{12}^{-1}\circ \cdots \circ g_{1}^{-1}\right)h^{-1}\right)\\
&=g_{1}\circ \cdots \circ g_{12}\circ\left(h \left( g_{\lambda(1)}^{-1}\circ \cdots \circ g_{\lambda(12)}^{-1}\right)h^{-1}\right) \quad \text{since the $g_{j}$ commute pairwise}\\
&=g_{1}\circ \cdots \circ g_{12}\circ g_{1}'^{-1}\circ \cdots \circ g_{12}'^{-1} \quad \text{by \req{conjlambdai}}\\ 
&= (g_{1}\circ g_{1}'^{-1})\circ \cdots \circ (g_{12}\circ g_{12}'^{-1}) \quad \text{by the commutativity relations on $g_{i}$ and $g_{j}'$.}
\end{align*}
Now for $i=1,\ldots, 12$, the Dehn twists $g_{i}$ and $g_{i}'$ are along curves contained in a small neighbourhood of the subsegment $[f_{i}(p_{1}), f_{i}(p_{m})]$ of the edge $e_{i}$ of $\Delta$, and since the homomorphism $\alpha$ forgets all of the marked points lying outside $e$ with the exception of $f_{2}(p_{1})$, we see that $\alpha\circ \Psi(g_{i})$ and $\alpha\circ \Psi(g_{i}')$ are trivial for all $i=2,\ldots, 12$. In particular, $\alpha(\widetilde{\gamma})=\alpha\circ \Psi(\gamma)= (\alpha\circ \Psi(g_{1}))\circ (\alpha\circ \Psi(g_{1}'^{-1}))$. Taking the four marked points of $\pmcg[4]$ in the given order, $\alpha\circ \Psi(g_{1})$ (resp.\ $\alpha\circ \Psi(g_{1}'^{-1})$) is a positive (resp.\ negative) Dehn twist along a curve that bounds a disc containing the first two (resp.\ the second and third) points, and so a preimage of $\alpha(\widetilde{\gamma})$ in $P_{4}(\St)$ under the homomorphism $\phi$ of \req{mcg} is given by $A_{1,2}A_{2,3}^{-1}$, where the $A_{i,j}$ are defined by \req{defaij}. But this element belongs to the free subgroup $\pi_{1}(\St\setminus \brak{x_{1},x_{3},x_{4}},x_{2})$ of $P_{4}(\St)$ of rank two, for which a basis is $\brak{A_{1,2},A_{2,3}}$. Thus $A_{1,2}A_{2,3}^{-1}$ is an element of $P_{4}(\St)$ of infinite order, and taking into account \req{mcg}, we deduce that $\alpha(\widetilde{\gamma})=\phi(A_{1,2}A_{2,3}^{-1})$ is also of infinite order, which contradicts the conclusion of the previous paragraph. Thus $\widetilde{h'}\notin \widetilde{\Gamma}$, and from the above arguments, we see that $H\cong \sn[4] \bigast_{\an[4]} \sn[4]$.
Taking $G=B_{n}(\St)$, $x=\ft$ and $p=\phi$, $\phi$ being as in the short exact sequence~\reqref{mcg}, in \repr{vcmcg}(\ref{it:vcmcgb})(\ref{it:vcmcgbii}), we deduce that $\phi^{-1}\left(\widetilde{\Gamma} \bigast_{\widetilde{\Omega}} \widetilde{\Gamma'}\right)$ is an infinite virtually cyclic subgroup of $B_{n}(\St)$ of Type~II isomorphic to $\phi^{-1}(\widetilde{\Gamma}) \bigast_{\phi^{-1}(\widetilde{\Omega})} \phi^{-1}(\widetilde{\Gamma'})$. But $\phi^{-1}(\widetilde{\Gamma})\cong \phi^{-1}(\widetilde{\Gamma'}) \cong \oonestar$ and $\phi^{-1}(\widetilde{\Gamma}) \cap \phi^{-1}({\widetilde\Gamma'})= \phi^{-1}(\widetilde{\Omega})\cong \tonestar$, so this subgroup is indeed isomorphic to $\oonestar \bigast_{\tonestar} \oonestar$, which proves the proposition.
\end{proof}

%In the case $n=6$, we have the following lemma.
%
%\begin{lem}
%$B_{6}(\St)$ has no subgroup isomorphic to $\oonestar\times \Z$.
%\end{lem}
%
%\begin{proof}
%Suppose on the contrary that $B_{6}(\St)$ has a subgroup $H$ isomorphic to $\oonestar\times \Z$. Since $\oonestar$ possesses a subgroup isomorphic to $\Z_{8}$, $B_{6}(\St)$ would have a subgroup isomorphic to $\Z_{8}\times \Z$, but this is not possible by \repr{necV1}.
%Let $x$ be an element of order $8$ belonging to the copy of $\oonestar$. By \reth{murasugi}, there exists $1\leq l\leq 7$ with $\gcd{(l,8)}=1$ such that $x$ is conjugate to $\alpha_{2}^l$. By taking an appropriate power of $x$ and by conjugating $H$, we may suppose that $\alpha_{2}$ belongs to the $\oonestar$-factor of $H$. But this implies that the centraliser of $\alpha_{2}$ contains an element of infinite order, which contradicts \reco{normalise}.
%\end{proof}

\section{Proof of the realisation of elements of $\mathbb{V}_{2}(n)$ in $B_{n}(\St)$}\label{sec:proofthm5}

In this section, we bring together the results of \resec{realtypeII} in order to prove \repr{realV2bis}. This will enable us to complete the proof of \reth{main}.

\begin{prop}\label{prop:realV2bis}
Let $n\geq 4$. The following Type~II virtually cyclic groups are realised as subgroups of $B_{n}(\St)$:
\begin{enumerate}[(a)]
\item\label{it:realIIa} $\Z_{4q}\bigast_{\Z_{2q}} \Z_{4q}$, where $q$ divides $(n-i)/2$ for some $i\in\brak{0,1,2}$.
\item $\Z_{4q}\bigast_{\Z_{2q}} \dic{4q}$, where $q\geq 2$ divides $(n-i)/2$ for some $i\in\brak{0,2}$.
\item $\dic{4q}\bigast_{\Z_{2q}} \dic{4q}$, where $q\geq 2$ divides $n-i$ strictly for some $i\in\brak{0,2}$. 
\item\label{it:realIId} $\dic{4q}\bigast_{\dic{2q}} \dic{4q}$, where $q\geq 4$ is even and divides $n-i$ for some $i\in\brak{0,2}$. 
\item\label{it:realIIe} $\oonestar \bigast_{\tonestar} \oonestar$, where $n\equiv 0,2 \bmod{6}$ and $n=36$ or $n\geq 42$.
\end{enumerate}
\end{prop}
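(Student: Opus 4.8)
The plan is to recognise \repr{realV2bis} as a consolidation of the realisation results proved in \resec{realtypeII}: the five families listed are exactly the elements of $\mathbb{V}_{2}(n)$ of \redef{v1v2}(\ref{it:mainIIdef}), and each has already been constructed as a subgroup of $B_{n}(\St)$. The unifying mechanism behind all of these constructions is \repr{infincard}, which reduces the realisation of an amalgamated product $G_{1}\bigast_{F} G_{2}$ to producing finite subgroups $G_{1},G_{2}$ of $B_{n}(\St)$ with $F=G_{1}\cap G_{2}$ of index $2$ in each, and then verifying merely that $\ang{G_{1}\cup G_{2}}$ is infinite.

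For parts~(\ref{it:realIIa})--(\ref{it:realIId}), whose amalgamating factors are cyclic or dicyclic, I would cite \reth{realV2} directly. The divisibility constraints on $q$ and the admissible values of $i$ quoted here coincide word for word with the hypotheses of \reth{realV2}(a)--(d), so nothing remains beyond lining up the two statements. The genuine content sits inside \reth{realV2}, where $G_{1}$ and $G_{2}$ are written explicitly in terms of powers of the $\alpha_{i}$, the half-twist $\garside$, and the commuting braids $\delta_{r,i},\xi_{i},\lambda_{i}$ supplied by \relem{commalphaigen}, the infinitude of $\ang{G_{1}\cup G_{2}}$ being checked by projecting a suitable product onto a four-string subgroup and identifying an element of infinite order such as $\sigma_{1}^{2}\sigma_{3}^{2}$ or $\sigma_{1}\sigma_{3}$.

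For part~(\ref{it:realIIe}) I would invoke \repr{oto}, whose hypotheses ($n\equiv 0,2\bmod 6$ with $n=36$ or $n\geq 42$) match those stated here exactly. There the argument is geometric: one assembles a copy of $\sn[4]\bigast_{\an[4]}\sn[4]$ in $\mcg$ from Dehn twists along a $\Gamma$-orbit of curves on the edges of a cube, confirms via \repr{infincard} and the torsion-freeness of $\pmcg$ that it is infinite, and lifts it through the projection $\phi$ of \req{mcg} by means of \repr{vcmcg}(\ref{it:vcmcgb})(\ref{it:vcmcgbii}). Since the substantive work already resides in \reth{realV2} and \repr{oto}, there is no real obstacle at this stage; the only point demanding care is the bookkeeping of parameter ranges, namely confirming that the hypotheses recorded in \repr{realV2bis} agree precisely with those under which the earlier constructions were carried out. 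Accordingly I expect the proof to amount to the two-sentence observation that parts~(\ref{it:realIIa})--(\ref{it:realIId}) follow from \reth{realV2} and part~(\ref{it:realIIe}) from \repr{oto}.
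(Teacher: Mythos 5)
Your proposal is correct and coincides exactly with the paper's own proof, which consists of the single observation that parts~(\ref{it:realIIa})--(\ref{it:realIId}) follow directly from \reth{realV2} and part~(\ref{it:realIIe}) from \repr{oto}. Your additional commentary on the internal mechanics of those two results (the use of \repr{infincard}, the explicit braid constructions, and the Dehn-twist argument lifted via \req{mcg}) is accurate but not needed at this consolidation step.
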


\begin{proof}
Parts~(\ref{it:realIIa})--(\ref{it:realIId}) follow directly from \reth{realV2}, while part~(\ref{it:realIIe}) follows from \repr{oto}.
\end{proof}

\begin{proof}[Proof of \reth{main}]
\reth{main}(\ref{it:mainI}) was proved in Propositions~\ref{prop:necV1} and~\ref{prop:necV2} for virtually cyclic subgroups of Types~I and~II respectively. \reth{main}(\ref{it:mainII}) was proved in Propositions~\ref{prop:realV1} and~\ref{prop:realV2bis} for virtually cyclic subgroups of Types~I and~II respectively. Finally, as we mentioned in \rerems{proddirbin}(\ref{it:proddirbinb}), the proof of \reth{main}(\ref{it:mainIII}) is an immediate consequence of \repr{ttimesz}(\ref{it:ttimeszd}).
%
%The statement for the virtually cyclic subgroups of $B_{n}(\St)$ of Type~I was proved in \repr{realV1}. As for the subgroups of Type~II, the first part of the theorem was proved in \repr{necV2}. The second part of the theorem concerning the realisation of Type~II is a consequence of \repr{realV2bis}.
\end{proof}

\section{Isomorphism classes of virtually cyclic subgroups of $B_{n}(\St)$ of Type~II}\label{sec:isoclasses}

%{\bf !!!D!!! About the description of amalgamated products. We have the possibilities of type two given in \repr{realV2bis}. Some of these groups  we have
%an alernative discription, like the first    and the third I think. May be we can tell more.}
%
%{\bf !!!D!!! Sees to me that we already know $\Z_{4q}\bigast_{\Z_{2q}} \Z_{4q}= \Z \rtimes \Z_{4q}$  and $\dic{4q}\bigast_{\Z_{2q}} \dic{4q}=\Z \rtimes \dic{4q}$}

By \reth{main}, we know which elements of $\mathbb{V}_{2}(n)$ are realised as subgroups of $B_{n}(\St)$. Such subgroups are of one of the following forms:
\begin{enumerate}[(a)]
\item\label{it:mainIIab} $\Z_{4q}\bigast_{\Z_{2q}} \Z_{4q}$, where $q\in \N$.

\item\label{it:mainIIbb} $\Z_{4q}\bigast_{\Z_{2q}} \dic{4q}$, where $q\geq 2$.

\item\label{it:mainIIcb} $\dic{4q}\bigast_{\Z_{2q}} \dic{4q}$, where $q\geq 2$.

\item\label{it:mainIIdb} $\dic{4q}\bigast_{\dic{2q}} \dic{4q}$, where $q\geq 4$ is even.

\item\label{it:mainIIeb} $\oonestar \bigast_{\tonestar} \oonestar$.
\end{enumerate}
There are of course additional constraints on $q$ imposed by the value of $n$. The aim of this section is to study the isomorphism classes of these amalgamated products. As we shall see in \repr{isoamalg}, there is a single such class, with the exception of $\quat[16] \bigast_{\quat} \quat[16]$, for which there are two possible classes. In \reco{semiamalg}, we will also show that with one exception (that occurs for one of the two isomorphism classes $\quat[16] \bigast_{\quat} \quat[16]$), each of the above amalgamated products of the form $G\bigast_{H} G$ is isomorphic to a semi-direct product $\Z\rtimes G$. We stress that \repr{isoamalg} and \reco{semiamalg} are consequences of the groups considered abstractly, and do not depend on the fact that they are realised as subgroups of $B_{n}(\St)$. 

Let $G$ be a group and $H$ a normal subgroup. Let $\aut[H]{G}$ denote the subgroup of $\aut{G}$ whose elements induce an automorphism of $H$. In some cases (if $H$ is characteristic, for example), the two groups $\aut{G}$ and $\aut[H]{G}$ coincide. We will concentrate our attention on the cases where $G$ is either cyclic of order a multiple of $4$, dicyclic, or equal to $\oonestar$. These are precisely the groups that appear as factors in the above list.

%\comment{The statement has been compactified a little, and the proof rearranged. In part~(\ref{it:autdic4q2q}), I added the hypotheses that $q$ is even (so that $\dic{4q}$ has dicyclic subgroups), and that $q\neq 4$ ($q=4$ is the critical case, for which $\aut{G}\to \aut{H}$ is not surjective).}

\begin{lem}\label{lem:index2}\mbox{}
\begin{enumerate}[(a)]
\item\label{it:autdic4q} Let $G$ be isomorphic to $\Z_{4q}$, $q\geq 1$, or to $\dic{4q}$, $q\geq 3$. Then $G$ possesses a unique subgroup $H$ that is isomorphic to $\Z_{2q}$, which is characteristic. Further, the homomorphism $\aut{G}\to \aut{H}$ given by restriction is surjective.

\item\label{it:autostar} Let $G$ be isomorphic to $\oonestar$. Then $G$ possesses a unique subgroup $H$ isomorphic to $\tonestar$, which is characteristic. Further, the homomorphism $\aut{G}\to \aut{H}$ given by restriction is surjective.

\item Let $G$ be isomorphic to $\quat$. Then $G$ possesses three subgroups $H_1, H_2,H_{3}$ that are isomorphic to $\Z_{4}$. Further, there is an automorphism of $\quat$ that sends $H_{i}$ to $H_{j}$ for all $i,j=1,2,3$. For $i=1,2,3$, the homomorphism $\aut[H_{i}]{G}\to \aut{H_{i}}$ given by restriction is surjective.

\item\label{it:autdic4q2q} Let $G$ be isomorphic to $\dic{4q}$, where $q\geq 4$ is even. Then $G$ possesses two subgroups $H_1, H_2$ that are isomorphic to $\dic{2q}$, and there exists an automorphism of $G$ that sends $H_{1}$ to $H_{2}$. Further, if $q\geq 6$, for $i=1,2$, the homomorphism $\aut[H_{i}]{G}\to \aut{H_{i}}$ given by restriction is surjective.
\end{enumerate}
\end{lem}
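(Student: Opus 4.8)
This lemma is a structural result about four families of finite groups, and the plan is to treat each part in turn, in each case first identifying the relevant subgroup(s), establishing uniqueness or the transitive action of $\aut{G}$, and then checking surjectivity of the restriction homomorphism. Throughout, I will use the explicit presentations already fixed in the paper: \req{presdic} for the dicyclic groups $\dic{4q}$ and \reqref{presostar} for $\oonestar$.

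For part~(\ref{it:autdic4q}), if $G\cong \Z_{4q}$ then $H$ is simply the unique subgroup of order $2q$ of a cyclic group, which is automatically characteristic; surjectivity of $\aut{\Z_{4q}}\to \aut{\Z_{2q}}$ follows because any unit $\overline{a}$ modulo $2q$ (with $a$ odd, which we may assume by adding $2q$ if necessary) lifts to a unit modulo $4q$ inducing it. If $G\cong \dic{4q}$ with $q\geq 3$, I would argue that $H=\ang{x}\cong \Z_{2q}$ is the unique cyclic subgroup of order $2q$: the elements of $G\setminus \ang{x}$ all have order $4$ (from $y^2=x^q$ and the relation $yxy^{-1}=x^{-1}$), so no such element can generate a $\Z_{2q}$ when $q\geq 3$. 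Being the unique such subgroup, $H$ is characteristic. For surjectivity, I would exhibit, for each automorphism $x\mapsto x^{-1}$ of $H$, the automorphism of $G$ fixing $y$ and inverting $x$, and more generally build automorphisms of $G$ of the form $x\mapsto x^a$ ($\gcd(a,2q)=1$), $y\mapsto y$, which realise every element of $\aut{H}\cong (\Z/2q)^{\times}$ upon restriction. Part~(\ref{it:autostar}) is handled similarly: from the presentation \reqref{presostar}, $\tonestar=\ang{P,Q,X}$ is the unique index~$2$ subgroup of $\oonestar$, hence characteristic, and surjectivity of the restriction map is exactly the content of the short exact sequence~\reqref{autot}, namely $1\to \Z_2\to \aut{\oonestar}\to \aut{\tonestar}\to 1$, which I may quote directly from \resecglobal{generalities}{autout}.

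For part~(c), with $G\cong \quat=\brak{\pm 1,\pm i,\pm j,\pm k}$, the three subgroups $H_1=\ang{i}$, $H_2=\ang{j}$, $H_3=\ang{k}$ are the cyclic subgroups of order $4$, and the transitivity of $\aut{\quat}$ on them is immediate from the fact that $\aut{\quat}\cong \sn[4]$ acts by permuting $i,j,k$ (indeed $\inn{\quat}$ already realises some of these, and the order-$3$ automorphism $\alpha$ cyclically permutes $i,j,k$). Since $\out{\quat}\cong \sn[3]$ and each $H_i$ is normal in $\quat$, we have $\aut[H_i]{\quat}=\aut{\quat}$, and surjectivity onto $\aut{H_i}\cong \Z_2$ follows because conjugation by $j$ inverts $i$. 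For part~(\ref{it:autdic4q2q}), with $G\cong \dic{4q}$, $q\geq 4$ even, the two copies of $\dic{2q}$ are $H_1=\ang{x^2,y}$ and $H_2=\ang{x^2,xy}$; that these are the only two, and are interchanged by an automorphism of $G$, I would verify by tracking which cosets of $\ang{x^2}$ contain elements of order $4$, using that $(x^ay)^2=x^{2a}y^2=x^{2a+q}$. An automorphism $x\mapsto x$, $y\mapsto xy$ (which is well-defined precisely because $q$ is even, so that $(xy)^2=x^{2+q}=x^q y^2$ still holds — this is the action $\nu$ of \req{actdic4m}) sends $H_1$ to $H_2$.

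The main obstacle I anticipate is the surjectivity claim in part~(\ref{it:autdic4q2q}) under the hypothesis $q\geq 6$, and the need to understand why $q=4$ (i.e.\ $G\cong \quat[16]$) must be excluded there. The restriction homomorphism $\aut[H_i]{\dic{4q}}\to \aut{\dic{2q}}$ must be shown to hit every automorphism of $\dic{2q}$; since $\dic{2q}$ itself has order $2q$ and $q$ is even, $\dic{2q}\cong \dic{2q}$ with $q/2\geq 3$ when $q\geq 6$, so $\aut{\dic{2q}}$ is governed by $\out{\dic{2q}}$ together with inner automorphisms, and I would construct the required lifts by extending generators $x^2\mapsto x^{2a}$, $y\mapsto x^{2b}y$ from $H_i$ to all of $G$ via $x\mapsto x^a$, choosing the odd residue $a$ appropriately. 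The delicate point is that such extensions exist and are automorphisms exactly when $q\geq 6$, because the value $q=4$ makes $\dic{2q}=\quat$, whose automorphism group $\sn[4]$ is too large to be covered by restrictions from $\aut{\quat[16]}$; this is the source of the two isomorphism classes of $\quat[16]\bigast_{\quat}\quat[16]$ recorded in \repr{isoamalg}, so I would take care to isolate precisely where the argument requires $q>4$ rather than merely $q\geq 4$.
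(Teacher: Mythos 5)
Your strategy coincides with the paper's own proof at almost every step --- same presentations, same candidate subgroups, surjectivity by explicitly extending automorphisms in parts~(a) and~(d), by the exact sequence~\reqref{autot} in part~(b), and by an inner automorphism in part~(c); your parts~(a) and~(b) are sound --- but two of your verifications contain concrete errors. In part~(c), the assertion that $\aut[H_{i}]{\quat}=\aut{\quat}$ because ``each $H_{i}$ is normal'' is false: normality gives invariance under \emph{inner} automorphisms only, and as you yourself observe, the order-$3$ automorphism $\alpha(1)$ permutes $H_{1},H_{2},H_{3}$ cyclically, so it preserves none of them; in fact $\aut[H_{i}]{\quat}$ has index $3$ in $\aut{\quat}\cong \sn[4]$. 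This slip is harmless for the lemma, since your actual surjectivity argument (conjugation by $j$ is inner, hence preserves the normal subgroup $H_{1}=\ang{i}$, and restricts to $-\id$ on it) is correct and is exactly the paper's argument.

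The error in part~(d) is more serious, because a step of your proof genuinely fails. In $\dic{4q}$ one has $yx^{a}y^{-1}=x^{-a}$, hence $(x^{a}y)^{2}=x^{a}\left(yx^{a}y^{-1}\right)y^{2}=y^{2}=x^{q}$ for \emph{every} $a$ (you used precisely this fact in your part~(a) argument); your formula $(x^{a}y)^{2}=x^{2a}y^{2}=x^{2a+q}$ is wrong. Consequently your proposed verification that $H_{1}=\ang{x^{2},y}$ and $H_{2}=\ang{x^{2},xy}$ are the only subgroups isomorphic to $\dic{2q}$ --- ``tracking which cosets of $\ang{x^{2}}$ contain elements of order $4$'' --- yields no information: every element outside $\ang{x}$ has order $4$, in every such coset. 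The clean argument, which is the paper's and relies on its proof of part~(a), is that for $q$ even the Abelianisation of $\dic{4q}$ is $\Z_{2}\oplus\Z_{2}$, so $G$ has exactly three subgroups of index $2$, namely $\ang{x}$, $\ang{x^{2},y}$ and $\ang{x^{2},xy}$, of which only the last two are dicyclic. The same false identity leads you to claim that $q$ being even is what makes $x\mapsto x$, $y\mapsto xy$ well defined; in fact $(xy)^{2}=y^{2}=x^{q}$ always, so this map is an automorphism of $\dic{4q}$ for all $q$ --- the evenness of $(n-i)/m$ in \redef{v1v2}(\ref{it:mainIdef})(\ref{it:maindict}) is a condition for realisation inside $B_{n}(\St)$, not for the automorphism to exist. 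Your remaining steps in~(d) --- the interchanging automorphism, the lift of $x^{2}\mapsto x^{2a}$, $y\mapsto x^{2b}y$ to $x\mapsto x^{a}$ with $a$ odd (hence coprime to $2q$) when $q\geq 6$, and the diagnosis that $q=4$ fails because $\aut{\quat}\cong\sn[4]$ is too large --- are correct and agree with the paper.
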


\begin{proof}\mbox{}
\begin{enumerate}
\item If $G$ is cyclic then the uniqueness of $H$ is clear. Now let $G\cong \dic{4q}$, $q\geq 3$. If $q$ is even (resp.\ odd) then $G$ possesses three subgroups (resp.\ one subgroup) of index $2$ because the Abelianisation of $\dic{4q}$ is isomorphic to $\Z_2\oplus\Z_2$ (resp.\ $\Z_{2}$), and exactly one is isomorphic to $\Z_{2q}$. In both the cyclic and dicyclic cases, the uniqueness of $H$ implies that it is characteristic. The surjectivity of the given homomorphism $\aut{G}\to \aut{H}$ is a consequence of the isomorphisms $\aut{\Z_{4q}}\cong \Z_{4q}^{\times}$, the group of units of $\Z_{4q}$, and $\aut{\dic{4q}}\cong \Z_{2q} \rtimes \Z_{2q}^{\times}$ (if $\dic{4q}$ is described by the presentation~\reqref{presdic} then the elements of $\aut{\dic{4q}}$ are given by automorphisms of the form $x\mapsto x^{i}$, $y\mapsto x^{j}y$, where $1\leq i\leq 2q-1$ is coprime with $2q$, and $1\leq j\leq 2q$, see~\cite[Example~1.4]{gg5} for more details). 
%\comment{check elsewhere whether we use $\Z_{4q}^{\times}$ or $\Z_{4q}^{\ast}$.}

\item Let $G\cong \oonestar$ be given by the presentation~\reqref{presostar}, and let $H=\ang{P,Q,X}\cong \tonestar$. Then $G/H\cong \Z_{2}$ is the Abelianisation of $G$, generated by the $H$-coset of $R$, and so $\Gamma_{2}(G)\cong H$. If $K$ is a subgroup of $G$ isomorphic to $\tonestar$ then the canonical projection $G\to G/K$ factors through the canonical projection $G\to G/H$, from which it follows that $G$ possesses a unique subgroup isomorphic to $\tonestar$. The surjectivity of $\aut{G}\to \aut{H}$ was proved in \cite[Proposition~4.1]{gg5}.

\item The first part is clear. Note that the automorphism $\alpha(1)$ of $\quat$ given in \redef{v1v2}(\ref{it:mainq8})(\ref{it:mainIcii}) may be used to permute the $H_{i}$. If $i\in \brak{1,2,3}$ then the non-trivial element of $\aut{H_{i}}\cong \Z_{2}$ is the restriction to $H_{i}$ of conjugation on $G$ by any element of $G\setminus H_{i}$.

\item Let $G$ be isomorphic to $\dic{4q}$, where $q\geq 4$ is even, and let $G$ have the presentation given by \req{presdic}. From part~(\ref{it:autdic4q}), $G$ possesses exactly two subgroups isomorphic to $\dic{2q}$, $H_{k}=\ang{x^{2},x^{k-1}y}$ for $k=1,2$. The automorphism of $G$ given by $x \mapsto x $ and $y \mapsto xy$ sends $H_{1}$ to $H_{2}$. Suppose further that $q\geq 6$, and let $f\in \aut{H_{k}}$. Using the description of $\aut{\dic{2q}}$ given in the proof of part~(\ref{it:autdic4q}), there exist $1\leq i\leq q-1$, $\gcd(i,q)=1$, and $1\leq j\leq q$ such that $f(x^{2})=x^{2i}$ and $f(x^{k-1}y)=x^{2j}\ldotp x^{k-1}y$. Since $q$ is even, $i$ is odd, so $\gcd(i,2q)=1$, and $f$ is the restriction to $H$ of the automorphism $x\mapsto x^{i}$, $y\mapsto x^{(1-i)(k-1)+2j}y$ of $G$. Hence the homomorphism $\aut[H_{i}]{G} \to \aut{H_{i}}$ is surjective.\qedhere
\end{enumerate}
\end{proof}

\begin{rems}\mbox{}
\begin{enumerate}[(a)]
\item In the case $q=4$ of \relem{index2}(\ref{it:autdic4q2q}), $\aut{\quat[16]}\cong \Z_{4}\rtimes \Z_{4}^{\times}$, while $\aut{\quat}\cong \sn[4]$, so the homomorphism $\aut[\quat]{\quat[16]}\to \aut{\quat}$ clearly cannot be surjective.
\item Note that \repr{isoamalg} depends only on the amalgamated products considered in an abstract sense, and does not use the fact that the groups are realised as subgroups of $B_{n}(\St)$.
\end{enumerate}
\end{rems}

We now come to the proof of \repr{isoamalg}.

\begin{proof}[Proof of \repr{isoamalg}.]
First suppose that $G_{1}\bigast_{F} G_{2}$ is one of the amalgamated products (\ref{it:mainIIab})--(\ref{it:mainIIeb}) appearing in the above list, with the exception of the group $\quat[16]\bigast_{\quat} \quat[16]$. Then for $k=1,2$, there exist embeddings $\map{i_{k}}{F}[G_{k}]$ that give rise to the amalgamated product $G_{1}\bigast_{F} G_{2}$. Suppose that there exists another amalgamated product $G_{1}\bigast'_{F} G_{2}$ involving the same groups, and for $k=1,2$, let $\map{j_{k}}{F}[G_{k}]$
be the associated embeddings. Let $\map{i_{k}^{-1}}{i_{k}(F)}[F]$ denote the inverse of the restriction $\map{i_{k}}{F}[i_{k}(F)]$. Then $\map{j_{k}\circ i_{k}^{-1}}{i_{k}(F)}[j_{k}(F)]$ is an isomorphism of subgroups of $G_{k}$ isomorphic to $F$, and so by \relem{index2}, there exists $\rho_{k}\in \aut{G_{k}}$ whose restriction to $j_{k}(F)$ is sent to $i_{k}(F)$, in other words, the upper left hand `square' of the diagram given in Figure~\ref{fig:commdiag}
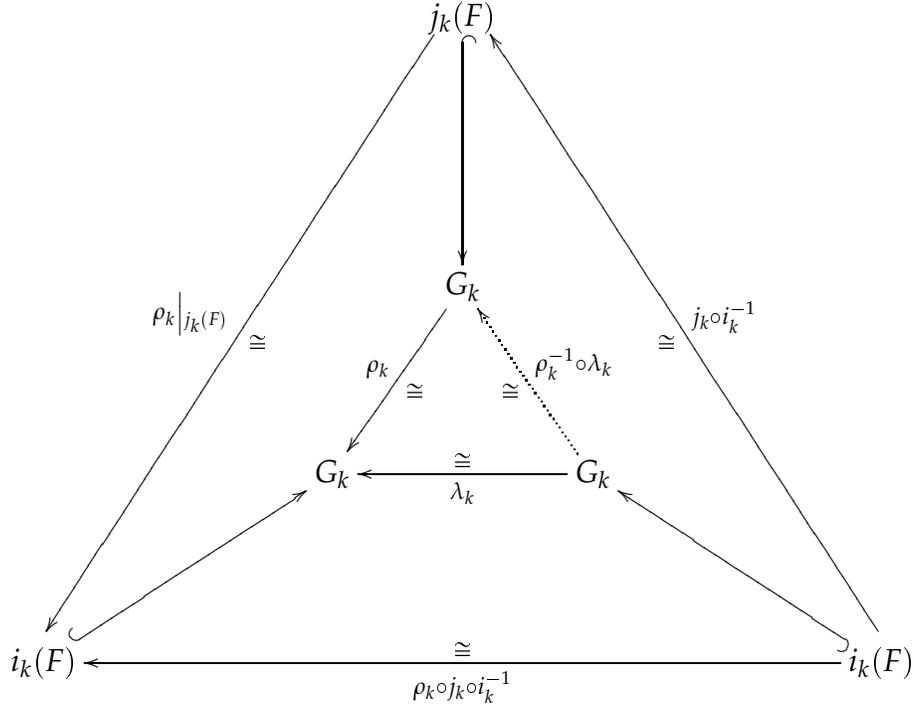
\begin{figure}[h]%[!h]
\begin{equation*}
\xymatrix{%
&&& & {j_{k}(F)} \ar@{^{(}->}[ddd] \ar@<-1ex>[dddddddllll]_{\rho_{k}\left\lvert_{j_{k}(F)}\right.}^{\cong} & &&&\\
&& &&&&&&\\
&& &&&&&&\\
&& &&G_{k} \ar[ddl]_{\rho_{k}}^{\cong}  & &&\\
&&& &&&&&\\
&&&G_{k}  & & G_{k} \ar@{.>}[uul]_{{\rho_{k}^{-1}}\circ \lambda_{k}}^{\cong} \ar[ll]^{\lambda_{k}}_{\cong} &&\\
&& &&&&\\
i_{k}(F) \ar@{^{(}->}[rrruu] && & & && & & i_{k}(F) \ar@{^{(}->}[llluu] \ar[llllllll]^{\rho_{k}\circ j_{k}\circ i_{k}^{-1}}_{\cong} \ar@<-1ex>[uuuuuuullll]^{\cong}_{j_{k}\circ i_{k}^{-1}}
}
\end{equation*}
\caption{The commutative diagram involving the embeddings $i_{k}$ and $j_{k}$.}\label{fig:commdiag}
\end{figure}
commutes, where all of the arrows from $i_{k}(F)$ and $j_{k}(F)$ to $G_{k}$ are inclusions. Thus $\rho_{k}\circ j_{k}\circ i_{k}^{-1}$ is an automorphism of $i_{k}(F)$, and so once more by \relem{index2}, there exists $\lambda_{k}\in \aut{G_{k}}$ whose restriction to $i_{k}(F)$ is equal to $\rho_{k}\circ j_{k}\circ i_{k}^{-1}$, in other words, the lower `square' of the diagram commutes. Hence $\rho_{k}^{-1}\circ \lambda_{k}\in \aut{G_{k}}$, and the restriction of this automorphism to $i_{k}(F)$ yields the isomorphism $\map{j_{k}\circ i_{k}^{-1}}{i_{k}(F)}[j_{k}(F)]$. Taking $\theta=\rho_{k}^{-1}\circ \lambda_{k}$ and applying \repr{amalgiso}, we see that $G_{1}\bigast'_{F} G_{2}\cong G_{1}\bigast_{F} G_{2}$, which gives the result in this case.

We now turn to the exceptional case of $\quat[16]\bigast_{\quat} \quat[16]$. We have already seen that $\quat[16]$ possesses two subgroups isomorphic to $\quat$, and that there exists an automorphism of $\quat[16]$ that sends one subgroup into the other. Applying \repr{amalgiso} in a manner similar to that of the previous paragraph, it thus suffices to restrict our attention to one of these subgroups.  It remains to understand the amalgamated products obtained by considering all possible embeddings of $\quat$ whose image in each of the two copies of $\quat[16]$ is fixed. So let us consider the two copies of $\quat[16]$ of the form
\begin{equation*}
G_{1}=\setangr{x,y}{x^4=y^2,\; yxy^{-1}=x^{-1}}\quad \text{and}\quad G_{2}=\setangr{a,b}{a^4=b^2,\; bab^{-1}=a^{-1}}
\end{equation*}
respectively, and let $H_{1}=\ang{x^{2},y}$ and $H_{2}=\ang{a^{2},b}$ be their respective fixed subgroups isomorphic to $\quat$. Let $F=\setangr{P,Q}{P^{2}=Q^{2},\; QPQ^{-1}=P^{-1}}$ be an abstract copy of $\quat$. Up to isomorphism, every amalgamated product of $G_{1}$ and $G_{2}$ along $F$ is obtained via an isomorphism between $H_1$ and $H_2$. This leads to twenty-four possibilities that we identify with the elements of $\aut{F}\cong \sn[4]$ without further comment (see case~(\ref{it:caseq8}) of \resecglobal{generalities}{autout}). Let $\map{\delta}{F}[H_{1}]$ be a fixed isomorphism, which we shall take to be defined by $\delta(P)=x^{2}$ and $\delta(Q)=y$. Suppose that $\map{\phi,\phi'}{H_{1}}[H_{2}]$ are isomorphisms that differ by the inner automorphism $\iota_{h}$ of $H_{2}$, where $h\in H_{2}$, and let $G_{1}\bigast_{F} G_{2}$ and $G_{1}\bigast'_{F} G_{2}$ denote the respective amalgamated products. Then $\phi'=\iota_{h}\circ \phi$, and we have the following commutative diagram:
\begin{equation}\label{eq:diagamalg}
\begin{xy}*!C\xybox{%
\xymatrix{%
& G_{1}\ar[ld] \ar@{=}[rrrr] & & & &G_{1}\ar[rd]& \\
G_{1}\bigast'_{F} G_{2} &&& F \ar[llu]^{\delta} \ar[rru]_{\delta} \ar[lld]_{\phi'\circ\delta} \ar[rrd]^{\phi\circ\delta}
&& &G_{1}\bigast_{F} G_{2},\\
& G_{2} \ar[lu] & &&& \ar[llll]^{\iota_{h}}G_{2}\ar[ru]& \\
}}
\end{xy}
\end{equation}
where we also denote the extension of $\iota_{h}$ to $G_{2}$ by $\iota_{h}$. Taking $\theta_{1}=\id_{G_{1}}$ and $\theta_{2}=\iota_{h}$ in \repr{amalgiso} leads to the conclusion that $G_{1}\bigast_{F} G_{2}\cong G_{1}\bigast'_{F} G_{2}$ if $\phi$ and $\phi'$, considered as elements of $\aut{F}$, project to the same element of $\out{F}$. So it suffices to consider the six following coset representatives of $\out{F}$ in $\aut{F}$ (recall from case~(\ref{it:caseq8}) of \resecglobal{generalities}{autout}, that $\out{\quat}\cong\sn[3]$):
\begin{equation}\label{eq:sixphis}
\begin{aligned}
& \phi_1\colon\thinspace x^2 \mapsto a^2, \; y \mapsto b, \; x^2y \mapsto a^2b, & &\phi_2\colon\thinspace x^2 \mapsto b, \; y \mapsto a^2b, \; x^2y \mapsto a^2,\\
&\phi_3\colon\thinspace x^2 \mapsto a^2b, \; y \mapsto a^2, \; x^2y \mapsto b, && \phi_4\colon\thinspace x^2 \mapsto a^2, \; y \mapsto a^2b, \; x^2y \mapsto b^{-1},\\
& \phi_5\colon\thinspace x^2 \mapsto a^2b, \; y \mapsto b^{-1}, \; x^2y \mapsto a^{2},  &&\phi_6\colon\thinspace x^2 \mapsto b, \; y \mapsto a^{-2}, \; x^2y \mapsto a^2b.
\end{aligned}
\end{equation}
Let $\psi\in \aut{G_{2}}$ be defined by $a \mapsto a$, $b \mapsto a^2b$. Then $\phi_{4}=\psi\circ \phi_{1}$ (resp.\ $\phi_{5}=\psi\circ \phi_{2}$). Consider the above diagram~\reqref{diagamalg}, and replace $\phi$ by $\phi_{1}$ (resp.\ $\phi_{2}$), $\phi'$ by $\phi_{4}$ (resp.\ $\phi_{5}$), and $\iota_{h}$ by the automorphism of $G_{2}$ given by $a \mapsto a$, $b \mapsto a^2b$. Applying \repr{amalgiso} implies that the two automorphisms $\phi_1$ (resp.\ $\phi_{2}$) and $\phi_4$ (resp.\ $\phi_{5}$) give rise to isomorphic amalgamated products. If $\psi'\in \aut{G_{2}}$ is defined by $a \mapsto a^{-1}$, $b \mapsto a^2b$, then $\phi_{6}=\psi'\circ \phi_{3}$, and a similar argument shows that $\phi_{3}$ and $\phi_{6}$ also give rise to isomorphic amalgamated products. Now let $G_{1}\bigast_{F} G_{2}$ and $G_{1}\bigast'_{F} G_{2}$ be the amalgamated products associated with $\phi_{2}$ and $\phi_{3}$ respectively. Let $\map{\delta'}{F}[H_{1}]$ be the isomorphism defined by $\delta'(P)=x^{-2}$ and $\delta'(Q)=x^{2}y$, let $\theta_{1}\in \aut{G_{1}}$ be defined by $x \mapsto x^{-1}$, $y \mapsto x^2y$, and let $\theta_{2}\in \aut{G_{2}}$ be defined by $a \mapsto a$, $b \mapsto a^2 b^{-1}$. Then the following diagram commutes:
\begin{equation*}%\label{eq:diagamalg}
\xymatrix{%
& G_{1}\ar[ld]  & & & & \ar[llll]_{\theta_{1}}^{\cong} G_{1}\ar[rd]& \\
G_{1}\bigast'_{F} G_{2} &&& F \ar[llu]^{\delta'} \ar[rru]_{\delta} \ar[lld]_{\phi_{3}\circ\delta'} \ar[rrd]^{\phi_{2}\circ\delta}
&& &G_{1}\bigast_{F} G_{2}.\\
& G_{2} \ar[lu] & &&& \ar[llll]^{\theta_{2}}_{\cong} G_{2}\ar[ru]& \\
}
\end{equation*}
So $\phi_{2}$ and $\phi_{3}$ give rise to isomorphic amalgamated products by \repr{amalgiso}. We conclude that there are at most two non-isomorphic amalgamated products of the form $K_{i}=G_{1}\bigast_{F} G_{2}$, defined by the automorphism $\phi_{i}$, where $i\in\brak{1,2}$.

To complete the proof, we now prove that $K_{1}\ncong K_{2}$. We start by showing that $K_{1}\cong \Z\rtimes \quat[16]$, where the action shall be defined presently. By definition,
\begin{equation}\label{eq:presK1}
K_{1}=\setangr{x,y,a,b}{x^{4}=y^{2},\; a^{4}=b^{2},\; yxy^{-1}= x^{-1},\; bab^{-1}=a^{-1},\; x^{2}=a^{2},\; y=b}.
\end{equation}
%Let $N$ be the amalgamating subgroup of $K_{1}$, in other words $N=\ang{x^{2},y}$. Then $N$ is normal in $K_{1}$, and we have a short exact sequence of the form:
%\begin{equation*}
%1\to N\to K_{1}\stackrel{\gamma}{\to} Q \to 1,
%\end{equation*}
%where $\gamma$ is the canonical projection, and $Q\cong \Z_{2}\bigast \Z_{2}$ is generated by $\overline{x}=\gamma(x)$ and $\overline{a}=\gamma(a)$. Now $Q$ is the infinite dihedral group $\Z\rtimes\Z_{2}$, where the action is non-trivial. In terms of our elements, we may choose 
%$\overline{x}\ldotp\overline{a}$ (resp.\ $\overline{a}$) to be a generator of 
%the $\Z$-factor (resp.\ the $\Z_{2}$-factor).
%
Let $N$ be the infinite cyclic subgroup of $K_{1}$ generated by $t=xa^{-1}$. Using the presentation~\reqref{presK1}, one may check that 
\begin{equation*}
vtv^{-1}=
\begin{cases}
t^{-1} & \text{if $v\in \brak{x,a}$}\\
t & \text{if $v\in \brak{y,b}$,}
\end{cases}
\end{equation*}
so $N$ is normal in $K_{1}$. A presentation of the quotient $K_{1}/N$ is obtained by adjoining the relation $x=a$ to that of $K_{1}$, from which it follows that
\begin{equation*}
K_{1}/N=\setangr{a,b}{a^{4}=b^{2},\; bab^{-1}=a^{-1}}\cong \quat[16].
\end{equation*}
Considered as a subgroup of $K_{1}$, $G_{2}=\ang{a,b}$ is isomorphic to $\quat[16]$, which implies that the short exact sequence
\begin{equation*}
1\to N\to K_{1}\to K_{1}/N\to 1
\end{equation*}
splits, and so $K_{1}\cong \Z \rtimes \quat[16]$. The action of $K_{1}/N$ on $N$ is defined as follows:
\begin{equation}\label{eq:zrtimesq16}
wtw^{-1}=
\begin{cases}
t^{-1} & \text{if $w\in G_{2}\setminus\ang{a^{2},b}$}\\
t & \text{if $w\in \ang{a^{2},b}$.}
\end{cases}
\end{equation}

To see that $K_{1}\ncong K_{2}$, let us suppose on the contrary that $K_{1}\cong K_{2}$ and argue for a contradiction. By definition,
\begin{equation}\label{eq:presK2}
K_{2}=\setangr{x,y,a,b}{x^{4}=y^{2},\; a^{4}=b^{2},\; yxy^{-1}= x^{-1},\; bab^{-1}=a^{-1},\; x^{2}=b,\; y=a^{2}b}.
\end{equation}
From this presentation, we obtain:
\begin{align*}
ax\ldotp x^{2}\ldotp x^{-1}a^{-1}&= ax^{2}a^{-1}=aba^{-1}=a^{2}b=y,\\
ax\ldotp y\ldotp x^{-1}a^{-1}&= ax^{2}ya^{-1}=aba^{2}ba^{-1}=a^{2}=x^{2}y\\
ax\ldotp x^{2}y\ldotp x^{-1}a^{-1}&=yx^{2}y=x^{2}.
\end{align*}
So $K_{2}$ possesses a copy $\ang{x^{2},y}$ of $\quat$ and an element $ax$ such that conjugation by $ax$ permutes the subgroups $\ang{x^{2}}$, $\ang{y}$ and $\ang{x^{2}y}$ cyclically. Since $K_{1}\cong K_{2}$ by hypothesis, $K_{1}$ thus possesses a subgroup $H$ isomorphic to $\quat$ and an element $z$ (of infinite order) such that $z L z^{-1}\neq L$ for every subgroup $L$ of $H$ of order $4$. We take $K_{1}$ to be described by the semi-direct product $\Z\rtimes G_{2}$, where the action is given by \req{zrtimesq16}. In particular, $K_{1}=\ang{a,b,t}$, and there exist $s,\lambda,\mu\in \Z$ such that $z=t^{s}a^{\lambda}b^{\mu}$. Consider the projection $\map{p}{\Z\rtimes G_{2}}[G_{2}]$ onto the second factor. 
%\comment{In the original version, there were two cases to be distinguished, according to the two possibilities for the image $p(H)$. But I think that we may unify these two cases as follows.} 
Since $\ker{p}=\Z$ is torsion free, $p(H)$ is isomorphic to $\quat$, and thus must be equal to one of the two subgroups of $G_{2}$ isomorphic to $\quat$. These two subgroups both contain $a^{2}$, so there exists $u\in H$ of order $4$ such that $p(u)=a^{2}$. Now $p(a^{2})=a^{2}$, hence there exists $m\in \Z$ such that $u=t^{m}a^{2}$. But $t$ commutes with $a^{2}$ by \req{zrtimesq16}, and since $u$ and $a^{2}$ are of finite order, and $t$ is of infinite order, it follows that $m=0$, $u=a^{2}$ and:
\begin{equation*}
zuz^{-1}= t^{s}a^{\lambda}b^{\mu} a^{2} b^{-mu}a^{-\lambda}t^{-s}= t^{s}a^{\lambda} a^{2\epsilon} a^{-\lambda}t^{-s}=a^{2\epsilon},
\end{equation*}
where $\epsilon$ is equal to $1$ (resp.\ $-1$) if $\mu$ is even (resp.\ odd), and so $z\ang{u}z^{-1}=\ang{u}$. This contradicts the fact that $z L z^{-1}\neq L$ for every subgroup $L$ of $H$ of order $4$, and completes the proof of the fact that $K_{1}\ncong K_{2}$.
\end{proof}

%\comment{No comeco da \resec{isoclasses}, voce fez o comentario seguinte. Talvez aqui seja um bom lugar. !!!D!!!
%Como temos a informacao de como escrever os grupos
%$Z_{4q}*_{Z_{2q}}Z_{4q}$ e $Dic_{4q}*_{Z_{2q}}Dic_{4q}$ de outra forma i.e. $Z\rtimes Z_{4q}$ e $Z\rtimes Dic_{4q}$, pergunta seria razoavel colocar esta informacao? Se sim onde seria o melhor lugar? Talvez quando se defini esta familia pela priemira vez ou nesta pagina 67 onde se prova a realizacao?}
%
%\comment{No segundo caso, nao seria $Dic_{4q}*_{Dic_{2q}}Dic_{4q}$ em lugar de $Dic_{4q}*_{Z_{2q}}Dic_{4q}$?}
%
%\comment{note to John: check page breaks after deleting comments.}

Combining \repr{semiamalg} and \relem{index2} yields an alternative description of most of the amalgamated products of the form $G\bigast_{H} G$ appearing in \repr{realV2bis} as semi-direct products of $\Z$ by $G$.

\begin{cor}\label{cor:semiamalg}
Let $\Gamma=G\bigast_{H} G$ be an amalgamated product, where $G$ and $H$ satisfy one of the following conditions:
\begin{enumerate}
\item\label{it:amalgsemia} $G$ is isomorphic to $\Z_{4q}$ or $\dic{4q}$ and $H$ is isomorphic to $\Z_{2q}$.
\item\label{it:amalgsemib} $G$ is isomorphic to $\dic{4q}$, $q\geq 6$ is even and $H$ is isomorphic to $\dic{2q}$.
\item\label{it:amalgsemic} $q=4$, $G\cong \quat[16]$, $H\cong \quat$ and $\Gamma$ is isomorphic to $K_{1}$.
\item\label{it:amalgsemid} $G$ is isomorphic to $\oonestar$ and $H$ is isomorphic to $\tonestar$.
\end{enumerate}
Then $\Gamma\cong \Z\rtimes G$, where 
\begin{equation*}
gtg^{-1}=\begin{cases}
t & \text{if $g\in H$}\\
t^{-1} & \text{if $g\in G\setminus H$,}
\end{cases}
\end{equation*}
$t$ being a generator of the $\Z$-factor.
\end{cor}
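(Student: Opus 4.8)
The plan is to deduce \reco{semiamalg} directly from \repr{semiamalg} by verifying, in each of the four listed cases, that the abstract hypotheses of that proposition are met. Recall that \repr{semiamalg} requires: an amalgamated product $G_{1}\bigast_{H} G_{2}$ with $G_{1}\cong G_{2}$, each $H_{j}$ of index $2$ in $G_{j}$, embeddings $\map{i_{j}}{H}[H_{j}]$, and crucially that the isomorphism $\map{i_{2}\circ i_{1}^{-1}}{H_{1}}[H_{2}]$ extends to an isomorphism $\map{\iota}{G_{1}}[G_{2}]$. Here $G_{1}=G_{2}=G$, so the first conditions are automatic once we know $H$ is isomorphic to an index $2$ subgroup of $G$ (which is part of the hypothesis of \reco{semiamalg} in each case). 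The entire content therefore reduces to producing the extension $\iota$, and then reading off the action from \req{actamalg}.

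First I would treat cases~(\ref{it:amalgsemia}), (\ref{it:amalgsemib}) and~(\ref{it:amalgsemid}) uniformly. In each of these, \relem{index2} supplies exactly what is needed: the relevant copy of $H$ sits inside $G$ as a \emph{characteristic} subgroup (parts~(\ref{it:autdic4q}), (\ref{it:autostar}) of the lemma give uniqueness, hence characteristicity, for $\Z_{2q}\subset\Z_{4q}$, $\Z_{2q}\subset\dic{4q}$, and $\tonestar\subset\oonestar$; part~(\ref{it:autdic4q2q}) handles $\dic{2q}\subset\dic{4q}$ for $q\geq 6$ even, where although there are two such subgroups, an automorphism of $G$ interchanges them and the restriction homomorphism $\aut[H_{i}]{G}\to\aut{H_{i}}$ is surjective). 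Surjectivity of the restriction map $\aut[H]{G}\to\aut{H}$ is the key point: given the isomorphism $i_{2}\circ i_{1}^{-1}$ between the two concrete copies of $H$ inside the two copies of $G$, I first use an automorphism of $G$ to identify these two copies of $H$ (trivial when $H$ is characteristic, and provided by \relem{index2} in the dicyclic case), reducing to an \emph{automorphism} of a single fixed copy $H\subset G$; then surjectivity of the restriction map lets me lift that automorphism to an element of $\aut{G}$, which is the desired extension $\iota$. Once $\iota$ exists, \repr{semiamalg} gives $\Gamma\cong\Z\rtimes G$ with precisely the action stated in \req{actamalg}, which is exactly the action asserted in \reco{semiamalg}.

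For case~(\ref{it:amalgsemic}), the group $\quat[16]$ has $\aut[\quat]{\quat[16]}\to\aut{\quat}$ \emph{not} surjective (as noted in the remark following \relem{index2}, since $\aut{\quat[16]}\cong\Z_{4}\rtimes\Z_{4}^{\times}$ cannot surject onto $\aut{\quat}\cong\sn[4]$), so the uniform argument fails and only one of the two isomorphism classes qualifies. Here I would instead invoke the explicit computation carried out inside the proof of \repr{isoamalg}: the class $K_{1}$ was there shown directly to satisfy $K_{1}\cong\Z\rtimes\quat[16]$ with the action~\reqref{zrtimesq16}, which is visibly the action of \reco{semiamalg} (trivial on $H=\ang{a^{2},b}\cong\quat$ and inversion on $G\setminus H$). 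Thus for $K_{1}$ the conclusion is immediate, whereas $K_{2}$ is excluded precisely because the non-surjectivity above obstructs the extension $\iota$ for the embedding $\phi_{2}$.

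The main obstacle I anticipate is organisational rather than computational: making the reduction ``isomorphism $i_{2}\circ i_{1}^{-1}$ of concrete subgroups $\rightsquigarrow$ automorphism of $G$'' rigorous and uniform, so that \repr{semiamalg}'s extension hypothesis is genuinely verified in each case. The delicate point is the $\dic{2q}\subset\dic{4q}$ situation, where $H$ is not characteristic and one must first compose with the automorphism of $G$ swapping $H_{1}$ and $H_{2}$ before applying surjectivity of $\aut[H_{i}]{G}\to\aut{H_{i}}$; the genuine threshold $q\geq 6$ in \relem{index2}(\ref{it:autdic4q2q}) is exactly what forces the separate, explicit treatment of the exceptional $q=4$ case~(\ref{it:amalgsemic}). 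Everything else is a direct citation of \relem{index2} and \repr{semiamalg}.
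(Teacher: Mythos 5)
Your proposal is correct, and for cases~(\ref{it:amalgsemia}), (\ref{it:amalgsemib}) and~(\ref{it:amalgsemid}) it is exactly the paper's argument: \relem{index2} provides the extension of $i_{2}\circ i_{1}^{-1}$ to an automorphism of $G$, and \repr{semiamalg} then yields the semi-direct product with the action of \req{actamalg}. (Indeed you are slightly more careful than the paper in case~(\ref{it:amalgsemib}), where $i_{1}(H)$ and $i_{2}(H)$ may be the two distinct copies of $\dic{2q}$ and one must first compose with the swapping automorphism; the paper elides this.) The only genuine divergence is case~(\ref{it:amalgsemic}). You bypass \repr{semiamalg} there and instead cite the computation inside the proof of \repr{isoamalg}, where $K_{1}$ was shown directly to be $\Z\rtimes\quat[16]$ with the action \reqref{zrtimesq16}, which is visibly the asserted action; this is legitimate, since that computation precedes the corollary. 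The paper instead stays within the framework of \repr{semiamalg}: it observes from the presentation~\reqref{presK1} that the \emph{particular} amalgamating isomorphism defining $K_{1}$, namely $x^{2}\mapsto a^{2}$, $y\mapsto b$, extends to the isomorphism $x\mapsto a$, $y\mapsto b$ of the $\quat[16]$-factors — so although the restriction map $\aut[\quat]{\quat[16]}\to\aut{\quat}$ is not surjective, the one automorphism that matters does extend, and \repr{semiamalg} applies once more. Your route reuses an existing computation; the paper's keeps all four cases under a single mechanism and makes explicit why $K_{1}$ (but not $K_{2}$) clears the extension hypothesis. Both are complete proofs.
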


\begin{proof}
First let $G$ and $H$ satisfy one of the conditions~(\ref{it:amalgsemia}),~(\ref{it:amalgsemib}) or (\ref{it:amalgsemid}). If $i_{1},i_{2}$ are the embeddings of $H$ into each of the $G$-factors of $\Gamma$ then $i_{2}\circ i_{1}^{-1}$ is an automorphism of $H$ that extends to an automorphism of $G$ by \relem{index2}. The result then follows from \repr{semiamalg}. Now suppose that $G$ and $H$ satisfy condition~(\ref{it:amalgsemic}). Since $\Gamma$ is isomorphic to $K_{1}$, using the presentation~\reqref{presK1}, we see that the isomorphism $\ang{x^{2},y}\to \ang{a^{2},b}$ of the amalgamating subgroup of $K_{2}$ isomorphic to $\quat$ that given by $x^{2}\mapsto a^{2}$ and $y\mapsto b$ extends to an isomorphism $\ang{x,y}\to \ang{a,b}$ of the factors that are isomorphic to $\quat[16]$, where the extension is given by $x\mapsto a$ and $y\mapsto b$. Once more, \repr{semiamalg} yields the result.
\end{proof}

The following two results will imply the existence of subgroups of $B_{n}(\St)$ isomorphic to $K_{1}$ and $K_{2}$ for all but a finite number of even values of $n$. The first proposition holds in general, while the second makes use of the structure of $B_{n}(\St)$.

\begin{prop}\label{prop:o2k2}
Let $G$ be a group that is isomorphic to $\oonestar\bigast_{\tonestar}\oonestar$. Then $G$ possesses a subgroup that is isomorphic to $K_{2}$.
\end{prop}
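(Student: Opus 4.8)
The plan is to realise $K_{2}$ as the subgroup of $\oonestar\bigast_{\tonestar}\oonestar$ generated by two suitably chosen generalised quaternion subgroups of the two octahedral factors, glued along their common quaternion subgroup. Write $G=G_{1}\bigast_{F}G_{2}$ with $G_{1},G_{2}\cong\oonestar$ and $F\cong\tonestar$, give $G_{1}$ the presentation~\reqref{presostar} with generators $X,P,Q,R$, so that $F=\ang{X,P,Q}$ is the common $\tonestar$ and $\quat=\ang{P,Q}$ is its quaternion Sylow $2$-subgroup, and denote by $R'$ the corresponding fourth generator of $G_{2}$.

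First I would identify inside each octahedral factor a copy of $\quat[16]$ containing $\quat$. In $G_{1}$ the subgroup $S_{1}=\ang{P,Q,R}$ has order $16$ (it is the disjoint union of $\quat$ and $\quat R$, since $R\notin F$), its only involution is the central element $P^{2}=Q^{2}=R^{2}$, and hence $S_{1}\cong\quat[16]$; a short computation from~\reqref{presostar} shows that $RP$ has order $8$ with $(RP)^{2}=Q$, so the unique cyclic subgroup of order $8$ in $S_{1}$ meets $\quat$ in $\ang{Q}$. The analogous subgroup $\ang{P,Q,R'}$ of $G_{2}$ would again meet $\quat$ in $\ang{Q}$, which (as I explain below) would only produce $K_{1}$; so instead I would twist by $X$ and take $S_{2}=X\ang{P,Q,R'}X^{-1}=\ang{P,Q,XR'X^{-1}}$. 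Since $X\in F$ normalises $\quat$ (indeed $XPX^{-1}=Q$ and $XQX^{-1}=PQ$ by~\reqref{preststar}), the group $S_{2}\cong\quat[16]$ still contains $\quat$, and its cyclic subgroup of order $8$ meets $\quat$ in $X\ang{Q}X^{-1}=\ang{PQ}$.

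Next I would check that $\ang{S_{1}\cup S_{2}}\cong\quat[16]\bigast_{\quat}\quat[16]$. Both $S_{1}\cap F$ and $S_{2}\cap F$ equal $\quat$, since the non-trivial coset of $\quat$ in each $S_{i}$ lies in $\oonestar\setminus\tonestar$; hence $S_{1}\cap S_{2}=S_{1}\cap S_{2}\cap F=\quat$, which is of index $2$ in each $S_{i}$. Forming the abstract amalgam $A=S_{1}\bigast_{\quat}S_{2}$ together with the natural map $A\to G$ restricting to the inclusions on the two factors, \repr{infincard} reduces injectivity to showing that $\ang{S_{1}\cup S_{2}}$ is infinite; this holds because $R\,(XR'X^{-1})$ is a reduced word of syllable length two in $G_{1}\bigast_{F}G_{2}$, hence is of infinite order. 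Thus $\ang{S_{1}\cup S_{2}}\cong\quat[16]\bigast_{\quat}\quat[16]$, which by \repr{isoamalg} is isomorphic to either $K_{1}$ or $K_{2}$.

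The main obstacle, and the crux of the argument, is to pin down which of the two classes occurs. As follows from the proof of \repr{isoamalg}, the two isomorphism classes are distinguished by whether the squares of the order-$8$ elements of the two $\quat[16]$-factors generate the \emph{same} cyclic subgroup of the amalgamating $\quat$ (the class $K_{1}$, as in~\reqref{presK1}, where $x^{2}=a^{2}$) or \emph{distinct} cyclic subgroups (the class $K_{2}$, as in~\reqref{presK2}, where $x^{2}=b$ and $a^{2}=yb^{-1}$ lie in different order-$4$ subgroups of $\quat$). In our construction these two subgroups are $\ang{Q}$ and $\ang{PQ}$, which are distinct, so $\ang{S_{1}\cup S_{2}}\cong K_{2}$, as required. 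The delicate point is precisely this class determination: the naive choice of both generalised quaternion factors would yield $K_{1}$, and the role of the order-$3$ element $X$ — which permutes the three order-$4$ subgroups $\ang{P},\ang{Q},\ang{PQ}$ of $\quat$ cyclically — is exactly to move the distinguished order-$4$ subgroup of the second factor off that of the first, thereby forcing the class $K_{2}$ rather than $K_{1}$.
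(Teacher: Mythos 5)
Your proof is correct and takes essentially the same approach as the paper's: both build two copies of $\quat[16]$, one inside each $\oonestar$-factor, meeting in the common $\quat$, invoke \repr{infincard} and \repr{isoamalg} to conclude that the resulting subgroup is isomorphic to $K_{1}$ or $K_{2}$, and use conjugation by an order-$3$ element of the common copy of $\tonestar$ (which permutes the three cyclic subgroups of order $4$ of $\quat$ cyclically) to force the class $K_{2}$. The only difference is presentational: the paper corrects a posteriori, conjugating its second $\quat[16]$ by such an element when the amalgam turns out to be $K_{1}$ and then matching the presentation~\reqref{presK2} directly, whereas you twist pre-emptively by $X$ and detect the class via the (valid) invariant that the order-$8$ cyclic subgroups of the two factors meet $\quat$ in distinct order-$4$ subgroups.
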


\begin{proof}
Suppose that $G$ is isomorphic to $\oonestar\bigast_{\tonestar} \oonestar$. Let $G_{1},G_{2}$ be subgroups of $G$ isomorphic to $\oonestar$ such that $F=G_{1}\cap G_{2}\cong \tonestar$ and $G=\ang{G_{1}\cup G_{2}}\cong \oonestar\bigast_{\tonestar} \oonestar$. Let $Q$ be the unique subgroup of $F\cong \quat\rtimes\Z_{3}$ that is isomorphic to $\quat$. By \relem{index2}(\ref{it:autostar}), $F$ is the unique subgroup of $G_{i}$ isomorphic to $\tonestar$ for $i=1,2$. From the proof of \repr{maxsubgp}(\ref{it:subgpsostar}), if $i\in\brak{1,2}$, the Sylow $2$-subgroups of $G_{i}$ consist of three conjugate subgroups isomorphic to $\quat[16]$ that contain $Q$. Let $H_{1}$ be one of the Sylow $2$-subgroups of $G_{1}$ with presentation
\begin{equation*}
H_{1}=\setangr{a,b}{a^{4}=b^{2},\; bab^{-1}=a^{-1}}.
\end{equation*}
Since the subgroups of $H_{1}$ isomorphic to $\quat$ are of the form $\ang{a^{2}, a^{\epsilon}b}$, $\epsilon\in\brak{0,1}$, by replacing $b$ by $ab$ if necessary in the presentation of $H_{1}$, we may suppose that $Q=\ang{a^{2},b}$. Now let $H_{2}$ be a subgroup of $G_{2}$ that is isomorphic to $\quat[16]$. Since for $i\in\brak{1,2}$, $H_{i}\nsubset F$, it follows that $H_{1}\cap H_{2}=Q$ and that $H_{i}$ contains elements of $G_{i}\setminus F$, whence $H=\ang{H_{1}\cup H_{2}}\cong \quat[16]\bigast_{\quat} \quat[16]$. The proof of \repr{isoamalg} implies that $H$ is isomorphic to one of $K_{1}$ and $K_{2}$. If $H\cong K_{2}$ then we are done. So suppose that $H\cong K_{1}$.
Then by \req{presK1}, there exist generators $x,y$ of $H_{2}$ such that $x^{4}=y^{2}$, $yxy^{-1}=x^{-1}$, $x^{2}=a^{2}$, $y=b$ and $Q=\ang{x^{2},y}$. Since $Q$ is the unique subgroup of $F$ that is isomorphic to $\quat$, there exists $t\in F$ such that $tx^{2}t^{-1}=y$ and $tyt^{-1}=x^{2}y$ corresponding to the action of $\Z_{3}$ on $\quat$. Now $F\subset G_{2}$, so $tG_{2}t^{-1}=G_{2}$. Let $H_{2}'=tH_{2}t^{-1}\subset G_{2}$. Then $x'=txt^{-1}$ and $y'=tyt^{-1}$ are generators of $H_{2}'$ satisfying $x'^{4}=y'^{2}$ and $y'x'y'^{-1}=x'^{-1}$. Now $x'^{2}=tx^{2}t^{-1}=y=b$ and $y'=tyt^{-1}=x^{2}y=a^{2}b$, and since $H_{1}\cap H_{2}'=Q$, it follows from \req{presK2} that $\ang{H_{1}\cup H_{2}'}\cong K_{2}$ as required.
\end{proof}

\begin{prop}\label{prop:existk1k2}
Let $n\geq 4$ be even.
\begin{enumerate}[(a)]
\item\label{it:existk1k2a} There exists a subgroup of $B_{n}(\St)$ isomorphic to $K_{1}$.
\item\label{it:existk1k2b} Suppose that either $n\equiv 0\bmod{4}$ or $n\equiv 10 \bmod{12}$. There exists a subgroup of $B_{n}(\St)$ isomorphic to $K_{2}$.
\end{enumerate}
\end{prop}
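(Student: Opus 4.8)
The plan is to treat the two parts separately, in both cases leaning on the description of the two isomorphism classes $K_{1},K_{2}$ of $\quat[16]\bigast_{\quat}\quat[16]$ from the proof of \repr{isoamalg} and on the infiniteness criterion \repr{infincard}. For part~(\ref{it:existk1k2a}) I would invoke \reth{realV2}(\ref{it:realV2d}) with $q=4$: since $n$ is even, either $4\mid n$ (take $i=0$) or $n\equiv 2\bmod 4$ and $4\mid n-2$ (take $i=2$), so its hypotheses hold and $B_{n}(\St)$ contains a copy of $\dic{16}\bigast_{\dic{8}}\dic{16}=\quat[16]\bigast_{\quat}\quat[16]$. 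It then remains to identify the class produced. In that construction the factors are $G_{1}=\ang{\alpha_{i}'^{m},\garside}$ and $G_{2}=\lambda_{i}G_{1}\lambda_{i}^{-1}$, and $\lambda_{i}$ commutes with $\alpha_{i}'^{2m}$ and with $\garside$ (the latter by \reqref{zetagarside}), hence fixes the amalgamating subgroup $\ang{\alpha_{i}'^{2m},\garside}\cong\quat$ pointwise. Thus conjugation by $\lambda_{i}$ is an isomorphism $G_{1}\to G_{2}$ extending the identity of the amalgamating subgroup, so \repr{semiamalg} applies and the subgroup is isomorphic to $\Z\rtimes\quat[16]$ with the action~\reqref{actamalg}, i.e. the action~\reqref{zrtimesq16}. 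Since $K_{1}$ is precisely this semi-direct product (\reco{semiamalg}(\ref{it:amalgsemic}) and the proof of \repr{isoamalg}), the subgroup is isomorphic to $K_{1}$, and this holds for every even $n\geq 4$.

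For part~(\ref{it:existk1k2b}) the guiding principle is that, by the proof of \repr{isoamalg}, $K_{2}$ is distinguished from $K_{1}\cong\Z\rtimes\quat[16]$ by containing an element whose conjugation permutes cyclically the three cyclic subgroups of order $4$ of a copy of $\quat$ (equivalently, its index-two Type~I subgroup is $\quat\rtimes_{\alpha}\Z$ rather than $\quat\times\Z$). Hence it suffices to exhibit two subgroups $G_{1},G_{2}\cong\quat[16]$ of $B_{n}(\St)$ both containing a common $\quat=Q$ of index two, with $\ang{G_{1}\cup G_{2}}$ infinite and admitting such an order-three ($\alpha$) action on $Q$; then \repr{infincard} gives $\ang{G_{1}\cup G_{2}}\cong\quat[16]\bigast_{\quat}\quat[16]$, and the $\alpha$-action forces the class to be $K_{2}$, just as in the passage from $\oonestar\bigast_{\tonestar}\oonestar$ to $K_{2}$ in \repr{o2k2}. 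I would realise the rotation by a conjugating element $w$ normalising $Q$ and inducing $\alpha$ on it, and set $G_{2}=wG_{1}w^{-1}$. The key observation is that $w$ cannot normalise $G_{1}$: because $\aut{\quat[16]}$ has order $8$, it admits no automorphism restricting to the order-three map $\alpha$ on $Q$, so $G_{1}\neq G_{2}$, and since $Q$ has index two in $G_{1}$ this already forces $G_{1}\cap G_{2}=Q$.

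The two congruence cases supply the rotation $w$ differently. When $n\equiv 0\bmod 4$ I would take $G_{1}=\ang{\alpha_{0}^{n/4},\garside}\cong\quat[16]$ (here $\alpha_{0}^{n/4}$ has order $8$, $(\alpha_{0}^{n/4})^{4}=\ft$, and its $\quat$-subgroup is $Q=\ang{\alpha_{0}^{n/2},\garside}$) and $w=\nu=\alpha_{0}^{n/4}\Omega_{2}$, which by \repr{constq8}(\ref{it:q8parta}) is of infinite order and induces $\alpha$ on $Q$; infiniteness of $\ang{G_{1}\cup G_{2}}$ would then be checked by the usual permutation/Abelianisation argument via \reth{murasugi}, with $n=4$ (where $\nu=\alpha_{1}$ is finite) handled separately through the order-three route below. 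When $n\equiv 10\bmod 12$ one has $n\equiv 4\bmod 6$, so $\tonestar=Q\rtimes\ang{w}\subseteq B_{n}(\St)$ by \reth{finitebn} with $w$ an honest order-three element inducing $\alpha$ on $Q$, while $4\mid n-2$ provides $G_{1}\cong\quat[16]\supseteq Q$; moreover $\oonestar\not\subseteq B_{n}(\St)$ for $n\equiv 4\bmod 6$, so by \reth{finitebn} no finite subgroup contains two distinct copies of $\quat[16]$, whence $\ang{G_{1}\cup G_{2}}$ is automatically infinite.

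The main obstacle is part~(\ref{it:existk1k2b}): producing, in explicit braid-group terms, two copies of $\quat[16]$ meeting in a common $\quat$ and verifying that the amalgam is $K_{2}$ and not $K_{1}$. The delicate points are arranging that the rotation $w$ acts on the \emph{same} $\quat$ that sits inside the chosen $\quat[16]$ (a matter of matching the conjugacy classes of $\quat$ in $\tonestar$ and in $\quat[16]$, using~\cite{GG7}), and, when $n\equiv 0\bmod 4$, establishing infiniteness of $\ang{G_{1}\cup G_{2}}$ by a Murasugi-type permutation computation since $\oonestar$ may then be present. These constraints are precisely what confine the direct construction to the stated congruences and leave the values $n\in\brak{6,14,18,26,30,38}$ open.
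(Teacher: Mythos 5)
Your part~(\ref{it:existk1k2a}) is correct and is essentially the paper's own argument: the paper takes the same subgroup from the proof of \reth{realV2}(\ref{it:realV2d}) with $q=4$ and identifies it as $K_{1}$ by checking the relations $x^{2}=a^{2}$, $y=b$ of \reqref{presK1} directly, whereas you route the identification through \repr{semiamalg} and \reco{semiamalg}(\ref{it:amalgsemic}); both work, and your observation that $\lambda_{i}$ centralises the amalgamating copy of $\quat$ (it commutes with $\alpha_{i}'^{2m}$ and, by \reqref{zetagarside}, with $\garside$) is exactly what either identification rests on.

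Part~(\ref{it:existk1k2b}) follows the same two constructions as the paper ($\nu$-conjugation for $n\equiv 0\bmod 4$, rotation by an order-three element coming from $\tonestar$ for $n\equiv 10\bmod{12}$), but your infiniteness argument in the second case has a genuine gap. You assert that since $\oonestar$ is not realised for $n\equiv 4\bmod 6$, ``by \reth{finitebn} no finite subgroup contains two distinct copies of $\quat[16]$'', so that $K=\ang{G_{1}\cup G_{2}}$ is automatically infinite. This is false: for $n\equiv 10\bmod{12}$ one has $4\divides n-2$, and the maximal finite subgroup $\dic{4(n-2)}$ of $B_{n}(\St)$ contains several distinct copies of $\quat[16]$ --- in the presentation $\setangr{u,v}{u^{n-2}=v^{2},\; vuv^{-1}=u^{-1}}$, the subgroups $\ang{u^{(n-2)/4},v}$ and $\ang{u^{(n-2)/4},uv}$ are two of them. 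The paper closes exactly this hole: if $K$ were finite it would lie in a maximal finite subgroup $M$, which after ruling out the cyclic, binary polyhedral and $\dic{4n}$ possibilities (the absence of $\oonestar$ is used here, and $\dic{4n}$ is impossible because $4\ndivides n$) must be isomorphic to $\dic{4(n-2)}$; but any two copies of $\quat[16]$ inside $\dic{4(n-2)}$ contain its unique cyclic subgroup of order $8$, contradicting $G_{1}\cap G_{2}\cong \quat$, which has no element of order $8$. Without this step you have not shown that $K$ is an amalgamated product at all, so the identification of its class cannot even begin.

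Two smaller points. Your criterion ``the $\alpha$-action forces the class to be $K_{2}$'' needs care: the element $w$ inducing $\alpha$ on $Q$ need not belong to $K$, so the property that distinguishes $K_{2}$ from $K_{1}$ in the proof of \repr{isoamalg} cannot be invoked inside $K$ directly. What actually does the work, both in the paper and implicitly in your construction, is that the generators $x=waw^{-1}$, $y=wbw^{-1}$ of $G_{2}=wG_{1}w^{-1}$ realise the gluing of \reqref{presK2} (or its companion $\phi_{3}$ of \reqref{sixphis}, which gives an isomorphic amalgam); this is also the mechanism of \repr{o2k2}, which you cite. Finally, for $n\equiv 0\bmod 4$ the ``usual permutation/Abelianisation argument'' is not uniform: the paper needs a maximal-subgroup argument for $n=4$, a dedicated braid computation projected into $B_{4}(\St)$ for $n=8$, and the Murasugi permutation count only for $n\geq 12$. (Also, $\aut{\quat[16]}$ has order $32$, not $8$, though your conclusion stands, since $3\ndivides 32$.)
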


\begin{rem}\label{rem:existk1k2}
Let $n\geq 4$ be even. Propositions~\ref{prop:oto},~\ref{prop:o2k2} and~\ref{prop:existk1k2} imply that $B_{n}(\St)$ possesses subgroups isomorphic to $K_{1}$ and $K_{2}$ with the possible exception of $K_{2}$ when $n\in \brak{6,14,18,26,30,38}$. 
\end{rem}

\begin{proof}[Proof of \repr{existk1k2}.]\mbox{}
\begin{enumerate}[(a)]
\item Suppose that $n\geq 4$ is even, let $i\in \brak{0,2}$ be such that $4 \divides n-i$, and let $m=(n-i)/4$. In the construction of $\quat[16]\bigast_{\quat} \quat[16]$ in $B_{n}(\St)$ given in part~(\ref{it:realV2d}) of the proof of \reth{realV2}, we have that $G_{1}=\ang{x,y}$ and $G_{2}=\ang{a,b}$, where $x=\alpha_{i}'^{m}$, $a=\lambda_{i} \alpha_{i}'^{m}\lambda_{i}^{-1}$ and $y=b=\garside$, where $\lambda_{i}=\sigma_{m+\frac{i}{2}}\sigma_{3m+\frac{i}{2}}$. Since $\lambda_{i}$ commutes with $\alpha_{i}'^{2m}$, we have also that $x^{2}=a^{2}$. So $\ang{G_{1}\cup G_{2}}$ is isomorphic to $K_{1}$ by \req{presK1}.
\item We consider the two cases separately.
\begin{enumerate}[(1)]
\item \underline{$n\equiv 0 \bmod{4}$.} Set $G_{1}=\ang{a,b}$, where $a=\alpha_{0}^{n/4}$ and $b=\garside$, let $G_{2}=\nu G_{1} \nu^{-1}$, where $\nu=\alpha_{0}^{n/4} \Omega_{2}$ is as in the proof of \repr{constq8}(\ref{it:q8parta}), and let $x=\nu a \nu^{-1}$ and $y=\nu b^{-1} \nu^{-1}$ be generators of $G_{2}$. Then $G_{1}\cong G_{2}\cong \quat[16]$, and $F=\ang{a^{2},b}$ is isomorphic to $\quat$. Since $\nu F \nu^{-1}=F$ by \repr{constq8}(\ref{it:q8parta}), it follows that $G_{1}\cap G_{2} \supset F$. Suppose that $x\in G_{1}$. Since $x$ is of order $8$, there exists $j\in \brak{1,3,5,7}$ such that $x=a^{j}$, and so $x^{2}\in \brak{\alpha_{0}^{n/2},\alpha_{0}^{-n/2}}$. On the other hand, using \relem{propsomega}(\ref{it:omegav}) and \req{basicconj}, we have:
\begin{equation}\label{eq:x2a2b}
x^{2}= \alpha_{0}^{n/4} \Omega_{2} \alpha_{0}^{n/2}\Omega_{2}^{-1} \alpha_{0}^{-n/4} = \alpha_{0}^{n/4} \garside \alpha_{0}^{-n/4}= \alpha_{0}^{n/2}\garside=a^{2}b,
\end{equation}
and so $x^{2}$ does not belong to $\brak{\alpha_{0}^{n/2},\alpha_{0}^{-n/2}}$, which gives a contradiction. We thus conclude that $x\notin G_{1}$, and so $G_{1}\cap G_{2}=F$. Let $K=\ang{G_{1}\cup G_{2}}$. By \req{nuconjgarside}, we have 
\begin{equation}\label{eq:ya2}
y=\nu b^{-1} \nu^{-1}=\nu \garside^{-1} \nu^{-1}= \alpha_{0}^{n/2}=a^{2}.
\end{equation}
Equations~\reqref{x2a2b} and~\reqref{ya2} correspond to the automorphism $\phi_{3}$ of \req{sixphis}, which using the proof of \repr{isoamalg}, will imply that $K\cong K_{2}$ provided that $K$ is indeed isomorphic to $\quat[16] \bigast_{\quat} \quat[16]$. By \repr{infincard}, it thus suffices to show that $K$ is infinite. To see this, we consider the following three cases.
\begin{enumerate}[(i)]
\item \underline{$n=4$.} Since the maximal finite subgroups of $B_{4}(\St)$ are isomorphic to $\quat[16]$ and $\tonestar$ by \reth{finitebn}, the fact that $G_{1}\neq G_{2}$ implies that $K$ is infinite.
\item \underline{$n=8$.} Let $\gamma=a^{-2} \nu a \nu^{-1} a \in K$.
Then
\begin{align*}
\gamma&=\alpha_{0}^{-2} \Omega_{2} \alpha_{0}^{2} \Omega_{2}^{-1}= \sigma_{3} \sigma_{4} \sigma_{5} \sigma_{3} \sigma_{4} \sigma_{3} \sigma_{5}^{-1} \sigma_{6}^{-1} \sigma_{5}^{-1} \sigma_{7}^{-1} \sigma_{6}^{-1} \sigma_{5}^{-1}\\
&=\sigma_{5}(\sigma_{3}\sigma_{4}\sigma_{5}\sigma_{3}\sigma_{4}\sigma_{5}^{-1} \sigma_{6}^{-1} \sigma_{5}^{-1} \sigma_{7}^{-1} \sigma_{6}^{-1})\sigma_{5}^{-1}\\
&= \sigma_{5}(\sigma_{4}\sigma_{3}\sigma_{5}\sigma_{4}\sigma_{6}^{-1} \sigma_{5}^{-1} \sigma_{7}^{-1} \sigma_{6}^{-1})\sigma_{5}^{-1}.
\end{align*}
by equations~\reqref{fundaa} and~\reqref{omegadef}. The braid $\widehat{\gamma}=\sigma_{4}\sigma_{3}\sigma_{5}\sigma_{4}\sigma_{6}^{-1} \sigma_{5}^{-1} \sigma_{7}^{-1} \sigma_{6}^{-1}$ is conjugate to $\gamma$, and its geometric representation is given in Figure~\ref{fig:gamma}.
\begin{figure}[h]%[!h]
\hfill
\begin{tikzpicture}[scale=0.5]
\foreach \j in {5,6}
{\draw[thick] (\j,8) .. controls (\j,6) and (\j-2,3) .. (\j-2,1);};
\foreach \j in {3,4}
{\draw[white,line width=6pt] (\j,8) .. controls (\j,6) and (\j+4,3) .. (\j+4,1);
\draw[thick] (\j,8) .. controls (\j,6) and (\j+4,3) .. (\j+4,1);};
\foreach \j in {7,8}
{\draw[white,line width=6pt] (\j,8) .. controls (\j,6) and (\j-2,3) .. (\j-2,1);
\draw[thick] (\j,8) .. controls (\j,6) and (\j-2,3) .. (\j-2,1);};
\foreach \k in {1,2}
{\draw[thick] (\k,1)--(\k,8);};
\end{tikzpicture}
\hspace*{\fill}
\caption{The braid $\widehat{\gamma}$ in $B_8(\St)$.}\label{fig:gamma}
\end{figure}
Forgetting the $2\up{nd}$, $4\up{th}$, $6\up{th}$ and $8\up{th}$ strings of $\widehat{\gamma}$ yields the braid $\sigma_{2}\sigma_{3}^{-1}$ of $B_{4}(\St)$, which may be seen to be of infinite order using an argument similar to that of \req{sig4sig3}. Hence $\widehat{\gamma}$ and $\gamma$ are of infinite order, and so $K$ is infinite.

\item \underline{$n\geq 12$.} Consider the element $\gamma'=\nu a \nu\ldotp a^{-1}=\alpha_{0}^{n/4} \Omega_{2} \alpha_{0}^{n/4} \Omega_{2}^{-1}\alpha_{0}^{-n/2}\in K$, and let $n/2+1\leq t\leq 3n/4$. Then $\pi(\alpha_{0}^{n/4})(t)=t-n/4$, $\pi(\Omega_{2})(t-n/4)=t-n/4$ since $t-n/4\leq n/2$, $\pi(\alpha_{0}^{n/4})(t-n/4)=t-n/2$, and $\pi(\Omega_{2}^{-1}\alpha_{0}^{-n/2})(t-n/2)=t$, so $\pi(\gamma')(t)=t$. Further, $\pi(\gamma')(1)=n$. Thus the cycle decomposition of $\pi(\gamma')$ has at least $n/4\geq 3$ fixed points, and at least one non-trivial cycle. \reth{murasugi} then implies that $\gamma'$ is of infinite order, and so $K$ is infinite.
\end{enumerate}
\item \underline{$n\equiv 10\bmod{12}$.} 
%\comment{There was a mistake in the previous version, this has been corrected (I think).} 
Then $4$ divides $n-2$, and we may write $n-2=2^{r}s$, where $r\geq 2$ and $s\in \N$ is odd. Since $n$ is even, $B_{n}(\St)$ possesses a subgroup $L$ isomorphic to $\tonestar\cong \quat\rtimes \Z_{3}$ by \reth{finitebn}. The fact that the action by conjugation of the generator of $\Z_{3}$ permutes cyclically the elements $i,j$ and $k$ of the subgroup $Q$ of $L$ isomorphic to $\quat$ implies that these elements are pairwise conjugate in $L$. On the other hand, by~\cite[Proposition~1.5]{GG7}, $\ang{\alpha_{2}'^{s},\garside}$ represents the unique conjugacy class of the group $\quat[2^{r+2}]$ in $B_{n}(\St)$, and it possesses two subgroups $\ang{\alpha_{2}'^{2s},\garside}$ and $\ang{\alpha_{2}'^{2s},\alpha_{2}'^{s}\garside}$ isomorphic to $\quat[2^{r+1}]$ that contain respectively $\Gamma_{0}=\ang{\alpha_{2}'^{2^{r-1}s},\garside}$ and $\Gamma_{1}= \ang{\alpha_{2}'^{2^{r-1}s},\alpha_{2}'^{s}\garside}$ which are subgroups isomorphic to $\quat$. The fact that $n/2$ and $s$ are odd implies that $\pi(\garside)= \pi(\alpha_{2}'^{s})=n-1\bmod{2(n-1)}$. In particular, $\alpha_{2}'^{2^{r-1}s}$ and $\garside$ are not conjugate, so $\Gamma_{0}$ is neither conjugate to $\Gamma_{1}$ nor to $Q$, and since $B_{n}(\St)$ possesses two conjugacy classes of subgroups isomorphic to $\quat$~\cite[Proposition~1.5]{GG7}, we deduce that $\Gamma_{1}$ and $Q$ are conjugate. Set $G_{1}=\ang{a,b}$, where $a=\alpha_{2}'^{2^{r-2}s}$ and $b=\alpha_{2}'^{s}\garside$. Then $G_{1}$ is isomorphic to $\quat[16]$, and it contains $\Gamma_{1}=\ang{a^{2}, b}$. Since $B_{n}(\St)$ possesses two isomorphism classes of subgroups isomorphic to $\quat$, we deduce that $\Gamma_{1}$ and $Q$ are conjugate, and using the fact that $Q$ is a subgroup of $L$, there exists an element $z\in B_{n}(\St)$ conjugate to an element of $L\setminus Q$ for which $za^{2}z^{-1}=b$, $zbz^{-1}=a^{2}b$ and $za^{2}b z^{-1}=a^{2}$.
%Let $G_{1}=\ang{a,b}$, where $a=\alpha_{2}'^{(n-2)/4}$ and $b=\garside$. Then $G_{1}$ has two subgroups $F=\ang{a^{2},ab}$ and $F'=\ang{a^{2},b}$ that are isomorphic to $\quat$, non-conjugate, \comment{not correct!} and so represent the two conjugacy classes of $\quat$ in $B_{n}(\St)$. Since $n$ is even, by \reth{finitebn} there exists a subgroup $H$ of $B_{n}(\St)$ that is isomorphic to $\tonestar\cong \quat\rtimes \Z_{3}$. The fact that the action by conjugation of the generator of $\Z_{3}$ permutes cyclically the elements $i,j$ and $k$ of $\quat$ implies that these elements are pairwise conjugate. Since we know that $b$ is not conjugate to $a^{2}$ in $B_{n}(\St)$, the subgroup of $H$ isomorphic to $\quat$ cannot be conjugate to $F'$, and so must be conjugate to $F$. Conjugating $H$ if necessary, we may thus suppose that $F$ is its subgroup isomorphic to $\quat$. Let $z\in H$ be an element of order $3$ such that $za^{2}z^{-1}=ab$ and $z^{2} a^{2} z^{-2}= a^{3}b$, and 
Let $G_{2}=zG_{1} z ^{-1}$. 
%Since conjugation by $z$ permutes cyclically the elements $a^{2}$, $ab$ and $a^{3}b$, it follows 
From the action by conjugation of $z$ on $\Gamma_{1}$, we have that $G_{1}\cap G_{2}\supset \Gamma_{1}$. Suppose that $G_{1}=G_{2}$. Then $zaz^{-1}\in G_{2}$, which is of order $8$, would be equal to an element of order $8$ of $G_{1}$, and so would be of the form $a^{j}$, $j\in\brak{1,3,5,7}$. Thus $za^{2}z^{-1}\in \brak{a^{2},a^{-2}}$, which is not possible. This implies that $G_{1}\neq G_{2}$, and thus $G_{1}\cap G_{2}=\Gamma_{1}$. Let $K=\ang{G_{1}\cup G_{2}}$. To see that $K\cong \quat[16] \bigast_{\quat} \quat[16]$,  by \repr{infincard} it suffices to prove that $K$ is infinite. Suppose on the contrary that $K$ is finite, and let $M$ be a finite maximal subgroup of $B_{n}(\St)$ that contains $K$. Since $K$ contains copies of $\quat[16]$, $M$ cannot be cyclic, nor can it be isomorphic to $\tonestar$ or $\istar$ by \repr{maxsubgp}. By the hypothesis on $n$, $\oonestar$ is not realised as a subgroup of $B_{n}(\St)$ by \reth{finitebn}, so $M\ncong \oonestar$, and thus $M\cong \dic{4(n-2)}$. Let $u\in M$ be an element of order $2(n-2)$. Since $G_{1}$ and $G_{2}$ are subgroups of $M$ isomorphic to $\quat[16]$, they both contain the unique cyclic subgroup $\ang{u^{(n-2)/4}}$ of $M$ of order $8$, but this contradicts the fact that $G_{1}\cap G_{2}=\Gamma_{1}$. So $M\ncong \dic{4(n-2)}$, and thus $K$ is infinite by \reth{finitebn}. \repr{infincard} then implies that $K\cong \quat[16]\bigast_{\quat} \quat[16]$. It remains to show that $K\cong K_{2}$. This may be seen as follows. 
%For notational purposes, set $a'=a$ and $b'=ab$. One may check that $\brak{a',b'}$ is a generating set of $G_{1}$ satisfying the usual relations of $\quat[16]$ ($a'^{4}=b'^{2}$ and $b'a'b'^{-1}=a'^{-1}$). 
To see this, let $x=zaz^{-1}$ and $y=zbz^{-1}$ be generators of $G_{2}$. Then $x^{4}=y^{2}$, $yxy^{-1}=x^{-1}$, $x^2=za^2 z^{-1}=za^2 z^{-1}=b$ and $y=zbz^{-1}=a^2 b$. Equation~\reqref{presK2} implies that $K\cong K_{2}$ as required.\qedhere
\end{enumerate}
\end{enumerate}
\end{proof}

\section{Classification of the virtually cyclic subgroups of the mapping class group $\mcg$}\label{sec:genmcg}

%\comment{This was a `Part', but I made it into a `Section' because it is short. In fact it is probably too short\ldots}

We apply \reth{main} and \repr{corrbnmcg} to deduce \reth{classvcmcg}, which up to a finite number of exceptions, yields the classification of the virtually cyclic subgroups of $\mcg$.

\begin{proof}[Proof of \reth{classvcmcg}.]
Let $n\geq 4$. The homomorphism $\phi$ of the short exact sequence~\reqref{mcg} satisfies the hypothesis of \repr{corrbnmcg} with $x=\ft$. \reth{main} and \repr{corrbnmcg} then imply the result, using the fact that if a finite subgroup $F$ of $B_{n}(\St)$ is isomorphic to $\Z_{q}$ (resp.\ $\dic{4m}, \quat, \tonestar, \oonestar,\istar$) then $\phi(F)$ is isomorphic to $\Z_{q/2}$ if $q$ is even and to $\Z_{q}$ if $q$ is odd (resp.\ is isomorphic to $\dih{2m}$, $\Z_{2}\oplus \Z_{2}, \an[4], \sn[4], \an[5]$). Note that the only cases where the conditions given in \redef{v1v2} on the order $q$ of $F$ differ from those on the order $q'$ of $\phi(F)$ given by \redef{v1v2mcg} is when $F$ is cyclic, and correspond to cases~(\ref{it:mainzq}) and~(\ref{it:mainzqt}) of these definitions. To see that one does indeed obtain the given conditions in parts~(\ref{it:mainzqmcg}) and~(\ref{it:mainzqtmcg}) of \redef{v1v2mcg}, suppose that $q$ satisfies the corresponding conditions given in parts~(\ref{it:mainzq}) and~(\ref{it:mainzqt}) of \redef{v1v2}. In particular, $q$ is a strict divisor of $2(n-i)$, and $q\neq n-i$ if $n-i$ is odd. If $q$ is even then $q'=q/2$, and $q'$ is a strict divisor of $n-i$. So suppose that $q$ is odd, in which case $q'=q$ and $q'$ divides $n-i$. Clearly, if $n-i$ is even then $q'\neq n-i$. On the other hand, if $n-i$ is odd then $q\neq n-i$. In both cases it follows once more that $q'$ is a strict divisor of $n-i$, which yields the condition on the order of the finite cyclic factor in parts~(\ref{it:mainzqmcg}) and~(\ref{it:mainzqtmcg}) of \redef{v1v2mcg}.
\end{proof}

%\comment{What follows is new.}

One may ask a similar question to that of \resecglobal{realisation}{isoclasses} concerning the isomorphism classes of the amalgamated products that are realised as subgroups of $\mcg$. From \redef{v1v2mcg} and \reth{classvcmcg}, these subgroups are of the form:
\begin{enumerate}[(a)]
\item\label{it:type2mcga} $\Z_{2q}\bigast_{\Z_{q}} \Z_{2q}$, where $q$ divides $(n-i)/2$ for some $i\in\brak{0,1,2}$.
\item $\Z_{2q}\bigast_{\Z_{q}} \dih{2q}$, where $q\geq 2$ divides $(n-i)/2$ for some $i\in\brak{0,2}$.
\item $\dih{2q}\bigast_{\Z_{q}} \dih{2q}$, where $q\geq 2$ divides $n-i$ strictly for some $i\in\brak{0,2}$.
\item $\dih{2q}\bigast_{\dih{q}} \dih{2q}$, where $q\geq 4$ is even and divides $n-i$ for some $i\in\brak{0,2}$. 
\item\label{it:type2mcge} $\sn[4] \bigast_{\an[4]} \sn[4]$, where $n\equiv 0,2 \bmod{6}$. 
\end{enumerate}
A key element in the analysis of the isomorphism classes of the amalgamated products that are realised as subgroups of $B_{n}(\St)$ was the use of \relem{index2}. This may be generalised as follows to the groups that appear as factors in the above list.

\begin{lem}
Let $G'$ be a group isomorphic to $\Z_{2q}$, $q\geq 1$ (resp.\ to $\dih{2q}$, $q\geq 2$, to $\sn[4]$), let $G$ be a group isomorphic to $\Z_{4q}$, $q\geq 1$ (resp.\ to $\dic{4q}$, $q\geq 2$, to $\oonestar$), and let $\map{\phi}{G}[G']$ be the canonical homomorphism, where we identify $G'$ with the quotient of $G$ by its unique subgroup $K$ of order $2$. Let $H'$ be a subgroup of $G'$ of index $2$, non isomorphic to $\Z_{2}\oplus \Z_{2}$ if $G'\cong \dih{8}$. Then the homomorphism $\aut[H']{G'}\to \aut{H'}$ given by restriction is surjective. 
\end{lem}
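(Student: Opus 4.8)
The plan is to prove this by mimicking the structure of \relem{index2}, reducing the surjectivity of $\aut[H']{G'}\to\aut{H'}$ to two ingredients: the surjectivity of the restriction $\aut[H]{G}\to\aut{H}$, which is supplied by \relem{index2}, and the surjectivity of the \emph{descent} homomorphism $\aut{H}\to\aut{H'}$ induced by a suitable $K$-quotient. First I would fix notation: let $K$ be the unique subgroup of $G$ of order $2$ (central and characteristic), so that $\phi\colon G\to G'=G/K$ is the quotient, and set $H=\phi^{-1}(H')$. Then $K\subseteq H$, $[G:H]=2$, and $\phi$ restricts to the quotient map $H\to H/K\cong H'$. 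Using \relem{index2} and the classification of index $2$ subgroups of the factors, $H$ is cyclic $\Z_{2q}$ when $H'\cong\Z_{q}$, is $\dic{2q}$ when $H'\cong\dih{q}$ (which forces $q$ even), and is $\tonestar$ when $H'\cong\an[4]$. In each case $K$ is again the unique subgroup of $H$ of order $2$, hence characteristic in $H$, so the descent maps $\aut{H}\to\aut{H'}$ and $\aut[H]{G}\to\aut[H']{G'}$ are both well defined (an $\hat F\in\aut[H]{G}$ descends to some $F'\in\aut{G'}$, and since $\hat F(H)=H$ we get $F'(H')=H'$, i.e.\ $F'\in\aut[H']{G'}$).

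The core is then a diagram chase. Given $f'\in\aut{H'}$, I would (i) lift $f'$ to some $\hat f\in\aut{H}$ using surjectivity of the descent $\aut{H}\to\aut{H'}$; (ii) extend $\hat f$ to some $\hat F\in\aut[H]{G}$ using \relem{index2} (when $H$ is characteristic in $G$, as for $H\cong\Z_{2q}$ or $H\cong\tonestar$, one has $\aut[H]{G}=\aut{G}$ and invokes parts~(a) or~(b); when $H\cong\dic{2q}$ inside $G\cong\dic{4q}$ one invokes part~(d)); and (iii) descend $\hat F$ to $F'\in\aut[H']{G'}$. Because restriction along $\phi\lvert_{H}$ and descent by $K$ commute by functoriality of the $K$-quotient, $F'$ restricts on $H'$ to the descent of $\hat F\lvert_{H}=\hat f$, which is exactly $f'$. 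Thus $F'\in\aut[H']{G'}$ is the required preimage, which is all the surjectivity assertion needs.

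The hard part, and the only genuine computation, is step~(i): the surjectivity of the descent $\aut{H}\to\aut{H'}$. For $H\cong\Z_{2q}$ this amounts to lifting a unit modulo $q$ to a unit modulo $2q$, which follows by taking the odd representative among $i'$ and $i'+q$; hence $\aut{\Z_{2q}}\cong\Z_{2q}^{\times}\to\aut{\Z_{q}}\cong\Z_{q}^{\times}$ is onto. For $H\cong\dic{2q}$ with $q\geq6$ even, I would use the description $\aut{\dic{2q}}\cong\Z_{q}\rtimes\Z_{q}^{\times}$ recalled in the proof of \relem{index2}, together with the analogous $\aut{\dih{q}}\cong\Z_{q/2}\rtimes\Z_{q/2}^{\times}$; on these the descent is reduction modulo $q/2$ of both coordinates, and surjectivity again reduces to lifting a unit modulo $q/2$ to a unit modulo $q$. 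For $H\cong\tonestar$ the descent $\aut{\tonestar}\to\aut{\an[4]}$ is a homomorphism between groups each isomorphic to $\sn[4]$; it is injective because $\tonestar$ has no nontrivial central automorphism (as $\operatorname{Hom}(\tonestar^{\mathrm{ab}},\Z_{2})=0$, since $\tonestar^{\mathrm{ab}}\cong\Z_{3}$), hence bijective, and in particular surjective.

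Finally I would check that the stated hypotheses remove exactly the obstructed configurations. The exclusion of $H'\cong\Z_{2}\oplus\Z_{2}$ when $G'\cong\dih{8}$ deletes precisely the case $q=4$, $H'\cong\dih{4}$, for which the dicyclic extension step would require \relem{index2}(d) with $q<6$, where it is unavailable; in the remaining dihedral subcases $H'\cong\dih{q}$ one has $q\geq6$ even, matching \relem{index2}(d). For the smallest cases, such as $G'\cong\Z_{2}$ (where $H'$ is trivial) or $G'\cong\dih{4}\cong\Z_{2}\oplus\Z_{2}$ (where $H'\cong\Z_{2}$), the target group $\aut{H'}$ is trivial and the conclusion is immediate, so no lifting is needed there. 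Assembling the three-step chase with these descent computations across the cyclic, dicyclic/dihedral and $\oonestar/\sn[4]$ families then yields the surjectivity of $\aut[H']{G'}\to\aut{H'}$ in all the asserted cases.
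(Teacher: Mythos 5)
Your proposal is correct and takes essentially the same approach as the paper: set $H=\phi^{-1}(H')$, prove that the descent homomorphism $\aut{H}\to\aut{H'}$ is surjective case by case (by lifting units in the cyclic and dicyclic cases), and then combine this with the surjectivity of the restriction $\aut[H]{G}\to\aut{H}$ from the preceding lemma through the commutative square relating restriction and descent. The only minor deviation is the binary tetrahedral case, where you obtain surjectivity of $\aut{\tonestar}\to\aut{\an[4]}$ from injectivity (using $\operatorname{Hom}(\Z_{3},\Z_{2})=0$) together with $\ord{\aut{\tonestar}}=\ord{\aut{\an[4]}}=24$, while the paper simply cites the same underlying fact $\aut{\tonestar}\cong\sn[4]$ from the literature.
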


\begin{proof}
Let $H=\phi^{-1}(H')$. Then $H$ is of index $2$ in $G$, and if $G\cong \quat[16]$ then $H\ncong \quat$. Let $\alpha'\in \aut{H'}$. We must show that there exists an automorphism of $G'$ that leaves $H'$ invariant, and whose restriction to $H'$ is equal to $\alpha'$. Note that:
\begin{enumerate}[(a)]
\item\label{it:authgi} if $G'\cong \Z_{2q}$, $q\geq 1$, then $G\cong \Z_{4q}$, $H'\cong \Z_{q}$ and $H\cong \Z_{2q}$.
\item\label{it:authgii} if $G'\cong \dih{2q}$, $q\geq 2$ then  $G\cong \dic{4q}$. If $q$ is odd then $H'\cong\Z_{q}$ and $H\cong \Z_{2q}$. If $q$ is even then $H'$ is isomorphic to $\Z_{q}$ or to $\dih{q}$, and $H$ is isomorphic to $\Z_{2q}$ or to $\dic{2q}$ respectively.
\item\label{it:authgiii} if $G\cong \oonestar$ then $G'\cong \sn[4]$, $H'\cong \an[4]$ and $H\cong \tonestar$.
\end{enumerate}
The kernel of $\map{\phi\left\lvert_{H}\right.}{H}[H']$ is that of $\phi$, equal to $K$. Since $K$ is characteristic in $G$ (resp.\ $H$), for each automorphism $f\in \aut{G}$ (resp.\ $f\in \aut{H}$), there exists a unique automorphism $f'\in\aut{G'}$ (resp.\ $f'\in \aut{H'}$) such that $\phi\circ f=f'\circ \phi$, and the correspondence $f\mapsto f'$ gives rise to a homomorphism $\map{\widetilde{\Phi}}{\aut{G}}[\aut{G'}]$ (resp.\ $\map{\Phi}{\aut{H}}[\aut{H'}]$) satisfying $\widetilde{\Phi}(f)\circ \phi=\phi\circ f$ (resp.\ $\Phi(f)\circ \phi=\phi\circ f$).

Let $r$ and $r'$ denote the restriction homomorphisms $\aut[H]{G}\to \aut{H}$ and $\aut[H']{G'}\to \aut{H'}$ respectively, and let $\iota$ and $\iota'$ denote the inclusions $\aut[H]{G}\to \aut{G}$ and $\aut[H']{G'}\to \aut{G'}$ respectively. Then we have the following commutative diagram:
\begin{equation}\label{eq:commautgh}
\begin{xy}*!C\xybox{%
\xymatrix{\aut{G} \ar[d]^{\widetilde{\Phi}} & \ar[l]_{\iota} \aut[H]{G} \ar[r]^{r} \ar[d]^{\widetilde{\Phi}\left\lvert_{\aut[H]{G}}\right.} & \aut{H}\ar[d]^{\Phi} \\ 
\aut{G'} & \ar[l]_{\iota'} \aut[H']{G'} \ar[r]^{r'} & \aut{H'}}}
\end{xy}
\end{equation}
Note that the restriction of $\widetilde{\Phi}$ to $\aut[H]{G}$ is well defined. Indeed, let $f\in \aut[H]{G}$, and let $h'\in H'$. Then there exists $h\in H$ such that $\phi(h)=h'$, and
\begin{equation*}
\Phi(f)(h')=\Phi(f)\circ\phi(h)=\phi\circ f(h)\in H, \quad\text{since $f(h)\in H$.}
\end{equation*}
We claim that for the groups $H,H'$ described in~(\ref{it:authgi})--(\ref{it:authgiii}) above, $\Phi$ is surjective. If $H\cong\Z_{2q}$, which covers  case~(\ref{it:authgi}) above and part of case~(\ref{it:authgii}), $\aut{H}\cong \Z_{2q}^{\times}$, $\aut{H'}\cong \Z_{q}^{\times}$, and if $\alpha'\in \aut{H'}$ is given by multiplication by $j$, where $1\leq j\leq q-1$, $\gcd{(j,q)}=1$, then $\Phi(\alpha)=\alpha'$, where $\alpha\in \aut{H}$ is given by multiplication by $j+\epsilon q$, where $\epsilon=0$ if $j$ is odd, and $\epsilon=1$ if $j$ is even. Let us now consider the remaining part of case~(\ref{it:authgii}) where $q$ is even, $H\cong\dic{2q}$ and $H'\cong \dih{q}$. Let $H$ admit the presentation
\begin{equation*}
H=\setangr{x,y}{x^{q/2}=y^{2},\; yxy^{-1}=x^{-1}},
\end{equation*}
and let $\overline{x}=\phi(x)$ and $\overline{y}=\phi(y)$, so that
\begin{equation*}
H'=\setangr{\overline{x}, \overline{y}}{\overline{x}^{q/2}=\overline{y}^{2}=1,\; \overline{y}\, \overline{x}\,\overline{y}^{-1}=\overline{x}^{-1}}.
\end{equation*}
Any automorphism $\alpha'$ of $H'$ is given by $\overline{x}\mapsto \overline{x}^{j}$, $\overline{y}\mapsto \overline{x}^{k}\,\overline{y}$, where $1\leq j\leq q/2-1$, $\gcd{(j,q/2)}=1$ and $0\leq k\leq q/2-1$. The presentation of $H$ implies that the map $\map{\alpha}{H}$ given by $x\mapsto x^{j+\epsilon q/2}$, $y\mapsto x^{k}y$, where $\epsilon=0$ if $j$ is odd, and $\epsilon=1$ if $j$ is even, is an automorphism. Further, $\Phi(\alpha)(\overline{x})= \phi(\alpha(x))=\overline{x}^{j}=\alpha'(\overline{x})$, and $\Phi(\alpha)(\overline{y})= \phi(\alpha(y))=\overline{x}^{k}\overline{y}=\alpha'(\overline{y})$, which proves the surjectivity of $\Phi$ in this case. Finally, in case~(\ref{it:authgiii}), the result is a consequence of~\cite[Theorem~3.3]{gg5}

It just remains to show that $r'$ is surjective. By the commutative diagram~\reqref{commautgh}, this follows from the surjectivity of $\Phi$, and that of $r$, which is a consequence of \relem{index2}.
\end{proof}

%We have a similar result to that of \repr{isoamalg} for the virtually cyclic subgroups of $\mcg$ of Type~II.

In principle, if we are given finite groups $H, G_1, G_2$, where $H$ is an index $2$ subgroup of both $G_{1}$ and $G_{2}$, there may be various non-isomorphic amalgamated products of the form $G_{1}\bigast_{H} G_{2}$. As for $B_{n}(\St)$, such a situation occurs exceptionally in $\mcg$, and we obtain a similar result to that of \repr{isoamalg} for the virtually cyclic subgroups of $\mcg$ of Type~II.
\begin{prop}\label{prop:isoamalgmcg}
Let $n\geq 4$ be even.
\begin{enumerate}[(a)]
\item\label{it:isoamalgmcga} Let $H_{1}',H_{2}'$ be subgroups of $\mcg$ that are both isomorphic to one of the amalgamated products given in (\ref{it:type2mcga})--(\ref{it:type2mcge}) 
%\comment{is this clear? I want to say that they are both isomorphic to the same type of amalgamated product, without themselves necessarily being isomorphic. Is there a better way to state this?} 
above, with the exception of $\dih{8}\bigast_{\dih{4}} \dih{8}$. Then $H_{1}'\cong H_{2}'$.
\item\label{it:isoamalgmcgb} Let $H'$ be a subgroup of $\mcg$ that is isomorphic to an amalgamated product of the form $\dih{8}\bigast_{\dih{4}} \dih{8}$. Then $H'$ is isomorphic to exactly one of the following two groups:
\begin{equation}\label{eq:presK1prime}
K_{1}'=\setangr{x,y,a,b}{x^{4}=y^{2}=a^{4}=b^{2}=1,\, yxy^{-1}= x^{-1},\, bab^{-1}=a^{-1},\, x^{2}=a^{2},\, y=b},
\end{equation}
and
\begin{equation}\label{eq:presK2prime}
K_{2}'=\setangr{x,y,a,b}{x^{4}=y^{2}=a^{4}=b^{2}=1,\, yxy^{-1}= x^{-1},\, bab^{-1}=a^{-1},\, x^{2}=b,\, y=a^{2}b}.
\end{equation}
%For each of the amalgamated products given in (\ref{it:type2mcga})--(\ref{it:type2mcge}) above, abstractly there is exactly one isomorphism class, with the exception of $\dih{8}\bigast_{\dih{4}} \dih{8}$, for which there are exactly two isomorphism classes, realised by the following groups:
\end{enumerate}
\end{prop}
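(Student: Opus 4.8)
The plan is to follow closely the proof of \repr{isoamalg}, replacing the rôle of \relem{index2} by the preceding lemma, which guarantees the surjectivity of the restriction homomorphism $\aut[H']{G'}\to \aut{H'}$ for precisely the factors $G'$ (namely $\Z_{2q}$, $\dih{2q}$ and $\sn[4]$) and amalgamating subgroups $H'$ occurring in the list~(\ref{it:type2mcga})--(\ref{it:type2mcge}), with the single exception $G'\cong \dih{8}$, $H'\cong \Z_{2}\oplus\Z_{2}$. The engine throughout is \repr{amalgiso}: two amalgamated products of the same pair of factors that differ only in the choice of embeddings of the amalgamating subgroup are isomorphic as soon as the composite isomorphism between the two copies of the amalgamating subgroup extends to an automorphism of each factor.

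For part~(\ref{it:isoamalgmcga}) I would argue exactly as in the first paragraph of the proof of \repr{isoamalg}. Given subgroups $H_{1}', H_{2}'$ of $\mcg$ both isomorphic to a common amalgam $G_{1}'\bigast_{F'} G_{2}'$ from the list other than $\dih{8}\bigast_{\dih{4}} \dih{8}$, write each as an amalgam via embeddings $\map{i_{k}'}{F'}[G_{k}']$ and $\map{j_{k}'}{F'}[G_{k}']$ respectively. Using the preceding lemma (together with the fact, recorded in \relem{index2} and its analogue, that in the dihedral cases the several copies of $F'$ inside $G_{k}'$ are permuted by $\aut{G_{k}'}$), the isomorphism $j_{k}'\circ (i_{k}')^{-1}$ between two copies of $F'$ in $G_{k}'$ extends to an element of $\aut{G_{k}'}$; the commutative diagram of Figure~\ref{fig:commdiag} then applies verbatim, and \repr{amalgiso} gives $H_{1}'\cong H_{2}'$. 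The amalgam $\dih{8}\bigast_{\dih{4}} \dih{8}$ is excluded precisely because surjectivity fails there: the image of $x^{2}$ is forced, being the unique square of $\dih{8}$ lying in a Klein four subgroup, so $\aut[\Z_{2}\oplus\Z_{2}]{\dih{8}}\to \aut{\Z_{2}\oplus\Z_{2}}$ has image only $\Z_{2}\subset\sn[3]$.

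For part~(\ref{it:isoamalgmcgb}) I would mirror the exceptional case of \repr{isoamalg}. Since $\dih{8}$ has exactly two subgroups isomorphic to $\dih{4}\cong\Z_{2}\oplus\Z_{2}$, exchanged by an automorphism, \repr{amalgiso} reduces us to a fixed copy of $\dih{4}$ in each factor, so that the possible amalgams are parametrised by $\aut{\dih{4}}\cong\sn[3]$ (see case~(\ref{it:caseq8}) of \resecglobal{generalities}{autout} for $\aut{\Z_{2}\oplus\Z_{2}}$). Using the automorphisms of the factor $\dih{8}$ that restrict to the $\Z_{2}\subset\aut{\dih{4}}$ identified above, and arguing with \repr{amalgiso} exactly as for the representatives $\phi_{1},\ldots,\phi_{6}$ of the exceptional case, I would whittle these six possibilities down to at most the two classes $K_{1}'$ and $K_{2}'$ of~\reqref{presK1prime} and~\reqref{presK2prime}. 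Note that the identification $x^{2}\mapsto a^{2}$, $y\mapsto b$ defining $K_{1}'$ extends to an isomorphism $\ang{x,y}\to\ang{a,b}$ of the $\dih{8}$-factors, so by \repr{semiamalg} (as in \reco{semiamalg}) one has $K_{1}'\cong\Z\rtimes\dih{8}$ with the standard action, whereas the computation of the exceptional case of \repr{isoamalg} shows, in $K_{2}'$, that conjugation by $ax$ permutes the subgroups $\ang{x^{2}}$, $\ang{y}$ and $\ang{x^{2}y}$ of $\ang{x^{2},y}$ cyclically.

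The main obstacle, as in \repr{isoamalg}, is to establish $K_{1}'\ncong K_{2}'$, and I would do this directly from the structure $K_{1}'\cong\Z\rtimes\dih{8}$. If $K_{1}'\cong K_{2}'$, then $K_{1}'$ would contain a Klein four subgroup $V$ and an element $z$ of infinite order whose conjugation permutes the three order-$2$ subgroups of $V$ cyclically. Projecting by $\map{p}{\Z\rtimes\dih{8}}[\dih{8}]$, whose kernel $\Z$ is torsion free, $p$ restricts to an isomorphism on $V$; hence $p(V)$ is a Klein four subgroup of $\dih{8}$ and $p(z)$ would permute its three involutions cyclically. But $p(V)$ has index $2$ in $\dih{8}$, so the conjugation action of $\dih{8}$ on $p(V)$ has image of order at most $2$ in $\aut{p(V)}\cong\sn[3]$, and in particular contains no $3$-cycle. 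This contradiction gives $K_{1}'\ncong K_{2}'$, so the reduction of the previous paragraph produces exactly two classes, completing the proof. (Alternatively, one could deduce $K_{1}'\ncong K_{2}'$ from $K_{1}\ncong K_{2}$ by realising each $K_{i}'$ as the quotient of $K_{i}$ by its central element of order $2$, but the direct argument above appears cleaner and self-contained.)
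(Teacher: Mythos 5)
Your proposal is correct, but it takes a genuinely different route from the paper's proof — in fact, it is precisely the route that the paper's own remark following the statement mentions ("one may mimic the proof of \repr{isoamalg}\dots") but deliberately declines to carry out. You argue abstractly at the level of the quotient groups: the unlabelled lemma preceding the proposition replaces \relem{index2}, \repr{amalgiso} is the engine, and the exceptional case is handled by reducing the six isomorphisms of the Klein four amalgamating subgroup modulo the $\Z_{2}$-images of the restriction maps $\aut[\dih{4}]{\dih{8}}\to\aut{\dih{4}}$, leaving the two classes $K_{1}'$ and $K_{2}'$. The paper instead lifts everything to $B_{n}(\St)$: by \repr{vcmcg}(\ref{it:vcmcgb})(\ref{it:vcmcgbii}) the preimage under $\phi$ of a subgroup of $\mcg$ of one of the given amalgam forms is a subgroup of $B_{n}(\St)$ of the corresponding form in the list~(\ref{it:mainIIab})--(\ref{it:mainIIeb}), and by \repr{vcmcg}(\ref{it:vcmcgc}) isomorphic preimages have isomorphic images, so the number of isomorphism classes in $\mcg$ is bounded by that in $B_{n}(\St)$; \repr{isoamalg} then yields one class in the non-exceptional cases and at most two (represented by $K_{1}'$, $K_{2}'$) in the exceptional case, and the paper finishes by rerunning the $\Z\rtimes\dih{8}$ argument of \repr{isoamalg} with $\quat$ replaced by $\Z_{2}\oplus\Z_{2}$ to get $K_{1}'\ncong K_{2}'$. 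What each approach buys: yours proves the stronger \emph{abstract} statement (independent of any realisation in $\mcg$), whereas the paper's recycles the already-proven \repr{isoamalg} wholesale and is shorter given that investment. Both proofs of $K_{1}'\ncong K_{2}'$ rest on the same structure $K_{1}'\cong\Z\rtimes\dih{8}$ and the torsion-free kernel of the projection $p$, but your contradiction — conjugation by $p(z)$ would induce an order-$3$ automorphism of the abelian index-$2$ subgroup $p(V)$ of $\dih{8}$, whereas the conjugation action factors through $\dih{8}/p(V)\cong\Z_{2}$ — is cleaner and more conceptual than the paper's element-by-element tracking of the central involution, and your observation that the restriction map for $G'\cong\dih{8}$, $H'\cong\Z_{2}\oplus\Z_{2}$ has image exactly $\Z_{2}\subset\sn[3]$ (the central involution being fixed by every automorphism) correctly pinpoints why this is the unique exception.
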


\begin{rem}
One may mimic the proof of \repr{isoamalg} to obtain an analogous result for the amalgamated products given in (\ref{it:type2mcga})--(\ref{it:type2mcge}) above, that is, abstractly there is a single isomorphism class, with the exception of $\dih{8}\bigast_{\dih{4}} \dih{8}$, for which there are two isomorphism classes, for which $K_{1}'$ and $K_{2}'$ are representatives. However, using \repr{isoamalg}, we shall give an alternative proof in the case that interests us, where the groups in question are realised as subgroups of $\mcg$. 
%\comment{Is this remark clear? Also, is it possible to obtain the complete generalisation to the (abstract) isomorphism classes without repeating the whole of the proof of \repr{isoamalg}? It seems that we need to repeat part of the proof anyway to show that $K_{1}'\ncong K_{2}'$.}
\end{rem}

\begin{proof}[Proof of \repr{isoamalgmcg}.]
Consider one of the amalgamated products given in the list (\ref{it:type2mcga})--(\ref{it:type2mcge}) above, and suppose that $n\geq 4$ is such that this amalgamated product is realised as a subgroup $G_{1}'\bigast_{F'} G_{2}'$ of $\mcg$, where $G_{1}',G_{2}'$ and $F'$ are finite subgroups of $\mcg$, and $[G_{i}'\colon F']=2$ for $i=1,2$. 
Taking $G=B_{n}(\St)$, $G'=\mcg$, $x=\ft$ and $p=\phi$ in the statement of \repr{vcmcg}, where $\phi$ is the homomorphism of \req{mcg}, we have that $\phi^{-1}(G_{1}')\bigast_{\phi^{-1}(F')} \phi^{-1}(G_{2}')$ is a subgroup of $B_{n}(\St)$ by part~(\ref{it:vcmcgb})(\ref{it:vcmcgbii}) of that proposition.  We claim that the number of isomorphism classes of subgroups of $B_{n}(\St)$ that are isomorphic to an amalgamated product of the form $\phi^{-1}(G_{1}')\bigast_{\phi^{-1}(F')} \phi^{-1}(G_{2}')$ (which are the amalgamated products~(\ref{it:mainIIab})--(\ref{it:mainIIeb}) that appear at the beginning of \resecglobal{realisation}{isoclasses}) is greater than or equal to the number of isomorphism classes of subgroups of $\mcg$ that are isomorphic to an amalgamated product of the form $G_{1}'\bigast_{F'} G_{2}'$. To prove the claim, let $H_{1}'$, $H_{2}'$ be subgroups of $\mcg$ that may be written in the form $G_{1}'\bigast_{F'} G_{2}'$, and for $i=1,2$, let $H_{i}=\phi^{-1}(H_{i}')$. From above, $H_{1}$ and $H_{2}$ are subgroups of $B_{n}(\St)$ that may be written in the form $\phi^{-1}(G_{1}')\bigast_{\phi^{-1}(F')} \phi^{-1}(G_{2}')$. If they are isomorphic then $H_{1}'=p(H_{1})$ and $H_{2}'=p(H_{2})$ are isomorphic by \repr{vcmcg}(\ref{it:vcmcgc}), which proves the claim. If $G_{1}'\bigast_{F'} G_{2}' \ncong \dih{8}\bigast_{\dih{4}} \dih{8}$ then $\phi^{-1}(G_{1}')\bigast_{\phi^{-1}(F')} \phi^{-1}(G_{2}')\ncong \quat[16] \bigast_{\quat} \quat[16]$, and combining the claim with \repr{isoamalg} implies that $\mcg$ possesses a single isomorphism class of subgroups that are isomorphic to amalgamated products of the form $G_{1}'\bigast_{F'} G_{2}'$, which proves part~(\ref{it:isoamalgmcga}) of the proposition. Similarly, if $G_{1}'\bigast_{F'} G_{2}' \cong \dih{8}\bigast_{\dih{4}} \dih{8}$ then $\phi^{-1}(G_{1}')\bigast_{\phi^{-1}(F')} \phi^{-1}(G_{2}')\cong \quat[16] \bigast_{\quat} \quat[16]$, and $\mcg$ possesses at most two isomorphism class of subgroups that are isomorphic to amalgamated products of the form $\dih{8}\bigast_{\dih{4}} \dih{8}$, and these isomorphism classes are represented by subgroups of $\mcg$ that are isomorphic to $K_{1}'$ and $K_{2}'$. To complete the proof of part~(\ref{it:isoamalgmcgb}) of the proposition, it thus suffices to show that $K_{1}'\ncong K_{2}'$. Taking $K_{1}'$ and $K_{2}'$ to be presented by equations~\reqref{presK1prime} and~\reqref{presK2prime} respectively, following the proof of \repr{isoamalg} from \req{presK1} onwards, and letting $N'$ be the infinite cyclic subgroup of $K_{1}'$ generated by $t=xa^{-1}$, we see that $N'$ is normal in $K_{1}'$,
\begin{equation*}
K_{1}'/N'=\setangr{a,b}{a^{4}=b^{2}=1,\; bab^{-1}=a^{-1}}\cong \dih{8},
\end{equation*}
and $K_{1}'\cong \ang{t}\rtimes \dih{8}$, where the action of $K_{1}'/N'$ on $N'$ is given by \req{zrtimesq16}, $G_{2}$ being in this case the subgroup $\ang{a,b}$ of $K_{1}'$. The rest of the proof of \repr{isoamalg} then goes through, where $\quat$ is replaced by $\Z_{2}\oplus \Z_{2}$, and the subgroups $L$ of $H$ are now of order $2$. We conclude that $K_{1}'\ncong K_{2}'$ as required.
\end{proof}

\pagebreak

We thus obtain the following result on the existence of subgroups of $\mcg$ isomorphic to $K_{1}'$ and $K_{2}'$.
\begin{prop}\label{prop:s4a4}%\mbox{}
%\begin{enumerate}[(a)]
%\item Let $G$ be a group that is isomorphic to $\sn[4] \bigast_{\an[4]} \sn[4]$. Then $G$ possesses a subgroup that is isomorphic to $K_{2}'$.
%\item\label{it:dih8dih4} 
Let $n\geq 4$ be even.
\begin{enumerate}[(a)]
\item\label{it:K1prime} There exists a subgroup of $\mcg$ isomorphic to $K_{1}'$.
\item\label{it:K2prime} Suppose that either $n\equiv 0\bmod{4}$ or $n\equiv 10 \bmod{12}$ and $n\notin \brak{6,14,18,26,30,38}$. There exists a subgroup of $\mcg$ isomorphic to $K_{2}'$.
%\end{enumerate}
\end{enumerate}
\end{prop}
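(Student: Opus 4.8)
The plan is to obtain both statements by transporting the corresponding realisation results in $B_{n}(\St)$ across the projection $\map{\phi}{B_{n}(\St)}[\mcg]$ of the short exact sequence~\reqref{mcg}. Recall that by \repr{existk1k2}(\ref{it:existk1k2a}), $B_{n}(\St)$ contains a subgroup isomorphic to $K_{1}$ for every even $n\geq 4$, while by \repr{existk1k2}(\ref{it:existk1k2b}) it contains a subgroup isomorphic to $K_{2}$ whenever $n\equiv 0\bmod 4$ or $n\equiv 10\bmod{12}$; together with Propositions~\ref{prop:oto} and~\ref{prop:o2k2} (see \rerem{existk1k2}), $B_{n}(\St)$ in fact contains a copy of $K_{2}$ for all even $n\notin\brak{6,14,18,26,30,38}$. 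So under the hypotheses of each part there is a subgroup $\widetilde{K}$ of $B_{n}(\St)$ isomorphic to $K_{1}$ (for part~(\ref{it:K1prime})) or to $K_{2}$ (for part~(\ref{it:K2prime})), and it remains to compute $\phi(\widetilde{K})$.

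First I would check that $\ft$ belongs to $\widetilde{K}$. In the presentations~\reqref{presK1} and~\reqref{presK2} the generator $x$ satisfies $x^{4}=y^{2}=a^{4}=b^{2}$, which is an element of order $2$; since $\ft$ is the unique element of order $2$ of $B_{n}(\St)$, it follows that $x^{4}=\ft$, so $\ang{\ft}\subset \widetilde{K}$. Hence $\ker{\phi}\cap \widetilde{K}=\ang{\ft}$, and $\phi(\widetilde{K})\cong \widetilde{K}\left/\ang{\ft}\right.$. By \repr{vcmcg}(\ref{it:vcmcga})(\ref{it:vcmcgaiii}) this quotient is an amalgamated product $\phi(\quat[16])\bigast_{\phi(\quat)} \phi(\quat[16])\cong \dih{8}\bigast_{\dih{4}} \dih{8}$, which by \repr{isoamalgmcg}(\ref{it:isoamalgmcgb}) is isomorphic to exactly one of $K_{1}'$ and $K_{2}'$.

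To decide which, I would argue on presentations. As $\ang{\ft}=\ang{x^{4}}$ is central in $\widetilde{K}$, a presentation of $\widetilde{K}\left/\ang{\ft}\right.$ is obtained from that of $K_{1}$ (resp.\ $K_{2}$) given by~\reqref{presK1} (resp.~\reqref{presK2}) simply by adjoining the relation $x^{4}=1$. Performing this reduction carries~\reqref{presK1} to~\reqref{presK1prime} and~\reqref{presK2} to~\reqref{presK2prime}; that is, $\phi(\widetilde{K})\cong K_{1}'$ when $\widetilde{K}\cong K_{1}$, and $\phi(\widetilde{K})\cong K_{2}'$ when $\widetilde{K}\cong K_{2}$. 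This produces the required subgroup of $\mcg$ in each case, the hypotheses on $n$ in part~(\ref{it:K2prime}) being exactly those under which $B_{n}(\St)$ is known to contain a copy of $K_{2}$.

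I do not expect a genuine obstacle here: the hard content, namely the explicit realisation of $K_{1}$ and $K_{2}$ as subgroups of $B_{n}(\St)$, has already been established in \repr{existk1k2} (and, for the full range of $n$ in part~(\ref{it:K2prime}), in Propositions~\ref{prop:oto} and~\ref{prop:o2k2}). The only delicate point is the bookkeeping of the final paragraph: one must verify that the central quotient by $\ang{\ft}$ sends~\reqref{presK1} (resp.~\reqref{presK2}) to~\reqref{presK1prime} (resp.~\reqref{presK2prime}) and not to the other, so that the two isomorphism classes of $\dih{8}\bigast_{\dih{4}} \dih{8}$ separated in \repr{isoamalgmcg} are matched correctly with the two classes of $\quat[16]\bigast_{\quat}\quat[16]$ arising from $K_{1}$ and $K_{2}$.
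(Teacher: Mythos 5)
Your proposal is correct and follows essentially the same route as the paper: take the subgroups of $B_{n}(\St)$ isomorphic to $K_{1}$ (resp.\ $K_{2}$) provided by \repr{existk1k2} and \rerem{existk1k2}, push them forward under $\phi$, and match presentations~\reqref{presK1},~\reqref{presK2} with~\reqref{presK1prime},~\reqref{presK2prime}. The paper's proof is just a terser version of this; your additional verifications (that $x^{4}=\ft$ lies in the subgroup, that $\ang{\ft}$ is central so the quotient presentation is obtained by adjoining $x^{4}=1$, and the appeal to \repr{vcmcg} and \repr{isoamalgmcg}) make explicit the bookkeeping the paper leaves implicit.
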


\begin{proof}%\mbox{}
Let $n$ be even, and let $\phi$ be the homomorphism of \req{mcg}. If $n\geq 4$ (resp.\ $n\nequiv 4 \bmod{12}$ and $n\notin \brak{6,14,18,26,30,38}$) then \repr{existk1k2} and \rerem{existk1k2} imply that $B_{n}(\St)$ possesses a subgroup $H$ that is isomorphic to $K_{1}$ (resp.\ $K_{2}$). The presentations of $K_{1}$ and $K_{1}'$ (resp.\ $K_{2}$ and $K_{2}'$) given by equations~\reqref{presK1} and~\reqref{presK1prime} (resp.\ equations~\reqref{presK2} and~\reqref{presK2prime}) imply that $\phi(H)$ is isomorphic to $K_{1}'$ (resp.\ to $K_{2}'$).
%
%\begin{enumerate}[(a)]
%\item The construction of a subgroup that is isomorphic to $K_{2}'$ is identical to that of \repr{o2k2}, where $\oonestar$, $\tonestar$, $\quat[16]$ and $\quat$ should be replaced by $\sn[4]$, $\an[4]$, $\dih{8}$ and $\Z_{2}\oplus \Z_{2}$ respectively, and the references to the presentations~\reqref{presK1} and~\reqref{presK2} should be replaced by the presentations~\reqref{presK1prime} and~\reqref{presK2prime} respectively. One also uses the fact that $\an[4]\cong (\Z_{2}\oplus \Z_{2})\rtimes \Z_{3}$, where the action permutes cyclically the three subgroups of order $2$, and that the Sylow $2$-subgroups of $\sn[4]$ consist of three conjugate copies of $\dih{8}$. \comment{we also used \repr{isoamalg}, we should say something about this in our case.}
%\item Suppose that $n\geq 4$ is even (resp.\ $n\equiv 0\bmod{4}$, $n\equiv 10 \bmod{12}$). By \repr{existk1k2}(\ref{it:existk1k2a}) (resp.\ \repr{existk1k2}(\ref{it:existk1k2b})), $B_{n}(\St)$ possesses a subgroup $H$ that is isomorphic to $K_{1}$ (resp.\ $K_{2}$). The effect on the presentations~\reqref{presK1} (resp.\ \reqref{presK2}) of $K_{1}$ (resp.\ $K_{2}$) of taking the image of $H$ by the homomorphism is to add the relation that the element $\ft$ of $H$ of order $2$ becomes trivial in $\mcg$. Thus the subgroup $\phi(H)$ of $\mcg$ admits the presentation~\reqref{presK1prime} (resp.\ ~\reqref{presK2prime}), and so is isomorphic to $K_{1}'$ (resp.\ $K_{2}'$), which proves part~(\ref{it:K1prime}) (resp.\ part~(\ref{it:K2prime})).\qedhere
%\end{enumerate}
\end{proof}

\begin{rem}
As in the case of $B_{n}(\St)$, we do not know whether $\mcg$ possesses a subgroup isomorphic to $K_{2}'$ if $n\in \brak{6,14,18,26,30,38}$.
%
%Propositions~\ref{prop:o2k2},~\ref{prop:existk1k2} and~\ref{prop:s4a4} imply that $\mcg$ possesses subgroups isomorphic to $K_{1}'$ and $K_{2}'$ with the possible exception of $K_{2}'$ when $n\in \brak{6,14,18,26,30,38}$. 
\end{rem}

\appendix

\addcontentsline{toc}{chapter}{Appendix: The subgroups of the binary polyhedral groups}

\addtocontents{toc}{\protect\setcounter{tocdepth}{-1}}

\chapter*{Appendix: The subgroups of the\\ binary polyhedral
groups}\label{part:append}%$\tonestar,\oonestar,\istar$}\label{sec:append}

In this Appendix, we derive the structure of the subgroups of the binary polyhedral groups $\tonestar,\oonestar,\istar$ that we refer to in the main body of the manuscript. More information on these groups may be found in~\cite{AM,Co,CM,Th}.

%\comment{I added the maximal and normal subgroups because we use them somewhere. So the proofs are somewhat longer.}
\begin{prop}\mbox{}\label{prop:maxsubgp}
\begin{enumerate}[(a)]
\item\label{it:subgpststar} The proper subgroups of the binary tetrahedral group $\tonestar$ are $\brak{e}$, $\Z_2$, $\Z_3$, $\Z_4$, $\Z_6$ and $\quat$. Its maximal subgroups are isomorphic to $\Z_6$ or $\quat$, its maximal cyclic subgroups are isomorphic to $\Z_4$ or $\Z_6$, and its non-trivial normal subgroups are isomorphic to $\Z_{2}$ or $\quat$.
\item\label{it:subgpsostar} The proper subgroups of the binary octahedral group $\oonestar$ are isomorphic to $\brak{e}$, $\Z_2$, $\Z_3$, $\Z_4$, $\Z_6$, $\Z_8$, $\quat$, $\dic{12}$, $\quat[16]$ or $\tonestar$. Its
maximal subgroups are isomorphic to $\dic{12}$, $\quat[16]$ or $\tonestar$, its maximal cyclic subgroups are isomorphic to $\Z_4$, $\Z_6$  or $\Z_8$, and its non-trivial normal subgroups are isomorphic to $\Z_{2}$, $\quat$ or $\tonestar$.
\item The proper subgroups of the binary icosahedral group $\istar$ are isomorphic to $\brak{e}$, $\Z_2$, $\Z_3$, $\Z_4$, $\Z_5$, $\Z_6$, $\quat$, $\Z_{10}$, $\dic{12}$, $\dic{20}$ or $\tonestar
$, its maximal subgroups are isomorphic to $\dic{12}$, $\dic{20}$ or $\tonestar$, its maximal cyclic subgroups are isomorphic to $\Z_4$, $\Z_6$ or $\Z_{10}$, and it has a unique non-trivial normal subgroup, isomorphic to $\Z_{2}$.
\end{enumerate}
\end{prop}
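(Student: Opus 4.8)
The plan is to exploit the structure, recalled in \resecglobal{generalities}{autout}, of each binary polyhedral group $G\in\brak{\tonestar,\oonestar,\istar}$ as a central extension
\begin{equation*}
1\to Z(G)\to G\stackrel{p}{\to} P\to 1,
\end{equation*}
where $Z(G)\cong\Z_{2}$ is generated by the unique element $z$ of order $2$ of $G$, and $P=G/Z(G)$ is isomorphic to $\an[4]$, $\sn[4]$ or $\an[5]$ respectively. Since $z$ is the unique involution of $G$ (recall that these groups embed in the unit quaternions $S^{3}$, where $-1$ is the only element of order $2$), Cauchy's theorem forces every subgroup $H\leq G$ of even order to contain $z$, whereas any subgroup of odd order meets $Z(G)$ trivially, so that $p\left\lvert_{H}\right.$ embeds it into $P$. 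This dichotomy drives the whole argument: the even-order subgroups of $G$ are precisely the preimages $p^{-1}(\overline{H})$ of subgroups $\overline{H}\leq P$ (and by the correspondence theorem they biject, compatibly with inclusion, with the subgroups of $P$), while the odd-order subgroups of $G$ are exactly the isomorphic copies in $G$ of the odd-order subgroups of $P$.

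First I would recall the classical subgroup lattices of $\an[4]$, $\sn[4]$ and $\an[5]$, whose odd-order subgroups are $\brak{e},\Z_{3}$ (for $\an[4]$ and $\sn[4]$) and $\brak{e},\Z_{3},\Z_{5}$ (for $\an[5]$). Then, for each even-order $\overline{H}\leq P$, I would identify the isomorphism type of the central extension $p^{-1}(\overline{H})$, a group of order $2\ord{\overline{H}}$ whose unique involution is $z$. The identification is pinned down by combining this unique-involution property with the known element orders of $G\leq S^{3}$: the preimage of a $\Z_{2}$ is $\Z_{4}$ (the only order-$4$ group with a single involution), of a $\Z_{3}$ is $\Z_{6}$, of a $\Z_{5}$ is $\Z_{10}$, of a Klein four group is $\quat$ (the only order-$8$ group with a single involution), of a $\Z_{4}$ generated by a $4$-cycle is $\Z_{8}$ (since $G$ has elements of order $8$ projecting onto such a $4$-cycle), of an $\sn[3]=\dih{6}$ is $\dic{12}$, of a $\dih{8}$ is $\quat[16]$, of a $\dih{10}$ is $\dic{20}$, and of a copy of $\an[4]$ is $\tonestar$. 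Assembling these preimages together with the odd-order copies $\Z_{3}$ (and $\Z_{5}$ for $\istar$) yields exactly the lists of proper subgroups asserted in the proposition.

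Finally I would read off the maximal, maximal cyclic and normal subgroups from the correspondence. Because any subgroup of $G$ properly containing an even-order subgroup again contains $z$, a preimage $p^{-1}(\overline{H})$ is maximal in $G$ if and only if $\overline{H}$ is maximal in $P$; moreover no odd-order subgroup can be maximal, being properly contained in its own even-order preimage. Hence the maximal subgroups of $G$ are the preimages of the maximal subgroups of $\an[4]$, $\sn[4]$, $\an[5]$, namely $\brak{\Z_{6},\quat}$, $\brak{\dic{12},\quat[16],\tonestar}$ and $\brak{\dic{12},\dic{20},\tonestar}$ respectively. For normality I would use that normal subgroups of $G$ containing $z$ correspond to normal subgroups of $P$, while an odd-order normal subgroup injects onto a normal subgroup of $P$ of odd order, of which only the trivial one exists in each case (note $\an[5]$ is simple); this produces the stated normal subgroups. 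The maximal cyclic subgroups are then obtained by verifying in each case that a cyclic subgroup lies in no larger cyclic one: for instance $\Z_{6}$ and $\Z_{10}$ are maximal cyclic because the only proper subgroups properly containing them are the non-cyclic (dicyclic) groups $\dic{12}$ and $\dic{20}$.

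The main obstacle I anticipate is not the extension bookkeeping but the careful case-by-case identification of the preimage isomorphism types — in particular distinguishing $\Z_{8}$ from $\quat$, and recognising the dicyclic and generalised quaternion groups $\dic{12},\dic{20},\quat[16]$ among the central extensions of order $12$, $20$ and $16$. This is exactly the point where mere order counting is insufficient and one must invoke the unique-involution property together with the concrete element orders coming from the embedding $G\leq S^{3}$.
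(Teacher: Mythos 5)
Your proposal is correct, but it is organised quite differently from the paper's proof. The paper argues order by order inside each group: it enumerates elements from explicit presentations (for $\tonestar$ and $\oonestar$), invokes Sylow theory together with the $p^2$- and $2p$-conditions coming from periodicity, and uses the projection onto $G/Z(G)$ only in an ad hoc way — e.g.\ to identify the order~$12$ subgroups of $\oonestar$, and to exclude subgroups of orders $15$, $30$, $40$ and $60$ in $\istar$, the order-$40$ case needing a separate split-extension argument. You instead make the central extension $1\to \Z_{2}\to G\to P\to 1$ the backbone of the whole proof: the unique-involution property plus Cauchy's theorem gives the dichotomy that even-order subgroups are exactly the preimages of subgroups of $P$ (an inclusion- and normality-preserving correspondence), while odd-order subgroups embed into $P$. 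This reduces the entire proposition to (i) the classical subgroup lattices of $\an[4]$, $\sn[4]$ and $\an[5]$, and (ii) identifying each preimage, which your unique-involution criterion does correctly ($\Z_{8}$ versus $\quat$ distinguished by the quotient being $\Z_{4}$ rather than $\Z_{2}\oplus\Z_{2}$, $\quat[16]$ as the only order-$16$ group with a single involution mapping onto $\dih{8}$, $\tonestar$ as the unique non-split central extension of $\an[4]$, etc.). What your route buys is uniformity and economy: the exclusions the paper handles by hand (no $\Z_{8}$ in $\tonestar$, no subgroup of order $40$ in $\istar$, \dots) become immediate, since the corresponding subgroups of $P$ simply do not exist; the maximality and normality statements also transfer through the correspondence exactly as you say. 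What the paper's route buys is self-containedness: it effectively proves the needed facts about $\an[4]$, $\sn[4]$ and $\an[5]$ inline, whereas you take those subgroup lattices as known input. One small point to make airtight when writing this up: justify $p^{-1}(\Z_{4})\cong\Z_{8}$ by the unique-involution-plus-quotient argument rather than by appealing to the existence of order-$8$ elements of $G$, since that existence is part of what is being established.
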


\begin{proof}
Recall first that if $G$ is a binary polyhedral group, it is periodic~\cite{AM} and has a unique element of order $2$ that generates $Z(G)$. By periodicity, the group $G$ satisfies the $p^2$–condition (if $p$ is prime and divides the order of $G$ then $G$ has no subgroup isomorphic to $\Z_{p}\times \Z_{p}$), which implies that every Sylow $p$–subgroup of $G$ is cyclic or generalised quaternion, as well as the $2p$–condition (each subgroup of order $2p$ is cyclic).
\begin{enumerate}[(a)]
\item Consider first the binary tetrahedral group $\tonestar$. It is isomorphic to $\quat \rtimes \Z_3$. Using the presentation given by \req{preststar}, one may check that $\tonestar\setminus \quat$ consists of the eight elements of
\begin{equation*}
\setl{S^{-j}X^{j}}{\text{$j\in \brak{-1,1}$ and $S\in \brak{1,P,Q,PQ}$}},
\end{equation*}
and of the eight elements of order $6$ which are obtained from those of order $3$ by multiplying by the unique (central) element $P^{2}$ of order $2$. The proper non-trivial subgroups of $\tonestar$ are isomorphic to $\Z_2$, $\Z_3$, $\Z_4$, $\Z_6$ and $\quat$. The fact that $\tonestar$ has a unique element of order $2$ rules out the existence of subgroups isomorphic to $\sn[3]$. Since $\quat$ is a Sylow $2$-subgroup of $\tonestar$, $\Z_8$ cannot be a subgroup of $\tonestar$. Further, since $\tonestar/Z(\tonestar)\cong \an[4]$, the quotient by $Z(\tonestar)$ of any order~$12$ subgroup of $\tonestar$ would be a subgroup of $\an[4]$ of order~$6$, which is impossible. Also, any copy of $\Z_{3}$ (resp.\ $\Z_{4}$) is contained in a copy of $\Z_{6}$ (resp.\ $\quat$). The maximal subgroups of $\tonestar$ are thus isomorphic to $\Z_6$ or $\quat$, and its maximal cyclic subgroups are isomorphic to $\Z_{4}$ or $\Z_6$. Among these possible subgroups, it is straightforward to check that the normal non-trivial subgroups are those isomorphic to $\Z_{2}$ or $\quat$.

\item Consider the binary octahedral group $\oonestar$, with presentation given by \req{presostar}. Recall from \relem{index2}(\ref{it:autostar}) that $\ang{P,Q,X}$ is the unique subgroup of $\oonestar$ isomorphic to $\tonestar$. The twenty-four elements of $\oonestar\setminus \tonestar$ are comprised of twelve elements of order $4$ and twelve of order $8$. Under the canonical projection onto $\oonestar/Z(\oonestar)\cong \sn[4]$, these elements are sent to the six transpositions and the six $4$-cycles of $\sn[4]$ respectively. The squares of the elements of order $8$ are the elements of $\tonestar$ of order $4$. Consequently, the elements of $\oonestar\setminus \tonestar$ of order $4$ generate maximal cyclic subgroups. 
%Under the canonical projection onto $\oonestar/Z(\oonestar)\cong \sn[4]$, these elements are sent to the six transpositions and the six $4$-cycles of $\sn[4]$ respectively. 
Thus $\oonestar$ has three subgroups isomorphic to $\Z_{8}$. The Sylow $2$-subgroups are copies of $\quat[16]$, and since each copy of $\Z_{8}$ is contained in a copy of $\quat[16]$ and each copy of $\quat[16]$ contains a unique copy of $\Z_{8}$, it follows from Sylow's Theorems that $\oonestar$ possesses exactly three (maximal and non-normal) copies of $\quat[16]$, and that the subgroups of $\oonestar$ of order $8$ are isomorphic to $\Z_{8}$ or $\quat$.

%Furthermore, it possesses a copy of $\quat[16]$, which is a Sylow $2$-subgroup. Since each copy of $\Z_{8}$ is contained in a copy of $\quat[16]$ and each copy of $\quat[16]$ contains a unique copy of $\Z_{8}$, it follows that $\oonestar$ has at least three (conjugate) subgroups isomorphic to $\quat[16]$. Among the three copies of $\Z_{8}$ there are six elements of order $4$ that are squares of elements of order $8$, and so belong to $\ang{P,Q,X}$. As we saw in the case $q\geq 2$ of the proof of \reth{realV2}(\ref{it:realV2a}), the intersection in $\oonestar$ of two copies of $\Z_{8}$ cannot be of order $4$,  so these six elements of order $4$ are distinct, and are thus the elements of order $4$ of $\ang{P,Q}$. If there were more than three copies of $\quat[16]$ then two of them, $H_{1}$ and $H_{2}$ say, would intersect in a copy $K$ of $\Z_{8}$, and there would exist sixteen distinct elements of order $4$ (the elements of $\bigcup_{i=1}^{2} H_{i}\setminus K$) belonging to $\oonestar\setminus \tonestar$, \comment{why?} which is impossible. So in $\oonestar$ there are exactly three subgroups isomorphic to $\quat[16]$.

It remains to determine the subgroups of order $12$. Under the projection onto the quotient $\oonestar/Z(\oonestar)$, such a subgroup would be sent to a subgroup of $\sn[4]$ of order $6$, so is the inverse image under this projection of a copy of $\sn[3]$, isomorphic to $\dic{12}$. It is not normal because the subgroups of $\sn[4]$ isomorphic to $\sn[3]$ are not normal. Further it cannot be a subgroup of $\ang{P,Q,X}$ since projection onto $\oonestar/Z(\oonestar)$ would imply that the image of $\ang{P,Q,X}$, which is isomorphic to $\an[4]$, would have a subgroup of order $6$, which is impossible. We thus obtain the isomorphism classes of the subgroups of $\oonestar$ given in the statement, as well as the isomorphism classes of the maximal and maximal cyclic subgroups. 
 
We now determine the normal subgroups of $\oonestar$. As we already mentioned,  the subgroups of $\oonestar$ isomorphic to $\dic{12}$ or $\quat[16]$ are not normal, and the fact that each of the three cyclic subgroups of order $8$ belongs to a single copy of $\quat[16]$ implies that these subgroups are not normal in $\oonestar$. Clearly $Z(\oonestar)\cong \Z_{2}$ and $\ang{P,Q,X}\cong \tonestar$ are normal in $\oonestar$. Since $\tonestar$ is normal in $\oonestar$ and possesses a unique copy $\ang{P,Q}$ of $\quat$, this copy of $\quat$ is normal in $\oonestar$. The subgroups isomorphic to $\Z_{3}$ or $\Z_{6}$ are not normal because they are contained in $\ang{P,Q,X}$ and are not normal there. The same is true for the subgroups isomorphic to $\Z_{4}$ and lying in $\ang{P,Q,X}$. Finally, under the canonical projection onto $\oonestar/Z(\oonestar)$, any subgroup of order $4$ generated by an element $\oonestar\setminus \tonestar$ is sent to subgroup of $\sn[4]$ generated by a transposition, so cannot be normal in $\oonestar$. This yields the list of isomorphism classes of normal subgroups of $\oonestar$.

\item Finally, consider the binary icosahedral group $\istar$ of order~$120$. It is well known that $\istar$ admits the presentation $\setangr{S,T}{(ST)^2=S^3=T^5}$, is isomorphic to the group $\operatorname{SL}_2(\F[5])$, and $\istar/Z(\istar)\cong \an[5]$. The group $\istar$ has thirty elements of order $4$ (which project to the fifteen elements of $\an[5]$ of order $2$), twenty elements each of order $3$ and $6$ (which project to the twenty $3$-cycles of $\an[5]$), and twenty-four elements each of order $5$ and $10$ (which project to the twenty-four $5$-cycles of $\an[5]$). Its proper subgroups of order less than or equal to $10$ are $\Z_2$, $\Z_3$, $\Z_4$, $\Z_5$, $\Z_6$, $\quat$ and $\Z_{10}$. The only difficulty here is the case of order $8$ subgroups: $\istar$ has no element of order $8$ since under the projection onto $\istar/Z(\istar)$, such an element would project onto an element of $\an[5]$ of order $4$, which is not possible. Since $\istar$ possesses a unique element of order $2$, the Sylow $2$-subgroups of $\istar$, which are of order $8$, are isomorphic to $\quat$. Any subgroup of order $15$ or $30$ (resp.\ $60$) would project to a subgroup of $\an[5]$ of order $15$ (resp.\ $30$), which is not possible either. Note that $\istar$ has no element of order $12$ (resp.\ $20$) since such an element would project to one of order $6$ (resp.\ $10$) in $\an[5]$. Since $\istar$ has a unique element of order $2$, any subgroup of order $12$ (resp.\ $20$) must thus be isomorphic to $\dic{12}$ (resp.\ $\dic{20}$) using the classification of the groups of these orders up to isomorphism.  Such a subgroup exists by taking the inverse image of the projection of any subgroup of $\an[5]$ isomorphic to $\dih{6}$ (resp.\ $\dih{10}$).

%If $G'$ is a subgroup of $\istar$ of order $12$ (resp.\ $20$), it projects to a subgroup $p(G')$ of $\an[5]$ of order $6$ (resp.\ $10$), which must be $\Z_3\rtimes \Z_2$ (resp.\ $\Z_5\rtimes \Z_2$), the action being multiplication by~$-1$. So $p(G')$ is dihedral of order $6$ (resp.\ $10$). As we observed previously, such a subgroup must lift to $\dic{12}$ (resp.\ $\dic{20}$). 

Any subgroup of $\istar$ of order~$24$ projects to a subgroup of $\an[5]$ of order~$12$, which must be a copy of $\an[4]$. Hence any subgroup of $\an[5]$ of order $12$, which is isomorphic to $\an[4]$, lifts to a subgroup of $\istar$ isomorphic to $\tonestar$. So any subgroup of $\istar$ of order $24$ is isomorphic to $\tonestar$, and such a subgroup exists. 

Let $G$ be a subgroup of $\istar$ of order $40$, and let $G'$ be its projection in $\istar/Z(\istar)$. Then $G'$ is of order $20$, and the Sylow $5$-subgroup $K$ of $G'$ is normal. Now $G'$ has no element of order $4$ since $\istar$ has no element of order $8$, so $G'/K\cong \Z_{2}\oplus \Z_{2}$. We thus have a short exact sequence:
\begin{equation*}
1 \to \Z_5 \to G' \to \Z_2 \oplus \Z_2 \to 1
\end{equation*}
which splits since the kernel and the quotient have coprime orders~\cite[Theorem~10.5]{McL}. Since $\operatorname{Aut}(\Z_5)\cong \Z_4$, the action of any non-trivial element of $\Z_2 \oplus \Z_2$ on $\Z_5$ must be multiplication by $-1$ (it could not be the identity, for otherwise $\an[5]$ would have an element of order $10$, which is impossible), but this is not compatible with the structure of $\Z_2 \oplus \Z_2$. Hence $\istar$ has no subgroup of order $40$. We thus obtain the list of subgroups of $\istar$ given in the statement. The cyclic subgroups of order $3$ and $5$ of $\istar$ are contained in the cyclic subgroups of order $6$ and $10$ respectively obtained by multiplying a generator by the central element of order $2$. Thus the maximal cyclic subgroups of $\istar$ are isomorphic to $\Z_{4}$, $\Z_{6}$ or $\Z_{10}$.

We now consider the maximal subgroups. Clearly, any subgroup of $\istar$ isomorphic to $\dic{12}$ or $\tonestar$ is maximal. Further, since $\tonestar$ has no subgroup of order $12$, any subgroup of $\istar$ isomorphic to $\dic{12}$ is also maximal. The subgroups of $\istar$ isomorphic to $\quat$ are its Sylow $2$-subgroups, so are conjugate, and since one of these subgroups is contained in a copy of $\tonestar$, the same is true for any such subgroup. Thus the subgroups of $\istar$ isomorphic to $\quat$ are not maximal. Replacing $\quat$ by $\Z_{3}$ (resp.\ $\quat$ by $\Z_{5}$ and $\tonestar$ by $\dic{20}$) and applying a similar argument shows that the subgroups of $\istar$ isomorphic to $\Z_{6}$ (resp.\ $\Z_{10}$) are not maximal either. This yields the list of the isomorphism classes of the maximal subgroups of $\istar$ given in the statement. Finally, since $\an[5]$ is simple, the only non-trivial normal subgroup of $\istar$ is its unique subgroup of order $2$.\qedhere
\end{enumerate}
\end{proof}

%Some of the maximal finite groups described by \reth{finitebn} are abstractly
%subgroups of others. For example, $\tonestar$ is a subgroup of $\oonestar$ and $\istar$. However,
%if we fix the value of $n$, among the maximal finite subgroups of $B_n(\St)$,
%one cannot be abstractly a subgroup of another. Indeed, the values of $n$ for
%which $\tonestar$ is a maximal finite subgroup of $B_n(\St)$ are clearly disjoint from
%those for which $\oonestar$ and $\istar$ are maximal finite subgroup of $B_n(\St)$. In the
%remaining cases, one may check easily that for $n$ fixed, the given maximal
%cyclic and dicyclic subgroups cannot be abstractly subgroups of another maximal
%subgroup. On the other hand, for the values of $n$ for which $\istar$ is realised,
%$\oonestar$ is also realised even though it is not abstractly a subgroup of $\istar$.

\backmatter

%\addcontentsline{toc}{chapter}{Bibliography}
\addtocontents{toc}{\protect\setcounter{tocdepth}{0}}
\addcontentsline{toc}{chapter}{Bibliography}
\addtocontents{toc}{\protect\setcounter{tocdepth}{-1}}

\end{document}